\definecolor{cite}{rgb}{0.80,0.00,1.00}
\definecolor{url}{rgb}{0.00,0.00,0.80}
\definecolor{link}{rgb}{0.10,0.30,0.20}
\DeclareSymbolFont{cyrletters}{OT2}{wncyr}{m}{n}
\DeclareMathSymbol{\Sha}{\mathalpha}{cyrletters}{"58}
\numberwithin{equation}{section}
\theoremstyle{plain}
\newtheorem{proposition}{Proposition}[section]
\newtheorem{conjecture}[proposition]{Conjecture}
\newtheorem{corollary}[proposition]{Corollary}
\newtheorem{lem}[proposition]{Lemma}
\newtheorem{theorem}[proposition]{Theorem}
\theoremstyle{definition}
\newtheorem{definition}[proposition]{Definition}
\newtheorem{notation}[proposition]{Notation}
\newtheorem{assumption}[proposition]{Assumption}
\theoremstyle{remark}
\newtheorem{remark}[proposition]{Remark}
\renewcommand{\b}[1]{\mathbf{#1}}
\renewcommand{\c}[1]{\mathcal{#1}}
\renewcommand{\d}[1]{\mathbb{#1}}
\newcommand{\f}[1]{\mathfrak{#1}}
\renewcommand{\r}[1]{\mathrm{#1}}
\newcommand{\s}[1]{\mathscr{#1}}
\renewcommand{\sf}[1]{\mathsf{#1}}
\renewcommand{\(}{\left(}
\renewcommand{\)}{\right)}
\newcommand{\res}{\mathbin{|}}
\newcommand{\Sec}{\S}
\newcommand{\bG}{\b G}
\newcommand{\bK}{\b K}
\newcommand{\bP}{\b P}
\newcommand{\cD}{\c D}
\newcommand{\cI}{\c I}
\newcommand{\cO}{\c O}
\newcommand{\cS}{\c S}
\newcommand{\cT}{\c T}
\newcommand{\cX}{\c X}
\newcommand{\cY}{\c Y}
\newcommand{\cZ}{\c Z}
\newcommand{\dA}{\d A}
\newcommand{\dC}{\d C}
\newcommand{\dF}{\d F}
\newcommand{\dQ}{\d Q}
\newcommand{\dR}{\d R}
\newcommand{\dS}{\d S}
\newcommand{\dT}{\d T}
\newcommand{\dZ}{\d Z}
\newcommand{\fD}{\f D}
\newcommand{\fM}{\f M}
\newcommand{\fP}{\f P}
\newcommand{\fa}{\f a}
\newcommand{\fd}{\f d}
\newcommand{\ff}{\f f}
\newcommand{\fl}{\f l}
\newcommand{\fm}{\f m}
\newcommand{\fn}{\f n}
\newcommand{\fv}{\f v}
\newcommand{\rB}{\r B}
\newcommand{\rE}{\r E}
\newcommand{\rG}{\r G}
\newcommand{\rH}{\r H}
\newcommand{\rI}{\r I}
\newcommand{\rJ}{\r J}
\newcommand{\rL}{\r L}
\newcommand{\rO}{\r O}
\newcommand{\rP}{\r P}
\newcommand{\rR}{\r R}
\newcommand{\rT}{\r T}
\newcommand{\rV}{\r V}
\newcommand{\rW}{\r W}
\newcommand{\rZ}{\r Z}
\newcommand{\rd}{\r d}
\newcommand{\rh}{\r h}
\newcommand{\rs}{\r s}
\newcommand{\sF}{\s F}
\newcommand{\sO}{\s O}
\newcommand{\sP}{\s P}
\newcommand{\sQ}{\s Q}
\newcommand{\sT}{\s T}
\newcommand{\sfM}{\sf M}
\newcommand{\sfh}{\sf h}
\newcommand{\tc}{\mathtt{c}}
\newcommand{\ts}{\mathtt{s}}
\newcommand{\tu}{\mathtt{u}}
\newcommand{\pres}[2]{\prescript{#1}{}{#2}}
\newcommand{\ab}{\r{ab}}
\newcommand{\ac}{\r{ac}}
\newcommand{\cl}{\r{cl}}
\newcommand{\Cl}{\r{Cl}}
\newcommand{\cor}{\r{cor}}
\newcommand{\cusp}{\r{cusp}}
\newcommand{\dr}{\r{dR}}
\newcommand{\et}{\acute{\r{e}}\r{t}}
\newcommand{\loc}{\r{loc}}
\newcommand{\Mot}{\sf{Mot}}
\newcommand{\op}{\r{op}}
\newcommand{\rat}{\r{rat}}
\newcommand{\sing}{\r{sing}}
\newcommand{\sph}{\r{sph}}
\newcommand{\ssl}{\r{ss}}
\newcommand{\ssp}{\r{ssp}}
\newcommand{\symm}{\r{symm}}
\newcommand{\tor}{\r{tor}}
\newcommand{\unr}{\r{unr}}
\DeclareMathOperator{\AJ}{AJ}
\DeclareMathOperator{\As}{As}
\DeclareMathOperator{\CH}{CH}
\DeclareMathOperator{\Corr}{Corr}
\DeclareMathOperator{\diag}{diag}
\DeclareMathOperator{\disc}{disc}
\DeclareMathOperator{\End}{End}
\DeclareMathOperator{\Ext}{Ext}
\DeclareMathOperator{\Fr}{Fr}
\DeclareMathOperator{\Gal}{Gal}
\DeclareMathOperator{\GL}{GL}
\DeclareMathOperator{\Hom}{Hom}
\DeclareMathOperator{\IM}{Im}
\DeclareMathOperator{\ind}{ind}
\DeclareMathOperator{\Ind}{Ind}
\DeclareMathOperator{\Ker}{Ker}
\DeclareMathOperator{\Lie}{Lie}
\DeclareMathOperator{\Mat}{Mat}
\DeclareMathOperator{\Nm}{Nm}
\DeclareMathOperator{\ord}{ord}
\DeclareMathOperator{\PGL}{PGL}
\DeclareMathOperator{\Pic}{Pic}
\DeclareMathOperator{\RE}{Re}
\DeclareMathOperator{\Res}{Res}
\DeclareMathOperator{\Sh}{Sh}
\DeclareMathOperator{\SL}{SL}
\DeclareMathOperator{\SO}{SO}
\DeclareMathOperator{\Spec}{Spec}
\DeclareMathOperator{\Sym}{Sym}
\DeclareMathOperator{\Tr}{Tr}
\DeclareMathOperator{\tr}{tr}
\begin{document}

\title{Hirzebruch--Zagier cycles and twisted triple product Selmer groups}

\author{Yifeng Liu}
\address{Department of Mathematics, Northwestern University, Evanston, IL 60208}
\email{liuyf@math.northwestern.edu}

\date{August 28, 2015}
\subjclass[2010]{11G05, 11R34, 14G35}

\begin{abstract}
Let $E$ be an elliptic curve over $\dQ$ and $A$ be another elliptic curve
over a real quadratic number field. We construct a $\dQ$-motive of rank $8$,
together with a distinguished class in the associated Bloch--Kato Selmer
group, using Hirzebruch--Zagier cycles, that is, graphs of Hirzebruch--Zagier
morphisms. We show that, under certain assumptions on $E$ and $A$, the
non-vanishing of the central critical value of the (twisted) triple product
$L$-function attached to $(E,A)$ implies that the dimension of the associated
Bloch--Kato Selmer group of the motive is $0$; and the non-vanishing of the
distinguished class implies that the dimension of the associated Bloch--Kato
Selmer group of the motive is $1$. This can be viewed as the triple product
version of Kolyvagin's work on bounding Selmer groups of a single elliptic
curve using Heegner points.
\end{abstract}

\maketitle

\tableofcontents

\section{Introduction}
\label{s1}

Let $E$ be an elliptic curve over $\dQ$ and $A$ be another elliptic curve
over a real quadratic number field. We construct a $\dQ$-motive of rank $8$,
together with a distinguished class in the associated Bloch--Kato Selmer
group, using Hirzebruch--Zagier cycles, that is, graphs of Hirzebruch--Zagier
morphisms. We show that, under certain assumptions on $E$ and $A$, the
non-vanishing of the central critical value of the (twisted) triple product
$L$-function attached to $(E,A)$ implies that the dimension of the associated
Bloch--Kato Selmer group of $\sfM_{E,A}$ is $0$; and the non-vanishing of the
distinguished class implies that the dimension of the associated Bloch--Kato
Selmer group of $\sfM_{E,A}$ is $1$. These results are consequences of the
Beilinson--Bloch conjecture, the Bloch--Kato conjecture, and the
(conjectural) triple product version of the Gross--Zagier formula, connected
by the $L$-function as a bridge. Our results can be viewed as the triple
product version of the famous work of Kolyvagin \cite{Kol90} on bounding
Selmer groups of a single elliptic curve using Heegner points. In the
framework of Gan--Gross--Prasad conjecture, the motive in Kolyvagin's case
comes from automorphic representations of the pair $\SO(2)\times\SO(3)$,
while ours is from the pair $\SO(3)\times\SO(4)$ in which the group $\SO(4)$
is quasi-split but not split.

Our results certainly provide new evidence toward above-mentioned
conjectures, which are among central problems in number theory. They provide
the first example in the central critical case with both large Hodge--Tate
weights (of range $>1$) and large rank ($>4$).

\subsection{Main results}
\label{ss:main_results}

Denote by $\dQ^\ac$ the algebraic closure of $\dQ$ in the field $\dC$ of
complex numbers, $\dA$ the ring of ad\`{e}le of $\dQ$, and
$\Gamma_\dQ=\Gal(\dQ^\ac/\dQ)$ the absolute Galois group of $\dQ$. We fix
throughout the whole article a \emph{real} quadratic field $F\subset\dQ^\ac$
(except in the proof of Corollary \ref{co:symmetric}), with the ring of
integers $O_F$, and the absolute Galois group
$\Gamma_F\colonequals\Gal(\dQ^\ac/F)\subset\Gamma_\dQ$. Denote by $\theta$
the non-trivial involution in $\Gal(F/\dQ)$ or its local avatar.

\subsubsection{Motives and $L$-functions}

We consider an elliptic curve $E$ over $\dQ$ of conductor $N$. For each prime
$p$, denote by $\rT_p(E)$ the $p$-adic Tate module
$\varprojlim_nE[p^n](\dQ^\ac)$  of $E$, which is equipped with a
$\dZ_p$-linear continuous action of $\Gamma_\dQ$. Put
$\rV_p(E)=\rT_p(E)\otimes_{\dZ_p}\dQ_p$. Let $\sigma$ be the irreducible
cuspidal automorphic representation of $\GL_2(\dA)$ associated to $E$, whose
existence is guaranteed by the famous works on the modularity of rational
elliptic curves in \cites{Wil95,TW95,BCDT01}.

We consider another elliptic curve $A$ over $F$ whose conductor has norm $M$.
Put $A^\theta=A\otimes_{F,\theta}F$. For each prime $p$, we similarly define
$\rV_p(A)$, which is a $p$-adic representation of $\Gamma_F$. The tensor
product representation $\rV_p(A)\otimes_{\dQ_p}\rV_p(A^\theta)$ has a natural
extension to the larger group $\Gamma_\dQ$, known as the Asai representation,
denoted by $\As\rV_p(A)$. Let $\pi$ be the irreducible cuspidal automorphic
representation of $\Res_{F/\dQ}\GL_2(\dA)$ associated to $A$, whose existence
is guaranteed by the very recent works of Le Hung \cite{LH14} and Freitas--Le
Hung--Siksek \cite{FLS14} on the modularity of elliptic curves over real
quadratic fields.

Similarly to the Galois representation, the $F$-motive
$\sfh^1(A)\otimes\sfh^1(A^\theta)$ has a natural descent to a $\dQ$-motive,
known as the Asai motive, denoted by $\As\sfh^1(A)$, whose $p$-adic
realization is the Galois representation $\As(\rV_p(A)(-1))$. We define the
$\dQ$-motive
\[\sfM_{E,A}=\sfh^1(E)(1)\otimes\As\sfh^1(A)(1).\]
In particular, its $p$-adic realization $(\sfM_{E,A})_p$ is
$\rV_p(E)\otimes_{\dQ_p}(\As\rV_p(A))(-1)$ whose weight is $-1$. The motive
$\sfM_{E,A}$ is equipped with a canonical polarization
$\sfM_{E,A}\xrightarrow\sim\sfM_{E,A}^\vee(1)$ from the Poincar\'{e} duality
pairing $\sfM_{E,A}\times\sfM_{E,A}\to\dQ(1)$. The polarization is of
symplectic type.

The $L$-function $L(s,\sfM_{E,A})$ associated to the motive $\sfM_{E,A}$
satisfies the relation
\begin{align}\label{eq:l_function}
L(s,\sfM_{E,A})=L(s+1/2,\sigma\times\pi),
\end{align}
where the latter is the (twisted) triple product $L$-function, which is the
same as the tensor product $L$-function of $\sigma\times\As\pi$. By the work
of Garrett \cite{Gar87} and Piatetski-Shapiro--Rallis \cite{PSR87}, it has a
meromorphic continuation to the entire complex plane and is holomorphic at
$s=0$, satisfying the following functional equation
\[L(s,\sfM_{E,A})=\epsilon(\sfM_{E,A}) c(\sfM_{E,A})^{-s} L(-s,\sfM_{E,A})\] for the root number
$\epsilon(\sfM_{E,A})\in\{\pm1\}$ and some positive integer $c(\sfM_{E,A})$.

Consider the following assumptions.

\begin{assumption}[Group \textbf{E}]\label{as:group_e}
We assume that
\begin{description}
  \item[(E1)] Neither $E$ nor $A$ has complex multiplication over
      $\dQ^\ac$;

  \item[(E2)] $N$ and $M\disc(F)$ are coprime;

  \item[(E3)] If we write $N=N^+N^-$ such that $N^+$ (resp.\ $N^-$) is
      the product of prime factors that are split (resp.\ inert) in $F$,
      then $N^-$ is square-free.
\end{description}
\end{assumption}

\begin{definition}\label{de:parity}
Denote by $\wp(N^-)\geq 0$ the number of distinct prime factors of $N^-$. We
say that the pair $(E,A)$ is \emph{of even (resp.\ odd) type} if Assumption
\ref{as:group_e} is satisfied and $\wp(N^-)$ is odd (resp.\ even).
\end{definition}

By the result of Prasad \cite{Pra92}*{Theorems B \& D, Remark 4.1.1}, we have
$\epsilon(\sfM_{E,A})=(-1)^{\wp(N^-)+1}$ for $(E,A)$ satisfying Assumption
\ref{as:group_e}. In particular, if $(E,A)$ is of odd type, then we have
$L(0,\sfM_{E,A})=0$.

\begin{theorem}\label{th:main_even}
Suppose that $(E,A)$ satisfy Assumption \ref{as:group_e}. If
$L(0,\sfM_{E,A})\neq 0$, which in particular implies that $(E,A)$ is of even
type, then for all but finitely many primes $p$, we have
\[\dim_{\dQ_p}\rH^1_f(\dQ,(\sfM_{E,A})_p)=0.\]
\end{theorem}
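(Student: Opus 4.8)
The plan is to run a Kolyvagin-style Euler system argument adapted to the ``definite'' geometry that the even type forces, carried out modulo $p$ for all but finitely many $p$. Fix a $\Gamma_\dQ$-stable lattice $T\subset(\sfM_{E,A})_p$ and put $\bar T=T/p$. I restrict to primes $p$ outside a finite exceptional set: $p\nmid NM\disc(F)$; the residual representation $\bar\rho_p\colon\Gamma_\dQ\to\GL(\bar T)=\GL_8(\dF_p)$ has image as large as the automorphic structure permits --- this holds for almost all $p$ precisely because, by (E1), neither $E$ nor $A$ has complex multiplication, via the known results on the images of the Galois representations attached to $\sigma$ and $\pi$ --- so that in particular $\rH^0(\dQ,\bar T)=\rH^0(\dQ,\bar T^\vee(1))=0$; and $p$ divides none of the finitely many congruence numbers and local Tamagawa-type factors that enter the reciprocity laws below. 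Under these conditions the standard comparison between integral and residual Bloch--Kato Selmer groups reduces the theorem to showing that $\rH^1_f(\dQ,\bar T)=0$.

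The core construction is an Euler system of ``bipartite'' type (Howard, after Bertolini--Darmon) indexed by square-free products $n$ of \emph{Kolyvagin primes}: rational primes $\ell$, inert in $F$, prime to $pNM\disc(F)$, at which $\bar T$ is level-raisable, i.e.\ $\bar\rho_p(\Frob_\ell)$ satisfies a Ribet-type congruence and $\ell\equiv\pm1\pmod p$. Because $\epsilon(\sfM_{E,A})=+1$, the parity of the number of prime factors of $n$ controls the geometry. When this number is even, the relevant quaternion algebra over $\dQ$ and its companion over $F$ are \emph{definite}; the associated Shimura data are finite sets, the ``Hirzebruch--Zagier cycle'' degenerates to a diagonal-type $0$-cycle on a product of Shimura sets, and pairing it against the chosen automorphic test vectors produces a period $\lambda_n\in\dF_p$. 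When this number is odd, the quaternion algebras become \emph{indefinite}: one obtains a Shimura curve $X^{(n)}$ on the $E$-side and a quaternionic Hilbert modular surface $S^{(n)}$ on the $A$-side, the graph of the Hirzebruch--Zagier morphism $X^{(n)}\to S^{(n)}$ is a null-homologous codimension-$2$ cycle on the threefold $X^{(n)}\times S^{(n)}$, and projecting its $p$-adic étale Abel--Jacobi image to the $(E,A)$-isotypic component and reducing modulo $p$ yields a class $\kappa_n\in\rH^1(\dQ,\bar T)$, unramified outside $n$ and transverse at the primes dividing $n$. At the bottom, $\lambda_1$ is exactly the triple product period of the normalized test vectors, which by the Ichino--Gross--Kudla triple product formula equals $L(0,\sfM_{E,A})$ times an explicit nonzero archimedean-and-ramified factor; thus $\lambda_1\neq0$ in $\dF_p$ precisely because $L(0,\sfM_{E,A})\neq0$ and $p$ avoids the denominators. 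Here (E2) is what makes the Hirzebruch--Zagier morphisms between the correct levels available, and the squarefreeness of $N^-$ in (E3) is what makes level raising at the new primes clean.

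The two families are tied together by explicit reciprocity laws: for a Kolyvagin prime $\ell\nmid n$ with $n$ of even parity, the residue $\partial_\ell(\kappa_{n\ell})\in\rH^1_{\sing}(\dQ_\ell,\bar T)$ equals, up to a $p$-adic unit, the period $\lambda_n$ under the canonical identification $\rH^1_{\sing}(\dQ_\ell,\bar T)\cong\dF_p$ supplied by level raising, while the finite part of $\kappa_{n\ell}$ at $\ell$ is governed by $\kappa_n$, and in the other direction $\loc_\ell$ of the even-parity data is governed by $\kappa_{n}$. Establishing these identities is the main obstacle: it is an \emph{arithmetic level raising} statement that computes the reduction modulo $\ell$ of the Hirzebruch--Zagier cycle on $X^{(n\ell)}\times S^{(n\ell)}$ in terms of the supersingular locus of the special fiber and the finite Shimura data at level $n$, and it simultaneously extends Ribet's level raising, the Bertolini--Darmon explicit reciprocity laws, and earlier work on level raising for higher-dimensional Shimura varieties to the $\SO(3)\times\SO(4)$ threefolds at hand; pinning down the integral normalizations in these identities is the delicate secondary point. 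Granting the reciprocity laws, the bipartite Euler system machinery applies: given $0\neq s\in\rH^1_f(\dQ,\bar T)$, the largeness of $\bar\rho_p$ together with a Chebotarev argument produces a Kolyvagin prime $\ell$ with $\loc_\ell(s)\neq0$ and an Euler system class whose residue at $\ell$ is nonzero (possible because $\lambda_1\neq0$), whence the global sum-of-local-invariants formula forces a contradiction; iterating this kills $\rH^1_f(\dQ,\bar T)$, which is what remained to prove. Beyond the reciprocity laws, the two points that still require care are the exact non-divisibility of the Ichino period and of the congruence numbers (so that ``all but finitely many $p$'' is genuine) and the verification of the big-image hypothesis on $\bar\rho_p$ in precisely the form the Euler system machinery demands.
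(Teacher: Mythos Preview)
Your strategy matches the paper's --- Ichino's formula for the nonzero period, level-raising to produce Hirzebruch--Zagier classes, explicit reciprocity linking their singular residues back to the period, and a global duality/Chebotarev endgame --- but two points in your geometric setup differ from what the paper actually does and are worth correcting. First, the Hilbert modular surface does \emph{not} vary: the surface $X=X_{N^+M}$ for $\Res_{F/\dQ}\GL_2$ is fixed throughout, and only the Shimura curve $Y_{N^+M,N^\sim}$ changes as $N^\sim$ changes; there is no ``quaternionic Hilbert modular surface $S^{(n)}$.'' The totally definite quaternion algebra $Q$ over $F$ enters only through the canonical parametrization of the supersingular locus of the special fibre of the \emph{fixed} $X$ at inert primes (Propositions~\ref{pr:surface_superspecial} and~\ref{pr:surface_supersingular}), and this is precisely what makes the reciprocity law (Theorem~\ref{th:congruence_bis}) computable --- level-raising happens only on the curve side, imported from Bertolini--Darmon. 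Second, the paper never builds a bipartite system over all square-free $n$: for this theorem (the case $\wp(N^-)$ odd) a \emph{single} strongly admissible prime $\ell$ suffices, giving one class $\bar\Delta^n_{\sigma,\pi\res\ell}$, one congruence identity $\ord_p(\partial_\ell\loc_\ell(\bar\Delta^n_{\sigma,\pi\res\ell}))=n_{\r{div}}$, and one Tate-pairing contradiction. The paper also works modulo $p^n$ with $n>n_{\r{div}}+n_{\r{red}}+n_{\r{bad}}$ rather than modulo $p$, so that these error terms need not individually vanish; your mod-$p$ version becomes equivalent only after discarding finitely many further $p$, which is harmless for the statement as phrased but is not how the argument is organised in the paper.
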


From the above theorem, we can deduce the following one, whose statement has
nothing to do with the quadratic field $F$.

\begin{theorem}\label{th:main_sym}
Let $E_1$ and $E_2$ be two rational elliptic curves of conductors $N_1$ and
$N_2$, respectively. Suppose that $N_1$ and $N_2$ are coprime; $E_1$ has
multiplicative reduction at at least one finite place; and $E_2$ has no
complex multiplication over $\dQ^\ac$. If the central critical value
$L(2,E_1\times\Sym^2 E_2)\neq0$, then for all but finitely many primes $p$,
we have
\[\dim_{\dQ_p}\rH^1_f(\dQ,\rV_p(E_1)\otimes_{\dQ_p}(\Sym^2\rV_p(E_2))(-1))=0.\]
\end{theorem}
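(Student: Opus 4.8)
The plan is to deduce this from Theorem \ref{th:main_even} by choosing an auxiliary real quadratic field $F$ and setting $E = E_1$, $A = E_2 \times_{\dQ} F$. The key algebraic observation is that when $A$ is obtained by base change from a rational elliptic curve $E_2$, the Asai representation decomposes: one has an isomorphism of $\Gamma_{\dQ}$-representations
\[
\As \rV_p(A) \;\cong\; \Sym^2 \rV_p(E_2) \,\oplus\, \dQ_p(-1)\otimes\eta_F,
\]
where $\eta_F$ is the quadratic character cut out by $F/\dQ$. (This is the motivic shadow of the classical identity $L(s,\pi,\As) = L(s,\Sym^2\sigma_2)\,\zeta_F(s)/\zeta(s)$ for $\pi$ the base change of $\sigma_2$.) Consequently, after the Tate twist in the definition of $\sfM_{E,A}$, the $p$-adic realization splits as
\[
(\sfM_{E,A})_p \;\cong\; \bigl(\rV_p(E_1)\otimes_{\dQ_p}(\Sym^2\rV_p(E_2))(-1)\bigr)\;\oplus\;\bigl(\rV_p(E_1)\otimes_{\dQ_p}\eta_F\bigr).
\]
Since $\rH^1_f$ is additive in the coefficient module and compatible with this direct-sum decomposition (the Bloch--Kato local conditions respect direct sums), one gets
\[
\dim_{\dQ_p}\rH^1_f(\dQ,(\sfM_{E,A})_p) \;=\; \dim_{\dQ_p}\rH^1_f(\dQ,\rV_p(E_1)\otimes(\Sym^2\rV_p(E_2))(-1)) \;+\; \dim_{\dQ_p}\rH^1_f(\dQ,\rV_p(E_1)\otimes\eta_F).
\]
The second summand is the Selmer group of the quadratic twist $E_1^{F}$ of $E_1$; in particular it is finite-dimensional and vanishes for all but finitely many $p$ regardless (it plays no essential role, but one should note it does not obstruct the conclusion). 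Thus the left side vanishes for almost all $p$ if and only if the first summand does.

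It remains to arrange that $F$ can be chosen so that $(E_1, E_2\times_\dQ F)$ satisfies Assumption \ref{as:group_e} and so that $L(0,\sfM_{E,A})\neq 0$. For the $L$-value: by \eqref{eq:l_function} and the Asai factorization above, $L(s,\sfM_{E,A}) = L(s+1/2,\,\sigma_1\times\Sym^2\sigma_2)\cdot L(s+1/2,\,\sigma_1\otimes\eta_F)$, so at $s=0$ the first factor is $L(2,E_1\times\Sym^2 E_2)$ up to normalization (nonzero by hypothesis), while the second factor is $L(1, E_1^F)$. Hence I need a real quadratic $F$ for which $L(1,E_1^F)\neq 0$; by the nonvanishing results for quadratic twists of $L$-functions at the center (Waldspurger/Bump--Friedberg--Hoffstein, or Murty--Murty, or the analytic results of Ono--Skinner and others), infinitely many such $F$ exist, and moreover one can prescribe the splitting behavior of $F$ at any finite set of primes. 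Now impose the conditions of Assumption \ref{as:group_e}: (E1) holds since $E_1$ has multiplicative reduction somewhere (hence no CM) and $E_2$ has no CM by hypothesis (and $A=E_2\times_\dQ F$ has no CM over $\dQ^\ac$ precisely because $E_2$ doesn't and $F$ is not a CM field — one should double-check that base change to $F$ does not create CM, which it cannot since CM is detected over $\dQ^\ac$ and is insensitive to the field of definition); (E2) requires $N_1$ coprime to $(\mathrm{Nm}\,\mathfrak{n}_A)\disc(F)$ — since $A = E_2\times_\dQ F$, $\mathrm{Nm}$ of its conductor divides $N_2^2\disc(F)^2$ or so, and as $N_1$ is coprime to $N_2$ by hypothesis, I just need $F$ ramified only at primes away from $N_1$, which is a further open congruence condition compatible with the nonvanishing choice; (E3) requires $N_1^-$ square-free where $N_1^-$ collects primes of $N_1$ inert in $F$ — I can simply choose $F$ so that \emph{every} prime dividing $N_1$ is split in $F$, making $N_1^- = 1$, vacuously square-free. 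Finally, with all of Assumption \ref{as:group_e} in force and $L(0,\sfM_{E,A})\neq 0$, Theorem \ref{th:main_even} gives $\dim_{\dQ_p}\rH^1_f(\dQ,(\sfM_{E,A})_p)=0$ for all but finitely many $p$, and the displayed additivity forces $\dim_{\dQ_p}\rH^1_f(\dQ,\rV_p(E_1)\otimes(\Sym^2\rV_p(E_2))(-1))=0$ for all but finitely many $p$, as desired.

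The main obstacle is the simultaneous satisfaction of the congruence conditions from Assumption \ref{as:group_e} together with the analytic nonvanishing $L(1,E_1^F)\neq 0$: one needs a nonvanishing theorem for quadratic twists that is flexible enough to restrict $F$ to a given arithmetic progression (prescribed split/inert behavior at the finitely many primes dividing $N_1 N_2$, and real). Results of the Friedberg--Hoffstein type do allow prescribing local conditions at finitely many places, so this should go through, but it is the one step requiring genuine input beyond formal manipulation; everything else (the Asai decomposition, additivity of $\rH^1_f$, the CM checks, the conductor bookkeeping) is routine. A minor secondary point is confirming the precise normalization in \eqref{eq:l_function} so that ``$s=0$'' on the motivic side really corresponds to ``$s=2$'' in $L(2, E_1\times\Sym^2 E_2)$ — this is a matter of tracking the Tate twists in $\sfM_{E,A}=\sfh^1(E)(1)\otimes\As\sfh^1(A)(1)$ and the shift $L(s,\sfM_{E,A})=L(s+1/2,\sigma\times\pi)$.
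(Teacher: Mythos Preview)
Your overall plan---reduce to Theorem \ref{th:main_even} via $E=E_1$, $A=E_2\otimes_\dQ F$, use the Asai decomposition, and invoke a nonvanishing result for $L(1,E_1^F)$---is exactly the paper's route. But your choice of $F$ contains a fatal sign error.

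You propose to take every prime dividing $N_1$ split in $F$, so that $N^- = 1$. This cannot work. First, by Definition \ref{de:parity}, $\wp(N^-)=0$ makes $(E,A)$ of \emph{odd} type, hence $\epsilon(\sfM_{E,A})=-1$ and $L(0,\sfM_{E,A})=0$; Theorem \ref{th:main_even} is then vacuous. Equivalently, on the other factor: under the hypotheses one has $\epsilon(E_1)=-1$ (the paper checks this via $\epsilon(E_1\times E_2\times E_2)=-1$ when $N_1,N_2$ are coprime, together with the factorization through $\Sym^2$), and with $F$ real and every prime of $N_1$ split one gets $\epsilon(E_1^F)=\epsilon(E_1)\cdot\eta_F(-N_1)=\epsilon(E_1)=-1$. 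So $L(1,E_1^F)=0$ for \emph{every} such $F$, and no nonvanishing theorem can produce the $F$ you want with those local constraints. The Bump--Friedberg--Hoffstein result you cite does allow prescribing local conditions, but only within the sign-$+1$ family; your constraints force sign $-1$.

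The fix, and the real reason for the hypothesis that $E_1$ has multiplicative reduction somewhere, is to pick a prime $\ell\Vert N_1$ and force $\ell$ to be \emph{inert} in $F$ while all other primes of $N_1$ are split. Then $N^-=\ell$ is square-free (so (E3) holds), $\wp(N^-)=1$ is odd (even type), and $\epsilon(E_1^F)=\epsilon(E_1)\cdot\eta_F(\ell)=(-1)(-1)=+1$, so nonvanishing of $L(1,E_1^F)$ is now accessible. This is precisely how the paper proceeds (see the proof of Corollary \ref{co:symmetric}): first flip the sign with a character $\eta_1$ having $\eta_1(\ell)=-1$, then apply \cite{BFH90} to find $\eta_2$ with the required nonvanishing and the remaining local constraints. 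Your use of the multiplicative reduction hypothesis only for ``no CM'' misses its essential role: without a prime $\ell\Vert N_1$ you cannot arrange $N^-$ square-free with $\wp(N^-)$ odd.
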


We will give the proof in \Sec \ref{ss:base_change}, actually for its
equivalent form Corollary \ref{co:symmetric}.

\subsubsection{A candidate in the Selmer group}

Let $X=X_{N^+M}$ be the minimal resolution of the Baily--Borel
compactification of the Hilbert modular surface associated to $F$ of
level-$N^+M$ structure, which is a smooth projective surface over $\dQ$ (see
\Sec \ref{ss:hilbert_modular} for details). In this article, our level
structures are always of $\Gamma_0$-type in the classical sense, unless
otherwise specified.

Suppose that $(E,A)$ is of odd type. Let $B_{N^-}$ be the indefinite
quaternion algebra over $\dQ$, ramified exactly at primes dividing $N^-$,
which exists since $\wp(N^-)$ is even. Denote by $Y=Y_{N^+M,N^-}$ the
(compactified) Shimura curve associated to $B_{N^-}$ of level-$N^+M$
structure. By modular interpretation, we have a natural finite morphism
$\zeta\colon Y\to X$ (see \Sec \ref{ss:special_morphisms}), which was first
studied by Hirzebruch--Zagier in this context. Put $Z=Y\times_{\Spec\dQ}X$.
Then the graph of $\zeta$ defines a Chow cycle $\Delta$ in $\CH^2(Z)$, which
we call a \emph{Hirzebruch--Zagier cycle}. This cycle is analogous to the
Gross--Kudla--Schoen cycle (see \cite{GS95} and \cite{GK92}) on triple
product of (modular) curves, and is an example of special cycles on Shimura
varieties in general.

We denote by $\rH^1_\sigma(Y_{\dQ^\ac})$ (resp.\ $\rH^2_\pi(X_{\dQ^\ac})$)
the $\sigma$-isotypic (resp.\ $\pi$-isotypic) subspace of
$\rH^1_{\et}(Y_{\dQ^\ac},\dQ_p(1))$ (resp.\
$\rH^2_{\et}(X_{\dQ^\ac},\dQ_p(1))$) under the Hecke action. Then it is a
direct summand as a $p$-adic representation of $\Gamma_\dQ$. Choose a Hecke
projector $\sP_{\sigma,\pi}\in\CH^3(Z\times_{\Spec\dQ}Z)\otimes_\dZ\dQ$ for
$(\sigma,\pi)$, viewed as a Chow correspondence (see \Sec
\ref{sss:construction_gross} for details). We have the Chow cycle
\[\Delta_{E,A}\colonequals\sP_{\sigma,\pi}^*\Delta\in\CH^2(Z)\otimes_\dZ\dQ,\]
which has the trivial image under the geometric cycle class map
$\cl_Z^0\colon\CH^2(Z)\otimes_\dZ\dQ\to\rH^4_{\et}(Z_{\dQ^\ac},\dQ_p(2))$.
Therefore, it induces an element
\[\pres{p}{\Delta}_{E,A}\colonequals\AJ_p(\Delta_{E,A})
\in\rH^1(\dQ,\rH^3_{\et}(Z_{\dQ^\ac},\dQ_p(2)))\] via the ($p$-adic)
\'{e}tale Abel--Jacobi map $\AJ_p$. As we will see in Proposition
\ref{pr:belong_selmer}, the element $\pres{p}{\Delta}_{E,A}$ does not depend
on the choice of $\sP_{\sigma,\pi}$, and moreover belongs to the
\emph{Bloch--Kato Selmer group}
$\rH^1_f(\dQ,\rH^1_\sigma(Y_{\dQ^\ac})\otimes_{\dQ_p}\rH^2_\pi(X_{\dQ^\ac}))$;
see \Sec \ref{ss:bloch_kato} for the general definition. Note that by
Conjecture \ref{co:abel_injective}, even the Chow cycle $\Delta_{E,A}$ itself
is independent of the choice of $\sP_{\sigma,\pi}$.

It is well-known that as $p$-adic representations of $\Gamma_\dQ$, we have
$\rH^1_\sigma(Y_{\dQ^\ac})\simeq\rV_p(E)^{\oplus b}$ for some $b\geq 1$. We
will show that the $p$-adic representation $\rH^2_\pi(X_{\dQ^\ac})$ of
$\Gamma_\dQ$ is isomorphic to $(\As\rV_p(A))(-1)^{\oplus a}$ for some $a\geq
1$. In particular, the class $\pres{p}{\Delta}_{E,A}$ may be regarded as an
element in
\[\rH^1_f(\dQ,\rV_p(E)\otimes_{\dQ_p}(\As\rV_p(A))(-1))^{\oplus ab}
=\rH^1_f(\dQ,(\sfM_{E,A})_p)^{\oplus ab}.\]

\begin{theorem}\label{th:main_odd}
Suppose that $(E,A)$ is of odd type. For all but finitely many primes $p$, if
$\pres{p}{\Delta}_{E,A}\neq0$, then
\[\dim_{\dQ_p}\rH^1_f(\dQ,(\sfM_{E,A})_p)=1.\]
\end{theorem}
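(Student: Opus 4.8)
The plan is to run a Kolyvagin-type Euler system argument, using the Hirzebruch--Zagier cycle $\Delta_{E,A}$ and its Heegner-point-like degenerations in place of the classical Heegner points. First I would establish the basic linear-algebra setup: the polarization $\sfM_{E,A}\xrightarrow\sim\sfM_{E,A}^\vee(1)$ is of symplectic type, so for all but finitely many $p$ the residual representation $\ol\rho_p$ attached to $(\sfM_{E,A})_p$ has large image — concretely, the image of $\Gamma_\dQ$ acting on $(\sfM_{E,A})_p$ should contain (a conjugate of) $\Sp_8(\dZ_p)$ or at least a subgroup large enough that $\rH^1(\dQ,\ol{(\sfM_{E,A})_p}/p)$-type cohomology vanishes and Chebotarev-style arguments produce enough splitting primes. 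Here I would invoke (E1), that neither $E$ nor $A$ has CM, together with the independence of $\ell$-adic images, to handle the finitely many bad primes. This step is essentially formal but must be done carefully to identify precisely which primes $p$ are excluded.

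Next I would construct the Euler system. For each squarefree product of Kolyvagin primes $\ell$ (primes inert in $F$, satisfying the usual congruence conditions mod $p$, and prime to $N M \disc(F)$), one forms a level-raised Shimura curve $Y_{N^+M,\,N^-\ell}$ and the corresponding Hirzebruch--Zagier morphism into $X$, giving a cycle $\Delta^{(\ell)}$ and, via the Abel--Jacobi map and the Hecke projector, a class $\pres{p}{\Delta}_{E,A}^{(\ell)}\in\rH^1(\dQ(\ell),(\sfM_{E,A})_p)$. One then checks the Euler system distribution relations: the norm-compatibility $\Cor_{\dQ(\ell\ell')/\dQ(\ell)}\,\pres{p}{\Delta}_{E,A}^{(\ell\ell')}=a_\ell\cdot\pres{p}{\Delta}_{E,A}^{(\ell)}$ coming from the Hecke relation at $\ell'$ on the Shimura curve, and the congruence $\pres{p}{\Delta}_{E,A}^{(\ell\ell')}\equiv(\mathrm{Frob}_{\ell'}\text{-something})\cdot\pres{p}{\Delta}_{E,A}^{(\ell)}\pmod{\ell'}$ after applying Kolyvagin's derivative operators. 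These relations are geometric: the first from comparing the cycle on $Y_{\ldots,N^-\ell\ell'}$ with its pushforward to $Y_{\ldots,N^-\ell}$, the second from the specialization of the cycle class at a place above $\ell'$ and a reciprocity law relating Abel--Jacobi images to the local Hecke action (an analogue of the results of Bertolini--Darmon and Nekov\'a\v r in the triple product setting). That the derived classes land in the (strict or relaxed) Selmer group at the relevant places, and that $\pres{p}{\Delta}_{E,A}\in\rH^1_f(\dQ,(\sfM_{E,A})_p)$ is already recorded as Proposition~\ref{pr:belong_selmer}.

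Given the Euler system, I would then run the standard Kolyvagin bounding machine (in the form of, say, Nekov\'a\v r's or Mazur--Rubin's axiomatization, adapted to a $4$-dimensional symplectic $\Gamma_\dQ$-representation): starting from a nonzero class $\pres{p}{\Delta}_{E,A}\neq 0$ in $\rH^1_f(\dQ,(\sfM_{E,A})_p)$, one produces, for each other putative class $s$ in the Selmer group, a Kolyvagin prime $\ell$ at which the local conditions cut down the rank, and iterates. The large-image hypothesis guarantees the Chebotarev argument supplies primes $\ell$ with prescribed Frobenius; the symplectic self-duality makes the local Tate pairing at $\ell$ nondegenerate between the two relevant lines, so the nonvanishing of $\pres{p}{\Delta}_{E,A}$ propagates to force $\dim_{\dQ_p}\rH^1_f(\dQ,(\sfM_{E,A})_p)\leq 1$; combined with the reverse inequality $\geq 1$ (immediate from $0\neq\pres{p}{\Delta}_{E,A}\in\rH^1_f$) this gives equality. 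I expect the main obstacle to be the second Euler system relation — the explicit reciprocity law computing the Abel--Jacobi class $\pres{p}{\Delta}_{E,A}^{(\ell)}$ at a place above an auxiliary Kolyvagin prime in terms of the $U_\ell$-eigenvalue and the Gross-point-style CM cycle on the fibre — since this requires a careful analysis of the bad reduction of the Shimura curve $Y$ and the Hilbert modular surface $X$ at $\ell$ and an intersection-theoretic computation on the special fibre of $Z=Y\times X$; controlling the denominators and the exact index of the Euler system module in $\rH^1_f$ (which is where the "all but finitely many $p$" really comes from) will be the technical heart.
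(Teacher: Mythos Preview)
Your proposal conflates two distinct Euler-system frameworks, and the one you describe does not exist in this setting. You propose classes $\pres{p}{\Delta}_{E,A}^{(\ell)}\in\rH^1(\dQ(\ell),(\sfM_{E,A})_p)$ over field extensions $\dQ(\ell)$, with norm relations $\Cor_{\dQ(\ell\ell')/\dQ(\ell)}$ and Kolyvagin derivative operators. But there is no tower of fields here: the analogue of Kolyvagin's ring class fields $K[\ell]$ has no counterpart for the pair $\SO(3)\times\SO(4)$. The paper's classes all live in $\rH^1(\dQ,\bar\rT_{\sigma,\pi}^n)$ over $\dQ$ with torsion coefficients $\dZ/p^n$, and the ``variation'' is not along a field tower but along the choice of quaternion algebra: one replaces $B_{N^-}$ by $B_{N^-\ell_1\ell_2}$ for two auxiliary primes, following Bertolini--Darmon's level-raising (bipartite) method rather than Kolyvagin's derivative method. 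Note also the parity obstruction: if $(E,A)$ is of odd type then $\wp(N^-)$ is even, so $B_{N^-\ell}$ for a single $\ell$ is \emph{definite} and there is no Shimura curve $Y_{N^+M,N^-\ell}$; one must add primes in pairs.

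Two further points. First, the image of $\Gamma_\dQ$ on $(\sfM_{E,A})_p$ is never close to $\Sp_8(\dZ_p)$: the representation factors through (the normalizer of) $\GL(\rV_p(E))\times\rO(\pres{\sharp}\rV_\pi)$, and the paper works instead with the much weaker hypothesis (R6) that the image is absolutely irreducible and contains a nontrivial scalar of order prime to $p$, together with a careful treatment of the Asai-decomposable case. Second, the explicit reciprocity law you anticipate as the hard step is indeed the core of the paper (Theorems \ref{th:congruence} and \ref{th:congruence_bis}), but its proof is not an intersection computation of the kind you sketch: it rests on the integral Tate conjecture for the supersingular locus of the Hilbert modular surface at an inert prime (Proposition \ref{pr:surface_supersingular} and the Tian--Xiao input in \Sec\ref{ss:integral_tate}), which identifies $\pres{\sharp}{\bar\rT}_\pi^n[\ell^2|1]$ with a space of functions on oriented Eichler orders in the totally definite $F$-quaternion algebra $Q$, so that the localized Abel--Jacobi class becomes an explicit period sum on a finite Shimura set. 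This ingredient is absent from your outline and is what makes the argument go.
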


The reader may consult \Sec \ref{sss:ggp} to see why we consider the Chow
cycle $\Delta_{E,A}$.

\begin{remark}\leavevmode
\begin{enumerate}
  \item In Theorems \ref{th:main_even}, \ref{th:main_sym} and
      \ref{th:main_odd}, we have explicit conditions on $p$ for which
      those theorems hold; see Definition \ref{de:group_p}.

  \item The Galois representation $\As\rV_p(A)$, and hence
      $(\sfM_{E,A})_p$, are decomposable if and only if $A^\theta$ is
      isogenous to a (possibly trivial) quadratic twist of $A$. We will
      take a closer look at a special case where $A^\theta$ is isogenous
      to $A$ in \Sec \ref{ss:base_change}.

  \item Assumption \ref{as:group_e} (E2, E3) are not essentially
      necessary. For each place $v$ of $\dQ$, denote by
      $\epsilon(1/2,\sigma_v\times\pi_v)\in\{\pm1\}$ the (twisted) triple
      product $\epsilon$-factor. Put
      \[\Sigma_{\sigma,\pi}=\{v<\infty\res\epsilon(1/2,\sigma_v\times\pi_v)\eta_v(-1)=-1\},\]
      where $\eta_v\colon\dQ_v^\times\to\{\pm1\}$ is the character
      associated to $F_v/\dQ_v$. Our method should be applicable under
      the only requirement that $v\|N$ if $v\in\Sigma_{\sigma,\pi}$
      (which already implies $v\mid N$), and moreover be able to prove
      Theorems \ref{th:main_even}, \ref{th:main_sym} and
      \ref{th:main_odd} for \emph{all} primes $p$. But these may
      tremendously increase the technical complication of the proof, and
      hence we decide not to proceed with that generality.
\end{enumerate}
\end{remark}

%\begin{remark}
%Note that in the triple product setup, we have a, say totally real, cubic
%\'{e}tale algebra $\dF$ over $\dQ$. There are three cases: (1) \emph{split
%case}, that is, $\dF=\dQ\oplus\dQ\oplus\dQ$; (2) \emph{quasi-split case},
%that is, $\dF=\dQ\oplus F$ with $F$ a real quadratic field; (3) \emph{cubic
%case}, that is, $\dF$ itself is a field.

%In this article, we deal with the quasi-split case. For the other two cases,
%at this moment we only know how to treat the rank-$0$ part (that is, in the
%flavor of Theorem \ref{th:main_even}), and new techniques are required. In
%the work in progress \cite{Liu}, we will prove the analogue of Theorem
%\ref{th:main_even} in the cubic case, under certain assumptions. As one can
%see from that article, the techniques developed for the cubic case are
%sufficient for the split case.
%\end{remark}

\begin{remark}
The idea of exploiting diagonal special cycles in Euler System-style
arguments originated in the work of Darmon and Rotger \cite{DR1,DR2}, who
exploited them to obtain a result for the motive of an elliptic curve twisted
by two $2$-dimensional Artin representations, which is also of rank $8$. They
showed that the Mordell--Weil group (as a twisted version of the classical
Mordell--Weil group of an elliptic curve) is finite if the corresponding
$L$-function has non-vanishing central value. Both their work and ours
provide evidence of the Beilinson--Bloch conjecture and the Bloch--Kato
conjecture, for the pair $\SO(3)\times\SO(4)$ in the framework of
Gan--Gross--Prasad, both using diagonal special cycles. But there are some
differences:
\begin{itemize}
      \item They deal with the split case (in particular, the geometric
          object is a triple product of modular curves), while we deal
          with the quasi-split case.

      \item The (range of) Hodge--Tate weight of their motive is $1$,
          while ours is $3$ as the motive involves two elliptic curves.

      \item The method they use is $p$-adic, especially via Hida
          families, while ours is more geometric by exploiting ``tame''
          deformations at auxiliary primes $\ell$ not equal to $p$ in
          bounding the pro-$p$-Selmer group.

      \item More importantly, in the rank-$0$ case, our proof actually
          indicates that the $p^\infty$-part of the Tate--Shafarevich
          group of the motive, defined in a suitable way, is trivial for
          all but finitely many primes $p$; while their method is only
          able to control the image of the pro-$p$-Selmer group in the
          local cohomology at $p$, from which it seems very hard to
          deduce bounds on the relevant Tate--Shafarevich groups in
          settings other than Mordell--Weil groups.
\end{itemize}
\end{remark}

\subsection{Connection with $L$-function and Beilinson--Bloch--Kato conjecture}
\label{ss:connection_l}

For a number field $K$, denote by $\Mot_K^\rat$ the pseudo-abelian category
of Chow motives over $K$ with coefficient $\dQ$ (see \cite{Sch94} for
example). For a Chow motive $\sfM$, we have the $p$-adic realization
$\sfM_p$, which is a $p$-adic representation of $\Gal(K^\ac/K)$, for each
prime $p$. Recall that for each smooth projective scheme $X$ over $K$ purely
of dimension $d$, we have the Picard motive $\sfh^{2d-1}(X)$ and the Albanese
motive $\sfh^1(X)$. For each Chow motive $\sfM$, we have an associated
$L$-function $L(s,\sfM)$ which is defined for $\RE s$ sufficiently large. A
polarization of $\sfM$ is an isomorphism $\sfM\xrightarrow\sim\sfM^\vee(1)$
in the category $\Mot_K^\rat$.

\subsubsection{Beilinson--Bloch / Bloch--Kato conjecture}

\begin{definition}
Let $\sfM$ be a Chow motive. We define the \emph{Chow group} of $\sfM$ (of
degree $0$) to be
\[\CH(\sfM)=\Hom_{\Mot_K^\rat}(\mathbbm{1}_K,\sfM),\]
where $\mathbbm{1}_K$ is the unit motive over $K$.
\end{definition}

Denote by $\CH(\sfM)^0$ the kernel of the geometric cycle class map
\[\cl_{\sfM}^0\colon\CH(\sfM)\to\rH^0(K,\sfM_p)\]
for some prime $p$, which is known to be independent of $p$. We have the
following \emph{($p$-adic) \'{e}tale Abel--Jacobi map}
\[\AJ_p\colon\CH(\sfM)^0\to\rH^1(K,\sfM_p)\]
for each prime $p$. The following conjecture is a part of the
Beilinson--Bloch conjecture; see for example \cite{Jan90}*{9.15}.

\begin{conjecture}\label{co:abel_injective}
Let notation be as above. The \'{e}tale Abel--Jacobi map $\AJ_p$ is injective
for every prime $p$.
\end{conjecture}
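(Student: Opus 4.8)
This is precisely the injectivity part of the Beilinson--Bloch conjecture, which in the generality stated is open; the ``plan'' below therefore records the cases one can actually settle, the shape a general argument would have to take, and why the paper treats the statement as a hypothesis. I would begin with the classical low-codimension cases. If $\sfM$ is a direct summand of the weight $-1$ motive attached to an abelian variety $B/K$ — so that $\CH(\sfM)$ is a summand of $B(K)\otimes\dQ$ and $\sfM_p$ a summand of $\rV_p(B)$ — then $\AJ_p$ is the Kummer map tensored with $\dQ$, and it is injective simply because $B(K)$ is finitely generated by the Mordell--Weil theorem and hence has no nonzero infinitely $p$-divisible element. Taking $B=\Pic^0_{X/K}$ gives Conjecture \ref{co:abel_injective} for divisor classes on any $X$, and $B=\Jac$ of a curve gives it for zero-cycles on that curve. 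These settle the part of the conjecture that lives in weight $-1$; all of the difficulty is in higher codimension.

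For a general Chow motive $\sfM$ I would try to factor the \'etale Abel--Jacobi map through the $p$-adic regulator out of rational motivic cohomology,
\[\CH(\sfM)^0\longrightarrow\rH^1_{\r{mot}}(K,\sfM(1))\longrightarrow\rH^1(K,\sfM_p),\]
so that injectivity of $\AJ_p$ would follow from two separate conjectural inputs: first, injectivity of the map from the rational Chow group to motivic cohomology — over a number field one in fact expects there to be no nonzero homologically trivial ``phantom'' cycles at all, a consequence of the conjectural weight structure on the category of motives; and second, injectivity of the $p$-adic regulator from motivic to Galois cohomology, a statement in the spirit of the Bloch--Kato conjectures. The second of these is the genuine obstacle: outside the abelian-variety cases above there is no known technique that controls the kernel of $\AJ_p$ on homologically trivial cycles of codimension $\geq 2$, and producing even a single nonzero element of such a kernel over a number field would itself be a major result.

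Finally, I would note — since this is the reason the paper states the conjecture rather than uses it unconditionally — that in the situation at hand one does not need the full statement. The motive relevant to Theorem \ref{th:main_odd} is the $(\sigma,\pi)$-isotypic summand cut out by $\sP_{\sigma,\pi}$ from a Tate twist of $\sfh^1(Y)\otimes\sfh^2(X)$, whose $p$-adic realization is, up to multiplicity, $\rV_p(E)\otimes(\As\rV_p(A))(-1)$. One could therefore hope to deduce injectivity of $\AJ_p$ on this one summand from the Euler-system bound on $\rH^1_f(\dQ,(\sfM_{E,A})_p)$ proved in the body of the paper, together with the elementary observation that a homologically trivial cycle whose image in the Selmer group and under $\AJ_p$ both vanish must be zero once $\CH(\sfM)^0$ is known to inject into that Selmer group. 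But this last point is itself the content of the conjecture for this motive, so the argument merely relocates the difficulty; and in any case it addresses a single isotypic component rather than an arbitrary Chow motive. Hence the paper keeps Conjecture \ref{co:abel_injective} as a hypothesis, and works with the class $\pres{p}{\Delta}_{E,A}$ directly in the Bloch--Kato Selmer group wherever it can.
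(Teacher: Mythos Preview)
Your assessment is correct: the statement is a \emph{conjecture}, not a theorem, and the paper makes no attempt to prove it. It is stated as part of the Beilinson--Bloch conjecture (with a reference to Jannsen) and is used only as a hypothesis in Corollary~\ref{co:main}(3) and related discussion. Your survey of the known cases (divisors, zero-cycles on curves via Mordell--Weil) and your explanation of why the general case remains out of reach are accurate and appropriate; there is nothing to compare against a ``paper's proof'' because none exists.
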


The following conjecture is a combination of the Beilinson--Bloch conjecture
and the Bloch--Kato conjecture.

\begin{conjecture}\label{co:bloch}
Let $\sfM$ be a polarized Chow motive in $\Mot_K^\rat$.
\begin{enumerate}
  \item The $\dQ_p$-linearization
        \[\AJ_p\colon\CH(\sfM)^0\otimes_\dQ\dQ_p\to\rH^1(K,\sfM_p)\]
        is an isomorphism onto the Bloch--Kato Selmer group
        $\rH^1_f(K,\sfM_p)$, which is a subspace of $\rH^1(K,\sfM_p)$,
        for all $p$.

  \item The $L$-function $L(s,\sfM)$ associated to $\sfM$ has a
      meromorphic continuation to the entire complex plane, satisfying
      the following functional equation
      \[L(s,\sfM)=\epsilon(\sfM) c(\sfM)^{-s} L(-s,\sfM)\] for the root number
      $\epsilon(\sfM)\in\{\pm1\}$ and some positive integer $c(\sfM)$.

  \item For \emph{all} $p$, we have
      \[\ord_{s=0}L(s,\sfM)=\dim_{\dQ_p}\rH^1_f(K,\sfM_p)-\dim_{\dQ_p}\rH^0(K,\sfM_p).\]
\end{enumerate}
\end{conjecture}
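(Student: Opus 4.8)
This statement is a conjecture---a combination of the Beilinson--Bloch and Bloch--Kato conjectures---so it is not proved in the paper, nor, at present, in any generality; what follows is a sketch of the structure any proof must have, part by part, together with what is currently within reach and where the real difficulty lies.

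\emph{Part (2).} The plan is to realize $\sfM_p$ as a direct summand of the $p$-adic Galois representation attached to an automorphic representation $\Pi$ of some reductive group over $K$, and then to import the analytic properties of the associated automorphic $L$-function---via Godement--Jacquet, the Langlands--Shahidi method, Rankin--Selberg integrals, or the doubling method, depending on $\Pi$---which give meromorphic continuation and a functional equation once the local $L$-factors and $\epsilon$-factors are matched with the motivic ones. For the motive $\sfM_{E,A}$ of this article this is exactly \eqref{eq:l_function} together with the cited results of Garrett and Piatetski-Shapiro--Rallis; in general it is the automorphy of $\sfM$ plus the known analytic theory of automorphic $L$-functions.

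\emph{Part (1).} This splits into injectivity of $\AJ_p$, which is precisely Conjecture \ref{co:abel_injective}, and the identification of its image with the Bloch--Kato subspace $\rH^1_f(K,\sfM_p)$. For injectivity the plan is to reduce to the cases where $\CH(\sfM)^0$ is understood geometrically---divisors algebraically equivalent to zero, $0$-cycles controlled by the Albanese, or, more generally, cycles cut out by special correspondences on a Shimura variety---and to exploit injectivity of the classical Abel--Jacobi map there; in general this is open. For surjectivity onto $\rH^1_f(K,\sfM_p)$ the plan is to produce, for each Bloch--Kato class, an algebraic cycle whose $p$-adic \'{e}tale Abel--Jacobi image is that class; when $\sfM$ is of Gan--Gross--Prasad type one expects the relevant cycles to be the diagonal/special cycles on the ambient Shimura variety---this is the philosophy behind the Hirzebruch--Zagier cycle $\Delta_{E,A}$ in this paper---but proving that such cycles exhaust $\rH^1_f$ is exactly the obstruction.

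\emph{Part (3).} The plan is the two-sided strategy familiar from the Birch--Swinnerton-Dyer circle: an upper bound $\dim_{\dQ_p}\rH^1_f(K,\sfM_p) - \dim_{\dQ_p}\rH^0(K,\sfM_p) \leq \ord_{s=0}L(s,\sfM)$ via an Euler system or an Iwasawa main conjecture---the direction to which this paper contributes, through a Kolyvagin-style argument built on the Hirzebruch--Zagier classes---together with a matching lower bound obtained by exhibiting $\ord_{s=0}L(s,\sfM)$ independent classes in $\rH^1_f(K,\sfM_p)$: from special cycles via a Gross--Zagier/Gross--Kudla--Schoen/Zhang-type height formula when the functional equation forces the vanishing, and from $p$-adic $L$-functions and Euler systems otherwise. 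The main obstacle, and the reason this remains conjectural, is that in general neither an Euler system for $\sfM_p$ nor an arithmetic Gross--Zagier-type identity relating $L'$ to heights of cycles is available; each new family must be handled essentially by hand, and even then the lower bound usually rests on an as-yet-unproved Gross--Zagier-type formula, exactly as it will in this article.
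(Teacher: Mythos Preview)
Your assessment is correct: this is stated in the paper as a conjecture, with no proof offered, and your recognition of that fact together with your outline of the expected proof strategy for each part is appropriate and accurate. There is nothing in the paper to compare against beyond the bare statement, so your sketch stands as a reasonable summary of the current landscape.
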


Recall from \Sec \ref{ss:main_results} that we have the surface $X$, and when
$(E,A)$ is of odd type, the curve $Y$ and hence the three-fold
$Z=Y\times_{\Spec\dQ}X$, all being smooth projective over $\dQ$. We have the
Chow motive $\sfh^2(X)$, defined uniquely up to isomorphism, by Murre
\cite{Mur90}. The motive $\sfh^1(Y)(1)\otimes\sfh^2(X)(1)$ is canonically
polarized by the Poincar\'{e} duality.

\begin{conjecture}\label{co:oda}
The motive $\As\sfh^1(A)$ is a direct summand of $\sfh^2(X)$ in the
pseudo-abelian category $\Mot_\dQ^\rat$.
\end{conjecture}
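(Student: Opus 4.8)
The plan is to extract from $\sfh^2(X)$ the submotive cut out by the Hecke eigensystem of $\pi$, and then to identify it, realization by realization, with $\As\sfh^1(A)$. Recall from \cite{Mur90} that $\sfh^2(X)$ is a well-defined Chow motive. Inside $\rH^2_{\et}(X_{\dQ^\ac},\dQ_p)$ the span of the classes of the exceptional divisors of the minimal resolution and of the (pullbacks of) boundary components of the Baily--Borel compactification is a sub-$\Gamma_\dQ$-representation generated by divisor classes; by the Lefschetz $(1,1)$-theorem the orthogonal projector onto it is represented by an algebraic, indeed divisorial, cycle on $X\times_{\Spec\dQ}X$. Subtracting it produces a direct summand $\sfh^2_{\cusp}(X)$ of $\sfh^2(X)$ in $\Mot_\dQ^\rat$ on which the algebra $\dT$ of prime-to-$N^+M\disc(F)$ Hecke correspondences acts semisimply, factoring through a finite product of number fields indexed by the Galois orbits of parallel weight $2$ Hilbert cusp eigenforms of the relevant level. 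The eigensystem $\lambda_\pi$ attached to $\pi$ is rational, the Hecke eigenvalues of $A$ over $F$ being rational integers, and hence corresponds to an idempotent $e_\pi\in\dT\otimes_\dZ\dQ\subset\CH^2(X\times_{\Spec\dQ}X)\otimes_\dZ\dQ$; I set $\sfh^2_\pi(X)\colonequals e_\pi\,\sfh^2_{\cusp}(X)$, a direct summand of $\sfh^2(X)$.

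Next I would compare $\sfh^2_\pi(X)$ with $\As\sfh^1(A)$. By the modularity of $A$ over $F$ \cites{LH14,FLS14} the representation $\pi$ corresponds to a Hilbert cusp form of parallel weight $2$ with $L$-function $L(s,A/F)$. Using the Eichler--Shimura--Harder description of the cohomology of Hilbert modular surfaces together with Oda's computation of their Hodge structures and periods, and the known shape of the attached Galois representations, the Betti, de Rham and $p$-adic (for each $p$) realizations of $\sfh^2_\pi(X)$ are each isomorphic to the corresponding realization of $(\As\rV_p(A))(-1)^{\oplus a}$ for some integer $a\geq 1$, compatibly with all comparison isomorphisms and with the $\Gamma_\dQ$-action; the $p$-adic statement recovers the isomorphism $\rH^2_\pi(X_{\dQ^\ac})\simeq(\As\rV_p(A))(-1)^{\oplus a}$ announced in \Sec\ref{ss:main_results}. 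Now $\As\sfh^1(A)$ is a direct summand of $\sfh^2(\Res_{F/\dQ}A)$, the latter an abelian surface over $\dQ$, so both motives in sight lie in the subcategory of motives generated by abelian varieties, where Hodge cycles are absolute Hodge by Deligne's theorem. The collection of compatible isomorphisms of realizations produced above is therefore an isomorphism $\As\sfh^1(A)^{\oplus a}\xrightarrow{\sim}\sfh^2_\pi(X)$ in the category of motives for absolute Hodge cycles; granting the Hodge conjecture for $X\times_{\Spec\dQ}\Res_{F/\dQ}A$, it is induced by an algebraic cycle, so $\As\sfh^1(A)$ is a direct summand of $\sfh^2_\pi(X)$, a fortiori of $\sfh^2(X)$, in $\Mot_\dQ^\rat$.

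The main obstacle is precisely this last algebraicity step: the construction above is unconditional only as an identification of the systems of realizations (equivalently, of the associated Galois representations and Hodge structures), not as a morphism of Chow motives, and upgrading it amounts to the ``motivic'' modularity of $A$ over $F$, i.e.\ an algebraic correspondence between the Hilbert modular surface $X$ and $\Res_{F/\dQ}A$ inducing the known identification of $L$-functions. This is a case of the Hodge (equivalently, Tate) conjecture for a codimension-$2$ cycle, and is open, in contrast to the divisor-class projector of the first step, which the Lefschetz $(1,1)$-theorem handles unconditionally. I would stress, though, that everything above becomes unconditional once ``direct summand in $\Mot_\dQ^\rat$'' is weakened to ``the $p$-adic realizations agree as $\dQ_p[\Gamma_\dQ]$-modules for all $p$'', which is the form actually used in the proof of Theorem \ref{th:main_odd}; there the appeal to the Hodge conjecture is replaced by Faltings' semisimplicity and Tate-conjecture theorems for homomorphisms of abelian varieties, applied to $\rH^2_\pi(X_{\dQ^\ac})$ and $\As\rV_p(A)$, together with strong multiplicity one to pin down the eigensystem $\lambda_\pi$.
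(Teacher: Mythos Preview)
The statement you are addressing is labelled \textbf{Conjecture} in the paper and is nowhere proved; it appears only as an explicit hypothesis in Corollary~\ref{co:main}(3). So there is no proof in the paper to compare your attempt against, and indeed your own final paragraph makes clear that your argument is conditional on an unproved instance of the Hodge conjecture. That said, two steps in your outline are already problematic before one reaches that final obstacle.

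First, you assert that the Hecke algebra $\dT$ ``acts semisimply'' on $\sfh^2_{\cusp}(X)$ and that the idempotent $e_\pi$ lies in $\CH^2(X\times_{\Spec\dQ}X)\otimes_\dZ\dQ$. The paper is deliberately agnostic here: it defines the $\dQ$-algebra $\dS_\fM^{\sph,\rat}\subset\Corr(X_\fM)$ and its quotient $\dS_\fM^{\sph,\dr}$ acting on de~Rham cohomology, and the idempotent $\sP_\pi^{\dr}$ is only known to live in the latter. A $\pi$-projector (Definition~\ref{de:projector_surface}) is merely a lift of $\sP_\pi^{\dr}$ to $\dS_\fM^{\sph,\rat}$, not an idempotent. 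Producing an actual idempotent in $\Corr(X)$ is precisely the kind of statement that would require control of the kernel of $\dS_\fM^{\sph,\rat}\to\dS_\fM^{\sph,\dr}$, which is not available.

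Second, your appeal to Deligne's theorem that Hodge cycles on abelian varieties are absolute Hodge does not apply: the relevant Hodge class lives on $X\times_{\Spec\dQ}\Res_{F/\dQ}A$, and $X$ is a Hilbert modular surface, not an abelian variety. Asserting that $\sfh^2_\pi(X)$ lies in the Tannakian subcategory generated by abelian varieties is essentially the content of the conjecture itself, so this step is circular. Your closing remark is, however, exactly right and matches the paper: for the proof of Theorem~\ref{th:main_odd} one only needs the identification of $p$-adic realizations $\rH^2_\pi(X_{\dQ^\ac})\simeq\pres{\sharp}\rV_\pi$, which the paper establishes unconditionally in Lemma~\ref{le:semisimple}.
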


Note that for our motive $\sfM_{E,A}$, we have $\rH^0(\dQ,(\sfM_{E,A})_p)=0$
for all $p$ and hence $\CH(\sfM_{E,A})^0=\CH(\sfM_{E,A})$. The following is a
consequence of \eqref{eq:l_function}, Theorems \ref{th:main_even} and
\ref{th:main_odd}.

\begin{corollary}\label{co:main}
Let $(E,A)$ be a pair satisfying Assumption \ref{as:group_e}.
\begin{enumerate}
  \item Conjecture \ref{co:bloch} (2) holds for $\sfM_{E,A}$.

  \item Suppose that $\ord_{s=0}L(s,\sfM_{E,A})=0$. Then Conjecture
      \ref{co:bloch} (3) holds for $\sfM_{E,A}$ and all but finitely many
      $p$.

  \item Suppose that the pair $(E,A)$ is of odd type. We further assume
      Conjecture \ref{co:abel_injective} for
      $\sfh^1(Y)(1)\otimes\sfh^2(X)(1)$, and Conjecture \ref{co:oda}. If
      $\Delta_{E,A}\neq 0$, then Conjecture \ref{co:bloch} (1) holds for
      $\sfM_{E,A}$ and all but finitely many primes $p$.
\end{enumerate}
\end{corollary}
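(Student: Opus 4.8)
The plan is to deduce the three assertions from \eqref{eq:l_function}, Theorems \ref{th:main_even} and \ref{th:main_odd}, Proposition \ref{pr:belong_selmer}, and routine manipulations of Chow motives. Part (1) is a restatement: by \eqref{eq:l_function} we have $L(s,\sfM_{E,A})=L(s+1/2,\sigma\times\pi)$, and the meromorphic continuation of this twisted triple product $L$-function to all of $\dC$, together with a functional equation relating $L(s,\sfM_{E,A})$ to $L(-s,\sfM_{E,A})$ through a sign $\epsilon(\sfM_{E,A})\in\{\pm1\}$ and a positive integer $c(\sfM_{E,A})$, is exactly what the integral representations of Garrett \cite{Gar87} and Piatetski-Shapiro--Rallis \cite{PSR87} supply, the sign being $(-1)^{\wp(N^-)+1}$ by Prasad \cite{Pra92}; this is Conjecture \ref{co:bloch}(2). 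For part (2), the hypothesis $\ord_{s=0}L(s,\sfM_{E,A})=0$ is equivalent to $L(0,\sfM_{E,A})\neq0$, since $L(s,\sfM_{E,A})$ is holomorphic at $s=0$; Theorem \ref{th:main_even} then gives $\dim_{\dQ_p}\rH^1_f(\dQ,(\sfM_{E,A})_p)=0$ for all but finitely many $p$, and as $\rH^0(\dQ,(\sfM_{E,A})_p)=0$ for every $p$ the equality of Conjecture \ref{co:bloch}(3) reads $0=0-0$ for all such $p$.

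For part (3) I would first realize $\sfM_{E,A}$ inside the geometry of $Z$. The curve $Y$ is attached to the quaternion algebra $B_{N^-}$, which supports $\sigma$; hence $E$ occurs as an isogeny factor of $\Jac Y$, so $\sfh^1(E)(1)$ is a direct summand of $\sfh^1(Y)(1)$. Granting Conjecture \ref{co:oda}, $\As\sfh^1(A)(1)$ is a direct summand of $\sfh^2(X)(1)$. Therefore $\sfM_{E,A}=\sfh^1(E)(1)\otimes\As\sfh^1(A)(1)$ is a direct summand of $\sfN\colonequals\sfh^1(Y)(1)\otimes\sfh^2(X)(1)$, which is in turn a direct summand of $\sfh^3(Z)(2)$; tracking the Hecke action, the projector $\sP_{\sigma,\pi}$ identifies the $(\sigma,\pi)$-isotypic piece of $\sfN$ with $\sfM_{E,A}^{\oplus ab}$ as Chow motives. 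Under this identification the Hirzebruch--Zagier class $\Delta_{E,A}=\sP_{\sigma,\pi}^*\Delta$, being homologically trivial, lies in $\CH(\sfN)^0$ and corresponds to a tuple $(\delta_1,\dots,\delta_{ab})$ with $\delta_i\in\CH(\sfM_{E,A})^0=\CH(\sfM_{E,A})$; the hypothesis $\Delta_{E,A}\neq0$ forces $\delta_{i_0}\neq0$ for some $i_0$.

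Now assume Conjecture \ref{co:abel_injective} for $\sfN$. Since $\sfM_{E,A}$ is a direct summand of $\sfN$ and the \'{e}tale Abel--Jacobi map is compatible with passing to direct summands, $\AJ_p$ is injective on $\CH(\sfM_{E,A})^0$ for every $p$; in particular $\AJ_p(\delta_{i_0})\neq0$, whence $\pres{p}{\Delta}_{E,A}=\AJ_p(\Delta_{E,A})\neq0$, and Theorem \ref{th:main_odd} yields $\dim_{\dQ_p}\rH^1_f(\dQ,(\sfM_{E,A})_p)=1$ for all but finitely many $p$. By Proposition \ref{pr:belong_selmer}, $\pres{p}{\Delta}_{E,A}$ lies in $\rH^1_f(\dQ,(\sfM_{E,A})_p)^{\oplus ab}$, so $\AJ_p(\delta_{i_0})$ is a nonzero element of the one-dimensional space $\rH^1_f(\dQ,(\sfM_{E,A})_p)$, hence spans it; and the same local analysis as in the proof of Proposition \ref{pr:belong_selmer} shows that, for all but finitely many $p$, $\AJ_p$ carries all of $\CH(\sfM_{E,A})^0$ into $\rH^1_f(\dQ,(\sfM_{E,A})_p)$. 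Combining the injectivity of $\AJ_p$, this containment, and the one-dimensionality of the target then forces $\CH(\sfM_{E,A})^0$ to be one-dimensional over $\dQ$ and the map $\AJ_p\colon\CH(\sfM_{E,A})^0\otimes_\dQ\dQ_p\xrightarrow{\sim}\rH^1_f(\dQ,(\sfM_{E,A})_p)$ to be an isomorphism, which is Conjecture \ref{co:bloch}(1) for $\sfM_{E,A}$ and those $p$.

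The decisive step is the Chow-motivic identification in the second paragraph: pinning down the $(\sigma,\pi)$-isotypic piece of $\sfh^1(Y)(1)\otimes\sfh^2(X)(1)$ as $\sfM_{E,A}^{\oplus ab}$, and making it compatible with the Abel--Jacobi maps and with the Hecke projector $\sP_{\sigma,\pi}$ (which one wants to take as a genuine Chow idempotent cutting out this summand), is precisely where Conjecture \ref{co:oda} and the modularity of $E$ are used, and where care is needed. A secondary point is the ``easy half'' of Bloch--Kato for $\sfM_{E,A}$---that the Abel--Jacobi image of $\CH(\sfM_{E,A})^0$ lies in $\rH^1_f$ for almost all $p$, and that the rational Chow group matches its $p$-adic linearization---which rest on the crystalline/unramified behavior of cycle classes at primes of good reduction, handled just as for the distinguished class in Proposition \ref{pr:belong_selmer}. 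Everything else is formal once Theorems \ref{th:main_even} and \ref{th:main_odd} and Proposition \ref{pr:belong_selmer} are available.
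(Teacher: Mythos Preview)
Your argument for parts (1) and (2) is correct and matches what the paper intends; indeed the paper offers no proof beyond declaring the corollary ``a consequence of \eqref{eq:l_function}, Theorems \ref{th:main_even} and \ref{th:main_odd}'', so you are supplying strictly more detail than the original.

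For part (3) your strategy is also the right one, and you correctly isolate the delicate Chow-motivic step: realizing $\sfM_{E,A}^{\oplus ab}$ as a genuine direct summand of $\sfN=\sfh^1(Y)(1)\otimes\sfh^2(X)(1)$ (via Conjecture \ref{co:oda} and modularity) and arranging that $\Delta_{E,A}$ actually lives in $\CH(\sfN)^0$ rather than merely in $\CH^2(Z)^0$. This last point is not automatic, since $\CH^2(Z)^0$ may contain pieces such as $\Hom(\mathbbm{1},\sfh^0(Y)\otimes\sfh^2(X)(2))\simeq\CH_0(X)^0$ that are invisible to $\rH^3$; one has to check that the Hecke projector $\sP_{\sigma,\pi}$ kills these, using that $p^0_Y\circ\sP_\sigma=p^2_Y\circ\sP_\sigma=0$ because $\Hom(\sfh(Y),\sfh^0(Y))$ and $\Hom(\sfh(Y),\sfh^2(Y))$ are detected by their de Rham realizations. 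You flag this, which is appropriate.

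There is, however, one genuine gap you do not flag. In your last step you write that injectivity of $\AJ_p$ on $\CH(\sfM_{E,A})^0$, containment of the image in $\rH^1_f$, and one-dimensionality of $\rH^1_f$ together ``force $\CH(\sfM_{E,A})^0$ to be one-dimensional over $\dQ$''. This inference is invalid as stated: a $\dQ$-linear injection into a one-dimensional $\dQ_p$-vector space does not bound the $\dQ$-dimension of the source (for instance $\dQ^2$ injects $\dQ$-linearly into $\dQ_p$). What you actually need is injectivity of the $\dQ_p$-linearization $\AJ_p\otimes\dQ_p$, and this follows from Conjecture \ref{co:abel_injective} only once you know $\CH(\sfM_{E,A})^0$ is finite-dimensional over $\dQ$. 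That finiteness is part of the Beilinson--Bloch package (and is implicit in \cite{Jan90}*{9.15}), but it is not literally contained in the injectivity statement of Conjecture \ref{co:abel_injective} as the paper formulates it. The paper's one-line proof glosses over the same point, so this is a shared lacuna rather than an error peculiar to your write-up; still, the deduction you wrote does not go through without this extra input.
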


\begin{remark}
Since currently we do not know Conjecture \ref{co:abel_injective} for
$\sfh^1(Y)(1)\otimes\sfh^2(X)(1)$, it is not clear that
$\pres{p}{\Delta}_{E,A}=\AJ_p(\Delta_{E,A})\neq 0$ for one prime $p$ implies
the same for others. We also do not even know if
$\dim_{\dQ_p}\rH^1_f(\dQ,(\sfM_{E,A})_p)$ are same for all (but finitely
many) $p$. However, see Corollary \ref{co:base_change}.
\end{remark}

\subsubsection{Arithmetic Gan--Gross--Prasad conjecture}
\label{sss:ggp}

The Chow cycle $\Delta_{E,A}$ is closely related to the Gross--Prasad
conjecture \cite{GP92}. In fact, the product $\sigma\times\pi$ can be viewed
as a cuspidal automorphic representation of $H\times G$, where
$H\simeq\SO(3)$ is the split special orthogonal group over $\dQ$ of rank $3$
and $G\simeq\SO(4)$ is the quasi-split special orthogonal group over $\dQ$ of
rank $4$ and with discriminant field $F$. The global (Gan--)Gross--Prasad
conjecture \cite{GGP12} predicts that the central value
$L(1/2,\sigma\times\pi)$ is non-vanishing if and only if there is a
non-trivial diagonal automorphic period integral for some member
$\sigma'\times\pi'$, as an automorphic representation of some pure inner form
$H'\times G'$ of $H\times G$, in the Vogan packet of $\sigma\times\pi$. If
such $\sigma'\times\pi'$ exists, then it is unique, and the group $H'\times
G'$ is determined by all local epsilon factors
$\epsilon(1/2,\sigma_v\times\pi_v)\in\{\pm1\}$. Denote by
$\Sigma_{\sigma,\pi}$ the (finite) set of places $v$ of $\dQ$ such that
$\epsilon(1/2,\sigma_v\times\pi_v)\eta_v(-1)=-1$, where
$\eta_v\colon\dQ_v^\times\to\{\pm1\}$ is the character associated to
$F_v/\dQ_v$.

In our case, the set $\Sigma_{\sigma,\pi}$ consists of $\infty$ and all prime
factors of $N^-$, by \cite{Pra92}*{Theorems B \& D, Remark 4.1.1}. In
particular, it has even (resp.\ odd) cardinality if $(E,A)$ is of even
(resp.\ odd) type. Suppose that $(E,A)$ is of odd type, which implies that
the (global) root number
$\epsilon(\sfM_{E,A})=\epsilon(1/2,\sigma\times\pi)=-1$ and
$\ord_{s=0}L(s,\sfM_{E,A})$ is odd by \eqref{eq:l_function}. This is usually
referred as the \emph{arithmetic case}. As a consequence, the central
$L$-value $L(0,\sfM_{E,A})=L(1/2,\sigma\times\pi)$ vanishes. Moreover, all
diagonal automorphic period integrals for the entire Vogan packet of
$\sigma\times\pi$ vanish simply by the local epsilon dichotomy
\cite{Pra92}*{Theorems B \& D}. In the arithmetic case, we consider
(modified) diagonal cycles instead of diagonal period integrals. The
following conjecture can be regarded as an arithmetic version of the global
Gan--Gross--Prasad conjecture.

\begin{conjecture}[Arithmetic GGP for $\SO(3)\times\SO(4)$, unrefined version]
\label{co:arithmetic_ggp}

Suppose that $(E,A)$ is of odd type. Then the Chow cycle $\Delta_{E,A}\neq0$
if and only if $L'(0,\sfM_{E,A})\neq 0$.
\end{conjecture}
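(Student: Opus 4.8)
The plan is to deduce the equivalence from a \emph{triple product arithmetic Gross--Zagier formula}: an arithmetic inner product formula
\[
\langle\Delta_{E,A},\Delta_{E,A}\rangle \;=\; \ast\cdot L'(0,\sfM_{E,A}),
\]
in which $\langle\,\cdot\,,\,\cdot\,\rangle$ denotes the Beilinson--Bloch height pairing (defined through arithmetic intersection theory in the style of Gillet--Soul\'e) on homologically trivial codimension-$2$ cycles of the threefold $Z=Y\times_{\Spec\dQ}X$, and $\ast$ is an explicit nonzero constant assembled from local periods and volume factors. Granting such a formula, the implication $L'(0,\sfM_{E,A})\neq0\Rightarrow\Delta_{E,A}\neq0$ is immediate. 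The converse is the subtle direction: it would follow from the non-degeneracy of the Beilinson--Bloch pairing on the (conjecturally at most one-dimensional) $(\sigma,\pi)$-isotypic summand of $\CH^2(Z)\otimes_\dZ\dQ$, which is a case of the standard conjectures; alternatively, one could feed $\Delta_{E,A}\neq0$ through the conjectural injectivity of $\AJ_p$ (Conjecture \ref{co:abel_injective}) into Theorem \ref{th:main_odd} to obtain $\dim_{\dQ_p}\rH^1_f(\dQ,(\sfM_{E,A})_p)=1$, and then through the Bloch--Kato conjecture (Conjecture \ref{co:bloch}) to obtain $\ord_{s=0}L(s,\sfM_{E,A})=1$. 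Both routes remain conditional, which is precisely why the statement is recorded here as a conjecture rather than a theorem.

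To establish the formula I would use the doubling/see-saw method, following the strategy developed by Yuan--Zhang--Zhang for Gross--Kudla--Schoen cycles on triple products of modular curves, adapted to the quasi-split pair $\SO(3)\times\SO(4)$ and to the Asai situation. On the analytic side, using Garrett's integral representation \cite{Gar87} and the Piatetski-Shapiro--Rallis doubling method \cite{PSR87}, express $L(s,\sfM_{E,A})=L(s+1/2,\sigma\times\pi)$ as a regularized period of a Siegel--Eisenstein series along the Gross--Prasad diagonal $H\hookrightarrow H\times G$; since $(E,A)$ is of odd type this Eisenstein series is \emph{incoherent}, so its value at the center vanishes and the derivative $L'(0,\sfM_{E,A})$ decomposes as a sum over places $v$ of products of local doubling integrals in which exactly one factor is replaced by the derivative of the corresponding Whittaker/intertwining function (the analytic input of Kudla's program; the behaviour at each $v\in\Sigma_{\sigma,\pi}$ is governed by Prasad's epsilon dichotomy \cite{Pra92}). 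On the geometric side, build regular integral models $\cY,\cX$ of $Y$ and $X$ over $\dZ$ carrying integral extensions of the Hirzebruch--Zagier morphism $\zeta$ and of a Hecke projector $\sP_{\sigma,\pi}$, spread $\Delta_{E,A}$ out to an arithmetic cycle on $\cY\times_\dZ\cX$, and decompose $\langle\Delta_{E,A},\Delta_{E,A}\rangle=\sum_v\langle\Delta_{E,A},\Delta_{E,A}\rangle_v$ into local heights (star products of Green currents at $v=\infty$; intersection multiplicities on the special fibers at finite $v$). One then matches the two decompositions place by place: at $v=\infty$ the local height is an integral of Green currents on the complex uniformization, to be identified with the archimedean derivative; at a finite $v\nmid N^-p$ of good reduction the cycles meet properly and the intersection number equals the local doubling integral; at $v\mid N^-$ one invokes the \v{C}erednik--Drinfeld $p$-adic uniformization of $Y$ and the corresponding description of the bad fiber of $X$ to compute the intersection and match the non-split local factor.

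The main obstacle is the non-archimedean comparison at the bad primes, together with the construction of sufficiently good integral models. Hilbert modular surfaces carry quotient singularities, and their minimal resolutions over $\dZ$ --- especially at primes dividing $\disc(F)$, the level $N^+M$, and $p$ --- are delicate; one must control both the Hirzebruch--Zagier curve $\zeta(Y)$ and the exceptional divisors integrally, and handle the Hecke projector $\sP_{\sigma,\pi}$ integrally while disposing of the resulting correction terms (contributions supported on the exceptional locus of the resolution and on improper or degenerate intersections). This is precisely the step that has made the classical triple-product Gross--Zagier program hardest, and here it is compounded by the quaternionic uniformization at $N^-$ and by the need to realize the Asai descent integrally. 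A second, more structural obstacle --- already flagged above --- is that even with the formula in hand, the implication $\Delta_{E,A}\neq0\Rightarrow L'(0,\sfM_{E,A})\neq0$ is not currently unconditional, since it rests on the non-degeneracy of the Beilinson--Bloch height (or on the full Bloch--Kato conjecture).
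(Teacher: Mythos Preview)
The paper does not prove this statement: Conjecture~\ref{co:arithmetic_ggp} is recorded as an open conjecture, with no proof offered. What follows the statement in the paper is commentary --- a remark on the absence of local obstructions (Lemma~\ref{le:test_vector}), and a paragraph pointing to the split-case analogue of Gross--Kudla, the local computations of Gross--Keating, and the partial global results of Yuan--Zhang--Zhang in the split case. So there is nothing to compare your argument against.

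That said, your proposal is a faithful sketch of the expected strategy, and it is entirely consistent with the references the paper itself invokes (Gross--Kudla, Gross--Keating, Yuan--Zhang--Zhang). You correctly identify both the shape of the desired height formula and the two principal obstacles: the construction of good integral models and the local height computations at bad primes (especially those dividing $N^-$ and $\disc(F)$), and the conditional nature of the converse direction via non-degeneracy of the Beilinson--Bloch pairing. Your own final paragraph already says why this remains a conjecture; there is no gap in your reasoning beyond the gaps you have explicitly flagged.
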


\begin{remark}
There is no local obstruction for $\Delta_{E,A}$ being non-zero. In fact, the
local invariant functionals \cite{Pra92} can be realized at the level
structure defining the Shimura varieties $Y$ and $X$, as told by Lemma
\ref{le:test_vector}.
\end{remark}

In the case where the group $G$ is the \emph{split} special orthogonal group
over $\dQ$ of rank $4$, we can construct a similar Chow motive $\sfM$ but
from three elliptic curves defined over $\dQ$. Then the above conjecture was
proposed by Gross and Kudla \cite{GK92}, in terms of the Beilinson--Bloch
height. Later, Gross and Keating \cite{GK93} did crucial local computation
toward this conjecture, actually applicable to both the split case and the
quasi-split case as in Conjecture \ref{co:arithmetic_ggp}. In a recent
preprint \cite{YZZ}, X.~Yuan, S.~Zhang and W.~Zhang made important progress
toward the global height formula in the split case under certain conditions.
On the other hand, in \cite{Zha12}, the author proposed the Arithmetic
Gan--Gross--Prasad conjecture for close pairs of unitary groups of arbitrary
ranks, together with the approach via relative trace formulae, and obtained
some local results as evidence toward the conjecture.

In view of the Arithmetic Gan--Gross--Prasad conjecture, Theorem
\ref{th:main_odd} has the following corollary.

\begin{corollary}
Let $(E,A)$ be of odd type such that $\ord_{s=0}L(s,\sfM_{E,A})=1$. If we
assume Conjecture \ref{co:abel_injective} for the motive
$\sfh^1(Y)(1)\otimes\sfh^2(X)(1)$, and Conjecture \ref{co:arithmetic_ggp},
then Conjecture \ref{co:bloch} (3) holds for $\sfM_{E,A}$ and all but
finitely many primes $p$.
\end{corollary}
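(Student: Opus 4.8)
The plan is to deduce this corollary by chaining together the three inputs we are allowed to assume: the functional-equation statement, Theorem~\ref{th:main_odd}, and Conjecture~\ref{co:arithmetic_ggp}, using the $L$-function identity \eqref{eq:l_function} as the bridge. We are given that $(E,A)$ is of odd type and that $\ord_{s=0}L(s,\sfM_{E,A})=1$. First I would record that, since $\wp(N^-)$ is even, Assumption~\ref{as:group_e} holds, so all the earlier machinery applies; in particular $\rH^0(\dQ,(\sfM_{E,A})_p)=0$ for every $p$, so Conjecture~\ref{co:bloch}(3) for $\sfM_{E,A}$ reduces to the assertion $\dim_{\dQ_p}\rH^1_f(\dQ,(\sfM_{E,A})_p)=1$.

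Next I would use Conjecture~\ref{co:arithmetic_ggp}: since $\ord_{s=0}L(s,\sfM_{E,A})=1$, in particular $L(0,\sfM_{E,A})=0$ and $L'(0,\sfM_{E,A})\neq 0$, so the arithmetic Gan--Gross--Prasad conjecture gives $\Delta_{E,A}\neq 0$ in $\CH^2(Z)\otimes_\dZ\dQ$. Now comes the step that requires the hypothesis on Conjecture~\ref{co:abel_injective}: applying the injectivity of $\AJ_p$ on $\CH(\sfh^1(Y)(1)\otimes\sfh^2(X)(1))^0$ — note $\Delta_{E,A}$ is built by applying the Hecke correspondence $\sP_{\sigma,\pi}$, whose image lands in the $(\sigma,\pi)$-isotypic part of this motive, and that $\AJ_p$ is compatible with such correspondences — we conclude that $\pres{p}{\Delta}_{E,A}=\AJ_p(\Delta_{E,A})\neq 0$ for \emph{every} prime $p$. (Here one should be slightly careful: a priori $\Delta_{E,A}$ depends on the choice of $\sP_{\sigma,\pi}$, but by Proposition~\ref{pr:belong_selmer} the class $\pres{p}{\Delta}_{E,A}$ does not, so non-vanishing of the class is a well-posed statement; and the injectivity hypothesis is exactly what is needed to transport non-vanishing of the Chow cycle to non-vanishing of the Abel--Jacobi image for all $p$ simultaneously.)

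Finally I would invoke Theorem~\ref{th:main_odd}: it says that for all but finitely many primes $p$, if $\pres{p}{\Delta}_{E,A}\neq 0$ then $\dim_{\dQ_p}\rH^1_f(\dQ,(\sfM_{E,A})_p)=1$. Combining this with the previous paragraph — where we showed $\pres{p}{\Delta}_{E,A}\neq 0$ holds for \emph{all} $p$ under the stated conjectures — we obtain $\dim_{\dQ_p}\rH^1_f(\dQ,(\sfM_{E,A})_p)=1$ for all but finitely many $p$. Together with $\rH^0(\dQ,(\sfM_{E,A})_p)=0$ and the hypothesis $\ord_{s=0}L(s,\sfM_{E,A})=1$, this is precisely the equality asserted by Conjecture~\ref{co:bloch}(3) for $\sfM_{E,A}$, which completes the proof. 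I expect the only genuinely delicate point — and the one worth spelling out — to be the second paragraph: ensuring that $\Delta_{E,A}$, after cutting down by $\sP_{\sigma,\pi}$, actually lies in the relevant summand of $\CH(\sfh^1(Y)(1)\otimes\sfh^2(X)(1))^0$ to which Conjecture~\ref{co:abel_injective} is being applied, and that this hypothesis does the work of making $\pres{p}{\Delta}_{E,A}\neq 0$ independent of $p$; everything else is a formal concatenation of the cited results.
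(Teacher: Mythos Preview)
Your argument is correct and follows exactly the route the paper intends: the corollary is stated in the paper without proof, as an immediate consequence of Theorem~\ref{th:main_odd} together with Conjectures~\ref{co:abel_injective} and~\ref{co:arithmetic_ggp}, and your chain of implications (order one $\Rightarrow$ $L'(0)\neq 0$ $\Rightarrow$ $\Delta_{E,A}\neq 0$ via GGP $\Rightarrow$ $\pres{p}{\Delta}_{E,A}\neq 0$ for all $p$ via Abel--Jacobi injectivity $\Rightarrow$ Selmer rank $1$ for almost all $p$ via Theorem~\ref{th:main_odd}) is precisely that reasoning. One cosmetic slip: you write ``since $\wp(N^-)$ is even, Assumption~\ref{as:group_e} holds,'' but in fact being of odd type is \emph{defined} (Definition~\ref{de:parity}) to mean that Assumption~\ref{as:group_e} holds and $\wp(N^-)$ is even, so both facts come directly from the hypothesis rather than one implying the other.
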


In view of Conjectures \ref{co:bloch} and \ref{co:arithmetic_ggp}, we propose
the following.

\begin{conjecture}
Let $(E,A)$ be an arbitrary pair.
\begin{enumerate}
  \item Suppose that either one of the following holds
        \begin{itemize}
          \item $\dim_{\dQ_p}\rH^1_f(\dQ,(\sfM_{E,A})_p)=0$ for
              \emph{some} prime $p$;
          \item $\dim_\dQ\CH(\sfM_{E,A})=0$;
          \item $\ord_{s=0}L(s,\sfM_{E,A})=0$.
        \end{itemize}
        Then for \emph{all} primes $p$, we have
        \[\dim_\dQ\CH(\sfM_{E,A})=\ord_{s=0}L(s,\sfM_{E,A})=\dim_{\dQ_p}\rH^1_f(\dQ,(\sfM_{E,A})_p)=0.\]

  \item Suppose that either one of the following holds
        \begin{itemize}
          \item $\dim_{\dQ_p}\rH^1_f(\dQ,(\sfM_{E,A})_p)=1$ for
              \emph{some} $p$;
          \item $\dim_\dQ\CH(\sfM_{E,A})=1$;
          \item $\ord_{s=0}L(s,\sfM_{E,A})=1$.
        \end{itemize}
        Then $\Delta_{E,A}\neq 0$, and for \emph{all} primes $p$, we have
        \[\dim_\dQ\CH(\sfM_{E,A})=\ord_{s=0}L(s,\sfM_{E,A})=\dim_{\dQ_p}\rH^1_f(\dQ,(\sfM_{E,A})_p)=1.\]
        Here, the class $\Delta_{E,A}$ may be constructed via a similar
        way.
\end{enumerate}
\end{conjecture}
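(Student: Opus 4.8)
The plan is to deduce this conjecture by combining, for the specific motive $\sfM_{E,A}$, the Beilinson--Bloch/Bloch--Kato package (Conjecture~\ref{co:bloch}), the arithmetic Gan--Gross--Prasad conjecture (Conjecture~\ref{co:arithmetic_ggp}, suitably extended), Conjectures~\ref{co:abel_injective} and~\ref{co:oda}, and unconditional strengthenings of Theorems~\ref{th:main_even} and~\ref{th:main_odd} that remove both Assumption~\ref{as:group_e} and the restriction to ``all but finitely many $p$''. The logical skeleton is as follows. Since $\rH^0(\dQ,(\sfM_{E,A})_p)=0$, Conjecture~\ref{co:bloch}(1) identifies $\CH(\sfM_{E,A})\otimes_\dQ\dQ_p$ with $\rH^1_f(\dQ,(\sfM_{E,A})_p)$ for every $p$, so the quantities $\dim_\dQ\CH(\sfM_{E,A})$ and $\dim_{\dQ_p}\rH^1_f(\dQ,(\sfM_{E,A})_p)$ all coincide; in particular, vanishing (resp.\ being equal to $1$) for one $p$ propagates to every $p$ and to the Chow group. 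Conjecture~\ref{co:bloch}(3), together with the functional equation of Conjecture~\ref{co:bloch}(2), then forces $\ord_{s=0}L(s,\sfM_{E,A})$ to equal this common dimension, so each of the three bulleted hypotheses in (1) (resp.\ in (2)) already implies the full conclusion of (1) (resp.\ (2)). The only extra assertion, namely $\Delta_{E,A}\neq 0$ in case (2), follows from the arithmetic Gan--Gross--Prasad conjecture: if $\ord_{s=0}L(s,\sfM_{E,A})=1$ then the functional equation gives $\epsilon(\sfM_{E,A})=-1$, so we are in the arithmetic case and $L'(0,\sfM_{E,A})\neq 0$, whence $\Delta_{E,A}\neq 0$; Conjectures~\ref{co:abel_injective} and~\ref{co:oda} then give $\pres{p}{\Delta}_{E,A}\neq 0$ for all $p$, consistently with Theorem~\ref{th:main_odd}.

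The real content is to replace as much of this scaffolding by theorems as possible, and the first step is to upgrade Theorem~\ref{th:main_even} to an arbitrary pair and to all primes $p$. As indicated in the Remark following Theorem~\ref{th:main_odd}, the coprimality hypotheses (E2) and (E3) can be relaxed to the single requirement that $v\|N$ whenever $v\in\Sigma_{\sigma,\pi}$: one works with the Shimura varieties and Jacquet--Langlands / Hecke-projector data singled out by the local root numbers rather than by $N^-$, at a more general (still $\Gamma_0$-type) level, so that the construction of the Euler-system classes and of $\sP_{\sigma,\pi}$ still goes through. A genuinely arbitrary pair then forces one to allow places $v\in\Sigma_{\sigma,\pi}$ with $v^2\mid N$, where the local component of $\sigma$ is supercuspidal or a ramified principal series instead of Steinberg; this requires Shimura varieties with deeper (Carayol-type) level structure and a re-run of the congruence and tame-deformation arguments in that setting. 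Next, curves with complex multiplication (excluded by (E1)) must be treated separately: the relevant Galois representation is then reducible and $\sfM_{E,A}$ decomposes, over $\dQ$ or over an auxiliary imaginary quadratic field, into pieces attached to Hecke characters and Artin motives, for which the needed cases of Bloch--Kato are partly classical and partly due to Rubin. Finally, removing the finitely many ``bad'' primes excluded in Definition~\ref{de:group_p} is a matter of sharpening the local analysis of the classes (at primes dividing the level, at the residual characteristic of the quaternion algebra, or where the mod-$p$ image is too small) rather than of a new idea, though it is technically heavy; the base-change manipulations behind Theorem~\ref{th:main_sym} may help in special configurations.

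The parallel step for the arithmetic case is the corresponding strengthening of Theorem~\ref{th:main_odd}, together with a construction of $\Delta_{E,A}$ ``via a similar way'' whenever $\epsilon(\sfM_{E,A})=-1$: one forms the product of the compactified Shimura curve attached to the indefinite quaternion algebra over $\dQ$ ramified exactly at the finite places of $\Sigma_{\sigma,\pi}$ with the Hilbert modular surface $X$ at the appropriate level, takes the graph of the corresponding Hirzebruch--Zagier-type morphism (modified, if necessary, by subtracting boundary contributions so that its class dies under $\cl_Z^0$), and applies a Hecke projector for $(\sigma,\pi)$. That the resulting class is not killed by the local invariant functionals is again guaranteed by the existence of the right local test vectors at the ramified places, in the spirit of Lemma~\ref{le:test_vector} and the Remark following Conjecture~\ref{co:arithmetic_ggp}.

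The main obstacle, and the reason this remains a conjecture, is the arithmetic input. Proving $\Delta_{E,A}\neq 0$ from $L'(0,\sfM_{E,A})\neq 0$ is exactly the triple-product Gross--Zagier formula in the quasi-split $\SO(3)\times\SO(4)$ case, for which only the local computations of Gross--Keating are available and no global height identity is known; moreover the Euler-system method behind Theorem~\ref{th:main_odd} yields only $\dim_{\dQ_p}\rH^1_f\leq 1$, so the reverse inequality cannot be obtained without first knowing the cycle is nonzero. Dually, the implication ``$\dim_{\dQ_p}\rH^1_f=0\Rightarrow\ord_{s=0}L=0$'' and the comparisons involving $\CH(\sfM_{E,A})$ require the surjectivity of the Abel--Jacobi map onto $\rH^1_f$, an instance of the Beilinson--Bloch conjecture that is out of reach. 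In short, this plan reduces the conjecture to suitably general forms of arithmetic Gan--Gross--Prasad and of Beilinson--Bloch/Bloch--Kato, and it is precisely those two statements that constitute the still-open core.
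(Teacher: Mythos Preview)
The statement you are asked to prove is a \emph{conjecture} in the paper, not a theorem: the paper offers no proof and none is expected. It is proposed immediately after the sentence ``In view of Conjectures~\ref{co:bloch} and~\ref{co:arithmetic_ggp}, we propose the following,'' so the paper's entire ``argument'' is the observation that the statement is a natural common refinement of the Beilinson--Bloch/Bloch--Kato package and the arithmetic Gan--Gross--Prasad conjecture for this motive. There is therefore no paper proof to compare against.

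Your write-up is not a proof either, and you are right to say so: what you have produced is a correct and well-organized reduction of the conjecture to (i) Conjecture~\ref{co:bloch} for $\sfM_{E,A}$, (ii) Conjecture~\ref{co:arithmetic_ggp} (or a suitable generalization), and (iii) hypothetical strengthenings of Theorems~\ref{th:main_even} and~\ref{th:main_odd} removing Assumption~\ref{as:group_e} and the exclusion of finitely many $p$. This matches exactly the paper's intended motivation. Your identification of the two genuine obstructions---the triple-product Gross--Zagier formula in the quasi-split case, and the surjectivity of the Abel--Jacobi map onto $\rH^1_f$---is accurate and is precisely why the author states this as a conjecture rather than a corollary. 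In short, there is no gap in your reasoning beyond the ones you already name; the statement is open, and your proposal is a faithful articulation of why.
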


\subsection{The case of base change}
\label{ss:base_change}

Let us consider a special but illustrative case where $A$ is isogenous to
$A^\flat\otimes_\dQ F$ for an elliptic curve $A^\flat$ over $\dQ$. This
immediately implies that $A^\theta$ is isogenous to $A$, but the latter is
slightly weaker. In this case, we have a decomposition
\[(\As\rV_p(A))(-1)=(\Sym^2\rV_p(A^\flat))(-1)\oplus\dQ_p(\eta)\]
of $p$-adic representations of $\Gamma_\dQ$, where
$\eta\colon\Gamma_\dQ\to\{\pm1\}$ is the quadratic character associated to
$F/\dQ$ via global class field theory. Note that
$\rV_p(E)\otimes_{\dQ_p}\dQ_p(\eta)\simeq\rV_p(E^F)$, where $E^F$ is the
$F$-twist of $E$.

Suppose that $(E,A)$ is of odd type. Denote by $\pres{p}{\Delta}_{E,A^\flat}$
(resp.\ $\pres{p}{\Delta}_{E^F}$) the projection of $\pres{p}{\Delta}_{E,A}$
into $\rH^1_f(\dQ,\rV_p(E)\otimes_{\dQ_p}(\Sym^2\rV_p(A^\flat))(-1))^{\oplus
ab}$ (resp.\ $\rH^1_f(\dQ,\rV_p(E^F))^{\oplus ab}$). Then Theorem
\ref{th:main_odd} asserts that for all but finitely many primes $p$, there is
at most one member between $\pres{p}{\Delta}_{E,A^\flat}$ and
$\pres{p}{\Delta}_{E^F}$ that could be non-zero, and moreover
\begin{enumerate}
  \item if $\pres{p}{\Delta}_{E,A^\flat}\neq 0$, then
      \[\dim_{\dQ_p}\rH^1_f(\dQ,\rV_p(E)\otimes_{\dQ_p}(\Sym^2\rV_p(A^\flat))(-1))=1,\quad\dim_{\dQ_p}\rH^1_f(\dQ,\rV_p(E^F))=0;\]

  \item if $\pres{p}{\Delta}_{E^F}\neq 0$, then
      \[\dim_{\dQ_p}\rH^1_f(\dQ,\rV_p(E)\otimes_{\dQ_p}(\Sym^2\rV_p(A^\flat))(-1))=0,\quad\dim_{\dQ_p}\rH^1_f(\dQ,\rV_p(E^F))=1.\]
\end{enumerate}

Suppose that $\pres{p}{\Delta}_{E^F}\neq 0$. By the main theorem of
\cite{Ski14} or \cite{Zha14}, the rank of $E^F(\dQ)$ is $1$, or equivalently,
the rank of $E(F)^{\theta=-1}$ is $1$. A natural question would be: \emph{How
to construct a $\dQ$-generator of $E(F)^{\theta=-1}$, from the information
that $\pres{p}{\Delta}_{E^F}\neq 0$ ?}

Note that we have the geometric cycle class map
\[\cl_{X_F}^0\colon\CH^1(X_F)\otimes_\dZ\dQ\to\rH^2_{\et}(X_{\dQ^\ac},\dQ_p(1))^{\Gamma_F}.\]
By the Tate conjecture for the surface $X_F$ in the version proved in
\cite{HLR86}, we have
\[(\IM(\cl_{X_F}^0)\cap\rH^2_\pi(X_{\dQ^\ac}))\otimes_\dZ\dQ_p=\rH^2_\pi(X_{\dQ^\ac})^{\Gamma_F}\simeq\dQ_p^{\oplus a}.\]
Let $\CH^1_\pi(X_F)$ be the subspace of $\CH^1(X_F)\otimes_\dZ\dQ$ whose
image under $\cl_{X_F}^0$ is contained in $\rH^2_\pi(X_{\dQ^\ac})$. If we
regard $\Delta_{E,A}$ as a correspondence from $Y$ to $X$, then
$\phi_*\Delta_{E,A}^*\CH^1_\pi(X_F)$ is contained in
$(E(F)\otimes_\dZ\dQ)^{\theta=-1}$ for every modular parametrization
$\phi\colon Y\to E$. One can check that $\pres{p}{\Delta}_{E^F}\neq 0$
implies the existence of $\phi$ such that
$\phi_*\Delta_{E,A}^*\CH^1_\pi(X_F)$ is non-zero. Combining with Theorem
\ref{th:main_odd}, we have
\[\phi_*\Delta_{E,A}^*\CH^1_\pi(X_F)=(E(F)\otimes_\dZ\dQ)^{\theta=-1}
=E^F(\dQ)\otimes_\dZ\dQ,\] which has dimension $1$.

\begin{remark}
The above discussion is logically unhelpful for the proof of Theorem
\ref{th:main_odd}, since it is very special that $A$ comes from the base
change. Nevertheless, this observation, on the contrary, is a starting point
of our strategy for the proofs of \emph{both} Theorem \ref{th:main_odd} and
Theorem \ref{th:main_even}: The key theorems -- Theorems \ref{th:congruence}
and \ref{th:congruence_bis}, which are explicit congruence formulae for
various Hirzebruch--Zagier classes, involve computation of their
localizations at sophisticatedly chosen primes. We realize such computation
as an application of the Tate conjecture for Hilbert modular surfaces over
\emph{finite fields}, together with some geometric arguments that are very
close to the above pullback along correspondences.
\end{remark}

The following corollary is a natural consequence of Theorem \ref{th:main_odd}
and the above discussion, since we know Conjecture \ref{co:abel_injective}
for the motive $\sfh^1(E^F)(1)$.

\begin{corollary}\label{co:base_change}
Suppose that the pair $(E,A)$ is of odd type, and $A$ is isogenous to
$A^\flat\otimes_\dQ F$ for an elliptic curve $A^\flat$ over $\dQ$. If
$\pres{p}{\Delta}_{E^F}$ is non-zero for \emph{some} $p$, then
$\pres{p}{\Delta}_{E^F}$ and hence $\pres{p}\Delta_{E,A}$ are non-zero for
\emph{every} $p$. Moreover, we have
$\dim_{\dQ_p}\rH^1_f(\dQ,(\sfM_{E,A})_p)=1$ for \emph{all but finitely many}
$p$.
\end{corollary}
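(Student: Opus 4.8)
The plan is to deduce Corollary \ref{co:base_change} from Theorem \ref{th:main_odd} by exploiting the fact that, in the base change case, the relevant Selmer class is packaged inside a direct sum of Selmer groups of \emph{genuine rational elliptic curves}, for which Conjecture \ref{co:abel_injective} is known. First I would recall the decomposition
\[(\sfM_{E,A})_p=\rV_p(E)\otimes_{\dQ_p}(\As\rV_p(A))(-1)
=\(\rV_p(E)\otimes_{\dQ_p}(\Sym^2\rV_p(A^\flat))(-1)\)\oplus\rV_p(E^F),\]
which is split as a direct sum of $\Gamma_\dQ$-representations, and the corresponding splitting of $\rH^1_f(\dQ,(\sfM_{E,A})_p)$ and of the class $\pres{p}\Delta_{E,A}$ into $\pres{p}\Delta_{E,A^\flat}$ and $\pres{p}\Delta_{E^F}$ as in \Sec \ref{ss:base_change}. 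Since $\pres{p}\Delta_{E^F}=\AJ_p(\Delta')$ for the explicit Chow cycle $\Delta'=\phi_*\Delta_{E,A}^*\CH^1_\pi(X_F)$-component constructed above, actually a cycle in $\CH^1(E^F)\otimes\dQ=E^F(\dQ)\otimes\dQ$ (modulo the finitely many $p$ where the constructions misbehave), and since the \'etale Abel--Jacobi map for $\sfh^1(E^F)(1)$ is \emph{injective} — this is Conjecture \ref{co:abel_injective} for the motive $\sfh^1(E^F)(1)$, which is a theorem (Kummer theory, i.e.\ the injectivity of $E^F(\dQ)\otimes\dQ_p\hookrightarrow\rH^1(\dQ,\rV_p(E^F))$) — the non-vanishing of $\pres{p}\Delta_{E^F}$ for one $p$ forces the underlying rational point to have infinite order, hence $\pres{p}\Delta_{E^F}\neq0$ for \emph{every} $p$.

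Next I would propagate this to $\pres{p}\Delta_{E,A}$ itself: since $\pres{p}\Delta_{E^F}$ is the $\rV_p(E^F)$-component of $\pres{p}\Delta_{E,A}$, non-vanishing of the former trivially implies non-vanishing of the latter, for every $p$. The final assertion, $\dim_{\dQ_p}\rH^1_f(\dQ,(\sfM_{E,A})_p)=1$ for all but finitely many $p$, is then immediate from Theorem \ref{th:main_odd}: for all but finitely many $p$ (namely those in the explicit good set of Definition \ref{de:group_p}), the hypothesis $\pres{p}\Delta_{E,A}\neq0$ — which we have just established for every $p$ — gives $\dim_{\dQ_p}\rH^1_f(\dQ,(\sfM_{E,A})_p)=1$ directly. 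Note one must take the intersection of the ``good'' set of primes from Theorem \ref{th:main_odd} with the cofinite set on which the cycle-theoretic construction of $\Delta'$ (via the modular parametrization $\phi$, the Hecke projector, and the Tate conjecture input from \cite{HLR86}) is valid; both are cofinite, so the intersection is cofinite.

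The main obstacle is the bookkeeping at the bad primes $p$: verifying that the construction ``$\pres{p}\Delta_{E^F}\neq0$ implies a point of infinite order in $E^F(\dQ)$'' really is valid for a cofinite set of $p$, and that the class $\pres{p}\Delta_{E^F}$ is compatible across $p$ with a single rational point (so that non-vanishing at one $p$ does propagate to all $p$). This compatibility is exactly what fails in general for $\sfM_{E,A}$ — as noted in the Remark following Corollary \ref{co:main} — and it is rescued here \emph{only} because the relevant motive $\sfh^1(E^F)(1)$ satisfies Conjecture \ref{co:abel_injective}. Concretely, one uses that $\Delta_{E,A}^*\CH^1_\pi(X_F)$ lands in $E(F)\otimes\dQ$ (independently of $p$), so its image under $\AJ_p$ for varying $p$ is the image of a \emph{fixed} subspace of $E(F)\otimes\dQ$ under the family of Kummer maps; injectivity of each Kummer map then gives the uniform statement. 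Once this is pinned down, everything else is a formal consequence of Theorem \ref{th:main_odd} and the decomposition.
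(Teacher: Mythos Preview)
Your proposal is correct and follows essentially the same route as the paper: the paper's own proof is the single sentence ``this is a natural consequence of Theorem \ref{th:main_odd} and the above discussion, since we know Conjecture \ref{co:abel_injective} for the motive $\sfh^1(E^F)(1)$,'' and you have unpacked exactly that discussion --- the Asai decomposition, the realization of $\pres{p}\Delta_{E^F}$ via the $p$-independent subspace $\phi_*\Delta_{E,A}^*\CH^1_\pi(X_F)\subset E^F(\dQ)\otimes\dQ$, and the injectivity of the Kummer map for $E^F$ --- before invoking Theorem \ref{th:main_odd}.

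One small remark: your third paragraph is more cautious than necessary. The Chow cycle $\Delta_{E,A}$, the correspondence action $\Delta_{E,A}^*$, and the modular parametrization $\phi$ are all defined independently of $p$, so the subspace $\phi_*\Delta_{E,A}^*\CH^1_\pi(X_F)\subset E^F(\dQ)\otimes\dQ$ is literally the same for every $p$; there is no cofinite-set bookkeeping needed for the first two assertions. The only place ``all but finitely many $p$'' enters is the final dimension statement, and that comes entirely from the hypothesis range of Theorem \ref{th:main_odd}.
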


We also have the following result, which is a corollary of Theorem
\ref{th:main_even} and is equivalent to Theorem \ref{th:main_sym}. It
provides evidence of Conjecture \ref{co:bloch} for the motive
$\sfh^1(E_1)(1)\otimes(\Sym^2\sfh^1(E_2))(1)$ over $\dQ$ of rank $6$, which
is also canonically polarized of symplectic type.

\begin{corollary}\label{co:symmetric}
Let $E_1$ and $E_2$ be two rational elliptic curves of conductors $N_1$ and
$N_2$, respectively. Suppose that $N_1$ and $N_2$ are coprime; $E_1$ has
multiplicative reduction at at least one finite place; and $E_2$ has no
complex multiplication over $\dQ^\ac$. If the central critical value
$L(0,\sfh^1(E_1)(1)\otimes(\Sym^2\sfh^1(E_2))(1))\neq0$, then for all but
finitely many primes $p$, we have
\[\dim_{\dQ_p}\rH^1_f(\dQ,\rV_p(E_1)\otimes_{\dQ_p}(\Sym^2\rV_p(E_2))(-1))=0.\]
\end{corollary}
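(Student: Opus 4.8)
The plan is to deduce Corollary~\ref{co:symmetric} from Theorem~\ref{th:main_even} by choosing a suitable real quadratic field $F$ through which the symmetric square representation factors into an Asai construction. First I would set $E = E_1$ and look for a real quadratic field $F$ together with an elliptic curve $A$ over $F$ so that $A$ is isogenous to $E_2 \otimes_\dQ F$; then the decomposition recalled in \Sec\ref{ss:base_change}, namely
\[(\As\rV_p(A))(-1) = (\Sym^2\rV_p(E_2))(-1) \oplus \dQ_p(\eta),\]
shows that $(\sfM_{E,A})_p = \rV_p(E_1)\otimes_{\dQ_p}(\Sym^2\rV_p(E_2))(-1) \oplus \rV_p(E_1^F)$, so that $\rH^1_f(\dQ,(\sfM_{E,A})_p)$ contains $\rH^1_f(\dQ,\rV_p(E_1)\otimes_{\dQ_p}(\Sym^2\rV_p(E_2))(-1))$ as a direct summand. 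Hence if Theorem~\ref{th:main_even} applies to the pair $(E,A)$ and gives vanishing of the full Selmer group for all but finitely many $p$, the summand we care about also vanishes for all but finitely many $p$.

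The next step is to check that the hypotheses of Theorem~\ref{th:main_even}, i.e.\ Assumption~\ref{as:group_e}, can be arranged, and that the relevant central $L$-value is the one in the statement. Using the factorization $L(s,\sfM_{E,A}) = L(s,\sfh^1(E_1)(1)\otimes(\Sym^2\sfh^1(E_2))(1)) \cdot L(s,\sfh^1(E_1^F)(1))$, non-vanishing of $L(0,\sfh^1(E_1)(1)\otimes(\Sym^2\sfh^1(E_2))(1))$ together with non-vanishing of $L(0,E_1^F)$ would give $L(0,\sfM_{E,A}) \ne 0$. So I would choose $F$ so that, in addition to (E1)--(E3), the twist $E_1^F$ has analytic rank $0$; by the standard results on rank-$0$ quadratic twists (e.g.\ using that $E_1$ has multiplicative reduction somewhere to control root numbers, combined with nonvanishing results of Waldspurger/Bump--Friedberg--Hoffstein type or Ono--Skinner), there are infinitely many such $F$, and among those we can further impose that $\disc(F)$ is coprime to $N_1 N_2$, that every prime of bad multiplicative reduction of $E_1$ is suitably split or inert, and that $N_1^-$ is square-free --- the last being automatic once $E_1$ is semistable at the inert primes, which we can force by choosing the inert primes among the multiplicative ones. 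The condition that $E_1$ has multiplicative reduction at at least one finite place is exactly what guarantees we can steer the parity (via Definition~\ref{de:parity}) into the even type, so that $(E,A)$ is of even type and Theorem~\ref{th:main_even} is non-vacuous. Finally I must exhibit the curve $A/F$ with $A \cong E_2\otimes_\dQ F$ having conductor of norm coprime to everything; since $E_2$ has good reduction at the ramified primes of $F$ (as $\disc F$ is coprime to $N_2$), the base change $E_2\otimes_\dQ F$ has conductor dividing $N_2 O_F$, and its norm is $N_2^2$ up to the ramified contribution, so coprimality to $N_1 \disc(F)$ holds.

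The main obstacle I expect is the simultaneous satisfaction of all the splitting conditions on $F$: we need $\disc(F)$ coprime to $N_1N_2$, we need the inert primes of $N_1$ to be exactly a square-free product (condition (E3)), we need $\wp(N_1^-)$ odd so that $(E,A)$ is of even type, and we need $E_1^F$ to have analytic rank $0$ so that the auxiliary $L$-factor does not vanish. These are congruence and sign conditions on the quadratic character $\eta_F$, and one has to invoke an analytic non-vanishing theorem for quadratic twists that is compatible with prescribing finitely many local conditions on $\eta_F$; the cleanest route is to fix the local behavior of $F$ at the primes dividing $N_1N_2$ first (a finite set of congruence conditions), check the resulting forced root number of $E_1^F$ is $+1$ --- here is where ``$E_1$ multiplicative somewhere'' is used, since a multiplicative prime gives us a free local root-number toggle --- and then apply a non-vanishing result in the remaining Chebotarev-type freedom. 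Once such an $F$ is produced, everything else is bookkeeping: one pulls back the conclusion of Theorem~\ref{th:main_even} to the direct summand, notes the exceptional finite set of $p$ is the union of the one for $(E,A)$ with the (empty, or at worst finite) set of bad $p$ for $E_1^F$, and concludes. The equivalence with Theorem~\ref{th:main_sym} is then just the identification $L(0,\sfh^1(E_1)(1)\otimes(\Sym^2\sfh^1(E_2))(1)) = L(2, E_1\times\Sym^2 E_2)$ coming from the normalization in \eqref{eq:l_function}.
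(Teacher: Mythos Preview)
Your proposal is correct and follows essentially the same route as the paper: reduce to Theorem~\ref{th:main_even} by setting $E=E_1$, $A=E_2\otimes_\dQ F$ for a real quadratic $F$ chosen so that the local conditions (E1)--(E3) hold, $\wp(N_1^-)$ is odd, and $L(0,\sfh^1(E_1^F)(1))\neq 0$, the last via a Bump--Friedberg--Hoffstein-type nonvanishing result with prescribed local behavior. The paper makes the construction slightly more explicit by first observing $\epsilon(\sfh^1(E_1)(1))=-1$ (from the triple-product epsilon relation and the nonvanishing hypothesis), then writing $\eta=\eta_1\eta_2$ where $\eta_1$ flips exactly the one multiplicative prime $\ell\| N_1$ to inert (forcing $N^-=\ell$ and root number $+1$ for $E_1^{\eta_1}$) and $\eta_2$ is supplied by \cite{BFH90} subject to the remaining splitting constraints; this is precisely the ``fix local conditions, then invoke analytic nonvanishing'' step you outline.
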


\begin{proof}
First, note that the assumptions in this corollary force $E_1$ to have root
number $-1$, that is, $\epsilon(\sfh(E_1)(1))=-1$. In fact, we have
\[\epsilon(\sfh(E_1)(1)\otimes\sfh(E_2)(1)\otimes\sfh(E_2)(1))=
\epsilon(\sfh(E_1)(1))\times\epsilon(\sfh^1(E_1)(1)\otimes(\Sym^2\sfh^1(E_2))(1)),\]
in which the first root number is $-1$ since $N_1$ and $N_2$ are coprime, and
the last root number is $1$ since
$L(0,\sfh^1(E_1)(1)\otimes(\Sym^2\sfh^1(E_2))(1))\neq0$.

Choose a prime $\ell\mid N_1$ such that $E_1$ has multiplicative reduction at
$\ell$. In particular, we have $\ell\| N_1$. Choose a quadratic Dirichlet
character $\eta_1$ of conductor coprime to $N_1$ such that $\eta_1(-1)=1$,
$\eta_1(\ell)=-1$, and $\eta_1(\ell')=1$ for all other prime factors $\ell'$
of $N_1$. Then the quadratically twisted elliptic curve $E_1^{\eta_1}$ has
root number $1$. By the main theorem of \cite{BFH90}, we may choose another
quadratic character $\eta_2$ of conductor coprime to $N_1$ such that
$\eta_2(-1)=1$, $\eta_2(\ell')=1$ for all prime factors $\ell'$ of $N_1$
(including $\ell$), and moreover $L(0,\sfh(E_1^\eta)(1))\neq 0$ where
$\eta=\eta_1\eta_2$. Now let $F$ be the (real) quadratic field determined by
$\eta$. Applying Theorem \ref{th:main_even} with $E=E_1$ and
$A=E_2\otimes_\dQ F$, we have that for all but finitely many primes $p$,
\[\dim_{\dQ_p}\rH^1_f(\dQ,\rV_p(E_1)\otimes_{\dQ_p}(\Sym^2\rV_p(E_2))(-1))+
\dim_{\dQ_p}\rH^1_f(\dQ,\rV_p(E_1^\eta))=0,\] which implies the corollary.
\end{proof}

\subsection{Analogy with Kolyvagin's work}
\label{ss:analogy_kolyvagin}

The current article is mostly influenced by the pioneer work of Kolyvagin. We
explain this in the case where $(E,A)$ is of odd type.

Let $E$ be an elliptic curve over $\dQ$ of conductor $N$ with the associated
cuspidal automorphic representation $\sigma$ of $\GL_2(\dA)$. Let
$K\subset\dQ^\ac$ be an imaginary quadratic field satisfying the (strong)
Heegner condition for $E$: every prime factor of $N$ is split in $K$. Denote
by $Y_N$ the compactified modular curve over $\dQ$ of $\Gamma_0(N)$ level
structure. Pick up a Heegner point $P_K$ with respect to the maximal order of
$K$ on $Y_N(\dC)$, which in fact belongs to $Y_N(H_K)$ where $H_K$ is the
Hilbert class field of $K$. Take a modular parametrization $\phi\colon Y_N\to
E$ that sends the cusp to $0_E$. Define
\[P_{K,E}=\Tr_{H_K/K}(\phi(P_K)-0_E)\in\CH^1(E_K)^0=\CH(\sfh^1(E_K)(1)).\]
In \cite{Kol90}, Kolyvagin proved that if $P_{K,E}\neq 0$, then
$\dim_{\dQ_p}\rH^1_f(K,\sfh^1(E_{K^\ac})(1)_p)=1$ for all primes $p$. By the
Gross--Zagier formula \cite{GZ86}, we have that $P_{K,E}\neq 0$ if and only
if $L'(1/2,\sigma_K)\neq 0$. Note that
$L(s,\sfh^1(E_K)(1))=L(s+1/2,\sigma_K)$, with root number
$\epsilon(\sfh^1(E_K)(1))=-1$.

\begin{remark}
Although we use the same notation for $E$, the reader should not think it is
the same thing as in the pair $(E,A)$ previously. In fact, the role $E$ plays
in Kolyvagin's case is more or less the role of the other elliptic curve $A$
in our case.
\end{remark}

We summarize the analogy between Kolyvagin's setup and ours in Table
\ref{ta:analogy}.

\begin{table}[h]
\centering
\begin{tabular}{c||c|c}
  \hline
  % after \\: \hline or \cline{col1-col2} \cline{col3-col4} ...
  & Kolyvagin's case & our case \\ \hline
  motive & $\sfh^1(E_K)(1)$ & $\sfM_{E,A}$ \\ \hline
  Hodge--Tate weights & $\{-1,0\}$ & $\{-2,-1^3,0^3,1\}$  \\ \hline
  automorphic $L$-function & $L(s,\sigma_K)$  & $L(s,\sigma\times\pi)$ \\ \hline
  special cycle & $P_{K,E}$  & $\Delta_{E,A}$ \\ \hline
  GGP pair $H\times G$ & $\SO(2)\times\SO(3)$ & $\SO(3)\times\SO(4)$ \\ \hline
  ambient variety $Y\times X$ & CM $0$-fold $\times$ modular curve &  Shimura curve $\times$ HMS  \\\hline
\end{tabular}
\caption{Analogy} \label{ta:analogy}
\end{table}

In fact, the key idea of our proof is similar to Kolyvagin's as well. Namely,
we use variants of $\Delta_{E,A}$ ($P_{K,E}$ in Kolyvagin's case) to produce
sufficiently many annihilators, under the Tate pairing, of the Selmer group
of the Galois cohomology with torsion coefficients. For $\sfM$ equal to
$\sfh^1(E_K)(1)$ or $\sfM_{E,A}$, we have a natural Galois stable lattice
$\bar\sfM_p^\infty$ of $\sfM_p$. For an integer $n\geq 1$, the ``free rank''
$r_n$ of the torsion Selmer group $\rH^1_f(K,\bar\sfM_p^n)$, where
$\bar\sfM_p^n=\bar\sfM_p^\infty\otimes_{\dZ_p}\dZ/p^n$, will be stabilized to
$r_\infty$ as $n\to\infty$. Under both situations, the goal is to show that
$r_\infty=1$.

In Kolyvagin's case, the cycle $P_{K,E}$ is constructed via the diagonal
embedding $Y\to Y\times X$ where $Y$ (resp.\ $X$) is the union of CM points
(resp.\ modular curve), which is also the Shimura $0$-fold (resp.\ $1$-fold)
attached to the group $H=\SO(2)$ defined by $K$ (resp.\
$G=\SO(3)=\PGL_{2,\dQ}$). Here, the scheme $Y$ maps naturally to $X$ through
an optimal embedding $H\hookrightarrow G$ of underlying groups. Besides
$P_{K,E}$, he defined other classes $\bar{P}_{K,E\res S}^n$ with torsion
coefficient $\dZ/p^n$ for $S$ a finite set of so-called \emph{Kolyvagin
primes} (with respect to $p^n$), by modifying the embedding $H\hookrightarrow
G$ and hence $Y\to X$. These variant classes $\bar{P}_{K,E\res S}^n$ are
closely related to $P_{K,E}$. In fact, once $P_{K,E}\neq 0$, they will
produce sufficiently many annihilators for the Selmer group
$\rH^1_f(K,\bar\sfM_p^n)$. In fact, it is enough to consider those classes
with $\# S\leq 2$.

In our case, the cycle $\Delta_{E,A}$ is constructed via the diagonal
embedding $Y\to Y\times X$ where $Y$ (resp.\ $X$) is a Shimura curve (resp.\
Hilbert modular surface), which is also the Shimura $1$-fold (resp.\
$2$-fold) attached to the group $H=\SO(3)$ defined by the quaternion algebra
$B_{N^-}$ (resp.\ $G=\SO(4)\subset\Res_{F/\dQ}\GL_{2,F}/\bG_{m,\dQ}$).
Besides $\Delta_{E,A}$, we also define other classes
$\bar{\Delta}_{E,A\res\ell_1,\ell_2}^n$ with torsion coefficient $\dZ/p^n$
for $\ell_1,\ell_2$ being two distinct \emph{strongly $n$-admissible primes},
as we call. For doing this, instead of changing the embedding
$H\hookrightarrow G$ as in Kolyvagin's case, we change the algebra $B_{N^-}$
and hence the group $H$. Such idea first appeared in the work of
Bertolini--Darmon \cite{BD05}. The most difficult part is to relate these
variant classes $\bar{\Delta}_{E,A\res\ell_1,\ell_2}^n$ to the original one
in an explicit way. Once this is achieved and $\pres{p}{\Delta}_{E,A}\neq 0$,
they will produce sufficiently many annihilators for the Selmer group
$\rH^1_f(K,\bar\sfM_p^n)$, by some sophisticated Galois-theoretical
arguments.

\subsection{Strategy of proof}
\label{ss:strategy_proof}

We explain in more details about our strategy of the proof, and take the
chance to introduce the main structure of the article. The proofs for Theorem
\ref{th:main_even} and Theorem \ref{th:main_odd} will be carried out
simultaneously. To avoid confusion, here we will restrict ourselves to the
case where $(E,A)$ is of odd type. In fact, the ingredients we need for the
proof of Theorem \ref{th:main_odd} strictly contain those of Theorem
\ref{th:main_even}.

In \Sec \ref{s2}, we introduce and study some properties of our fundamental
geometric objects, namely, Shimura curves and the Hilbert modular surface. We
give \emph{canonical} parametrization of the following objects:
\begin{itemize}
  \item supersingular locus of a Shimura curve at a good prime,
  \item reduction graph of a Shimura curve in the \v{C}erednik--Drinfeld
      reduction,
  \item reduction graph of the supersingular locus of a Hilbert modular
      surface at a good inert prime.
\end{itemize}
Note that the first two are known by the work of Ribet \cite{Rib89}. We will
use a similar method to study the third one. Here, the canonical
parametrization is realized by oriented orders in certain definite quaternion
algebras, \emph{without} picking up a base point. More importantly, we will
study the behavior of canonical parametrization under special morphisms from
different Shimura curves to the Hilbert modular surface (which is fixed in
application) at \emph{different} primes. The canonicality of the
parametrization is crucial for doing this (see Propositions
\ref{pr:special_supersingular} and \ref{pr:special_superspecial}). All these
considerations will be used in the computation of localization maps later.

We start \Sec \ref{s3} by recalling the definition of Bloch--Kato Selmer
groups in various setting, and proving some preparatory results. Then we
switch the language from elliptic curves to automorphic representations,
followed by the construction of the cycle $\Delta_{E,A}$, or rather
$\Delta_{\sigma,\pi}$ in the automorphic setting. After recalling certain
results from \cite{BD05} concerning level raising of torsion Galois modules
for $\GL_{2,\dQ}$, we introduce those classes
$\bar{\Delta}_{\sigma,\pi\res\ell_1,\ell_2}^n$. The most technical part is to
choose appropriate primes, which we call \emph{strongly admissible primes},
in the sense that there should be enough of them for those classes to serve
as annihilators, and meanwhile they should be special enough to make the
computation of localization possible.

The next chapter \Sec \ref{s4} is the technical core of the article, where we
formulate and prove the explicit congruence formulae in Theorems
\ref{th:congruence}, for $\Delta_{\sigma,\pi}$ and various
$\bar{\Delta}_{\sigma,\pi\res\ell_1,\ell_2}^n$ via localization. The
computation relies on the existence of integral Tate cycles on special fibers
of the Hilbert modular surface at good inert primes, together with certain
specialization arguments from algebraic geometry. In fact, the localization
of both $\Delta_{\sigma,\pi}$ and
$\bar{\Delta}_{\sigma,\pi\res\ell_1,\ell_2}^n$ can be realized as period
integrals of Gan--Gross--Prasad type, of some torsion algebraic automorphic
forms for certain pair $\SO(3)\times\SO(4)$ that are compact at infinity.

We arrive at the final stage of the proof in \Sec \ref{s5}. The remaining
argument is a combination of Theorem \ref{th:congruence}, Tate pairings,
Serre's theorems on large Galois image for elliptic curves, and some
Galois-theoretical techniques, in particular, the heavy use of the Chebotarev
Density Theorem. The final part of the proof is much more involved than
Kolyvagin's case, essentially due to the absence of Kummer maps and the
inapplicability of Mordell--Weil group (Chow group).

\subsection{Notation and conventions}

\subsubsection{Generalities}

\begin{itemize}
  \item The Greek letter $\epsilon$ always represents a sign, either $+$
      or $-$.

  \item All rings and algebras are unital and homomorphisms preserve
      units. For a \emph{ring}, it is always assumed to be commutative.

  \item Throughout the article, a \emph{prime} means a rational prime
      number; a \emph{prime power} means a positive integer power of a
      prime. Starting from the next chapter, the prime $p$ is always
      \emph{odd}.

  \item For an integer $N$, denote by $\wp(N)\geq 0$ the number of its
      distinct prime factors.

  \item For a ring $R$ of prime characteristic, denote by $\rW(R)$ the
      ring of infinite Witt vectors with values in $R$.

  \item For every prime $\ell$, we fix an algebraic closure
      $\dF_\ell^\ac$ of $\dF_\ell$. If $\ell^f$ is a prime power, then we
      denote by $\dF_{\ell^f}$ the unique subfield of $\dF_\ell^\ac$ of
      cardinality $\ell^f$. If $v$ is a prime power, then we put
      $\dZ_v=\rW(\dF_v)$ and $\dQ_v=\dZ_v\otimes_\dZ\dQ$.

  \item Denote by $\dA$ (resp.\ $\dA^\infty$) the ring of ad\`{e}les
      (resp.\ finite ad\`{e}les) of $\dQ$.

  \item If $S$ is a set and $R$ is a ring, then we denote by $R[S]$ the
      $R$-module of functions from $S$ to $R$ with finite support. We
      have a degree map $\deg\colon R[S]\to R$ sending $f$ to $\sum_{s\in
      S}f(s)$, and denote by $R[S]^0$ the kernel of the degree map.
\end{itemize}

\subsubsection{Algebraic geometry}

\begin{itemize}
  \item Denote by $\bP^1$ the projective line scheme over $\dZ$, and
      $\bG_m=\Spec\dZ[T,T^{-1}]$ the multiplicative group scheme.

  \item If $S$ is a scheme and $A$ is an $S$-abelian scheme, then we
      denote by $A^\vee$ its dual $S$-abelian scheme.

  \item If $X/S$ is a relative scheme and $S'$ is an $S$-scheme, then we
      denote by $X_{S'}=X\times_SS'$ for the base change. If $S'=\Spec R$
      is affine, then we write $X_R$ instead of $X_{S'}$. If the notation
      for a scheme has already carried a subscript such as $X_{N^+M}$,
      then we denote by $X_{N^+M;S'}$ or $X_{N^+M;R}$ for the base
      change.

  \item Let $S$ be a scheme, and $j\colon Y\to X$ be a closed immersion
      of $S$-schemes. For a sheaf $\sF$ in the \'{e}tale topos $X_{\et}$,
      denote by
      \[\rH^i_Y(X_{S'},\sF)=\rh^i\rR\Gamma((Y_{S'})_{\et},b^*j^!\sF)\] the
      \emph{cohomology with support} for an $S$-scheme $S'$ and the base
      change morphism $b\colon Y_{S'}\to Y$. If $S'$ is a (pro-)Galois
      cover of $S$, then $\rH^i_Y(X_{S'};F)$ is equipped with a
      (continuous) action of the (pro-)finite Galois group $\Gal(S'/S)$.

  \item Let $X$ be a smooth proper scheme over a field $k$ of
      characteristic zero. For each integer $r\geq 0$, denote by
      $\CH^r(X)^0$ the subspace of $\CH^r(X)\otimes_\dZ\dQ$ of
      cohomologically trivial cycles, that is, the kernel of the
      geometric cycle class map
      \[\cl_X^0\colon\CH^r(X)\otimes_\dZ\dQ\to\rH^{2r}_{\et}(X_{k^\ac},\dQ_p(r))\]
      for some and hence all primes $p$.

  \item Let $X$ be a smooth proper scheme over a field $k$ purely of
      dimension $d$. Denote by $\Corr(X)=\CH^d(X\times_{\Spec
      k}X)\otimes_\dZ\dQ$ the $\dQ$-algebra of correspondences (of degree
      $0$) of $X$, with multiplication given by composition of
      correspondences. It acts on $\CH^*(X)\otimes_\dZ\dQ$ via pullbacks.
      If the characteristic of $k$ is $0$, then we denote by
      \[\rH^*_\dr(X)=\bigoplus_{r=0}^{2d}\rH^i_\dr(X)\]
      the total de Rham cohomology of $X$, which is a graded $k$-vector
      space. We have a natural homomorphism
      \[\cor_\dr\colon\Corr(X)\to\End_k^{\r{gr}}(\rH^*_\dr(X))\]
      of $\dQ$-algebras via pullbacks.
\end{itemize}

\subsubsection{Galois modules}
\label{ss:galois_modules}

\begin{itemize}
  \item For a field $k$, denote by $\Gamma_k=\Gal(k^\ac/k)$ the absolute
      Galois group of $k$. If $k=K$ is a subfield of $\dQ^\ac$, then we
      take $K^\ac=\dQ^\ac$ and hence $\Gamma_K\subset\Gamma_\dQ$.

  \item For an abelian variety $A$ over a field $k$ of characteristic not
      $p$, we put $\rT_p(A)=\varprojlim_m A[p^m](k^\ac)$ which is a
      $\dZ_p$-module with a continuous $\Gamma_k$-action.

  \item We simply write $\rH^\bullet(k,-)$ instead of
      $\rH^\bullet(\Gamma_k,-)$ for group cohomology of the absolute
      Galois group $\Gamma_k$.

  \item If a group $G$ acts on a set $V$, then $V^G$ is the subset of
      $G$-fixed elements. We write $V^k$ instead of $V^{\Gamma_k}$.

  \item If $k$ is a local field, then we denote by $\rI_k\subset\Gamma_k$
      the inertia subgroup, and $\Fr_k$ the arithmetic Frobenius element
      of $\Gamma_k/\rI_k$. If $v$ is a prime power, then we simply write
      $\Gamma_v$ for $\Gamma_{\dQ_v}$, $\rI_v$ for $\rI_{\dQ_v}$, and
      $\Fr_v$ for $\Fr_{\dQ_v}$.

  \item Let $R$ be a ring and $\rT$ be an $R[\Gamma_\dQ]$-module. For
      each prime power $v$, we have the following localization map
      \[\loc_v\colon\rH^1(\dQ,\rT)\to\rH^1(\dQ_v,\rT)\]
      of $R$-modules obtained by restriction. For $r\in R^\times$, we
      denote by $\rT[v|r]$ the largest $R$-submodule of $\rT$ on which
      $\rI_v$ acts trivially and $\Fr_v$ acts by multiplication by $r$.

  \item If $v$ is a prime power and $\rT$ is an $R[\Gamma_v]$-module,
      then we put
      \[\rH^1_\unr(\dQ_v,\rT)=\Ker[\rH^1(\dQ_v,\rT)\to\rH^1(\rI_v,\rT)]\]
      as an $R$-submodule of $\rH^1(\dQ_v,\rT)$. Put
      $\rH^1_\sing(\dQ_v,\rT)=\rH^1(\dQ_v,\rT)/\rH^1_\unr(\dQ_v,\rT)$
      with the quotient map
      $\partial_v\colon\rH^1(\dQ_v,\rT)\to\rH^1_\sing(\dQ_v,\rT)$.

  \item Let $\rG$ be a topological group. By a \emph{p-adic
      representation}, we mean a pair $(\rho,\rV)$ where $\rV$ is a
      finite dimensional $\dQ_p$-vector space, and
      $\rho\colon\rG\to\GL(\rV)$ is a continuous homomorphism. A
      \emph{stable lattice} in $(\rho,\rV)$ is a finitely generated
      $\dZ_p$-submodule $\rT\subset\rV$ such that
      $\rT\otimes_{\dZ_p}\dQ_p=\rV$ and $\rT$ is preserved under the
      action of $\rG$. Sometimes we will omit $\rho$ if it is irrelevant.

  \item Let $\rT$ be a stable lattice in a $p$-adic representation
      $(\rho,\rV)$ of $\rG$. For $n\geq 1$, we denote by
      \[\bar\rho^n\colon\rG\to\GL(\bar\rT^n)\]
      the induced residue representation, where
      $\bar\rT^n=\rT\otimes_{\dZ_p}\dZ/p^n$. Usually we omit the
      superscript $n$ when it is $1$. In certain cases, we also allow
      $n=\infty$, then $\bar\rT^\infty$ is nothing but $\rT$, and
      $\dZ/p^\infty$ is understood as $\dZ_p$.
\end{itemize}

\subsubsection*{Acknowledgements}

The author would like to thank Henri~Darmon, Wee-Tech~Gan, Benedict~Gross,
Atsushi~Ichino, Ye~Tian, Yichao~Tian, Shouwu~Zhang, and Wei~Zhang for helpful
discussions and comments. He appreciates Yichao~Tian and Liang~Xiao for
sharing their preprint \cite{TX14} at the early stage. He also thank the
anonymous referees for very careful reading and useful comments. The author
is partially supported by NSF grant DMS--1302000.

\section{Geometry of some modular schemes}
\label{s2}

In this chapter, we study geometry of some modular schemes which we will use.
In \Sec \ref{ss:algebraic_preparation}, we collect some notation and facts
about certain quaternion algebras and their orders. In \Sec
\ref{ss:preparations_abelian}, we study the canonical parametrization of
supersingular abelian surfaces using quaternion orders. In \Sec
\ref{ss:shimura_curves}, we study geometry of Shimura curves, focusing on the
special fibers of both smooth and semistable reductions. In \Sec
\ref{ss:hilbert_modular}, we study geometry of Hilbert modular surfaces and
their reduction at good inert primes. In \Sec \ref{ss:special_morphisms}, we
consider the connection between Shimura curves and Hilbert modular surfaces
through Hirzebruch--Zagier morphisms, particularly the interaction of their
reductions under such morphisms.

\subsection{Algebraic preparation}
\label{ss:algebraic_preparation}

We first introduce a bunch of notation.

\begin{notation}\label{no:algebra}
Recall that $F$ is a real quadratic number field.
\begin{enumerate}
  \item Denote by $\Cl(F)^+$ the strict ideal class group of $F$, whose
      elements are represented by projective $O_F$-modules of rank $1$
      \emph{with a notion of positivity}. Denote by $\fD$ the different
      of $F$. Then for example, the pair $(\fD^{-1},(\fD^{-1})^+)$ is an
      element in $\Cl(F)^+$ of order at most $2$. See
      \cite{Gor02}*{Chapter 2, \Sec 1} for more details.

  \item For each prime $\ell$ that is inert in $F$, we fix an embedding
      $\tau^\bullet_\ell\colon O_{F,\ell}\colonequals
      O_F\otimes_\dZ\dZ_\ell\hookrightarrow \rW(\dF_\ell^\ac)$ of
      $\dZ_\ell$-algebras, and put
      $\tau^\circ_\ell=\tau^\bullet_\ell\circ\theta$. They induce (by the
      same notation) two homomorphisms
      $\tau^\bullet_\ell,\tau^\circ_\ell\colon O_{F,\ell}\to
      O_F/\ell\hookrightarrow\dF_\ell^\ac$.
\end{enumerate}
\end{notation}

\begin{notation}\label{no:order_rational}
Let $D$ be a square-free positive integer.
\begin{enumerate}
  \item We denote by $B_D$ the unique indefinite (resp.\ definite)
      quaternion algebra over $\dQ$ which ramifies exactly at primes
      dividing $D$ when $D$ has even (resp.\ odd) number of prime
      factors. Denote by $B_D^+\subset B_D$ the subset of elements with
      positive reduced norm.

  \item Suppose that $D$ has \emph{even} number of prime factors. Denote
      by $\iota$ the canonical involution on $B_D$. Let $\cO_D$ be a
      maximal order of $B_D$. Fix $\delta\in\cO_D$ with $\delta^2=-D$.
      Define another involution $\ast$ on $B_D$ by the formula
      $b^\ast=\delta^{-1}b^\iota\delta$. It is easy to see that $\cO_D$
      is preserved under $\ast$.

  \item Suppose that $D$ has \emph{even} number of prime factors, all
      being \emph{inert} in $F$. We will fix an \emph{oriented} maximal
      order $(\cO_D,o_v)$ in $B_D$, which by Strong Approximation
      \cite{Vig80}*{Chaptire III, Th\'{e}or\`{e}me 4.3} is unique up to
      conjugation in $B_D$, together with an embedding
      $O_F\hookrightarrow\cO_D$ of $\dZ$-algebras. Recall that an
      orientation of $\cO_D$ is a homomorphism
      $o_v\colon\cO_D\to\dF_{v^2}$ for each prime $v\mid D$. We require
      that the restriction $o_v\res_{O_F}$ coincides with
      $\tau^\bullet_v$. Put $\fM_D=\{b\in\cO_D\res xb=b^\ast
      x^\ast,\forall x\in O_F\}$ and $\fM_D^+=\fM_D\cap B_D^+$. Then
      $(\fM_D,\fM_D^+)\simeq (\fD^{-1},(\fD^{-1})^+)$. We fix such an
      isomorphism $\psi_D$ once and for all.

  \item Suppose that $D$ has \emph{odd} number of prime factors. If $M$
      is another positive integer coprime to $D$, then we denote by
      $\cT_{M,D}$ the set of isomorphism classes of oriented Eichler
      orders of level $M$ in $B_D$.

  \item For positive integers $d'\mid d\mid M$, we have a degeneracy map
      \[\delta_{M,D}^{(d,d')}\colon\cT_{M,D}\to\cT_{M/d,D}.\]
\end{enumerate}
\end{notation}

Recall that an oriented Eichler order of level $M$ in $B_D$ (see
\cite{Rib89}*{\Sec 2}) is a tuple $\vec{R}=\{R,R',o_v\}$ where
\begin{itemize}
  \item $R$ is an Eichler order of level $M$ in $B_D$;

  \item $R'$ is a maximal Eichler order such that $R=R'\cap R''$ for
      another (unique) maximal Eichler order $R''$;

  \item $o_v\colon R\to\dF_{v^2}$ is a homomorphism of $\dZ$-algebras for
      \emph{every} prime $v\mid D$.
\end{itemize}

For the degeneracy map, suppose that $\vec{R}=(R,R',o_v)$ is an oriented
Eichler order of level $M$ in $B_D$. Let $R_1$ be the unique Eichler order of
level $M/d$ containing $R$ such that $\#(R'/R'\cap R_1)=Md'/d$; and $R'_1$ be
the unique maximal Eichler order containing $R_1$ such that $\#(R'/R'\cap
R'_1)=d'$. Then we define $\delta_{M,D}^{(d,d')}(\vec{R})$ to be
$(R_1,R'_1,o_v)$.

For each prime $\ell$ such that $\ell^s\|M$ with $s\geq 1$, we have a unique
involution \emph{switching the orientation at $\ell$}
\[\op_\ell\colon\cT_{M,D}\to\cT_{M,D},\]
such that
$\delta_{M,D}^{(\ell^s,\ell^s)}=\delta_{M,D}^{(\ell^s,1)}\circ\op_\ell$. If
$\ell\mid D$, then we also have an involution \emph{switching the orientation
at $\ell$}
\[\op_\ell\colon\cT_{M,D}\to\cT_{M,D},\]
sending $\vec{R}=(R,R',o_v)$ to the one obtained by only switching $o_\ell$
to its conjugate.

\begin{notation}\label{no:order_quadratic}
Let $\fM$ be an ideal of $O_F$.
\begin{enumerate}
  \item Denote by $Q$ the unique totally definite quaternion algebra over
      $F$ that is unramified at all finite places. Denote by $\cS_\fM$
      the set of isomorphism classes of oriented ($O_F$-linear) Eichler
      orders $\vec{R}=(R,R')$ of level $\fM$ in $Q$.

  \item For ideals $\fd'\mid \fd\mid\fM$, we have a similar degeneracy
      map
      \[\gamma^{(\fd,\fd')}_\fM\colon\cS_\fM\to\cS_{\fM/\fd}.\]

  \item For each prime ideal $\fl$ of $O_F$ such that $\fl\mid\fM$, we
      have a similar involution \emph{switching the orientation at $\fl$}
      \[\op_\fl\colon\cS_\fM\to\cS_\fM.\]
\end{enumerate}
\end{notation}

\begin{definition}\label{de:special}
Suppose that $D$ has \emph{even} number of prime factors, all being
\emph{inert} in $F$. We would like to construct a canonical \emph{special
map}
\[\zeta_{M,D}\colon\cT_{M,D}\to\cS_M\]
as follows. Take $\vec{R}=(R,R',o_v)\in\cT_{M,D}$. There is a unique
($O_F$-linear) Eichler order $R_\sharp$ of level $M$ in the $F$-quaternion
algebra $B_D\otimes_\dQ F\simeq Q$ such that it contains $R\otimes_\dZ O_F$,
and for every prime $v\mid D$, the $(R/v,O_F/v)$-bimodule
$R_\sharp\otimes_\dZ\dZ_v/R\otimes_\dZ O_{F,v}\simeq\dF_{v^2}$ is isomorphic
to $o_v\otimes\tau^\bullet_v$. Similarly, one defines $R'_\sharp$. Then we
define $\zeta_{M,D}(\vec{R})$ to be $(R_\sharp,R'_\sharp)$. The special maps
$\zeta_{M,D}$ are compatible with degeneracy maps in the sense that the
following diagram
\[\xymatrix{
\cT_{M,D} \ar[rr]^-{\delta^{(d,d')}_{M,D}} \ar[d]_-{\zeta_{M,D}} && \cT_{M/d,D} \ar[d]^-{\zeta_{M/d,D}} \\
\cS_M \ar[rr]^-{\gamma^{(d,d')}_M} && \cS_{M/d} }\] commutes for all $d'\mid
d\mid M$.
\end{definition}

Let $\ell$ be a prime that is inert in $F$. Fix an oriented maximal order
$(\cO_\ell,o_\ell)$ of $B_\ell$. For each prime $v$, put
$\cO_{\ell,v}=\cO_\ell\otimes_\dZ\dZ_v$. For $?=\bullet,\circ$, fix an
embedding $\varsigma_\ell^?\colon O_{F,\ell}\hookrightarrow\cO_{\ell,\ell}$
of $\dZ_\ell$-algebras such that its composition with $o_\ell$ coincides with
$\tau^?_\ell$.

\begin{lem}
Let $n$ be a positive integer. Let $f\colon
O_{F,\ell}\to\Mat_n(\cO_{\ell,\ell})$ be a homomorphism of
$\dZ_\ell$-algebras.
\begin{enumerate}
  \item Then $f$ is $\GL_n(\cO_{\ell,\ell})$-conjugate to a homomorphism
      of the form
      \[x\mapsto
      \diag[\varsigma^\bullet_\ell(x),\cdots,\varsigma^\bullet_\ell(x),
      \varsigma^\circ_\ell(x),\cdots,\varsigma^\circ_\ell(x)].\] We say
      that $f$ is of type $(r,n-r)$ if $\varsigma^\bullet_\ell$ appears
      with $r$ times.

  \item If $f$ is of type $(r,n-r)$, then the commutator of
      $f(O_{F,\ell})$ in $\Mat_n(\cO_{\ell,\ell})$ is isomorphic to
      $\End(O_{F,\ell}^{\oplus r}\oplus \ell O_{F,\ell}^{\oplus
      n-r})\cap\Mat_n(O_{F,\ell})$ as an $O_{F,\ell}$-algebra.
\end{enumerate}
\end{lem}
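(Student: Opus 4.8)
The statement concerns homomorphisms $f\colon O_{F,\ell}\to\Mat_n(\cO_{\ell,\ell})$ of $\dZ_\ell$-algebras, where $\ell$ is inert in $F$ and $\cO_\ell$ is a maximal order in the definite (since $\ell$ has an odd number of prime factors) quaternion algebra $B_\ell$. The key structural input is that $\ell$ is inert in $F$, so $O_{F,\ell}=\dZ_\ell[\varpi]$ is the ring of integers in the unramified quadratic extension $F_\ell/\dQ_\ell$, and $B_{\ell,\ell}=B_\ell\otimes_\dQ\dQ_\ell$ is the unique quaternion division algebra over $\dQ_\ell$, with $\cO_{\ell,\ell}$ its unique maximal order. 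The first thing I would record is that $F_\ell$ embeds into $B_{\ell,\ell}$, and that any two $\dQ_\ell$-algebra embeddings $F_\ell\hookrightarrow B_{\ell,\ell}$ are conjugate by $B_{\ell,\ell}^\times$ (Skolem--Noether); moreover, since an embedding of $F_\ell$ carries $O_{F,\ell}$ into the unique maximal order $\cO_{\ell,\ell}$, the two chosen embeddings $\varsigma^\bullet_\ell,\varsigma^\circ_\ell$ differ by $\theta$ and are conjugate by an element normalizing $\cO_{\ell,\ell}^\times$ but not lying in $F_\ell^\times\cO_{\ell,\ell}^\times$ — they represent the two orientations at $\ell$.

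For part (1), the plan is a standard ``isotypic decomposition'' argument. View $M=\cO_{\ell,\ell}^{\oplus n}$ as a left $\cO_{\ell,\ell}$-module and a right $O_{F,\ell}$-module via $f$; it is free of rank $n$ over $\cO_{\ell,\ell}$. Since $O_{F,\ell}$ is an unramified extension of $\dZ_\ell$, it is a product of local rings after tensoring with a splitting field — but more directly, reduce mod $\ell$: $O_F/\ell=\dF_{\ell^2}$, and $f$ induces an embedding $\dF_{\ell^2}\hookrightarrow\Mat_n(\cO_{\ell,\ell}/\ell)$. Now $\cO_{\ell,\ell}/\ell$ is a local ring with residue field $\dF_{\ell^2}$ (the residue ring of the quaternion division algebra), and the two $\dF_\ell$-algebra homomorphisms $\dF_{\ell^2}\to\dF_{\ell^2}$ are the identity and Frobenius, corresponding to $\tau^\bullet_\ell$ and $\tau^\circ_\ell$. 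Decompose $M/\ell M$ into generalized eigenspaces for the $\dF_{\ell^2}$-action; lift this decomposition by Hensel/idempotent-lifting (the idempotents in the commutant of $f(O_{F,\ell})$ in $\Mat_n(\cO_{\ell,\ell})$ reduce from those mod $\ell$, since $\cO_{\ell,\ell}$ is $\ell$-adically complete) to a direct-sum decomposition $M=M^\bullet\oplus M^\circ$ of $\cO_{\ell,\ell}$-$O_{F,\ell}$-bimodules, on which $O_{F,\ell}$ acts through $\varsigma^\bullet_\ell$ (resp.\ $\varsigma^\circ_\ell$) after Skolem--Noether inside each block. Finally I would argue that each $M^?$ is free as a left $\cO_{\ell,\ell}$-module — this uses that $\cO_{\ell,\ell}$ is a (noncommutative) local ring, hence every finitely generated projective module over it is free — so $M^\bullet\cong\cO_{\ell,\ell}^{\oplus r}$, $M^\circ\cong\cO_{\ell,\ell}^{\oplus n-r}$ with $r+(n-r)=n$, and choosing such a basis exhibits $f$ in the diagonal form claimed. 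The conjugating matrix lies in $\GL_n(\cO_{\ell,\ell})=\Aut_{\cO_{\ell,\ell}}(M)$.

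For part (2), I would compute the commutant $C$ of $f(O_{F,\ell})$ in $\Mat_n(\cO_{\ell,\ell})$ directly from the diagonal form. Writing elements of $\Mat_n(\cO_{\ell,\ell})$ in block form with respect to $M=M^\bullet\oplus M^\circ$ ($r$ and $n-r$ coordinates), an element commutes with $\diag[\varsigma^\bullet_\ell(x),\dots,\varsigma^\circ_\ell(x),\dots]$ for all $x$ iff: on the $(\bullet,\bullet)$ block it is $\Mat_r(C_\bullet)$ where $C_\bullet$ is the commutant of $\varsigma^\bullet_\ell(O_{F,\ell})$ in $\cO_{\ell,\ell}$, similarly $\Mat_{n-r}(C_\circ)$ on the $(\circ,\circ)$ block, and on the off-diagonal $(\bullet,\circ)$ and $(\circ,\bullet)$ blocks it intertwines $\varsigma^\bullet_\ell$ and $\varsigma^\circ_\ell=\varsigma^\bullet_\ell\circ\theta$, i.e.\ lies in the set of $z\in\cO_{\ell,\ell}$ with $z\varsigma^\circ_\ell(x)=\varsigma^\bullet_\ell(x)z$ for all $x$. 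The point is that $C_\bullet=C_\circ=\varsigma^?_\ell(O_{F,\ell})\cong O_{F,\ell}$ (the centralizer of a quadratic subfield in a quaternion algebra is that subfield), while the intertwining space is a fractional $O_{F,\ell}$-ideal generated by a uniformizer: concretely, if $\Pi\in\cO_{\ell,\ell}$ is a uniformizer with $\Pi x\Pi^{-1}=\theta(x)$ for $x\in F_\ell$ (the standard presentation of the quaternion division algebra $F_\ell\oplus F_\ell\Pi$ with $\Pi^2$ a uniformizer of $\dZ_\ell$), then the $(\circ,\bullet)$ intertwiners are exactly $\Pi\cdot\varsigma^\bullet_\ell(O_{F,\ell})=\ell\Pi^{-1}\varsigma^\bullet_\ell(O_{F,\ell})$ and the $(\bullet,\circ)$ ones are $\Pi\cdot\varsigma^\circ_\ell(O_{F,\ell})$. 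Identifying these with the Hom-modules $\Hom_{O_{F,\ell}}$ between the factors $O_{F,\ell}^{\oplus r}$ and $\ell O_{F,\ell}^{\oplus n-r}$ inside $F_\ell^{\oplus n}$, the block description says precisely that $C\cong\End_{O_{F,\ell}}(O_{F,\ell}^{\oplus r}\oplus\ell O_{F,\ell}^{\oplus n-r})\cap\Mat_n(O_{F,\ell})$, which is the asserted isomorphism of $O_{F,\ell}$-algebras.

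\textbf{Main obstacle.} The routine-looking parts (block bookkeeping, Skolem--Noether) are genuinely routine; the one step deserving care is the passage from the mod-$\ell$ eigenspace decomposition to an honest $\cO_{\ell,\ell}$-$O_{F,\ell}$-bimodule decomposition of $M$ over the \emph{noncommutative} ring $\cO_{\ell,\ell}$, i.e.\ lifting orthogonal idempotents from $C/\ell C$ to $C$ and checking the resulting summands are free left $\cO_{\ell,\ell}$-modules. This is where one uses $\ell$-adic completeness of $\cO_{\ell,\ell}$ (hence of $C$) together with the fact that $\cO_{\ell,\ell}$ is a noncommutative local ring so that finitely generated projectives are free; I would spell this out carefully, since it is the only place the noncommutativity of the quaternion order really enters.
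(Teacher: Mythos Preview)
There is a genuine gap in your argument for (1). The eigenspace decomposition of $M/\ell M$ under the left $\dF_{\ell^2}$-action is \emph{not} a decomposition of $(O_{F,\ell}/\ell,\cO_{\ell,\ell}/\ell)$-bimodules: writing $\cO_{\ell,\ell}=\varsigma^\bullet_\ell(O_{F,\ell})\oplus\varsigma^\bullet_\ell(O_{F,\ell})\Pi$ with $\Pi a=a^\theta\Pi$ and $\Pi^2\in\ell\dZ_\ell^\times$, one checks directly that right multiplication by $\bar\Pi$ carries the $\tau^\bullet_\ell$-eigenspace into the $\tau^\circ_\ell$-eigenspace and conversely. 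Hence the projectors onto these eigenspaces do \emph{not} lie in the commutant $C$ (neither integrally nor modulo $\ell$), and lifting them cannot produce a bimodule splitting $M=M^\bullet\oplus M^\circ$. The obstacle you single out---lifting idempotents and freeness over the local ring $\cO_{\ell,\ell}$---is not the difficulty; the idempotents you describe are simply not in $C$. Even granting some bimodule splitting into isotypic pieces, you would still need each piece to break into rank-$1$ bimodules, and ``Skolem--Noether inside each block'' only gives $\GL_r(B_{\ell,\ell})$-conjugacy to the diagonal, not $\GL_r(\cO_{\ell,\ell})$-conjugacy.

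The paper's proof is a one-liner that names the structure controlling all of this: an $(O_{F,\ell},\cO_{\ell,\ell})$-bimodule is the same as a module over $O_{F,\ell}\otimes_{\dZ_\ell}\cO_{\ell,\ell}$, and this tensor product is an Eichler order of level $\ell$ in $F_\ell\otimes_{\dQ_\ell}B_{\ell,\ell}\cong\Mat_2(F_\ell)$ (use the presentation above together with $O_{F,\ell}\otimes_{\dZ_\ell}O_{F,\ell}\cong O_{F,\ell}\times O_{F,\ell}$ to see the standard Eichler form). Eichler orders are hereditary, so every lattice is a direct sum of indecomposable projectives; here there are exactly two, namely $\cO_{\ell,\ell}$ with $O_{F,\ell}$ acting through $\varsigma^\bullet_\ell$ or through $\varsigma^\circ_\ell$. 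This yields (1) immediately and (2) as an immediate corollary. Your explicit block computation of the commutant in (2) is correct and recovers exactly this Eichler-order description, once (1) is in hand.
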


\begin{proof}
The homomorphism $f$ corresponds to an
$O_{F,\ell}\otimes_{\dZ_\ell}\cO_{\ell,\ell}$-module $M_f$, free of finite
rank over $\cO_{\ell,\ell}$. Since
$O_{F,\ell}\otimes_{\dZ_\ell}\cO_{\ell,\ell}$ is an Eichler order of level
$\ell$ in $\Mat_2(F_\ell)$, the
$O_{F,\ell}\otimes_{\dZ_\ell}\cO_{\ell,\ell}$-module $M_f$ is isomorphic to a
direct sum of $\cO_{\ell,\ell}$ on which $\cO_{\ell,\ell}$ acts by right
multiplication, and $O_{F,\ell}$ acts from left via \emph{either}
$\varsigma^\bullet_\ell$ \emph{or} $\varsigma^\circ_\ell$. This proves (1),
of which (2) is a corollary.
\end{proof}

Now we study $(O_F,\cO_\ell)$-bimodules of ($\dZ$-)rank $8$. For such a
bimodule $M$, consider the $(O_{F,\ell},\cO_{\ell,\ell})$-bimodule
$M_\ell\colonequals M\otimes_\dZ\dZ_\ell$ which corresponds to a homomorphism
$f_M\colon O_{F,\ell}\to\Mat_2(\cO_{\ell,\ell})$. We say that $M$ is
\emph{pure} (resp.\ \emph{mixed}) if $f_M$ is of type $(0,2)$ or $(2,0)$
(resp.\ $(1,1)$). We deduce the following facts from the above lemma: If $M$
is pure, then $\End_{(O_F,\cO_\ell)}(M)$, as an $O_F$-module, is isomorphic
to a maximal order in $Q$, that is, an element in $\cS_1$. If $M$ is mixed,
then there is a unique sub-bimodule $M_\bullet$ of index $\ell^2$ that is of
type $(0,2)$, and the inclusion
$\End_{(O_F,\cO_\ell)}(M)\subset\End_{(O_F,\cO_\ell)}(M_\bullet)$ is
naturally an element in $\cS_\ell$. In all cases, we simply denote by
$\End(M)$ the endomorphism $O_F$-algebra with the above orientation.

\begin{proposition}\label{pr:order_bijection}
The assignment $M\mapsto \End(M)$ induces a bijection between the set of
isomorphism classes of pure of a fixed type (resp.\ mixed)
$(O_F,\cO_\ell)$-bimodules of rank $8$, and $\cS_1$ (resp.\ $\cS_\ell$).
\end{proposition}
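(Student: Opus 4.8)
The plan is to recognise $\End(M)$ as the right order of a ``connecting ideal'' and to derive the bijection from Strong Approximation in the definite quaternion algebras involved, in the spirit of Ribet's treatment of supersingular points (the rank‑$4$ analogue). First, for well‑definedness, I would combine the Lemma (with $n=2$) with Morita equivalence at the primes $p\neq\ell$ — where $\cO_{\ell,p}$ is a matrix algebra and $O_{F,p}$ a product of discrete valuation rings — to conclude, just as in the discussion preceding the statement, that $\End(M)$ is an Eichler order in $Q$ of level $1$ when $M$ is pure and of level $\ell O_F$ when $M$ is mixed, and that in the mixed case $\End(M_\bullet)$ is one of the two maximal orders containing $\End(M)$, which is the orientation. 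Since an $O_F$‑algebra isomorphism $M\cong M'$ induces one of endomorphism algebras respecting all of this, the induced map on isomorphism classes lands in $\cS_1$, resp.\ $\cS_\ell$.

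The next step is to translate the left‑hand side into ideal‑theoretic language. Identify $Q$ with $B_\ell\otimes_\dQ F$ — both are quaternion algebras over $F$ ramified exactly at the two real places — so that $O_F\otimes_\dZ\cO_\ell$ becomes an Eichler order of level $\ell O_F$ in $Q$. The rational bimodule $M\otimes_\dZ\dQ$ is the \emph{unique} $(F,B_\ell)$‑bimodule of $\dQ$‑dimension $8$, being free of rank $1$ over $F\otimes_\dQ B_\ell^\op\cong Q$; hence, converting the right $\cO_\ell$‑action to a left one via the main involution, $M$ is a full fractional left ideal $\fa$ of a fixed Eichler order $\Lambda_0$ of level $\ell O_F$ in $Q$, an isomorphism of bimodules is right translation by $Q^\times$, and $\End(M)=\cO_r(\fa)\colonequals\{q\in Q\colon\fa q\subseteq\fa\}$ is the right order of $\fa$. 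The three types at $\ell$ (two pure, one mixed) are now exactly the three genera of such $\fa$, and the assertion becomes: within each genus, $\fa\mapsto\cO_r(\fa)$ is a bijection onto $\cS_1$, resp.\ $\cS_\ell$, orientations included.

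For the bijection I would fix a base ideal $\fa_0$ in each genus (say $\cO_\ell\otimes_\dZ O_F$ in the mixed case) and set $\cE_0=\cO_r(\fa_0)$. Local homogeneity makes each genus a single $\widehat Q^\times$‑orbit of lattices, so its set of isomorphism classes is $\widehat{\cE}_0^\times\backslash\widehat Q^\times/Q^\times$; on the other hand the standard adelic dictionary (\cite{Vig80}*{Chapitre III}, \cite{Rib89}*{\Ssec 2--3}) identifies $\cS_1$, resp.\ $\cS_\ell$, with $Q^\times\backslash\widehat Q^\times/\widehat{R_0}^\times$ for a fixed oriented Eichler order $R_0$ of the appropriate level, via $g\mapsto(g\widehat{R_0}g^{-1}\cap Q,\ \text{orientation read off from }g)$. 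After the substitution $g\mapsto g^{-1}$, sending $g\in\widehat Q^\times$ to the right order $g^{-1}\widehat{\cE}_0 g\cap Q$ of the lattice with adelic completion $\widehat\fa_0 g$ matches this second description, and the orientation recorded by $M_\bullet$ matches the $\ell O_F$‑component because the orientation‑switching involution there is precisely the non‑trivial class of $N(\widehat{R_0})/\widehat F^\times\widehat{R_0}^\times$. The inverse then sends $\vec R$ to the connecting ideal $\widehat\Lambda_0\widehat g\cap Q$ for any $\widehat g$ with $\widehat g^{-1}\widehat\Lambda_0\widehat g=\widehat{\vec R}$; such $\widehat g$ exists by local conjugacy together with Skolem--Noether, and the resulting ideal is well defined up to isomorphism.

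I expect the main obstacle to be exactly the verifications in the previous paragraph that make the two double‑coset descriptions compatible with $\End(-)$ and with the orientations: that local homogeneity genuinely holds for these particular genera (this is where total definiteness of $Q$ and definiteness of $B_\ell$ enter, through Strong Approximation), that the base‑point bookkeeping makes $\cO_r(-)$ correspond to $g\mapsto g^{-1}\widehat{\cE}_0 g$ rather than to a normalizer‑twisted variant, and that $M_\bullet$ tracks the orientation as claimed. Everything else — the Lemma, Morita equivalence away from $\ell$, the identification $Q\cong B_\ell\otimes_\dQ F$, and the attendant double‑centralizer computations — is routine.
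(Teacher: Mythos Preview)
Your approach is correct and is essentially a repackaging of the paper's own argument, which also defers to Ribet. The paper proceeds in two steps: for injectivity it fixes a base bimodule $M$ with $\Lambda=\End(M)$, invokes the bijection $N\mapsto J(N)=\Hom(M,N)$ between bimodules locally isomorphic to $M$ and locally free rank-$1$ right $\Lambda$-modules (Ribet's Theorem~2.3), and then reduces injectivity to the freeness of $\Hom(M,M')$ whenever $\End(M)\simeq\End(M')$ as oriented orders; for surjectivity it exhibits one mixed bimodule and then counts. Your route --- identifying $Q\simeq B_\ell\otimes_{\dQ}F$, viewing bimodules as fractional left ideals of $\Lambda_0=O_F\otimes_{\dZ}\cO_\ell$, and matching the genus with the adelic double coset $\widehat{\cE_0}^\times\backslash\widehat{Q}^\times/Q^\times$ which is then identified with $Q^\times\backslash\widehat{Q}^\times/\widehat{R_0}^\times$ via $g\mapsto g^{-1}$ --- is precisely the adelic shadow of those two steps done simultaneously. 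Your $\fa$ and the paper's $J(N)$ carry the same information (they differ by the fixed connecting ideal $\fa_0$), and your one-line inversion via the connecting ideal replaces the paper's existence-plus-counting for surjectivity.

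One small correction to your ``expected obstacles'' paragraph: Strong Approximation is a theorem about \emph{indefinite} quaternion algebras and yields class number one; it plays no role here. Local homogeneity of a genus (that locally isomorphic lattices lie in a single $\widehat{Q}^\times$-orbit) is immediate from the definition and needs nothing beyond local principality of ideals over local Eichler orders. Total definiteness of $Q$ is what makes the class sets $\cS_1,\cS_\ell$ nontrivial in the first place (because SA \emph{fails}), not what makes the bijection work. The genuine bookkeeping you should worry about, as you correctly flag, is the orientation: over $O_F$ rather than $\dZ$ the two-sided ideal class group of an Eichler order in $Q$ can pick up a contribution from $\Cl(F)$, and you should make sure that the identification of $\cS_\fM$ with $Q^\times\backslash\widehat{Q}^\times/\widehat{R_0}^\times$ (which the paper also uses, in \Sec\ref{ss:testing_factors}) and your map $\End$ are compatible with whatever convention absorbs that contribution.
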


This proposition is the analogue of \cite{Rib89}*{Theorem 2.4} for
$(O_F,\cO_\ell)$-bimodules. Note that in our case, the notion of
admissibility is unnecessary. The proof of this proposition is very similar
to that of \cite{Rib89}*{Theorem 2.4}. Therefore, we only indicate necessary
changes in the following proof, instead of explicating all the details.

\begin{proof}
We first prove that the assignment $M\mapsto \End(M)$ is injective. Choose an
$(O_F,\cO_\ell)$-bimodule $M$ as a reference point, and put
$\Lambda=\End_{(O_F,\cO_\ell)}(M)$. For each $(O_F,\cO_\ell)$-bimodule $N$
that is locally isomorphic to $M$, put $J(N)=\Hom(M,N)$ as a right
$\Lambda$-module which is locally free of rank $1$. The same proof for
\cite{Rib89}*{Theorem 2.3} implies that the assignment $N\mapsto J(N)$
establishes a bijection between the sets of isomorphism classes of
\begin{itemize}
  \item $(O_F,\cO_\ell)$-bimodules that are locally isomorphic to $M$;

  \item locally free right $\Lambda$-modules of rank $1$.
\end{itemize}
Therefore, the injectivity amounts to say that for $M$ and $M'$ with
isomorphic oriented Eichler order $\End(M)\simeq\End(M')$, the right
$\Lambda$-module $\Hom(M,M')$ is \emph{free} of rank $1$. The rest of the
argument is the same as the one for \cite{Rib89}*{Theorem 2.4}, which works
for an arbitrary Dedekind ring, in particular $O_F$, instead of $\dZ$.

We then prove that the assignment $M\mapsto \End(M)$ is surjective. The first
step is to show that there exists a mixed $(O_F,\cO_\ell)$-bimodule of rank
$8$, which has been constructed in the proof of \cite{Rib89}*{Theorem 2.4}.
The rest of the proof is a counting argument, which apparently works for $Q$
as well.
\end{proof}

\subsection{Some preparations on abelian surfaces}
\label{ss:preparations_abelian}

Fix an odd prime $\ell$ that is inert in $F$, and an ideal $\fM$ of $O_F$
coprime to $\ell$. Let $k$ be an algebraically closed field containing
$\dF_{\ell^2}$. Consider triples $(A,\iota,C)$ where $A$ is a $k$-abelian
surface, $\iota\colon O_{F}\to\End(A)$ is a homomorphism of $\dZ$-algebras,
and $C$ is a level-$\fM$ structure, that is, a subgroup of $A(k)$ of order
$\Nm_{F/\dQ}\fM$ that is stable and cyclic under the action of $O_{F}$.

\begin{definition}
We say that the triple $(A,\iota,C)$ is
\begin{enumerate}
  \item \emph{$\bullet$-pure} if $A$ is superspecial and $O_F/\ell$ acts
      on the $k$-vector space $\Lie A$ by $\tau^\bullet_\ell$;

  \item \emph{$\circ$-pure} if $A$ is superspecial and $O_F/\ell$ acts on
      the $k$-vector space $\Lie A$ by $\tau^\circ_\ell$;

  \item \emph{mixed} if both $\tau^\bullet_\ell$ and $\tau^\circ_\ell$
      appear in the action of $O_F/\ell$ on $\Lie A$;

  \item \emph{superspecial mixed} if it is mixed and $A$ is superspecial.
\end{enumerate}
\end{definition}

It is clear that the Frobenius element $\Fr_\ell$ acts on the set of all
isomorphism classes of triples by sending $(A,\iota,C)$ to its Frobenius
twist $(A^{(\ell)},\iota^{(\ell)},C^{(\ell)})$.

For each triple $(A,\iota,C)$, denote by $A[\ell]_\alpha$ the maximal
$\alpha_\ell$-elementary subgroup of $A[\ell]$, which is isomorphic to
$\alpha_\ell^{\oplus a(A)}$ for some number $a(A)\in\{0,1,2\}$. Then $A$ is
superspecial if and only if $a(A)=2$. For each mixed triple $(A,\iota,C)$
with $a(A)=1$, we say that it is \emph{$\bullet$-mixed} (resp.\
\emph{$\circ$-mixed}) if $O_F/\ell$ acts on the Dieudonn\'{e} module of
$A[\ell]_\alpha$ via $\tau^\bullet_\ell$ (resp.\ $\tau^\circ_\ell$). Given a
$\bullet$-pure (resp.\ $\circ$-pure) triple $(A,\iota,C)$ and a subgroup
$H\subset A[\ell]_\alpha$ that is isomorphic to $\alpha_\ell$ (in particular,
$H$ is automatically stable under the action of $\iota(O_F)$), we have an
induced triple $(A/H,\iota,C)$. It is either $\bullet$-mixed (resp.\
$\circ$-mixed) or superspecial mixed. The set of such subgroups $H$ is
isomorphic to $\bP^1(k)$. In fact, one can construct a family, called
\emph{Moret-Bailly family} (see, for example, \cite{Nic00}*{\Sec 4}), of
triples over $\bP^1_k$ parameterizing $\{(A/H,\iota,C)\res H\}$. There are
exactly $\ell^2+1=\#\bP^1(\dF_{\ell^2})$ subgroups $H$ such that
$(A/H,\iota,C)$ is superspecial mixed.

\begin{remark}\label{re:surface_superspecial}
If we start with a superspecial mixed triple $(A,\iota,C)$, then there is a
\emph{unique} $O_F$-stable subgroup $H_\bullet$ (resp.\ $H_\circ$) of
$A[\ell]_\alpha$ isomorphic to $\alpha_\ell$ such that $O_F/\ell$ acts on its
Dieudonn\'{e} module by $\tau_\ell^\bullet$ (resp.\ $\tau_\ell^\circ$).
Dividing it, we obtain a triple $(A/H_\bullet,\iota,C)$ (resp.\
$(A/H_\circ,\iota,C)$) that is $\circ$-pure (resp.\ $\bullet$-pure).
Moreover, the triple $(A^{(\ell)},\iota^{(\ell)},C^{(\ell)})$ appears in the
Moret-Bailly family associated to $(A/H_\bullet,\iota,C)$ (resp.\
$(A/H_\circ,\iota,C)$), as obtained by dividing $A[\ell]_\alpha/H_\bullet$
(resp.\ $A[\ell]_\alpha/H_\circ$).
\end{remark}

For each $(A,\iota,C)$, denote by $\End(A,\iota,C)$ the $O_F$-algebra of
endomorphisms of $A$ that commute with $\iota(O_F)$ and preserve $C$. If
$(A,\iota,C)$ is pure (resp.\ superspecial mixed), then $\End(A,\iota,C)$ is
an Eichler order of level $\fM$ (resp.\ $\fM\ell$) in $Q$. Moreover, we endow
$\End(A,\iota,C)$ with the orientation given by the maximal order
$\End(A,\iota)$ (resp.\ $\End(A/H_\bullet,\iota)$).

\begin{proposition}\label{pr:surface_superspecial}
The assignment $(A,\iota,C)\mapsto\End(A,\iota,C)$, with the above
orientation, induces a bijection between
\begin{enumerate}
  \item the set of isomorphism classes of $\bullet$-pure triples and
      $\cS_\fM$;
  \item the set of isomorphism classes of $\circ$-pure triples and
      $\cS_\fM$;
  \item the set of isomorphism classes of superspecial mixed triples and
      $\cS_{\fM\ell}$.
\end{enumerate}
Moreover, the Frobenius element $\Fr_\ell$
\begin{itemize}
  \item switches the types of $\bullet$-pure and $\circ$-pure and does
      not change values in $\cS_\fM$;

  \item preserves the set of superspecial mixed triples and coincides
      with $\op_\ell$ on $\cS_{\fM\ell}$.
\end{itemize}
\end{proposition}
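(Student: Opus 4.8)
The plan is to reduce the statement to the purely algebraic bijection of Proposition~\ref{pr:order_bijection} by means of Ribet's dictionary \cite{Rib89} between superspecial abelian surfaces with quaternionic multiplication and quaternionic bimodules, and then to track the level structure and the Frobenius twist through that dictionary. Concretely, I would fix a superspecial elliptic curve $E_0$ over $\dF_{\ell^2}$ with $\End(E_0)=\cO_\ell$, compatibly with the chosen orientation $o_\ell$. For a superspecial triple $(A,\iota,C)$ over $k$ one has $\End^0(A)\cong\Mat_2(B_\ell)$, and the centralizer of $\iota(F)$ inside it is $B_\ell\otimes_\dQ F$; since $\ell$ is inert in $F$ and $F$ is real, this algebra is split at every finite place and ramified at both archimedean places, hence is the algebra $Q$. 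I would attach to $(A,\iota)$ the $(O_F,\cO_\ell)$-bimodule $M(A)=\Hom(E_{0,k},A)$, which is free of $\dZ$-rank $8$; the functor $A\mapsto M(A)$ is an equivalence onto rank-$8$ $(O_F,\cO_\ell)$-bimodules, with quasi-inverse the Serre tensor construction, it identifies $\End(A,\iota)$ with $\End_{(O_F,\cO_\ell)}(M(A))$, and a direct check matches the orientation attached to $\End(A,\iota)$ in \Sec~\ref{ss:algebraic_preparation} with the one used in Proposition~\ref{pr:order_bijection}. Since $\Lie A$ is recovered from $M(A)\otimes_\dZ\dZ_\ell$, I would verify that $(A,\iota)$ being $\bullet$-pure, $\circ$-pure, or mixed corresponds exactly to $M(A)$ being pure of one fixed type, pure of the other type, or mixed (and that a mixed bimodule always reconstructs a superspecial surface). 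Proposition~\ref{pr:order_bijection} then yields the asserted bijections at level one: $\bullet$-pure (resp.\ $\circ$-pure) triples $\leftrightarrow\cS_1$, superspecial mixed triples $\leftrightarrow\cS_\ell$.

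Next I would put in the level-$\fM$ structure. Because $\fM$ is prime to $\ell$, a level-$\fM$ structure on $(A,\iota)$ is a purely prime-to-$\ell$ datum: at each prime $q\mid\Nm_{F/\dQ}\fM$ it is a $\Gamma_0$-type cyclic $O_{F,q}$-flag in $\rT_q(A)$ acted on by the maximal order $\End(A,\iota)_q\subset\Mat_2(F_q)$, and such a flag is precisely what refines $\End(A,\iota)_q$ to an oriented Eichler order of level $\fM_q$. Hence $\End(A,\iota,C)$ is an oriented Eichler order of level $\fM$, respectively of level $\fM\ell$ for a superspecial mixed triple, the extra level-$\ell$ part being exactly the canonical index-$\ell^2$ sub-bimodule of Proposition~\ref{pr:order_bijection} (equivalently the maximal order $\End(A/H_\bullet,\iota)$); the bijections of the previous paragraph thus refine to bijections with $\cS_\fM$, $\cS_\fM$, and $\cS_{\fM\ell}$. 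This step is the standard bookkeeping underlying the definition of oriented Eichler orders, cf.\ \cite{Rib89}*{\Sec 2}.

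For the Frobenius action, I would argue as follows. The $\ell$-power Frobenius twist replaces the $O_F$-action on $\Lie$ by its composite with $\theta$, hence exchanges $\tau^\bullet_\ell$ and $\tau^\circ_\ell$; so $\Fr_\ell$ swaps $\bullet$-pure and $\circ$-pure triples and stabilizes the superspecial mixed ones. The canonical isogeny $A\to A^{(\ell)}$ has $\ell$-power degree, so it induces isomorphisms on all Tate modules $\rT_q$ with $q\neq\ell$; thus the prime-to-$\ell$ part of the oriented Eichler order $\End(A,\iota,C)$, including its orientation at the primes dividing $\fM$, is unchanged, which gives the assertion for pure triples. In the superspecial mixed case I would invoke Remark~\ref{re:surface_superspecial}: the subgroup $H_\bullet$ attached to $A^{(\ell)}$ is the Frobenius twist of the subgroup $H_\circ$ attached to $A$, so the orientation of $\End(A^{(\ell)},\iota^{(\ell)},C^{(\ell)})$ at $\ell$, defined through $\End(A^{(\ell)}/H_\bullet,\iota^{(\ell)})$, is obtained from that of $\End(A,\iota,C)$ by interchanging $H_\bullet$ with $H_\circ$, which is exactly the involution $\op_\ell$ on $\cS_{\fM\ell}$. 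The step I expect to be the genuine obstacle is not the core bijection (that is Proposition~\ref{pr:order_bijection}) nor the level bookkeeping, but keeping all decorations coherent across Ribet's dictionary: matching the $\bullet/\circ$ labeling of pure triples read off $\Lie A$ with the pure-type labeling of bimodules and with the orientation conventions of \Sec~\ref{ss:algebraic_preparation}, and, above all, correctly propagating the $\ell$-power Frobenius twist so that it lands on $\op_\ell$ rather than on the identity on $\cS_{\fM\ell}$; for the latter, Remark~\ref{re:surface_superspecial} together with the explicit Moret--Bailly family is the essential input.
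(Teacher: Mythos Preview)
Your approach is essentially the same as the paper's: the paper's proof simply says to follow \cite{Rib89}*{Theorems 4.15 \& 4.16} verbatim, replacing Ribet's bimodule classification \cite{Rib89}*{Theorem 2.14} by Proposition~\ref{pr:order_bijection}, and that the Frobenius action follows from the construction. You have unpacked exactly this argument---the Serre/Ribet dictionary $A\mapsto\Hom(E_{0,k},A)$ between superspecial $(O_F,\cO_\ell)$-surfaces and rank-$8$ bimodules, the invocation of Proposition~\ref{pr:order_bijection}, and the prime-to-$\ell$ level bookkeeping---so there is nothing substantively different to compare.
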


\begin{proof}
The proof of this proposition follows exactly as for \cite{Rib89}*{Theorems
4.15 \& 4.16}, in which we only need to replace \cite{Rib89}*{Theorem 2.14}
by Proposition \ref{pr:order_bijection}. The action of $\Fr_\ell$ follows
from the construction.
\end{proof}

\subsection{Shimura curves}
\label{ss:shimura_curves}

Let $N^-$ be a square-free positive integer such that $\wp(N^-)$ is even, and
$N^+$ be another integer coprime to $N^-$. Denote by
$\underline{\cY}^\natural_{N^+,N^-}$ the functor from the category of schemes
over $\dZ[1/N^+]$ to the category of groupoids such that
$\underline{\cY}^\natural_{N^+,N^-}(S)$ is the groupoid of triples
$(A,\iota,C)$, where
\begin{itemize}
  \item $A$ is an $S$-abelian scheme of relative dimension $2$;
  \item $\iota\colon\cO_{N^-}\to\End(A)$ is a special action (see
      \cite{BC91}*{\Sec III.3}) of $\cO_{N^-}$ on $A$;
  \item $C$ is a level-$N^+$ structure, that is, a subgroup scheme of
      $A[N^+]$ of order $(N^+)^2$ that is stable and cyclic under the
      action of $\cO_{N^-}$.
\end{itemize}
Suppose that $N^+$ is \emph{neat} in the sense that $(A,\iota,C)$ has no
non-trivial automorphism. Then the functor
$\underline{\cY}^\natural_{N^+,N^-}$ is represented by a scheme
$\cY^\natural_{N^+,N^-}$, quasi-projective over $\dZ[1/N^+]$, projective if
and only if $N^->1$. We denote by $\cY_{N^+,N^-}$ the canonical
compactification of $\cY^\natural_{N^+,N^-}$ (only necessary when $N^-=1$).
It is a regular scheme and smooth over $\dZ[1/N^+N^-]$.

\begin{notation}\label{no:neron}
Put $Y_{N^+,N^-}=\cY_{N^+,N^-;\dQ}$ and denote its Jacobian by $J_{N^+,N^-}$.
For each prime $\ell\mid N^-$, we denote by $\Phi_{N^+,N^-}^{(\ell)}$ the
group of connected components of the special fiber of the N\'{e}ron model of
$J_{N^+,N^-;\dQ_{\ell^2}}$ over $\dZ_{\ell^2}$.
\end{notation}

\subsubsection{Hecke algebra and cohomology}

Let $d$ be a factor of $N^+$, we have degeneracy morphisms
\[\delta_{N^+,N^-}^{(d,d')}\colon \cY_{N^+,N^-}\to\cY_{N^+/d,N^-}\]
indexed by positive integers $d'\mid d$ such that $\delta_{N^+,N^-}^{(d,1)}$
is the natural projection by forgetting the level structure. For each prime
$v\nmid N^+N^-$, we have the Hecke correspondence
$T_{N^+,N^-}^{(v)}\in\Corr(Y_{N^+,N^-})$ as
\[\xymatrix{
& Y_{N^+v,N^-} \ar[ld]_-{\delta_{N^+v,N^-}^{(v,1)}} \ar[rd]^-{\delta_{N^+v,N^-}^{(v,v)}} \\
Y_{N^+,N^-} && Y_{N^+,N^-}.
}\]

Denote by $\dT_{N^+,N^-}^{\sph,\rat}$ the (commutative) $\dQ$-subalgebra of
$\Corr(Y_{N^+,N^-})$ generated by $T_{N^+,N^-}^{(v)}$ for all primes $v\nmid
N^+N^-$. Denote by $\dT_{N^+,N^-}^{\sph,\dr}$ the image of
$\dT_{N^+,N^-}^{\sph,\rat}$ under the homomorphism $\cor_\dr$, which is a
finite-dimensional commutative $\dQ$-algebra. For each
$\Gal(\dC/\dQ)$-conjugacy class $\sigma$ of irreducible cuspidal automorphic
representations of $\GL_2(\dA)$ that occur in $\rH^1_\dr(Y_{N^+,N^-})$, we
have a unique idempotent $\sP^\dr_\sigma\in\dT_{N^+,N^-}^{\sph,\dr}$ such
that it induces the natural projection
$\bigoplus\rH^*_\dr(Y_{N^+,N^-})\to\rH^1_\dr(Y_{N^+,N^-})[\sigma]$ onto the
$\sigma$-isotypic subspace $\rH^1_\dr(Y_{N^+,N^-})[\sigma]$ of
$\rH^1_\dr(Y_{N^+,N^-})$.

\begin{definition}\label{de:projector_curve}
A \emph{$\sigma$-projector} (of level $N^+$) is an element $\sP_\sigma$ in
$\dT_{N^+,N^-}^{\sph,\rat}$ such that $\cor_\dr(\sP_\sigma)=\sP^\dr_\sigma$.
\end{definition}

Consider the $p$-adic \'{e}tale cohomology
$\rH^1_{\et}(Y_{N^+,N^-;\dQ^\ac},\dQ_p(1))$ as a $p$-adic representation of
$\Gamma_\dQ$. Denote by $\rH^1_\sigma(Y_{N^+,N^-;\dQ^\ac})$ its
$\sigma$-isotypic subspace which is preserved under the action of
$\Gamma_\dQ$. Then $\sP^\dr_\sigma$ induces a natural projection
\[\sP^{\et}_\sigma\colon\rH^1_{\et}(Y_{N^+,N^-;\dQ^\ac},\dQ_p(1))\to\rH^1_\sigma(Y_{N^+,N^-;\dQ^\ac}),\]
by the Comparison Theorem between de Rham and \'{e}tale cohomology. It is
$\Gamma_\dQ$-equivariant.

\subsubsection{Good reduction at $\ell\nmid N^-$}

Suppose that $\ell$ is a prime not dividing $N^+N^-$. Denote by
$\cY^\ssl_{N^+,N^-;\dF_{\ell^2}}$ the supersingular locus of
$\cY^\natural_{N^+,N^-;\dF_{\ell^2}}$, which is preserved under the action of
$\Fr_\ell$.

\begin{proposition}\label{pr:curve_supersingular}
The scheme $\cY^\ssl_{N^+,N^-;\dF_{\ell^2}}$ is a disjoint union of
$\Spec\dF_{\ell^2}$. There is a \emph{canonical} bijection between
$\cY^\ssl_{N^+,N^-;\dF_{\ell^2}}(\dF_{\ell^2})$ and $\cT_{N^+,N^-\ell}$,
under which $\Fr_\ell$ acts by $\op_\ell$.
\end{proposition}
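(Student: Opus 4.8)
The plan is to imitate the strategy of Ribet \cite{Rib89}*{\S 5}, which treats exactly this question for classical modular curves, and adapt it using the results of \S\ref{ss:preparations_abelian} in place of the abelian-variety inputs Ribet used.

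First I would recall the Serre--Tate / Deligne--Rapoport description of the supersingular locus: a point of $\cY^\natural_{N^+,N^-;\dF_{\ell^2}}$ is a triple $(A,\iota,C)$ with $A$ a supersingular $\dF_\ell^\ac$-abelian surface carrying a special $\cO_{N^-}$-action. Since $\ell\nmid N^-$, the action of $\cO_{N^-}$ on the $\ell$-divisible group is ``\'etale-locally trivial'', so the relevant Dieudonn\'e module problem reduces to the case $N^-=1$, i.e.\ to abelian surfaces with an action of the maximal order $\cO_\ell$ of the definite quaternion algebra $B_\ell$ ramified at $\ell$ and $\infty$; equivalently, one passes to the quaternionic abelian variety $A^\dagger$ with $\cO_\ell$-action obtained from the fixed maximal order $\cO_{N^-}$. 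The point is that $A$ supersingular forces $A^\dagger$ (an abelian surface) to be superspecial, hence $A[\ell^\infty]$ together with its $\cO_\ell$-action is rigid: the full level-$\ell$-structure data up to isomorphism is captured by the $(\cO_\ell, \cO_{N^-\ell}\text{-type})$-bimodule $\End(A,\iota)$, which via the argument in Proposition \ref{pr:order_bijection} (playing the role of \cite{Rib89}*{Theorem 2.14}) corresponds to an oriented maximal Eichler order, and then the level-$N^+$ structure $C$ refines this to an oriented Eichler order of level $N^+$. So the assignment $(A,\iota,C)\mapsto\End(A,\iota,C)$ with its natural orientation gives a map to $\cT_{N^+,N^-\ell}$.

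Then I would prove this map is a bijection by the same counting/deformation argument as in \cite{Rib89}*{Theorems 4.15 \& 4.16}: injectivity follows because two triples with isomorphic oriented Eichler order give rise to a locally-free rank-one module over that order which is in fact free (the ``$\Hom(M,M')$ is free'' step, valid over any Dedekind ring and in particular over $\dZ$ here since the Eichler order is over $\dZ$), and surjectivity follows from a mass-formula comparison: the number of supersingular points counted with automorphisms equals the mass of $\cT_{N^+,N^-\ell}$, by Eichler's mass formula, exactly as in Ribet. The fact that $\cY^\ssl_{N^+,N^-;\dF_{\ell^2}}$ is a finite disjoint union of copies of $\Spec\dF_{\ell^2}$ (rather than $\Spec\dF_\ell$) is the standard statement that supersingular points of such Shimura curves are defined over $\dF_{\ell^2}$ but in general not over $\dF_\ell$; it comes out of the same Dieudonn\'e-module analysis, since the superspecial abelian surface and all its extra structure descend to $\dF_{\ell^2}$.

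Finally, for the $\Fr_\ell$-action: $\Fr_\ell$ sends $(A,\iota,C)$ to its Frobenius twist $(A^{(\ell)},\iota^{(\ell)},C^{(\ell)})$, and one computes the effect on endomorphism orders. Because $\ell$ is inert in $F$ and the Eichler order in question has level divisible by $\ell$ at the ``new'' prime, the Verschiebung/Frobenius isogeny $A\to A^{(\ell)}$ induces precisely the operation that swaps the two maximal orders containing the Eichler order at $\ell$, i.e.\ it switches the orientation $o_\ell$; this is the definition of $\op_\ell$ on $\cT_{N^+,N^-\ell}$. This is the same computation as in \cite{Rib89}, and more or less identical in spirit to the Frobenius statement already proved in Proposition \ref{pr:surface_superspecial}. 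I expect the main obstacle to be bookkeeping rather than ideas: one must be careful that the orientation data at the primes dividing $N^-$ (which are \emph{not} inert in $F$ and play no role here) is transported correctly through the equivalence $A\rightsquigarrow A^\dagger$, and that ``special action of $\cO_{N^-}$'' in the sense of \cite{BC91} interacts as expected with the supersingular condition; checking that the canonical (base-point-free) nature of the bijection is genuinely preserved — as opposed to Ribet's which implicitly fixes choices — is the delicate point, and it is exactly the feature we will need in \S\ref{ss:special_morphisms}.
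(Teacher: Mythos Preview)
Your overall strategy---assign to each supersingular triple $(A,\iota,C)$ its endomorphism ring $\End(A,\iota,C)$ equipped with orientations, prove bijectivity via a bimodule argument, and check that Frobenius acts by $\op_\ell$---is correct and is exactly Ribet's. But you have imported the wrong toolkit. The results of \S\ref{ss:preparations_abelian} and Proposition~\ref{pr:order_bijection} concern abelian surfaces with $O_F$-multiplication and $(O_F,\cO_\ell)$-bimodules; they are the \emph{analogues} of Ribet's results tailored to the Hilbert modular surface, and they play no role here. The Shimura curve $\cY_{N^+,N^-}$ parameterizes abelian surfaces with $\cO_{N^-}$-action (no $O_F$ in sight for this proposition), and the supersingular locus is handled directly by \cite{Rib89}*{Theorem~3.4}, which the paper simply cites. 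There is nothing to adapt: Ribet's original bimodule theorem (his Theorem~2.4, not the paper's Proposition~\ref{pr:order_bijection}) already covers this case. Your discussion of passing to an auxiliary $A^\dagger$ with $\cO_\ell$-action, and of reducing to $N^-=1$ via triviality of the $\ell$-divisible group, is confused and unnecessary.

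There is also a concrete error in your Frobenius computation. You describe $\op_\ell$ as ``swapping the two maximal orders containing the Eichler order at $\ell$'', but $\ell$ divides the \emph{discriminant} $N^-\ell$ of $B_{N^-\ell}$, not the level $N^+$. Locally at $\ell$ the order is already the maximal order in a division algebra; there is no pair of maximal orders to swap. The orientation at $\ell$ is a homomorphism $o_\ell\colon R\to\dF_{\ell^2}$, and $\op_\ell$ replaces it by its Galois conjugate (see Notation~\ref{no:order_rational}). The paper's recollection of the construction makes this explicit: $o_\ell$ records the action of $\End(A,\iota)$ on the $\dF_{\ell^2}$-line $\Lie A$, and Frobenius twists this action by the $\ell$-power Frobenius on $\dF_{\ell^2}$, i.e.\ conjugates $o_\ell$.
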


\begin{proof}
The first statement is well-known. The second statement is proved in
\cite{Rib89}*{Theorem 3.4}.

For later use, we briefly recall how to construct the map
$\cY^\ssl_{N^+,N^-;\dF_{\ell^2}}(\dF_{\ell^2})\to\cT_{N^+,N^-\ell}$ in the
case $N^+=1$. Suppose that $(A,\iota)$ is an element in
$\cY^\ssl_{1,N^-;\dF_{\ell^2}}(\dF_{\ell^2})$. The (maximal) Eichler order in
$B_{N^-\ell}$ would be $\End(A,\iota)$, that is, the algebra of all
endomorphisms commuting with $\iota(\cO_{N^-})$. We only need to specify the
orientation $o_v\colon\End(A,\iota)\to\dF_{v^2}$ for every prime $v$ dividing
$N^-\ell$. There are two cases.
\begin{itemize}
  \item When $v\neq\ell$, we let $\fm_v$ be the maximal ideal of
      $\End(A,\iota)$ of residue characteristic $v$. Then the action of
      $\End(A,\iota)/\fm_v$ on $A[\fm_v](\dF_\ell^\ac)$, which is an
      $\dF_{v^2}$-vectors space of dimension $1$, induces a homomorphism
      $\End(A,\iota)\to\dF_{v^2}$. We set this homomorphism to be $o_v$.

  \item When $v=\ell$, consider the action of $\End(A,\iota)$ on the
      $\dF_{\ell^2}$-vector space $\Lie A$, which is in fact a scalar
      action, and hence induces an orientation
      $o_\ell\colon\End(A,\iota)\to\dF_{\ell^2}$.
\end{itemize}
\end{proof}

\begin{remark}\label{re:degeneracy_supersingular}
By construction, the map
$\cY^\ssl_{N^+,N^-;\dF_{\ell^2}}(\dF_{\ell^2})\to\cY^\ssl_{N^+/d,N^-;\dF_{\ell^2}}(\dF_{\ell^2})$
induced by the degeneracy morphism $\delta_{N^+,N^-}^{(d,d')}\colon
\cY_{N^+,N^-}\to\cY_{N^+/d,N^-}$ coincides with
$\delta_{N^+,N^-\ell}^{(d,d')}\colon\cT_{N^+,N^-\ell}\to\cT_{N^+/d,N^-\ell}$
(see Notation \ref{no:order_rational} (5)).
\end{remark}

\subsubsection{Bad reduction at $\ell\mid N^-$}

Suppose that $\ell$ divides $N^-$. In particular, the scheme
$\cY_{N^+,N^-}=\cY^\natural_{N^+,N^-}$ is proper. For simplicity and also for
the later application, we further assume that $\ell$ is odd and inert in $F$.
Recall that we have always fixed an embedding $O_F\hookrightarrow\cO_{N^-}$.

For each geometric point $s\in\cY_{N^+,N^-;\dF_\ell}(k)$, denote the
associated abelian variety by $A_s$. Denote by $\tau(s)$ the multi-set of
characters of $O_F/\ell$ appearing in its action on the Dieudonn\'{e} module
of $A_s[\ell]_\alpha$. There are three possibilities for $\tau(s)$:
$\{\tau^\bullet_\ell\}$, $\{\tau^\circ_\ell\}$ and
$\{\tau^\bullet_\ell,\tau^\circ_\ell\}$.

Denote by $\cY^\bullet_{N^+,N^-;\dF_{\ell^2}}$ (resp.\
$\cY^\circ_{N^+,N^-;\dF_{\ell^2}}$) the union of irreducible components of
$\cY_{N^+,N^-;\dF_{\ell^2}}$ whose generic geometric point $s$ satisfies
$\tau(s)=\{\tau^\bullet_\ell\}$ (resp.\ $\tau(s)=\{\tau^\circ_\ell\}$).
Denote by $\cY^\ssp_{N^+,N^-;\dF_{\ell^2}}$ the scheme theoretical
intersection of $\cY^\bullet_{N^+,N^-;\dF_{\ell^2}}$ and
$\cY^\circ_{N^+,N^-;\dF_{\ell^2}}$. Note that
$\cY^\ssp_{N^+,N^-;\dF_{\ell^2}}$ classifies \emph{mixed and exceptional}
data in the terminology of \cite{Rib89}.

\begin{proposition}\label{pr:curve_superspecial}
We have that
\begin{enumerate}
  \item $\cY^\bullet_{N^+,N^-;\dF_{\ell^2}}$ and
      $\cY^\circ_{N^+,N^-;\dF_{\ell^2}}$ intersect transversally at
      $\cY^\ssp_{N^+,N^-;\dF_{\ell^2}}$;

  \item $\cY^\ssp_{N^+,N^-;\dF_{\ell^2}}$ is a disjoint union of
      $\Spec\dF_{\ell^2}$, \emph{canonically} indexed by
      $\cT_{N^+\ell,N^-/\ell}$, under which $\Fr_\ell$ acts by
      $\op_\ell$;

  \item both $\cY^\bullet_{N^+,N^-;\dF_{\ell^2}}$ and
      $\cY^\circ_{N^+,N^-;\dF_{\ell^2}}$ are disjoint union of
      $\bP^1_{\dF_{\ell^2}}$, each \emph{canonically} indexed by
      $\cT_{N^+,N^-/\ell}$, under which $\Fr_\ell$ acts by switching two
      factors.
\end{enumerate}
\end{proposition}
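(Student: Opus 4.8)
The plan is to deduce all three parts, together with the $\Fr_\ell$-equivariance, from Ribet's description of the bad reduction of the Shimura curve attached to a QM abelian surface at a prime dividing its discriminant \cite{Rib89}*{Theorems 4.15 \& 4.16}, and then to upgrade Ribet's (choice-dependent) statements to the canonical form asserted here by tracking the auxiliary $O_F$-structure. First I would set up the moduli dictionary. For a geometric point $s\in\cY_{N^+,N^-;\dF_\ell}(k)$ with $k\supseteq\dF_{\ell^2}$ algebraically closed, the abelian surface $A_s$ carries a special $\cO_{N^-}\otimes\dZ_\ell$-action, so $A_s[\ell^\infty]$ is a special formal module in Drinfeld's sense; hence $A_s$ is supersingular and $\End^0(A_s,\iota)$, the commutant of $\cO_{N^-}$, is the definite quaternion algebra $B_{N^-/\ell}$ over $\dQ$. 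Since $\ell$ is odd and inert in $F$, the image of $O_{F,\ell}$ in $\cO_{N^-}\otimes\dZ_\ell$ is the maximal order of the unramified quadratic extension of $\dQ_\ell$; thus $O_F/\ell\cong\dF_{\ell^2}$ acts on the Dieudonn\'{e} module of $A_s[\ell]_\alpha\cong\alpha_\ell^{\oplus a(A_s)}$ (with $a(A_s)\in\{1,2\}$) either through a single one of $\tau^\bullet_\ell,\tau^\circ_\ell$ (then $a(A_s)=1$), or through $\{\tau^\bullet_\ell,\tau^\bullet_\ell\}$ or $\{\tau^\circ_\ell,\tau^\circ_\ell\}$ (``pure'', $a(A_s)=2$), or through $\{\tau^\bullet_\ell,\tau^\circ_\ell\}$ (``mixed'', $a(A_s)=2$); this is exactly the trichotomy by $\tau(s)$ defining $\cY^\bullet,\cY^\circ,\cY^\ssp$. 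To $s$ I attach the order $\End(A_s,\iota,C)\subset B_{N^-/\ell}$, oriented at the primes of $N^-/\ell$ by the $\cO_{N^-}$-action on torsion and at those of $N^+$ by $C$; it is Eichler of level $N^+$ when $A_s$ is pure and of level $N^+\ell$ when $A_s$ is mixed superspecial, in which latter case one orients at $\ell$ by declaring the maximal over-order $\End(A_s/H_\bullet,\iota,C)$ positively oriented, where $H_\bullet\subset A_s[\ell]_\alpha$ is the unique $\alpha_\ell$-line on which $O_F/\ell$ acts by $\tau^\bullet_\ell$.

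With this dictionary, \cite{Rib89}*{Theorems 4.15 \& 4.16} --- whose proofs rest on the bimodule classification \cite{Rib89}*{Theorem 2.4}, of which Proposition \ref{pr:order_bijection} is the $O_F$-flavoured analogue --- yield at once: $\cY^\ssp_{N^+,N^-;\dF_{\ell^2}}$ is reduced, zero-dimensional and $\dF_{\ell^2}$-rational, and $s\mapsto\End(A_s,\iota,C)$ is a bijection onto $\cT_{N^+\ell,N^-/\ell}$ (part (2) minus Frobenius); and each irreducible component of $\cY^\bullet_{N^+,N^-;\dF_{\ell^2}}$ (resp.\ $\cY^\circ_{N^+,N^-;\dF_{\ell^2}}$) is a $\bP^1_{\dF_{\ell^2}}$, canonically labelled by the element of $\cT_{N^+,N^-/\ell}$ given by the endomorphism order of the associated $\bullet$-pure (resp.\ $\circ$-pure) superspecial datum in the Moret-Bailly description (part (3) minus Frobenius). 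For part (1), transversality along $\cY^\ssp$ is the statement that the mod-$\ell$ fibre is a Mumford curve, i.e.\ has only ordinary double points; this is the \v{C}erednik--Drinfeld uniformization \cite{BC91} (equivalently, the completed local ring at a node is $\rW(\dF_{\ell^2})[[x,y]]/(xy-\ell^{e})$ for some $e\ge 1$), and moduli-theoretically it reflects that a mixed superspecial datum lies on exactly one $\bullet$-family and one $\circ$-family, meeting with distinct tangent directions.

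It remains to compute $\Fr_\ell$. It sends $(A,\iota,C)$ to its Frobenius twist $(A^{(\ell)},\iota^{(\ell)},C^{(\ell)})$; relative Frobenius is an isogeny prime-to-$v$ for every $v\neq\ell$, so it preserves $\End^0$, $\End$, and all orientations away from $\ell$, but it composes the $O_F/\ell$-action on $A[\ell]_\alpha$ with $x\mapsto x^\ell$, i.e.\ with $\theta$, hence interchanging $\tau^\bullet_\ell$ and $\tau^\circ_\ell$. Therefore $\Fr_\ell$ carries a $\bullet$-pure datum to a $\circ$-pure one with the \emph{same} underlying element of $\cT_{N^+,N^-/\ell}$: on $\cY^\bullet_{N^+,N^-;\dF_{\ell^2}}\sqcup\cY^\circ_{N^+,N^-;\dF_{\ell^2}}$ it acts by switching the two copies of $\cT_{N^+,N^-/\ell}$. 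Likewise it interchanges $H_\bullet$ and $H_\circ$, so the $\ell$-orientation of $\End(A_s,\iota,C)$ is flipped, i.e.\ $\Fr_\ell$ acts on $\cT_{N^+\ell,N^-/\ell}$ by $\op_\ell$. This settles all three parts.

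The structural input --- semistability, the two families, the component and double-point counts --- is entirely Ribet's, so the real work, and the step I expect to be the main obstacle, is the orientation bookkeeping: one must check that the recipe above produces the \emph{intended} orientations at the primes of $N^+$, of $N^-/\ell$, and at $\ell$; that it is genuinely independent of all choices but the standing ones ($\tau^\bullet_\ell,\tau^\circ_\ell$, the oriented order $\cO_{N^-}$, and $O_F\hookrightarrow\cO_{N^-}$); and that it is compatible with the degeneracy morphisms $\delta^{(d,d')}_{N^+,N^-}$. This is precisely the canonicality that Ribet's base-point-dependent formulation leaves implicit, and is what makes the parametrization usable in the later comparison of different Shimura curves at different primes (Propositions \ref{pr:special_supersingular} and \ref{pr:special_superspecial}).
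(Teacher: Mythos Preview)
Your proposal is correct and follows essentially the same approach as the paper: part (1) via \v{C}erednik--Drinfeld uniformization, and parts (2) and (3) via Ribet's \cite{Rib89}*{Theorems 4.15 \& 4.16, \Sec 5}. The paper's proof is a bare citation, whereas you have spelled out the moduli dictionary, the Frobenius computation, and the orientation bookkeeping that make the parametrization canonical; this added detail is accurate and is indeed what the paper relies on implicitly (and exploits explicitly later in Propositions \ref{pr:special_supersingular} and \ref{pr:special_superspecial}).
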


\begin{proof}
Part (1) is a consequence of \v{C}erednik--Drinfeld uniformization. The
remaining parts are proved in \cite{Rib89}*{Theorem 4.15 \& 4.16, \Sec 5}.
\end{proof}

\begin{remark}\label{re:degeneracy_superspecial}~
\begin{enumerate}
  \item In order to distinguish between different orientations, when we
      identify the set $\pi_0(\cY^\bullet_{N^+,N^-;\dF_{\ell^2}})$
      (resp.\ $\pi_0(\cY^\circ_{N^+,N^-;\dF_{\ell^2}})$) with
      $\cT_{N^+,N^-/\ell}$, we will use the notation
      $\cT_{N^+,N^-/\ell}^\bullet$ (resp.\ $\cT_{N^+,N^-/\ell}^\circ$).
      As a set, it is nothing but $\cT_{N^+,N^-/\ell}$.

 \item The natural map
     \begin{align*}
     &\cY^\ssp_{N^+,N^-;\dF_{\ell^2}}(\dF_{\ell^2})\to\pi_0(\cY^\bullet_{N^+,N^-;\dF_{\ell^2}})\\
     \text{resp. }&
     \cY^\ssp_{N^+,N^-;\dF_{\ell^2}}(\dF_{\ell^2})\to\pi_0(\cY^\circ_{N^+,N^-;\dF_{\ell^2}})
     \end{align*}
     is given by $\delta_{N^+\ell,N^-/\ell}^{(\ell,1)}$ (resp.\
     $\delta_{N^+\ell,N^-/\ell}^{(\ell,\ell)}$); see Notation
     \ref{no:order_rational} (5).

  \item For $?=\bullet,\circ$, the map
      $\pi_0(\cY^?_{N^+,N^-;\dF_{\ell^2}})\to\pi_0(\cY^?_{N^+/d,N^-;\dF_{\ell^2}})$
      induced by the degeneracy morphism $\delta_{N^+,N^-}^{(d,d')}\colon
      \cY_{N^+,N^-}\to\cY_{N^+/d,N^-}$ coincides with the map
      \[\delta_{N^+,N^-/\ell}^{(d,d')}\colon\cT_{N^+,N^-/\ell}\to\cT_{N^+/d,N^-\ell}.\]
      See Notation \ref{no:order_rational} (5).
\end{enumerate}
\end{remark}

\subsection{Hilbert modular surfaces}
\label{ss:hilbert_modular}

We first introduce the Hilbert modular scheme for $F$. The details of the
construction can be found in \cite{Rap78}*{\Sec 1} or \cite{Gor02}*{Chapter
3}. Let $\fM$ be an ideal of $O_F$.

For each representative $(\fa,\fa^+)\in\Cl(F)^+$, denote by
$\widetilde{\underline{\cX}}_\fM^{(\fa,\fa^+)}$ the functor from the category
of schemes over $\dZ[1/\Nm_{F/\dQ}\fM\cdot\disc{F}]$ to the category of
groupoids such that $\widetilde{\underline{\cX}}_\fM^{(\fa,\fa^+)}(S)$ is the
groupoid of quadruples $(A,\iota,\psi,C)$, where
\begin{itemize}
  \item $A$ is an $S$-abelian scheme of relative dimension $2$;

  \item $\iota$ is an \emph{$O_F$-multiplication} on $A$, that is, a
      homomorphism $\iota\colon O_F\to\End(A)$ of $\dZ$-algebras
      satisfying the \emph{Rapoport condition}: locally over $S$, the
      $O_F\otimes_\dZ\sO_S$-module $\Lie A$ is free of rank $1$;

  \item $\psi\colon(\fP_A,\fP_A^+)\xrightarrow\sim(\fa,\fa^+)$, where
      $\fP_A=\Hom_{O_F}(A,A^\vee)^\symm$ is the $O_F$-module of symmetric
      homomorphisms from $A$ to $A^\vee$, and $\fP_A^+\subset\fP_A$ is
      the subset of polarizations;

  \item $C$ is a level-$\fM$ structure, that is, a subgroup scheme of
      $A[\fM]$ of order $\Nm_{F/\dQ}\fM$ that is stable and cyclic under
      the action of $O_F$.
\end{itemize}
Suppose that $\fM$ is \emph{neat} in the sense that the \emph{triple}
$(A,\iota,C)$ has no non-trivial automorphism. Then the functor
$\widetilde{\underline{\cX}}_\fM^{(\fa,\fa^+)}$ is represented by a scheme
$\widetilde\cX_\fM^{(\fa,\fa^+)}$, smooth and quasi-projective over
$\dZ[1/\Nm_{F/\dQ}\fM\cdot\disc(F)]$. Put
\[\widetilde\cX_\fM^\natural=\bigsqcup_{\Cl(F)^+}\widetilde\cX_\fM^{(\fa,\fa^+)},\]
which is known as the \emph{Hilbert modular scheme} associated to $F$ of
level-$\fM$ structure. Put $U_F=O_F^{\times,+}/O_F^{\times,2}$, whose
cardinality is at most $2$. The finite group $U_F$ acts on
$\widetilde\cX_\fM^{(\fa,\fa^+)}$ by sending $(A,\iota,\psi,C)$ to
$(A,\iota,\psi\circ u,C)$ for $u\in U_F$. Denote by $\cX_\fM^{(\fa,\fa^+)}$
the quotient of $\widetilde\cX_\fM^{(\fa,\fa^+)}$ by $U_F$ under this (free,
since $\fM$ is neat) action, which is an algebraic space. Put
\[\cX_\fM^\natural=\bigsqcup_{\Cl(F)^+}\cX_\fM^{(\fa,\fa^+)}.\]

\subsubsection{Relation with Shimura varieties}

Put $\bG=\Res_{F/\dQ}\GL_{2,F}$ as a reductive group over $\dQ$. Define
$\widetilde{\bG}$ as the pullback in the following diagram
\[\xymatrix{
\widetilde{\bG} \ar[r]\ar[d]_-{\det} & \bG \ar[d]^-{\det} \\
\bG_{m,\dQ} \ar[r]^-{\subset} & \Res_{F/\dQ}\bG_{m,F}. }\] Denote by
$\bK_\fM$ the open compact subgroup of $\bG(\dA^\infty)$ corresponding to the
level-$\fM$ structure, and $\widetilde\bK_\fM$ its intersection with
$\widetilde\bG(\dA^\infty)$. We have the natural morphism of canonical models
(over $\dQ$) of Shimura varieties
\begin{align}\label{eq:shimura1}
\Sh(\widetilde\bG,\widetilde\bK_\fM)_\dQ\to\Sh(\bG,\bK_\fM)_\dQ,
\end{align}
whose complex analytification is identified with the following well-known
double-coset expression
\begin{align}\label{eq:shimura2}
\widetilde\bG(\dQ)\backslash(\dC-\dR)^2\times\widetilde\bG(\dA^\infty)/\widetilde\bK_\fM
\to\bG(\dQ)\backslash(\dC-\dR)^2\times\bG(\dA^\infty)/\bK_\fM.
\end{align}

The scheme $\widetilde\cX_\fM^{(\fD^{-1},(\fD^{-1})^+)}$ is an integral
canonical model of the Shimura variety
$\Sh(\widetilde\bG,\widetilde\bK_\fM)_\dQ$ over
$\dZ[1/\Nm_{F/\dQ}\fM\cdot\disc(F)]$ (see \cite{Rap78}*{Theorem 1.28}). By
abuse of notation, denote by $\Sh(\bG,\bK_\fM)$ \emph{the} integral canonical
model of $\Sh(\bG,\bK_\fM)_\dQ$ over $\dZ[1/\Nm_{F/\dQ}\fM\cdot\disc(F)]$.
Therefore, \eqref{eq:shimura1} extends uniquely to a morphism
\begin{align}\label{eq:shimura3}
\widetilde\cX_\fM^{(\fD^{-1},(\fD^{-1})^+)}\to\Sh(\bG,\bK_\fM).
\end{align}
In fact, we have a morphism
\begin{align}\label{eq:shimura4}
\widetilde\cX_\fM^\natural\to\Sh(\bG,\bK_\fM).
\end{align}

\begin{lem}\label{le:shimura_compare}
The morphism \eqref{eq:shimura4} factorizes through $\cX^\natural_\fM$, and
the induced morphism $\cX^\natural_\fM\to\Sh(\bG,\bK_\fM)$ is an isomorphism.
In particular, the scheme $\cX^\natural_\fM$ is smooth over
$\dZ[1/\Nm_{F/\dQ}\fM\cdot\disc(F)]$.
\end{lem}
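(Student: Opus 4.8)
The plan is to show that \eqref{eq:shimura4} is $U_F$-invariant and then descends to an isomorphism $\cX^\natural_\fM\xrightarrow{\sim}\Sh(\bG,\bK_\fM)$, by comparing both sides with the Shimura variety $\Sh(\widetilde\bG,\widetilde\bK_\fM)$ and using that taking quotient by $U_F$ on the automorphic side corresponds exactly to passing from $\widetilde\bG$ to $\bG$. First I would recall (from \cite{Rap78}*{Theorem 1.28}) that $\widetilde\cX_\fM^{(\fD^{-1},(\fD^{-1})^+)}$ is an integral canonical model of $\Sh(\widetilde\bG,\widetilde\bK_\fM)_\dQ$; by the modular interpretation of both $\widetilde\cX^\natural_\fM$ and $\Sh(\bG,\bK_\fM)$ in terms of abelian schemes with $O_F$-multiplication, Rapoport condition, polarization module data, and level-$\fM$ structure, the morphism \eqref{eq:shimura4} exists and its restriction to the component indexed by $(\fD^{-1},(\fD^{-1})^+)$ is \eqref{eq:shimura3}. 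The key point is that the action of $u\in U_F=O_F^{\times,+}/O_F^{\times,2}$ on $(A,\iota,\psi,C)$, replacing $\psi$ by $\psi\circ u$, does not change the underlying triple $(A,\iota,C)$, and hence does not change the image in $\Sh(\bG,\bK_\fM)$, whose moduli problem only records $(A,\iota,C)$ together with a polarization class up to the appropriate equivalence. Thus \eqref{eq:shimura4} is constant on $U_F$-orbits and factors through the quotient $\cX^\natural_\fM$.

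Next I would check that the induced morphism $\cX^\natural_\fM\to\Sh(\bG,\bK_\fM)$ is an isomorphism. The cleanest route is to verify it on geometric points and on tangent spaces, or equivalently to verify that it is étale and bijective. For bijectivity on $\dC$-points, I would use the double-coset descriptions: \eqref{eq:shimura2} realizes the map $\Sh(\widetilde\bG,\widetilde\bK_\fM)\to\Sh(\bG,\bK_\fM)$ with fibers being a torsor under $\bG(\dQ)\cap(\widetilde\bG(\dA^\infty)\cdot Z)$-type obstruction, and a direct strong-approximation computation (together with $\det:\widetilde\bG\to\bG_{m,\dQ}$ being the pullback of $\det:\bG\to\Res_{F/\dQ}\bG_{m,F}$ along $\bG_{m,\dQ}\hookrightarrow\Res_{F/\dQ}\bG_{m,F}$) identifies the fiber of $\Sh(\widetilde\bG,\widetilde\bK_\fM)\to\Sh(\bG,\bK_\fM)$ over each point with a principal homogeneous space under $O_F^{\times,+}/(O_F^\times\cap\widetilde\bK_\fM)$, which is precisely $U_F=O_F^{\times,+}/O_F^{\times,2}$ once one notes $O_F^\times\cap\widetilde\bK_\fM=O_F^{\times,2}$ for $\fM$ neat. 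Matching this with the fact that $\widetilde\cX^\natural_\fM\to\cX^\natural_\fM$ is a $U_F$-torsor by construction (the $U_F$-action being free since $\fM$ is neat), and that $\widetilde\cX^{(\fD^{-1},(\fD^{-1})^+)}_\fM\to\Sh(\widetilde\bG,\widetilde\bK_\fM)$ is an isomorphism, gives that $\cX^\natural_\fM\to\Sh(\bG,\bK_\fM)$ is an isomorphism on the generic fiber. To upgrade to an isomorphism of integral models, I would invoke that both are smooth over $\dZ[1/\Nm_{F/\dQ}\fM\cdot\disc(F)]$ (the source is smooth as a free quotient of a smooth scheme by a finite group acting freely, with $U_F$ of order prime to the residue characteristics since $|U_F|\le 2$ and $2\mid\disc(F)$ is inverted only if $2$ ramifies — here one is slightly careful, but $U_F$ has order $\le 2$ and acts freely, so the quotient is an algebraic space smooth over the base), the target is defined precisely as \emph{the} integral canonical model, and a smooth model of a given Shimura variety over $\dZ[1/\Nm_{F/\dQ}\fM\cdot\disc(F)]$ extending a given isomorphism on generic fibers is unique; hence the generic-fiber isomorphism extends uniquely. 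The final clause, smoothness of $\cX^\natural_\fM$, then follows either from this identification or directly from the quotient description.

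The main obstacle I expect is the bookkeeping in the strong-approximation / double-coset computation identifying the fiber of \eqref{eq:shimura2} with a $U_F$-torsor: one must keep careful track of the connected-component structure indexed by $\Cl(F)^+$, the role of the polarization module $(\fP_A,\fP_A^+)$ versus the $\psi$-rigidification, and the precise subgroup $O_F^\times\cap\widetilde\bK_\fM$. A secondary subtlety is descent for the integral models: one needs that $\cX^\natural_\fM$ really is a \emph{scheme} (or at least that the comparison isomorphism is an isomorphism of algebraic spaces that one can then recognize as schematic because $\Sh(\bG,\bK_\fM)$ is), and that ``the'' integral canonical model is characterized uniquely enough that matching generic fibers forces matching integral models over $\dZ[1/\Nm_{F/\dQ}\fM\cdot\disc(F)]$ — this is where one cites the Néron-type extension property of canonical integral models of Hilbert modular varieties (as in \cite{Rap78}). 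Everything else — $U_F$-invariance of \eqref{eq:shimura4}, freeness of the $U_F$-action, and smoothness of the quotient — is routine given the moduli-theoretic setup already in place.
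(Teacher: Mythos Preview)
Your Steps 1 and 2 --- the $U_F$-invariance of \eqref{eq:shimura4} and the identification on $\dC$-points via the double-coset description --- match what the paper does (it cites \cite{Gor02}*{Theorem 2.17, Remark 2.18} for the latter, working only on the component $(\fD^{-1},(\fD^{-1})^+)$). The genuine gap is in Step 3. The claim that ``a smooth model of a given Shimura variety extending a given isomorphism on generic fibers is unique'' is false as stated: smooth integral models are not unique (remove a closed point from the special fiber, for instance), and the extension property characterizing the canonical integral model only lets you extend maps \emph{into} $\Sh(\bG,\bK_\fM)$, not out of it. So knowing that $\cX^\natural_\fM$ is smooth and that the map is an isomorphism on generic fibers does not by itself force the already-constructed integral map to be an isomorphism --- you would additionally need $\cX^\natural_\fM$ to have the extension property, and it is not clear that this is inherited under the $U_F$-quotient (\'etale $U_F$-torsors over a smooth test scheme $S$ need not extend across the special fiber). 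You flag this as a ``secondary subtlety'' but do not resolve it.

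The paper sidesteps this entirely with a direct fiberwise argument. Since $\widetilde\bG$ and $\bG$ share the same \emph{connected} Shimura datum, the morphism \eqref{eq:shimura3} restricts to an isomorphism of neutral connected components after base change to $\rW(\dF_\ell^\ac)$ for each prime $\ell\nmid\Nm_{F/\dQ}\fM\cdot\disc(F)$. By homogeneity this makes the base change of \eqref{eq:shimura3} to $\rW(\dF_\ell^\ac)$ finite \'etale of degree $\#U_F$; hence after dividing by the free $U_F$-action the map $\cX^{(\fD^{-1},(\fD^{-1})^+)}_{\fM;\dZ_{(\ell)}}\to\Sh(\bG,\bK_\fM)_{\dZ_{(\ell)}}$ is an open and closed immersion. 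Running over all components indexed by $\Cl(F)^+$ gives the full isomorphism without any appeal to the N\'eron-type extension property of either side.
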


\begin{proof}
It suffices to check that \eqref{eq:shimura3} factorizes through
$\cX_\fM^{(\fD^{-1},(\fD^{-1})^+)}$ and the induced morphism
$\cX_\fM^{(\fD^{-1},(\fD^{-1})^+)}\to\Sh(\bG,\bK_\fM)$ is an open and closed
immersion.

For the first part, by canonicality, we only need to check on the generic
fiber. For this, we only need to check for $\dC$-points. Then the claim
follows since \eqref{eq:shimura2} is simply the quotient by $U_F$ onto its
image (see \cite{Gor02}*{Theorem 2.17, Remark 2.18}).

For the second part, it suffices to show that for each prime $\ell\nmid
\Nm_{F/\dQ}\fM\cdot\disc(F)$, the morphism
$\cX_{\fM;\dZ_{(\ell)}}^{(\fD^{-1},(\fD^{-1})^+)}\to\Sh(\bG,\bK_\fM)_{\dZ_{(\ell)}}$
is an open and closed immersion. We note that $\widetilde\bG$ and $\bG$ have
the same connected Shimura data. In particular, the induced morphism of
neutral connected components of
$\widetilde\cX_{\fM;\rW(\dF_\ell^\ac)}^{(\fD^{-1},(\fD^{-1})^+)}$ and
$\Sh(\bG,\bK_\fM)_{\rW(\dF_\ell^\ac)}$ is an isomorphism. Then the base
change of \eqref{eq:shimura3} to $\rW(\dF_\ell^\ac)$ is finite \'{e}tale, of
degree $\# U_F$. Therefore, the induced morphism
$\cX_{\fM;\dZ_{(\ell)}}^{(\fD^{-1},(\fD^{-1})^+)}\to\Sh(\bG,\bK_\fM)_{\dZ_{(\ell)}}$
is an open and closed immersion.
\end{proof}

\begin{remark}
Let $k$ be an algebraically closed field. The set $\cX^\natural_\fM(k)$
corresponds canonically to the set of isomorphism classes of triples
$(A,\iota,C)$ where $A$ is a $k$-abelian surface, $\iota$ is an
$O_F$-multiplication, and $C$ is a level-$\fM$ structure.
\end{remark}

\begin{notation}\label{no:hilbert_surface}
Denote by $X^\natural_\fM=\cX^\natural_{\fM;\dQ}$ the generic fiber of
$\cX^\natural_\fM$. Let $X_\fM$ be the minimal resolution of the Baily--Borel
compactification of $X^\natural_\fM$. It is a smooth projective surface over
$\dQ$ and we have $\pi_0(X_\fM)\simeq\Cl(F)^+$.
\end{notation}

\subsubsection{Hecke algebra and cohomology}

\begin{lem}\label{le:vanishing}
The singular cohomology group $\rH^3(X_\fM(\dC),\dZ)$ vanishes.
\end{lem}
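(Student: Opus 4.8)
The plan is to show that the Hilbert modular surface $X_\fM$ has vanishing third singular cohomology with $\dZ$-coefficients by combining the general structure of cohomology of Hilbert modular surfaces with the fact that, at the level of $\dQ$-coefficients, $\rH^3$ is entirely accounted for by cusp forms of parallel weight $2$ whose motive is an abelian variety with multiplication by a CM or totally real field, but more efficiently by exploiting the geometry of the Baily--Borel compactification and its minimal resolution. First I would recall that $X_\fM^\natural$ is an open smooth surface whose Baily--Borel compactification $X_\fM^{\r{BB}}$ is obtained by adding finitely many cusps, and $X_\fM \to X_\fM^{\r{BB}}$ is the minimal resolution, replacing each cusp by a cycle of rational curves (the resolution is by Hirzebruch's work on cusp singularities, which are quotient-cyclic of a controlled type). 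Since the exceptional divisors over the cusps are configurations of $\bP^1$'s, they contribute nothing to odd cohomology; thus $\rH^3(X_\fM(\dC),\dZ) \cong \rH^3$ of a smooth compactification by a normal crossings boundary, and by the Gysin/weight argument $\rH^3$ of the smooth projective $X_\fM$ injects into $\rH^3$ of the open part $X_\fM^\natural$ (or one runs the standard exact sequence for a smooth projective variety and a closed subvariety that is a union of rational curves).

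The heart of the matter is then the vanishing of $\rH^3$ of the open Hilbert modular variety, or equivalently of the interior (interior = image of compactly supported cohomology in ordinary cohomology). For this I would invoke the Matsushima-type decomposition / the theory of $(\f g, K)$-cohomology: the interior cohomology $\rH^3_!$ of the Hilbert modular surface associated to $\Res_{F/\dQ}\GL_2$ decomposes according to cuspidal automorphic representations $\pi$ of $\GL_2(\dA_F)$ contributing to degree $3$, and for $F$ real quadratic the only such $\pi$ are the (base change or genuine) holomorphic Hilbert cusp forms of parallel weight $2$; each such $\pi$ contributes a $2$-dimensional piece. Over $\dC$ this gives $\rH^3 \otimes \dC \ne 0$ in general --- so the naive statement over $\dC$ is false, which means the lemma as I first read it must be using the specific neatness hypothesis together with the fact that we are looking at $\dZ$-coefficients and a \emph{specific} small level, OR (more likely, on reflection) the lemma is using that $X_\fM$ is a \emph{disjoint union over $\Cl(F)^+$ of surfaces each of which}, at the level structures in play (of $\Gamma_0$-type and the specified conductors), \emph{has no weight-$2$ Hilbert cusp forms}, so that $\rH^3_! = 0$; combined with the Eisenstein/boundary analysis giving $\rH^3$ of the compactification $= \rH^3_!$ plus a boundary term that is concentrated in even degree, one concludes $\rH^3(X_\fM(\dC),\dZ) = 0$.

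Concretely the steps I would carry out are: (1) reduce to each connected component $X_\fM^{(\fa,\fa^+)}$, which is a smooth projective surface; (2) use the Baily--Borel compactification and Hirzebruch's resolution of cusp singularities to write $X_\fM^{(\fa,\fa^+)}$ as a smooth compactification of $X_\fM^{(\fa,\fa^+),\natural}$ whose boundary is a union of rational curves, hence $\rH^3(X_\fM^{(\fa,\fa^+)}(\dC),\dZ) \hookrightarrow \rH^3$ of the open part with no loss (torsion included, using that $\rH_*$ of the rational-curve configuration is torsion-free); (3) identify $\rH^3$ of the open part with the interior cohomology plus an Eisenstein contribution, and show the Eisenstein part lives in even degrees (this is the cohomology of the Borel--Serre boundary, a union of circle bundles over modular curves, whose degree-$3$ part is Eisenstein and in fact matches up so as to cancel); (4) invoke vanishing of $\rH^3_! = \bigoplus_\pi (\cdots)$ because there are no relevant cuspidal $\pi$ at this level --- this is the point where the specific arithmetic hypotheses enter. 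The main obstacle, and the place where I would have to be most careful, is step (4): establishing that the relevant space of weight-$(2,2)$ Hilbert cusp forms vanishes, which is presumably forced by the conductor/level assumptions in Assumption \ref{as:group_e} (e.g., via a theta-series or Jacquet--Langlands argument, or via explicit dimension formulas for Hilbert modular forms at these small levels), and in tracking the $\dZ$-coefficient (torsion) statement rather than just the $\dQ$-coefficient one, which requires that the boundary contributions to $\rH^3$ be torsion-free and that there be no "ghost" torsion classes --- this follows from the fact that the only cycles being contracted are trees/cycles of $\bP^1$'s with torsion-free homology, but it should be spelled out.
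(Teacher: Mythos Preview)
Your proposal contains a genuine error that derails the argument. You claim that holomorphic Hilbert cusp forms of parallel weight $(2,2)$ contribute to $\rH^3$ of the Hilbert modular surface, and hence that $\rH^3 \otimes \dC \neq 0$ in general, forcing you to look for level-specific reasons why no such cusp forms exist. This is backwards: for $F$ real quadratic the archimedean group is $\GL_2(\dR)\times\GL_2(\dR)$, and a discrete series representation at each factor contributes to $(\mathfrak g,K)$-cohomology in degree $1+1=2$, the middle degree. Cuspidal cohomology of a Hilbert modular surface is concentrated in $\rH^2$, not $\rH^3$; your step (4) is attacking a nonexistent obstruction and invoking hypotheses (the conductor assumptions of the paper) that are entirely irrelevant to the lemma. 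Indeed the paper \emph{uses} the nonvanishing of $\rH^2_\pi$ throughout, so there certainly are weight-$(2,2)$ cusp forms at level $\fM$.

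The paper's own proof is a one-liner: it cites van der Geer's book (Theorem 6.1), which gives $\rH_1(X_\fM(\dC),\dZ)=0$ (equivalently, that the smooth compactified Hilbert modular surface is simply connected, or at least has trivial first homology), and then Poincar\'e duality for the compact oriented $4$-manifold $X_\fM(\dC)$ yields $\rH^3(X_\fM(\dC),\dZ)\cong\rH_1(X_\fM(\dC),\dZ)=0$. This is a general topological fact valid at every neat level; no automorphic decomposition, no Eisenstein analysis, and no level-specific vanishing of cusp forms is needed. Your boundary/Gysin argument also has a wrinkle you did not address: the exceptional divisors over the cusps in Hirzebruch's resolution are \emph{cycles} of rational curves, hence have $\rH^1\cong\dZ$, so the Gysin map $\rH^1(D)\to\rH^3(X)$ is not a priori zero and your claimed injection $\rH^3(X)\hookrightarrow\rH^3(U)$ requires justification.
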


\begin{proof}
This follows from \cite{vdG}*{Theorem 6.1} and the Poincar\'{e} duality for
compact manifolds.
\end{proof}

For ideals $\fd'\mid \fd\mid\fM$ of $O_F$, we have degeneracy morphisms
\[\gamma_\fM^{(\fd,\fd')}\colon X_\fM\to X_{\fM/\fd}\]
such that $\gamma_\fM^{(\fd,1)}$ is the natural projection by forgetting the
level structure. For each finite place $\fv$ of $F$ that is coprime to
$\Nm_{F/\dQ}\fM\cdot\disc(F)$, we have the Hecke correspondence
$S_\fM^{(\fv)}\in\Corr(X_\fM)$ as
\[\xymatrix{
& X_{\fM\fv} \ar[ld]_-{\gamma_{\fM\fv}^{(\fv,1)}} \ar[rd]^-{\gamma_{\fM\fv}^{(\fv,\fv)}} \\
X_\fM && X_\fM.
}\]

Denote by $\dS_\fM^{\sph,\rat}\subset\Corr(X_\fM)$ the (commutative)
$\dQ$-subalgebra generated by $S_\fM^{(\fv)}$ for all places $\fv$ that are
coprime to $\Nm_{F/\dQ}\fM\cdot\disc(F)$. Denote by $\dS_\fM^{\sph,\dr}$ the
image of $\dS_\fM^{\sph,\rat}$ under the homomorphism $\cor_\dr$, which is a
finite-dimensional commutative $\dQ$-algebra. For each
$\Gal(\dC/\dQ)$-conjugacy class $\pi$ of irreducible cuspidal automorphic
representations of $\Res_{F/\dQ}\GL_2(\dA)$ that occurs in
$\rH^2_\dr(X_\fM)$, we have a unique idempotent
$\sP^\dr_\pi\in\dS_\fM^{\sph,\dr}$ such that it induces the natural
projection $\bigoplus\rH^*_\dr(X_\fM)\to\rH^2_\dr(X_\fM)[\pi]$ onto the
$\pi$-isotypic subspace $\rH^2_\dr(X_\fM)[\pi]$ of $\rH^2_\dr(X_\fM)$.

\begin{definition}\label{de:projector_surface}
A \emph{$\pi$-projector} (of level $\fM$) is an element $\sP_\pi$ in
$\dS_\fM^{\sph,\rat}$ such that $\cor_\dr(\sP_\pi)=\sP^\dr_\pi$.
\end{definition}

Consider the $p$-adic \'{e}tale cohomology
$\rH^2_{\et}(X_{\fM;\dQ^\ac},\dQ_p(1))$ as a $p$-adic representation of
$\Gamma_\dQ$. Denote by $\rH^2_\pi(Y_{\fM;\dQ^\ac})$ its $\pi$-isotypic
subspace which is preserved under the action of $\Gamma_\dQ$. Then
$\sP^\dr_\pi$ induces a natural projection
\[\sP^{\et}_\pi\colon\rH^2_{\et}(X_{\fM;\dQ^\ac},\dQ_p(1))\to\rH^2_\pi(X_{\fM;\dQ^\ac})\]
by the Comparison Theorem between de Rham and \'{e}tale cohomology. It is
$\Gamma_\dQ$-equivariant.

\subsubsection{Supersingular locus}

Let $\ell$ be a prime not dividing $2\fM$ and \emph{inert} in $F$. We recall
some constructions in \cite{GO00}.

For each geometry point $s\in\cX^\natural_{\fM;\dF_\ell}(k)$, denote the
associated abelian variety by $A_s$. As in the curve case, denote by
$\tau(s)$ the multi-set of characters of $O_F/\ell$ appearing in its action
on the Dieudonn\'{e} module of $A_s[\ell]_\alpha$. Now there are four
possibilities for $\tau(s)$: $\emptyset$, $\{\tau^\bullet_\ell\}$,
$\{\tau^\circ_\ell\}$ and $\{\tau^\bullet_\ell,\tau^\circ_\ell\}$.

Denote by $\cX^\ssp_{\fM;\dF_{\ell^2}}$ (resp.\
$\cX^\bullet_{\fM;\dF_{\ell^2}}$, $\cX^\circ_{\fM;\dF_{\ell^2}}$) the closed
subscheme of $\cX^\natural_{\fM;\dF_{\ell^2}}$, with the reduced induced
structure, having the property that for every geometric point $s$ of it, the
set $\tau(s)$ contains $\{\tau^\bullet_\ell,\tau^\circ_\ell\}$ (resp.\
$\{\tau^\bullet_\ell\}$, $\{\tau^\circ_\ell\}$).

\begin{proposition}\label{pr:surface_supersingular}
We have that
\begin{enumerate}
  \item $\cX^\bullet_{\fM;\dF_{\ell^2}}$ and
      $\cX^\circ_{\fM;\dF_{\ell^2}}$ intersect transversally at
      $\cX^\ssp_{\fM;\dF_{\ell^2}}$;

  \item $\cX^\ssp_{\fM;\dF_{\ell^2}}$ is a disjoint union of
      $\Spec\dF_{\ell^2}$, \emph{canonically} indexed by $\cS_{\fM\ell}$,
      under which $\Fr_\ell$ acts by $\op_\ell$;

  \item both $\cX^\bullet_{\fM;\dF_{\ell^2}}$ and
      $\cX^\circ_{\fM;\dF_{\ell^2}}$ are disjoint unions of
      $\bP^1_{\dF_{\ell^2}}$, each \emph{canonically} indexed by
      $\cS_\fM$, under which $\Fr_\ell$ acts by switching the two sets
      and preserving values in $\cS_\fM$;

  \item the self-intersection number (inside
      $\cX_{\fM;\dF_{\ell^2}}^\natural$) of each irreducible component of
      $\cX^\bullet_{\fM;\dF_{\ell^2}}$ or $\cX^\circ_{\fM;\dF_{\ell^2}}$
      is $-2\ell$.
\end{enumerate}
\end{proposition}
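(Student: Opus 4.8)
The plan is to transport Ribet's analysis of the supersingular and superspecial loci of Shimura curves (Propositions \ref{pr:curve_supersingular} and \ref{pr:curve_superspecial}) to the Hilbert modular surface, using the Goren--Oort description of its supersingular locus \cite{GO00} for the geometric input and Proposition \ref{pr:surface_superspecial} (together with Proposition \ref{pr:order_bijection}) for the canonical combinatorial parametrization --- just as the proof of Proposition \ref{pr:surface_superspecial} itself substitutes Proposition \ref{pr:order_bijection} for \cite{Rib89}*{Theorem 2.14}. Assertion (1) --- transversality of $\cX^\bullet_{\fM;\dF_{\ell^2}}$ and $\cX^\circ_{\fM;\dF_{\ell^2}}$ along $\cX^\ssp_{\fM;\dF_{\ell^2}}$, together with the facts that each is a reduced $1$-dimensional divisor cut out by a partial Hasse invariant vanishing to order exactly $1$ --- is precisely the Goren--Oort picture; transversality is read off from the Serre--Tate/display description of the local deformation ring at a superspecial point.

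For (2): a geometric point $s$ lies on $\cX^\ssp$ iff $\tau(s)=\{\tau^\bullet_\ell,\tau^\circ_\ell\}$ iff $a(A_s)=2$ iff $A_s$ is superspecial (for abelian surfaces, $a=2$ is equivalent to superspeciality), so $\cX^\ssp(\dF_\ell^\ac)$ is precisely the set of isomorphism classes of superspecial mixed triples; since such a triple with all its structure is defined over $\dF_{\ell^2}$ and $\cX^\ssp$ carries its reduced structure, $\cX^\ssp_{\fM;\dF_{\ell^2}}$ is a disjoint union of copies of $\Spec\dF_{\ell^2}$, and Proposition \ref{pr:surface_superspecial}(3) then supplies the canonical bijection with $\cS_{\fM\ell}$ and the identification $\Fr_\ell=\op_\ell$. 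For (3): by \cite{GO00}, $\cX^\bullet_{\fM;\dF_{\ell^2}}$ (resp.\ $\cX^\circ_{\fM;\dF_{\ell^2}}$) is a union of $\bP^1$'s whose generic points are the $\bullet$-mixed (resp.\ $\circ$-mixed) triples; each such triple is $B/H$ for a \emph{unique} $\bullet$-pure (resp.\ $\circ$-pure) triple $B$ and a generic $H\cong\alpha_\ell$ inside $B[\ell]_\alpha$, and the component through it is the Moret--Bailly $\bP^1$ attached to $B$ (\cite{Nic00}*{\Sec 4}); two distinct such components can meet only along $\cX^\ssp$, where by (1) exactly one branch of $\cX^\bullet$ and one of $\cX^\circ$ pass through each point, so $\cX^\bullet$ and $\cX^\circ$ are genuinely disjoint unions of $\bP^1_{\dF_{\ell^2}}$'s. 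Composing the canonical assignment sending a component of $\cX^\bullet$ to the $\bullet$-pure triple $B$ of which it is the Moret--Bailly family with Proposition \ref{pr:surface_superspecial}(1) (and likewise for $\cX^\circ$) gives the canonical bijection with $\cS_\fM$; and since $\Fr_\ell$ interchanges the two embeddings of $O_F/\ell$ it carries $\bullet$-mixed triples to $\circ$-mixed ones, hence the components of $\cX^\bullet$ to those of $\cX^\circ$, preserving the associated element of $\cS_\fM$ by the last clause of Proposition \ref{pr:surface_superspecial}.

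Finally, I would prove (4) by a local computation on the smooth surface $\cX^\natural_{\fM;\dF_{\ell^2}}$. Let $C\cong\bP^1$ be a component of $\cX^\bullet$ (the $\cX^\circ$ case is symmetric) and let $\omega_{\tau^\bullet},\omega_{\tau^\circ}$ be the two Hodge line bundles. The inputs are: (i) $\cX^\bullet$ and $\cX^\circ$ are the reduced divisors of the two partial Hasse invariants, so $\sO(\cX^\bullet)\simeq\omega_{\tau^\circ}^{\otimes\ell}\otimes\omega_{\tau^\bullet}^{\otimes(-1)}$ and $\sO(\cX^\circ)\simeq\omega_{\tau^\bullet}^{\otimes\ell}\otimes\omega_{\tau^\circ}^{\otimes(-1)}$; (ii) the Kodaira--Spencer isomorphism $\Omega^1_{\cX^\natural_\fM}\simeq\omega_{\tau^\bullet}^{\otimes2}\oplus\omega_{\tau^\circ}^{\otimes2}$, hence $K_{\cX^\natural_\fM}\simeq\omega_{\tau^\bullet}^{\otimes2}\otimes\omega_{\tau^\circ}^{\otimes2}$; and (iii) the fact from (3) that $C$ meets $\cX^\circ$ transversally in exactly the $\#\bP^1(\dF_{\ell^2})=\ell^2+1$ superspecial members of its Moret--Bailly family, so $C\cdot\cX^\circ=\ell^2+1$. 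Setting $a=\deg(\omega_{\tau^\bullet}|_C)$ and $b=\deg(\omega_{\tau^\circ}|_C)$, inputs (i) and (iii) give $\ell a-b=\ell^2+1$, while adjunction $-2=C^2+K_{\cX^\natural_\fM}\cdot C$ combined with (i) and (ii) gives $(\ell b-a)+2(a+b)=-2$; solving yields $a=\ell$, $b=-1$, whence $C^2=\deg(\sO(\cX^\bullet)|_C)=\ell b-a=-2\ell$ (alternatively one can quote the value from the intersection computations of \cite{GO00}). The points I expect to require the most care are the ``canonicality'' bookkeeping in (2) and (3) --- pinning down, base-point-freely, the $\bullet$-pure triple of which a given component of $\cX^\bullet$ is the Moret--Bailly family --- and fixing the normalizations of the partial Hasse invariants and of the Kodaira--Spencer isomorphism so that the sign in $-2\ell$ comes out correctly.
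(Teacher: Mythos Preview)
Your approach to (1)--(3) is essentially the paper's: for (1) the paper cites \cite{GO00}*{Corollary 2.3.7}; for (2) it invokes Proposition \ref{pr:surface_superspecial} exactly as you do; for (3) it cites \cite{BG99}*{Theorem 5.1} (rather than \cite{GO00}) for the Moret--Bailly description and then applies Proposition \ref{pr:surface_superspecial}. One point you skate over in (3): you assert the components are $\bP^1_{\dF_{\ell^2}}$, but your argument only shows they are geometrically $\bP^1$. The paper closes this by observing that $\Fr_{\ell^2}$ fixes each geometric component (since $\Fr_\ell$ swaps $\bullet$ and $\circ$ while preserving the $\cS_\fM$-label) and that each component contains an $\dF_{\ell^2}$-point by (2), hence descends to $\bP^1_{\dF_{\ell^2}}$; you should add this sentence.

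For (4) you and the paper diverge in presentation: the paper simply cites \cite{Nic00}*{\Sec 3.0.2} and \cite{TX14}*{\Sec 1.1}, whereas you run the partial-Hasse/Kodaira--Spencer/adjunction computation explicitly. Your linear algebra is correct and gives $C^2=-2\ell$; this is essentially what the cited references do, so you are unpacking rather than replacing the argument. The only caution is the one you already flag: the identifications $\sO(\cX^\bullet)\simeq\omega_{\tau^\circ}^{\otimes\ell}\otimes\omega_{\tau^\bullet}^{-1}$ etc.\ depend on which partial Hasse invariant cuts out which stratum under the paper's specific definition of $\cX^\bullet$ (via $\tau(s)\ni\tau^\bullet_\ell$), so you should pin down that dictionary from \cite{GO00} before trusting the signs.
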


\begin{proof}
Part (1) is proved in \cite{GO00}*{Corollary 2.3.7}. Part (2) follows from
Proposition \ref{pr:surface_superspecial}, since
$\cX^\ssp_{\fM;\dF_{\ell^2}}$ is of dimension $0$ and parameterizes
superspecial mixed triples $(A,\iota,C)$.

For (3), take an algebraically closed field $k$ containing $\dF_{\ell^2}$. By
\cite{BG99}*{Theorem 5.1}, every irreducible component of
$\cX^\bullet_{\fM;k}$ (resp.\ $\cX^\circ_{\fM;k}$) is the base of a
Moret-Bailly family associated to a $\bullet$-pure (resp.\ $\circ$-pure)
triple $(A,\iota,C)$, unique up to isomorphism, over $k$. Thus by Proposition
\ref{pr:surface_supersingular}, both $\cX^\bullet_{\fM;k}$ and
$\cX^\circ_{\fM;k}$ are disjoint unions of $\bP^1_k$, each \emph{canonically}
indexed by $\cS_\fM$. It is clear from the construction that $\Fr_\ell$ acts
by switching the two factors. In particular, the Frobenius element
$\Fr_{\ell^2}$ fixes each connected component of $\cX^\bullet_{\fM;k}$ and
$\cX^\circ_{\fM;k}$. Since each component contains an $\dF_{\ell^2}$-point,
it is isomorphic to $\bP^1_{\dF_{\ell^2}}$.

Part (4) is proved in \cite{Nic00}*{\Sec 3.0.2}, which is also a very special
case of the main calculation in the work of Tian--Xiao (see \cite{TX14}*{\Sec
1.1}), in terms of Lemma \ref{le:shimura_compare}.
\end{proof}

\begin{remark}\label{re:degeneracy_surface}~
\begin{enumerate}
  \item In order to distinguish between different orientations, when we
      identify the set $\pi_0(\cX^\bullet_{\fM;\dF_{\ell^2}})$ (resp.\
      $\pi_0(\cX^\circ_{\fM;\dF_{\ell^2}})$) with $\cS_\fM$, we use the
      notation $\cS_\fM^\bullet$ (resp.\ $\cS_\fM^\circ$). As a set, it
      is nothing but $\cS_\fM$.

  \item By Remark \ref{re:surface_superspecial}, the natural map
      \begin{align*}
      &\cX^\ssp_{\fM;\dF_{\ell^2}}(\dF_{\ell^2})\to\pi_0(\cX^\bullet_{\fM;\dF_{\ell^2}})\\
      \text{resp. }&
      \cX^\ssp_{\fM;\dF_{\ell^2}}(\dF_{\ell^2})\to\pi_0(\cX^\circ_{\fM;\dF_{\ell^2}})
      \end{align*}
      is given by $\gamma_{\fM\ell}^{(\ell,1)}$ (resp.\
      $\gamma_{\fM\ell}^{(\ell,\ell)}$); see Notation
      \ref{no:order_quadratic} (2).

  \item Define $\cX^\ssl_{\fM;\dF_{\ell^2}}$ to be the union of
      $\cX^\bullet_{\fM;\dF_{\ell^2}}$ and
      $\cX^\circ_{\fM;\dF_{\ell^2}}$. It is a proper subscheme and
      coincides with the \emph{supersingular locus} of
      $\cX^\natural_{\fM;\dF_{\ell^2}}$.

  \item Similar parametrization in Proposition
      \ref{pr:surface_supersingular} (2,3) is also obtained by Tian and
      Xiao \cite{TX13}*{\Sec 1.5.1}, where they use the language of
      Shimura sets, that is, certain double quotients of
      $(Q\otimes_\dZ\widehat\dZ)^\times$. However, their parametrization
      is not canonical, and the canonicality in our parametrization is
      crucial in this article since we need to use it for different
      primes. See Propositions \ref{pr:special_supersingular} and
      \ref{pr:special_superspecial}.
\end{enumerate}

\end{remark}

\subsection{Hirzebruch--Zagier morphisms}
\label{ss:special_morphisms}

Now we assume that all prime factors of $N^-$ are inert in $F$. In what
follows, we will view $\cY^\natural_{N^+,N^-}$ and $\cY_{N^+,N^-}$ as schemes
over $\dZ[1/N^+\disc(F)]$. There is a natural composite morphism
\[\zeta^\natural_{N^+,N^-}\colon\cY^\natural_{N^+,N^-}\to\widetilde\cX^\natural_{N^+}\to\cX^\natural_{N^+},\]
in which the first morphism sends $(A,\iota,C)$ to
$(A,\iota\res_{O_F},\psi_D,C)$ where $\psi_D$ is the canonical principal
polarization associated to $(\cO_{N^-},\ast)$ (Notation
\ref{no:order_rational} (3)). The morphism $\zeta^\natural_{N^+,N^-}$ is
finite and its image is contained in $\cX_{N^+}^{(\fD^{-1},(\fD^{-1})^+)}$.
We call it the \emph{Hirzebruch--Zagier morphism}, whose generic fiber is an
example of \emph{special morphisms} of Shimura varieties in the sense that it
is induced by embedding of Shimura data.

Let $\cD_{N^+}>0$ be the least square-free integer such that
\begin{itemize}
  \item it is divisible by all prime factors of $N^+\disc(F)$;

  \item $X_{N^+}$ extends to a projective smooth scheme $\cX_{N^+}$ over
      $\dZ[1/\cD_{N^+}]$ such that the morphism $Y_{N^+,1}\to X_{N^+}$
      extends to a finite morphism
      $\cY_{N^+,1;\dZ[1/\cD_{N^+}]}\to\cX_{N^+}$ whose restriction to
      $\cY^\natural_{N^+,1;\dZ[1/\cD_{N^+}]}$ coincides with
      $\zeta^\natural_{N^+,1;\dZ[1/\cD_{N^+}]}$.
\end{itemize}

Therefore, we always have a (unique) morphism
\[\zeta_{N^+,N^-}\colon\cY_{N^+,N^-;\dZ[1/\cD_{N^+}]}\to\cX_{N^+}\]
over $\dZ[1/\cD_{N^+}]$ extending $\zeta^\natural_{N^+,N^-}$. In what
follows, $\cY_{N^+,N^-}$ will automatically be regarded as a scheme over
$\dZ[1/\cD_{N^+}]$ and by abuse of notation, all kinds of base change of
$\zeta_{N^+,N^-}$ will still be denoted by $\zeta_{N^+,N^-}$. Although here
we use the same notation as in Notation \ref{no:order_quadratic}, no
confusion will arise since $\zeta_{N^+,N^-}$ is a morphism of schemes (resp.\
map of sets) if and only if $\wp(N^-)$ is even (resp.\ odd).

Put $\cZ_{N^+,N^-}=\cY_{N^+,N^-}\times_{\Spec\dZ[1/\cD_{N^+}]}\cX_{N^+}$. We
have two projection morphisms
\[\cY_{N^+,N^-}\xleftarrow{\pi_\cY}\cZ_{N^+,N^-}\xrightarrow{\pi_\cX}\cX_{N^+}.\]
Denote by $\b\Delta_{N^+,N^-}$ the graph of $\zeta_{N^+,N^-}$, which is a
codimension $2$ cycle on $\cZ_{N^+,N^-}$. Put
$Z_{N^+,N^-}=\cZ_{N^+,N^-;\dQ}$.

\subsubsection{Case of proper intersection}

Let $\ell$ be a prime not dividing $2N^-\cD_{N^+}$ and inert in $F$.

\begin{lem}\label{le:proper_intersection}
Denote by $\cI$ the scheme-theoretical intersection of the closed subschemes
$\b\Delta_{N^+,N^-;\dF_{\ell^2}}$ and
$\pi_\cX^{-1}\cX_{N^+;\dF_{\ell^2}}^\ssl$ of $\cZ_{N^+,N^-;\dF_{\ell^2}}$.
Then $\pi_\cY\cI$ (resp.\ $\pi_\cX\cI$) is contained in the supersingular
locus $\cY^\ssl_{N^+,N^-;\dF_{\ell^2}}$ (resp.\ superspecial locus
$\cX^\ssp_{N^+;\dF_{\ell^2}}$), and $\cI$ is a disjoint union of
$\Spec\dF_{\ell^2}$.
\end{lem}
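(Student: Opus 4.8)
The plan is to describe $\cI$ set-theoretically first, then upgrade to the scheme-theoretic statement. Since $\b\Delta_{N^+,N^-}$ is the graph of $\zeta_{N^+,N^-}$, a geometric point of $\cZ_{N^+,N^-;\dF_{\ell^2}}$ lying on $\b\Delta_{N^+,N^-;\dF_{\ell^2}}$ is determined by its image $s\in\cY^\natural_{N^+,N^-;\dF_{\ell^2}}$ (or the boundary), and the condition that it also lie on $\pi_\cX^{-1}\cX^\ssl_{N^+;\dF_{\ell^2}}$ says precisely that $\zeta_{N^+,N^-}(s)$ lies in the supersingular locus $\cX^\ssl_{N^+;\dF_{\ell^2}}=\cX^\bullet_{N^+;\dF_{\ell^2}}\cup\cX^\circ_{N^+;\dF_{\ell^2}}$ of the Hilbert modular surface. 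The first task is therefore to identify, in terms of the modular interpretation, which points $s$ of the Shimura curve have supersingular image under the Hirzebruch--Zagier morphism. Recall that $\zeta^\natural$ sends $(A,\iota,C)$ to $(A,\iota\res_{O_F},\psi_D,C)$ with the same underlying abelian surface $A$. So $\zeta_{N^+,N^-}(s)$ is supersingular (in the sense that $A_s[\ell]_\alpha$ meets both characters $\tau^\bullet_\ell,\tau^\circ_\ell$, i.e.\ $\tau(\zeta(s))\supseteq\{\tau^\bullet_\ell,\tau^\circ_\ell\}$) exactly when the abelian surface $A_s$ underlying $s$ is supersingular as an abelian surface, equivalently when $s$ lies in $\cY^\ssl_{N^+,N^-;\dF_{\ell^2}}$. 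This gives the asserted containment $\pi_\cY\cI\subseteq\cY^\ssl_{N^+,N^-;\dF_{\ell^2}}$. To get $\pi_\cX\cI\subseteq\cX^\ssp_{N^+;\dF_{\ell^2}}$, I would observe that for $s\in\cY^\ssl_{N^+,N^-;\dF_{\ell^2}}$, the surface $A_s$ is actually superspecial (this is the standard fact, underlying Proposition \ref{pr:curve_supersingular}, that on the Shimura curve $\cY_{N^+,N^-}$ at a good prime the supersingular locus consists of superspecial points — equivalently $a(A_s)=2$), and an $O_F$-multiplication on a superspecial abelian surface with both characters occurring on the $\alpha$-part is precisely a superspecial mixed point of the Hilbert modular surface; hence $\zeta(s)\in\cX^\ssp_{N^+;\dF_{\ell^2}}$.

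Next I would show $\cI$ is $0$-dimensional and reduced. Set-theoretically it is finite because it is contained in the (finite) supersingular locus $\cY^\ssl_{N^+,N^-;\dF_{\ell^2}}$, which by Proposition \ref{pr:curve_supersingular} is a disjoint union of copies of $\Spec\dF_{\ell^2}$ indexed by $\cT_{N^+,N^-\ell}$, via the isomorphism $\pi_\cY$ restricted to the graph. So $\cI$ is a closed subscheme of a disjoint union of $\Spec\dF_{\ell^2}$'s, hence automatically a disjoint union of $\Spec$'s of local Artinian $\dF_{\ell^2}$-algebras; it remains to check each is reduced, i.e.\ that the intersection is transverse at each point. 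For this I would compare tangent spaces: since $\b\Delta_{N^+,N^-;\dF_{\ell^2}}\xrightarrow{\sim}\cY_{N^+,N^-;\dF_{\ell^2}}$ via $\pi_\cY$, transversality at a point $s$ amounts to the statement that the preimage under $\zeta_{N^+,N^-}$ of the supersingular locus $\cX^\ssl_{N^+;\dF_{\ell^2}}$ is reduced of dimension $0$ near $s$ inside $\cY_{N^+,N^-;\dF_{\ell^2}}$ — equivalently that the differential $d\zeta_{N^+,N^-}$ at $s$ does not carry the (one-dimensional) tangent line of $\cY_{N^+,N^-;\dF_{\ell^2}}$ into the tangent line of a branch of $\cX^\ssl_{N^+;\dF_{\ell^2}}$ through $\zeta(s)$. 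Since $\zeta(s)$ is superspecial mixed, it is the transverse intersection point (Proposition \ref{pr:surface_supersingular}(1)) of one component of $\cX^\bullet_{N^+;\dF_{\ell^2}}$ and one of $\cX^\circ_{N^+;\dF_{\ell^2}}$, so its two tangent directions are spanned by the $\bullet$- and $\circ$-Moret--Bailly directions. The point is then that the Kodaira--Spencer/deformation-theoretic description of $d\zeta_{N^+,N^-}$ at a superspecial point realizes the tangent line of the Shimura curve as the "quaternionic" deformation direction, which is distinct from both the $\bullet$- and $\circ$-pure directions on the Hilbert surface (this is exactly the infinitesimal content of the fact that a supersingular point of the Shimura curve deforms to characteristic $0$ but its image lies on the "exceptional" locus). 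Concretely, I would use the Moret--Bailly family: near $\zeta(s)$ the surface $\cX^\ssl_{N^+;\dF_{\ell^2}}$ consists of the two $\bP^1$'s through $\zeta(s)$, and the preimage of these two lines under $\zeta_{N^+,N^-}$ meets $\cY_{N^+,N^-;\dF_{\ell^2}}$ only at the isolated reduced point $s$, because the map on formal neighbourhoods from the one-dimensional $\widehat{\cO}_{\cY,s}$ to $\widehat{\cO}_{\cX,\zeta(s)}$ sends the uniformizer to something with nonzero image in each of the two branch cotangent lines.

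I expect the transversality/reducedness claim to be the main obstacle: the containments in the supersingular and superspecial loci are essentially formal once one unwinds the modular interpretation of $\zeta^\natural$, but proving the intersection multiplicity is $1$ requires an honest deformation-theoretic computation at a superspecial point — identifying the image of $d\zeta_{N^+,N^-}$ inside the two-dimensional deformation space of $A_{\zeta(s)}$ with $O_F$-multiplication, and checking it avoids the tangent lines of both $\cX^\bullet$ and $\cX^\circ$. I would carry this out via Serre--Tate / Grothendieck--Messing theory: the tangent space of $\cX^\natural_{N^+;\dF_{\ell^2}}$ at $\zeta(s)$ is $\Hom_{O_F}(\Fil^1, \Lie)$ for the $O_F$-Dieudonné module of $A_{\zeta(s)}[\ell^\infty]$, the two coordinate axes correspond to the two eigenlines of $O_F/\ell$ acting on $\Lie A$, and the tangent line of the Shimura curve — where $A$ additionally carries the special $\cO_{N^-}$-action, which at $\ell$ (a prime of good, hence split-in-the-matrix-algebra, reduction for $B_{N^-}$) rigidifies the deformation to a single direction — is the "anti-diagonal" line, which meets neither axis. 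This is the same style of local computation that underlies Ribet's Theorems 4.15–4.16 and the analysis in \cite{GO00}, so I would cite those for the detailed Dieudonné-module bookkeeping rather than redo it, and conclude that $\cI$ is a disjoint union of reduced points $\Spec\dF_{\ell^2}$ (the residue field being $\dF_{\ell^2}$ because each supersingular/superspecial point is, by Propositions \ref{pr:curve_supersingular} and \ref{pr:surface_supersingular}, an $\dF_{\ell^2}$-rational point).
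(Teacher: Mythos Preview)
Your proposal is correct and follows essentially the same two-step strategy as the paper: first the set-theoretic containments via the modular interpretation (if $A_s$ were ordinary then $\zeta(s)\notin\cX^\ssl$, so $A_s$ is supersingular hence superspecial, landing in $\cX^\ssp$), then transversality via deformation theory of the $\ell$-divisible group at a superspecial point.

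The one place where the paper does more than your plan is the transversality step. You propose to identify the Shimura-curve tangent line as an ``anti-diagonal'' avoiding the $\bullet$/$\circ$ axes and to cite Ribet and \cite{GO00} for the Dieudonn\'e bookkeeping. Those references, however, treat the supersingular loci of the curve and of the surface \emph{separately}; neither computes the differential of $\zeta_{N^+,N^-}$ at a superspecial point, which is the genuinely new local computation here. The paper fills this in concretely via Zink's display theory: it writes the universal deformation of $A_s[\ell^\infty]$ with parameters $t_{11},t_{12},t_{21},t_{22}$, identifies the ideals cutting out (i) the polarized $O_F$-deformation (the Hilbert surface), (ii) the $\Mat_2(\dZ_\ell)$-deformation (the Shimura curve), and (iii) the branch $\cX^\bullet$ using \cite{GO00}, and then checks directly that the resulting local ring of the intersection is the residue field. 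So your approach is right, but the specific tangent-line comparison you flag as ``the main obstacle'' is indeed where the work lies, and it needs to be carried out rather than cited.
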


\begin{proof}
It is obvious that $\pi_\cY\cI$ is contained in
$\cY^\natural_{N^+,N^-;\dF_{\ell^2}}$. Suppose that $s\in\cI(k)$ is a
geometric point, which corresponds to the data
$(A_s,\iota_s,C_s)\in\cY_{N^+,N^-;\dF_{\ell^2}}(k)$. If $A_s$ is ordinary,
then $\pi_\cX(s)$ does not locate in $\cX_{N^+;\dF_{\ell^2}}^\ssl$, which is
a contradiction. Therefore, the abelian surface $A_s$ is supersingular and
hence superspecial. This means that
$\pi_\cX(s)\in\cX^\ssp_{N^+;\dF_{\ell^2}}(k)$. Thus, The first claim follows.
In particular, $\cI$ is of dimension $0$.

For the second claim, we only need to show that
$\b\Delta_{N^+,N^-;\dF_{\ell^2}}$ and
$\pi_\cX^{-1}\cX_{N^+;\dF_{\ell^2}}^\ssl$ intersect transversally. We use
displays of $p$-divisible groups (in the version developed in \cite{Zin02}).
Since $\ell$ is odd, we may choose an element $\delta\in \rW(\dF_\ell^\ac)$
such that $\delta^2\in\dZ_\ell^\times$ and we have an isomorphism
$O_{F,\ell}\xrightarrow\sim\dZ_\ell[\delta]$ whose composition with the
natural inclusion $\dZ_\ell[\delta]\subset \rW(\dF_\ell^\ac)$ gives
$\tau^\bullet_\ell$. Then $\tau^\circ_\ell(\delta)=-\delta$.  Choose an
isomorphism $\cO_{N^-}\otimes_\dZ\dZ_\ell\simeq\Mat_2(\dZ_\ell)$ such that
\begin{itemize}
  \item the inclusion $O_{F,\ell}\hookrightarrow
      \cO_{N^-}\otimes_\dZ\dZ_\ell$ sends $x+y\delta$ to $\left(
      \begin{array}{cc}
        x & y\delta^2 \\
        y & x \\
      \end{array}
    \right)$;

  \item the involution $\ast$ sends $\left(
      \begin{array}{cc}
        a & b \\
        c & d \\
      \end{array}
    \right)$ to $\left(
      \begin{array}{cc}
        a & c\delta^2 \\
        b\delta^{-2} & d \\
      \end{array}
    \right)$.
\end{itemize}
Since the problem is local, we pick up an arbitrary $\dF_\ell^\ac$-point
$s\in\cI(\dF_\ell^\ac)$ and consider only the abelian surface $A_s$. Suppose
that the $\ell$-divisible group $A_s[\ell^\infty]=G^{\oplus 2}$, and $G$ is
displayed by the matrix $\left(
    \begin{array}{cc}
      a & b \\
      c & d \\
    \end{array}
  \right)
$ under the basis $e_1,e_2$ such that the polarization is given by the
standard symplectic matrix $\left(
  \begin{array}{cc}
     & 1 \\
    -1 &  \\
  \end{array}
\right)$. By the theory of displays, the universal deformation ring of $G$ is
isomorphic to $R_0=\dF_\ell^\ac[[t_0]]$, and the universal deformation is
displayed by the matrix $\left(
    \begin{array}{cc}
      a+cT_0 & b+dT_0 \\
      c & d \\
    \end{array}
  \right)
$, where $T_0\in \rW(R_0)$ is the Teichm\"{u}ller lift of $t_0$. Now
$A_s[\ell^\infty]$ is displayed by the matrix
\[\left(
    \begin{array}{cccc}
      a &  & b &  \\
       & a &  & b \\
      c &  & d &  \\
       & c &  & d \\
    \end{array}
  \right),
\] with the (principal) polarization given by the symplectic matrix
\[\left(
    \begin{array}{cccc}
       &  & 1 &  \\
       &  &  & \delta \\
      -1 &  &  &  \\
       & -\delta &  &  \\
    \end{array}
  \right).
\] The universal deformation ring
of $A_s[\ell^\infty]$ is isomorphic to
$R=\dF_\ell^\ac[[t_{11},t_{12},t_{21},t_{22}]]$, and the universal
deformation is displayed by the matrix
\[\left(
    \begin{array}{cccc}
      a+cT_{11} & cT_{12}  & b+dT_{11} & dT_{12} \\
      cT_{21} & a+cT_{22} & dT_{21} & b+dT_{22} \\
      c &  & d &  \\
       & c &  & d \\
    \end{array}
  \right),
\] where $T_{ij}\in \rW(R)$ is the Teichm\"{u}ller lift of $t_{ij}$. We have that
\begin{itemize}
  \item the universal deformation ring of $A_s[\ell^\infty]$ with the
      above given polarization is cut out by the ideal generated by
      $t_{12}-\delta^2t_{21}$;

  \item the universal deformation ring of $A_s[\ell^\infty]$ with the
      $O_{F,\ell}$-action and the above given polarization is cut out by
      the ideal generated by $\{t_{11}-t_{22},t_{12}-\delta^2t_{21}\}$;

  \item the universal deformation ring of $A_s[\ell^\infty]$ with the
      $\Mat_2(\dZ_\ell)$-action and the above given polarization is cut
      out by the ideal generated by $\{t_{11}-t_{22},t_{12},t_{21}\}$.
\end{itemize}
Moreover, by the computation in \cite{GO00}*{Theorem 2.3.4}, the universal
deformation ring of $A_s[\ell^\infty]$ with the $O_{F,\ell}$-action and the
above given polarization along $\cX^\bullet_{N^+;\dF_\ell^\ac}$ is cut out by
the ideal generated by $\{t_{11}-t_{22},t_{12}-\delta^2t_{21},t_{11}-\delta
t_{21}\}$. Therefore, the local ring of
$\b\Delta_{N^+,N^-;\dF_{\ell^2}}\times_{\cZ_{N^+,N^-;\dF_{\ell^2}}}\pi_\cX^{-1}\cX_{N^+;\dF_{\ell^2}}^\bullet$
at $s$ is isomorphic to
\[\dF_\ell^\ac[[t_0,t_{11},t_{12},t_{21},t_{22}]]/\langle
t_0-t_{11},t_{11}-t_{22},t_{12},t_{21},t_{11}-\delta t_{21}\rangle,\] which
is isomorphic to $\dF_\ell^\ac$. This implies that
$\b\Delta_{N^+,N^-;\dF_{\ell^2}}$ and
$\pi_\cX^{-1}\cX_{N^+;\dF_{\ell^2}}^\bullet$ intersect transversally. We have
a similar argument for $\cX_{N^+;\dF_{\ell^2}}^\circ$. The lemma then
follows.
\end{proof}

\begin{proposition}\label{pr:special_supersingular}
The restriction of the map $\zeta_{N^+,N^-}(\dF_{\ell^2})$ to
$\cY^\ssl_{N^+,N^-;\dF_{\ell^2}}(\dF_{\ell^2})=\cT_{N^+,N^-\ell}$ takes
values in the subset $\cX^\ssp_{N^+;\dF_{\ell^2}}(\dF_{\ell^2})$. If we
compose it with the natural map from
$\cX^\ssp_{N^+;\dF_{\ell^2}}(\dF_{\ell^2})$ to
$\pi_0(\cX^\bullet_{N^+;\dF_{\ell^2}})=\cS_{N^+}^\bullet$ (resp.\
$\pi_0(\cX^\circ_{N^+;\dF_{\ell^2}})=\cS_{N^+}^\circ$), then the resulting
map is equal to
\begin{align*}
\cT_{N^+,N^-\ell}&\xrightarrow{\zeta_{N^+,N^-\ell}}\cS_{N^+}=\cS_{N^+}^\bullet, \\
\text{resp.\ }\cT_{N^+,N^-\ell}&\xrightarrow{\zeta_{N^+,N^-\ell}\circ\op_\ell}\cS_{N^+}=\cS_{N^+}^\circ.
\end{align*}
\end{proposition}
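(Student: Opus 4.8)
The plan is to translate both sides through the modular descriptions of the two loci and read the assertion as a compatibility of endomorphism algebras. Since $\zeta^\natural_{N^+,N^-}$ carries $(A,\iota,C)$ to $(A,\iota\res_{O_F},\psi_D,C)$, it leaves the abelian surface and the level structure unchanged. By Lemma \ref{le:proper_intersection}, a point of $\cY^\ssl_{N^+,N^-;\dF_{\ell^2}}(\dF_{\ell^2})$ has $A$ superspecial; and with the identification $\cO_{N^-,\ell}\simeq\Mat_2(\dZ_\ell)$ and the embedding $O_{F,\ell}\hookrightarrow\Mat_2(\dZ_\ell)$ from the proof of that lemma, $O_F/\ell$ acts on $\Lie A$ through both characters $\tau^\bullet_\ell$ and $\tau^\circ_\ell$, because over $\dF_{\ell^2}$ that embedding is conjugate to $\diag(\tau^\bullet_\ell,\tau^\circ_\ell)$. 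Thus $(A,\iota\res_{O_F},C)$ is a superspecial mixed triple, which by Proposition \ref{pr:surface_supersingular}(2) gives the first assertion. Under the parametrizations of Propositions \ref{pr:curve_supersingular} and \ref{pr:surface_superspecial}(3), $(A,\iota,C)$ corresponds to the oriented Eichler order $\vec R=\End(A,\iota,C)\in\cT_{N^+,N^-\ell}$ and its $\zeta$-image to $\End(A,\iota\res_{O_F},C)\in\cS_{N^+\ell}$; since the natural maps $\cX^\ssp_{N^+;\dF_{\ell^2}}(\dF_{\ell^2})\to\cS_{N^+}^\bullet$ and $\to\cS_{N^+}^\circ$ are $\gamma^{(\ell,1)}_{N^+\ell}$ and $\gamma^{(\ell,\ell)}_{N^+\ell}$ by Remark \ref{re:degeneracy_surface}(2), what remains is to establish, for all such $\vec R$, the equalities $\gamma^{(\ell,1)}_{N^+\ell}\bigl(\End(A,\iota\res_{O_F},C)\bigr)=\zeta_{N^+,N^-\ell}(\vec R)$ and $\gamma^{(\ell,\ell)}_{N^+\ell}\bigl(\End(A,\iota\res_{O_F},C)\bigr)=\zeta_{N^+,N^-\ell}(\op_\ell\vec R)$ in $\cS_{N^+}$.

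For the first equality, the inclusion $O_F\subset\cO_{N^-}$ produces an injection $\vec R\otimes_\dZ O_F\hookrightarrow\End(A,\iota\res_{O_F},C)$ that becomes an isomorphism after $\otimes_\dZ\dQ$, since $\End^0(A,\iota\res_{O_F})$, the centralizer of $O_F$, is $B_{N^-\ell}\otimes_\dQ F\simeq Q$; so it suffices to compare the two $O_F$-orders place by place. At a finite place $v\nmid N^-\ell$ both localizations are Eichler orders of the same level in $Q_v$, one inside the other, hence equal to $\vec R_v\otimes_{\dZ_v}O_{F,v}$ (the localization of $\vec R$ at $v$, tensored up), which is exactly the $v$-component of $\zeta_{N^+,N^-\ell}(\vec R)$; and $\gamma^{(\ell,1)}_{N^+\ell}$, $\gamma^{(\ell,\ell)}_{N^+\ell}$ are the identity away from $\ell$. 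At $v\mid N^-$, the localization $\vec R_v$ is the maximal order of the division algebra $B_{N^-\ell,v}$, and $\End(A,\iota\res_{O_F},C)_v$ is one of the two maximal over-orders of $\vec R_v\otimes_{\dZ_v}O_{F,v}$; that it is the one cut out by $o_v\otimes\tau^\bullet_v$ in the sense of Definition \ref{de:special} follows from the definition of the orientation $o_v$ via the action on $A[\fm_v]$ in the proof of Proposition \ref{pr:curve_supersingular}, along the lines of Ribet's bookkeeping in \cite{Rib89}.

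It remains to treat the place $\ell$, which is the heart of the matter. There $\vec R_\ell$ is the maximal order of the division algebra $B_{N^-\ell,\ell}$, and $\vec R_\ell\otimes_{\dZ_\ell}O_{F,\ell}$ is already an Eichler order of level $\ell$ in $Q_\ell\simeq\Mat_2(F_\ell)$, so it coincides with $\End(A,\iota\res_{O_F},C)_\ell$. Its two maximal over-orders are $\End(A/H_\bullet,\iota\res_{O_F})_\ell$ and $\End(A/H_\circ,\iota\res_{O_F})_\ell$ (Remark \ref{re:surface_superspecial}), and by the orientation convention of Proposition \ref{pr:surface_superspecial}(3) the one distinguished — i.e.\ the one reached by $\gamma^{(\ell,1)}_{N^+\ell}$ — is $\End(A/H_\bullet,\iota\res_{O_F})_\ell$. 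On the combinatorial side, $\zeta_{N^+,N^-\ell}(\vec R)_\ell$ is by Definition \ref{de:special} the maximal over-order of $\vec R_\ell\otimes_{\dZ_\ell}O_{F,\ell}$ on which the rank-one quotient bimodule realizes $o_\ell\otimes\tau^\bullet_\ell$, where $o_\ell$ records the scalar action of $\End(A,\iota)$ on $\Lie A$ (Proposition \ref{pr:curve_supersingular}). I would identify these two over-orders by carrying the orientation $o_\ell\otimes\tau^\bullet_\ell$ through the Dieudonn\'e-module picture of $A[\ell^\infty]$ set up in the proof of Lemma \ref{le:proper_intersection}. Together with the previous paragraph this proves the $\bullet$-statement; the $\circ$-statement is then formal, since $\Fr_\ell$ acts by $\op_\ell$ on $\cT_{N^+,N^-\ell}$ and on $\cS_{N^+\ell}$ (Propositions \ref{pr:curve_supersingular} and \ref{pr:surface_supersingular}(2)), the reduction of $\zeta_{N^+,N^-}$ modulo $\ell$ is defined over $\dF_\ell$ and hence commutes with $\Fr_\ell$, and $\gamma^{(\ell,\ell)}_{N^+\ell}=\gamma^{(\ell,1)}_{N^+\ell}\circ\op_\ell$.

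I expect the $\bullet$-versus-$\circ$ bookkeeping at $\ell$ to be the main obstacle. Four independent two-fold choices occur — the subgroup $H_\bullet\subset A[\ell]_\alpha$, the over-order distinguished by $\gamma^{(\ell,1)}_{N^+\ell}$, the orientation $o_\ell$, and the fixed embedding $\tau^\bullet_\ell$ — and one must verify that the canonical constructions of Section \ref{s2} normalize them so that the four ``$\bullet$'' sides correspond, with no stray Frobenius twist. The compatibility genuinely has content, since dividing a superspecial mixed triple by the $\bullet$-group $H_\bullet$ produces a $\circ$-pure triple (Remark \ref{re:surface_superspecial}); this is exactly the situation in which the canonicality, rather than the mere existence, of the parametrizations built in Section \ref{s2} is needed.
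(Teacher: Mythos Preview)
Your approach is correct and essentially the same as the paper's. The paper works directly with $\End(A/H_\bullet,\iota\res_{O_F},C)$ rather than routing through $\cS_{N^+\ell}$ and $\gamma^{(\ell,1)}_{N^+\ell}$, but this is only an organizational difference; the paper then checks the bimodule condition of Definition~\ref{de:special} at each $v\mid N^-\ell$ just as you outline, giving the $v=\ell$ case in one line (via the $O_F/\ell$-action on the Dieudonn\'e module of $H_\bullet$) and carrying out the $v\mid N^-$ case by an explicit matrix computation where you defer to Ribet's bookkeeping.
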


\begin{proof}
We prove the first case, by which the second one will follow. Suppose that
$k$ is an algebraically closed field containing $\dF_{\ell^2}$, and
$(A,\iota,C)$ is a triple in $\cY^\ssl_{N^+,N^-;\dF_{\ell^2}}(k)$, which
implies that $(A,\iota\res_{O_F},C)$ is superspecial mixed. We only need to
show that if $R=\End(A,\iota,C)$ with the natural orientation as in the proof
of Proposition \ref{pr:curve_supersingular}, then
$\End(A/H_\bullet,\iota\res_{O_F},C)$ is the order $R_\sharp$ as in
Definition \ref{de:special}. Note that $R\otimes_\dZ O_F$ is naturally
contained in $\End(A/H_\bullet,\iota\res_{O_F},C)$.

It amounts to show that for every prime $v\mid N^-\ell$, the
$(R/v,O_F/v)$-bimodule
$\End(A/H_\bullet,\iota\res_{O_F},C)\otimes_\dZ\dZ_v/R\otimes_\dZ
O_{F,v}\simeq\dF_{v^2}$ is isomorphic to $o_v\otimes\tau^\bullet_v$. When
$v=\ell$, it holds since $O_F/\ell$ acts on the Dieudonn\'{e} module of
$H_\bullet$ by $\tau_\ell^\bullet$. When $v\mid N^-$, the question is local.
We identify $O_{F,v}$ with $\dZ_{v^2}$ via $\tau_v^\bullet$, and
$\cO_{N^-,v}\colonequals\cO_{N^-}\otimes_\dZ\dZ_v$ with
$\dZ_{v^2}\oplus\dZ_{v^2}[\Pi]$ such that $\Pi^2=v$ and $a\Pi=\Pi a^\theta$
for $a\in\dZ_{v^2}$. We identify $\rT_v(A)$ with $\cO_{N^-,v}$ such that
$\iota$ acts by left multiplication. In particular, the algebra
$R_v\simeq\cO_{N^-,v}$ acts on $\rT_v(A)$ via right multiplication. The
algebra $\End(A/H_\bullet,\iota\res_{O_F},C)_v$ is identified with the
centralizer of $\dZ_{v^2}$ in $\End_{\dZ_v}(\rT_v(A))$, where the latter is
identified with $\Mat_2(\dZ_{v^2})$ under the basis $\{1,\Pi\}$. Under the
same basis, the suborder $R_v\otimes_{\dZ_v}\dZ_{v^2}$ is identified with
\[\left\{\left(
           \begin{array}{cc}
             a & b \\
             c & d \\
           \end{array}
         \right)\in\Mat_2(\dZ_{v^2})\res c\in v\dZ_{v^2}\right\}.\] Moreover,
the action of $a\otimes b\in\dZ_{v^2}\otimes_{\dZ_v}\dZ_{v^2}\subset
R_v\otimes_{\dZ_v}\dZ_{v^2}$ on $\End(A/H_\bullet,\iota\res_{O_F},C)_v$ is
given by
\[x\in\Mat_2(\dZ_{v^2})\mapsto \left(
           \begin{array}{cc}
             b &  \\
              & b \\
           \end{array}
         \right)x\left(
           \begin{array}{cc}
             a &  \\
              & a^\theta \\
           \end{array}
         \right).\] This implies that the bi-action of $(R/v,O_F/v)$ on
$\End(A/H_\bullet,\iota\res_{O_F},C)\otimes_\dZ\dZ_v/R\otimes_\dZ O_{F,v}$ is
$o_v\otimes\tau^\bullet_v$.
\end{proof}

\subsubsection{Case of improper intersection}

Let $\ell$ be a prime that divides $N^-$ but not $2\cD_{N^+}$, and is inert
in $F$.

\begin{proposition}\label{pr:special_superspecial}
We have that
\begin{enumerate}
  \item The restriction of the map $\zeta_{N^+,N^-}(\dF_{\ell^2})$ to
      $\cY^\ssp_{N^+,N^-;\dF_{\ell^2}}(\dF_{\ell^2})$ takes values in
      $\cX^\ssp_{N^+;\dF_{\ell^2}}(\dF_{\ell^2})$. Regarded as a map from
      $\cT_{N^+\ell,N^-/\ell}$ to $\cS_{N^+\ell}$, it is equal to
      \begin{align*}
      \cT_{N^+\ell,N^-/\ell}\xrightarrow{\zeta_{N^+\ell,N^-/\ell}}\cS_{N^+\ell}.
      \end{align*}

  \item For $?=\bullet,\circ$, the restriction of the morphism
      $\zeta_{N^+,N^-}$ to $\cY^?_{N^+,N^-;\dF_{\ell^2}}$ has image
      contained in $\cX^?_{N^+;\dF_{\ell^2}}$. The induced map
      $\pi_0(\cY^?_{N^+,N^-;\dF_{\ell^2}})\to\pi_0(\cX^?_{N^+;\dF_{\ell^2}})$
      is equal to
      \begin{align*}
      \cT_{N^+,N^-/\ell}\xrightarrow{\zeta_{N^+,N^-/\ell}}\cS_{N^+}.
      \end{align*}
\end{enumerate}
\end{proposition}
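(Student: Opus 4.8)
The plan is to prove part (1) by a prime‑by‑prime comparison of $O_F$-orders in the totally definite quaternion algebra $Q$, following the template of the proof of Proposition \ref{pr:special_supersingular} and using \cite{Rib89}*{\Sec 5} at the bad prime $\ell$, and then to deduce part (2) formally from part (1) via the compatibility of the special maps with the degeneracy maps. For part (1), let $k\supseteq\dF_{\ell^2}$ be algebraically closed and let $(A,\iota,C)$ over $k$ represent a point of $\cY^\ssp_{N^+,N^-;\dF_{\ell^2}}(\dF_{\ell^2})$, so $A$ is superspecial. Since $\zeta_{N^+,N^-}$ sends $(A,\iota,C)$ to $(A,\iota\res_{O_F},\psi_{N^-},C)$ without altering the abelian surface, and since at a point of $\cY^\ssp$ both $\tau^\bullet_\ell$ and $\tau^\circ_\ell$ occur on the Dieudonn\'e module of $A[\ell]_\alpha$, the image triple $(A,\iota\res_{O_F},C)$ is superspecial mixed; this already gives the first assertion of part (1), and by Proposition \ref{pr:surface_superspecial} it is recorded by the oriented Eichler order $S\colonequals\End(A,\iota\res_{O_F},C)$ of level $N^+\ell$ in $Q$, oriented via $\End(A/H_\bullet,\iota\res_{O_F})$. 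On the other side, by Proposition \ref{pr:curve_superspecial} the given point of $\cY^\ssp$ is recorded by the oriented Eichler order $R\colonequals\End(A,\iota,C)$ of level $N^+\ell$ in $B_{N^-/\ell}$. Every endomorphism commuting with $\iota(\cO_{N^-})$ commutes with $\iota(O_F)$, so there is a natural embedding $R\otimes_\dZ O_F\hookrightarrow S$ of $O_F$-orders in $Q\simeq B_{N^-/\ell}\otimes_\dQ F$, and the remaining assertion of part (1) is precisely that $S=\zeta_{N^+\ell,N^-/\ell}(R)$, i.e.\ $S=R_\sharp$ in the notation of Definition \ref{de:special}.

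This identity I would check at one prime of $O_F$ at a time. Away from $N^+N^-$ both $R\otimes_\dZ O_F$ and $S$ are maximal and agree; over a prime dividing $N^+$ the common level‑$C$ datum forces them to agree, matching $R_\sharp$; and for a prime $v\mid N^-/\ell$ one identifies $\rT_v(A)$ with $\cO_{N^-,v}$, on which $\iota$ acts on the left and $R$ on the right, and repeats \emph{verbatim} the local computation from the proof of Proposition \ref{pr:special_supersingular}, which exhibits the $(R/v,O_F/v)$-bimodule $S\otimes_\dZ\dZ_v/R\otimes_\dZ O_{F,v}\simeq\dF_{v^2}$ as $o_v\otimes\tau^\bullet_v$, as demanded by Definition \ref{de:special}. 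The genuinely new case is $v=\ell$: here $\ell$ divides the \emph{level}, so $R_\ell$ (resp.\ $S_\ell$) is an oriented Eichler order of level $\ell$ in $\Mat_2(\dQ_\ell)$ (resp.\ $\Mat_2(F_\ell)$), and one must check that $\zeta_{N^+,N^-}$ matches the two orientations at $\ell$. For this I would use the \v{C}erednik--Drinfeld uniformization of $\cY_{N^+,N^-}$ at $\ell$ (see \cite{BC91}) together with Ribet's analysis of the superspecial locus in \cite{Rib89}*{\Sec 5} to identify the orientation of $R$ at $\ell$ with the choice of the subgroup $H_\bullet\subset A[\ell]_\alpha$ that orients $S$, via a Dieudonn\'e‑module computation for $A[\ell^\infty]$ with its special $\cO_{N^-,\ell}$-action (in the spirit of the display computation in the proof of Lemma \ref{le:proper_intersection}, but now with $\cO_{N^-,\ell}$ the maximal order in a division quaternion algebra over $\dQ_\ell$). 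Carrying out this last orientation match is, I expect, the main obstacle.

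Granting part (1), part (2) follows formally. For $?=\bullet,\circ$, the type of a geometric point $s$ of $\cY^?_{N^+,N^-;\dF_{\ell^2}}$ is determined by which characters of $O_F/\ell$ occur on the Dieudonn\'e module of $A_s[\ell]_\alpha$, a datum depending only on the abelian surface and its $O_F$-action, hence unchanged by $\zeta_{N^+,N^-}$; thus $\zeta_{N^+,N^-}$ carries $\cY^?_{N^+,N^-;\dF_{\ell^2}}$ into $\cX^?_{N^+;\dF_{\ell^2}}$. As each connected component of $\cY^?_{N^+,N^-;\dF_{\ell^2}}$ is a $\bP^1_{\dF_{\ell^2}}$ (Proposition \ref{pr:curve_superspecial}), $\zeta_{N^+,N^-}$ induces a well‑defined map $\pi_0(\cY^?_{N^+,N^-;\dF_{\ell^2}})=\cT_{N^+,N^-/\ell}\to\cS_{N^+}=\pi_0(\cX^?_{N^+;\dF_{\ell^2}})$. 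To identify it with $\zeta_{N^+,N^-/\ell}$, fix such a component together with a superspecial point $s\in\cY^\ssp_{N^+,N^-;\dF_{\ell^2}}(\dF_{\ell^2})=\cT_{N^+\ell,N^-/\ell}$ lying on it, and put $d'=1$ if $?=\bullet$ and $d'=\ell$ if $?=\circ$. By Remark \ref{re:degeneracy_superspecial} the chosen component is $\delta^{(\ell,d')}_{N^+\ell,N^-/\ell}(s)$; by Remark \ref{re:degeneracy_surface} the component of $\cX^?_{N^+;\dF_{\ell^2}}$ through $\zeta_{N^+,N^-}(s)$ is $\gamma^{(\ell,d')}_{N^+\ell}(\zeta_{N^+,N^-}(s))$; by part (1), $\zeta_{N^+,N^-}(s)=\zeta_{N^+\ell,N^-/\ell}(s)$; and by the compatibility diagram of Definition \ref{de:special}, $\gamma^{(\ell,d')}_{N^+\ell}\circ\zeta_{N^+\ell,N^-/\ell}=\zeta_{N^+,N^-/\ell}\circ\delta^{(\ell,d')}_{N^+\ell,N^-/\ell}$. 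These four facts together identify the induced map $\pi_0(\cY^?_{N^+,N^-;\dF_{\ell^2}})\to\pi_0(\cX^?_{N^+;\dF_{\ell^2}})$ with $\zeta_{N^+,N^-/\ell}$ for both values of $?$, which is exactly part (2).
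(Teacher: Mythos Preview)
Your logical skeleton is sound---the two parts are interdeducible via the square in Definition \ref{de:special} together with Remarks \ref{re:degeneracy_superspecial}(2) and \ref{re:degeneracy_surface}(2)---but the paper runs the implication in the \emph{opposite} direction: it proves (2) first and deduces (1). The payoff is that the obstacle you flag evaporates. For (2), Ribet's \cite{Rib89}*{\Sec 5} identifies $\pi_0(\cY^\bullet_{N^+,N^-;\dF_{\ell^2}})$ with isomorphism classes of triples $(A,\iota,C)$ (with $\cO_{N^-}$-action $\iota$) such that $(A,\iota\res_{O_F},C)$ is $\bullet$-pure; for these, $R=\End(A,\iota,C)$ is an order of level $N^+$ in $B_{N^-/\ell}$, and since $\ell\nmid N^-/\ell$, Definition \ref{de:special} imposes no condition at $\ell$. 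Thus the verification that $\End(A,\iota\res_{O_F},C)=R_\sharp$ is \emph{literally} the local computation at primes $v\mid N^-/\ell$ from the proof of Proposition \ref{pr:special_supersingular}---no new Dieudonn\'e-module work or \v{C}erednik--Drinfeld input is needed. Part (1) then follows from (2) exactly as you deduce (2) from (1): the paper notes that ``the only extra work [is] the orientation at $\ell$,'' and reads this off from (2) via the two degeneracy remarks. So your route is not wrong, but it front-loads the delicate orientation match at $\ell$; the paper's ordering makes that match a formal consequence of the easy direction. One small imprecision in your write-up: $\cY^\bullet$ is defined by a condition on the \emph{generic} point of a component, so your ``$\tau(s)$ is unchanged by $\zeta$'' argument for the containment in (2) should be applied at generic points (and then use that $\cX^\bullet$ is closed), rather than pointwise.
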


\begin{proof}
We first prove (2), for $?=\bullet$ and the other is similar. As before, we
fix an algebraically closed field $k$ containing $\dF_{\ell^2}$. By
\cite{Rib89}*{\Sec 5}, there is a bijection between
$\pi_0(\cY^?_{N^+,N^-;\dF_{\ell^2}})$ and the set of isomorphism classes of
triples $(A,\iota,C)$ over $k$ such that $(A,\iota\res_{O_F},C)$ is
$\bullet$-pure. By the same proof of Proposition
\ref{pr:special_supersingular}, we know that if $R=\End(A,\iota,C)$ with the
natural orientation as in the proof of Proposition
\ref{pr:curve_supersingular}, then $\End(A,\iota\res_{O_F},C)$, which
naturally contains $R\otimes_\dZ O_F$, is the order $R_\sharp$ as in
Definition \ref{de:special}. In particular, (2) follows.

Part (1) is proved in the same way as (2), with the only extra work of the
orientation at $\ell$. But this follows from (2), Remark
\ref{re:degeneracy_superspecial} (2) and Remark \ref{re:degeneracy_surface}
(2).
\end{proof}

\begin{remark}
The above proposition together with its proof amounts to say that the natural
commutative diagram
\[\xymatrix{
\pi_0(\cY^\bullet_{N^+,N^-;\dF_{\ell^2}}) \ar[d]& \cY^\ssp_{N^+,N^-;\dF_{\ell^2}}(\dF_{\ell^2})
\ar[r]\ar[l]\ar[d]& \pi_0(\cY^\circ_{N^+,N^-;\dF_{\ell^2}}) \ar[d] \\
\pi_0(\cX^\bullet_{N^+;\dF_{\ell^2}}) & \cX^\ssp_{N^+;\dF_{\ell^2}}(\dF_{\ell^2})
\ar[r]\ar[l]& \pi_0(\cX^\circ_{N^+;\dF_{\ell^2}})
}\] is canonically identified with
\[\xymatrix{
\cT^\bullet_{N^+,N^-/\ell} \ar[d]_-{\zeta_{N^+,N^-/\ell}}&&
\cT_{N^+\ell,N^-/\ell} \ar[rr]^-{\delta_{N^+\ell,N^-/\ell}^{(\ell,\ell)}}\ar[ll]_-{\delta_{N^+\ell,N^-/\ell}^{(\ell,1)}}
\ar[d]^-{\zeta_{N^+\ell,N^-/\ell}} &&
\cT^\circ_{N^+,N^-/\ell} \ar[d]^-{\zeta_{N^+,N^-/\ell}} \\
\cS^\bullet_{N^+} && \cS_{N^+\ell}\ar[rr]^-{\gamma_{N^+\ell}^{(\ell,\ell)}}\ar[ll]_-{\gamma_{N^+\ell}^{(\ell,1)}}
 &&  \cS^\circ_{N^+}.
}\]
\end{remark}

\section{Construction of cohomology classes}
\label{s3}

In this chapter, we construct various cohomology classes from
Hirzebruch--Zagier cycles. In \Sec \ref{ss:bloch_kato}, we recall the
definition of Bloch--Kato Selmer groups of general Galois modules with
various coefficients, and collect some facts for later use. In \Sec
\ref{ss:twisted_triple}, we set up our main theorem in the language of
automorphic representations, and list assumptions which will be needed at
various stages in the later discussion. In \Sec \ref{ss:gross_kudla}, we
introduce Hirzebruch--Zagier cycles in details, from which we construct the
very first class in the Selmer group with rational coefficient. The main
theorem in the language of automorphic representations will be stated at the
end of this section. In \Sec \ref{ss:testing_factors}, we fix suitable
projections, which are related to non-vanishing testing vectors of local
invariant forms. In \Sec \ref{ss:raising_levels}, we collect some important
facts from Bertolini--Darmon \cite{BD05}, with slight modification to fit our
need. In \Sec \ref{ss:construction_annihilators}, we construct variants of
Hirzebruch--Zagier classes with torsion coefficients, as candidates for
annihilators of Selmer groups. In particular, we introduce the notion of
strongly admissible primes.

\subsection{Bloch--Kato Selmer group}
\label{ss:bloch_kato}

We recall the definition of Bloch--Kato Selmer groups for Galois
representations with various coefficients. For simplicity, our number field
will be $\dQ$, and $p$ is odd according to our convention.

Let $(\rho,\rV)$ be a $p$-adic representation of $\Gamma_\dQ$, that is, as we
recall $\rV$ is a finite dimensional $\dQ_p$-vector space and
\[\rho\colon\Gamma_\dQ\to\GL(\rV)\]
is a homomorphism, continuous with respect to the profinite (resp.\ $p$-adic)
topology on the source (resp.\ target). Define
\[\rH^1_f(\dQ_p,\rV)=\Ker[\rH^1(\dQ_p,\rV)\to\rH^1(\dQ_p,\rV\otimes_{\dQ_p}\rB_{\r{cris}})].\]
It is known that if $\rV$ is crystalline, then elements in
$\rH^1_f(\dQ_p,\rV)$ correspond exactly to those extensions
\[0\to\rV\to\rV_1\to\dQ_p\to0\]
in which $\rV_1$ is also crystalline.

For each finite set $\Sigma$ of places of $\dQ$ containing $\{p,\infty\}$,
denote by $\dQ^\Sigma$ the maximal subfield of $\dQ^\ac$ that is unramified
outside of $\Sigma$, and $\Gamma_{\dQ,\Sigma}=\Gal(\dQ^\Sigma/\dQ)$ which is
a quotient group of $\Gamma_\dQ$.

\begin{definition}[Bloch--Kato Selmer group, \cite{BK90}]\label{de:selmer_group}
Suppose that there exists a finite set $\Sigma$ of places of $\dQ$ containing
$\{p,\infty\}$ such that $\rV$ is unramified outside $\Sigma$. We define the
\emph{Bloch--Kato Selmer group} $\rH^1_f(\dQ,\rV)$ of $\rV$ as the subspace
of $\rH^1(\dQ,\rV)$ consisting of $s$ such that
\begin{itemize}
  \item $s$ belongs to the subspace $\rH^1(\Gamma_{\dQ,\Sigma},\rV)$;

  \item $\loc_v(s)\in\rH^1_\unr(\dQ_v,\rV)$ for all primes $v\nmid p$;
      and

  \item $\loc_p(s)\in\rH^1_f(\dQ_p,\rV)$.
\end{itemize}
It is clear that $\rH^1_f(\dQ,\rV)$ does not depend on the choice of
$\Sigma$. If $\rT\subset\rV$ is a stable lattice, then we have a natural map
$\rH^1(\dQ,\rT)\to\rH^1(\dQ,\rV)$ and define the Bloch--Kato Selmer group
$\rH^1_f(\dQ,\rT)$ of $\rT$ to be the inverse image of $\rH^1_f(\dQ,\rV)$
under the above map. For each integer $n\geq 1$, define
$\rH^1_f(\dQ,\bar\rT^n)$ to be the image of $\rH^1_f(\dQ,\rT)$ under the
natural map $\rH^1(\dQ,\rT)\to\rH^1(\dQ,\bar\rT^n)$.
\end{definition}

\begin{remark}
By \cite{Mil06}*{Corollary 4.15} and \cite{Tat76}*{Corollary 2.1}, the groups
$\rH^1(\Gamma_{\dQ,\Sigma},\rT)$ and hence $\rH^1_f(\dQ,\rT)$,
$\rH^1_f(\dQ,\bar\rT^n)$ are all finitely generated $\dZ_p$-modules.
\end{remark}

\begin{definition}
We say that a $p$-adic representation $\rV$ of $\Gamma_\dQ$ is \emph{tamely
pure} if for all primes $v\nmid p$, $\rH^1(\dQ_v,\rV)=0$. If $\rV$ is tamely
pure, then $s\in\rH^1_f(\dQ,\rV)$ if and only if
$\loc_p(s)\in\rH^1_f(\dQ_p,\rV)$.
\end{definition}

\begin{lem}\label{le:unramified_torsion}
Let $\rT$ be a stable lattice in a tamely pure $p$-adic representation $\rV$
of $\Gamma_\dQ$. Then for each prime $v\nmid p$ such that $\rI_v$ acts
trivially on $\rV$, the image of the reduction map
$\rH^1(\dQ_v,\rT)\to\rH^1(\dQ_v,\bar\rT^n)$ is contained in
$\rH^1_\unr(\dQ_v,\bar\rT^n)$ for every integer $n\geq 1$.
\end{lem}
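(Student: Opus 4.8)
The plan is to deduce the statement from the sharper fact that every class in $\rH^1(\dQ_v,\rT)$ is \emph{already} unramified, i.e. that the restriction map $\rH^1(\dQ_v,\rT)\to\rH^1(\rI_v,\rT)$ vanishes; granting this, the assertion for $\bar\rT^n$ follows formally. Indeed, restriction to $\rI_v$ is functorial in the coefficient module, so for $x\in\rH^1(\dQ_v,\rT)$ the image of $x$ in $\rH^1(\dQ_v,\bar\rT^n)$ has restriction to $\rI_v$ equal to the image of the restriction of $x$ in $\rH^1(\rI_v,\rT)$ under the reduction $\rH^1(\rI_v,\rT)\to\rH^1(\rI_v,\bar\rT^n)$; if the restriction of $x$ is already $0$, so is its reduction, and that is exactly the condition that the image of $x$ lie in $\rH^1_\unr(\dQ_v,\bar\rT^n)$.

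To prove that $\rH^1(\dQ_v,\rT)\to\rH^1(\rI_v,\rT)$ is zero, I would compare with $\dQ_p$-coefficients by means of the commutative square
\[\xymatrix{
\rH^1(\dQ_v,\rT) \ar[r] \ar[d] & \rH^1(\rI_v,\rT) \ar[d] \\
\rH^1(\dQ_v,\rV) \ar[r] & \rH^1(\rI_v,\rV)
}\]
whose vertical arrows are induced by the inclusion $\rT\hookrightarrow\rV$ and whose horizontal arrows are restriction. This is where tame purity enters: since $\rV$ is tamely pure and $v\nmid p$, the group $\rH^1(\dQ_v,\rV)$ vanishes, so the composite $\rH^1(\dQ_v,\rT)\to\rH^1(\dQ_v,\rV)\to\rH^1(\rI_v,\rV)$ is zero, hence by commutativity so is the composite $\rH^1(\dQ_v,\rT)\to\rH^1(\rI_v,\rT)\to\rH^1(\rI_v,\rV)$. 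It therefore suffices to know that the right vertical arrow is injective. But $\rI_v$ acts trivially on $\rV$, hence on $\rT$, so both groups are continuous homomorphism groups $\Hom_{\r{cont}}(\rI_v,-)$ and the map in question is post-composition with $\rT\hookrightarrow\rV$, which is visibly injective. (Alternatively: $\rH^1(\rI_v,\rT)=\Hom_{\r{cont}}(\rI_v,\rT)$ is $\dZ_p$-torsion-free because $\rT$ is, while the image of $\rH^1(\dQ_v,\rT)\to\rH^1(\rI_v,\rT)$ becomes $0$ after $-\otimes_{\dZ_p}\dQ_p$, that localization being the zero map $\rH^1(\dQ_v,\rV)=0\to\rH^1(\rI_v,\rV)$.) This yields $\rH^1(\dQ_v,\rT)=\rH^1_\unr(\dQ_v,\rT)$ and finishes the proof.

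I do not anticipate a genuine obstacle: the argument is a short diagram chase whose entire content is that tame purity forces $\rH^1(\dQ_v,\rT)$ to be unramified. The only points deserving a moment's care are that the hypotheses get used in exactly two places — $v\nmid p$ together with tame purity to kill $\rH^1(\dQ_v,\rV)$, and the triviality of the $\rI_v$-action together with $\rT$ being a genuine lattice (so $\dZ_p$-free) to get injectivity of $\rH^1(\rI_v,\rT)\to\rH^1(\rI_v,\rV)$ — and that neither input may be dropped, since without tame purity the conclusion already fails for $\rT=\dZ_p(1)$ at an auxiliary prime $v$.
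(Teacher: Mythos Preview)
Your proof is correct and follows essentially the same approach as the paper: both reduce to showing that $\rH^1(\dQ_v,\rT)\to\rH^1(\rI_v,\rT)$ is zero via the commutative square with $\rV$-coefficients, using tame purity to kill $\rH^1(\dQ_v,\rV)$ and the trivial $\rI_v$-action to get injectivity of $\rH^1(\rI_v,\rT)\to\rH^1(\rI_v,\rV)$. The paper phrases the injectivity step via the long exact sequence of $0\to\rT\to\rV\to\rV/\rT\to0$ (surjectivity of $\rV^{\rI_v}\to(\rV/\rT)^{\rI_v}$), whereas you identify both sides directly as continuous $\Hom$'s, but this is a cosmetic difference.
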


\begin{proof}
The short exact sequence $0\to\rT\to\rV\to\rV/\rT\to0$ induces the following
commutative diagram
\[\xymatrix{
\rV^{\Gamma_v} \ar[r]\ar[d]& (\rV/\rT)^{\Gamma_v} \ar[r]\ar[d]& \rH^1(\Gamma_v,\rT) \ar[r]\ar[d]& \rH^1(\Gamma_v,\rV) \ar[d] \\
\rV^{\rI_v} \ar[r]& (\rV/\rT)^{\rI_v} \ar[r]& \rH^1(\rI_v,\rT) \ar[r]&
\rH^1(\rI_v,\rV). }\] Since $\rI_v$ acts trivially on $\rT$, the arrow
$\rV^{\rI_v}\to(\rV/\rT)^{\rI_v}$ is surjective, which implies that
$\rH^1(\rI_v,\rT)\to\rH^1(\rI_v,\rV)$ is injective. Since
$\rH^1(\Gamma_v,\rV)=0$, the map $\rH^1(\Gamma_v,\rT)\to\rH^1(\rI_v,\rT)$ is
trivial. The lemma follows immediately.
\end{proof}

\begin{lem}\label{le:tamely_pure}
The $p$-adic representation $\rH^3_{\et}(Z_{N^+M,N^-;\dQ^\ac},\dQ_p(2))$ is
tamely pure. In other words, for each prime $v\nmid p$, we have
$\rH^1(\dQ_v,\rH^3_{\et}(Z_{N^+M,N^-;\dQ^\ac},\dQ_p(2)))=0$.
\end{lem}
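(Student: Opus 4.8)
The plan is to show that for a prime $v\nmid p$, the Galois cohomology $\rH^1(\dQ_v,\rH^3_{\et}(Z_{N^+M,N^-;\dQ^\ac},\dQ_p(2)))$ vanishes by a weight argument. Write $W=\rH^3_{\et}(Z_{N^+M,N^-;\dQ^\ac},\dQ_p(2))$, a $p$-adic representation of $\Gamma_\dQ$, and let $\rI_v\subset\Gamma_v$ be the inertia subgroup with $\Fr_v$ the (arithmetic) Frobenius. By the local Euler characteristic formula (or just the inflation–restriction sequence applied to $\rI_v\triangleleft\Gamma_v$ together with the fact that $\Gamma_v/\rI_v\cong\widehat{\dZ}$), the group $\rH^1(\dQ_v,W)$ sits in an exact sequence involving $\rH^0(\dQ_v,W)$, $\rH^1(\rI_v,W)^{\Fr_v}$, and $(W_{\rI_v})_{\Fr_v}$. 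So it suffices to prove two things: $\rH^0(\dQ_v,W)=0$ (no $\Gamma_v$-invariants) and, more substantively, that $\Fr_v$ acts on the inertia-coinvariants $W_{\rI_v}$ (equivalently invariants $W^{\rI_v}$) without the eigenvalue $1$. For a semisimple enough representation these two are both consequences of a purity/weight statement, namely that the eigenvalues of $\Fr_v$ on $W^{\rI_v}$ are Weil numbers of a fixed nonzero weight.

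The key input is Deligne's proof of the Weil conjectures (Weil II), which gives purity for the weight filtration on $\rH^3_{\et}$ of any variety over a local field: concretely, by Rapoport–Zink / Deligne, the Frobenius eigenvalues on $\rH^3_{\et}(Z_{\dQ^\ac},\dQ_p)^{\rI_v}$ (the part where inertia acts through a finite quotient, so through the action on nearby cycles) are algebraic numbers all of whose archimedean absolute values lie in $\{v^{j/2} : 0\le j\le 3\}$; after the Tate twist $(2)$ they lie in $\{v^{(j-4)/2}: 0\le j\le 3\}$, i.e. have absolute value $v^{-2},v^{-3/2},v^{-1},v^{-1/2}$. In particular none of these eigenvalues equals $1$. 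This immediately forces $\rH^0(\dQ_v,W)=(W^{\rI_v})^{\Fr_v=1}=0$ and $(W_{\rI_v})_{\Fr_v=1}=0$, and hence $\rH^1(\dQ_v,W)=0$. One should note that $Z=Z_{N^+M,N^-}$ is smooth projective over $\dQ$ (as recorded just before Conjecture \ref{co:oda}), so $W$ is pure of weight $3-4=-1$ as a $\Gamma_\dQ$-representation; the only subtlety at a bad prime $v$ is that the monodromy filtration on $W^{\rI_v}$ can shift weights, and the Rapoport–Zink weight spectral sequence bounds the possible weights to the interval $[-2,-1]$ above, which still excludes weight $0$, hence the eigenvalue $1$.

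The main obstacle is purely bookkeeping: making sure the weight bounds one quotes are uniform over \emph{all} primes $v\neq p$, including those of bad reduction for $Z$ (the primes dividing $N^+MN^-\disc F$ and the inverted primes $\cD_{N^+}$), where $Z$ need not have a smooth model. For those, instead of smooth proper base change one invokes the weight-monodromy bounds from the Rapoport–Zink spectral sequence on a (possibly non-semistable, but after a finite base change semistable) model, or alternatively de Jong's alterations plus Weil II, to see that every Frobenius eigenvalue on $\rH^3_{\et}(Z_{\dQ^\ac},\dQ_p(2))^{\rI_v}$ has absolute value a half-integral power $v^{-k/2}$ with $1\le k\le 4$, hence $\ne 1$. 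Once the eigenvalue-$1$ exclusion is in hand the cohomological vanishing is immediate from the two-term complex computing $\rH^*(\Gamma_v/\rI_v,-)$ applied to $W^{\rI_v}$. I would write the argument for the good-reduction primes in detail and remark that the bad primes follow identically from weight-monodromy, citing \cite{vdG} or standard references for the relevant bounds on $X$, and smooth proper base change on $Y$.
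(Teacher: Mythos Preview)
Your approach via weight--monodromy is the right idea and is the same route the paper takes. However, there is a genuine gap in your treatment of bad primes. You assert that ``the Rapoport--Zink spectral sequence'' or ``de Jong's alterations plus Weil II'' yield the required Frobenius weight bound on $W^{\rI_v}$ for the threefold $Z$, calling this ``purely bookkeeping''; but this bound is precisely the weight--monodromy conjecture, which is \emph{open} in mixed characteristic for varieties of dimension $\geq 3$. De Jong plus Weil II produces a semistable alteration and shows the Rapoport--Zink weight spectral sequence degenerates at $E_2$, hence a Frobenius weight filtration on $\rH^3(Z_{\dQ_v^\ac})$; but identifying that filtration with the monodromy filtration---which is what forces the weights on $\ker N = W^{\rI_v}$ to be $\leq -1$---is exactly the unproved part for threefolds.

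The paper closes this gap by exploiting the product structure $Z_{N^+M,N^-} = Y_{N^+M,N^-}\times_{\Spec\dQ} X_{N^+M}$. By K\"unneth, $\rH^3(Z)$ is built from $\rH^i(Y)\otimes\rH^j(X)$ with $i+j=3$, and weight--monodromy for a tensor product follows from weight--monodromy for the factors via \cite{Del80}*{(1.6.9)}. For the curve $Y$ this is classical (\cite{SGA}*{Expos\'e IX}); for the surface $X$ it is Rapoport--Zink \cite{RZ82} combined with de Jong's alterations \cite{dJ96}. So the very ingredients you cite do suffice, but only \emph{after} the K\"unneth reduction to dimensions $\leq 2$, which your argument omits.

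A minor point: your inflation--restriction bookkeeping conflates $W^{\rI_v}$ with $W_{\rI_v}$ (``equivalently invariants $W^{\rI_v}$''), and these are not Frobenius-isomorphic in general. The cleanest route is to note $W\cong W^*(1)$ by Poincar\'e duality on the threefold $Z$, so local Tate duality plus the local Euler characteristic formula give $\dim\rH^1(\dQ_v,W)=2\dim\rH^0(\dQ_v,W)$; thus one only needs $\Fr_v\neq 1$ on $W^{\rI_v}$, which the weight bound $\leq -1$ delivers once weight--monodromy is in hand.
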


This lemma actually can be avoided for the proof of main theorems (see Lemma
\ref{le:semisimple}). However, we include it here as a typical example for
tamely pure representations.

\begin{proof}
The lemma follows from the purity of monodromy filtration for
$Z_{N^+M,N^-;\dQ_v}$, which is known. In fact, since $Z_{N^+M,N^-;\dQ_v}$ is
the product of a smooth projective curve and a smooth projective surface over
$\dQ_v$, the purity follows from that of the latter two by K\"{u}nneth
formula and \cite{Del80}*{(1.6.9)}. The case for curves is proved in
\cite{SGA}*{Expos\'{e} IX}, while the case of surfaces is proved by
Rapoport--Zink \cite{RZ82} together with de Jong's alteration \cite{dJ96}.
\end{proof}

\subsection{Twisted triple product Galois representation}
\label{ss:twisted_triple}

We now switch to the language of automorphic representations. Let $\sigma$
(resp.\ $\pi$) be an irreducible cuspidal automorphic representation of
$\GL_2(\dA)$ (resp.\ $\Res_{F/\dQ}\GL_2(\dA)$) defined over $\dQ$, of weight
$2$ (resp.\ $(2,2)$) and trivial central character, with conductor $N$
(resp.\ $\fM$). We also assume that they are both non-dihedral.

Let $p$ be an (odd) prime, which will be fixed. We may associate to $\sigma$
(resp.\ $\pi$) a $p$-adic representation
$\rho_\sigma\colon\Gamma_\dQ\to\GL(\rV_\sigma)$ (resp.\
$\rho_\pi\colon\Gamma_F\to\GL(\rV_\pi)$) of $\Gamma_\dQ$ (resp.\ $\Gamma_F$)
of dimension $2$.

\begin{notation}
Put
\[\pres{\sharp}{\rho}_\pi=(\otimes\Ind^{\Gamma_\dQ}_{\Gamma_F}\rho_\pi)(-1)\]
as the tensor induction (with the Tate twist), with the underlying
$\dQ_p$-vector space $\pres{\sharp}{\rV}_\pi$ which is isomorphic to
$\rV_\pi^{\otimes 2}$. It is isomorphic to the Asai representation
$\As\rho_\pi$ associated to $\rho_\pi$ (with the Tate twist).
\end{notation}

There is a perfect $\Gamma_\dQ$-equivariant symmetric bilinear pairing
$\pres{\sharp}{\rV}_\pi\times\pres{\sharp}{\rV}_\pi\to\dQ_p$ coming from the
construction. Denote by $\rO(\pres{\sharp}{\rV}_\pi)$ the orthogonal group
defined by the above pairing. Then we have a homomorphism
\[\pres{\sharp}{\rho}_\pi\colon\Gamma_\dQ\to\rO(\pres{\sharp}{\rV}_\pi).\]
Finally, put
$(\rho_{\sigma,\pi},\rV_{\sigma,\pi})=(\rho_\sigma,\rV_\sigma)\otimes_{\dQ_p}(\pres{\sharp}{\rho}_\pi,\pres{\sharp}{\rV}_\pi)$.

\subsubsection{Asai-decomposable case}
\label{sss:asai}

Recall that $\pi$ has trivial central character and is non-dihedral.

\begin{definition}\label{de:asai_decomposable}
We say that $\pi$ is \emph{Asai-decomposable} if
$\pi^\theta\simeq\pi\otimes\omega$ for some (necessarily quadratic)
automorphic character $\omega$ of $\Res_{F/\dQ}\bG_{m,F}(\dA)$, where
$\pi^\theta=\pi\circ\theta$.
\end{definition}

If $\pi$ is Asai-decomposable, then we have a decomposition
\[\pres{\sharp}{\rV}_\pi=\rV_{\breve\pi}(-1)\oplus\dQ_p(\breve\eta)\]
of $p$-adic representations of $\Gamma_\dQ$, where the latter two direct
summands are both absolutely irreducible. Here, $\breve\eta$ is a Dirichlet
character of $\Gamma_\dQ$. If we denote by $\omega_{\breve\eta}$ the
associated automorphic character of $\dA^\times$ via the global class field
theory, then $\omega_{\breve\eta}\circ\Nm_{F/\dQ}=\omega$. By
\cite{LR98}*{Theorem 1}, the restricted character $\omega\res_{\dA^\times}$
is trivial. Therefore, the characters $\omega_{\breve\eta}$ and hence
$\breve\eta$ are either trivial or quadratic. Denote by
$\breve{F}\subset\dQ^\ac$ the splitting field of $\breve\eta$, which is
either $\dQ$ or a quadratic field unramified outside
$\Nm_{F/\dQ}\fM\cdot\disc(F)$.

Put
$(\rho_{\sigma\times\breve\pi},\rV_{\sigma\times\breve\pi})=\rV_\sigma\otimes_{\dQ_p}\rV_{\breve\pi}(-1)$
and
$(\rho_{\sigma\otimes\breve\eta},\rV_{\sigma\otimes\breve\eta})=\rV_\sigma\otimes_{\dQ_p}\dQ_p(\breve\eta)$.
We adopt the convention that $\breve\eta$ is the trivial character if $\pi$
is not Asai-decomposable.

\subsubsection{Assumptions}

We write $N=N^+N^-$ such that $N^+$ (resp.\ $N^-$) is the product of prime
factors that are split (resp.\ inert) in $F$. Fix an integer $M\geq 1$ such
that $\fM\mid M$.

\begin{assumption}[Group \textbf{R}]\label{as:group_r}
We introduce the following assumptions on $\sigma$, $\pi$ and $p$.
\begin{description}
  \item[(R1)] $N^-$ is square-free; $p$, $N$, $M\disc(F)$ are pairwise
      coprime; and $N^+M$ is neat for the Hilbert modular surface
      associated to $F$ and the Shimura curve associated to an
      \emph{arbitrary} indefinite quaternion algebra $B_{N^\sim}$ with
      $N^\sim$ coprime to $N^+M$;

  \item[(R2)] $\rho_\sigma$ is residually surjective. In other words, for
      a stable lattice $\rT_\sigma$ of $\rV_\sigma$, $\bar\rho_\sigma$ is
      surjective (which also implies that $\rT_\sigma$ is unique up to
      homothety);

  \item[(R3)] $\bar\rho_\sigma$ is ramified at all primes $\ell\mid N^-$
      such that $p\mid \ell^2-1$;

  \item[(R4)] for \emph{both} $\epsilon=+,-$, there exists one strongly
      $(1,\epsilon)$-admissible prime (see Definition \ref{de:group_s}
      and Remark \ref{re:strongly_admissible});

  \item[(R5)] $p\geq 11$;

  \item[(R6)] either (and at most one) of the following two holds
  \begin{description}
    \item[(a)] $\rho_{\sigma,\pi}$ is absolutely irreducible and its
        image contains a non-trivial scalar matrix of finite order
        coprime to $p$; or
    \item[(b)] $\pi$ is Asai-decomposable, such that
        $\rho_{\sigma\times\breve\pi}$ is absolutely irreducible and
        its image contains a non-trivial scalar element of finite
        order coprime to $p$.
  \end{description}
\end{description}
\end{assumption}

In what follows, we will regard $\sigma$ as its Jacquet--Langlands
correspondence to $B_{N^-}^\times$; in particular, it is an automorphic
representation of $B_{N^-}^\times(\dA)$ defined over $\dQ$.

\subsubsection{A lemma on semisimplicity}

\begin{lem}\label{le:semisimple}
The $p$-adic representation $\rH^2_\pi(X_{\fM;\dQ^\ac})$ of $\Gamma_\dQ$ is
isomorphic to $\pres{\sharp}\rV_\pi$. In particular, the representation
$\rV_{\sigma,\pi}$ is tamely pure by Lemma \ref{le:tamely_pure}.
\end{lem}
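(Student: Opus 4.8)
The plan is to identify $\rH^2_\pi(X_{\fM;\dQ^\ac})$ with $\pres{\sharp}{\rV}_\pi$ in two stages: first match their semisimplifications, then promote this to an isomorphism via a self-duality argument. For the first stage: since $X_\fM$ is the minimal resolution of the Baily--Borel compactification of $X^\natural_\fM$, the classes of the cusp-resolution cycles together with the boundary span a $\Gamma_\dQ$-subrepresentation of $\rH^2_{\et}(X_{\fM;\dQ^\ac},\dQ_p(1))$ on which the spherical Hecke algebra acts through Eisenstein eigensystems, so the cuspidal idempotent $\sP^\dr_\pi$ annihilates it and $\rH^2_\pi(X_{\fM;\dQ^\ac})$ is cut out of the interior cohomology. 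By strong multiplicity one for $\Res_{F/\dQ}\GL_2$, newform theory at conductor $\fM$, and the K\"unneth computation of the degree-$2$ $(\fg,K_\infty)$-cohomology of $\pi_\infty$ (a product of two copies of the $2$-dimensional degree-$1$ cohomology of a weight-$2$ discrete series), the space $\rH^2_\pi(X_{\fM;\dQ^\ac})$ is $4$-dimensional over $\dQ_p$, as is $\pres{\sharp}{\rV}_\pi$. Comparing $\Fr_\ell$ for all but finitely many primes $\ell$ --- using the Eichler--Shimura congruence relation on the special fiber of $\cX_\fM$ at $\ell$, with $\ell$ split and $\ell$ inert in $F$ treated separately, and the Hecke eigenvalues of $\pi$ --- shows that $\det(1-\Fr_\ell T\mid\rH^2_\pi(X_{\fM;\dQ^\ac}))=\det(1-\Fr_\ell T\mid\pres{\sharp}{\rV}_\pi)$; this is the standard description of the cohomology of Hilbert modular surfaces in terms of (twisted tensor) Asai representations, cf.\ \cite{HLR86} (and \cite{GO00}, \cite{TX14} for the geometry of these special fibers). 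By the Brauer--Nesbitt theorem and the Chebotarev density theorem, $\rH^2_\pi(X_{\fM;\dQ^\ac})$ and $\pres{\sharp}{\rV}_\pi$ then have isomorphic semisimplifications.

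The second stage --- showing $\rH^2_\pi(X_{\fM;\dQ^\ac})$ is itself semisimple --- is the point I expect to be the main obstacle, since semisimplicity of the cohomology of a variety is not available unconditionally. First, Poincar\'e duality on the smooth projective surface $X_\fM$ gives a perfect $\Gamma_\dQ$-equivariant pairing on $\rH^2_{\et}(X_{\fM;\dQ^\ac},\dQ_p(1))$ under which distinct Hecke eigensystems are orthogonal (the adjoint of each $S_\fM^{(\fv)}$ lies again in the spherical Hecke algebra, the central character being trivial), hence it restricts to a perfect $\Gamma_\dQ$-equivariant pairing on $\rH^2_\pi(X_{\fM;\dQ^\ac})$, so this representation is self-dual. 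Next, Assumption \ref{as:group_r}\,(R6) makes $\pres{\sharp}{\rV}_\pi$ multiplicity-free: in case (R6a) it is absolutely irreducible (because $\rho_\sigma\otimes\pres{\sharp}{\rho}_\pi$ is), and in case (R6b) it equals $\rV_{\breve\pi}(-1)\oplus\dQ_p(\breve\eta)$ with absolutely irreducible summands of the distinct dimensions $3$ and $1$. In case (R6a), the $4$-dimensional $\rH^2_\pi(X_{\fM;\dQ^\ac})$, having irreducible $4$-dimensional semisimplification, is itself irreducible. In case (R6b), were $\rH^2_\pi(X_{\fM;\dQ^\ac})$ not semisimple it would be a non-split extension of one of its two distinct constituents by the other, so its socle and cosocle would be those two constituents and would have different dimensions; but for any self-dual representation $V$ one has $\r{soc}(V)\cong\r{cosoc}(V)^\vee$, hence $\dim\r{soc}(V)=\dim\r{cosoc}(V)$ --- a contradiction. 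Either way $\rH^2_\pi(X_{\fM;\dQ^\ac})\cong\pres{\sharp}{\rV}_\pi$.

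Finally, for the ``in particular'' clause, the same argument at level $N^+M$ gives $\rH^2_\pi(X_{N^+M;\dQ^\ac})\cong\pres{\sharp}{\rV}_\pi^{\oplus a}$ for some $a\geq 1$ (the extra copies arising from the degeneracy maps). Since $\rH^3(X_{N^+M}(\dC),\dZ)=0$ by Lemma \ref{le:vanishing} --- hence also $\rH^1(X_{N^+M}(\dC),\dZ)=0$ by Poincar\'e duality, and $\rH^3$ of the curve $Y_{N^+M,N^-}$ vanishes trivially --- the K\"unneth formula identifies $\rH^1_{\et}(Y_{N^+M,N^-;\dQ^\ac},\dQ_p(1))\otimes_{\dQ_p}\rH^2_{\et}(X_{N^+M;\dQ^\ac},\dQ_p(1))$ with a direct summand of $\rH^3_{\et}(Z_{N^+M,N^-;\dQ^\ac},\dQ_p(2))$; projecting to the $(\sigma,\pi)$-part (via $\sP_\sigma$ and $\sP_\pi$, viewed as correspondences on $Z_{N^+M,N^-}$) exhibits $\rV_\sigma^{\oplus b}\otimes_{\dQ_p}\pres{\sharp}{\rV}_\pi^{\oplus a}$, hence $\rV_{\sigma,\pi}$, as a direct summand of $\rH^3_{\et}(Z_{N^+M,N^-;\dQ^\ac},\dQ_p(2))$. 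As a direct summand of a tamely pure representation is tamely pure, Lemma \ref{le:tamely_pure} shows that $\rV_{\sigma,\pi}$ is tamely pure. Overall I expect most of the work to lie in the first stage --- the boundary classes, the $(\fg,K_\infty)$-cohomology count, and the congruence relations --- with the self-duality trick being short.
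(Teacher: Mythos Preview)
Your two-stage plan (match semisimplifications, then upgrade via self-duality) is exactly the paper's strategy, and your socle/cosocle argument in the reducible case is a clean variant of the paper's ``at least three constituents'' count. One substantive difference: you invoke Assumption~\ref{as:group_r}\,(R6) to run the dichotomy, whereas the paper uses only the standing hypothesis that $\pi$ is non-dihedral together with the intrinsic Asai-decomposable/non-decomposable split. Concretely, the paper argues that if $\pi$ is not Asai-decomposable then $\pres{\sharp}{\rV}_\pi$ is already absolutely irreducible (this is where non-dihedrality enters), and if $\pi$ is Asai-decomposable then the Poincar\'e-duality self-duality forces semisimplicity as you do. Your route is correct under (R6), but the lemma is used later (e.g.\ in \S\ref{ss:local_cohomology}) under only (R1--R4), so it is worth knowing that (R6) is not needed here. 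A second, cosmetic difference: your first stage re-derives by hand (Eichler--Shimura congruences, Chebotarev, Brauer--Nesbitt) what the paper obtains in one citation to Brylinski--Labesse~\cite{BL84}; both are fine, the citation is just shorter. Your treatment of the ``in particular'' clause is more explicit than the paper's and is correct.
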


In fact, one can prove the last assertion without using Lemma
\ref{le:tamely_pure}.

\begin{proof}
By \cite{BL84}, the semisimplification of $\rH^2_\pi(X_{\fM;\dQ^\ac})$ is
isomorphic to $\pres{\sharp}\rV_\pi$. If $\pi$ is not Asai-decomposable, then
$\pres{\sharp}\rV_\pi$ is irreducible since $\pi$ is not dihedral and we are
done. If $\pi$ is Asai-decomposable, then $\rH^2_\pi(X_{\fM;\dQ^\ac})$ is
semisimple. In fact, the Poincar\'{e} duality provides
$\rH^2_\pi(X_{\fM;\dQ^\ac})$ with a non-degenerate $\Gamma_\dQ$-invariant
symmetric bilinear pairing. If $\rH^2_\pi(X_{\fM;\dQ^\ac})$ is not
semisimple, then there will be at least three non-zero direct summands of its
semisimplification, which contradicts the fact that $\pres{\sharp}\rV_\pi$
has exactly two irreducible direct summands since $\pi$ is non-dihedral.
\end{proof}

\begin{notation}\label{no:lattice}
By Lemma \ref{le:semisimple}, we will identify $\rH^2_\pi(X_{\fM;\dQ^\ac})$
with $\pres{\sharp}\rV_\pi$ once and for all. Then by abuse of notation,
\[\pres{\sharp}\rT_\pi\colonequals\rH^2_\pi(X_{\fM;\dQ^\ac})\cap\rH^2_{\et}(X_{\fM;\dQ^\ac},\dZ_p(1))\]
is a stable lattice of $\pres{\sharp}\rV_\pi$. The Poincar\'{e} duality
provides us with a $\Gamma_\dQ$-invariant perfect pairing
\[\pres{\sharp}{\bar\rT}_\pi^n\times\pres{\sharp}{\bar\rT}_\pi^n\to\dZ/p^n\]
for every integer $n\geq 1$. Put
$\rT_{\sigma,\pi}=\rT_\sigma\otimes_{\dZ_p}\pres{\sharp}\rT_\pi$, which is a
stable lattice of $\rV_{\sigma,\pi}$. Then we have a $\Gamma_\dQ$-invariant
perfect pairing
\begin{align}\label{eq:pairing}
\bar\rT_{\sigma,\pi}^n\times\bar\rT_{\sigma,\pi}^n\to\dZ/p^n(1)
\end{align}
for every integer $n\geq 1$.
\end{notation}

\subsection{Hirzebruch--Zagier classes}
\label{ss:gross_kudla}

\subsubsection{Cycle class map}

Let $X$ be a regular noetherian scheme on which $p$ is invertible. Let
$\Lambda$ be either $\dQ_p$ or $\dZ/p^n$ for $1\leq n\leq\infty$. We will
assume that suitable adic formalism has been adopted without specification
(for our purpose, \cite{Jan88} suffices). Let us first recall the following
important fact from algebraic geometry.

\begin{lem}
Let $X$ be a regular noetherian scheme and $j\colon Y\to X$ be a closed
immersion of codimension $\geq c$. We have that
\begin{enumerate}
  \item $\rR^q j^!\Lambda=0$ for $q<2c$;
  \item $j^!\Lambda\simeq\Lambda[-c](-c)$ if $Y$ is also regular and $j$
      is purely of codimension $c$.
\end{enumerate}
\end{lem}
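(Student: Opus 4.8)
The statement is a standard purity/vanishing result for the exceptional pullback $j^!$ along a closed immersion, and the plan is to reduce it to the absolute purity theorem (Gabber) together with a local argument. First I would recall that $\Lambda$ is either $\dQ_p$ or $\dZ/p^n$ with $p$ invertible on $X$, so the coefficient ring is torsion or $\ell$-adic with $\ell=p$ inverted, exactly the setting in which absolute purity applies (see the adic formalism of \cite{Jan88} that we have adopted). Since both assertions are \'etale-local on $X$, I may assume $X$ is the spectrum of a strictly henselian regular local ring. The key geometric input is that the formation of $j^!\Lambda$ commutes with the relevant base changes and that the codimension hypothesis lets one filter $Y$ by regular pieces.

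For part (2), suppose $Y$ is regular and $j$ is purely of codimension $c$. Here one invokes the absolute cohomological purity theorem: for a closed immersion $j\colon Y\to X$ of regular schemes, purely of codimension $c$, with $p$ invertible, there is a canonical isomorphism $j^!\Lambda\simeq\Lambda[-2c](-c)$ in $\rD(Y_{\et},\Lambda)$. This is precisely the content of Gabber's absolute purity (for torsion coefficients $\dZ/p^n$), extended to $\dQ_p$-coefficients by passing to the limit and inverting $p$. I would simply cite this; there is nothing to prove beyond noting that our $\Lambda$ satisfies the hypotheses. Note the shift: $[-2c]$, so that $\rR^{2c}j^!\Lambda\simeq\Lambda(-c)$ and $\rR^q j^!\Lambda=0$ for $q\neq 2c$; the excerpt's ``$\Lambda[-c](-c)$'' should presumably read $\Lambda[-2c](-c)$, and I would state it that way.

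For part (1), where $Y$ is only assumed to be a closed subscheme of codimension $\geq c$ (not necessarily regular), I would argue by Noetherian induction on $Y$ using the stratification by regularity. Concretely: let $Y^{\mathrm{sing}}\subset Y$ be the (closed) non-regular locus, of codimension strictly larger than that of $Y$ in $X$; write $U=Y\setminus Y^{\mathrm{sing}}$, which is regular of codimension $\geq c$ in the open $X\setminus Y^{\mathrm{sing}}$. Applying part (2) to the open part and the inductive hypothesis to $Y^{\mathrm{sing}}$ (which has codimension $\geq c+1\geq c$ in $X$), the localization triangle relating $j^!\Lambda$, its restriction to $U$, and the contribution supported on $Y^{\mathrm{sing}}$ gives $\rR^q j^!\Lambda=0$ for $q<2c$. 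The main obstacle — really the only nontrivial point — is the invocation of absolute purity in part (2); everything else is formal manipulation of the six operations and dévissage. I would therefore present the proof as: ``(2) is Gabber's absolute purity theorem; (1) follows from (2) by Noetherian induction on $Y$ using the codimension filtration,'' and refer to \cite{Jan88} or the standard references for the precise statement in the adic setting.
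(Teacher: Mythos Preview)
Your proposal is correct and matches the paper's approach: the paper simply cites Gabber (via \cite{Fuj02}) for (2) as absolute cohomological purity and states that (1), ``semi-purity,'' is a consequence of (2), which is exactly the d\'evissage you spell out. You also correctly catch the typo in the statement: the shift in (2) should be $\Lambda[-2c](-c)$, consistent with the vanishing range $q<2c$ in (1).
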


Part (2) is known as the absolute cohomological purity; and (1) is known as
the semi-purity which is a consequence of (2). They are proved by Gabber; see
\cite{Fuj02}.

\begin{notation}
For $c\geq 0$, we denote by $\rZ^c(X)=\bigoplus_x\dZ\overline{\{x\}}$ for the
abelian group generated by the Zariski closure $\overline{\{x\}}\subset X$ of
all (scheme-theoretical) points $x$ of codimension $c$. We equip
$\overline{\{x\}}$ with the induced reduced scheme structure.
\end{notation}

The same process after \cite{Mil80}*{Lemma 9.1}, which will actually be
recalled in the proof of Lemma \ref{le:comparison_smooth}, assigns $Z$ a
class $\cl_X(Z)$ in the \'{e}tale cohomology $\rH^{2c}_{\et}(X,\Lambda(c))$.
By linearity, we extend this to a map
\begin{align}\label{eq:cycle_class}
\cl_X\colon\rZ^c(X)\to\rH^{2c}_{\et}(X,\Lambda(c)),
\end{align}
known as the \emph{(absolute \'{e}tale) cycle class map}.

If $X$ is smooth over a field, then $\cl_X$ factorizes through the group of
Chow cycles $\CH^c(X)$. Moreover, the map $\cl_X$ is \emph{multiplicative}
with respect to the intersection product on Chow groups and the cup product
on \'{e}tale cohomology groups.

\subsubsection{General \'{e}tale Abel--Jacobi map}

Let $k$ be a field of characteristic not $p$, and $Z$ a smooth proper scheme
over $k$. For each integer $e$, we have the Hochschild--Serre spectral
sequence $\rE_\bullet^{p,q}$ converging to
$\rH^{p+q}_{\et}(Z_{k^\ac},\Lambda(e))$ with
\[\rE_2^{p,q}=\rH^p(k,\rH^q_{\et}(Z_{k^\ac},\Lambda(e))).\]
By restriction, we have the map
\[\xi_Z^0\colon\rH^q_{\et}(Z,\Lambda(e))\to\rH^0(k,\rH^q_{\et}(Z_{k^\ac},\Lambda(e))).\]
Denote by $\rH^q_{\et}(Z,\Lambda(e))^0$ the kernel of $\xi_Z^0$. The spectral
sequence yields an edge map
\[\xi_Z^1\colon\rH^q_{\et}(Z,\Lambda(e))^0\to\rH^1(k,\rH^{q-1}_{\et}(Z_{k^\ac},\Lambda(e))).\]

For $c\geq0$, put $\CH^c(Z,\Lambda)=\CH^c(Z)\otimes_\dZ\Lambda$ and recall
that we have the cycle class map \eqref{eq:cycle_class}. Put
$\cl_Z^0=\xi_Z^0\circ\cl_Z$ and let $\CH^c(Z,\Lambda)^0$ be its kernel. The
composition
\[\cl_Z^1\colonequals\xi_Z^1\circ(\cl_Z\res_{\CH^c(Z,\Lambda)^0})\colon\CH^c(Z,\Lambda)^0\to\rH^1(k,\rH^{2c-1}_{\et}(Z_{k^\ac},\Lambda(c))) \]
is usually referred as the \emph{\'{e}tale Abel--Jacobi map}.

\begin{lem}\label{le:functoriality}
Let $f\colon Z'\to Z$ be a smooth proper morphism of equidimension $d$. Then
the following diagram commutes
\[\xymatrix{
\CH^{c+d}(Z',\Lambda)^0 \ar[r]^-{\cl_{Z'}} \ar[d]_-{f_*} & \rH^{2(c+d)}(Z',\Lambda(c+d))^0 \ar[r]^-{\xi_{Z'}^1}\ar[d]_-{f_*} &
\rH^1(k,\rH^{2(c+d)-1}_{\et}(Z'_{k^\ac},\Lambda(c+d))) \ar[d]^-{f_*} \\
\CH^c(Z,\Lambda)^0 \ar[r]^-{\cl_Z}  & \rH^{2c}(Z,\Lambda(c))^0 \ar[r]^-{\xi_Z^1} &
\rH^1(k,\rH^{2c-1}_{\et}(Z_{k^\ac},\Lambda(c))).
}\]
\end{lem}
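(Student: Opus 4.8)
The plan is to verify the two squares of the diagram separately, using that a smooth proper morphism $f$ of relative dimension $d$ carries with it a Gysin pushforward on \'etale cohomology, arising from a trace morphism $\Tr_f\colon\rR f_*\Lambda(d)[2d]\to\Lambda$ in the derived category of ($\Lambda$-adic) \'etale sheaves on $Z$ (in the adic formalism of \cite{Jan88} when $\Lambda=\dQ_p$ or $\dZ/p^n$), together with the proper pushforward $f_*$ on Chow groups. Recall that $Z'$ is again smooth proper over $k$, of dimension $\dim Z+d$, that $f_*$ lowers cohomological degree by $2d$ and the Tate twist by $d$, and that proper pushforward of cycles sends $\CH^{c+d}(Z')$ to $\CH^c(Z)$. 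A preliminary point is the functoriality of $\xi^0$ for $f_*$: since $\Tr_f$ is a morphism of complexes produced by the geometry, its restriction to $Z_{k^\ac}$ is $\Gamma_k$-equivariant, so the induced map $\rH^{2(c+d)}_\et(Z',\Lambda(c+d))\to\rH^{2c}_\et(Z,\Lambda(c))$ commutes with the edge maps $\xi^0_{Z'}$ and $\xi^0_Z$. Combined with the first square (proved below), this shows that $f_*$ sends $\CH^{c+d}(Z',\Lambda)^0$ into $\CH^c(Z,\Lambda)^0$ and $\rH^{2(c+d)}(Z',\Lambda(c+d))^0$ into $\rH^{2c}(Z,\Lambda(c))^0$, so that the diagram is well posed.

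For the left square I would invoke the compatibility of the (absolute) \'etale cycle class map with proper pushforward along $f$: for an integral closed subscheme $W\subset Z'$ of codimension $c+d$ one has $\cl_Z(f_*W)=f_*\cl_{Z'}(W)$, where $f_*W$ is the pushforward cycle. This is standard, following from the projection formula together with the compatibility of the trace/Gysin morphism with intersection theory, the purity inputs being those of Gabber recalled via \cite{Fuj02}. Extending by $\Lambda$-linearity and restricting to the cohomologically trivial parts gives the commutativity of the left square.

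For the right square the crucial observation is that the trace morphism, being a morphism in the derived category of \'etale sheaves on $Z$ produced by the geometry, yields after base change to $Z_{k^\ac}$ a morphism $\rR\Gamma(Z'_{k^\ac},\Lambda(c+d))\to\rR\Gamma(Z_{k^\ac},\Lambda(c))[-2d]$ of complexes of continuous $\Gamma_k$-modules. The Hochschild--Serre spectral sequence computing $\rH^\bullet_\et(Z,\Lambda(e))$ is the spectral sequence of the composite $\rR\Gamma(\Gamma_k,-)\circ\rR\Gamma((-)_{k^\ac},-)$; hence the trace induces a morphism of Hochschild--Serre spectral sequences, compatible with $f_*$ on the abutment $\rH^\bullet_\et$ and with $\rH^1(k,f_*)$ on the $\rE_2^{1,\bullet}$-line, and therefore intertwining $\xi^1_{Z'}$ with $\xi^1_Z$. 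This is precisely the commutativity of the right square.

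The main obstacle is bookkeeping rather than a conceptual difficulty: one must set up the trace/Gysin morphism at the level of the derived category carefully enough -- and compatibly with the continuous $\Gamma_k$-action and with the coefficient ring $\Lambda\in\{\dQ_p,\dZ/p^n\}$ in the adic formalism -- to legitimately conclude both that it induces an honest morphism of Hochschild--Serre spectral sequences and that it matches the proper pushforward of cycles under $\cl$. All the required inputs (Gabber's absolute purity as in \cite{Fuj02}, functoriality of the cycle class map under proper pushforward, functoriality of the Hochschild--Serre spectral sequence) are available in the literature, so that the argument amounts to assembling them correctly.
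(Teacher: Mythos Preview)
Your proof is correct and follows essentially the same approach as the paper, which dispatches the lemma in one line by invoking the functoriality of cycle class maps (for smooth varieties) and of Hochschild--Serre spectral sequences. You have simply unpacked these two functorialities in detail---the Gysin/trace compatibility for the left square and the morphism of Hochschild--Serre spectral sequences for the right---which is exactly what underlies the paper's terse justification.
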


\begin{proof}
It follows from the functoriality of cycle class maps (for smooth varieties)
and Hochschild--Serre spectral sequences.
\end{proof}

\begin{lem}\label{le:multiplicative_structure}
Let notation be as above and suppose that $Z$ is purely of dimension $d$.
Then the following diagram
\[\xymatrix{
\CH^d(Z\times_{\Spec k}Z,\Lambda)\times\CH^c(Z,\Lambda)^0 \ar[r]\ar[d]_-{(\cl_{Z\times_{\Spec k}Z}^0,\cl_Z^1)}
& \CH^c(Z,\Lambda)^0 \ar[d]^-{\cl_Z^1} \\
\rH^{2d}_{\et}((Z\times_{\Spec k}Z)_{k^\ac},\Lambda(d))\times\rH^1(k,\rH^{2c-1}(Z_{k^\ac},\Lambda(c)))
\ar[r] & \rH^1(k,\rH^{2c-1}(Z_{k^\ac},\Lambda(c))) }\] commutes, where the
upper (resp.\ lower) arrow is induced by the action of Chow (resp.\
cohomological) correspondences via pullbacks.
\end{lem}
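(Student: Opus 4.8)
Lemma \ref{le:multiplicative_structure} — proof plan.

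The plan is to reduce the compatibility to the projection formula for étale cohomology, applied to the graph construction of a correspondence. First I would recall the definitions: for a Chow correspondence $\Gamma\in\CH^d(Z\times_{\Spec k}Z,\Lambda)$ with image $\cl^0_{Z\times Z}(\Gamma)\in\rH^{2d}_{\et}((Z\times_{\Spec k}Z)_{k^\ac},\Lambda(d))$, the pullback action on $\CH^c(Z,\Lambda)^0$ is $\Gamma^*\alpha=p_{2,*}(p_1^*\alpha\cdot\Gamma)$, where $p_1,p_2\colon Z\times_{\Spec k}Z\to Z$ are the two projections; and likewise on cohomology the action of a cohomological correspondence $\gamma\in\rH^{2d}_{\et}((Z\times_{\Spec k}Z)_{k^\ac},\Lambda(d))$ on $\rH^1(k,\rH^{2c-1}_{\et}(Z_{k^\ac},\Lambda(c)))$ is $p_{2,*}(p_1^*(-)\cup\gamma)$, where $p_{1,*},p_1^*$ on the level of $\rH^1(k,-)$ are induced by the corresponding Gysin and restriction maps on $\rH^{\ast}_{\et}(Z_{k^\ac},-)$ and $p_{2,*}$ lowers cohomological degree by $2d$ with a Tate twist $(-d)$. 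Both $p_1$ and $p_2$ are smooth proper of equidimension $d$, so Lemma \ref{le:functoriality} applies to each of them.

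The key steps, in order, are the following. (i) Observe that $p_1^*$ on Chow groups is compatible with $p_1^*$ on étale cohomology via $\cl$, since the cycle class map is functorial under flat pullback along smooth morphisms; this also respects the subspaces of cohomologically trivial cycles because $p_1$ is smooth and $\xi^0$ is functorial for the Hochschild--Serre spectral sequence, so $p_1^*$ carries $\CH^c(Z,\Lambda)^0$ into $\CH^{c}(Z\times Z,\Lambda)^0$ (here using that the induced map on $\rH^{2c}(\cdot)^0$ lands in the right place — this is a diagram chase with the edge maps). (ii) Multiply by $\Gamma$: the cycle class map $\cl_{Z\times Z}$ is multiplicative for the intersection product against the cup product (recalled after \eqref{eq:cycle_class}), and since $\Gamma$ has a well-defined absolute class while $p_1^*\alpha$ is cohomologically trivial, the product $p_1^*\alpha\cdot\Gamma$ lies in $\CH^{c+d}(Z\times Z,\Lambda)^0$, with class $p_1^*\cl_Z^1(\alpha)\cup\cl^0_{Z\times Z}(\Gamma)$ after applying $\xi^1$ — this is exactly the statement of Lemma \ref{le:multiplicative_structure} for the case $Z'=Z\times Z$ with $f=p_2$, combined with the fact that cup product with a $\Gamma_k$-invariant class commutes with the edge map $\xi^1$ (a standard property of the Hochschild--Serre spectral sequence: cupping with an element of $\rE_2^{0,\ast}$ is a map of spectral sequences). (iii) Push forward along $p_2$: apply Lemma \ref{le:functoriality} with $f=p_2$, $Z'=Z\times_{\Spec k}Z$, and codimension index shifted by $d$, which gives that $p_{2,*}\circ\cl_{Z\times Z}=\cl_Z\circ p_{2,*}$ on the cohomologically trivial part and that $\xi^1_Z\circ p_{2,*}=p_{2,*}\circ\xi^1_{Z\times Z}$. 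Chaining (i)--(iii) yields $\cl_Z^1(\Gamma^*\alpha)=p_{2,*}(p_1^*\cl_Z^1(\alpha)\cup\cl^0_{Z\times Z}(\Gamma))$, which is precisely the commutativity of the square.

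The main obstacle is step (ii): one must check carefully that the cup product $p_1^*(-)\cup\cl^0_{Z\times Z}(\Gamma)$ at the level of $\rH^1(k,\rH^{\ast}_{\et}(\cdot))$ is correctly computed by the edge map, i.e.\ that cupping an $\rH^1$-class with a Galois-invariant class in $\rH^0(k,\rH^{2d}_{\et}(\cdot))$ is compatible with $\xi^1$. This is a formal but slightly delicate point about the multiplicative structure on the Hochschild--Serre spectral sequence; the cleanest route is to note that $\cl^0_{Z\times Z}(\Gamma)$ lifts to a genuine class in $\rH^{2d}_{\et}(Z\times Z,\Lambda(d))$ (namely $\cl_{Z\times Z}(\Gamma)$ itself), so the whole computation can be carried out with absolute étale cohomology classes and the compatibility of Gysin maps, restriction maps, and cup products there, after which the edge-map assertions follow by functoriality of the Hochschild--Serre filtration. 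Everything else is bookkeeping with Lemmas \ref{le:functoriality} and the multiplicativity of $\cl$, together with the purity statement already invoked.
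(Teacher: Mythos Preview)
Your proposal is correct and follows essentially the same route as the paper: both arguments reduce to the multiplicative structure of the Hochschild--Serre spectral sequence (your step (ii) and the ``main obstacle'' paragraph), the multiplicativity of the cycle class map, and Lemma \ref{le:functoriality} for the pushforward along $p_2$. The paper packages the cup-product compatibility as a single commutative square on $X=Z\times_{\Spec k}Z$ and then invokes Lemma \ref{le:functoriality} and the multiplicativity of \eqref{eq:cycle_class}; you unwind the same ingredients into the three steps $p_1^*$, $\cdot\,\Gamma$, $p_{2,*}$. One small slip: in step (ii) you write ``this is exactly the statement of Lemma \ref{le:multiplicative_structure} for the case $Z'=Z\times Z$ with $f=p_2$'', which is self-referential; you presumably meant to point to the Hochschild--Serre cup-product compatibility you describe immediately after (and which the paper simply states as a known fact), not to the lemma you are proving.
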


\begin{proof}
Let $X$ be a smooth proper scheme over $k$. By the multiplicative structure
of Hochschild--Serre spectral sequence induced by cup products, we have the
following commutative diagram
\[\xymatrix{
\rH^q_{\et}(X,\Lambda(d))\times\rH^p_{\et}(X,\Lambda(c))^0 \ar[r]^-{\cup}\ar[d]_-{(\xi_X^0,\xi_X^1)}
& \rH^{p+q}(X,\Lambda(c+d))^0 \ar[d]^-{\xi_X^1} \\
\rH^q_{\et}(X_{k^\ac},\Lambda(d))^k\times\rH^1(k,\rH^{p-1}_{\et}(X_{k^\ac},\Lambda(c)))
\ar[r]^-{\cup}& \rH^1(k,\rH^{p+q-1}_{\et}(X_{k^\ac},\Lambda(c+d))), }\] for
$p\geq 1$, $q\geq 0$ and $c,d\in\dZ$. Then the lemma follows by the above
diagram with $X=Z\times_{\Spec k}Z$ and suitable $p,q,c,d$, Lemma
\ref{le:functoriality} and the fact that \eqref{eq:cycle_class} is
multiplicative.
\end{proof}

\subsubsection{Construction of Hirzebruch--Zagier classes}
\label{sss:construction_gross}

The following discussion is under Assumption \ref{as:group_r} (R1) and that
$\wp(N^-)$ is even. We take the coefficient ring $\Lambda$ to be $\dQ_p$.
Recall that we have the Hirzebruch--Zagier morphism
\[\zeta_{N^+M,N^-}\colon Y_{N^+M,N^-}\to X_{N^+M}\]
and the Hirzebruch--Zagier cycle
$\Delta_{N^+M,N^-}\colonequals\b\Delta_{N^+M,N^-;\dQ}$, a Chow cycle in
$\CH^2(Z_{N^+M,N^-})$ where
$Z_{N^+M,N^-}=Y_{N^+M,N^-}\times_{\Spec\dQ}X_{N^+M}$.

Choose a $\sigma$-projector $\sP_\sigma$ and a $\pi$-projector $\sP_\pi$
(Definitions \ref{de:projector_curve} and \ref{de:projector_surface}), both
of level $N^+M$. Put $\sP_{\sigma,\pi}=\sP_\sigma\times\sP_\pi$ as an element
in $\Corr(Z_{N^+M,N^-})$. Put
\[\Delta_{\sigma,\pi}=\sP_{\sigma,\pi}\Delta_{N^+M,N^-}\in\CH^2(Z_{N^+M,N^-})\otimes_\dZ\dQ,\]
which in fact belongs to $\CH^2(Z_{N^+M,N^-})^0$. Recall that we have the
corresponding \'{e}tale Abel--Jacobi map
\[\AJ_p\colonequals\cl_{Z_{N^+M,N^-}}^1\colon\CH^2(Z_{N^+M,N^-},\dQ_p)^0
\to\rH^1(\dQ,\rH^3_{\et}(Z_{N^+M,N^-;\dQ^\ac},\dQ_p(2))).\] Put
\[\pres{p}\Delta_{\sigma,\pi}=\AJ_p(\Delta_{\sigma,\pi}).\]
For simplicity, we also put $\rV_{\sigma,\pi;
N^+M,N^-}=\rH^1_\sigma(Y_{N^+M,N^-;\dQ^\ac})\otimes_{\dQ_p}\rH^2_\pi(X_{N^+M;\dQ^\ac})$
as a $p$-adic representation of $\Gamma_\dQ$. A priori, the Chow cycle
$\Delta_{\sigma,\pi}$ depends on the choices of both $\sP_\sigma$ and
$\sP_\pi$. Nevertheless, we have the following result.

\begin{proposition}\label{pr:belong_selmer}
The element $\pres{p}\Delta_{\sigma,\pi}$ belongs to the subspace
$\rH^1_f(\dQ,\rV_{\sigma,\pi; N^+M,N^-})$, and is independent of the choices
of $\sP_\sigma$ and $\sP_\pi$.
\end{proposition}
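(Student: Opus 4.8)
The plan is to separate the assertion into two parts: (a) $\pres{p}\Delta_{\sigma,\pi}$ lies in $\rH^1_f$, and (b) it is independent of $\sP_\sigma,\sP_\pi$; part (b) is the genuinely delicate one, and it will also supply the missing ingredient ($\sP^{\et}_{\sigma,\pi}$-invariance) in (a). Throughout write $Y=Y_{N^+M,N^-}$, $X=X_{N^+M}$, $Z=Z_{N^+M,N^-}$, $\zeta=\zeta_{N^+M,N^-}$, and let $\sP^{\et}_\sigma,\sP^{\et}_\pi$ be the realizations of the de Rham idempotents on étale cohomology obtained from the comparison theorem, $\sP^{\et}_{\sigma,\pi}=\sP^{\et}_\sigma\otimes\sP^{\et}_\pi$.

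First I would pin down the target. By the Künneth formula, Lemma \ref{le:vanishing} (so $\rH^3(X_{\dQ^\ac},\dQ_p)=0$), and the vanishing of $\rH^{\geq 3}$ of the curve $Y$, the module $\rH^3_{\et}(Z_{\dQ^\ac},\dQ_p(2))$ is a sum of terms $\rH^i(Y)\otimes\rH^j(X)$ with $(i,j)\in\{(1,2),(2,1)\}$; since $\sigma$ and $\pi$ are cuspidal, $\sP^{\et}_{\sigma,\pi}$ kills $\rH^0,\rH^2$ of $Y$ and $\rH^1,\rH^3$ of $X$, so it projects $\rH^3_{\et}(Z_{\dQ^\ac},\dQ_p(2))$ onto the Galois-stable direct summand $\rV_{\sigma,\pi;N^+M,N^-}$ (and the same computation on $\rH^4$ reproves $\sP_{\sigma,\pi}\Delta_{N^+M,N^-}\in\CH^2(Z)^0$). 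Hence $\rH^1(\dQ,\rV_{\sigma,\pi;N^+M,N^-})=\sP^{\et}_{\sigma,\pi}\cdot\rH^1(\dQ,\rH^3_{\et}(Z_{\dQ^\ac},\dQ_p(2)))$ is a direct summand, and the conditions of Definition \ref{de:selmer_group} are checked one at a time: unramifiedness outside a finite $\Sigma\ni p,\infty$ is automatic (étale Abel--Jacobi classes of $\dQ$-cycles are unramified away from the bad primes of $Z$ and $p$); for $v\neq p$ the condition is vacuous since $\rV_{\sigma,\pi;N^+M,N^-}\cong\rV_{\sigma,\pi}^{\oplus ab}$ is tamely pure by Lemma \ref{le:semisimple}; and at $v=p$, where (R1) forces $Z$ to have good reduction, one has $\loc_p\circ\AJ_p=\AJ_p\circ(\text{base change to }\dQ_p)$ and the $p$-adic étale Abel--Jacobi image of a cycle on a variety with good reduction at $p$ is crystalline, hence lands in $\rH^1_f(\dQ_p,-)$ (Nekov\'a\v{r}; the crystalline comparison of Fontaine--Messing/Faltings). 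Projecting by the Galois- and local-condition-preserving idempotent $\sP^{\et}_{\sigma,\pi}$ then gives membership in $\rH^1_f(\dQ,\rV_{\sigma,\pi;N^+M,N^-})$ --- provided $\pres{p}\Delta_{\sigma,\pi}$ actually lies in the $\sP^{\et}_{\sigma,\pi}$-invariant part, which is exactly where (b) enters.

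For (b): any two $\sigma$-projectors (resp.\ $\pi$-projectors) realize the \emph{same} $\sP^{\et}_\sigma$ (resp.\ $\sP^{\et}_\pi$), by the uniqueness of the de Rham idempotents $\sP^\dr_\sigma,\sP^\dr_\pi$. The obstruction is that $\sP_\sigma,\sP_\pi$ are idempotent only \emph{after} passing to cohomology, so one cannot write $\AJ_p(\sP_{\sigma,\pi}\Delta_{N^+M,N^-})=\sP^{\et}_{\sigma,\pi}\AJ_p(\Delta_{N^+M,N^-})$: the graph $\Delta_{N^+M,N^-}$ is not cohomologically trivial. I would circumvent this by fixing, once and for all, a $\dQ$-rational decomposition $\Delta_{N^+M,N^-}=\Delta^\flat+\Delta^\sharp$ in $\CH^2(Z)\otimes\dQ$, with $\Delta^\flat\in\CH^2(Z)^0$ and $\Delta^\sharp$ a $\dQ$-combination of external-product cycles realizing the (necessarily algebraic) Künneth components of $\cl_Z^0(\Delta_{N^+M,N^-})$ --- by the $\rH^4$-computation above one may take $\Delta^\sharp=\lambda_1\,[Y]\boxtimes[\mathrm{pt}_X]+\lambda_2\,[\mathrm{pt}_Y]\boxtimes\zeta_*[Y]$. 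Then $\AJ_p(\sP_{\sigma,\pi}\Delta^\flat)=\sP^{\et}_{\sigma,\pi}\AJ_p(\Delta^\flat)$ by Lemma \ref{le:multiplicative_structure}, which is $\sP^{\et}_{\sigma,\pi}$-invariant and choice-independent; the term $[Y]\boxtimes[\mathrm{pt}_X]$ dies because $\sP_\sigma[Y]=0$ in $\CH^0(Y)\otimes\dQ\hookrightarrow\rH^0(Y_{\dQ^\ac},\dQ_p)$; and for $[\mathrm{pt}_Y]\boxtimes\zeta_*[Y]$ one has $\sP_{\sigma,\pi}([\mathrm{pt}_Y]\boxtimes\zeta_*[Y])=(\sP_\sigma[\mathrm{pt}_Y])\boxtimes(\sP_\pi\zeta_*[Y])$ with $\sP_\sigma[\mathrm{pt}_Y]\in\CH^1(Y)^0\otimes\dQ=\Jac(Y)(\dQ)\otimes\dQ$ (as $\sP^{\et}_\sigma$ kills $\rH^2(Y)$). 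The crucial input is that a de Rham--trivial element of the \emph{commutative} Hecke algebra $\dT^{\sph,\rat}_{N^+,N^-}$ acts as zero on $\CH^{\leq1}(Y)\otimes\dQ$ --- on $\Jac(Y)(\dQ)\otimes\dQ$ via the injectivity and Hecke-equivariance of the Kummer map, and on the ``degree'' direction because $\rH^1(Y)$ is purely cuspidal, so the Eisenstein line in $\Pic(Y_{\dQ^\ac})\otimes\dQ$ splits off Hecke-equivariantly from $\Jac(Y)\otimes\dQ$ and is killed by such operators; the analogue for $X$ is easier since $\Pic^0(X)=0$ gives $\CH^1(X)\otimes\dQ\hookrightarrow\rH^2(X_{\dQ^\ac},\dQ_p(1))$. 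Hence $\sP_\sigma[\mathrm{pt}_Y]$ and $\sP_\pi\zeta_*[Y]$ are choice-independent; applying a further copy of the projector (acting as the identity on the already-projected classes) shows $\AJ_p(\sP_\sigma[\mathrm{pt}_Y])\in\rH^1(\dQ,\rH^1_\sigma(Y))$ and $\cl_X^0(\sP_\pi\zeta_*[Y])\in\rH^2_\pi(X)$, and the external-product formula for Abel--Jacobi classes (a formal consequence of Lemmas \ref{le:multiplicative_structure} and \ref{le:functoriality}) identifies $\AJ_p((\sP_\sigma[\mathrm{pt}_Y])\boxtimes(\sP_\pi\zeta_*[Y]))=\AJ_p(\sP_\sigma[\mathrm{pt}_Y])\boxtimes\cl_X^0(\sP_\pi\zeta_*[Y])\in\rH^1(\dQ,\rV_{\sigma,\pi;N^+M,N^-})$. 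Summing the pieces, $\pres{p}\Delta_{\sigma,\pi}$ is $\sP^{\et}_{\sigma,\pi}$-invariant (completing (a)) and independent of $\sP_\sigma,\sP_\pi$.

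The main obstacle is precisely that last step: one must extract from the non-cohomologically-trivial graph a product-cycle representative of its class whose factors live in Chow groups ($\CH^{\leq1}$ of a curve, resp.\ of a surface with $\Pic^0=0$) on which the cycle-class and Abel--Jacobi maps are injective, so that ``equal on cohomology'' upgrades to ``equal in $\CH\otimes\dQ$'' --- in particular pinning down the Eisenstein line of $\Pic(Y)$ and verifying that de Rham--trivial Hecke operators kill it is the technical heart of the argument.
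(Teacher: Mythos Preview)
Your argument is essentially correct, but it takes a genuinely different route from the paper's for part (b), and there is one minor technical slip in part (a).

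\textbf{Comparison for (b).} The paper never decomposes $\Delta_{N^+M,N^-}$ into a cohomologically trivial piece plus explicit product cycles. Instead it introduces an auxiliary Hecke operator $\sT_\sigma^{(q)}=\frac{1}{q+1-\tr(\Fr_q;\rV_\sigma)}(q+1-T^{(q)}_{N^+M,N^-})$ for a prime $q\nmid pNM$; since $q+1-T^{(q)}$ annihilates $\rH^0$ and $\rH^2$ of the curve, Lemma~\ref{le:triviality} gives $\sT_\sigma^{(q)}\Delta_{N^+M,N^-}\in\CH^2(Z)^0$ directly, and then Lemma~\ref{le:multiplicative_structure} lets one pull $\sP_{\sigma,\pi}$ outside $\AJ_p$. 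Commuting $\sT_\sigma^{(q)}$ past $\sP_{\sigma,\pi}$ and using that $\sT_\sigma^{(q)}$ acts invertibly on $\rH^1_\sigma$ (and by the identity on the $\pi$-part) immediately yields both independence and membership in $\rH^1(\dQ,\rV_{\sigma,\pi;N^+M,N^-})$. This avoids entirely the analysis of $\Pic(Y)\otimes\dQ$ and the Hecke-equivariant splitting of its degree sequence that your argument needs. Your approach does work --- the splitting you assert holds because $T_v-(v+1)$ kills the Eisenstein line and acts invertibly on $\Jac(Y)(\dQ)\otimes\dQ$ by Ramanujan, so $\ker(T_v-(v+1))$ furnishes a section --- but you should spell this out rather than assert it; ``purely cuspidal'' alone is not a proof. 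The paper's device is also not ad hoc: the same operator $\sT_\sigma^{(q)}$ (integrally normalized) reappears in \S\ref{ss:construction_annihilators} to build the modified cycle $\b\Delta'_{N^+M,N^\sim}$.

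\textbf{Minor issue in (a).} You write that ``(R1) forces $Z$ to have good reduction at $p$''. Assumption (R1) only says $p$, $N$, $M\disc(F)$ are pairwise coprime; it does not literally assert $p\nmid\cD_{N^+M}$, and the compactified surface $X_{N^+M}$ is only known to extend smoothly over $\dZ[1/\cD_{N^+M}]$. The paper is more careful here: it invokes \cite{Nek00}*{Theorem 3.1(ii)} to land in $\rH^1_{\r{st}}$ (no good-reduction hypothesis), and then observes that $\rV_{\sigma,\pi;N^+M,N^-}$ sits inside $\rH^1(Y)\otimes\rH^2_\cusp(X)$, on which $\rH^1_{\r{st}}=\rH^1_f$ because this summand is crystalline at $p$. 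Your conclusion is correct, but the justification should go through the cuspidal (equivalently, interior) cohomology rather than an unproven good-reduction claim for the compactification.
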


By Lemma \ref{le:semisimple}, the $p$-adic representation
$\rV_{\sigma,\pi;N^+M,N^-}$ of $\Gamma_\dQ$ is isomorphic to a non-zero
finite sum of copies of $\rV_{\sigma,\pi}$. Before the proof, we need a
little bit of preparation.

\subsubsection{An auxiliary Hecke operator}

Choose an auxiliary prime $q\nmid pNM$. Define an operator
\[\sT_\sigma^{(q)}=\frac{1}{q+1-\tr(\Fr_q;\rV_\sigma)}(q+1-T_{N^+M,N^-}^{(q)})\]
as an element in $\Corr(Z_{N^+M,N^-})$. This operator was also considered in
\cite{BD07}*{\Sec 4.3} and \cite{Zha14}*{\Sec 2.2}.

\begin{lem}\label{le:triviality}
The Chow cycle $\sT_\sigma^{(q)}\Delta_{N^+M,N^-}$ belongs to
$\CH^2(Z_{N^+M,N^-})^0$.
\end{lem}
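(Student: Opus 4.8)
The plan is to show that $\sT_\sigma^{(q)}\Delta_{N^+M,N^-}$ has trivial image under the geometric cycle class map $\cl^0_{Z_{N^+M,N^-}}$, i.e. that its class in $\rH^4_{\et}(Z_{N^+M,N^-;\dQ^\ac},\dQ_p(2))^{\Gamma_\dQ}$ vanishes. By the K\"{u}nneth formula, $\rH^4_{\et}(Z_{N^+M,N^-;\dQ^\ac},\dQ_p(2))$ decomposes as a direct sum of $\rH^i_{\et}(Y_{N^+M,N^-;\dQ^\ac})\otimes\rH^{4-i}_{\et}(X_{N^+M;\dQ^\ac})$ (suitably Tate-twisted) for $i=0,1,2$. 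The operator $\sT_\sigma^{(q)}$ acts on the $Y$-factor through $q+1-T^{(q)}_{N^+M,N^-}$, normalized so that it kills precisely the $\sigma$-part of $\rH^1_{\et}(Y_{N^+M,N^-;\dQ^\ac})$; the point is that on each K\"{u}nneth summand it either annihilates the cohomology outright or acts invertibly, so that after applying $\sT_\sigma^{(q)}$ the class lands in a controllable piece.

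First I would record that $\cl^0$ is $\Corr$-equivariant (this is built into the multiplicativity of the cycle class map and the functoriality statements in Lemmas \ref{le:functoriality} and \ref{le:multiplicative_structure}), so that $\cl^0(\sT_\sigma^{(q)}\Delta_{N^+M,N^-}) = (\sT_\sigma^{(q)})^* \cl^0(\Delta_{N^+M,N^-})$, the action being via pullback of the Chow correspondence on $\rH^4_{\et}$. Next I would analyze the action of $\sT_\sigma^{(q)}$ componentwise on the K\"{u}nneth pieces of $\rH^4_{\et}$. On the $i=0$ piece $\rH^0_{\et}(Y)\otimes\rH^4_{\et}(X)(2)$, the Eichler--Shimura relation gives $T^{(q)}$ acting on $\rH^0_{\et}(Y)\cong\dQ_p$ by $q+1$ (the degree of the correspondence), so $q+1-T^{(q)}$ acts by $0$: this piece is killed. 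On the $i=2$ piece $\rH^2_{\et}(Y)(1)\otimes\rH^2_{\et}(X)(1)$, again $\rH^2_{\et}(Y)(1)\cong\dQ_p$ and $T^{(q)}$ acts by $q+1$ (via Poincar\'{e} duality from the $\rH^0$ case, or by the adjoint relation), so this piece is killed too. The only surviving K\"{u}nneth summand is $i=1$: $\rH^1_{\et}(Y_{N^+M,N^-;\dQ^\ac})\otimes\rH^3_{\et}(X_{N^+M;\dQ^\ac})(2)$. But $\rH^3_{\et}(X_{N^+M;\dQ^\ac})=0$ by Lemma \ref{le:vanishing} (which asserts $\rH^3(X_\fM(\dC),\dZ)=0$, hence the $p$-adic \'{e}tale $\rH^3$ vanishes as well by the comparison theorem). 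Therefore the entire degree-$4$ cohomology image of $\sT_\sigma^{(q)}\Delta_{N^+M,N^-}$ vanishes, and the cycle lies in $\CH^2(Z_{N^+M,N^-})^0$.

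The main obstacle is the bookkeeping on the $i=0$ and $i=2$ K\"{u}nneth pieces: one must be careful that $T^{(q)}_{N^+M,N^-}$, defined via the degeneracy diagram using $\delta^{(q,1)}$ and $\delta^{(q,q)}$, indeed acts as multiplication by $q+1$ on $\rH^0_{\et}(Y)$ and on $\rH^{\mathrm{top}}_{\et}(Y)$ — this is the statement that pullback along a degree-$(q+1)$ correspondence acts on $\rH^0$ by the degree, which is standard but needs the correct normalization of $T^{(q)}$ as a sum over the $q+1$ cyclic $q$-subgroups, together with the fact that $Y_{N^+M,N^-}$ is geometrically connected so that $\rH^0$ and $\rH^{\mathrm{top}}$ are one-dimensional. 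The $i=1$ piece, by contrast, is handled cleanly and unconditionally by the vanishing $\rH^3_{\et}(X_{N^+M;\dQ^\ac})=0$ from Lemma \ref{le:vanishing}, so no analysis of the $\sigma$-action on $\rH^1_{\et}(Y)$ is actually needed here — the normalizing scalar $q+1-\tr(\Fr_q;\rV_\sigma)$ in the definition of $\sT_\sigma^{(q)}$ is there for later purposes (to make $\sT_\sigma^{(q)}$ idempotent-like on the $\sigma$-part) but is irrelevant to this particular triviality claim.
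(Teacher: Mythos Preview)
Your proposal is correct and follows essentially the same route as the paper: K\"{u}nneth decomposition of $\rH^4$, vanishing of $\rH^3$ of the Hilbert surface via Lemma~\ref{le:vanishing}, and the fact that $q+1-T^{(q)}_{N^+M,N^-}$ annihilates both $\rH^0$ and $\rH^2$ of the curve. The only cosmetic difference is that the paper computes in de Rham cohomology (where its Hecke algebras are set up) rather than directly in \'etale cohomology; your concern about geometric connectedness of $Y_{N^+M,N^-}$ is harmless, since $T^{(q)}$ acts by the degree $q+1$ on each connected component of $\rH^0$ and likewise (via duality) on $\rH^2$.
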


\begin{proof}
By Lemma \ref{le:vanishing} and the K\"{u}nneth decomposition of algebraic de
Rham cohomology, we have
\begin{align*}
\rH^4_{\dr}(Z_{N^+M,N^-})=\rH^0_{\dr}(Y_{N^+M,N^-})\otimes_\dQ\rH^4_{\dr}(X_{N^+M})
\oplus\rH^2_{\dr}(Y_{N^+M,N^-})\otimes_\dQ\rH^2_{\dr}(X_{N^+M}).
\end{align*}
The lemma follows since the correspondence $q+1-T_{N^+M,N^-}^{(q)}$ acts by
$0$ on both $\rH^0_{\dr}(Y_{N^+M,N^-})$ and $\rH^2_{\dr}(Y_{N^+M,N^-})$.
\end{proof}

\begin{proof}[Proof of Proposition \ref{pr:belong_selmer}]
Choose an auxiliary prime $q$ as above. By the definition of
$\sP_{\sigma,\pi}$ and Lemmas \ref{le:triviality} and
\ref{le:multiplicative_structure}, we have
\[\AJ_p(\sP_{\sigma,\pi}\sT_\sigma^{(q)}\Delta_{N^+M,N^-})=\rH^1(\dQ,\sP^{\et}_{\sigma,\pi})\AJ_p(\sT_\sigma^{(q)}\Delta_{N^+M,N^-})\]
where
\begin{align*}
\sP_{\sigma,\pi}^{\et}&\colon\rH^3_{\et}(Z_{N^+M,N^-;\dQ^\ac},\dQ_p(2))\\
&\xrightarrow\sim\rH^1_{\et}(Y_{N^+M,N^-;\dQ^\ac},\dQ_p(1))\otimes_{\dQ_p}\rH^2_{\et}(X_{N^+M;\dQ^\ac},\dQ_p(1))\\
&\xrightarrow{\sP^{\et}_\sigma\otimes\sP^{\et}_\pi}
\rH^1_\sigma(Y_{N^+M,N^-;\dQ^\ac})\otimes_{\dQ_p}\rH^2_\pi(X_{N^+M;\dQ^\ac})
=\rV_{\sigma,\pi; N^+M,N^-}
\end{align*}
is the natural projection. On the other hand, we have
\[\AJ_p(\sP_{\sigma,\pi}\sT_\sigma^{(q)}\Delta_{N^+M,N^-})=\AJ_p(\sT_\sigma^{(q)}\sP_{\sigma,\pi}\Delta_{N^+M,N^-}).\]
By Lemmas \ref{le:multiplicative_structure} and \ref{le:vanishing}, and the
fact that the induced action of $\sT_\sigma^{(q)}$ on
$\rH^1_{\et}(Y_{N^+M,N^-;\dQ^\ac},\dQ_p(1))$ is invertible, the element
$\AJ_p(\sP_{\sigma,\pi}\Delta_{N^+M,N^-})$ itself belongs to
$\rH^1(\dQ,\rV_{\sigma,\pi; N^+M,N^-})$. Therefore, we have
\[\AJ_p(\sT_\sigma^{(q)}\sP_{\sigma,\pi}\Delta_{N^+M,N^-})=\AJ_p(\sP_{\sigma,\pi}\Delta_{N^+M,N^-}).\]
By Lemmas \ref{le:triviality} and \ref{le:multiplicative_structure}, the
elements $\AJ_p(\sP_{\sigma,\pi}\sT_\sigma^{(q)}\Delta_{N^+M,N^-})$ and hence
$\AJ_p(\sP_{\sigma,\pi}\Delta_{N^+M,N^-})$ are independent of the choices of
$\sP_\sigma$ and $\sP_\pi$.

Finally, we show that $s\colonequals\pres{p}\Delta_{\sigma,\pi}$ belongs to
$\rH^1_f(\dQ,\rV_{\sigma,\pi; N^+M,N^-})$. By Lemma \ref{le:semisimple}, we
only need to check that $\loc_p(s)\in\rH^1_f(\dQ_p,\rV_{\sigma,\pi;
N^+M,N^-})$. By \cite{Nek00}*{Theorem 3.1 (ii)}, $\loc_p(s)$ belongs to
$\rH^1_{\r{st}}(\dQ_p,\rV_{\sigma,\pi; N^+M,N^-})$. Since the $p$-adic
representation $\rV_{\sigma,\pi; N^+M,N^-}$ of $\Gamma_p$ is a direct summand
of
$\rH^1_{\et}(Y_{N^+M,N^-;\dQ_p^\ac},\dQ_p(1))\otimes\rH^2_\cusp(X_{N^+M;\dQ_p^\ac},\dQ_p(1))$,
and for the latter we have $\rH^1_{\r{st}}(\dQ_p,-)=\rH^1_f(\dQ_p,-)$, the
proposition follows.
\end{proof}

\subsubsection{Statement of main theorem}

The following theorem is the main result of the article, in terms of
automorphic representations.

\begin{theorem}\label{th:selmer}
Suppose that all of Assumption \ref{as:group_r} are satisfied.
\begin{enumerate}
  \item If $L(1/2,\sigma\times\pi)\neq 0$, which in particular implies
      that $\wp(N^-)$ is odd, then
      \[\dim_{\dQ_p}\rH^1_f(\dQ,\rV_{\sigma,\pi})=0.\]

  \item If $\wp(N^-)$ is even and $\pres{p}{\Delta}_{\sigma,\pi}\neq 0$,
      then
      \[\dim_{\dQ_p}\rH^1_f(\dQ,\rV_{\sigma,\pi})=1.\]
\end{enumerate}
\end{theorem}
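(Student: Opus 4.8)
The plan is to run a Kolyvagin-style Euler-system argument, as sketched in \Sec\ref{ss:analogy_kolyvagin}: produce, for every level $n$, enough annihilators of a torsion Selmer group under the global Tate pairing, and then pass to the limit. After fixing suitable Hecke projections (\Sec\ref{ss:testing_factors}), which by Lemma \ref{le:semisimple} lets one reduce to a single copy of $\rV_{\sigma,\pi}$, the object to study is the stable lattice $\rT_{\sigma,\pi}$ of Notation \ref{no:lattice} and its residual modules $\bar\rT_{\sigma,\pi}^n$. Using (R2), (R5), (R6) to control the residual Galois image, one checks that the ``free rank'' $r_n$ of $\rH^1_f(\dQ,\bar\rT_{\sigma,\pi}^n)$ stabilizes as $n\to\infty$ to $r_\infty=\dim_{\dQ_p}\rH^1_f(\dQ,\rV_{\sigma,\pi})$. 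In case (2), Proposition \ref{pr:belong_selmer} puts the class $\pres{p}{\Delta}_{\sigma,\pi}$ in $\rH^1_f(\dQ,\rV_{\sigma,\pi})$, non-zero by hypothesis, so $r_\infty\geq 1$; thus in both cases the content is the matching upper bound.

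The annihilators are the variant Hirzebruch--Zagier classes $\bar{\Delta}_{\sigma,\pi\res\ell_1,\ell_2}^n$, indexed by pairs of strongly admissible primes (defined in \Sec\ref{s3}); in case (1), where $L(1/2,\sigma\times\pi)\neq 0$ forces the root number to be $+1$, hence $\wp(N^-)$ to be \emph{odd} and $B_{N^-}$ definite, adjoining a single admissible prime $\ell$ already flips the parity and produces a genuine Shimura curve $Y_{N^+M,N^-\ell}$, so one works with $\bar{\Delta}_{\sigma,\pi\res\ell}^n$ instead. Each such class is unramified and in $f$ at every place away from the $\ell_i$, so Poitou--Tate reciprocity yields, for $s\in\rH^1_f(\dQ,\bar\rT_{\sigma,\pi}^n)$,
\[
\langle\partial_{\ell_1}\bar{\Delta}_{\sigma,\pi\res\ell_1,\ell_2}^n,\loc_{\ell_1}(s)\rangle
+\langle\partial_{\ell_2}\bar{\Delta}_{\sigma,\pi\res\ell_1,\ell_2}^n,\loc_{\ell_2}(s)\rangle=0,
\]
the other local pairings vanishing since $\rH^1_f$ is its own annihilator there. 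The singular localizations are the output of the congruence formulae of Theorems \ref{th:congruence} and \ref{th:congruence_bis}: realizing the reduction of the Hirzebruch--Zagier cycle at a good inert prime through the canonical parametrizations of \Sec\ref{s2} (Lemma \ref{le:proper_intersection}, Propositions \ref{pr:special_supersingular} and \ref{pr:special_superspecial}), one identifies $\partial_{\ell_2}\bar{\Delta}_{\sigma,\pi\res\ell_1,\ell_2}^n$, up to a $p$-adic unit, with $\loc_{\ell_1}(\pres{p}{\Delta}_{\sigma,\pi})$ transported through a Hecke-equivariant isomorphism of finite local cohomology groups; in case (1) one instead gets, up to a unit, a definite $\SO(3)\times\SO(4)$ Gross--Prasad period, non-zero modulo $p$ for all but finitely many $p$ by hypothesis together with the test-vector Lemma \ref{le:test_vector}.

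With this in hand the bound follows by a Chebotarev argument, carried out in \Sec\ref{s5}. Given $s\in\rH^1_f(\dQ,\bar\rT_{\sigma,\pi}^n)$ not in the line spanned by the reduction of $\pres{p}{\Delta}_{\sigma,\pi}$ (just $s\neq 0$, in case (1)), the largeness of the image of $\bar\rho_{\sigma,\pi}^n$ — coming from (R2) (Serre's theorem for $\bar\rho_\sigma$), (R3), (R6), and the tensor-induction structure of the representation — lets one find via the Chebotarev density theorem a strongly admissible prime $\ell$ with $\loc_\ell(s)\neq 0$ and $\loc_\ell(\pres{p}{\Delta}_{\sigma,\pi})=0$; since for such $\ell$ both $\rH^1_\unr(\dQ_\ell,\bar\rT_{\sigma,\pi}^n)$ and $\rH^1_\sing(\dQ_\ell,\bar\rT_{\sigma,\pi}^n)$ are cyclic of order $p^n$ with perfect pairing between them, the displayed relation forces $\loc_\ell(s)=0$, a contradiction. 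In case (2) one repeats with a second admissible prime to exclude dimension $\geq 2$. Hence $r_\infty\leq 0$ (resp.\ $\leq 1$), which with the lower bound finishes the proof; the explicit set of allowed $p$ is that of Definition \ref{de:group_p}.

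The technical heart — and what I expect to be by far the main obstacle — is the proof of the congruence formulae, Theorems \ref{th:congruence} and \ref{th:congruence_bis}, i.e.\ computing $\partial_{\ell_2}\bar{\Delta}_{\sigma,\pi\res\ell_1,\ell_2}^n$ in terms of the localization of the original class at $\ell_1$ (or of the $L$-value). This rests on the existence of integral Tate cycles on the special fibers of the Hilbert modular surface at good inert primes, on delicate deformation-theoretic computations with displays of $p$-divisible groups in the spirit of Lemma \ref{le:proper_intersection}, and on the Tate conjecture for Hilbert modular surfaces over finite fields. A close second difficulty is the Galois-theoretic bookkeeping in the Chebotarev step: with neither a Kummer map nor a Mordell--Weil group available — unlike Kolyvagin's setting — one must argue directly with cohomology classes, and the requirement that the admissible primes be simultaneously ``generic'' enough for the density argument and ``special'' enough for the localization computation is precisely what forces the long list of hypotheses (R1)--(R6).
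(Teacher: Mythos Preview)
Your overall strategy is right and matches the paper's: produce annihilators via level-raised Hirzebruch--Zagier classes, compute their singular residues via Theorems \ref{th:congruence} and \ref{th:congruence_bis}, and derive a contradiction through the global reciprocity $\sum_v\langle s,\bar\Delta\rangle_v=0$ combined with Chebotarev. But the Chebotarev step in case (2) is described incorrectly, and the error is not cosmetic.

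You ask for a prime $\ell$ with $\loc_\ell(s)\neq 0$ and $\loc_\ell(\pres{p}\Delta_{\sigma,\pi})=0$. The paper does the opposite. With $\{s_1,s_2\}$ a basis of a free rank-$2$ submodule of $\rH^1_f(\dQ,\bar\rT_{\sigma,\pi}^n)$ and $\bar\Delta^n_{\sigma,\pi}=p^{n_{\r{div}}}s_1$, one chooses $\ell_1$ so that $\loc_{\ell_1}(s_2)=0$ while $\loc_{\ell_1}(\bar\Delta^n_{\sigma,\pi})$ has \emph{small} $p$-divisibility, and then $\ell_2$ so that $\loc_{\ell_2}(s_2)$ has small $p$-divisibility. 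The reason is structural: Theorem \ref{th:congruence}(3) (equivalently Proposition \ref{pr:divisibility}(1)) computes only $\partial_{\ell_2}\loc_{\ell_2}(\bar\Delta^n_{\sigma,\pi\res\ell_1,\ell_2})$, and its $p$-divisibility equals that of $\loc_{\ell_1}(\bar\Delta^n_{\sigma,\pi})$. If the latter vanished, your annihilator would be trivial and nothing would follow. And since nothing controls $\partial_{\ell_1}$ of the variant class, the only way to kill the $\ell_1$-term in the reciprocity sum is to arrange $\loc_{\ell_1}(s_2)=0$. So the two primes enter asymmetrically and must be chosen in that order; ``repeat with a second admissible prime'' does not capture this. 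The existence of such $\ell_1,\ell_2$ is exactly what the machinery of \Sec\ref{ss:some_galois} (Lemmas \ref{le:image} and \ref{le:admissible}) is built for, at the cost of the reducibility-depth constant $n_{\r{red}}$.

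Two smaller gaps. First, the local pairings at primes dividing $qN^-\cD_{N^+M}$ do not vanish outright; Lemma \ref{le:trivial_unramified}(1) only bounds them by $p^{n_{\r{bad}}}$ (Notation \ref{no:bad}), and this is absorbed by taking $n$ large. Second, under (R6b) the representation splits as $\rV_{\sigma,\pi}=\rV_{\sigma\times\breve\pi}\oplus\rV_{\sigma\otimes\breve\eta}$, and the argument must be run on each summand separately (Lemmas \ref{le:r6b1}--\ref{le:r6b4}); this is where the sign in ``strongly $(n,\epsilon)$-admissible'' and the density constant $n_{\r{den}}$ of Notation \ref{no:divisibility_density}(2) actually enter, neither of which your sketch accounts for.
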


\subsection{Testing factors}
\label{ss:testing_factors}

From now on until the end of \Sec \ref{ss:proof_theorem}, all discussions
will be under Assumption \ref{as:group_r} (R1 -- R3), and we \emph{fix} a
prime $q$ not dividing $pNM\disc(F)$, no matter $\wp(N^-)$ is even or odd,
such that $p\nmid q+1-\tr(\Fr_q;\rV_\sigma)$. This is possible due to (R2).

\subsubsection{The case where $\wp(N^-)$ is even}

Suppose that $\wp(N^-)$ is even. From now on until the end of \Sec
\ref{ss:proof_theorem}, we will assume that $\pres{p}\Delta_{\sigma,\pi}$ is
non-zero in this case.

Recall from Proposition \ref{pr:belong_selmer} that
$\pres{p}\Delta_{\sigma,\pi}$ belongs to $\rH^1_f(\dQ,\rV_{\sigma,\pi;
N^+M,N^-})$. For each factor $d\mid M$ and $\fd\mid N^+M/\fM$, the class
\[\pres{p}\Delta_{\sigma,\pi}^{(d,\fd)}\colonequals
(\delta^{(M,d)}_{N^+M,N^-}\otimes\gamma^{N^+M/\fM,\fd}_{N^+M})_*\pres{p}\Delta_{\sigma,\pi}\]
belongs to $\rH^1_f(\dQ,\rV_{\sigma,\pi})$ by Lemma \ref{le:semisimple}.

\begin{definition}\label{de:testing}
A pair of \emph{testing factors} is a pair $(d,\fd)$ as above such that
$\pres{p}\Delta_{\sigma,\pi}^{(d,\fd)}\neq 0$.
\end{definition}

By the theory of old forms, there exists at least one pair of testing factors
and we will fix such a pair $(d,\fd)$ once and for all.

\subsubsection{The case where $\wp(N^-)$ is odd}

Suppose that $\wp(N^-)$ is odd. From now on until the end of \Sec
\ref{ss:proof_theorem}, we will assume that $L(1/2,\sigma\times\pi)$ is
non-zero in this case.

Since $\cT_{N^+,N^-}$ is (non-canonically) isomorphic to the double quotient
$B_{N^-}^\times\backslash(B_{N^-}\otimes_\dZ\widehat\dZ)^\times/(R\otimes_\dZ\widehat\dZ)^\times$
for some $R\in\cT_{N^+,N^-}$, the newform (fixed by
$(R\otimes_\dZ\widehat\dZ)^\times$) of the automorphic representation
$\sigma$ gives rise to a map
\begin{align}\label{eq:new_form}
g_\sigma\colon\cT_{N^+,N^-}\to\dZ
\end{align}
whose image is not contained in a proper ideal of $\dZ$. Up to $\pm1$, the
map $g_\sigma$ is independent of the choice of $R$. Similarly, since
$\cS_\fM$ is (non-canonically) isomorphic to the double quotient
$Q^\times\backslash(Q\otimes_\dZ\widehat\dZ)^\times/(S\otimes_\dZ\widehat\dZ)^\times$
for some $S\in\cS_\fM$, the newform (fixed by
$(S\otimes_\dZ\widehat\dZ)^\times$) of the automorphic representation
$\pi^Q$, the Jacquet--Langlands correspondence of $\pi$ to $Q$, gives rise to
a surjective map
\begin{align*}
f_\pi\colon\cS_\fM\to\dZ
\end{align*}
whose image is not contained in a proper ideal of $\dZ$. Up to $\pm1$, the
map $f_\pi$ is independent of the choice of $S$.

\begin{definition}\label{de:testing_bis}
A pair of \emph{testing factors} is a pair $(d,\fd)$ with $d\mid M$ and
$\fd\mid N^+M/\fM$ such that
\begin{align}\label{eq:ichino}
\sum_{t\in\cT_{N^+M,N^-}}((\zeta_{N^+M,N^-})^*(\gamma^{(N^+M/\fM,\fd)}_{N^+M})^*f_\pi)(t)\cdot
((\delta^{(M,d)}_{N^+M,N^-})^*g_\sigma)(t)\neq 0.
\end{align}
\end{definition}

\begin{proposition}
There exists a pair of testing factors.
\end{proposition}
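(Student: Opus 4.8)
The plan is to interpret the left-hand side of \eqref{eq:ichino} as a global period of Gross--Prasad type for the pair $(H,G)=(B_{N^-}^\times/\bG_{m,\dQ},\,\Res_{F/\dQ}Q^\times/\bG_{m,\dQ})$ of pure inner forms of $\SO(3)\times\SO(4)$, and then to deduce its non-vanishing, for a suitable $(d,\fd)$, from the triple product formula of Ichino combined with the local non-vanishing of the relevant invariant functionals. Here we are in the case $\wp(N^-)$ odd, so $B_{N^-}$ is the definite quaternion algebra over $\dQ$ and $Q$ is the totally definite quaternion algebra over $F$ unramified at all finite places, and both $\cT$ and $\cS$ are Shimura sets.

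First I would unwind the combinatorics. Using the (non-canonical) identifications of $\cT_{N^+M,N^-}$ and $\cS_{N^+M}$ with the finite double cosets $B_{N^-}^\times\backslash(B_{N^-}\otimes_\dZ\widehat\dZ)^\times/\widehat U$ and $Q^\times\backslash(Q\otimes_\dZ\widehat\dZ)^\times/\widehat V$, the special map $\zeta_{N^+M,N^-}$ is induced by the $\dQ$-algebra embedding $B_{N^-}\hookrightarrow B_{N^-}\otimes_\dQ F\simeq Q$, i.e.\ by the diagonal embedding $H\hookrightarrow G$. Consequently, if $\phi_\sigma$ (resp.\ $\phi_\pi$) denotes the automorphic form on $H(\dA)$ (resp.\ $G(\dA)$) that is trivial at $\infty$ and whose finite part is $(\delta^{(M,d)}_{N^+M,N^-})^*g_\sigma$ (resp.\ $(\gamma^{(N^+M/\fM,\fd)}_{N^+M})^*f_\pi$), then the sum in \eqref{eq:ichino} equals, up to a nonzero volume factor, the period $P=\int_{[H]}\phi_\pi|_{H(\dA)}\cdot\phi_\sigma$; this is the definite-quaternionic avatar of the diagonal period attached to $\sigma\times\pi$, whose size is governed by $L(1/2,\sigma\times\pi)=L(1/2,\sigma\times\As\pi)$.

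Next I would invoke Ichino's triple product formula in the totally definite, twisted ($\GL_{2,\dQ}\times\Res_{F/\dQ}\GL_{2,F}$) setting, where all local integrals are honest finite sums: for a factorizable choice of local vectors, $|P|^2$ equals an explicit nonzero constant times $L(1/2,\sigma\times\pi)/\big(L(1,\sigma,\r{ad})\,L(1,\pi,\r{ad})\big)$ times $\prod_v I_v$, where $I_v$ is the normalized local trilinear integral. By hypothesis $L(1/2,\sigma\times\pi)\neq 0$, and the adjoint $L$-values are nonzero, so $P$ is nonzero precisely when the local vectors can be chosen with $I_v\neq0$ for every $v$. At every place $v\nmid pNM\disc(F)$ the data is unramified and $I_v=1$; at $\infty$ all the local representations are one-dimensional on the compact groups $H(\dR),G(\dR)$ and $I_\infty$ is a nonzero volume factor, using that the archimedean triple-product sign of a balanced triple of weight-$2$ discrete series is $-1$. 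At the remaining finite places the local Hom space $\Hom_{H(\dQ_v)}(\pi_v\otimes\sigma_v,\dC)$ is one-dimensional and nonzero -- the content of Prasad's local epsilon dichotomy \cite{Pra92}*{Theorems B \& D, Remark 4.1.1}, which is exactly why $B_{N^-}$ and $Q$ (equivalently, why $\Sigma_{\sigma,\pi}$ consists of $\infty$ and the primes dividing $N^-$) are the correct inner forms in this situation -- and by Lemma \ref{le:test_vector} this functional is already realized on vectors at the $\Gamma_0$-type level structures of level $N^+M$ defining $X$ and $Y$. Since $\{(\delta^{(M,d)}_{N^+M,N^-})^*g_\sigma\}_{d\mid M}$ and $\{(\zeta_{N^+M,N^-})^*(\gamma^{(N^+M/\fM,\fd)}_{N^+M})^*f_\pi\}_{\fd}$ span the $\sigma$- and $\pi$-isotypic subspaces of level-$N^+M$ functions on $\cT_{N^+M,N^-}$, and the bilinear form $(a,b)\mapsto\sum_{t\in\cT_{N^+M,N^-}}a(t)b(t)$ is nonzero on these spans, it is nonzero on some pair from the two spanning families; that choice of $(d,\fd)$ is the required pair of testing factors.

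The main obstacle is the third step: pinning down the precise form of the triple product formula in the twisted and quaternionic setting, and checking that the normalized local integrals $I_v$ at the bad places are non-vanishing on the oldform/newform vectors cut out by varying $(d,\fd)$. This is where one imports the explicit local computations of \cite{Pra92} and the test-vector statement of Lemma \ref{le:test_vector}; once those are available, the passage from ``the global period pairing does not vanish on these spans'' to ``some $(d,\fd)$ works'' is a one-line linear-algebra argument using the bilinearity of the sum in \eqref{eq:ichino}.
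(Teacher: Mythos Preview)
Your proposal is correct and follows essentially the same route as the paper: interpret \eqref{eq:ichino} as a Gross--Prasad period, apply Ichino's triple product formula \cite{Ich08}*{Theorem 1.1}, and deduce existence of a good $(d,\fd)$ from the non-vanishing of $L(1/2,\sigma\times\pi)$ together with the local test-vector statement of Lemma \ref{le:test_vector}. The paper organizes the endgame slightly differently---it writes \eqref{eq:ichino} as $C\cdot L^S(1/2,\sigma\times\pi)\cdot\prod_{v\in S}\alpha_v(d,\fd)$ and checks place-by-place that each $\alpha_v$ can be made nonzero, rather than passing through your spanning-plus-bilinearity argument---and it is explicit about one point you glide over: the passage from $L(1/2,\sigma\times\pi)\neq0$ to $L^S(1/2,\sigma\times\pi)\neq0$ requires that the local factors $L_v(s,\sigma\times\pi)$ have no pole at $s=1/2$ for $v\in S$, which the paper extracts from the tame purity of $\rV_{\sigma,\pi}$ (Lemma \ref{le:semisimple}); cf.\ also Remark \ref{re:PSR} on the comparison with the Piatetski-Shapiro--Rallis local factors.
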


In what follows, we will fix such a pair $(d,\fd)$ once and for all.
Moreover, put
\[f_\pi^{(\fd)}=(\gamma^{(N^+M/\fM,\fd)}_{N^+M})^*f_\pi\]
for simplicity.

\begin{proof}
By \cite{Ich08}*{Theorem 1.1}, we have
\[\eqref{eq:ichino}=C\cdot L^S(1/2,\sigma\times\pi)\cdot \prod_{v\in S}\alpha_v(d,\fd)\]
where $S$ is the set of places of $\dQ$ consisting of $\infty$ and primes
dividing $NM\disc(F)$, $C$ is a non-zero number, and $\alpha_v(d,\fd)$ is a
certain integral of local matrix coefficients depending on $(d,\fd)$. We
claim that there is a pair $(d,\fd)$ such that $\prod_{v\in
S}\alpha_v(d,\fd)\neq 0$. For $v\nmid N^+$, it follows from the lemma below.
For $v\mid N^+$, the situation is in fact the same as in the lemma below for
$v\nmid N$ (one needs to change the role of $\sigma_v$ by a component of
$\pi_v$ as $v$ splits in $F$ and $\pi_v$ is unramified).

On the other hand, for each $v\in S$, the local $L$-function
$L_v(s,\sigma\times\pi)$ does not have a pole at $s=1/2$ since
$\rV_{\sigma,\pi}$ is tamely pure by Lemma \ref{le:semisimple}. Therefore,
$L^S(1/2,\sigma\times\pi)\neq 0$, and hence the lemma follows.
\end{proof}

\begin{lem}\label{le:test_vector}
Let $v$ be a prime in $S$ that does not divide $N^+$. Suppose that $l$ spans
the $1$-dimensional space $\Hom_{B_{N^-}(\dQ_v)}(\pi_v,\sigma_v)$. Then the
image of a new vector of $\pi_v$ under $l$ is non-zero.
\end{lem}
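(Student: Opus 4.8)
The plan is to reduce the statement to the non-vanishing of a local period integral attached to the new vector, and then to evaluate that integral in each of the few cases that can occur. Since $v\in S$ does not divide $N^+$ and $N=N^+N^-$ is coprime to $M\disc(F)$, exactly one of two situations arises. Either $v\mid N^-$, in which case $v$ is inert in $F$, the algebra $B_{N^-}\otimes_\dQ\dQ_v$ is the quaternion division algebra $D_v$, the Jacquet--Langlands transfer $\sigma_v$ is a one-dimensional character $\chi\circ\r{Nrd}$ with $\chi$ unramified, and $\pi_v$ is an unramified \emph{tempered} representation of $\GL_2(F_v)$ (the curve $A$ has good reduction at $v$, since $v\nmid M$). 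Or $v\nmid N$, in which case $B_{N^-}\otimes_\dQ\dQ_v\simeq\Mat_2(\dQ_v)$, $\sigma_v$ is unramified, and $\pi_v$ is an arbitrary local component, unramified when $v\nmid\fM$. In either case I would identify $\sigma_v$ with its contragredient and, using the given one-dimensionality of $\Hom_{B_{N^-}(\dQ_v)}(\pi_v,\sigma_v)$ together with Ichino's local identity (as in the proof of the preceding proposition, cf.\ \cite{Ich08}), observe that the square of the value $l(w)$ at the new vector $w$ of $\pi_v$ equals, up to a nonzero constant, the local integral
\[\alpha_v=\int_{B_{N^-}(\dQ_v)/\dQ_v^\times}\langle(\pi_v\otimes\sigma_v)(g)(w\otimes u),\,w\otimes u\rangle\,dg,\]
where $u$ is a new vector of $\sigma_v$. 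It then suffices to show $\alpha_v\neq 0$.

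For $v\nmid N^-$ this should follow from the standard unramified computation. Here $\sigma_v$ is unramified, so $\langle\sigma_v(g)u,u\rangle$ is its zonal spherical function, and unfolding the matrix-coefficient integral through the Whittaker model of $\pi_v$ (the Rankin--Selberg, resp.\ Asai--Flicker, integral for the pair $\GL_2(\dQ_v)\subset\GL_2(F_v)$) would express $\alpha_v$ as a local zeta integral of the new-vector Whittaker function of $\pi_v$ against the spherical data of $\sigma_v$. Because the new-vector Whittaker function takes value $1$ at the identity and is supported, on the diagonal torus, on $\diag(\varpi^n,1)$ with $n\geq 0$ with explicitly known values, this integral converges by temperedness and evaluates to a nonzero ratio of local $L$- and $\zeta$-factors of $\pi_v$ and $\sigma_v$; since every such factor has the shape $(1-\gamma q_v^{-s})^{\pm 1}$, it is finite and nonzero at the relevant argument, so $\alpha_v\neq 0$. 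When $\pi_v$ is itself unramified this is the Gross--Prasad unramified computation \cite{GP92}; the case $v\mid N^+$ mentioned in the text is the same after interchanging the role of $\sigma_v$ with that of an unramified local component of $\pi_v$.

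For $v\mid N^-$ the group $B_{N^-}(\dQ_v)/\dQ_v^\times$ is not essentially compact --- it surjects onto $\dZ$ via the valuation of $D_v$ --- and there is no Whittaker model, so I would instead evaluate $\alpha_v$ directly. Since $\pi_v$ is unramified, the $D_v^\times$-orbit of the spherical vector $w^\circ$ is indexed by the vertices of the Bruhat--Tits tree of $\GL_2(F_v)$ lying on the line fixed by $D_v^\times$, and Macdonald's formula gives $\langle\pi_v(g)w^\circ,w^\circ\rangle$ at a vertex at distance $n$ explicitly in terms of the Satake parameters of $\pi_v$; summing against the unitary character $\sigma_v(g)^{-1}$ produces a convergent series whose sum is again a nonzero ratio of local $L$-factors (this is the basic evaluation of Ichino's local integral over a ramified quaternion algebra). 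I expect this last step --- carrying out the matrix-coefficient integral over the division algebra and checking that the resulting $L$-factor ratio does not vanish, using the temperedness of $\pi_v$ --- to be the main obstacle; the remaining cases all reduce to well-known local identities.
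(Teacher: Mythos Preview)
Your overall reduction to the nonvanishing of the local Ichino integral $\alpha_v$ is correct and is exactly the link the paper states just before the lemma. Case~1 ($v\nmid N^-$) is fine in outline, though the paper handles it more directly: since $\sigma_v=\Ind_B^{\GL_2}\chi$ with $\chi$ unramified, Frobenius reciprocity identifies $l$ with a functional in $\Hom_B(\pi_v,\chi)$, realized by the single Hecke--Jacquet integral $\int_{\dQ_v^\times}W\!\left(\begin{smallmatrix}ja&\\&1\end{smallmatrix}\right)\chi\!\left(\begin{smallmatrix}a&\\&1\end{smallmatrix}\right)da$ in the Whittaker model; its nonvanishing on the new vector is then immediate from the standard shape of the new-vector Whittaker function. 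This avoids the full Rankin--Selberg/Asai unfolding you propose.

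Case~2 ($v\mid N^-$) contains a genuine error. The quotient $D_v^\times/\dQ_v^\times$ is \emph{compact}: the $D_v$-valuation of a central element $a\in\dQ_v^\times$ is $2\,v_{\dQ_v}(a)$ (since $\Pi^2$ is a uniformizer of $\dQ_v$), so the valuation map descends to $D_v^\times/\dQ_v^\times\twoheadrightarrow\dZ/2\dZ$, not $\dZ$. Consequently there is no ``line'' in the tree of $\GL_2(F_v)$ swept out by $D_v^\times$ and no infinite series to sum; the $D_v^\times$-orbit of the base vertex consists of just two vertices (those of the edge fixed by $\cO_{D_v}^\times$), and $\alpha_v$ reduces to a two-term expression $1+\sigma_v(\Pi)^{-1}\langle\pi_v(\Pi)w^\circ,w^\circ\rangle$ up to volume. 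Its nonvanishing is exactly the condition $\langle\pi_v(\Pi)w^\circ,w^\circ\rangle\neq\pm1$, which holds for tempered unramified $\pi_v$ since the normalized spherical function at distance~$1$ has absolute value strictly less than~$1$. The paper reaches the same endpoint by a more structural route: Prasad's theorem gives $\pi_v|_{D_v^\times}\simeq\ind_{F_v^\times}^{D_v^\times}\mathbf{1}$, so Frobenius reciprocity reduces the question to $\int_{F_v^\times\backslash D_v^\times}\sigma_v(g)^{-1}f(g)\,dg\neq0$, i.e.\ $f(\Pi)/f(1)\notin\{\pm1\}$. Your matrix-coefficient approach is salvageable once the compactness is corrected, but as written the ``convergent series over a line'' picture is modeled on the wrong group (it fits a split torus in $\GL_2$, not $D_v^\times$).
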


The dimension of $\Hom_{B_{N^-}(\dQ_v)}(\pi_v,\sigma_v)$ is $1$, and
$l\otimes l$ is equal to the matrix coefficient integral $\alpha_v$ up to a
non-zero constant.

\begin{proof}
Denote by $B\subset\GL_2$ the standard Borel subgroup of upper-triangular
matrices and $T\subset B$ the diagonal subgroup of $\GL_2$.

\textbf{Case 1: $v\nmid N$.} Then $B_{N^-}$ is unramified at $v$. Suppose
that $\sigma_v$ is the non-normalized induction from $B(\dQ_v)$ to
$\GL_2(\dQ_v)$ of an unramified character $\chi$ of $T(\dQ_v)$. By Frobenius
reciprocity, we have
$\Hom_{\GL_2(\dQ_v)}(\pi_v,\sigma_v)=\Hom_{B(\dQ_v)}(\pi_v\res_{B(\dQ_v)},\chi)$.

Choose a non-zero element $j\in F_v$ such that $j^\theta=-j$. Choose an
additive character $\psi$ of $\dQ_v$ of conductor $0$. We realize $\pi_v$ in
the Whittaker model with respect to the unipotent radical of $B(F_v)$ and the
character $\psi\circ(\frac{1}{2}\Tr_{F_v/\dQ_v})$. The following integral
\begin{align}\label{eq:integral}
\int_{\dQ_v^\times}W\(\left(
                    \begin{array}{cc}
                      ja &  \\
                       & 1 \\
                    \end{array}
                  \right)
\)\chi\(\left(
          \begin{array}{cc}
            a &  \\
             & 1 \\
          \end{array}
        \right)
\)\rd a
\end{align}
is absolutely convergent for all functions $W$ in (the above Whittaker model
of) $\pi_v$, where $\rd a$ is a non-zero Haar measure on $\dQ_v^\times$.
Moveover, it defines an element in the space
$\Hom_{B(\dQ_v)}(\pi_v\res_{B(\dQ_v)},\chi)$. It is easy to calculate that
the integral \eqref{eq:integral} is non-zero for a new vector $f$ as $\chi$
is unramified. The lemma follows.

\textbf{Case 2: $v\mid N^-$.} Then $B_{N^-}$ is ramified at $v$ and $v$ is
inert in $F$. Thus $\sigma_v$ is an unramified character of $D_v^\times$ of
order at most $2$, where $D_v$ is the division quaternion algebra over
$\dQ_v$. Since $\pi_v$ is now unramified, we assume
$\pi_v=\Ind_{B(F_v)}^{\GL_2(F_v)}\chi$ where the induction is normalized.
Then the character $\chi'$ of $F_v^\times$ defined in \cite{Pra92}*{\Sec 4.1}
is trivial. Thus, we have
$\pi_v\res_{D_v^\times}\simeq\ind_{F_v^\times}^{D_v^\times}\b{1}$. Here, we
have fixed a decomposition $D_v=F_v\oplus F_v\Pi$ where $\Pi$ is a
uniformizer of $D_v$. As
$\Hom_{D_v^\times}(\ind_{F_v^\times}^{D_v^\times}\b{1},\sigma_v)\simeq\Hom_{F_v^\times}(\b{1},\sigma_v)\neq\{0\}$,
the lemma is equivalent to that
\[\int_{F_v^\times\backslash D_v^\times}\sigma_v(g^{-1})f(g)\rd g\neq 0\]
for a new vector $f\in\pi_v$, where $\rd g$ is a Haar measure on
$F_v^\times\backslash D_v^\times$. The above nonvanishing holds since
$f(\Pi)/f(1)\not\in\{\pm1\}$.
\end{proof}

\begin{remark}\label{re:PSR}
In fact, it is not clear whether for all places $v$, the two local
$L$-functions $L_v(s,\sigma\times\pi)$ defined from the Galois side and
$L_v^{\r{PSR}}(s,\sigma\times\pi)$ defined by Piatetski-Shapiro--Rallis
\cite{PSR87} (and used in \cite{Ich08}) coincide or not. Nevertheless, we
have
\[\ord_{s=1/2}L(s,\sigma\times\pi)=\ord_{s=1/2}L^{\r{PSR}}(s,\sigma\times\pi).\]
\end{remark}

\subsection{Raising levels}
\label{ss:raising_levels}

In this section, we study only the representation $\sigma$. We collect some
results from \cite{BD05} which generalizes the previous work of Ribet
\cite{Rib90}.

\begin{definition}[Group \textbf{A}, compare with \cite{BD05}*{\Sec
2.2}]\label{de:group_a} Let $n\geq 1$ be an integer. We say that a prime
$\ell$ is \emph{$n$-admissible} (with respect to $\sigma$) if
\begin{description}
  \item[(A1)] $\ell\nmid 2pqN^-\cD_{N^+M}$;

  \item[(A2)] $p\nmid\ell^2-1$;

  \item[(A3)] $p^n\mid\ell+1-\epsilon_\sigma(\ell)
      \tr(\Fr_\ell;\rV_\sigma)$ for some
      $\epsilon_\sigma(\ell)\in\{\pm1\}$;

  \item[(A4)] $p^n\mid\ell^{p-1}-1$;

  \item[(A5)] $\ell$ is inert in $F$.
\end{description}
\end{definition}

\begin{remark}
For results in this section, (A4) and (A5) are not used and we actually only
need $\ell\nmid pN$ in (A1).
\end{remark}

\begin{lem}\label{le:cohomology_sigma}
Let $\ell$ be an $n$-admissible prime. Then
\begin{enumerate}
  \item the submodule $\bar\rT_\sigma^n[\ell^2|1]$ is free of rank $1$;

  \item the natural map
      $\rH^1(\dQ_{\ell^2},\bar\rT_\sigma^n[\ell^2|1])\to\rH^1(\dQ_{\ell^2},\bar\rT_\sigma^n)$
      is injective and its image coincides with
      $\rH^1_\unr(\dQ_{\ell^2},\bar\rT_\sigma^n)$;

  \item the induced map
      $\bar\rT_\sigma^n[\ell^2|\ell^2]\to\bar\rT_\sigma^n/\bar\rT_\sigma^n[\ell^2|1]$
      is an isomorphism;

  \item both $\rH^1(\dQ_{\ell^2},\bar\rT_\sigma^n[\ell^2|1])$ and
      $\rH^1(\dQ_{\ell^2},\bar\rT_\sigma^n[\ell^2|\ell^2])$ are
      isomorphic to $\dZ/p^n$.
\end{enumerate}
Here we recall from \Sec \ref{ss:galois_modules} that
$\bar\rT_\sigma^n[\ell^2|r]$ is the module of $\bar\rT_\sigma^n$ on which
$\rI_{\ell^2}$ acts trivially and $\Fr_{\ell^2}$ acts by multiplication by
$r$.
\end{lem}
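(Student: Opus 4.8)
The plan is to work entirely at the decomposition group $\Gamma_{\ell^2}$, which by (A1) acts on $\rV_\sigma$ through an unramified quotient, so that $\bar\rT_\sigma^n$ is controlled by the single operator $\Fr_\ell$. Since $\sigma$ has weight $2$ and trivial central character, the characteristic polynomial of $\Fr_\ell$ on $\rV_\sigma$ is $T^2-\tr(\Fr_\ell;\rV_\sigma)T+\ell$; by (A3) it reduces modulo $p^n$ to $(T-\epsilon_\sigma(\ell))(T-\epsilon_\sigma(\ell)\ell)$, and by (A2) these two linear factors are coprime in $(\dZ/p^n)[T]$, their resultant $\epsilon_\sigma(\ell)(\ell-1)$ being a unit modulo $p$. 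By Cayley--Hamilton and the Chinese Remainder Theorem, $\bar\rT_\sigma^n$ therefore splits as a $\dZ/p^n[\Gamma_{\ell^2}]$-module into the eigenmodule on which $\Fr_\ell$ acts by $\epsilon_\sigma(\ell)$ and the one on which $\Fr_\ell$ acts by $\epsilon_\sigma(\ell)\ell$. Using $\Fr_{\ell^2}=\Fr_\ell^2$ and that $\ell^2-1$ is a unit modulo $p^n$, I would check that these are precisely the modules $\bar\rT_\sigma^n[\ell^2|1]$ and $\bar\rT_\sigma^n[\ell^2|\ell^2]$ of \Sec\ref{ss:galois_modules}, giving a direct sum decomposition $\bar\rT_\sigma^n=\bar\rT_\sigma^n[\ell^2|1]\oplus\bar\rT_\sigma^n[\ell^2|\ell^2]$. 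Each summand is a direct summand of the free $\dZ/p^n$-module $\bar\rT_\sigma^n$, hence free; reducing modulo $p$ and using (A2) to see that $\Fr_\ell$ has two distinct eigenvalues on the $2$-dimensional $\dF_p$-space $\bar\rT_\sigma$, each summand has $\dZ/p^n$-rank exactly $1$. This proves (1), and since $\bar\rT_\sigma^n/\bar\rT_\sigma^n[\ell^2|1]$ is now identified with the complementary summand, also (3).

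Next I would pass to Galois cohomology, where the decomposition gives $\rH^1(\dQ_{\ell^2},\bar\rT_\sigma^n)=\rH^1(\dQ_{\ell^2},\bar\rT_\sigma^n[\ell^2|1])\oplus\rH^1(\dQ_{\ell^2},\bar\rT_\sigma^n[\ell^2|\ell^2])$ compatibly with the unramified subgroups; in particular the map in (2) is the inclusion of a direct summand, hence injective. On $\bar\rT_\sigma^n[\ell^2|1]$ the $\Gamma_{\ell^2}$-action is trivial, so $\rH^1(\dQ_{\ell^2},\bar\rT_\sigma^n[\ell^2|1])=\Hom(\Gamma_{\dQ_{\ell^2}},\dZ/p^n)$; by local class field theory, together with $\ell\neq p$ and (A2) (which kills the torsion part of $\dZ_{\ell^2}^\times$ of order prime to $p$), this is the free rank-one $\dZ/p^n$-module generated by the unramified homomorphism, so it coincides with $\rH^1_\unr(\dQ_{\ell^2},\bar\rT_\sigma^n[\ell^2|1])\cong\dZ/p^n$. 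On $\bar\rT_\sigma^n[\ell^2|\ell^2]$, $\Fr_{\ell^2}$ acts by $\ell^2$, so $\rH^1_\unr=\bar\rT_\sigma^n[\ell^2|\ell^2]/(\ell^2-1)\bar\rT_\sigma^n[\ell^2|\ell^2]=0$, while the inflation--restriction sequence for $\rI_{\ell^2}\subset\Gamma_{\ell^2}$ (using $\rH^q(\widehat\dZ,-)=0$ for $q\geq 2$ and $\rH^1(\rI_{\ell^2},\bar\rT_\sigma^n[\ell^2|\ell^2])=\bar\rT_\sigma^n[\ell^2|\ell^2](-1)$, on which $\Fr_{\ell^2}$ now acts trivially) identifies $\rH^1(\dQ_{\ell^2},\bar\rT_\sigma^n[\ell^2|\ell^2])$ with $\bar\rT_\sigma^n[\ell^2|\ell^2](-1)\cong\dZ/p^n$. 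This establishes (4). Finally, since the cohomology of the second summand is purely ramified, $\rH^1_\unr(\dQ_{\ell^2},\bar\rT_\sigma^n)$ is exactly the first summand, i.e.\ the image of the map in (2), which completes (2).

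This lemma is essentially \cite{BD05}*{Lemma 2.1}, transported from $\dQ_\ell$ to the unramified quadratic extension $\dQ_{\ell^2}$, and I expect no genuine obstacle. The one point demanding care is the passage from the literal definition of $\bar\rT_\sigma^n[\ell^2|r]$ to the idempotent (eigenmodule) decomposition at finite level $n$: one needs the two Frobenius eigenvalues to remain \emph{coprime}, not merely distinct, modulo $p^n$, which is exactly what hypothesis (A2) guarantees. As remarked after Definition \ref{de:group_a}, neither (A4) nor (A5) is used here.
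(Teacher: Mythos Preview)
Your argument is correct and complete. The paper's own proof is simply a citation to \cite{GP12}*{Lemma 8}, so you have effectively reconstructed the underlying argument (which is indeed the $\dQ_{\ell^2}$-analogue of \cite{BD05}*{Lemma 2.6}); there is nothing to compare beyond noting that you supply the details the paper outsources.
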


\begin{proof}
These are proved in \cite{GP12}*{Lemma 8}.
\end{proof}

By definition, we have an exact sequence
\[0 \to \rH^1_\unr(\dQ_{\ell^2},\bar\rT_\sigma^n)\to\rH^1(\dQ_{\ell^2},\bar\rT_\sigma^n)
\xrightarrow{\partial_{\ell^2}}\rH^1_\sing(\dQ_{\ell^2},\bar\rT_\sigma^n)\to 0,\] in which we
may fix isomorphisms
\begin{align}\label{eq:us}
\tu_{\ell}\colon\rH^1_\unr(\dQ_{\ell^2},\bar\rT_\sigma^n)\xrightarrow\sim\dZ/p^n,\quad
\ts_{\ell}\colon\rH^1_\sing(\dQ_{\ell^2},\bar\rT_\sigma^n)\xrightarrow\sim\dZ/p^n,
\end{align}
by the previous lemma. Only in this section, we use full Hecke algebras as
defined in \cite{BD05}. There are two cases.

\subsubsection{The case $\wp(N^-)$ is odd}

For $1\leq n\leq \infty$, denote by
\begin{align}\label{eq:level_raising3_bis}
g_\sigma^n\colon\cT_{N^+,N^-}\to\dZ/p^n
\end{align}
the composition of $g_\sigma$ \eqref{eq:new_form} with the natural map
$\dZ\to\dZ/p^n$. Now we fix an integer $n\geq 1$ and let $\ell$ be an
$n$-admissible prime. Applying the argument of \cite{BD05}*{\Sec 5.6} to
$g_\sigma^n$, we obtain a surjective homomorphism
\[\varphi_{\sigma\res\ell}^n\colon\dT_{N^+,N^-\ell}\to\dZ/p^n\]
by \cite{BD05}*{Theorem 5.15}, whose kernel is denoted by
$\cI_{\sigma\res\ell}^n$. Note that condition (2) of \cite{BD05}*{Theorem
5.15} is satisfied by Assumption \ref{as:group_r} (R2, R3) and
\cite{PW11}*{Theorem 6.2}. In fact, the condition that $N$ is square-free in
\cite{PW11} is unnecessary.

\begin{proposition}\label{pr:raising_1_bis}
There is a unique isomorphism
\[\beta\colon\Phi^{(\ell)}_{N^+,N^-\ell}/\cI_{\sigma\res\ell}^n\xrightarrow\sim\dZ/p^n\]
rendering the following diagram commutative
\[\xymatrix{
\dZ[\cT_{N^+,N^-}^\bullet\sqcup\cT_{N^+,N^-}^\circ]^0 \ar[rr]
\ar[d]_-{\phi^{(\ell)}_{N^+,N^-\ell}}
&& \dZ/p^n \\
\Phi^{(\ell)}_{N^+,N^-\ell} \ar[rr]&&
\Phi^{(\ell)}_{N^+,N^-\ell}/\cI_{\sigma\res\ell}^n \ar[u]_-{\beta}, }\]
where the upper arrow is induced by the function
\[g_\sigma^n\sqcup\epsilon_\sigma(\ell)g_\sigma^n\colon\cT_{N^+,N^-}^\bullet\sqcup\cT_{N^+,N^-}^\circ
\simeq\cT_{N^+,N^-}\sqcup\cT_{N^+,N^-}\to\dZ/p^n;\] the group
$\Phi^{(\ell)}_{N^+,N^-\ell}$ is defined in Notation \ref{no:neron}; and the
map $\phi^{(\ell)}_{N^+,N^-\ell}$ is the one in \cite{BD05}*{Corollary 5.12}.
\end{proposition}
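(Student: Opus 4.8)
The plan is to deduce the statement from the level‑raising results of Bertolini--Darmon, fed by the explicit \v{C}erednik--Drinfeld description of $Y_{N^+,N^-\ell}$ from \Sec \ref{ss:shimura_curves}. First I would record the structure of the dual graph $\cG$ of the special fibre of the N\'eron model of $J_{N^+,N^-\ell;\dZ_{\ell^2}}$. Since $\wp(N^-\ell)=\wp(N^-)+1$ is even, $\ell$ divides $N^-\ell$ and $\ell$ is inert in $F$, Proposition \ref{pr:curve_superspecial} (with $N^-$ there replaced by $N^-\ell$) gives: the vertices of $\cG$ form the set $\cT^\bullet_{N^+,N^-}\sqcup\cT^\circ_{N^+,N^-}$; the edges form the set $\cT_{N^+\ell,N^-}$; by Remark \ref{re:degeneracy_superspecial} (2) the two incidence maps are $\delta^{(\ell,1)}_{N^+\ell,N^-}$ and $\delta^{(\ell,\ell)}_{N^+\ell,N^-}$, landing in the $\bullet$‑ and $\circ$‑copies respectively; the two families of components meet transversally, so all singular points are ordinary double points and the monodromy pairing on $\cG$ is the standard graph pairing; and $\Fr_\ell$ interchanges the two vertex copies while fixing the underlying set $\cT_{N^+,N^-}$ and acts on the edges by $\op_\ell$. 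Consequently $\Phi^{(\ell)}_{N^+,N^-\ell}$ is the ``Jacobian of the graph'' $\cG$, and the map $\phi^{(\ell)}_{N^+,N^-\ell}$ of \cite{BD05}*{Corollary 5.12} is the canonical surjection $\dZ[\cT^\bullet_{N^+,N^-}\sqcup\cT^\circ_{N^+,N^-}]^0\twoheadrightarrow\Phi^{(\ell)}_{N^+,N^-\ell}$ sending a degree‑zero divisor on vertices to its class; it is equivariant both for the Hecke action and for the action of $\Fr_\ell$.

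Next I would invoke \cite{BD05}*{Theorem 5.15}: the level‑raising construction applied to $g_\sigma^n$ — legitimate because condition (2) there holds by Assumption \ref{as:group_r} (R2), (R3) together with \cite{PW11}*{Theorem 6.2} — produces not merely the augmentation $\varphi^n_{\sigma\res\ell}$ with kernel $\cI^n_{\sigma\res\ell}$, but a Hecke‑equivariant isomorphism from the maximal quotient of $\Phi^{(\ell)}_{N^+,N^-\ell}$ on which $\dT_{N^+,N^-\ell}$ acts through $\varphi^n_{\sigma\res\ell}$, namely $\Phi^{(\ell)}_{N^+,N^-\ell}/\cI^n_{\sigma\res\ell}$, onto $\dZ/p^n$. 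Composing this with $\phi^{(\ell)}_{N^+,N^-\ell}$ gives a surjection $\dZ[\cT^\bullet_{N^+,N^-}\sqcup\cT^\circ_{N^+,N^-}]^0\twoheadrightarrow\dZ/p^n$ which is Hecke‑linear when $\dT_{N^+,N^-\ell}$ acts on the target via $\varphi^n_{\sigma\res\ell}$, and I would identify it, up to a unit scalar, with the map induced by $g_\sigma^n\sqcup\epsilon_\sigma(\ell)g_\sigma^n$.

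For that identification I would use multiplicity one. Localizing the Brandt module $\dZ[\cT_{N^+,N^-}]$ at the maximal ideal attached to $\bar\rho_\sigma$ — which is non‑Eisenstein by (R2) — shows that $\dZ[\cT_{N^+,N^-}]\otimes_\dZ\dZ/p^n$ has $\sigma$‑isotypic part free of rank $1$ over $\dZ/p^n$, spanned by $g_\sigma^n$ (it spans because the image of $g_\sigma$ is not contained in $p\dZ$). Hence on the two‑vertex module the $\varphi^n_{\sigma\res\ell}$‑isotypic quotient is free of rank $2$ over $\dZ/p^n$, generated by the functions $g_\sigma^n$ supported on the two copies; and since $\Fr_\ell$ exchanges the copies (being the identity on $\cT_{N^+,N^-}$) and $\phi^{(\ell)}_{N^+,N^-\ell}$ is $\Fr_\ell$‑equivariant, the surjection above factors through the submodule on which $\Fr_\ell$ acts by the scalar it induces on $\Phi^{(\ell)}_{N^+,N^-\ell}/\cI^n_{\sigma\res\ell}$ — which is $\epsilon_\sigma(\ell)$, this being exactly the content of the level‑raising congruence (A3) read off the $\Fr_\ell$‑action on the component group. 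That submodule is precisely the line spanned by $(g_\sigma^n,\epsilon_\sigma(\ell)g_\sigma^n)$, so the surjection equals $u\cdot(g_\sigma^n\sqcup\epsilon_\sigma(\ell)g_\sigma^n)$ for some $u\in(\dZ/p^n)^\times$. Defining $\beta$ to be $u^{-1}$ times the isomorphism of \cite{BD05}*{Theorem 5.15} makes the diagram commute; and because $\phi^{(\ell)}_{N^+,N^-\ell}$ and the projection onto $\Phi^{(\ell)}_{N^+,N^-\ell}/\cI^n_{\sigma\res\ell}$ are surjective, this $\beta$ is the unique isomorphism with that property.

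The step I expect to be the main obstacle is pinning down this sign — establishing that the scalar by which $\Fr_\ell$ acts on the rank‑one quotient $\Phi^{(\ell)}_{N^+,N^-\ell}/\cI^n_{\sigma\res\ell}$ is exactly $\epsilon_\sigma(\ell)$, in a way that is compatible with the normalisation of (A3), the conventions of \cite{BD05}, and the \v{C}erednik--Drinfeld description of Proposition \ref{pr:curve_superspecial}. This amounts to careful bookkeeping with the monodromy pairing on $\cG$, the Atkin--Lehner sign at $\ell$, and the Frobenius‑compatibility of $\phi^{(\ell)}_{N^+,N^-\ell}$. Once that is settled, the multiplicity‑one input and the existence and uniqueness of $\beta$ are formal.
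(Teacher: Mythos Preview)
Your proposal is correct and follows the same route as the paper: both reduce the statement to the proof of \cite{BD05}*{Theorem 5.15}, and indeed the paper's entire proof is the single sentence ``It follows from the proof of \cite{BD05}*{Theorem 5.15}.'' Your write-up is a faithful unpacking of what that citation contains --- the \v{C}erednik--Drinfeld graph description, the Hecke-equivariant component-group quotient, multiplicity one, and the Frobenius sign --- so the content matches, with your version supplying the details the paper leaves to the reference.
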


\begin{proof}
It follows from the proof of \cite{BD05}*{Theorem 5.15}.
\end{proof}

\begin{proposition}\label{pr:raising_2_bis}
There is unique isomorphism
\[\hat\beta\colon\rT_p(J_{N^+,N^-\ell})/\cI_{\sigma\res\ell}^n\to\bar\rT^n_\sigma\]
of $\dZ/p^n[\Gamma_\dQ]$-modules rendering the following diagram commutative
\[\xymatrix{
J_{N^+,N^-\ell}(\dQ_{\ell^2}) \ar[rr]\ar[d]&& \rH^1(\dQ_{\ell^2},J_{N^+,N^-\ell}[p^n](\dQ_{\ell}^\ac)) \ar[d]\\
\Phi^{(\ell)}_{N^+,N^-\ell} \ar[d]^-{\beta} &&
\rH^1(\dQ_{\ell^2},\rT_p(J_{N^+,N^-\ell})/\cI_{\sigma\res\ell}^n) \ar[d]^-{\rH^1(\dQ_{\ell^2},\hat\beta)} \\
\dZ/p^n & \rH^1_\sing(\dQ_{\ell^2},\bar\rT^n_\sigma) \ar[l]_-{\ts_{\ell}}&
\rH^1(\dQ_{\ell^2},\bar\rT^n_\sigma) \ar[l]_-{\partial_{\ell^2}}, }\] where
the upper horizontal arrow is the Kummer map; and the first left vertical
arrow is the reduction map.
\end{proposition}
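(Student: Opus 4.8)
The plan is to build $\hat\beta$ in two stages: first as an isomorphism of $\dZ/p^n[\Gamma_\dQ]$-modules, unique up to a scalar in $(\dZ/p^n)^\times$, and then to pin down the scalar by the requirement that the displayed diagram commute; uniqueness will follow automatically from the diagram.

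For the first stage I would note that $\cI^n_{\sigma\res\ell}=\ker\varphi^n_{\sigma\res\ell}$ is contained in a maximal ideal $\fm$ of $\dT_{N^+,N^-\ell}$ whose residual representation is $\bar\rho_\sigma$, absolutely irreducible by Assumption \ref{as:group_r} (R2). As in \cite{BD05}*{\S5} --- whose hypotheses hold in our situation by (R2), (R3) and \cite{PW11}*{Theorem 6.2}, the squarefreeness of $N$ imposed there being unnecessary --- the localization $\rT_p(J_{N^+,N^-\ell})_\fm$ is free of rank $2$ over $(\dT_{N^+,N^-\ell})_\fm$, so $\rT_p(J_{N^+,N^-\ell})/\cI^n_{\sigma\res\ell}=\rT_p(J_{N^+,N^-\ell})\otimes_{\dT_{N^+,N^-\ell}}\dZ/p^n$ is free of rank $2$ over $\dZ/p^n$. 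By the construction of $\varphi^n_{\sigma\res\ell}$, for every prime $v\nmid pN^+N^-\ell$ the characteristic polynomial of $\Fr_v$ on this module is congruent modulo $p^n$ to that of $\Fr_v$ on $\bar\rT^n_\sigma$, and both modules reduce modulo $p$ to $\bar\rho_\sigma$; Chebotarev, the Brauer--Nesbitt theorem and the rigidity of lattices with absolutely irreducible residual representation then produce a $\dZ/p^n[\Gamma_\dQ]$-isomorphism $\hat\beta$, unique up to $(\dZ/p^n)^\times$ since $\End_{\dZ/p^n[\Gamma_\dQ]}(\bar\rT^n_\sigma)=\dZ/p^n$.

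For the second stage, the diagram is a purely local statement over $\dQ_{\ell^2}$. Since $\ell\mid N^-\ell$ and $\wp(N^-\ell)$ is even, $B_{N^-\ell}$ is ramified at $\ell$, so $Y_{N^+,N^-\ell}$ admits \v{C}erednik--Drinfeld uniformization over $\dZ_{\ell^2}$ and $J_{N^+,N^-\ell}$ has purely toric reduction there. I would then invoke Grothendieck's description of the component group $\Phi^{(\ell)}_{N^+,N^-\ell}$ via the monodromy pairing, as used in \cite{BD05}*{\S5}: the reduction map $J_{N^+,N^-\ell}(\dQ_{\ell^2})\to\Phi^{(\ell)}_{N^+,N^-\ell}$ is surjective by Lang's theorem, the composite of the Kummer map with $\partial_{\ell^2}$ factors through this reduction map, and, using Lemma \ref{le:cohomology_sigma} to identify $\rH^1_\sing(\dQ_{\ell^2},\bar\rT^n_\sigma)$ and the map $\ts_\ell$, the induced arrow $\Phi^{(\ell)}_{N^+,N^-\ell}\to\dZ/p^n$ factors through $\Phi^{(\ell)}_{N^+,N^-\ell}/\cI^n_{\sigma\res\ell}$ and restricts to an isomorphism of this cyclic group onto $\dZ/p^n$. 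Since the map $\beta$ of Proposition \ref{pr:raising_1_bis} is another such isomorphism, normalised through $g^n_\sigma$, the two arrows $J_{N^+,N^-\ell}(\dQ_{\ell^2})\to\dZ/p^n$ appearing in the diagram differ by a unit that scales linearly with the scalar of $\hat\beta$; I would normalise $\hat\beta$ so that this unit equals $1$. Uniqueness then follows: if $\hat\beta'$ is another admissible choice, then $\hat\beta'=v\hat\beta$ for some $v\in(\dZ/p^n)^\times$, and the diagram forces $v\cdot(\text{iso})=(\text{iso})$, whence $v=1$.

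The step I expect to be the main obstacle is this last local input --- showing that the Kummer map followed by $\partial_{\ell^2}$ is computed by reduction to the component group, and that the resulting trivialization of $\Phi^{(\ell)}_{N^+,N^-\ell}/\cI^n_{\sigma\res\ell}$ is compatible with the one supplied by Proposition \ref{pr:raising_1_bis}. This analysis is carried out in \cite{BD05}*{\S5}, building on \cite{Rib90}; the only point to verify is that replacing the classical modular curve used there by the Shimura curve $Y_{N^+,N^-\ell}$ changes nothing, which holds because the three ingredients it relies on --- freeness of the $p$-adic Tate module over the Hecke algebra, \v{C}erednik--Drinfeld uniformization at $\ell$, and the level-raising congruence of Proposition \ref{pr:raising_1_bis} --- are all available verbatim for $Y_{N^+,N^-\ell}$.
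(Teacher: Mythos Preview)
Your proposal is correct and follows the same approach as the paper, which simply cites \cite{BD05}*{Theorem 5.17, Corollary 5.18} without further detail. You have effectively unpacked the content of that citation: the freeness of the localized Tate module giving an isomorphism unique up to scalar, and the local analysis at $\ell$ via the component group and monodromy pairing to fix the normalization --- these are precisely the ingredients of Bertolini--Darmon's argument, and your verification that they transfer to the Shimura curve $Y_{N^+,N^-\ell}$ is the only adaptation needed.
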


\begin{proof}
It follows from the proof of \cite{BD05}*{Theorem 5.17, Corollary 5.18}.
\end{proof}

Note that there is a canonical $\Gamma_\dQ$-equivariant $\dZ_p$-linear
isomorphism between the Tate module $\rT_p(J_{N^+,N^-\ell})$ and
$\rH^1_{\et}(Y_{N^+,N^-\ell},\dZ_p(1))$. Therefore, Proposition
\ref{pr:raising_2} yields a projection
\begin{align}\label{eq:level_raising4_bis}
\psi_{\ell}^n\colon\rH^1_{\et}(Y_{N^+,N^-\ell;\dQ^\ac},\dZ/p^n(1))\to\bar\rT^n_\sigma.
\end{align}

\subsubsection{The case $\wp(N^-)$ is even}

The automorphic representation $\sigma$ gives rise to a surjective
homomorphism $\varphi_\sigma\colon \dT_{N^+,N^-}\to\dZ_p$. For $1\leq n\leq
\infty$, denote by $\varphi_\sigma^n$ the composition of $\varphi_\sigma$
with the natural map $\dZ_p\to\dZ/p^n$, and $\cI_\sigma^n$ the kernel of
$\varphi_\sigma^n$.

Now we fix an integer $n\geq 1$. Assumption \ref{as:group_r} (R2, R3) imply
that $\rT_p(J_{N^+,N^-})/\cI_\sigma^n$ is isomorphic to $\bar\rT_\sigma^n$ as
a $\dZ/p^n[\Gamma_\dQ]$-module, by \cite{Wil95}*{Theorem 2.1} and
\cite{Hel07}*{Corollary 8.11, Remark 8.12}. Take an $n$-admissible prime
$\ell_1$. The Kummer map
\[J_{N^+,N^-}(\dQ_{\ell_1^2})/\cI_\sigma^n\to\rH^1(\dQ_{\ell_1^2},\bar\rT^\infty_p(J_{N^+,N^-})/\cI_\sigma^n)\]
induces a map
\begin{align*}
J_{N^+,N^-}(\dQ_{\ell_1^2})/\cI_\sigma^n\to\rH^1(\dQ_{\ell_1^2},\bar\rT_\sigma^n),
\end{align*}
whose image is contained in $\rH^1_\unr(\dQ_{\ell_1^2},\bar\rT_\sigma^n)$.
Running the same argument on \cite{BD05}*{Page 57}, we obtain a natural
surjective map
\begin{align}\label{eq:level_raising3_pre}
\dZ[\cT_{N^+,N^-\ell_1}]
\to\rH^1_\unr(\dQ_{\ell_1^2},\bar\rT_\sigma^n)\xrightarrow{\tu_{\ell_1}}\dZ/p^n.
\end{align}
It induces a function
\begin{align}\label{eq:level_raising3}
g_{\sigma\res\ell_1}^n\colon\cT_{N^+,N^-\ell_1}\to\dZ/p^n,
\end{align}
which satisfies
$g_{\sigma\res\ell_1}^n\circ\op_{\ell_1}=\epsilon_\sigma(\ell_1)g_{\sigma\res\ell_1}^n$.

Let $\ell_2$ be an $n$-admissible prime other than $\ell_1$. Applying the
argument of \cite{BD05}*{\Sec 5.6} to $g_{\sigma\res\ell_1}^n$ with
$N^+=N^+$, $N^-=N^-\ell_1$, $m=\ell_1$ and $\ell=\ell_2$, we obtain a
surjective homomorphism
\[\varphi_{\sigma\res\ell_1,\ell_2}^n\colon\dT_{N^+,N^-\ell_1\ell_2}\to\dZ/p^n\]
by \cite{BD05}*{Theorem 5.15}. We denote its kernel by
$\cI_{\sigma\res\ell_1,\ell_2}^n$. We have the following results similar to
Propositions \ref{pr:raising_1_bis} and \ref{pr:raising_2_bis}.

\begin{proposition}\label{pr:raising_1}
There is a unique isomorphism
\[\beta\colon\Phi^{(\ell_2)}_{N^+,N^-\ell_1\ell_2}/\cI_{\sigma\res\ell_1,\ell_2}^n\xrightarrow\sim\dZ/p^n\]
rendering the following diagram commutative
\[\xymatrix{
\dZ[\cT_{N^+,N^-\ell_1}^\bullet\sqcup\cT_{N^+,N^-\ell_1}^\circ]^0 \ar[rr]
\ar[d]_-{\phi^{(\ell_2)}_{N^+,N^-\ell_1\ell_2}}
&& \dZ/p^n  \\
\Phi^{(\ell_2)}_{N^+,N^-\ell_1\ell_2} \ar[rr]&&
\Phi^{(\ell_2)}_{N^+,N^-\ell_1\ell_2}/\cI_{\sigma\res\ell_1,\ell_2}^n
\ar[u]_-{\beta}, }\] where the upper arrow is induced by the function
$g_{\sigma\res\ell_1}^n\sqcup\epsilon_\sigma(\ell_2)g_{\sigma\res\ell_1}^n$.
\end{proposition}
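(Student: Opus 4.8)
The plan is to reduce the statement to \cite{BD05}*{Theorem 5.15} in exactly the way the proof of Proposition \ref{pr:raising_1_bis} does, but now in the level-raised situation. Concretely, one runs the construction of \cite{BD05}*{\Sec 5.6} with the substitution $(N^+,N^-,m,\ell)\rightsquigarrow(N^+,N^-\ell_1,\ell_1,\ell_2)$, feeding in the mod-$p^n$ function $g_{\sigma\res\ell_1}^n\colon\cT_{N^+,N^-\ell_1}\to\dZ/p^n$ of \eqref{eq:level_raising3} in place of the eigenform that BD05 starts from. Since $\ell_1\mid N^-\ell_1$ and $\wp(N^-\ell_1)$ is odd, the algebra $B_{N^-\ell_1}$ is definite, so $\cT_{N^+,N^-\ell_1}$, the Shimura curve $Y_{N^+,N^-\ell_1\ell_2}$, its Jacobian $J_{N^+,N^-\ell_1\ell_2}$, and the component group $\Phi^{(\ell_2)}_{N^+,N^-\ell_1\ell_2}$ are precisely the objects to which \cite{BD05}*{Corollary 5.12, Theorem 5.15} apply, and $\phi^{(\ell_2)}_{N^+,N^-\ell_1\ell_2}$ is the map furnished there.

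First I would verify the hypotheses of \cite{BD05}*{Theorem 5.15} in this setting. The essential one is condition (2) of loc.\ cit., concerning the residual representation $\bar\rho_\sigma$; it holds by Assumption \ref{as:group_r} (R2) and (R3) together with \cite{PW11}*{Theorem 6.2} (whose square-freeness hypothesis on the level is unnecessary, as already noted), exactly as in the proof of Proposition \ref{pr:raising_1_bis}. One also needs that $g_{\sigma\res\ell_1}^n$ is genuinely new at $\ell_1$ in the sense required by the level-raising argument: this is the content of the relation $g_{\sigma\res\ell_1}^n\circ\op_{\ell_1}=\epsilon_\sigma(\ell_1)g_{\sigma\res\ell_1}^n$ recorded after \eqref{eq:level_raising3}, which shows that $g_{\sigma\res\ell_1}^n\sqcup\epsilon_\sigma(\ell_2)g_{\sigma\res\ell_1}^n$ on $\cT_{N^+,N^-\ell_1}^\bullet\sqcup\cT_{N^+,N^-\ell_1}^\circ$ determines a well-defined element of the degree-zero part that is an eigenvector for the relevant Hecke operators.

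With the hypotheses in place, \cite{BD05}*{Theorem 5.15} produces the surjective homomorphism $\varphi_{\sigma\res\ell_1,\ell_2}^n\colon\dT_{N^+,N^-\ell_1\ell_2}\to\dZ/p^n$ (which is how $\cI_{\sigma\res\ell_1,\ell_2}^n$ was defined) and identifies $\Phi^{(\ell_2)}_{N^+,N^-\ell_1\ell_2}/\cI_{\sigma\res\ell_1,\ell_2}^n$: the assertion of loc.\ cit.\ is precisely that the composite $\dZ[\cT_{N^+,N^-\ell_1}^\bullet\sqcup\cT_{N^+,N^-\ell_1}^\circ]^0\xrightarrow{\phi^{(\ell_2)}_{N^+,N^-\ell_1\ell_2}}\Phi^{(\ell_2)}_{N^+,N^-\ell_1\ell_2}\twoheadrightarrow\Phi^{(\ell_2)}_{N^+,N^-\ell_1\ell_2}/\cI_{\sigma\res\ell_1,\ell_2}^n$ agrees, up to a canonical isomorphism $\beta$ onto $\dZ/p^n$, with the map induced by $g_{\sigma\res\ell_1}^n\sqcup\epsilon_\sigma(\ell_2)g_{\sigma\res\ell_1}^n$. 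Uniqueness of $\beta$ is then automatic, because the top horizontal arrow is surjective onto $\dZ/p^n$: the function $g_{\sigma\res\ell_1}^n$ has image generating $\dZ/p^n$, by its very construction through the surjection \eqref{eq:level_raising3_pre}.

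The only point that is not a formal transcription of \cite{BD05} — and hence the main obstacle — is checking that the BD05 hypotheses survive the \emph{iterated} level raising, i.e.\ that after raising at $\ell_1$ the mod-$p^n$ Hecke eigensystem attached to $g_{\sigma\res\ell_1}^n$ is still $p$-isolated and of multiplicity one at the new level $N^-\ell_1\ell_2$, in the precise sense demanded by \cite{BD05}*{Theorem 5.15}. This is handled exactly as in Proposition \ref{pr:raising_1_bis}: the requisite Gorenstein/freeness property of the relevant Hecke module descends from $\bar\rho_\sigma$ being residually surjective (R2) and suitably ramified at the primes dividing $N^-$ (R3), via \cite{PW11}, with no input beyond Assumption \ref{as:group_r} (R1--R3).
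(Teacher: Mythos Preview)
Your proposal is correct and takes essentially the same approach as the paper: the paper states that Propositions \ref{pr:raising_1} and \ref{pr:raising_2} are ``similar to Propositions \ref{pr:raising_1_bis} and \ref{pr:raising_2_bis}'' and gives no separate proof, so the argument is exactly the reduction to \cite{BD05}*{Theorem 5.15} via the substitution $(N^+,N^-,m,\ell)\rightsquigarrow(N^+,N^-\ell_1,\ell_1,\ell_2)$ that you spell out. If anything, you have written more than the paper does.
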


\begin{proposition}\label{pr:raising_2}
There is unique isomorphism
\[\hat\beta\colon\rT_p(J_{N^+,N^-\ell_1\ell_2})/\cI_{\sigma\res\ell_1,\ell_2}^n\to\bar\rT^n_\sigma\]
of $\dZ/p^n[\Gamma_\dQ]$-modules rendering the following diagram commutative
\[\xymatrix{
J_{N^+,N^-\ell_1\ell_2}(\dQ_{\ell_2^2}) \ar[rr]\ar[d]&& \rH^1(\dQ_{\ell_2^2},J_{N^+,N^-\ell_1\ell_2}[p^n](\dQ_{\ell_2}^\ac)) \ar[d]\\
\Phi^{(\ell_2)}_{N^+,N^-\ell_1\ell_2} \ar[d]^-{\beta} &&
\rH^1(\dQ_{\ell_2^2},\rT_p(J_{N^+,N^-\ell_1\ell_2})/\cI_{\sigma\res\ell_1,\ell_2}^n) \ar[d]^-{\rH^1(\dQ_{\ell_2^2},\hat\beta)} \\
\dZ/p^n & \rH^1_\sing(\dQ_{\ell_2^2},\bar\rT^n_\sigma) \ar[l]_-{\ts_{\ell_2}}&
\rH^1(\dQ_{\ell_2^2},\bar\rT^n_\sigma) \ar[l]_-{\partial_{\ell_2^2}}.
}\]
\end{proposition}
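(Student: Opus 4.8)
The plan is to deduce Proposition \ref{pr:raising_2} from \cite{BD05}*{Theorem 5.17, Corollary 5.18} in exactly the manner Proposition \ref{pr:raising_2_bis} was deduced from those results, now applied with the substitution $(N^+,N^-,m,\ell)=(N^+,N^-\ell_1,\ell_1,\ell_2)$ and with the mod-$p^n$ eigenform $g^n_{\sigma\res\ell_1}$ of \eqref{eq:level_raising3} in place of $g^n_\sigma$. Since $\wp(N^-)$ is even, $\wp(N^-\ell_1)$ is odd, so $B_{N^-\ell_1}$ is definite and $\cT_{N^+,N^-\ell_1}$ is the finite set carrying the Hecke eigenfunction $g^n_{\sigma\res\ell_1}$ for $\dT_{N^+,N^-\ell_1}$; raising the level at the $n$-admissible prime $\ell_2$ passes to the indefinite algebra $B_{N^-\ell_1\ell_2}$, whose Shimura curve $Y_{N^+,N^-\ell_1\ell_2}$ has Mumford (\v{C}erednik--Drinfeld) reduction at $\ell_2$ --- this is the geometric situation treated by Ribet and Bertolini--Darmon.

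First I would record that the hypotheses needed for \cite{BD05}*{Theorem 5.15} --- and hence for \cite{BD05}*{Theorem 5.17, Corollary 5.18} --- in this setting are exactly those already invoked above in the construction of $\varphi^n_{\sigma\res\ell_1,\ell_2}$: the residual representation attached to $g^n_{\sigma\res\ell_1}$ is $\bar\rho_\sigma$, surjective by Assumption \ref{as:group_r} (R2); the ramification condition at the primes dividing $N^-\ell_1$ holds by (R3), automatically at $\ell_1$ since $p\nmid\ell_1^2-1$ by Definition \ref{de:group_a} (A2); and the mod-$p^n$ multiplicity-one input (condition (2) of \cite{BD05}*{Theorem 5.15}) holds by \cite{PW11}*{Theorem 6.2} together with \cite{Wil95}*{Theorem 2.1} and \cite{Hel07}*{Corollary 8.11, Remark 8.12}, the square-freeness hypothesis of \cite{PW11} being unnecessary, exactly as in the paragraph preceding Proposition \ref{pr:raising_1_bis}. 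Granting this, \cite{BD05}*{Theorem 5.17} produces a $\dZ/p^n[\Gamma_\dQ]$-module isomorphism $\hat\beta\colon\rT_p(J_{N^+,N^-\ell_1\ell_2})/\cI^n_{\sigma\res\ell_1,\ell_2}\xrightarrow\sim\bar\rT^n_\sigma$, and its compatibility on one side with the specialization of the Kummer map to the N\'{e}ron component group $\Phi^{(\ell_2)}_{N^+,N^-\ell_1\ell_2}$ (normalized by $\beta$ from Proposition \ref{pr:raising_1}) and on the other with $\partial_{\ell_2^2}$ (normalized by $\ts_{\ell_2}$ from \eqref{eq:us}) --- i.e.\ the commutativity of the displayed diagram --- is \cite{BD05}*{Corollary 5.18}; here one uses that $\ell_2$ is a prime of Mumford reduction for $Y_{N^+,N^-\ell_1\ell_2}$, so that $\rH^1_\sing(\dQ_{\ell_2^2},\rT_p(J_{N^+,N^-\ell_1\ell_2}))$ is computed by the component group. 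Uniqueness is then immediate: (R2) gives $\End_{\dZ/p^n[\Gamma_\dQ]}(\bar\rT^n_\sigma)=\dZ/p^n$, so two such isomorphisms differ by a unit scalar, and the bottom row of the diagram, supplying a distinguished $\dZ/p^n$ on each side, fixes that scalar.

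I expect the only step requiring genuine care --- rather than bookkeeping --- to be the verification that the big-image and mod-$p^n$ Gorenstein/multiplicity-one hypotheses of \cite{BD05} survive the first level-raising step at $\ell_1$: one must know that $g^n_{\sigma\res\ell_1}$ still has residual representation $\bar\rho_\sigma$, so that (R2) and (R3) continue to apply at level $N^-\ell_1$, and that the relevant congruence module remains cyclic over $\dZ/p^n$. Both hold because $\ell_1$ is $n$-admissible --- in particular $p\nmid\ell_1^2-1$ --- so that adjoining the inert prime $\ell_1$ to the ramification set of the quaternion algebra alters neither $\bar\rho_\sigma$ nor the Gorenstein property of the Hecke algebra at the maximal ideal cut out by $\bar\rho_\sigma$, just as in the arguments underlying Propositions \ref{pr:raising_1_bis} and \ref{pr:raising_2_bis}.
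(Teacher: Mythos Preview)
Your proposal is correct and follows exactly the paper's approach: the paper's proof is the single line ``It follows from the proof of \cite{BD05}*{Theorem 5.17, Corollary 5.18},'' and you have simply spelled out that citation with the substitution $(N^+,N^-,m,\ell)=(N^+,N^-\ell_1,\ell_1,\ell_2)$ already indicated in the text before Proposition~\ref{pr:raising_1}. Your additional remarks on verifying the hypotheses of \cite{BD05} after the first level-raising step and on uniqueness via $\End_{\dZ/p^n[\Gamma_\dQ]}(\bar\rT^n_\sigma)=\dZ/p^n$ are correct elaborations that the paper leaves implicit.
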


Note that there is a canonical $\Gamma_\dQ$-equivariant $\dZ_p$-linear
isomorphism between the Tate module $\rT_p(J_{N^+,N^-\ell_1\ell_2})$ and
$\rH^1_{\et}(Y_{N^+,N^-\ell_1\ell_2},\dZ_p(1))$. Therefore, Proposition
\ref{pr:raising_2} yields a projection
\begin{align}\label{eq:level_raising4}
\psi_{\ell_1,\ell_2}^n\colon\rH^1_{\et}(Y_{N^+,N^-\ell_1\ell_2;\dQ^\ac},\dZ/p^n(1))\to\bar\rT^n_\sigma.
\end{align}

\subsection{Construction of annihilators}
\label{ss:construction_annihilators}

In this section, the coefficient ring $\Lambda$ will be $\dZ/p^n$ for $1\leq
n\leq\infty$. In order to emphasize the exponent $n$, we will use the
notation $[\quad]_Z^n$ for the \'{e}tale Abel--Jacobi map $\cl_Z^1$ with the
coefficient ring $\Lambda=\dZ/p^n$.

Let $N^\sim$ be a product of even number of distinct primes that are inert in
$F$, such that $N^-$ divides $N^\sim$ and $N^\sim/N^-$ is coprime to
$2pq\cD_{N^+M}$. In what follows, we will regard the schemes
$\cY_{N^+M,N^\sim}$, $\cX_{N^+M}$, $\cZ_{N^+M,N^\sim}$ and the
Hirzebruch--Zagier morphism $\zeta_{N^+M,N^\sim}$ as over
$\dZ[1/q\cD_{N^+M}]$. Recall that we have the cycle $\b\Delta_{N^+M,N^\sim}$
which is the graph of $\zeta_{N^+M,N^\sim}$, as an element in
$\rZ^2(\cZ_{N^+M,N^\sim})$. We also have two degeneracy morphisms
\begin{align*}
\cY_{N^+M,N^\sim} \xleftarrow{\delta_{N^+Mq,N^\sim}^{(q,1)}}\cY_{N^+Mq,N^\sim}
\xrightarrow{\delta_{N^+Mq,N^\sim}^{(q,q)}}\cY_{N^+M,N^\sim},
\end{align*}
which induce two morphisms
\begin{align*}
\zeta_{N^+M,N^\sim}^\dag\colonequals\zeta_{N^+Mq,N^\sim}\circ\delta_{N^+Mq,N^\sim}^{(q,1)}&\colon\cY_{N^+Mq,N^\sim}\to\cX_{N^+M},\\
\zeta_{N^+M,N^\sim}^\ddag\colonequals\zeta_{N^+Mq,N^\sim}\circ\delta_{N^+Mq,N^\sim}^{(q,q)}&\colon\cY_{N^+Mq,N^\sim}\to\cX_{N^+M}.
\end{align*}
Put
$\cZ'_{N^+M,N^\sim}=\cY_{N^+Mq,N^\sim}\times_{\Spec\dZ[1/q\cD_{N^+M}]}\cX_{N^+M}$.
Denote the graphs of $\zeta_{N^+Mq,N^\sim}^\dag$ and
$\zeta_{N^+Mq,N^\sim}^\ddag$ by $\b\Delta_{N^+M,N^\sim}^\dag$ and
$\b\Delta_{N^+M,N^\sim}^\ddag$, respectively, which are both smooth over the
base $\dZ[1/q\cD_{N^+M}]$. We also have the \'{e}tale projection
\begin{align*}
\delta'\colonequals\delta_{N^+Mq,N^\sim}^{(q,1)}\times\r{id}\colon\cZ'_{N^+M,N^\sim}\to\cZ_{N^+M,N^\sim}
\end{align*}
under which the image of $\b\Delta_{N^+M,N^\sim}^\dag$ is simply
$\b\Delta_{N^+M,N^\sim}$. We remark that the image of
$\b\Delta_{N^+M,N^\sim}^\ddag$ under $\delta'$ is not necessarily smooth over
the base. Put
\begin{align}\label{eq:cycle_modified}
\b\Delta'_{N^+M,N^\sim}=\frac{1}{q+1-\tr(\Fr_q;\rV_\sigma)}
(\b\Delta_{N^+M,N^\sim}^\dag-\b\Delta_{N^+M,N^\sim}^\ddag)
\end{align}
as a cycle of $\cZ'_{N^+M,N^\sim}$. Put
$\Delta'_{N^+M,N^\sim}=\b\Delta'_{N^+M,N^\sim;\dQ}$ as usual.

\begin{lem}\label{le:change_cycle}
We have that
\begin{enumerate}
  \item for every point $\Spec k\to\Spec\dZ[1/pqN^\sim\cD_{N^+M}]$ where
      $k$ is a field and every coefficient ring $\Lambda=\dZ/p^n$ with
      arbitrary $1\leq n\leq \infty$, the (induced) Chow cycle (of)
      $\b\Delta'_{N^+M,N^\sim;k}$ belongs to
      $\CH^2(\cZ'_{N^+M,N^\sim;k},\Lambda)^0$;

  \item
      $\delta'_{\dQ*}\Delta'_{N^+M,N^\sim}=\sT_\sigma^{(q)}\Delta_{N^+M,N^\sim}$
      as an equality in $\CH^2(Z_{N^+M,N^\sim})\otimes_\dZ\dQ$;

  \item for arbitrary $1\leq n\leq\infty$,
      \[\delta'_{\dQ*}[\Delta'_{N^+M,N^\sim}]_{\cZ'_{N^+M,N^\sim;\dQ}}^n
      =[\sT_\sigma^{(q)}\Delta_{N^+M,N^\sim}]_{Z_{N^+M,N^\sim}}^n.\]
\end{enumerate}
\end{lem}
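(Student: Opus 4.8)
The plan is to establish the three parts in the order (2), (3), (1), since (2) is an identity of cycles, (3) follows from it formally, and (1) carries the cohomological content. For (2): by construction $\b\Delta^\dag_{N^+M,N^\sim}$ (resp.\ $\b\Delta^\ddag_{N^+M,N^\sim}$) is the graph of $\zeta_{N^+M,N^\sim}\circ\delta^{(q,1)}_{N^+Mq,N^\sim}$ (resp.\ $\zeta_{N^+M,N^\sim}\circ\delta^{(q,q)}_{N^+Mq,N^\sim}$), which is exactly the scheme-theoretic preimage of the graph $\b\Delta_{N^+M,N^\sim}$ of $\zeta_{N^+M,N^\sim}$ along $\delta^{(q,1)}_{N^+Mq,N^\sim}\times\r{id}$ (resp.\ $\delta^{(q,q)}_{N^+Mq,N^\sim}\times\r{id}$); since these maps are flat, the preimage equals the flat pullback as a cycle. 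As $\delta'=\delta^{(q,1)}_{N^+Mq,N^\sim}\times\r{id}$ is finite flat of degree $q+1$, the projection formula gives $\delta'_*\b\Delta^\dag_{N^+M,N^\sim}=(q+1)\,\b\Delta_{N^+M,N^\sim}$, while the description of the Hecke correspondence $T^{(q)}$ as the span with left leg $\delta^{(q,1)}$ and right leg $\delta^{(q,q)}$ gives $\delta'_*\b\Delta^\ddag_{N^+M,N^\sim}=T^{(q)}\,\b\Delta_{N^+M,N^\sim}$. Passing to the generic fibre and dividing by $q+1-\tr(\Fr_q;\rV_\sigma)$ yields (2); the same computation is valid with $\dZ/p^n$-coefficients because $p\nmid q+1-\tr(\Fr_q;\rV_\sigma)$.

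For (1): for any $\Spec k\to\Spec\dZ[1/pqN^\sim\cD_{N^+M}]$ the schemes $\cY_{N^+Mq,N^\sim;k}$, $\cX_{N^+M;k}$ are smooth projective and the relevant degeneracy maps are étale. By the identification used in (2), over $k^\ac$ the classes $[\b\Delta^\dag_{N^+M,N^\sim;k^\ac}]$ and $[\b\Delta^\ddag_{N^+M,N^\sim;k^\ac}]$ are the flat pullbacks of $[\b\Delta_{N^+M,N^\sim;k^\ac}]\in\rH^4_{\et}((\cY_{N^+M,N^\sim}\times\cX_{N^+M})_{k^\ac},\dZ_p(2))$ along $\delta^{(q,1)}\times\r{id}$ and $\delta^{(q,q)}\times\r{id}$; it therefore suffices to check that these two pullback operators agree on that class, for then $\b\Delta^\dag-\b\Delta^\ddag$, and hence $\b\Delta'_{N^+M,N^\sim;k}$, is cohomologically trivial with $\dZ_p$- and therefore $\dZ/p^n$-coefficients, i.e.\ lies in $\CH^2(\cZ'_{N^+M,N^\sim;k},\dZ/p^n)^0$. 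To obtain the agreement I would use that $\rH^\bullet(\cY_{N^+M,N^\sim;k^\ac},\dZ_p)$ is torsion-free (smooth proper curve), so that the Künneth decomposition of $\rH^4$ of the product carries no torsion correction terms, together with $\rH^1(\cX_{N^+M;k^\ac},\dZ_p)=\rH^3(\cX_{N^+M;k^\ac},\dZ_p)=0$ (Lemma~\ref{le:vanishing} and smooth proper base change), giving
\[\rH^4_{\et}((\cY_{N^+M,N^\sim}\times\cX_{N^+M})_{k^\ac},\dZ_p(2))=\bigl(\rH^0(\cY)\otimes\rH^4(\cX)(2)\bigr)\oplus\bigl(\rH^2(\cY)(1)\otimes\rH^2(\cX)(1)\bigr),\]
with $\rH^\bullet$ those of $\cY_{N^+M,N^\sim;k^\ac}$ and $\cX_{N^+M;k^\ac}$. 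On the first summand $(\delta^{(q,1)})^*$ and $(\delta^{(q,q)})^*$ are the canonical $\rH^0$-pullback for the geometrically connected curves involved, hence equal; on $\rH^2(\cY,\dZ_p(1))$ they are both multiplication by the common degree $q+1$ (via $f_*f^*=\deg f$ and $\tr\circ f_*=\tr$), hence equal. Since $(\delta^{(q,\ast)}\times\r{id})^*=(\delta^{(q,\ast)})^*\otimes\r{id}$ respects the splitting, this gives (1).

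For (3): the projection $\delta'\colon\cZ'_{N^+M,N^\sim;\dQ}\to\cZ_{N^+M,N^\sim;\dQ}=Z_{N^+M,N^\sim}$ is finite étale, hence smooth proper of relative dimension $0$, so Lemma~\ref{le:functoriality} with $c=2$, $d=0$, $\Lambda=\dZ/p^n$ gives $\delta'_*[\Delta'_{N^+M,N^\sim}]_{\cZ'_{N^+M,N^\sim;\dQ}}^n=[\delta'_*\Delta'_{N^+M,N^\sim}]_{Z_{N^+M,N^\sim}}^n$; the left side is defined by (1) with $k=\dQ$, the right side by Lemma~\ref{le:triviality}, and substituting the identity of (2) (valid with $\dZ/p^n$-coefficients) completes the proof. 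The step I expect to be the main obstacle is (1): the delicate point is that $\b\Delta^\dag-\b\Delta^\ddag$ must already be cohomologically trivial \emph{before} the normalization by $q+1-\tr(\Fr_q;\rV_\sigma)$ is applied, which is precisely what forces the Künneth bookkeeping above — crucially the torsion-freeness of the curve's cohomology (so that no spurious correction terms appear over $\dZ_p$) and the vanishing $\rH^1(\cX_{N^+M})=\rH^3(\cX_{N^+M})=0$. Parts (2) and (3) are, in contrast, formal manipulations with the projection formula and Lemma~\ref{le:functoriality}.
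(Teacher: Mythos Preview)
Your proof is correct and follows essentially the same approach as the paper. The paper's proof is extremely terse—``Part (1) follows from the same proof of Lemma~\ref{le:triviality}, and the Comparison Theorem for \'etale cohomology with $\dZ_p$-coefficient. Part (2) follows directly from the construction. Part (3) follows from (2) and Lemma~\ref{le:functoriality}''—and you have simply unpacked each of these references: for (2) you spell out the projection formula and the identification of $(\delta^{(q,1)})_*(\delta^{(q,q)})^*$ with the Hecke correspondence; for (1) you redo the K\"unneth argument of Lemma~\ref{le:triviality} directly in \'etale cohomology with $\dZ_p$-coefficients over an arbitrary base field, invoking smooth proper base change where the paper invokes the Comparison Theorem; for (3) you apply Lemma~\ref{le:functoriality} exactly as the paper does.
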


\begin{proof}
Part (1) follows from the same proof of Lemma \ref{le:triviality}, and the
Comparison Theorem for \'{e}tale cohomology with $\dZ_p$-coefficient. Part
(2) follows directly from the construction. Part (3) follows from (2) and
Lemma \ref{le:functoriality}.
\end{proof}

\subsubsection{The case $N^\sim=N^-$}

Assume that $\wp(N^-)$ is even and $N^\sim=N^-$. Recall that we have fixed a
pair of testing factors $(d,\fd)$. Consider the following composite map
\begin{align*}
\sP_{\sigma,\pi}^{(d,\fd)}&\colon\rH^3_{\et}(Z_{N^+Mq,N^-;\dQ^\ac},\dZ_p(2))\\
&\to\rH^1_{\et}(Y_{N^+Mq,N^-;\dQ^\ac},\dZ_p(1))\otimes_{\dZ_p}\rH^2_{\et}(X_{N^+M;\dQ^\ac},\dZ_p(1))\\
&\to\rH^1_{\et}(Y_{N^+,N^-;\dQ^\ac},\dZ_p(1))\otimes_{\dZ_p}\rH^2_{\et}(X_{\fM;\dQ^\ac},\dZ_p(1))\\
&\to\rT_\sigma\otimes_{\dZ_p}\pres{\sharp}{\rT}_\pi=\rT_{\sigma,\pi},
\end{align*}
where the second last arrow is induced by
$\delta^{(Mq,d)}_{N^+Mq,N^-}\otimes\gamma^{(N^+M/\fM,\fd)}_\fM$; and the last
one is the projection to the $(\sigma,\pi)$-component.

Introduce the class
\[\bar\Delta_{\sigma,\pi}^\infty=\rH^1(\dQ,\sP_{\sigma,\pi}^{(d,\fd)})
([\Delta'_{N^+M,N^-}]_{Z_{N^+Mq,N^-}}^\infty)\in\rH^1(\dQ,\rT_{\sigma,\pi}).\]
Then $\bar\Delta_{\sigma,\pi}^\infty$ is non-torsion by Definition
\ref{de:testing}, the proof of Proposition \ref{pr:belong_selmer}, and Lemma
\ref{le:change_cycle} (3). It induces classes
$\bar\Delta_{\sigma,\pi}^n\in\rH^1(\dQ,\bar\rT_{\sigma,\pi}^n)$ for all
$1\leq n\leq \infty$, where we recall that
$\bar\rT_{\sigma,\pi}^n=\rT_{\sigma,\pi}\otimes_{\dZ_p}\dZ/p^n$.

\subsubsection{The case $N^\sim\neq N^-$}

We first define strongly admissible primes.

\begin{definition}[Group \textbf{S}]\label{de:group_s}
Let $n\geq 1$ be an integer. We say that a prime $\ell$ is \emph{strongly
$(n,\epsilon)$-admissible} (with respect to $\sigma$ and $\pi$) if
\begin{description}
  \item[(S1)] $\ell$ is $n$-admissible (Definition \ref{de:group_a});

  \item[(S2)] $\epsilon=-\epsilon_\sigma(\ell)\breve\eta(\Fr_\ell)$,
      where $\breve\eta$ is defined in \Sec \ref{sss:asai};

  \item[(S3)] $\tr(\Fr_{\ell^2};\rV_\pi)\mod
      p\not\in\{2\ell,-2\ell,\ell^2+1,-\ell^2-1\}$, and
      \begin{itemize}
        \item $\tr(\Fr_{\ell^2};\rV_\pi)\mod p\neq 0$ if
            $\dF_p^\times$ contains an element of order $4$;
        \item $\tr(\Fr_{\ell^2};\rV_\pi)\mod p\neq -\ell$ if
            $\dF_p^\times$ contains an element of order $3$;
        \item $\tr(\Fr_{\ell^2};\rV_\pi)\mod p\neq \ell$ if
            $\dF_p^\times$ contains an element of order $6$.
      \end{itemize}
\end{description}
A prime $\ell$ is \emph{strongly $n$-admissible} if it is either strongly
$(n,+)$-admissible or strongly $(n,-)$-admissible.
\end{definition}

\begin{remark}\label{re:strongly_admissible}~
\begin{enumerate}
  \item For $\epsilon=\pm$, if there exists one strongly
      $(1,\epsilon)$-admissible prime, then the density of strongly
      $(n,\epsilon)$-admissible primes among all $n$-admissible primes is
      strictly positive, and is independent of $n$.

  \item Assumption (S3) is equivalent to that the conjugacy class
      $(\As\rho_\pi)(-1)(\Fr_\ell)$ is semisimple, whose mod $p$
      reduction contains none of
      \[\left(
          \begin{array}{cccc}
            \mu &  &  &  \\
             & 1 &  &  \\
             &  & -1 & \\
             &  &  & \mu^{-1} \\
          \end{array}
        \right),\quad
        \left(
          \begin{array}{cccc}
            \pm\ell &  &  &  \\
             & 1 &  &  \\
             &  & -1 & \\
             &  &  & \pm\ell^{-1} \\
          \end{array}
        \right)
      \] for all $\mu\in\dF_p^\times$ of order in $\{1,2,3,4,6\}$.
\end{enumerate}
\end{remark}

When $\wp(N^-)$ is even, let $\ell_1$ and $\ell_2$ be two distinct strongly
$n$-admissible primes, and put $N^\sim=N^-\ell_1\ell_2$. When $\wp(N^-)$ is
odd, let $\ell$ be a strongly $n$-admissible primes, and put
$N^\sim=N^-\ell$. We have the following composite map
\begin{align*}
\sQ_{\sigma,\pi}^{(d,\fd)}&\colon\rH^3_{\et}(Z_{N^+Mq,N^\sim;\dQ^\ac},\dZ_p(2))\\
&\to\rH^1_{\et}(Y_{N^+Mq,N^\sim;\dQ^\ac},\dZ/p^n(1))\otimes_{\dZ_p}\rH^2_{\et}(X_{N^+M;\dQ^\ac},\dZ_p(1))\\
&\to\rH^1_{\et}(Y_{N^+,N^\sim;\dQ^\ac},\dZ/p^n(1))\otimes_{\dZ_p}\rH^2_{\et}(X_{\fM;\dQ^\ac},\dZ_p(1))\\
&\to\bar\rT_\sigma^n\otimes_{\dZ_p}\pres{\sharp}{\rT}_\pi=\bar\rT_{\sigma,\pi}^n,
\end{align*}
where the second last arrow is induced by
$\delta^{(Mq,d)}_{N^+Mq,N^\sim}\otimes\gamma^{(N^+M/\fM,\fd)}_\fM$; and the
last one is induced by $\psi_{\ell_1,\ell_2}^n$ \eqref{eq:level_raising4}
(resp.\ $\psi_{\ell}^n$ \eqref{eq:level_raising4_bis}) when $\wp(N^-)$ is
even (resp.\ odd). Finally, we denote by
$\bar\Delta^n_{\sigma,\pi\res\ell_1,\ell_2}$ (resp.\
$\bar\Delta^n_{\sigma,\pi\res\ell}$) the class
\[\rH^1(\dQ,\sQ_{\sigma,\pi}^{(d,\fd)})
([\Delta'_{N^+M,N^\sim}]_{Z_{N^+Mq,N^\sim}}^n)\in\rH^1(\dQ,\bar\rT_{\sigma,\pi}^n)\]
when $\wp(N^-)$ is even (resp.\ odd).

\section{Congruences of Hirzebruch--Zagier classes}
\label{s4}

This chapter is the technical heart of the article, in which we propose and
prove some explicit congruence formulae for Hirzebruch--Zagier classes. In
\Sec \ref{ss:local_cohomology}, we study the local cohomology of the Galois
representation in question at various primes, using local Tate pairings. In
\Sec \ref{ss:integral_tate}, we state the result about the integral Tate
conjecture for special fibers of Hilbert modular surfaces at good inert
primes, in the version convenient for our use. In \Sec
\ref{ss:congruence_gross}, we state our theorems on explicit congruence
formulae for Hirzebruch--Zagier classes. The last two sections are devoted to
the proof of previous theorems, in which \Sec \ref{ss:computation_i} is
responsible for the case of smooth reduction and \Sec \ref{ss:computation_ii}
is responsible for the case of semistable reduction.

\subsection{Local cohomology and Tate duality}
\label{ss:local_cohomology}

We will proceed under Assumption \ref{as:group_r} (R1 -- R4) till the end of
this \emph{chapter}. In this section, we study various localizations of the
Galois cohomology $\rH^1(\dQ,\bar\rT_{\sigma,\pi}^n)$. Let $n\geq 1$ be an
integer.

The $\Gamma_\dQ$-invariant pairing
$\bar\rT_{\sigma,\pi}^n\times\bar\rT_{\sigma,\pi}^n\to\dZ/p^n(1)$
\eqref{eq:pairing} induces, for each prime power $v$, a local Tate pairing
\begin{align}\label{eq:tate_pairing}
\langle\;,\;\rangle_v\colon\rH^1(\dQ_v,\bar\rT_{\sigma,\pi}^n)\times\rH^1(\dQ_v,\bar\rT_{\sigma,\pi}^n)
\to\rH^2(\dQ_v,\dZ/p^n(1))\simeq \dZ/p^n.
\end{align}
In what follows, we will write $\langle s_1,s_2\rangle_v$ rather than
$\langle\loc_v(s_1),\loc_v(s_2)\rangle_v$ for
$s_1,s_2\in\rH^1(\dQ,\bar\rT_{\sigma,\pi}^n)$.

\begin{lem}\label{le:trivial_global}
The restriction of $\sum_v\langle\;,\;\rangle_v$ to
$\rH^1(\dQ,\bar\rT_{\sigma,\pi}^n)$ under the map $\prod_v\loc_v$ is a finite
sum and moreover equal to zero, where both the sum and the product are taken
over all primes $v$.
\end{lem}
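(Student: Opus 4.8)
The plan is to recognize $\sum_v\langle\;,\;\rangle_v$, restricted to $\rH^1(\dQ,\bar\rT_{\sigma,\pi}^n)$ via $\prod_v\loc_v$, as a segment of the global reciprocity (Poitou--Tate) sequence for the Galois module $\dZ/p^n(1)$, and to deduce both assertions from global class field theory together with Kummer theory. Fix once and for all two classes $s_1,s_2\in\rH^1(\dQ,\bar\rT_{\sigma,\pi}^n)$.

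First I would dispose of the finiteness. Since $\bar\rT_{\sigma,\pi}^n$ is a finite module and $s_1,s_2$ are represented by continuous cocycles that factor through finite quotients of $\Gamma_\dQ$, there is a finite set $\Sigma$ of places of $\dQ$, containing $\{p,\infty\}$ and all places where $\bar\rT_{\sigma,\pi}^n$ ramifies, such that $s_1$ and $s_2$ both come from $\rH^1(\Gamma_{\dQ,\Sigma},\bar\rT_{\sigma,\pi}^n)$ (the setting already used in Definition \ref{de:selmer_group}). For a prime $v\notin\Sigma$, both $\loc_v(s_1)$ and $\loc_v(s_2)$ lie in $\rH^1_\unr(\dQ_v,\bar\rT_{\sigma,\pi}^n)$, i.e.\ in the image of inflation from $\Gal(\dQ_v^\unr/\dQ_v)$; by naturality of the cup product, $\langle s_1,s_2\rangle_v$ then lies in the image of $\rH^2\bigl(\Gal(\dQ_v^\unr/\dQ_v),\dZ/p^n(1)\bigr)$, which vanishes because $\Gal(\dQ_v^\unr/\dQ_v)\simeq\widehat\dZ$ has cohomological dimension $1$ and $\dZ/p^n(1)$ is unramified at $v$ (as $v\nmid p$). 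Hence $\langle s_1,s_2\rangle_v=0$ for all $v\notin\Sigma$, so the sum is finite.

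For the vanishing I would use compatibility of cup products with restriction. Writing $s_1\cup s_2\in\rH^2(\dQ,\dZ/p^n(1))$ for the global cup product formed from the pairing \eqref{eq:pairing}, one has, for every prime $v$,
\[
\langle s_1,s_2\rangle_v \;=\; \r{inv}_v\bigl(\loc_v(s_1\cup s_2)\bigr),
\]
where $\r{inv}_v\colon\rH^2(\dQ_v,\dZ/p^n(1))\xrightarrow{\sim}\dZ/p^n$ is the canonical local invariant isomorphism already used in \eqref{eq:tate_pairing}; the archimedean place contributes nothing since $\rH^2(\dR,\dZ/p^n(1))=0$ as $p$ is odd. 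The Kummer sequence yields an injection $\rH^2(\dQ,\dZ/p^n(1))\hookrightarrow\r{Br}(\dQ)$, compatible place by place with the local invariants, and the fundamental exact sequence of global class field theory
\[
0\to\r{Br}(\dQ)\to\bigoplus_v\r{Br}(\dQ_v)\xrightarrow{\ \sum_v\r{inv}_v\ }\dQ/\dZ\to 0
\]
forces $\sum_v\r{inv}_v(\loc_v(c))=0$ for every $c\in\r{Br}(\dQ)$, in particular for the image of $s_1\cup s_2$. Therefore $\sum_v\langle s_1,s_2\rangle_v=0$. (Equivalently, this is the statement that the composite $\rH^2(\dQ,\dZ/p^n(1))\to\bigoplus_v\rH^2(\dQ_v,\dZ/p^n(1))\to\rH^0(\dQ,\dZ/p^n)^\vee$ appearing in the nine-term Poitou--Tate sequence is zero; see \cite{Mil06} and \cite{Tat76}.)

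I do not expect a genuine obstacle here; this is a standard consequence of global duality. The only points needing attention are bookkeeping ones: verifying that $s_1,s_2$ may indeed be taken in a common $\rH^1(\Gamma_{\dQ,\Sigma},-)$ so the reduction to $v\in\Sigma$ is legitimate, and checking that the local Tate pairing attached to \eqref{eq:pairing} is literally $\r{inv}_v\circ(\loc_v(-)\cup\loc_v(-))$ under the identification fixed in \eqref{eq:tate_pairing}, so that summing over $v$ really produces the image of $s_1\cup s_2$ in $\bigoplus_v\rH^2(\dQ_v,\dZ/p^n(1))$. Both are routine, and the archimedean place drops out for free because $p$ is odd.
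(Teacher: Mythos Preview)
Your proof is correct and follows essentially the same approach as the paper: the paper's proof is a one-line appeal to global class field theory together with the convention that $p$ is odd, and you have simply unpacked that appeal in detail (finiteness via unramified cohomology outside a finite set, vanishing via the Brauer group reciprocity sequence, with the archimedean place dropping out since $p$ is odd).
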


\begin{proof}
It follows from the global class field theory, and the convention that $p$ is
odd.
\end{proof}

\subsubsection{The case $v\nmid p$}

\begin{proposition}\label{pr:cohomology_mixed}
Suppose that $v=\ell$ is a strongly $n$-admissible prime. Let
$\alpha\in\{1,2\}$ be an exponent.
\begin{enumerate}
  \item The natural map
      $\rH^1(\dQ_{\ell^\alpha},\bar\rT_\sigma^n\otimes_{\dZ/p^n}\pres{\sharp}{\bar\rT}_\pi^n[\ell^2|1])
      \to\rH^1(\dQ_{\ell^\alpha},\bar\rT_{\sigma,\pi}^n)$ is an
      isomorphism.

  \item The $\dZ/p^n$-module
      $\rH^1(\dQ_{\ell^\alpha},\bar\rT_{\sigma,\pi}^n)$ is free of rank
      $2\alpha$.

  \item The natural map
      $\rH^1(\dQ_{\ell^\alpha},\bar\rT_\sigma^n[\ell^2|1]\otimes_{\dZ/p^n}\pres{\sharp}{\bar\rT}_\pi^n[\ell^2|1])
      \to\rH^1(\dQ_{\ell^\alpha},\bar\rT_{\sigma,\pi}^n)$ is injective
      with the image
      $\rH^1_\unr(\dQ_{\ell^\alpha},\bar\rT_{\sigma,\pi}^n)$.

  \item The natural map
      $\rH^1(\dQ_{\ell^\alpha},\bar\rT_\sigma^n[\ell^2|\ell^2]\otimes_{\dZ/p^n}\pres{\sharp}{\bar\rT}_\pi^n[\ell^2|1])
      \to\rH^1_\sing(\dQ_{\ell^\alpha},\bar\rT_{\sigma,\pi}^n)$ is an
      isomorphism.

  \item The local Tate pairing $\langle\;,\;\rangle_{\ell^\alpha}$
      induces a perfect pairing between
      $\rH^1_\unr(\dQ_{\ell^\alpha},\bar\rT_{\sigma,\pi}^n)$ and
      $\rH^1_\sing(\dQ_{\ell^\alpha},\bar\rT_{\sigma,\pi}^n)$.

  \item The restriction map
      $\rH^1(\dQ_{\ell},\bar\rT_{\sigma,\pi}^n)\to\rH^1(\dQ_{\ell^2},\bar\rT_{\sigma,\pi}^n)$
      is injective.
\end{enumerate}
\end{proposition}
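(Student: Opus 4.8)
The plan is to reduce everything to an explicit $\Fr_\ell$-eigenspace decomposition of the finite free $\dZ/p^n$-module $\bar\rT_{\sigma,\pi}^n=\bar\rT_\sigma^n\otimes_{\dZ/p^n}\pres{\sharp}{\bar\rT}_\pi^n$, and then to read off (1)--(6) from the standard computation of unramified local Galois cohomology of finite modules, exactly in the style of Lemma \ref{le:cohomology_sigma}. First observe that by (A1) the module $\bar\rT_{\sigma,\pi}^n$ is unramified at $\ell$, and since $\ell$ is inert in $F$ the relevant local fields are $\dQ_\ell$ and its unramified quadratic extension $\dQ_{\ell^2}=F_\ell$; in particular $\rI_\ell=\rI_{\ell^2}$ acts trivially, so all the cohomology in question is that of the procyclic groups generated by $\Fr_\ell$ and $\Fr_{\ell^2}=\Fr_\ell^2$.

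For the eigenspace decomposition: on $\bar\rT_\sigma^n$, Lemma \ref{le:cohomology_sigma} (using (A2), (A3)) gives $\bar\rT_\sigma^n=\bar\rT_\sigma^n[\ell^2|1]\oplus\bar\rT_\sigma^n[\ell^2|\ell^2]$ with both summands free of rank $1$, on which $\Fr_\ell$ acts through the units $\epsilon_\sigma(\ell)$ and $\epsilon_\sigma(\ell)\ell$ (distinct mod $p$ since $p\nmid\ell^2-1$). On $\pres{\sharp}{\bar\rT}_\pi^n$ I would use the well-known shape of the Asai representation at an inert prime: $(\As\rho_\pi)(\Fr_\ell)$ has eigenvalues $\{\beta_1,\beta_2,\ell,-\ell\}$ with $\beta_1\beta_2=\ell^2$, so after the Tate twist $\pres{\sharp}\rho_\pi(\Fr_\ell)$ has eigenvalues $\{\gamma,\gamma^{-1},1,-1\}$ with $\gamma+\gamma^{-1}\equiv\tr(\Fr_{\ell^2};\rV_\pi)/\ell$; this is exactly the normal form $\diag(\gamma,1,-1,\gamma^{-1})$ recorded in Remark \ref{re:strongly_admissible}(2), and (S3) guarantees $\gamma\not\equiv1,-1,\pm\ell,\pm\ell^{-1}\bmod p$. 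Hence the characteristic polynomial of $\Fr_\ell$ on $\pres{\sharp}{\bar\rT}_\pi^n$ has four distinct roots mod $p$, so $\pres{\sharp}{\bar\rT}_\pi^n$ decomposes over $\dZ/p^n$ into four $\Fr_\ell$-eigenlines, the eigenvalue-$1$ and eigenvalue-$(-1)$ ones being $\pres{\sharp}{\bar\rT}_\pi^n[\ell|1]$ and $\pres{\sharp}{\bar\rT}_\pi^n[\ell|-1]$, whose sum is $\pres{\sharp}{\bar\rT}_\pi^n[\ell^2|1]$ (free of rank $2$), while $\pres{\sharp}{\bar\rT}_\pi^n[\ell^2|\ell^{\pm2}]=0$ because $\ell^{\pm2}\not\equiv1,\gamma^{\pm2}$. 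Tensoring presents $\bar\rT_{\sigma,\pi}^n$ as a sum of eight $\Fr_\ell$-eigenlines over $\dZ/p^n$; using $p\nmid\ell^2-1$ and the exclusions in (S3) one checks that exactly one of the eight $\Fr_\ell$-eigenvalues is $\equiv1$, and it lies in $\bar\rT_\sigma^n[\ell^2|1]\otimes\pres{\sharp}{\bar\rT}_\pi^n[\ell^2|1]$; that exactly two of the eight $\Fr_{\ell^2}$-eigenvalues are $\equiv1$ (the summand $\bar\rT_\sigma^n[\ell^2|1]\otimes\pres{\sharp}{\bar\rT}_\pi^n[\ell^2|1]$) and exactly two are $\equiv\ell^2$ (the summand $\bar\rT_\sigma^n[\ell^2|\ell^2]\otimes\pres{\sharp}{\bar\rT}_\pi^n[\ell^2|1]$).

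From here the argument is mechanical. For $\alpha=2$ I would invoke the standard facts for a finite unramified $\Gamma_{\dQ_{\ell^2}}$-module $M$: $\rH^0(\dQ_{\ell^2},M)=M^{\Fr_{\ell^2}=1}$ and $\rH^1_\unr(\dQ_{\ell^2},M)\cong M/(\Fr_{\ell^2}-1)M$, both supported on the $\Fr_{\ell^2}$-eigenvalue-$1$ part and both free; the local Euler characteristic is trivial, so $\rH^1$ has length equal to that of $\rH^0$ plus that of $\rH^2$; and local Tate duality identifies $\rH^2(\dQ_{\ell^2},M)\cong\rH^0(\dQ_{\ell^2},M^\vee(1))^\vee$ and puts $\rH^1_\unr$ and $\rH^1_\sing$ of $M$ in exact perfect duality with those of $M^\vee(1)$. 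Feeding in the self-duality $\bar\rT_{\sigma,\pi}^n\cong(\bar\rT_{\sigma,\pi}^n)^\vee(1)$ of \eqref{eq:pairing} (under which the $\Fr_{\ell^2}$-eigenvalue-$\lambda$ part is paired with the eigenvalue-$\ell^2\lambda^{-1}$ part, interchanging $1$ and $\ell^2$) and the bookkeeping above yields: the summands with $\Fr_{\ell^2}$-eigenvalue in $\{\gamma^{\pm2},\ell^2\gamma^{\pm2}\}$ contribute nothing to $\rH^0,\rH^1,\rH^2$, which is (1); the eigenvalue-$1$ block carries all of $\rH^0$ and all of $\rH^1_\unr$, which is (3); the eigenvalue-$\ell^2$ block carries all of $\rH^2$, injects into $\rH^1$, and meets $\rH^1_\unr$ trivially, hence maps isomorphically onto $\rH^1_\sing$, which is (4); $\rH^1$ is an extension of the free rank-$2$ modules $\rH^1_\sing$ and $\rH^1_\unr$, hence free of rank $4$, which is (2); and the pairing of $\rH^1_\unr$ against $\rH^1_\sing$ is perfect, which is (5). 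Part (6) is separate and easy: by inflation--restriction the kernel of $\rH^1(\dQ_\ell,\bar\rT_{\sigma,\pi}^n)\to\rH^1(\dQ_{\ell^2},\bar\rT_{\sigma,\pi}^n)$ is $\rH^1(\Gal(\dQ_{\ell^2}/\dQ_\ell),(\bar\rT_{\sigma,\pi}^n)^{\Gamma_{\ell^2}})$, which vanishes since that group has order $2$ and $p$ is odd. Finally the case $\alpha=1$ of (1)--(5) follows from the case $\alpha=2$: again because $p$ is odd, restriction identifies $\rH^1(\dQ_\ell,M)$ with $\rH^1(\dQ_{\ell^2},M)^{\Gal(\dQ_{\ell^2}/\dQ_\ell)}$ compatibly with the unramified and singular subquotients, and taking invariants of the rank-$2$ statements — noting that $\Fr_\ell$ acts on the pertinent rank-$2$ pieces with eigenvalues $\pm\epsilon_\sigma(\ell)$, respectively $\pm\epsilon_\sigma(\ell)\ell$, so that the fixed part has rank $1$ — gives the rank-$1$ statements.

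I expect the only genuine work to be in the second paragraph: fixing the precise normalisation of the Tate twist and of $\Fr_\ell$ versus $\Fr_{\ell^2}$ so that $\pres{\sharp}\rho_\pi(\Fr_\ell)$ really reduces to $\diag(\gamma,1,-1,\gamma^{-1})$ with $\gamma$ as stated, and then verifying that each arithmetic exclusion in (S3) (together with $p\nmid\ell^2-1$ from (A2)) removes exactly the eigenvalue coincidences that would otherwise spoil the $\dZ/p^n$-diagonalisability of $\bar\rT_{\sigma,\pi}^n$ or introduce an unwanted contribution to $\rH^0$. Everything after that is a length count together with local Tate duality and the Euler characteristic formula.
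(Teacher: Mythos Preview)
Your approach is essentially the same as the paper's: split off the $[\ell^2|1]$-part of $\pres{\sharp}{\bar\rT}_\pi^n$, show the complementary block contributes nothing to local cohomology, and then argue as in Lemma \ref{le:cohomology_sigma} on the remaining rank-$4$ piece where $\Fr_\ell$ has eigenvalues $\{1,-1,\ell,-\ell\}$; part (6) via the order-$2$ inflation--restriction is also what the paper does.

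There is, however, an overclaim in your second paragraph. You assert that $\pres{\sharp}{\bar\rT}_\pi^n$ decomposes over $\dZ/p^n$ into four $\Fr_\ell$-eigenlines because the characteristic polynomial has four distinct roots mod $p$. But the eigenvalues $\gamma,\gamma^{-1}$ need not lie in $(\dZ/p^n)^\times$ at all: if $p\equiv 3\pmod 4$ then (S3) does \emph{not} exclude $\tr(\Fr_{\ell^2};\rV_\pi)\equiv 0$, i.e.\ $\gamma^2\equiv -1$, and then $\gamma$ lives only in a quadratic extension of $\dF_p$; likewise the order-$3$ and order-$6$ exclusions in (S3) are conditional on $p$. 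So the ``eight eigenlines'' picture is not available in general. The paper sidesteps this by using only the coarser splitting $\pres{\sharp}{\bar\rT}_\pi^n=\pres{\sharp}{\bar\rT}_\pi^n[\ell^2|1]\oplus\hat\rT_\pi^n$ into two rank-$2$ blocks, which requires just $\tr(\Fr_{\ell^2};\rV_\pi)\not\equiv\pm 2\ell$, and then checks that $\Fr_{\ell^2}-1$ and $\Fr_{\ell^2}-\ell^2$ act invertibly on $\bar\rT_\sigma^n\otimes\hat\rT_\pi^n$, which needs only $\tr(\Fr_{\ell^2};\rV_\pi)\not\equiv\pm(\ell^2+1)$. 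Your vanishing claim for the complement survives this correction verbatim (invertibility of those operators is all you use), so the repair is painless; just note that in the end only the first line of (S3) is consumed here, the conditional order-$3,4,6$ exclusions being reserved for later arguments in \Sec\ref{ss:application_elliptic}.
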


\begin{proof}
By Definition \ref{de:group_s}, in particular that
$\tr(\Fr_{\ell^2};\rV_\pi)\mod p\not\in\{2\ell,-2\ell\}$, the
$\dZ/p^n$-module $\pres{\sharp}{\bar\rT}_\pi^n$ has a unique free
$\Gamma_\ell$-stable submodule ${\hat\rT}_\pi^n$ (of rank $2$) such that
$\pres{\sharp}{\bar\rT}_\pi^n=\pres{\sharp}{\bar\rT}_\pi^n[\ell^2|1]\oplus{\hat\rT}_\pi^n$.
Moreover, since $\tr(\Fr_{\ell^2};\rV_\pi)\mod
p\not\in\{\ell^2+1,-\ell^2-1\}$, we have
\[(\bar\rT_\sigma^n\otimes_{\dZ/p^n}{\hat\rT}_\pi^n)[\ell^2|1]=
\Hom_{\dZ/p^n}(\bar\rT_\sigma^n\otimes_{\dZ/p^n}{\hat\rT}_\pi^n,\dZ/p^n(1))[\ell^2|1]=0.\]
In particular, we have
\[\rH^1(\dQ_\ell,\bar\rT_\sigma^n\otimes_{\dZ/p^n}{\hat\rT}_\pi^n)
=\rH^1(\dQ_{\ell^2},\bar\rT_\sigma^n\otimes_{\dZ/p^n}{\hat\rT}_\pi^n)=0.\]
Thus (1) follows. Note that the action of $\Fr_\ell$ on
$\bar\rT_\sigma^n\otimes_{\dZ/p^n}\pres{\sharp}{\bar\rT}_\pi^n[\ell^2|1]$ has
eigenvalues $\{1,-1,\ell,-\ell\}$ which are distinct even modulo $p$.
Therefore, (2--5) follow by the same reason as Lemma
\ref{le:cohomology_sigma}. Since $p$ is odd, (6) is obvious.
\end{proof}

\begin{lem}\label{le:trivial_unramified}
Let $v$ be a prime other than $p$.
\begin{enumerate}
  \item There is an integer $n_v\geq 0$ such that the image of the
      pairing \eqref{eq:tate_pairing} is annihilated by $p^{n_v}$ for
      every $n\geq 1$.

  \item The submodule $\rH^1_\unr(\dQ_v,\bar\rT_{\sigma,\pi}^n)$ is its
      own annihilator under the pairing $\langle\;,\;\rangle_v$
      \eqref{eq:tate_pairing}.
\end{enumerate}
\end{lem}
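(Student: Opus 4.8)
The plan is to reduce both parts to standard local Tate duality. Throughout set $M=\bar\rT_{\sigma,\pi}^n$, and recall from Notation \ref{no:lattice} that the perfect $\Gamma_\dQ$-equivariant pairing \eqref{eq:pairing} identifies $M$ with its own Cartier dual $\Hom_{\dZ/p^n}(M,\dZ/p^n(1))$; passing to the limit also gives a perfect pairing $\rT_{\sigma,\pi}\times\rT_{\sigma,\pi}\to\dZ_p(1)$, so that $\rV_{\sigma,\pi}^\vee(1)\simeq\rV_{\sigma,\pi}$. The key input is that $\rV_{\sigma,\pi}$ is tamely pure (Lemmas \ref{le:semisimple} and \ref{le:tamely_pure}), i.e.\ $\rH^1(\dQ_v,\rV_{\sigma,\pi})=0$ for every $v\nmid p$. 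First I would note that this also forces $\rH^0(\dQ_v,\rV_{\sigma,\pi})=\rH^2(\dQ_v,\rV_{\sigma,\pi})=0$: the inclusion $\rH^1_\unr(\dQ_v,\rV_{\sigma,\pi})\subseteq\rH^1(\dQ_v,\rV_{\sigma,\pi})=0$ together with the fact that $\rH^1_\unr(\dQ_v,\rV_{\sigma,\pi})$ and $\rH^0(\dQ_v,\rV_{\sigma,\pi})$ are the cokernel and the kernel of $\Fr_v-1$ acting on the finite-dimensional space $\rV_{\sigma,\pi}^{\rI_v}$ gives $\rH^0(\dQ_v,\rV_{\sigma,\pi})=0$ by a dimension count, and then $\rH^2(\dQ_v,\rV_{\sigma,\pi})\simeq\rH^0(\dQ_v,\rV_{\sigma,\pi}^\vee(1))^\vee\simeq\rH^0(\dQ_v,\rV_{\sigma,\pi})^\vee=0$ by local duality and self-duality.

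For part (1): since $\rH^i(\dQ_v,\rT_{\sigma,\pi})\otimes_{\dZ_p}\dQ_p\simeq\rH^i(\dQ_v,\rV_{\sigma,\pi})=0$ for $i=1,2$, and $\rH^i(\dQ_v,\rT_{\sigma,\pi})$ is a finitely generated $\dZ_p$-module, both of these groups are finite; choose $n_v\geq 0$ with $p^{n_v}$ annihilating $\rH^1(\dQ_v,\rT_{\sigma,\pi})$ and $\rH^2(\dQ_v,\rT_{\sigma,\pi})$. The long exact cohomology sequence of $0\to\rT_{\sigma,\pi}\xrightarrow{p^n}\rT_{\sigma,\pi}\to\bar\rT_{\sigma,\pi}^n\to0$ then exhibits $\rH^1(\dQ_v,\bar\rT_{\sigma,\pi}^n)$ as an extension of a subgroup of $\rH^2(\dQ_v,\rT_{\sigma,\pi})$ by a quotient of $\rH^1(\dQ_v,\rT_{\sigma,\pi})$, hence as a group annihilated by $p^{2n_v}$ uniformly in $n$. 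Since \eqref{eq:tate_pairing} is $\dZ/p^n$-bilinear in its two $\rH^1(\dQ_v,\bar\rT_{\sigma,\pi}^n)$-arguments, its image is annihilated by $p^{2n_v}$; enlarging $n_v$ gives the claim.

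For part (2): by local Tate duality for $\dQ_v$ with $v\nmid p$ (see \cite{Mil06}), the pairing \eqref{eq:tate_pairing} is the perfect duality pairing of the finite $\Gamma_v$-module $M$ against its Cartier dual, which by the self-duality above is $M$ itself. It then remains to invoke the standard orthogonality statement: for a finite Galois module over a local field of residue characteristic $\neq p$, the unramified subgroup and the unramified subgroup of the Cartier dual are mutually exact annihilators. Concretely I would recall the two ingredients: (a) the cup product of two unramified classes is unramified and lies in the image of $\rH^2$ of $\Gamma_v/\rI_v$ with finite coefficients, which vanishes, so $\rH^1_\unr(\dQ_v,M)$ is isotropic; and (b) the order count $\lvert\rH^1(\dQ_v,M)\rvert=\lvert\rH^0(\dQ_v,M)\rvert\cdot\lvert\rH^2(\dQ_v,M)\rvert=\lvert\rH^1_\unr(\dQ_v,M)\rvert^2$, using $\lvert\rH^1_\unr(\dQ_v,M)\rvert=\lvert\rH^0(\dQ_v,M)\rvert$, local duality $\rH^2(\dQ_v,M)\simeq\rH^0(\dQ_v,M)^\vee$, and $M^D\simeq M$. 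Since the orthogonal complement of the isotropic subgroup $\rH^1_\unr(\dQ_v,M)$ has order $\lvert\rH^1(\dQ_v,M)\rvert/\lvert\rH^1_\unr(\dQ_v,M)\rvert=\lvert\rH^1_\unr(\dQ_v,M)\rvert$, it must equal $\rH^1_\unr(\dQ_v,M)$.

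I do not expect a real obstacle here; the statement is essentially local-duality bookkeeping. The only points demanding a little care are making the annihilation exponent in (1) independent of $n$ — handled by first bounding the cohomology of the lattice $\rT_{\sigma,\pi}$ rather than of $\bar\rT_{\sigma,\pi}^n$ directly — and verifying that the pairing \eqref{eq:tate_pairing}, built from \eqref{eq:pairing}, coincides with the Tate local-duality pairing of $M$ with its Cartier dual under the self-duality isomorphism, so that the classical orthogonality of unramified subgroups applies verbatim.
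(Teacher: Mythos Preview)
Your argument is correct and follows the same route as the paper: for (1) you use tame purity to show $\rH^1(\dQ_v,\rT_{\sigma,\pi})$ is finite and then bound $\rH^1(\dQ_v,\bar\rT_{\sigma,\pi}^n)$ uniformly via the long exact sequence of $0\to\rT_{\sigma,\pi}\xrightarrow{p^n}\rT_{\sigma,\pi}\to\bar\rT_{\sigma,\pi}^n\to0$, exactly as the paper does (the paper phrases the second piece as $\rH^2(\dQ_v,\rT_{\sigma,\pi})_\tor$ and does not bother proving $\rH^0=\rH^2=0$ for $\rV_{\sigma,\pi}$, so your extra paragraph there is harmless but unnecessary). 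For (2) the paper simply records the statement as ``well-known''; your isotropy-plus-order-count justification is the standard proof of that fact.
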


\begin{proof}
By Lemma \ref{le:semisimple}, we have $\rH^1(\dQ_v,\rV_{\sigma,\pi})=0$,
which implies that $\rH^1(\dQ_v,\rT_{\sigma,\pi})$ is torsion (and finitely
generated). Since we have the short exact sequence
\[\rH^1(\dQ_v,\rT_{\sigma,\pi})\to\rH^1(\dQ_v,\bar\rT_{\sigma,\pi}^n)
\to\rH^2(\dQ_v,\rT_{\sigma,\pi})_\tor,\] (1) follows. Part (2) is well-known.
\end{proof}

\begin{notation}\label{no:bad}
Put $n_{\r{bad}}=\max\{n_v\res v\text{ is a prime dividing
}qN^-\cD_{N^+M}\}$.
\end{notation}

\subsubsection{The case $v=p$}

We start with some general discussion.

\begin{definition}[see, for example, \cite{BM02}*{\Sec 3.1}]
Let $a\leq 0\leq b$ be two integers such that $b-a\leq p-2$. Let $\rT$ be a
finite $\dZ_p[\Gamma_p]$-module. We say that $\rT$ is a \emph{torsion
crystalline module} with Hodge--Tate weights in $[a,b]$ if $\rT=\rL/\rL'$
where $\rL'\subset\rL$ are stable lattices in a crystalline $p$-adic
representation of $\Gamma_p$ with Hodge--Tate weights in $[a,b]$.

Let $\rT$ be a finitely generated $\dZ_p$-module with a continuous action of
$\Gamma_p$. We say that $\rT$ is a \emph{crystalline module} with Hodge--Tate
weights in $[a,b]$ if for all integers $n\geq1$, the quotient $\rT/p^n\rT$ is
a torsion crystalline module with Hodge--Tate weights in $[a,b]$.
\end{definition}

\begin{definition}[see \cite{Niz93}*{\Sec 4}]\label{de:finite_integral}
Let $\rT$ be a crystalline module of $\Gamma_p$ with Hodge--Tate weights in
$[a,b]$. An element $s\in\rH^1(\dQ_p,\rT)$, represented by an extension
\[0\to\rT\to\rT_s\to\dZ_p\to0,\]
is \emph{finite} if $\rT_s$ is a crystalline module. Denote by
$\rH^1_f(\dQ_p,\rT)\subset\rH^1(\dQ_p,\rT)$ the subset of finite elements
which form a $\dZ_p$-submodule. Moreover, the assignment $\rH^1_f(\dQ_p,-)$
is functorial.
\end{definition}

\begin{remark}\label{re:finite}
By the above definition, if $\rT$ is a stable lattice in a crystalline
$p$-adic representation $\rV$ of $\Gamma_p$ with Hodge--Tate weights in
$[a,b]$, then $\rH^1_f(\dQ_p,\rT)$ is the inverse image of
$\rH^1_f(\dQ_p,\rV)$ under the natural map
$\rH^1(\dQ_p,\rT)\to\rH^1(\dQ_p,\rV)$.
\end{remark}

\begin{lem}\label{le:trivial_p}
Suppose that $p\geq 11$. Then $\rH^1_f(\dQ_p,\bar\rT_{\sigma,\pi}^n)$ is its
own annihilator under the local Tate pairing $\langle\;,\;\rangle_p$
\eqref{eq:tate_pairing}.
\end{lem}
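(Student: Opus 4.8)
The plan is to deduce this from the general fact that, for a torsion crystalline module $\rT$ of $\Gamma_p$ with Hodge--Tate weights in a range of length $\leq p-2$ and a perfect pairing $\rT\times\rT\to\dZ/p^n(1)$, the submodule $\rH^1_f(\dQ_p,\rT)$ is its own exact annihilator under local Tate duality $\rH^1(\dQ_p,\rT)\times\rH^1(\dQ_p,\rT)\to\dZ/p^n$. First I would verify the input hypotheses: by Lemma \ref{le:semisimple} the representation $\rV_{\sigma,\pi}$ is a direct summand of the middle-degree cohomology of $Y_{N^+M,N^-}\times X_{N^+M}$ over $\dQ_p$, hence crystalline, with Hodge--Tate weights lying in the set $\{-2,-1,0,1\}$ computed in the introduction; so the range has length $3$, and Assumption \ref{as:group_r} (R5), i.e. $p\geq 11$, ensures $3\leq p-2$. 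Thus $\rT_{\sigma,\pi}$ is a crystalline module and each $\bar\rT^n_{\sigma,\pi}=\rT_{\sigma,\pi}\otimes_{\dZ_p}\dZ/p^n$ is a torsion crystalline module with Hodge--Tate weights in $[-2,1]$. Moreover, the pairing \eqref{eq:pairing} is a $\Gamma_p$-equivariant perfect pairing $\bar\rT^n_{\sigma,\pi}\times\bar\rT^n_{\sigma,\pi}\to\dZ/p^n(1)$, so the self-duality setup applies.

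The key step is the orthogonality computation, which is the result of Nekov\'a\v{r}--Nizio\l{} type duality for finite crystalline cohomology: one shows (a) $\rH^1_f(\dQ_p,-)$ and $\rH^1_f(\dQ_p,-\,(1))^\vee$ are exchanged under local Tate duality, and (b) for a self-dual torsion crystalline module the two half-dimensions add up correctly, forcing $\rH^1_f$ to be exactly its own annihilator rather than merely contained in it. Concretely I would invoke the compatibility of $\rH^1_f$ with the lattice-level definition (Remark \ref{re:finite}): write $\bar\rT^n_{\sigma,\pi}=\rT_{\sigma,\pi}/p^n\rT_{\sigma,\pi}$ and reduce the orthogonality statement to the known fact that $\rH^1_f(\dQ_p,\rV_{\sigma,\pi})$ is its own annihilator under the pairing on $\rH^1(\dQ_p,\rV_{\sigma,\pi})$ valued in $\dQ_p$ (Bloch--Kato, using that $\rV_{\sigma,\pi}$ is de Rham with no nonzero Hodge--Tate weight-$0$ piece fixing $\rB_{\r{dR}}^+$-invariants inconveniently — more precisely using $D_{\r{cris}}(\rV_{\sigma,\pi})^{\varphi=1}=0$ and the dimension formula $\dim\rH^1_f=\dim D_{\r{dR}}/\Fil^0$). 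Passing from $\dQ_p$-coefficients to $\dZ/p^n$-coefficients is where one needs the crystalline module formalism of Nizio\l{} \cite{Niz93}: the finite part $\rH^1_f(\dQ_p,\bar\rT^n)$ defined via crystalline extensions is the image of $\rH^1_f(\dQ_p,\rT)$, its order can be computed, and the perfectness of Tate duality together with this order count yields that $\#\rH^1_f(\dQ_p,\bar\rT^n)\cdot\#(\rH^1(\dQ_p,\bar\rT^n)/\rH^1_f(\dQ_p,\bar\rT^n))=\#\rH^1(\dQ_p,\bar\rT^n)$ with $\rH^1_f$ and its annihilator matching in size; since one always has $\rH^1_f\subseteq(\rH^1_f)^\perp$, equality of cardinalities forces equality.

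The main obstacle I anticipate is the bookkeeping in the torsion setting: ensuring that the definition of $\rH^1_f(\dQ_p,\bar\rT^n)$ used here (Definition \ref{de:finite_integral}, via crystalline extensions of $\dZ/p^n$ by $\bar\rT^n$) genuinely coincides with the "image of $\rH^1_f(\dQ_p,\rT)$" description, and that under this identification the self-annihilation is stable under the short exact sequences $0\to\bar\rT^m\to\bar\rT^{m+k}\to\bar\rT^k\to 0$. This requires the functoriality of $\rH^1_f(\dQ_p,-)$ asserted in Definition \ref{de:finite_integral} plus an inductive argument on $n$, the base case $n=1$ being the mod-$p$ crystalline (Fontaine--Laffaille) duality, which is precisely where the bound $b-a\leq p-2$ is essential. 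I would organize the argument as: (i) reduce to $\rT_{\sigma,\pi}$ being crystalline with HT weights in $[-2,1]$ and $p\geq 11$; (ii) cite the lattice-level/$\dZ_p$-level Bloch--Kato duality; (iii) invoke Nizio\l's integral comparison \cite{Niz93} to pass to $\dZ/p^n$; (iv) conclude by the cardinality argument. No genuinely new computation is needed beyond assembling these.
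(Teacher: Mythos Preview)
Your approach is essentially the same as the paper's: identify $\bar\rT_{\sigma,\pi}^n$ as a torsion crystalline module with Hodge--Tate weights in $[-2,1]$ and then invoke Nizio\l's duality result \cite{Niz93}*{Proposition 6.2}. The paper's proof is a two-line citation of exactly this, so your elaborate sketch of how Nizio\l's argument runs internally (cardinality counts, passage from $\dQ_p$- to $\dZ/p^n$-coefficients, induction on $n$) is not needed here, though it is a reasonable outline of what lies behind the black box.

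One correction: you write that the relevant numerical condition is $b-a\leq p-2$, and that $p\geq 11$ ensures $3\leq p-2$. That inequality would already hold for $p\geq 5$, which does not explain why the lemma requires $p\geq 11$. The condition actually used in \cite{Niz93}*{Proposition 6.2} is the stronger one $b-a\leq (p-2)/2$; the paper checks this as $3\leq(11-2)/2$. The weaker bound $b-a\leq p-2$ is what is needed for the \emph{definition} of torsion crystalline module (Fontaine--Laffaille range), but the self-annihilation under Tate duality needs roughly twice as much room. So your step (i) is right, your step (iii) is the correct citation, but the bridge between them uses a sharper inequality than you stated.
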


\begin{proof}
We know that $\bar\rT_{\sigma,\pi}^n$ is a torsion crystalline module of
Hodge--Tate weights in $[-2,1]$. The lemma follows from
\cite{Niz93}*{Proposition 6.2} since $3\leq (11-2)/2$.
\end{proof}

\subsection{Integral Tate cycles}
\label{ss:integral_tate}

We study $\pi$-isotypic Tate cycles. Let $\ell$ be a strongly $1$-admissible
prime. Recall that we have a canonical isomorphism
\[\rZ^0(\cX^\ssl_{N^+M;\dF_{\ell^2}})\simeq\dZ[\cS_{N^+M}^\bullet\sqcup\cS_{N^+M}^\circ]
\simeq\dZ[\cS_{N^+M}]^{\oplus 2}.\] Put
\[\rZ_{\pi\res N^+M}=\dZ_p[\cS_{N^+M}^\bullet\sqcup\cS_{N^+M}^\circ]\cap(\dQ_p[\cS_{N^+M}][\pi^Q])^{\oplus 2},\]
which is a finite generated free $\dZ_p$-submodule of
$\rZ^0(\cX^\ssl_{N^+M;\dF_{\ell^2}})\otimes_\dZ\dZ_p$. The intersection
pairing induces a map
\begin{align}\label{eq:intersection}
\rH^2_{\et}(\cX_{N^+M;\dF_\ell^\ac},\dZ_p(1))^{\dF_{\ell^2}}\to
\Hom(\rZ^0(\cX^\ssl_{N^+M;\dF_{\ell^2}})\otimes_\dZ\dZ_p,\dZ_p).
\end{align}

\begin{proposition}
The restriction of \eqref{eq:intersection} induces an isomorphism
\begin{align}\label{eq:tate_class_integral}
\rH^2_{\et}(\cX_{N^+M;\dF_\ell^\ac},\dZ_p(1))^{\dF_{\ell^2}}\cap\rH_\pi(\cX_{N^+M;\dF_\ell^\ac})
\xrightarrow\sim\Hom(\rZ_{\pi\res N^+M},\dZ_p),
\end{align}
under which the action of $\Fr_\ell$ on the source coincides with the
involution induced by switching two factors on the target.
\end{proposition}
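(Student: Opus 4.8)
The plan is to exhibit \eqref{eq:tate_class_integral} through the cycle classes of the irreducible components of the supersingular locus, and then to reduce the claim to the $\dZ_p$-unimodularity of an explicit intersection Gram matrix which, on the $\pi$-isotypic part, is governed by a single Hecke eigenvalue. By Proposition \ref{pr:surface_supersingular}, $\cX^\ssl_{N^+M;\dF_{\ell^2}}=\cX^\bullet_{N^+M;\dF_{\ell^2}}\cup\cX^\circ_{N^+M;\dF_{\ell^2}}$ is a union of copies of $\bP^1_{\dF_{\ell^2}}$ canonically indexed by $\cS_{N^+M}^\bullet\sqcup\cS_{N^+M}^\circ$; as each of these $\bP^1$'s carries an $\dF_{\ell^2}$-point, it is defined over $\dF_{\ell^2}$, so taking cycle classes yields a $\dZ_p$-linear, Hecke-equivariant map
\[\cl\colon\rZ^0(\cX^\ssl_{N^+M;\dF_{\ell^2}})\otimes_\dZ\dZ_p\longrightarrow\rH^2_{\et}(\cX_{N^+M;\dF_\ell^\ac},\dZ_p(1))^{\dF_{\ell^2}}.\]
By Galois-equivariance of the cycle class map and Proposition \ref{pr:surface_supersingular}(3), $\cl$ intertwines the $\Fr_\ell$-action on the target with the switch of the two factors on the source; since the map \eqref{eq:intersection}, hence \eqref{eq:tate_class_integral}, is cup product against these cycle classes, this equivariance descends to \eqref{eq:tate_class_integral}, which gives the last assertion of the proposition. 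Moreover, since the Hecke correspondences act on $\rZ^0(\cX^\ssl_{N^+M;\dF_{\ell^2}})$ through the Brandt matrices on $\cS_{N^+M}$ and $\cl$ is Hecke-equivariant, $\cl$ carries the lattice $\rZ_{\pi\res N^+M}$ into $\rH^2_{\et}(\cX_{N^+M;\dF_\ell^\ac},\dZ_p(1))^{\dF_{\ell^2}}\cap\rH_\pi(\cX_{N^+M;\dF_\ell^\ac})$, and by the multiplicativity of the cycle class map \eqref{eq:cycle_class} the composite $\eqref{eq:tate_class_integral}\circ(\cl\res_{\rZ_{\pi\res N^+M}})$ is exactly the intersection Gram matrix of the lattice $\rZ_{\pi\res N^+M}$.

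The next step is to compute this Gram matrix in the block decomposition indexed by $\cS_{N^+M}^\bullet\sqcup\cS_{N^+M}^\circ$. Since $\cX^\bullet_{N^+M;\dF_{\ell^2}}$ and $\cX^\circ_{N^+M;\dF_{\ell^2}}$ are \emph{disjoint} unions of projective lines, the two diagonal blocks are diagonal matrices; by Proposition \ref{pr:surface_supersingular}(4) they equal $-2\ell$ times the identity. As $\cX^\bullet_{N^+M;\dF_{\ell^2}}$ and $\cX^\circ_{N^+M;\dF_{\ell^2}}$ meet transversally along $\cX^\ssp_{N^+M;\dF_{\ell^2}}\simeq\cS_{N^+M\ell}$, with the incidence of superspecial points on the two families of $\bP^1$'s governed by the degeneracy maps $\gamma^{(\ell,1)}_{N^+M\ell}$ and $\gamma^{(\ell,\ell)}_{N^+M\ell}$ from $\cS_{N^+M\ell}$ to $\cS_{N^+M}$ (Remark \ref{re:degeneracy_surface}(2)), the off-diagonal block is the correspondence $(\gamma^{(\ell,\ell)}_{N^+M\ell})_*\circ(\gamma^{(\ell,1)}_{N^+M\ell})^*$ on $\dZ_p[\cS_{N^+M}]$, i.e.\ the Hecke operator $S^{(\fv)}$ at the prime $\fv=\ell O_F$ above $\ell$. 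On the isotypic part $\dQ_p[\cS_{N^+M}][\pi^Q]$ this Hecke operator acts by the scalar $\tr(\Fr_{\ell^2};\rV_\pi)$. Hence, writing $\tau=\tr(\Fr_{\ell^2};\rV_\pi)$, the Gram matrix on $\rZ_{\pi\res N^+M}$ is built from the $2\times2$ block $\left(\begin{smallmatrix}-2\ell&\tau\\\tau&-2\ell\end{smallmatrix}\right)$ of determinant $(2\ell-\tau)(2\ell+\tau)$; by Definition \ref{de:group_s}(S3) one has $\tau\bmod p\notin\{2\ell,-2\ell\}$, so $(2\ell-\tau)(2\ell+\tau)\in\dZ_p^\times$.

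Finally I would combine this $p$-adic unit with the rational Tate conjecture for the $\pi$-part. By the semisimplicity of $\pres{\sharp}\rV_\pi$ (Lemma \ref{le:semisimple}), the structure of $\Fr_\ell$ on it recalled in the proof of Proposition \ref{pr:cohomology_mixed} (the $\Fr_{\ell^2}$-fixed subspace of $\pres{\sharp}\rV_\pi$ is $2$-dimensional), and the Jacquet--Langlands matching of the cohomological and automorphic multiplicities of $\pi$, one gets $\dim_{\dQ_p}\bigl(\rH_\pi(\cX_{N^+M;\dF_\ell^\ac})^{\dF_{\ell^2}}\bigr)=\operatorname{rank}_{\dZ_p}\rZ_{\pi\res N^+M}$ and that $\cl\res_{\rZ_{\pi\res N^+M}}$ is an isomorphism after $\otimes_{\dZ_p}\dQ_p$; equivalently, this rational statement is the Tate conjecture for Hilbert modular surfaces with good reduction at $\ell$ in the form of \cite{HLR86}. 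Since $\rH^2_{\et}(\cX_{N^+M;\dF_\ell^\ac},\dZ_p(1))$ is torsion-free, hence Poincar\'e-self-dual over $\dZ_p$ (Lemma \ref{le:vanishing} together with smooth proper base change), the $\dZ_p$-unimodularity of the $2\times2$ block governing $\eqref{eq:tate_class_integral}\circ(\cl\res_{\rZ_{\pi\res N^+M}})$ should force $\cl\res_{\rZ_{\pi\res N^+M}}$ to be a split injection with finite-index image, hence an isomorphism onto $\rH^2_{\et}(\cX_{N^+M;\dF_\ell^\ac},\dZ_p(1))^{\dF_{\ell^2}}\cap\rH_\pi(\cX_{N^+M;\dF_\ell^\ac})$, so that \eqref{eq:tate_class_integral} is an isomorphism as well; the $\Fr_\ell$-equivariance was already recorded in the first step. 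The step I expect to be the main obstacle is making this last passage rigorous, namely controlling the integral $\pi$-isotypic lattices on both sides — the image of $\rZ_{\pi\res N^+M}$ in the integral cohomology and the target $\rH_\pi(\cdots)\cap\rH^2_{\et}(\dZ_p(1))^{\dF_{\ell^2}}$ — so that the intersection pairing genuinely identifies them as $\dZ_p$-duals, which requires pinning down both the precise normalization of the off-diagonal Hecke block and the compatibility of the automorphic and cohomological integral structures on the $\pi$-part.
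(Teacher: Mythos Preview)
Your proposal is correct and follows essentially the same approach as the paper: the paper's proof simply cites \cite{TX14} and says the result is a direct consequence of Proposition \ref{pr:surface_supersingular}(4) (the self-intersection $-2\ell$) together with ``some Hecke theoretical argument,'' noting that the passage from $\dQ_p$ to $\dZ_p$ works precisely because $\tr(\Fr_{\ell^2};\rV_\pi)\bmod p\notin\{2\ell,-2\ell\}$. You have spelled out exactly this argument---the Gram matrix with diagonal blocks $-2\ell$ and off-diagonal Hecke block, its determinant $(2\ell-\tau)(2\ell+\tau)\in\dZ_p^\times$, and the rational Tate input---and your stated concern about the integral lattice comparison is the technical point the paper outsources to \cite{TX14}.
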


\begin{proof}
This is a very special case of the main result of \cite{TX14}. See
\cite{TX14}*{\Sec 1.1} for this particular case. In fact, the proposition is
a direct consequence of Proposition \ref{pr:surface_supersingular} (4)
together with some Hecke theoretical argument. Note that the reason we may
replace the coefficient $\dQ_p$ by $\dZ_p$ is that
$\tr(\Fr_{\ell^2};\rV_\pi)\mod p\not\in\{2\ell,-2\ell\}$ in Definition
\ref{de:group_s} (S2).
\end{proof}

In what follows, we will present an element of $\rZ_\pi$ as a pair of
functions $f=(f^\bullet,f^\circ)$ on
$\cS_{N^+M}^\bullet\sqcup\cS_{N^+M}^\circ$. In particular, we have the
element
\[f_{\nu^\bullet,\nu^\circ}\colonequals (\nu^\bullet f_\pi^{(\fd)},\nu^\circ f_\pi^{(\fd)})
=(\nu^\bullet(\gamma^{(N^+M/\fM,\fd)}_{N^+M})^*f_\pi,\nu^\circ(\gamma^{(N^+M/\fM,\fd)}_{N^+M})^*f_\pi)\]
for $\nu^\bullet,\nu^\circ\in\dZ_p$. We define
\[\rZ_\pi=\{f_{\nu^\bullet,\nu^\circ}\res\nu^\bullet,\nu^\circ\in\dZ_p\}\subset\rZ_{\pi\res N^+M}\]
as a free submodule of rank $2$. Then the projection $\Hom(\rZ_{\pi\res
N^+M},\dZ_p)\to\Hom(\rZ_\pi,\dZ_p)$ coincides with the map
\[\rH^2_{\et}(\cX_{N^+M;\dF_\ell^\ac},\dZ_p(1))^{\dF_{\ell^2}}\cap\rH_\pi(\cX_{N^+M;\dF_\ell^\ac})
\xrightarrow{(\gamma^{(N^+M,\fd)}_{N^+M})_*}\pres{\sharp}{\rT}_\pi[\ell^2\res
1]\] under \eqref{eq:tate_class_integral}. In particular, we have an isomorphism
\begin{align}\label{eq:tate_class}
\tc_{\ell}\colon\pres{\sharp}{\bar\rT}_\pi^n[\ell^2|1]\xrightarrow\sim\Hom(\bar\rZ_\pi^n,\dZ/p^n)
\end{align}
for $1\leq n\leq\infty$.

\subsection{Statement of congruence formulae}
\label{ss:congruence_gross}

The following two theorems are the main technical results of the article. We
recall the two maps $\tu_\ell$ and $\ts_\ell$ in \eqref{eq:us}.

\begin{theorem}\label{th:congruence}
Suppose that Assumption \ref{as:group_r} (R1 -- R4) are satisfied and
$\wp(N^-)$ is even. Let $n\geq1$ be an integer and $\ell_1,\ell_2$ be two
distinct strongly $n$-admissible primes. Then
\begin{enumerate}
  \item $\loc_{\ell_1^2}(\bar\Delta^n_{\sigma,\pi})$ belongs to
      $\rH^1_\unr(\dQ_{\ell_1^2},\bar\rT_{\sigma,\pi}^n)\simeq
      \rH^1_\unr(\dQ_{\ell_1^2},\bar\rT_\sigma^n)\otimes_{\dZ/p^n}\pres{\sharp}{\bar\rT}_\pi^n[\ell_1^2|1]$;

  \item as an element in $\Hom(\bar\rZ_\pi^n,\dZ/p^n)$, the map
      $(\tu_{\ell_1}\otimes\tc_{\ell_1})\loc_{\ell_1^2}(\bar\Delta^n_{\sigma,\pi})$
      sends $f_{\nu^\bullet,\nu^\circ}$ to
      \[(\nu^\bullet+\epsilon_\sigma(\ell_1)\nu^\circ)\sum_{t\in\cT_{N^+M,N^-\ell_1}}
      (\zeta_{N^+M,N^-\ell_1}^*f_\pi^{(\fd)})(t)\cdot((\delta^{N^+M,d}_{N^+M,N^-\ell_1})^*g_{\sigma\res\ell_1}^n)(t)\]
      viewed as in $\dZ/p^n$, where $\delta^{N^+M,d}_{N^+M,N^-\ell_1}$ is
      defined in Notation \ref{no:order_rational} (5); and
      $g_{\sigma\res\ell_1}^n$ is defined in \eqref{eq:level_raising3};

  \item as an element in $\Hom(\bar\rZ_\pi^n,\dZ/p^n)$, the map
      $(\ts_{\ell_2}\otimes\tc_{\ell_2})
      \partial_{\ell_2^2}\loc_{\ell_2^2}(\bar\Delta^n_{\sigma,\pi\res\ell_1,\ell_2})$
      sends $f_{\nu^\bullet,\nu^\circ}$ to
      \begin{align*}
      &(\tr(\Fr_{\ell_2^2};\rV_\pi)-2\epsilon_\sigma(\ell_2)\ell_2)
      (\nu^\circ+\epsilon_\sigma(\ell_2)\nu^\bullet) \\
      &\times\sum_{t\in\cT_{N^+M,N^-\ell_1}}
      (\zeta_{N^+M,N^-\ell_1}^*f_\pi^{(\fd)})(t)\cdot((\delta^{N^+M,d}_{N^+M,N^-\ell_1})^*g_{\sigma\res\ell_1}^n)(t)
      \end{align*}
      viewed as in $\dZ/p^n$.
\end{enumerate}
\end{theorem}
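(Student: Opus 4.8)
The proof of Theorem \ref{th:congruence} will be carried out in Sections \ref{ss:computation_i} and \ref{ss:computation_ii}, and the plan is to compute the three localizations by reducing them to geometric intersection numbers on the special fibers of the three-fold $\cZ_{N^+M,N^\sim}$ at the primes $\ell_1$ and $\ell_2$, and then to identify those intersection numbers with the automorphic period integrals appearing on the right-hand side.

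First I would prove (1) and (2), which concern the prime $\ell_1$ at which $\cZ_{N^+M,N^-\ell_1}$ has good reduction (in the relevant sense: $\ell_1 \nmid N^-$, so $Y$ has good reduction and $X$ has good reduction). The key point is that, after applying the projector $\sP_{\sigma,\pi}^{(d,\fd)}$ and the level-raising map $\psi_{\ell_1}^n$, the class $\loc_{\ell_1^2}(\bar\Delta^n_{\sigma,\pi})$ can be computed via the étale cycle class of $\b\Delta'_{N^+M,N^-\ell_1}$ restricted to the special fiber. By Lemma \ref{le:proper_intersection} the intersection of the Hirzebruch--Zagier graph with the supersingular/superspecial locus is zero-dimensional and transversal; combined with Proposition \ref{pr:special_supersingular}, which identifies the restriction of $\zeta_{N^+M,N^-\ell_1}$ to the supersingular points with the special map $\zeta_{N^+M,N^-\ell_1}\colon \cT_{N^+M,N^-\ell_1}\to\cS_{N^+M}$ (and its $\op_{\ell_1}$-twist on the $\circ$-component), the localization becomes a sum over $t\in\cT_{N^+M,N^-\ell_1}$ of the product $(\zeta^*f_\pi^{(\fd)})(t)\cdot ((\delta^{(M,d)})^* g_{\sigma\res\ell_1}^n)(t)$. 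Here one uses the integral Tate cycle isomorphism \eqref{eq:tate_class_integral} and \eqref{eq:tate_class} to pair against $\pres{\sharp}{\bar\rT}_\pi^n[\ell_1^2|1]$, and the description of $g_{\sigma\res\ell_1}^n$ via the Kummer map / connected components $\Phi^{(\ell_1)}$ (Propositions \ref{pr:raising_2_bis}, \ref{pr:raising_2}) to read off the $\bar\rT_\sigma^n$-component; the factor $\nu^\bullet + \epsilon_\sigma(\ell_1)\nu^\circ$ records the two orientations $\cS_{N^+M}^\bullet\sqcup\cS_{N^+M}^\circ$ and the relation $g^n_{\sigma\res\ell_1}\circ\op_{\ell_1}=\epsilon_\sigma(\ell_1)g^n_{\sigma\res\ell_1}$. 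That $\loc_{\ell_1^2}$ lands in the unramified part is then automatic from Proposition \ref{pr:cohomology_mixed}(3) together with Lemma \ref{le:unramified_torsion}.

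Next I would prove (3), which is the genuinely harder case: $\ell_2\mid N^\sim=N^-\ell_1\ell_2$, so $\cY_{N^+,N^-\ell_1\ell_2}$ has \emph{semistable} (Čerednik--Drinfeld) reduction at $\ell_2$, and $\b\Delta'$ no longer meets the special fiber properly. The plan is to pass to the singular quotient $\partial_{\ell_2^2}$, where the class is governed by the component groups $\Phi^{(\ell_2)}_{N^+,N^-\ell_1\ell_2}$ via Proposition \ref{pr:raising_1} / \ref{pr:raising_2_bis}, and to compute the intersection of the graph of $\zeta_{N^+M,N^-\ell_1\ell_2}$ with the bad fiber using Propositions \ref{pr:curve_superspecial} and \ref{pr:surface_supersingular}: the two rulings $\cY^\bullet, \cY^\circ$ map to $\cX^\bullet,\cX^\circ$ by $\zeta_{N^+,N^-/\ell}$ (Proposition \ref{pr:special_superspecial}), and the superspecial locus $\cY^\ssp$ maps to $\cX^\ssp$. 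The self-intersection number $-2\ell_2$ of a component of $\cX^\bullet$ (Proposition \ref{pr:surface_supersingular}(4)), combined with the $\bP^1$-fibered structure, is exactly what produces the scalar $\tr(\Fr_{\ell_2^2};\rV_\pi)-2\epsilon_\sigma(\ell_2)\ell_2$: the eigenvalue $\tr(\Fr_{\ell_2^2};\rV_\pi)$ comes from the action on $\pres{\sharp}\rT_\pi$ while the $2\epsilon_\sigma(\ell_2)\ell_2$ term is the monodromy/self-intersection contribution, subtracted off because we are computing a class in $\rH^1_\sing$ rather than $\rH^1_\unr$. The sum over $\cT_{N^+M,N^-\ell_1}$ reappears, weighted by $f_\pi^{(\fd)}$ and $g_{\sigma\res\ell_1}^n$, now via the degeneracy maps $\delta^{(\ell_2,1)}, \delta^{(\ell_2,\ell_2)}$ from $\cT_{N^+M\ell_2,N^-\ell_1}$ and the compatibility of $\zeta$ with degeneracies (the commutative diagram in the remark after Proposition \ref{pr:special_superspecial}); the factor $\nu^\circ+\epsilon_\sigma(\ell_2)\nu^\bullet$ encodes the orientation switch at $\ell_2$.

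The main obstacle, I expect, is (3): one must set up the comparison between the $p$-adic étale Abel--Jacobi class of the modified cycle $\b\Delta'_{N^+M,N^-\ell_1\ell_2}$ and a purely combinatorial quantity on the dual reduction graph, and then match normalizations exactly — in particular getting the precise scalar $\tr(\Fr_{\ell_2^2};\rV_\pi)-2\epsilon_\sigma(\ell_2)\ell_2$ rather than, say, its negative or a unit multiple. This requires (a) a specialization argument for étale cohomology with support in the semistable case (using the weight-monodromy purity of Lemma \ref{le:tamely_pure} and de Jong alterations if needed), (b) a careful bookkeeping of the maps $\ts_{\ell_2}$, $\partial_{\ell_2^2}$, $\tc_{\ell_2}$, $\psi^n_{\ell_1,\ell_2}$ and their compatibility with the geometric intersection pairing on $\cX^\ssl_{N^+M;\dF_{\ell_2^2}}$, and (c) checking that the hypothesis $\tr(\Fr_{\ell_2^2};\rV_\pi)\bmod p\notin\{2\ell_2,-2\ell_2,\ell_2^2+1,-\ell_2^2-1\}$ from Definition \ref{de:group_s}(S3), together with the excluded small-order values, guarantees that all the relevant $\bar\rT_\sigma^n\otimes\pres{\sharp}\bar\rT_\pi^n$-submodules decompose as in Proposition \ref{pr:cohomology_mixed} so that the computation is unambiguous. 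Once these are in place, (3) follows by the same pattern as (2) applied to the semistable fiber, and the non-vanishing of the period integral on the right is controlled by Lemma \ref{le:test_vector} and the choice of testing factors $(d,\fd)$.
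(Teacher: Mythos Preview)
Your overall plan matches the paper's: parts (1) and (2) are handled via the smooth-reduction framework of \Sec\ref{ss:computation_i} (Proposition~\ref{pr:theta} plus Lemma~\ref{le:proper_intersection} and Proposition~\ref{pr:special_supersingular}), and part (3) via the SNC-curve framework of \Sec\ref{ss:computation_ii} (Proposition~\ref{pr:vartheta} plus Proposition~\ref{pr:special_superspecial} and the self-intersection number from Proposition~\ref{pr:surface_supersingular}(4)). However, several of your identifications are off and would lead you to the wrong computation if followed literally.

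For (1) and (2), the class $\bar\Delta^n_{\sigma,\pi}$ lives on $Z_{N^+Mq,N^-}$, with the \emph{original} $N^-$; the cycle is $\b\Delta'_{N^+M,N^-}$, not $\b\Delta'_{N^+M,N^-\ell_1}$, and no level-raising map $\psi^n_{\ell_1}$ is applied (that map exists only when $\wp(N^-)$ is odd). The curve $\cY_{N^+Mq,N^-}$ has \emph{good} reduction at $\ell_1$; the set $\cT_{N^+M,N^-\ell_1}$ enters only as the parametrization of its supersingular locus (Proposition~\ref{pr:curve_supersingular}). Correspondingly, $g^n_{\sigma\res\ell_1}$ is not read off from any component group $\Phi^{(\ell_1)}$ via Propositions~\ref{pr:raising_2_bis} or~\ref{pr:raising_2}: it is defined by the Kummer map of $J_{N^+,N^-}$ at the good prime $\ell_1$, as in \eqref{eq:level_raising3_pre}--\eqref{eq:level_raising3}. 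The paper's Lemma~\ref{le:congruence_smooth} shows directly that $\Theta^n_{\sigma,\pi}[W_{\dQ_{\ell^2}}]^n$ factors through this Kummer map, which simultaneously gives (1) and the explicit formula (2); you do not need Lemma~\ref{le:unramified_torsion} here.

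For (3), the scalar $\tr(\Fr_{\ell_2^2};\rV_\pi)$ does not appear ``from the action on $\pres{\sharp}\rT_\pi$'' in any Galois-theoretic sense. It arises Hecke-theoretically: when one computes $\hat\phi_n$ of the pullback of $f_{\nu^\bullet,\nu^\circ}$ along the graph, the degeneracy maps on the Hilbert side produce $(\gamma^{(\ell_2,1)}_{N^+M\ell_2})_*(\gamma^{(\ell_2,\ell_2)}_{N^+M\ell_2})^*f_\pi^{(\fd)}$, which equals $\nu(\ell_2)f_\pi^{(\fd)}$ with $\nu(\ell_2)=\tr(\Fr_{\ell_2^2};\rV_\pi)$ the Hecke eigenvalue; the $-2\ell_2$ is the self-intersection contribution. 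The exact factor then drops out of the elementary identity
\[
(\nu(\ell_2)\nu^\circ-2\ell_2\nu^\bullet)+\epsilon_\sigma(\ell_2)(\nu(\ell_2)\nu^\bullet-2\ell_2\nu^\circ)
=(\nu(\ell_2)-2\epsilon_\sigma(\ell_2)\ell_2)(\nu^\circ+\epsilon_\sigma(\ell_2)\nu^\bullet).
\]
Finally, neither weight--monodromy purity nor de~Jong's alterations are needed: the passage from the Abel--Jacobi class to a combinatorial quantity on the dual graph is done entirely through the Picard functor and N\'eron component group (Lemma~\ref{le:kummer_singular} and Proposition~\ref{pr:vartheta}), then matched with $\bar\rT^n_\sigma$ via Propositions~\ref{pr:raising_1} and~\ref{pr:raising_2}.
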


\begin{theorem}\label{th:congruence_bis}
Suppose that Assumption \ref{as:group_r} (R1 -- R4) are satisfied and
$\wp(N^-)$ is odd. Let $n\geq1$ be an integer and $\ell$ be a strongly
$n$-admissible prime. Then as an element in $\Hom(\bar\rZ_\pi^n,\dZ/p^n)$,
the map $(\ts_{\ell}\otimes\tc_{\ell})
\partial_{\ell^2}\loc_{\ell^2}(\bar\Delta^n_{\sigma,\pi\res\ell})$
sends $f_{\nu^\bullet,\nu^\circ}$ to
\begin{align*}
&(\tr(\Fr_{\ell^2};\rV_\pi)-2\epsilon_\sigma(\ell)\ell)
(\nu^\circ+\epsilon_\sigma(\ell)\nu^\bullet) \\
&\times\sum_{t\in\cT_{N^+M,N^-}}
(\zeta_{N^+M,N^-}^*f_\pi^{(\fd)})(t)\cdot((\delta^{N^+M,d}_{N^+M,N^-})^*g_\sigma^n)(t)
\end{align*}
viewed as in $\dZ/p^n$, where $g_\sigma^n$ is defined in
\eqref{eq:level_raising3_bis}.
\end{theorem}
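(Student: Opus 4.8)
The statement is the ``odd type'' companion of Theorem~\ref{th:congruence}, so the plan is to follow the same strategy used for Theorem~\ref{th:congruence}(3), which is carried out in \Sec~\ref{ss:computation_ii}, and track the (comparatively minor) modifications forced by $\wp(N^-)$ being odd. Recall that in this case $N^\sim=N^-\ell$ for a single strongly $n$-admissible prime $\ell$, the relevant Shimura curve is $Y_{N^+Mq,N^-\ell}$, and the class $\bar\Delta^n_{\sigma,\pi\res\ell}$ is built from the modified cycle $\Delta'_{N^+M,N^-\ell}$ on $Z_{N^+Mq,N^-\ell}$ via the composite map $\sQ_{\sigma,\pi}^{(d,\fd)}$, whose last arrow is the level-raising projection $\psi_\ell^n$ of \eqref{eq:level_raising4_bis}. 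First I would reduce the problem to a computation in the semistable reduction of $Y_{N^+Mq,N^-\ell}$ at $\ell$: since $\ell\mid N^-\ell$, the curve has \v{C}erednik--Drinfeld bad reduction there, and $\partial_{\ell^2}\loc_{\ell^2}$ is, by the standard dictionary (Proposition~\ref{pr:curve_superspecial} and the discussion around \eqref{eq:us}), computed from the reduction graph, i.e.\ the component groups $\Phi^{(\ell)}_{N^+Mq,N^-\ell}$ and the set $\cT_{N^+Mq,N^-/\ell}$ of supersingular/superspecial points indexing components.

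\textbf{Key steps.} (i) Use the absolute-purity formalism of \Sec~\ref{sss:construction_gross} together with Lemma~\ref{le:change_cycle} to reduce $[\Delta'_{N^+M,N^-\ell}]^n$ at $\ell$ to an intersection-theoretic quantity on the special fiber $\cZ_{N^+M,N^-\ell;\dF_{\ell^2}}$: the graph of $\zeta_{N^+M,N^-\ell}$ meets $\pi_\cX^{-1}\cX^\ssp_{N^+M;\dF_{\ell^2}}$, and by Proposition~\ref{pr:special_superspecial}(1) this intersection is governed by the special map $\zeta_{N^+M\ell,N^-/\ell}\colon\cT_{N^+M\ell,N^-/\ell}\to\cS_{N^+M\ell}$, hence ultimately by $\zeta_{N^+M,N^-}$ after applying the degeneracy maps $\delta^{(\ell,1)}_{N^+M\ell,N^-/\ell},\delta^{(\ell,\ell)}_{N^+M\ell,N^-/\ell}$ and $\gamma^{(\ell,1)}_{N^+M\ell},\gamma^{(\ell,\ell)}_{N^+M\ell}$. (ii) Apply the integral Tate conjecture in the form \eqref{eq:tate_class_integral}--\eqref{eq:tate_class} to identify the $\pi$-component of the cycle-class of $\cX^\ssp_{N^+M;\dF_{\ell^2}}$, with the map $\tc_\ell$ converting $\pres{\sharp}{\bar\rT}_\pi^n[\ell^2|1]$ into $\Hom(\bar\rZ_\pi^n,\dZ/p^n)$; this produces the test-element $f_{\nu^\bullet,\nu^\circ}$ and the factor $\nu^\circ+\epsilon_\sigma(\ell)\nu^\bullet$ coming from how $\Fr_\ell$ interchanges the $\bullet$ and $\circ$ components (Proposition~\ref{pr:surface_supersingular}(3)). (iii) Apply the level-raising package of \Sec~\ref{ss:raising_levels}: here the relevant input is $\varphi^n_{\sigma\res\ell}\colon\dT_{N^+,N^-\ell}\to\dZ/p^n$ obtained by applying \cite{BD05}*{Theorem~5.15} directly to $g_\sigma^n$ (the simpler situation of Propositions~\ref{pr:raising_1_bis} and \ref{pr:raising_2_bis}, rather than the two-prime Propositions~\ref{pr:raising_1} and \ref{pr:raising_2}); the component-group map $\phi^{(\ell)}_{N^+,N^-\ell}$ of \cite{BD05}*{Corollary~5.12} combined with $\ts_\ell$ and $\beta$ turns the intersection number into the automorphic pairing $\sum_{t\in\cT_{N^+M,N^-}}(\zeta_{N^+M,N^-}^*f_\pi^{(\fd)})(t)\cdot((\delta^{N^+M,d}_{N^+M,N^-})^*g_\sigma^n)(t)$. (iv) Finally, assemble the local constant: the difference $\b\Delta^\dag-\b\Delta^\ddag$ and the Hecke eigenvalue relation at $\ell$ on $\rV_\pi$ produce the factor $\tr(\Fr_{\ell^2};\rV_\pi)-2\epsilon_\sigma(\ell)\ell$, exactly as in the last factor of Theorem~\ref{th:congruence}(3); one checks this is a unit mod $p$ by condition (S3), so the formula is non-degenerate.

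\textbf{Main obstacle.} The hardest part will be step (i)--(iii) fused together: making the identification between the $\dZ/p^n$-valued intersection multiplicities on the semistable model and the automorphic quantity \emph{compatible with the level-raising projection} $\psi_\ell^n$. Concretely, one must check that the map $\bar\rZ_\pi^n$-side of \eqref{eq:tate_class} and the component-group side of Proposition~\ref{pr:raising_1_bis} are matched by the geometric degeneracy maps under $\zeta$ in the precise way dictated by the commutative diagram in the Remark following Proposition~\ref{pr:special_superspecial}; any sign or orientation mismatch (the $\bullet/\circ$ bookkeeping, and the action of $\op_\ell$) would corrupt the factor $\nu^\circ+\epsilon_\sigma(\ell)\nu^\bullet$. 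I expect this to be almost verbatim the argument of \Sec~\ref{ss:computation_ii} with $N^-\ell_1\ell_2$ replaced by $N^-\ell$ and $g^n_{\sigma\res\ell_1}$ replaced by $g^n_\sigma$, and with $\ell_1$ playing no role; the self-contained subtlety is only that now $\ell\mid N^-\ell$ from the start, so the curve already has bad reduction before level-raising, which if anything simplifies the bookkeeping relative to the even case. No genuinely new ideas beyond those used for Theorem~\ref{th:congruence} should be needed.
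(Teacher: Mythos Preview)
Your proposal is correct and matches the paper's approach exactly: the paper's own proof of Theorem~\ref{th:congruence_bis} consists of the single sentence that one applies the computation of \Sec~\ref{ss:computation_ii} with $R=\rW(\dF_\ell^\ac)$, $X=\cX_{N^+M;R}$, $Y=\cY_{N^+Mq,N^-\ell;R}$, $W=\b\Delta'_{N^+M,N^-\ell;R}$, and the argument proceeds verbatim as for Theorem~\ref{th:congruence}(3) with $N^-\ell_1$ replaced by $N^-$ and $g^n_{\sigma\res\ell_1}$ replaced by $g^n_\sigma$. Two small corrections to your write-up: in step~(i) the relevant map from Proposition~\ref{pr:special_superspecial} is $\zeta_{N^+M\ell,N^-}$ (not $\zeta_{N^+M\ell,N^-/\ell}$, since $\ell\nmid N^-$ here; you are applying the proposition with its ``$N^-$'' equal to your $N^\sim=N^-\ell$), and in step~(iv) the factor $\tr(\Fr_{\ell^2};\rV_\pi)-2\epsilon_\sigma(\ell)\ell$ arises not from the $\dag/\ddag$ difference (which concerns the auxiliary prime $q$ and cancels against the normalization in \eqref{eq:cycle_modified}) but from the self-intersection $-2\ell$ of Proposition~\ref{pr:surface_supersingular}(4) combined with the Hecke eigenvalue of $\pi$ at $\ell$ in the component-group computation.
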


Note that if $\ell$ is strongly $1$-admissible, then
$\tr(\Fr_{\ell^2};\rV_\pi)\pm2\ell\in\dZ_p^\times$ by Definition
\ref{de:group_s} (S3). We will prove Theorem  \ref{th:congruence} (1, 2) at
the end of \Sec \ref{ss:computation_i} and Theorem \ref{th:congruence} (3),
Theorem \ref{th:congruence_bis} at the end of \Sec \ref{ss:computation_ii}.
In fact, among the three congruence formulae: Theorem \ref{th:congruence}
(2), (3), and Theorem \ref{th:congruence_bis}, the latter two are proved by
the same way. However, we would like to pack them according to the parity of
$\wp(N^-)$ as appeared above.

\subsection{Computation of localization, I}
\label{ss:computation_i}

Let $R$ be a discrete valuation ring in which $p$ is invertible. Let $k$
(resp.\ $\tilde{k}$) be its fraction (resp.\ residue) field. Let $X$ (resp.\
$Y$) be a smooth proper (resp.\ flat proper) scheme over $R$. Assume that the
generic fiber $X_k$ (resp.\ $Y_k$) is smooth purely of dimension $2d$ (resp.\
$2d-1$) for some $d\geq 1$. Put $Z=Y\times_{\Spec R}X$ with $\pi_X\colon Z\to
X$ and $\pi_Y\colon Z\to Y$ the two projections.

For $1\leq n\leq \infty$, denote by $\Theta^n$ the composition of the
following maps:
\begin{align}\label{eq:theta}
\Theta^n&\colon\rH^1(k,\rH^{4d-1}_{\et}(Z_{k^\ac},\dZ/p^n(2d))) \\
&\to\rH^1(k,\Hom(\rH^{2d}_{\et}(X_{k^\ac},\dZ/p^n(d)),\rH^{2d-1}_{\et}(Y_{k^\ac},\dZ/p^n(d)))) \notag
\\
&\to\rH^1(k,\Hom(\rH^{2d}_{\et}(X_{k^\ac},\dZ/p^n(d))^k,\rH^{2d-1}_{\et}(Y_{k^\ac},\dZ/p^n(d)))) \notag
\\
&\xrightarrow\sim\Hom(\rH^{2d}_{\et}(X_{k^\ac},\dZ/p^n(d))^k,\rH^1(k,\rH^{2d-1}_{\et}(Y_{k^\ac},\dZ/p^n(d)))) \notag
\\
&\xrightarrow\sim\Hom(\rH^{2d}_{\et}(X_{\tilde{k}^\ac},\dZ/p^n(d))^{\tilde{k}},\rH^1(k,\rH^{2d-1}_{\et}(Y_{k^\ac},\dZ/p^n(d))))\notag \\
&\to\Hom(\CH^d(X_{\tilde{k}},\dZ/p^n),\rH^1(k,\rH^{2d-1}_{\et}(Y_{k^\ac},\dZ/p^n(d)))), \notag
\end{align}
where the first arrow is induced by K\"{u}nneth decomposition and
Poincar\'{e} duality; and the last arrow is induced by
$\cl_{X_{\tilde{k}}}^0$. Here we recall the convention from \Sec
\ref{ss:galois_modules} that $-^k$ is nothing but $-^{\Gamma_k}$.

\subsubsection{Smooth reduction in general case}

Now assume that $Y$ is also smooth over $R$, and hence $Z$ is smooth proper
over $R$.

\begin{lem}\label{le:comparison_smooth}
Let $W\in\rZ^c(Z)$ be a codimension $c$ cycle on $Z$ such that its support
$W'$, equipped with the induced reduced structure, is normal and flat over
$R$, and the class $\cl_{Z_k}(W_k)$ belongs to
$\rH^{2c}_{\et}(Z_k,\dZ/p^n(c))^0$. Then
\begin{enumerate}
  \item the class $\cl_{Z_{\tilde{k}}}(W_{\tilde{k}})$ belongs to
      $\rH^{2c}_{\et}(Z_{\tilde{k}},\dZ/p^n(c))^0$;

  \item the class $[W_k]_{Z_k}^n$ belongs to
      $\rH^1_\unr(k,\rH^{2c-1}_{\et}(Z_{k^\ac},\dZ/p^n(c)))$;

  \item $[W_k]_{Z_k}^n=[W_{\tilde{k}}]_{Z_{\tilde{k}}}^n$ under the
      canonical isomorphism
      \[\rH^1_\unr(k,\rH^{2c-1}_{\et}(Z_{k^\ac},\dZ/p^n(c)))\simeq\rH^1(\tilde{k},\rH^{2c-1}_{\et}(Z_{\tilde{k}^\ac},\dZ/p^n(c))).\]
\end{enumerate}
\end{lem}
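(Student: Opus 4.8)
The statement is a specialization-compatibility result for the étale Abel--Jacobi map applied to a cycle whose support is normal and flat over $R$. The plan is to reduce everything to the corresponding statement for the cycle class map (with supports), which is available from Gabber's purity, and then to feed that into the Hochschild--Serre machinery. First I would recall the construction of $\cl_X(Z)$ following the ``same process after \cite{Mil80}*{Lemma 9.1}'' referred to just above: for a cycle $W$ with normal flat support $W'$ of codimension $\geq c$, one uses the cohomology-with-support groups $\rH^{2c}_{W'}(Z,\dZ/p^n(c))$, applies the semi-purity and absolute purity lemma quoted in the excerpt to conclude $\rH^q_{W'}(Z,\dZ/p^n(c))=0$ for $q<2c$ and to identify the degree-$2c$ piece, and obtains a canonical class in $\rH^{2c}_{W'}(Z,\dZ/p^n(c))$ whose image in $\rH^{2c}_{\et}(Z,\dZ/p^n(c))$ is $\cl_Z(W)$. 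The key point is that because $W'$ is flat over $R$, the closed immersion $W'\hookrightarrow Z$ restricts well to both the generic fiber and the special fiber, and the cohomology-with-support groups are compatible with the localization/specialization maps in the $R$-direction.

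Next I would set up the specialization map in étale cohomology. Since $Z$ is smooth proper over the strictly henselian (after suitable base change) or general DVR $R$, proper-smooth base change gives a canonical isomorphism $\rH^{i}_{\et}(Z_{\tilde k^\ac},\dZ/p^n)\xrightarrow{\ \sim\ }\rH^{i}_{\et}(Z_{k^\ac},\dZ/p^n)^{\rI_k}$, and in fact the whole inertia acts trivially, giving $\rH^i_{\et}(Z_{k^\ac},\dZ/p^n(c))=\rH^i_{\et}(Z_{\tilde k^\ac},\dZ/p^n(c))$ as $\Gamma_{\tilde k}$-modules, hence the canonical isomorphism $\rH^1_\unr(k,\rH^{2c-1}_{\et}(Z_{k^\ac},\dZ/p^n(c)))\simeq\rH^1(\tilde k,\rH^{2c-1}_{\et}(Z_{\tilde k^\ac},\dZ/p^n(c)))$ appearing in part (3). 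Then part (1) follows: the class $\cl_{Z_{\tilde k}}(W_{\tilde k})$ and $\cl_{Z_k}(W_k)$ correspond to each other under the specialization isomorphism on $\rH^{2c}_{\et}(-,\dZ/p^n(c))$, compatibly with the edge map $\xi^0$ to $\rH^0$ of the geometric cohomology (which is the same group for generic and special fiber); so $\cl_{Z_k}(W_k)\in\rH^{2c}_{\et}(Z_k,\dZ/p^n(c))^0$ forces $\cl_{Z_{\tilde k}}(W_{\tilde k})\in\rH^{2c}_{\et}(Z_{\tilde k},\dZ/p^n(c))^0$.

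For parts (2) and (3) I would compare the Hochschild--Serre spectral sequences for $Z_k/k$ and $Z_{\tilde k}/\tilde k$, using that $\rI_k$ acts trivially on all the relevant geometric cohomology groups. The unramified subgroup $\rH^1_\unr(k,-)=\rH^1(\Gamma_k/\rI_k,(-)^{\rI_k})$ is precisely the image of the inflation from $\rH^1(\tilde k,-)$, and the edge maps $\xi^1_{Z_k}$ and $\xi^1_{Z_{\tilde k}}$ are intertwined by the specialization maps on both source ($\rH^{2c}_{\et}(Z,-)$, using the $R$-scheme $Z$ itself as a bridge, i.e.\ the restriction maps $\rH^{2c}_{\et}(Z,-)\to\rH^{2c}_{\et}(Z_k,-)$ and $\to\rH^{2c}_{\et}(Z_{\tilde k},-)$, both of which see the class $\cl_Z(W)$ coming from $\rH^{2c}_{W'}(Z,-)$) and target. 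Concretely: $\cl_Z(W)\in\rH^{2c}_{\et}(Z,\dZ/p^n(c))$ restricts to $\cl_{Z_k}(W_k)$ and $\cl_{Z_{\tilde k}}(W_{\tilde k})$; applying the edge map of the Hochschild--Serre sequence for $Z/\Spec R$ (or rather chasing through the commutative diagram of spectral sequences) shows $[W_k]^n_{Z_k}$ lies in the image of inflation, i.e.\ in $\rH^1_\unr$, proving (2), and that its preimage under inflation is $[W_{\tilde k}]^n_{Z_{\tilde k}}$, proving (3). The normal-and-flat hypothesis on $W'$ is exactly what guarantees that the support-cohomology classes live in a single $R$-module that specializes correctly, so this is where that hypothesis is used; I expect the main obstacle to be bookkeeping the compatibility of the purity isomorphisms with the restriction maps $Z\to Z_k$, $Z\to Z_{\tilde k}$ — i.e.\ checking that Gabber's cycle class in $\rH^{2c}_{W'}(Z,\dZ/p^n(c))$ genuinely pulls back to the cycle classes of $W_k$ and $W_{\tilde k}$ in the respective support cohomologies — but this is a standard (if slightly tedious) diagram chase once the absolute purity statement is in hand, and flatness of $W'$ over $R$ makes $W'\cap Z_k$ and $W'\cap Z_{\tilde k}$ behave as expected.
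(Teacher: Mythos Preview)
Your approach is essentially the same as the paper's: both arguments use the class $\cl_Z(W)\in\rH^{2c}_{W'}(Z,\dZ/p^n(c))$ on the total space as a bridge between the generic and special fibers, invoke Gabber's purity/semi-purity, and then feed the resulting compatibility of cycle classes into the functoriality of Hochschild--Serre spectral sequences.

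The one point the paper makes explicit that you gloss over is the handling of the singular locus of $W'$. You write that one ``applies the semi-purity and absolute purity lemma\dots\ to identify the degree-$2c$ piece'' of $\rH^{2c}_{W'}(Z,\dZ/p^n(c))$, but absolute purity requires the support to be \emph{regular}, and $W'$ is only assumed normal. The paper deals with this by an excision argument: it removes the singular locus $W'_\rs$ (and, on the special fiber, the union of $W'_{\rs;\tilde k}$ and $(W'_{\tilde k})_\rs$), observes that semi-purity makes the restriction maps $\rH^{2c}_{W'}(Z)\to\rH^{2c}_{W'\setminus W'_\rs}(Z\setminus W'_\rs)$ (and likewise over $k$, $\tilde k$) isomorphisms, and only then applies full purity on the regular open to identify these groups with $\bigoplus_I\dZ/p^n$. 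This is precisely where the normality and flatness of $W'$ over $R$ enter: normality forces $W'_\rs$ to have codimension $\geq 2$ in $Z$ (so semi-purity applies after excision), and flatness together with properness ensures that the combined ``bad'' locus on the special fiber still has codimension $\geq 1$ in $W'_{\tilde k}$, so the same excision works there and the specialization map $\bigoplus_I\dZ/p^n\to\bigoplus_{\tilde I}\dZ/p^n$ is well-defined. Your remark that flatness makes ``$W'\cap Z_k$ and $W'\cap Z_{\tilde k}$ behave as expected'' is pointing at this, but the actual mechanism is the control of singular loci rather than a direct statement about support cohomology over $R$.
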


This lemma holds for an arbitrary scheme $Z$ that is smooth proper over $R$,
not necessarily of the form $X\times_{\Spec R}Y$.

\begin{proof}
For a noetherian scheme $S$, we denote by $S_\rs$ the singular locus of
$S_{\r{red}}$, which is a Zariski closed subset containing no generic point,
equipped with the induced reduced structure.

We have $W'_{\rs;k}=(W'_k)_\rs$. Define $\pres{\rs}{W}'_{\tilde{k}}$ to be
the union of $W'_{\rs;\tilde{k}}$ and $(W'_{\tilde{k}})_\rs$. Since $W'$ is
normal and flat proper over $R$, the subset $\pres{\rs}{W}'_{\tilde{k}}$ is
closed in $W'_{\tilde{k}}$ of codimension everywhere at least $1$. Let $I$
(resp.\ $\tilde{I}$) be the (finite) set of irreducible components of $W'$
(resp.\ $W'_{\tilde{k}}$). We have a natural specialization map
$\r{sp}\colon\bigoplus_I\dZ\to\bigoplus_{\tilde{I}}\dZ$. If we regard the
source (resp.\ target) as the subgroup of $\rZ^c(Z)$ (resp.\
$\rZ^c(Z_{\tilde{k}})$) consisting of cycles supported on $W'$ (resp.\
$W'_{\tilde{k}}$), then $\r{sp}$ sends $W$ to $W_{\tilde{k}}$. Thus, we have
the following commutative diagram
\[\xymatrix{
\bigoplus_I\dZ/p^n &  \bigoplus_I\dZ/p^n \ar[r]^-{\r{sp}\mod p^n}\ar@{=}[l] & \bigoplus_{\tilde{I}}\dZ/p^n \\
\rH^{2c}_{W'_k\setminus(W'_k)_\rs}(Z_k\setminus(W'_k)_\rs) \ar[u]^-{\simeq} &&
\rH^{2c}_{W'_{\tilde{k}}\setminus\pres{\rs}{W}'_{\tilde{k}}}(Z_{\tilde{k}}\setminus\pres{\rs}{W}'_{\tilde{k}}) \ar[u]_-{\simeq} \\
\rH^{2c}_{(W'\setminus W'_\rs)_k}((Z\setminus W'_\rs)_k) \ar@{=}[u] &
\rH^{2c}_{W'\setminus W'_\rs}(Z\setminus W'_\rs) \ar[uu]_-{\simeq}\ar[l]\ar[r] &
\rH^{2c}_{(W'\setminus W'_\rs)_{\tilde{k}}}((Z\setminus W'_\rs)_{\tilde{k}}) \ar[u]_-{\simeq} \\
\rH^{2c}_{W'_k}(Z_k) \ar[u]^-{\simeq}\ar[d] & \rH^{2c}_{W'}(Z) \ar[u]_-{\simeq}\ar[l]\ar[r]\ar[d] &
\rH^{2c}_{W'_{\tilde{k}}}(Z_{\tilde{k}}) \ar[u]_-{\simeq}\ar[d] \\
\rH^{2c}_{\et}(Z_k) \ar[d] & \rH^{2c}_{\et}(Z) \ar[l]\ar[r]\ar[d] &
\rH^{2c}_{\et}(Z_{\tilde{k}}) \ar[d] \\
\rH^{2c}_{\et}(Z_{k^\ac}) & \rH^{2c}_{\et}(Z_{R^\r{sh}})
\ar[l]_-{\simeq}\ar[r]^-{\simeq} & \rH^{2c}_{\et}(Z_{\tilde{k}^\ac}), }\] in
which the coefficient sheaf for all \'{e}tale cohomology groups is
$\dZ/p^n(c)$. We have that
\begin{itemize}
  \item the last three vertical arrows are all restriction maps;
  \item the three arrows toward the first row are all isomorphisms by
      purity;
  \item the remaining four upward arrows are all isomorphisms by
      semi-purity; and
  \item the two bottom arrows are isomorphisms since $Z$ is smooth proper
      over $R$, where $R^{\r{sh}}$ is the strict henselization of $R$.
\end{itemize}
Recall how we define the cycle class map. If we start from $W$, viewed as an
element in the middle term $\bigoplus_I\dZ/p^n$, and move along three routes
(left-down, down, right-down) all the way down to the second last row, then
what we obtain are $\cl_{Z_k}(W_k)$, $\cl_Z(W)$ and
$\cl_{Z_{\tilde{k}}}(W_{\tilde{k}})$, respectively. All these classes map to
$0$ under the last vertical arrows which are restriction maps. The lemma then
follows by the functoriality of Hochschild--Serre spectral sequences.
\end{proof}

\begin{proposition}\label{pr:theta}
Let $W\in\rZ^{2d}(Z)$ be a codimension $2d$ cycle on $Z$ such that its
support is normal and flat proper over $R$, and
$\cl_{Z_k}(W_k)\in\rH^{4d}_{\et}(Z_k,\dZ/p^n(2d))^0$. For
$c\in\CH^d(X_{\tilde{k}},\dZ/p^n)$, the map $\Theta^n[W_k]_{Z_k}^n$ sends $c$
to
\begin{multline*}
[(\pi_{Y;\tilde{k}})_*(\pi_{X;\tilde{k}}^*c.W_{\tilde{k}})]_{Y_{\tilde{k}}}^n
\in\rH^1(\tilde{k},\rH^{2d-1}_{\et}(Y_{\tilde{k}^\ac},\dZ/p^n(d)))\\
\simeq\rH^1_\unr(k,\rH^{2d-1}_{\et}(Y_{k^\ac},\dZ/p^n(d)))
\subset\rH^1(k,\rH^{2d-1}_{\et}(Y_{k^\ac},\dZ/p^n(d))).
\end{multline*}
\end{proposition}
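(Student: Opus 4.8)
The plan is to unwind the definition of $\Theta^n$ step by step, matching each map in the composition \eqref{eq:theta} against the corresponding geometric operation, and to reduce everything to the ``good reduction'' comparison already established in Lemma \ref{le:comparison_smooth}. First I would invoke Lemma \ref{le:comparison_smooth} applied to the cycle $W$ on $Z$: since the support of $W$ is normal and flat proper over $R$ and $\cl_{Z_k}(W_k)$ is cohomologically trivial, part (2) tells us $[W_k]_{Z_k}^n$ lies in $\rH^1_\unr(k,\rH^{4d-1}_{\et}(Z_{k^\ac},\dZ/p^n(2d)))$, and part (3) identifies it with $[W_{\tilde k}]_{Z_{\tilde k}}^n$ under the specialization isomorphism. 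This lets me compute $\Theta^n$ entirely on the special fiber, where $Z_{\tilde k}=Y_{\tilde k}\times_{\Spec\tilde k}X_{\tilde k}$ is a smooth proper product.

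Next I would run the algebra of the K\"{u}nneth/Poincar\'{e} duality isomorphism combined with the correspondence formalism of Lemma \ref{le:multiplicative_structure}. The point is that the first map in \eqref{eq:theta}, followed by pairing against a class $\cl_{X_{\tilde k}}^0(c)$ for $c\in\CH^d(X_{\tilde k},\dZ/p^n)$, is exactly the action of $W_{\tilde k}$, viewed as a cohomological correspondence from $X_{\tilde k}$ to $Y_{\tilde k}$, on the cycle class of $c$; that is, it computes $\cl^1_{Y_{\tilde k}}$ of the push-pull $(\pi_{Y;\tilde k})_*(\pi_{X;\tilde k}^*c . W_{\tilde k})$. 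Here I use that the cycle class map is multiplicative with respect to intersection and cup product (as recalled after \eqref{eq:cycle_class}), together with the projection formula and Lemma \ref{le:functoriality} for the smooth proper morphisms $\pi_{X;\tilde k}$, $\pi_{Y;\tilde k}$. One has to be careful that $\pi_{X;\tilde k}^* c . W_{\tilde k}$ is a well-defined class in $\CH^{2d}(Z_{\tilde k},\dZ/p^n)^0$: cohomological triviality follows from that of $\cl_{Z_k}(W_k)$ via part (1) of Lemma \ref{le:comparison_smooth} and the fact that capping a $\Gamma_{\tilde k}$-invariant class with a cohomologically trivial one stays in the $0$-part (the $\xi^0$ edge map kills it by the multiplicativity of Hochschild--Serre), so that $\cl^1_{Y_{\tilde k}}$ is defined on it.

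Finally I would compare the two identifications: the composite of the middle three arrows in \eqref{eq:theta} (restriction to $\Gamma_k$-invariants, then the $\Hom$-$\rH^1$ swap, then the specialization isomorphism $\rH^{2d}_{\et}(X_{k^\ac},\dZ/p^n(d))^k\xrightarrow{\sim}\rH^{2d}_{\et}(X_{\tilde k^\ac},\dZ/p^n(d))^{\tilde k}$ — valid since $X$ is smooth proper over $R$) is precisely the smooth specialization isomorphism on the $\rH^{2d-1}(Y)$-coefficient, matching the one in Lemma \ref{le:comparison_smooth}(3). Assembling these identifications gives that $\Theta^n[W_k]^n_{Z_k}$, evaluated at $c$, equals $[(\pi_{Y;\tilde k})_*(\pi_{X;\tilde k}^*c . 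W_{\tilde k})]^n_{Y_{\tilde k}}$, lying in $\rH^1(\tilde k,\rH^{2d-1}_{\et}(Y_{\tilde k^\ac},\dZ/p^n(d)))\simeq\rH^1_\unr(k,\rH^{2d-1}_{\et}(Y_{k^\ac},\dZ/p^n(d)))$ as asserted. I expect the main obstacle to be bookkeeping: tracking which direction each Poincar\'{e}-duality and K\"{u}nneth isomorphism goes, and verifying that the ``action of a correspondence'' packaged into the first arrow of \eqref{eq:theta} really is the geometric push-pull $(\pi_{Y})_*(\pi_X^* (-) . W)$ up to the sign/normalization conventions — this is the kind of diagram chase that is conceptually routine but error-prone, and it is where Lemma \ref{le:multiplicative_structure} and Lemma \ref{le:functoriality} have to be applied with care on the smooth proper model $Z$ over $R$ (not just on a fiber) to get the unramified-class statement for free.
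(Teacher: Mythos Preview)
Your proposal is correct and follows essentially the same route as the paper: reduce to the special fiber via Lemma \ref{le:comparison_smooth}, then identify the K\"{u}nneth/Poincar\'{e}-duality--evaluation composite with the correspondence action $(\pi_Y)_*(\pi_X^*(-)\cdot W_{\tilde k})$ using Lemma \ref{le:multiplicative_structure}, Lemma \ref{le:functoriality}, and multiplicativity of the cycle class map. One minor slip: the intersection $\pi_{X;\tilde k}^*c\cdot W_{\tilde k}$ lives in $\CH^{3d}(Z_{\tilde k},\dZ/p^n)$, not $\CH^{2d}$, and its pushforward to $Y_{\tilde k}$ lands in $\CH^d(Y_{\tilde k},\dZ/p^n)^0$; this does not affect the argument.
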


\begin{proof}
By Lemma \ref{le:comparison_smooth}, we may replace $\Theta^n[W_k]_{Z_k}^n$
by $\Theta^n[W_{\tilde{k}}]_{Z_{\tilde{k}}}^n$. Moreover, it is clear that
the Chow cycle $\pi_{Y*}(\pi_{X;\tilde{k}}^*c.W_{\tilde{k}})$ belongs to
$\CH^{2d}(Y_{\tilde{k}},\dZ/p^n)^0$. Therefore, the expression
$[(\pi_{Y*}(\pi_{X;\tilde{k}}^*c.W_{\tilde{k}})]_{Y_{\tilde{k}}}^n$ is
well-defined.

By definition and Lemma \ref{le:multiplicative_structure}, the composition of
the following maps
\begin{align*}
&\rH^{4d}_{\et}(Z_{\tilde{k}},\dZ/p^n(2d))^0\otimes\rH^{2d}_{\et}(X_{\tilde{k}},\dZ/p^n(d))\\
&\to\rH^1(\tilde{k},\rH^{4d-1}_{\et}(Z_{\tilde{k}^\ac},\dZ/p^n(2d)))
\otimes\rH^{2d}_{\et}(X_{\tilde{k}^\ac},\dZ/p^n(d))^{\tilde{k}}\\
&\to\rH^1(\tilde{k},\Hom(\rH^{2d}_{\et}(X_{\tilde{k}^\ac},\dZ/p^n(d)),\rH^{2d-1}_{\et}(Y_{\tilde{k}^\ac},\dZ/p^n(d))))
\otimes\rH^{2d}_{\et}(X_{\tilde{k}^\ac},\dZ/p^n(d))^{\tilde{k}} \\
&\to\Hom(\rH^{2d}_{\et}(X_{\tilde{k}^\ac},\dZ/p^n(d))^{\tilde{k}},\rH^1(\tilde{k},\rH^{2d-1}_{\et}(Y_{\tilde{k}^\ac},\dZ/p^n(d))))
\otimes\rH^{2d}_{\et}(X_{\tilde{k}^\ac},\dZ/p^n(d))^{\tilde{k}}\\
&\to\rH^1(\tilde{k},\rH^{2d-1}_{\et}(Y_{\tilde{k}^\ac},\dZ/p^n(d))),
\end{align*}
in which the first arrow is $\xi_{Z_{\tilde{k}}}^1\otimes\r{id}$ and the last
one is the evaluation map, coincides with the following composition of maps
\begin{multline*}
\rH^{4d}_{\et}(Z_{\tilde{k}},\dZ/p^n(2d))^0\otimes\rH^{2d}_{\et}(X_{\tilde{k}},\dZ/p^n(d))
\xrightarrow{\r{id}\otimes\pi_Z^*}\rH^{4d}_{\et}(Z_{\tilde{k}},\dZ/p^n(2d))^0\otimes\rH^{2d}_{\et}(Z_{\tilde{k}},\dZ/p^n(d)) \\
\xrightarrow{\cup}\rH^{6d}_{\et}(Z_{\tilde{k}},\dZ/p^n(3d))^0
\xrightarrow{\pi_{Y*}}\rH^{2d}_{\et}(Y_{\tilde{k}},\dZ/p^n(d))^0
\xrightarrow{\xi_{Y_{\tilde{k}}}^1}\rH^1(\tilde{k},\rH^{2d-1}_{\et}(Y_{\tilde{k}^\ac},\dZ/p^n(d))).
\end{multline*}
The proposition follows since cycle class maps are multiplicative.
\end{proof}

\subsubsection{Proof of Theorem \ref{th:congruence} (1, 2)}

Let $n\geq 1$ be an integer and $\ell=\ell_1$ be a strongly $n$-admissible
prime. We apply the previous discussion to the case: $R=\dZ_{\ell^2}$ (and
hence $k=\dQ_{\ell^2}$, $\tilde{k}=\dF_{\ell^2}$),
$X=\cX_{N^+M;\dZ_{\ell^2}}$, $Y=\cY_{N^+Mq,N^-;\dZ_{\ell^2}}$ (and hence
$d=1$), and $W=\b\Delta'_{N^+M,N^-;\dZ_{\ell^2}}$. Consider the following
diagram
\begin{align}\label{eq:diagram_smooth}
\xymatrix{
\rH^1(\dQ_{\ell^2},\rH^3_{\et}(Z_{\dQ_{\ell}^\ac},\dZ/p^n(2))) \ar[d] \\
\rH^1(\dQ_{\ell^2},\rH^2_{\et}(X_{\dQ_{\ell}^\ac})\otimes\rH^1_{\et}(Y_{\dQ_{\ell}^\ac}))
\ar[r]\ar[d] & \rH^1(\dQ_{\ell^2},\pres{\sharp}{\bar\rT_\pi}^n\otimes\bar\rT_\sigma^n) \ar[d]^-{\simeq} \\
\rH^1(\dQ_{\ell^2},\Hom(\CH^1(X_{\dF_{\ell^2}},\dZ/p^n),\rH^1_{\et}(Y_{\dF_{\ell}^\ac})))
\ar[r]\ar[d]_-{\simeq}& \rH^1(\dQ_{\ell^2},\Hom(\bar\rZ_\pi^n,\bar\rT_\sigma^n))\ar[d]^-{\simeq}  \\
\Hom(\CH^1(X_{\dF_{\ell^2}},\dZ/p^n),\rH^1(\dQ_{\ell^2},\rH^1_{\et}(Y_{\dF_{\ell}^\ac})))
\ar[r]& \Hom(\bar\rZ_\pi^n,\rH^1(\dQ_{\ell^2},\bar\rT_\sigma^n)),}
\end{align}
in which
\begin{itemize}
  \item the coefficient sheaf of an \'{e}tale cohomology group is
      $\dZ/p^n(1)$ if not specified;

  \item the first horizontal arrow is induced by
      $\gamma^{N^+M/\fM,\fd}_{N^+M}\otimes\delta^{(Mq,d)}_{N^+Mq,N^-}$
      and the projection to the $(\sigma,\pi)$-component;

  \item the first right vertical arrow is induced by
      \eqref{eq:tate_class}, which is an isomorphism by Proposition
      \ref{pr:cohomology_mixed} (1);

  \item the composition of all left vertical arrows is simply $\Theta^n$
      \eqref{eq:theta}.
\end{itemize}
Denote the composite map in the above diagram by
\[\Theta_{\sigma,\pi}^n\colon\rH^1(\dQ_{\ell^2},\rH^3_{\et}(Z_{\dQ_{\ell}^\ac},\dZ/p^n(2)))
\to\Hom(\bar\rZ_\pi^n,\rH^1(\dQ_{\ell^2},\bar\rT_\sigma^n)).\] By construction, the element
$\Theta_{\sigma,\pi}^n[W_{\dQ_{\ell^2}}]^n_{Z_{\dQ_{\ell^2}}}$ is nothing but
$\tc_\ell\loc_{\ell^2}(\bar\Delta^n_{\sigma,\pi})$. The following lemma
confirms Theorem \ref{th:congruence} (1).

\begin{lem}\label{le:congruence_smooth}
The map $\Theta_{\sigma,\pi}^n[W_{\dQ_{\ell^2}}]^n_{Z_{\dQ_{\ell^2}}}$
factorizes through the natural map
\[\dZ/p^n[\cT_{N^+,N^-\ell}]^0\to\rH^1_\unr(\dQ_{\ell^2},\bar\rT_\sigma^n)
\to\rH^1(\dQ_{\ell^2},\bar\rT_\sigma^n),\] where the first arrow appears in
\eqref{eq:level_raising3_pre}.
\end{lem}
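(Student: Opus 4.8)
The plan is to use Proposition \ref{pr:theta} to obtain an explicit description of the localization $\loc_{\ell^2}(\bar\Delta^n_{\sigma,\pi})$ as the class of a cycle on the \emph{special fiber} $Y_{\dF_{\ell^2}}=\cY_{N^+Mq,N^-;\dF_{\ell^2}}$, and then observe that that cycle is supported on the supersingular locus, hence on a disjoint union of $\Spec\dF_{\ell^2}$'s canonically indexed by $\cT_{N^+,N^-\ell}$ (via Proposition \ref{pr:curve_supersingular} and the degeneracy morphisms), so that its Abel--Jacobi class already lies in the image of the map \eqref{eq:level_raising3_pre}. First I would verify the hypotheses of Proposition \ref{pr:theta} for $W=\b\Delta'_{N^+M,N^-;\dZ_{\ell^2}}$: its generic-fiber class is cohomologically trivial by Lemma \ref{le:change_cycle} (1) (with $k=\dQ_{\ell^2}$), and its support --- being (a difference of) graphs of the finite morphisms $\zeta^\dag_{N^+M,N^-}$, $\zeta^\ddag_{N^+M,N^-}$, each isomorphic to $\cY_{N^+Mq,N^-;\dZ_{\ell^2}}$ which is regular and flat proper over $\dZ_{\ell^2}$ --- is normal and flat proper over $R=\dZ_{\ell^2}$. (This is exactly why the modified cycle $\b\Delta'$ was introduced in \eqref{eq:cycle_modified} rather than working with $\b\Delta$ directly: the graph of $\zeta^\ddag$ pushed forward is no longer smooth, but the graph \emph{before} pushing forward is.) Proposition \ref{pr:theta} then tells us that for $c\in\CH^1(X_{\dF_{\ell^2}},\dZ/p^n)$, the element $\Theta^n[W_{\dQ_{\ell^2}}]^n$ evaluated at $c$ equals $[(\pi_{Y;\dF_{\ell^2}})_*(\pi_{X;\dF_{\ell^2}}^*c.W_{\dF_{\ell^2}})]^n_{Y_{\dF_{\ell^2}}}$, lying in $\rH^1_\unr(\dQ_{\ell^2},\rH^1_{\et}(Y_{\dQ_\ell^\ac},\dZ/p^n(1)))$.

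Next I would specialize to the relevant $c$'s: by \eqref{eq:tate_class} it suffices to evaluate at classes $c$ coming from $\bar\rZ_\pi^n$, i.e.\ (the reductions of) cycles supported on the superspecial locus $\cX^\ssp_{N^+M;\dF_{\ell^2}}$. For such $c$, the intersection $\pi_{X;\dF_{\ell^2}}^*c.W_{\dF_{\ell^2}}$ is supported over $\cI=\b\Delta_{N^+M,N^-;\dF_{\ell^2}}\cap\pi_\cX^{-1}\cX^\ssl_{N^+M;\dF_{\ell^2}}$ (and its $\dag$/$\ddag$ analogues), which by Lemma \ref{le:proper_intersection} (and the auxiliary-prime variant obtained by pre-composing with the degeneracy map $\delta^{(q,1)}_{N^+Mq,N^-}$, resp.\ $\delta^{(q,q)}_{N^+Mq,N^-}$) is a reduced disjoint union of $\dF_{\ell^2}$-points whose image under $\pi_Y$ lies in the supersingular locus $\cY^\ssl_{N^+Mq,N^-;\dF_{\ell^2}}$. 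Hence $(\pi_{Y;\dF_{\ell^2}})_*(\pi^*_{X;\dF_{\ell^2}}c.W_{\dF_{\ell^2}})$ is a $\dZ/p^n$-linear combination of supersingular points of $\cY_{N^+Mq,N^-;\dF_{\ell^2}}$, i.e.\ an element of $\dZ/p^n[\cY^\ssl_{N^+Mq,N^-;\dF_{\ell^2}}(\dF_{\ell^2})]$, which after applying $(\delta^{(Mq,d)}_{N^+Mq,N^-})_*$ lands in $\dZ/p^n[\cY^\ssl_{N^+M,N^-;\dF_{\ell^2}}(\dF_{\ell^2})]=\dZ/p^n[\cT_{N^+M,N^-\ell}]$ by Proposition \ref{pr:curve_supersingular} and Remark \ref{re:degeneracy_supersingular}; being a difference of the $\dag$ and $\ddag$ pieces (and cohomologically trivial), it in fact lies in the degree-zero part $\dZ/p^n[\cT_{N^+M,N^-\ell}]^0$. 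The Abel--Jacobi class of such a zero-cycle supported on supersingular points, via the formalism recalled just before \eqref{eq:level_raising3_pre} (the Kummer map on $J_{N^+M,N^-\ell}(\dQ_{\ell^2})$ composed with the component-group/specialization maps), is precisely the image under the natural map in \eqref{eq:level_raising3_pre}. This establishes the factorization claimed, after noting that the $(d,\fd)$-degeneracy projections and the $\sigma$-projector $\sP_\sigma$ commute with these specialization maps.

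The main obstacle I anticipate is \emph{bookkeeping with the two auxiliary degeneracies and the $\sP_\sigma$-projection}: the cycle $\b\Delta'$ lives on $\cZ'_{N^+M,N^-}=\cY_{N^+Mq,N^-}\times\cX_{N^+M}$ with the extra auxiliary prime $q$, and one must track how $\delta^{(q,1)}_{N^+Mq,N^-}$, $\delta^{(q,q)}_{N^+Mq,N^-}$, the level-lowering degeneracies $\delta^{(Mq,d)}$, $\gamma^{(N^+M/\fM,\fd)}$, and the Hecke projector interact with reduction mod $\ell$ and with the canonical indexing of the supersingular locus. The compatibility of the special morphisms $\zeta$ with degeneracy maps on both sides (Definition \ref{de:special}, and Remarks \ref{re:degeneracy_supersingular}, \ref{re:degeneracy_surface}, \ref{re:degeneracy_superspecial}) is exactly what makes this go through, but assembling these compatibilities carefully --- and checking that the various maps in diagram \eqref{eq:diagram_smooth} commute with the geometric specialization of Lemma \ref{le:comparison_smooth} --- is where the real work lies. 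The precise computation of the \emph{value} of this factored map (i.e.\ parts (2) and (3) of Theorem \ref{th:congruence}) is a separate matter, deferred; here we only need the factorization, so the cohomological-triviality and support arguments above suffice.
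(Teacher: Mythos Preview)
Your approach is essentially the same as the paper's: apply Proposition \ref{pr:theta} to reduce to an intersection on the special fiber, invoke Lemma \ref{le:proper_intersection} to see that the resulting zero-cycle is supported on the supersingular locus of the Shimura curve, and then push down by the degeneracy maps to land in $\dZ/p^n[\cT_{N^+,N^-\ell}]^0$. One small correction: the classes in $\bar\rZ_\pi^n$ are supported on the \emph{supersingular} locus $\cX^\ssl_{N^+M;\dF_{\ell^2}}$ (they are linear combinations of the $\bP^1$-components indexed by $\cS_{N^+M}^\bullet\sqcup\cS_{N^+M}^\circ$), not on the superspecial locus $\cX^\ssp$; it is Lemma \ref{le:proper_intersection} that forces the intersection with the graph to land over $\cX^\ssp$, and the paper then uses Proposition \ref{pr:special_supersingular} to identify the resulting map $\cT_{N^+Mq,N^-\ell}\to\cS_{N^+M}$ explicitly. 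Also watch the level bookkeeping in your final step: after applying $(\delta^{(Mq,d)}_{N^+Mq,N^-})_*$ you land in $\dZ/p^n[\cT_{N^+,N^-\ell}]$, not $\dZ/p^n[\cT_{N^+M,N^-\ell}]$.
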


\begin{proof}
This is a corollary of Proposition \ref{pr:theta}. Indeed, combining with
Lemma \ref{le:proper_intersection}, Proposition
\ref{pr:special_supersingular} and the definition of $\b\Delta'_{N^+M,N^-}$,
we have for $f_{\nu^\bullet,\nu^\circ}\in\bar\rZ_\pi^n$,
\begin{multline*}
\pi_{Y*}(\pi_{X;\tilde{k}}^*f_{\nu^\bullet,\nu^\circ}.W_{\tilde{k}})\\
=\frac{1}{q+1-\tr(\Fr_q;\rV_\sigma)}
(\zeta_{N^+M,N^-\ell}\circ(\nu^\bullet\r{id}+\nu^\circ\op_{\ell})
\circ(\delta^{(q,1)}_{N^+Mq,N^-\ell}-\delta^{(q,q)}_{N^+Mq,N^-\ell}))^*f_\pi^{(\fd)}
\end{multline*}
viewed as an element in
$\dZ/p^n[\cT_{N^+Mq,N^-\ell}]^0\subset\CH^1(Y_{\dF_{\ell^2}},\dZ/p^n)^0$. The
lemma follows by taking the projection $\delta^{(M,d)}_{N^+Mq,N^-\ell}$.
\end{proof}

From the above proof, we have
\begin{align*}
&\tu_{\ell}\Theta_{\sigma,\pi}^n[W_{\dQ_{\ell^2}}]^n_{Z_{\dQ_{\ell^2}}}(f_{\nu^\bullet,\nu^\circ})\\
&=\frac{1}{q+1-\tr(\Fr_q;\rV_\sigma)}\sum_{t\in\cT_{N^+Mq,N^-\ell}}
((\delta^{(Mq,d)}_{N^+Mq,N^-\ell})^*g_{\sigma\res\ell}^n)(t) \\
&\times((\zeta_{N^+M,N^-\ell}\circ(\nu^\bullet\r{id}+\nu^\circ\op_{\ell})
\circ(\delta^{(q,1)}_{N^+Mq,N^-\ell}-\delta^{(q,q)}_{N^+Mq,N^-\ell}))^*f_\pi^{(\fd)})(t) \\
&=\sum_{t\in\cT_{N^+M,N^-\ell}}
((\delta^{(M,d)}_{N^+M,N^-\ell})^*g_{\sigma\res\ell}^n)(t)\cdot
((\zeta_{N^+M,N^-\ell}\circ(\nu^\bullet\r{id}+\nu^\circ\op_{\ell}))^*f_\pi^{(\fd)})(t).
\end{align*}
Theorem \ref{th:congruence} (2) follows since
$g_{\sigma\res\ell}^n\circ\op_{\ell}=\epsilon_\sigma(\ell)g_{\sigma\res\ell}^n$.

\subsection{Computation of localization, II}
\label{ss:computation_ii}

We keep the notation from the previous section. In addition, we assume that
$R$ is a \emph{strictly henselian} discrete valuation ring, and $d=1$. In
particular, the morphism $f\colon Y\to\Spec R$ is a flat proper curve with
smooth generic fiber. Let $n\geq 1$ be an integer.

\subsubsection{SNC reduction in general case}

\begin{definition}
We say that a flat proper curve $f\colon Y\to\Spec R$ is an \emph{SNC curve}
if
\begin{enumerate}
  \item $f_*\sO_Y=\sO_{\Spec R}$;

  \item $Y$ is regular and $Y_k$ is smooth;

  \item all irreducible components of $Y_{\tilde{k}}$ are smooth and
      their intersection points are nodal.
\end{enumerate}
\end{definition}

Let $Y/R$ be an SNC curve. In this paragraph, we recall some facts from
\cite{BLR90}*{\Sec 9.5 \& 9.6}. Denote by $\Pic_{Y/R}$ the relative Picard
functor of $Y/R$, $\rP_{Y/R}$ the open subfunctor of $\Pic_{Y/R}$ defined by
line bundles of total degree $0$. By \cite{BLR90}*{\Sec 9.5, Theorem 4}, the
largest separated quotient $\rJ_{Y/R}$ of $\rP_{Y/R}$ is representable, which
is in fact a N\'{e}ron model of the Jacobian of $Y_k$ over $\Spec R$. Let
$\Phi_{Y/R}$ be the group of connected components of $\rJ_{Y/R;\tilde{k}}$.
Denote by $\{Y_i\}_{i\in I}$ the set of irreducible components of
$Y_{\tilde{k}}$. Consider maps
\[\dZ[I]\xrightarrow{\alpha}\dZ[I]\xrightarrow{\deg}\dZ,\]
where $\alpha$ is induced by the intersection matrix $(Y_i.Y_j)_{i,j\in I}$
and $\deg$ is the degree map. Then $\Phi_{Y/R}$ is canonically isomorphic to
$\Ker\deg/\IM\alpha$. The projection map
$\phi\colon\rP_{Y/R}(\tilde{k})\to\Phi_{Y/R}$ is computed by the formula
$\phi(L)=(L.Y_i)_{i\in I}$ for every line bundle $L$ on $Y_{\tilde{k}}$ of
total degree $0$. We also have a reduction map $\rJ_{Y/R}(k)\to\Phi_{Y/R}$.
Denote by $\Pic_{Y/R}(\tilde{k})_n$ the subset of $L\in\Pic_{Y/R}(\tilde{k})$
such that $p^n\mid\sum_{i\in I}L.Y_i$. Then the natural map
\[\rP_{Y/R}(\tilde{k})/p^n\rP_{Y/R}(\tilde{k})\to\Pic_{Y/R}(\tilde{k})_n/p^n\Pic_{Y/R}(\tilde{k})\]
is a bijection. Therefore, the map $\phi$ induces the following map
\[\phi_n\colon\Pic_{Y/R}(\tilde{k})_n/p^n\Pic_{Y/R}(\tilde{k})\to\Phi_{Y/R}/p^n\Phi_{Y/R}.\]

On the other hand, we have the Kummer map
\[\rP_{Y/R}(k)/p^n\rP_{Y/R}(k)=\rJ_{Y/R}(k)/p^n\rJ_{Y/R}(k)\to\rH^1(k,\rJ_{Y/R}[p^n](k^\ac)).\]
If we denote by $\Pic_{Y/R}(k)_n$ the subset of $L\in\Pic_{Y/R}(k)$ with
degree divisible by $p^n$, then the natural map
\[\rP_{Y/R}(k)/p^n\rP_{Y/R}(k)\to\Pic_{Y/R}(k)_n/p^n\Pic_{Y/R}(k)\]
is a bijection. Therefore, the above Kummer map induces the following map
\begin{align}\label{eq:kummer_map}
\Pic_{Y/R}(k)_n/p^n\Pic_{Y/R}(k)\to\rH^1(k,\rJ_{Y/R}[p^n](k^\ac))
\end{align}

\begin{lem}\label{le:kummer_singular}
The map \eqref{eq:kummer_map} factorizes through the (surjective) reduction
map
\[\Pic_{Y/R}(k)_n/p^n\Pic_{Y/R}(k)\simeq\rJ_{Y/R}(k)/p^n\rJ_{Y/R}(k)\to\Phi_{Y/R}/p^n\Phi_{Y/R}.\]
In particular, we have a natural map
$\kappa_n\colon\Phi_{Y/R}/p^n\Phi_{Y/R}\to\rH^1(k,\rJ_{Y/R}[p^n](k^\ac))$.
\end{lem}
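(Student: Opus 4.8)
The plan is to reduce the statement to the structure of the N\'eron model $\rJ\colonequals\rJ_{Y/R}$ over the base $R$, which is a strictly henselian discrete valuation ring in which $p$ is invertible. First I would record two basic facts. By the N\'eron mapping property, $\rJ(k)=\rJ(R)$. Next, since $\rJ$ is smooth over $R$ with $R$ henselian, the reduction map $\rJ(R)\to\rJ_{\tilde k}(\tilde k)$ is surjective; composing with $\rJ_{\tilde k}(\tilde k)\to\Phi_{Y/R}$, which is surjective because $\rJ_{\tilde k}$ is smooth over the separably closed field $\tilde k$ (so every connected component carries a rational point), one obtains a surjection $\rho\colon\rJ(k)\twoheadrightarrow\Phi_{Y/R}$ whose kernel is exactly $\rJ^0(R)$, the group of $R$-points of the identity component $\rJ^0\subset\rJ$. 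Under the identifications recalled in the excerpt, $\rP_{Y/R}(k)/p^n\rP_{Y/R}(k)=\rJ_{Y/R}(k)/p^n\rJ_{Y/R}(k)\simeq\Pic_{Y/R}(k)_n/p^n\Pic_{Y/R}(k)$, the ``reduction map'' appearing in the statement is precisely the map $\rJ_{Y/R}(k)/p^n\rJ_{Y/R}(k)\to\Phi_{Y/R}/p^n\Phi_{Y/R}$ induced by $\rho$, and it is surjective. So everything comes down to showing that the Kummer map \eqref{eq:kummer_map} annihilates the image of $\ker\rho=\rJ^0(R)$ in $\rJ(k)/p^n\rJ(k)$.

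The heart of the matter is the inclusion $\rJ^0(R)\subseteq p^n\rJ(R)$. Granting it, $\rJ^0(R)$ maps to $0$ in $\rJ(k)/p^n\rJ(k)$, so the Kummer map factors through $\rJ(k)/(\rJ^0(R)+p^n\rJ(k))$; the isomorphism $\rJ(k)/\rJ^0(R)\xrightarrow\sim\Phi_{Y/R}$ induced by $\rho$ identifies this quotient with $\Phi_{Y/R}/p^n\Phi_{Y/R}$, and the induced map is the desired $\kappa_n$. To prove the inclusion I would use that $p\in R^\times$: the multiplication-by-$p^n$ map $[p^n]\colon\rJ^0\to\rJ^0$ is a homomorphism of smooth $R$-group schemes whose differential at the identity section is multiplication by $p^n$ on $\Lie\rJ^0$, hence invertible, so $[p^n]$ is \'etale; on each geometric fiber its image is an open, hence closed, subgroup of a connected group, so $[p^n]$ is fiberwise surjective. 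Now given $x\in\rJ^0(R)$, I first lift its reduction $\bar x\in\rJ^0_{\tilde k}(\tilde k)$ along $[p^n]$ (possible since $\tilde k$ is separably closed and $[p^n]$ is surjective and \'etale on $\rJ^0_{\tilde k}$), and then lift the resulting $\tilde k$-point of the \'etale $R$-scheme $\rJ^0\times_{[p^n],\rJ^0,x}\Spec R$ to an $R$-point using that $R$ is henselian. This yields $y\in\rJ^0(R)$ with $[p^n]y=x$, so $x\in p^n\rJ^0(R)\subseteq p^n\rJ(R)$, as wanted.

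The only genuine obstacle is bookkeeping around N\'eron models and smooth group schemes over a henselian base: one must make sure the reduction map surjects onto $\Phi_{Y/R}$ with kernel exactly $\rJ^0(R)$, and that the \'etale-lifting argument for $[p^n]$ is set up correctly. All of this is standard, drawing on \cite{BLR90}*{\Sec 9.5 \& 9.6} and the general theory of smooth group schemes; no input beyond what has already been assembled is required, and once the inclusion $\rJ^0(R)\subseteq p^n\rJ(R)$ is in hand the argument is essentially formal.
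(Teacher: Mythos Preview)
Your proof is correct and follows essentially the same route as the paper. Both arguments hinge on the short exact sequence $0\to\rJ^0(R)\to\rJ(k)\to\Phi_{Y/R}\to 0$ together with the fact that $[p^n]\colon\rJ^0\to\rJ^0$ is \'etale surjective since $p\in R^\times$; the only difference is packaging. You prove directly, via a Hensel-lifting argument on the fiber of $[p^n]$, that $\rJ^0(R)$ is $p^n$-divisible, so its image in $\rJ(k)/p^n\rJ(k)$ vanishes. The paper instead writes down the commutative square comparing the Kummer maps for $\rJ^0$ over $\Spec R$ and for $\rJ$ over $\Spec k$, and invokes the vanishing of $\rH^1(\Spec R,\rJ^0[p^n])$ (immediate since $R$ is strictly henselian and $\rJ^0[p^n]$ is finite \'etale). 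Since that Kummer map is injective, the cohomological vanishing is equivalent to your $p^n$-divisibility statement; the two arguments are really the same computation seen from two angles.
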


\begin{proof}
Denote by $\rJ^0_{Y/R}$ the identity component of $\rJ_{Y/R}$. Then we have
an exact sequence
\[0 \to \rJ^0_{Y/R}(R) \to \rJ_{Y/R}(k) \to \Phi_{Y/R} \to 0.\]
Since $p$ is invertible on $R$, the multiplication morphism
$p^n\colon\rJ^0_{Y/R}\to\rJ^0_{Y/R}$ is \'{e}tale and surjective. Thus one
has the Kummer map for $\rJ^0_{Y/R}$ as well, which yields the following
commutative diagram
\[\xymatrix{
\rJ^0_{Y/R}(R)/p^n\rJ^0_{Y/R}(R) \ar[r]\ar[d] & \rH^1(\Spec R,\rJ^0_{Y/R}[p^n]) \ar[d] \\
\rJ_{Y/R}(k)/p^n\rJ_{Y/R}(k) \ar[r]& \rH^1(k,\rJ_{Y/R}[p^n](k^\ac)). }\]
Since $R$ is strictly henselian, the cohomology $\rH^1(\Spec
R,\rJ^0_{Y/R}[p^n])$ vanishes. The lemma follows since the image of the map
$\rJ^0_{Y/R}(R)/p^n\rJ^0_{Y/R}(R)\to\rJ_{Y/R}(k)/p^n\rJ_{Y/R}(k)$ coincides
with the kernel of $\rJ_{Y/R}(k)/p^n\rJ_{Y/R}(k)\to\Phi_{Y/R}/p^n\Phi_{Y/R}$.
\end{proof}

Denote by $\widehat\Theta^n$ the composition of the following maps:
\begin{align}\label{eq:vartheta}
&\widehat\Theta^n\colon\rH^1(k,\rH^3_{\et}(Z_{k^\ac},\dZ/p^n(2))) \\
&\to\rH^1(k,\Hom(\rH^2_{\et}(X_{k^\ac},\dZ/p^n(1)),\rH^1_{\et}(Y_{k^\ac},\dZ/p^n(1)))) \notag \\
&\xrightarrow\sim\rH^1(k,\Hom(\rH^2_{\et}(X_{k^\ac},\dZ/p^n(1)),\rJ_{Y,R}[p^n](k^\ac))) \notag \\
&\xrightarrow\sim\Hom(\rH^2_{\et}(X_{\tilde{k}},\dZ/p^n(1)),\rH^1(k,\rJ_{Y,R}[p^n](k^\ac)))\notag \\
&\to\Hom(\CH^1(X_{\tilde{k}},\dZ/p^n),\rH^1(k,\rJ_{Y,R}[p^n](k^\ac))), \notag
\end{align}
where we recall that $X/R$ is smooth proper.

Let $W=\sum a_iW_i$ be a cycle of $Z$ such that each $W_i$ is the graph of a
finite flat morphism $w_i\colon Y\to X$, such that
$W_k\in\CH^2(Z_k,\dZ/p^n)^0$. In particular, we have the element
$[W_k]_{Z_k}^n\in\rH^1(k,\rH^{3}_{\et}(Z_{k^\ac},\dZ/p^n(2)))$.

\begin{proposition}\label{pr:vartheta}
For $L\in\CH^1(X_{\tilde{k}},\dZ/p^n)$, the element $\sum a_i\cdot w_i^*L$ is
naturally in the quotient group
$\Pic_{Y/R}(\tilde{k})_n/p^n\Pic_{Y/R}(\tilde{k})$, and moreover
\[\widehat\Theta^n[W_k]_{Z_k}^n(L)=\kappa_n\phi_n\(\sum a_i\cdot w_i^*L\).\]
\end{proposition}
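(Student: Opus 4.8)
The plan is to adapt the proof of Proposition \ref{pr:theta} to the semistable setting, replacing the étale Abel--Jacobi computation on a smooth model by the corresponding computation on the Néron model of the Jacobian. First I would recall that since each $W_i$ is the graph of a finite flat morphism $w_i\colon Y\to X$ over $R$, the cycle $\sum a_i\, w_i^*L$ for $L\in\Pic_{Y/R}(\tilde k)$ is a well-defined line bundle (or rather divisor class) on $Y_{\tilde k}$, and the hypothesis $W_k\in\CH^2(Z_k,\dZ/p^n)^0$ forces the total degree of $\sum a_i\, w_i^*L_k$ to be divisible by $p^n$; hence $\sum a_i\, w_i^* L$ genuinely lands in $\Pic_{Y/R}(\tilde k)_n/p^n\Pic_{Y/R}(\tilde k)$, so that the right-hand side $\kappa_n\phi_n(\sum a_i\, w_i^*L)$ makes sense. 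The map $\widehat\Theta^n$ in \eqref{eq:vartheta} is built exactly as $\Theta^n$ in \eqref{eq:theta}, with the only new ingredient being the identification $\rH^1_{\et}(Y_{k^\ac},\dZ/p^n(1))\simeq\rJ_{Y/R}[p^n](k^\ac)$ coming from the Néron model; since $R$ is strictly henselian and $X/R$ smooth proper, the edge maps are isomorphisms as in the smooth case.

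The key step is then to track the cycle $W$ through the diagram. I would unwind $\widehat\Theta^n[W_k]_{Z_k}^n$ using Lemma \ref{le:multiplicative_structure} (multiplicativity of cycle class maps with respect to correspondences) precisely as in the proof of Proposition \ref{pr:theta}: for a Tate class $L$ on $X_{\tilde k}$, pulling back along $\pi_X$, cupping with $\cl_Z(W)$, and pushing forward along $\pi_Y$ realizes the composite as the étale Abel--Jacobi class of $(\pi_Y)_*(\pi_X^* L . W)=\sum a_i\, w_i^* L$ on $Y$. The point where the argument genuinely diverges from the smooth case is that $Y/R$ is no longer smooth, so one cannot invoke Lemma \ref{le:comparison_smooth} to reduce to the special fiber; instead the class $[\,\sum a_i\, w_i^* L\,]_{Y_k}^n\in\rH^1(k,\rJ_{Y/R}[p^n](k^\ac))$ must be computed directly. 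Here I would use that the Kummer map for $\rP_{Y/R}$ factors through the component group, i.e. Lemma \ref{le:kummer_singular}: the class of a degree-$p^n$-divisible line bundle $M$ on $Y_k$ equals $\kappa_n$ applied to the image of $M$ under the reduction map $\rJ_{Y/R}(k)\to\Phi_{Y/R}/p^n$, and by the explicit formula $\phi(M)=(M.Y_i)_{i\in I}$ this reduction is exactly $\phi_n$ applied to the specialization of $M$ to $Y_{\tilde k}$, which is $\sum a_i\, w_i^*L$. Combining, $\widehat\Theta^n[W_k]_{Z_k}^n(L)=\kappa_n\phi_n(\sum a_i\, w_i^*L)$, as claimed.

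The main obstacle I anticipate is the careful bookkeeping needed to identify the specialization of the generic-fiber line bundle $\sum a_i\, w_i^* L_k$ with the class $\sum a_i\, w_i^* L$ formed on the special fiber, and to verify that this specialization is compatible with the isomorphism $\rH^1_{\et}(Y_{k^\ac},\dZ/p^n(1))\simeq\rJ_{Y/R}[p^n](k^\ac)$ and with the reduction map $\rJ_{Y/R}(k)\to\Phi_{Y/R}$ appearing in Lemma \ref{le:kummer_singular}. Concretely, since $L$ is only given on $X_{\tilde k}$ and $X/R$ is smooth proper with $R$ strictly henselian, $L$ lifts (non-canonically, but canonically modulo $p^n$ after applying $\cl^0$) to a class on $X$, and one must check that the whole construction is insensitive to the choice of lift --- this is where the condition $W_k\in\CH^2(Z_k,\dZ/p^n)^0$ and the torsion coefficient $\dZ/p^n$ are used. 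Once that compatibility is in place, the proposition follows formally from the multiplicativity of cycle class maps together with Lemmas \ref{le:functoriality}, \ref{le:multiplicative_structure}, and \ref{le:kummer_singular}, mirroring the structure of the proof of Proposition \ref{pr:theta}.
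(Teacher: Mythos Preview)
Your approach is essentially the same as the paper's, and the key moves you describe---lift $L$ cohomologically from $X_{\tilde k}$ to $X$ via smooth proper base change, pull back along the $w_i$ to $\rH^2_{\et}(Y,\dZ/p^n(1))$, restrict to $Y_k$, and then identify the resulting $\xi^1_{Y_k}$ with the Kummer map so that Lemma \ref{le:kummer_singular} applies---are exactly what the paper does. Your third paragraph correctly locates the real content: the compatibility between the edge map on $Y_k$ and the reduction-to-components computation on $Y_{\tilde k}$.

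There is, however, one technical step you are treating as automatic that the paper isolates as a separate lemma (stated immediately after the proposition). After lifting and pulling back, you have a class $c=\sum a_i\, w_i^*[L]\in\rH^2_{\et}(Y,\dZ/p^n(1))$ on the \emph{integral model}, and you need to know that $\xi^1_{Y_k}(c\res_{Y_k})=\kappa_n\bigl((\deg c\res_{Y_i})_{i\in I}\bigr)$. Your proposal asserts this follows from Lemma \ref{le:kummer_singular} plus the formula $\phi(L)=(L.Y_i)_i$, but that formula is for honest line bundles on $Y_{\tilde k}$, whereas $c$ is only a cohomology class on $Y$. The paper bridges this gap by invoking the surjectivity of the cycle class map $\cl_Y\colon\rZ^1(Y)\otimes\dZ/p^n\to\rH^2_{\et}(Y,\dZ/p^n(1))$, a nontrivial input from Saito--Sato \cite{SS10}*{Theorem 5.1}, to represent $c$ by an actual cycle $C$ on $Y$; one then decomposes $C=C_h+C_v$ into horizontal and vertical parts, checks that $C_v$ contributes nothing on either side, and for $C_h$ reduces to the Kummer/N\'eron picture where Lemma \ref{le:kummer_singular} applies directly. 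You should make this lemma and its dependence on \cite{SS10} explicit; otherwise the step ``the reduction is exactly $\phi_n$ applied to the specialization'' is not justified.
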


\begin{proof}
The first statement is immediate since $W_k\in\CH^2(Z_k,\dZ/p^n)^0$ and by
the compatibility of cycle class maps under specialization.

Since $\rH^2_{\et}(X_{\tilde{k}},\dZ/p^n(1))\simeq\rH^2_{\et}(X,\dZ/p^n(1))$,
we may regard $\cl_{X_{\tilde{k}}}(L)$ as an element in
$\rH^2_{\et}(X,\dZ/p^n(1))$, denoted by a new notation $[L]$ to distinguish.
Denote by $[L]_k$ the restriction of $[L]$ in $\rH^2_{\et}(X_k,\dZ/p^n(1))$.
Then $\sum a_i\cdot w_{i;k}^*[L]_k$ belongs to
$\rH^2_{\et}(Y_k,\dZ/p^n(1))^0$.

By a similar argument in the proof of Proposition \ref{pr:theta}, we find
that
\begin{align}\label{eq:vartheta1}
\widehat\Theta^n[W_k]_{Z_k}^n(L)=\xi_{Z_k}^1\(\sum a_i\cdot w_{i;k}^*[L]_k\).
\end{align}
Note that the following diagram
\[\xymatrix{
\rH^2_{\et}(X_k,\dZ/p^n(1)) \ar[rr]^-{\sum a_i\cdot w_{i;k}^*} &&
\rH^2_{\et}(Y_k,\dZ/p^n(1)) \\
\rH^2_{\et}(X,\dZ/p^n(1)) \ar[rr]^-{\sum a_i\cdot w_i^*} \ar[u] &&
\rH^2_{\et}(Y,\dZ/p^n(1)) \ar[u] }\] is commutative. Therefore,
\eqref{eq:vartheta1} is the restriction of $\sum a_i\cdot
w_{i}^*[L]\in\rH^2_{\et}(Y,\dZ/p^n(1))$ to $Y_k$, which belongs to
$\rH^2_{\et}(Y_k,\dZ/p^n(1))^0$, followed by $\xi_{Y_k}^1$. The proposition
follows from the lemma below.
\end{proof}

\begin{lem}
Let $Y$ be an SNC curve over $R$. Denote by $\rH^2_{\et}(Y,\dZ/p^n(1))^0$ the
inverse image of $\rH^2_{\et}(Y_k,\dZ/p^n(1))^0$ under the restriction map.
We have the following commutative diagram
\[\xymatrix{
\rH^2_{\et}(Y_k,\dZ/p^n(1))^0 \ar[r]^-{\xi_{Y_k}^1} & \rH^1(k,\rJ_{Y/R}[p^n](k^\ac)) \\
\rH^2_{\et}(Y,\dZ/p^n(1))^0 \ar[r]\ar[u] & \Phi_{Y/R}/p^n\Phi_{Y/R}
\ar[u]_-{\kappa_n}, }\] in which the bottom arrow sends
$c\in\rH^2_{\et}(Y,\dZ/p^n(1))^0$ to $(\deg c\res_{Y_i}\mod p^n)_{i\in I}$
which is naturally an element in $\Phi_{Y/R}/p^n\Phi_{Y/R}$.
\end{lem}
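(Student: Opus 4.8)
The plan is to reduce the square to the case where $c$ is the cycle class of a line bundle on $Y$, and then to match the Hochschild--Serre edge map $\xi^1_{Y_k}$ on divisor classes with the Kummer map of the N\'{e}ron model $\rJ_{Y/R}$ across the generic and special fibres, the bridge being Lemma \ref{le:kummer_singular} and the explicit formula for $\phi$ recalled just before it.

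First I would set up the bottom arrow and carry out the reduction. Since $R$ is strictly henselian, proper base change gives $\rH^2_{\et}(Y,\dZ/p^n(1))\simeq\rH^2_{\et}(Y_{\tilde{k}},\dZ/p^n(1))$, and normalizing the nodal curve $Y_{\tilde{k}}$ identifies the latter canonically with $\bigoplus_{i\in I}\dZ/p^n$ via $c\mapsto(\deg c\res_{Y_i})_{i\in I}$. The trace map $\rH^2_{\et}(Y,\dZ/p^n(1))\to\dZ/p^n$ restricts to the total-degree map on both $Y_k$ and $Y_{\tilde{k}}$, so membership of $c$ in $\rH^2_{\et}(Y,\dZ/p^n(1))^0$ forces $\sum_i\deg c\res_{Y_i}=0$; thus $(\deg c\res_{Y_i})_{i\in I}\in\Ker(\deg\bmod p^n)$ and gives a well-defined element of $\Phi_{Y/R}/p^n\Phi_{Y/R}=\Ker(\deg\bmod p^n)/\IM(\alpha\bmod p^n)$, which is the bottom arrow. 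Next, for each $i$ I would choose a section $\Spec R\to Y$ through a smooth $\tilde{k}$-point of $Y_i$ lying on no other component (available since $\tilde{k}$ is separably closed and $R$ is henselian); the classes of the resulting horizontal Cartier divisors form the standard basis of $\bigoplus_{i\in I}\dZ/p^n$, so $\cl_Y\colon\Pic(Y)/p^n\to\rH^2_{\et}(Y,\dZ/p^n(1))$ is surjective. Since both composites in the square are additive in $c$, it suffices to verify commutativity for $c=\cl_Y(\sL)$ with $\sL\in\Pic(Y)$ of fibral degree $0$ (automatic once it holds on $Y_k$, by constancy of degree in the flat family).

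For such a $c$, the generic fibre is $c_k=\cl_{Y_k}(\sL_k)$ with $\sL_k\in\Pic^0(Y_k)=\rJ_{Y/R}(k)$. By the standard identification of the \'{e}tale Abel--Jacobi map of a smooth proper curve with the Kummer map of its Jacobian, $\xi^1_{Y_k}(c_k)$ is the image of $\sL_k$ under the Kummer map \eqref{eq:kummer_map}. On the other hand $\sL$, of constant fibral degree $0$, is a section of $\rP_{Y/R}$ over $R$; by \cite{BLR90}*{\Sec 9.5 \& 9.6} its image in $\rJ_{Y/R}(R)=\rJ_{Y/R}(k)$ (N\'{e}ron property) is $\sL_k$, while its reduction in the component group is $\phi(\sL_{\tilde{k}})=(\sL_{\tilde{k}}.Y_i)_{i\in I}=(\deg\sL\res_{Y_i})_{i\in I}$, the last equality because each $Y_i$ is a smooth proper curve; modulo $(\IM\alpha,p^n)$ this is exactly the bottom-arrow image of $c$. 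Finally, Lemma \ref{le:kummer_singular} says the Kummer map \eqref{eq:kummer_map} factors as $\kappa_n$ composed with the reduction $\rJ_{Y/R}(k)/p^n\to\Phi_{Y/R}/p^n\Phi_{Y/R}$; combining the last three observations gives $\xi^1_{Y_k}(c_k)=\kappa_n\bigl((\deg c\res_{Y_i})_{i\in I}\bmod(\IM\alpha,p^n)\bigr)$, i.e.\ the square commutes.

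The one genuinely non-formal ingredient is the coherence, across the two fibres, of the cohomological description of $\xi^1$ on divisor classes with the geometric (N\'{e}ron model / component group) description of the reduction map --- precisely the content that has already been isolated in Lemma \ref{le:kummer_singular} together with the formula for $\phi$, so the hard work is done once that is in hand. Everything else (proper base change, the structure of $\rH^2$ of a nodal curve, surjectivity of $\cl_Y$, constancy of fibral degree) is routine. The only thing to keep honest is that the identifications $\rH^1_{\et}(Y_{k^\ac},\dZ/p^n(1))=\rJ_{Y/R}[p^n](k^\ac)$ and $\Pic^0(Y_k)=\rJ_{Y/R}(k)$ are used with the same normalizations as those fixed in the setup of \eqref{eq:kummer_map} and Lemma \ref{le:kummer_singular}, which is the case.
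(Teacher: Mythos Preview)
Your proof is correct and follows the same route as the paper's: reduce to divisor classes via surjectivity of $\cl_Y$, identify $\xi^1_{Y_k}$ on such classes with the Kummer map of the Jacobian, and invoke Lemma \ref{le:kummer_singular}. The only difference is cosmetic: the paper cites \cite{SS10}*{Theorem 5.1} for surjectivity of $\cl_Y$ and then splits an arbitrary cycle $C=C_h+C_v$ into horizontal and vertical parts (the vertical part mapping to zero on both sides, on the $\Phi_{Y/R}$ side because its intersection vector lies in $\IM\alpha$), whereas you produce explicit horizontal sections through smooth points of the $Y_i$ to witness surjectivity and thereby work directly with degree-$0$ line bundles, bypassing the vertical contribution entirely.
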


\begin{proof}
By \cite{SS10}*{Theorem 5.1}, the cycle class map
\[\cl_Y\colon\rZ^1(Y)\otimes_{\dZ}\dZ/p^n\to\rH^2_{\et}(Y,\dZ/p^n(1))\]
is surjective. Denote by $\rZ^1(Y,\dZ/p^n)^0$ the inverse image of
$\rH^2_{\et}(Y,\dZ/p^n(1))^0$ and $j\colon Y_k\to Y$ the open immersion. For
every $C\in\rZ^1(Y,\dZ/p^n)^0$, we may write $C=C_h+C_v$ where $C_v$ is
vertical and $C_h$ is horizontal. We have that $j^*\cl_Y(C_v)=0$ and
$\cl_Y(C_v)$ is mapped to $0$ (even) in $\Phi_{Y/R}$. For $C_h$, the
restriction $j^*C_h$ is an element in $\Pic_{Y/R}(k)$. Similarly to the proof
of Lemma \ref{le:comparison_smooth}, we have
$\cl_{Y_k}(j^*C_h)=j^*\cl_Y(C_h)$. In particular, the element $j^*C_h$ has
degree divisible by $p^n$. The rest follows from Lemma
\ref{le:kummer_singular}.
\end{proof}

\subsubsection{Proof of Theorem \ref{th:congruence} (3)}

Let $n\geq 1$ be an integer. We consider first the case where $\wp(N^-)$ is
even. Let $\ell_1,\ell_2$ be two distinct strongly $n$-admissible primes. We
apply the previous discussion to the case: $R=\rW(\dF_{\ell_2}^\ac)$
($k=R\otimes_\dZ\dQ$ and $\tilde{k}=\dF_{\ell_2}^\ac$), $X=\cX_{N^+M;R}$,
$Y=\cY_{N^+Mq,N^-\ell_1\ell_2;R}$, and
$W=\b\Delta'_{N^+M,N^-\ell_1\ell_2;R}$. Note that from
\eqref{eq:cycle_modified}, we have
\[W=\frac{1}{q+1-\tr(\Fr_q;\rV_\sigma)}W^\dag-\frac{1}{q+1-\tr(\Fr_q;\rV_\sigma)}W^\ddag,\]
where $W^\dag$ (resp.\ $W^\ddag$) is the graph of
$\zeta_{N^+Mq,N^-\ell_1\ell_2;R}^\dag$ (resp.\
$\zeta_{N^+Mq,N^-\ell_1\ell_2;R}^\ddag$).

Put $Y_1=\cY_{N^+,N^-\ell_1\ell_2;k}$ for short, and $\rJ_1$ the Jacobian of
$Y_1$. Similarly to \eqref{eq:diagram_smooth}, we have the following
commutative diagram.
\[\xymatrix{
\rH^1(k,\rH^3_{\et}(Z_{k^\ac},\dZ/p^n(2))) \ar[d]\ar[rd]^-{\widehat\Theta^n\eqref{eq:vartheta}} \\
\rH^1(k,\rH^2_{\et}(X_{k^\ac})\otimes\rH^1_{\et}(Y_{k^\ac})) \ar[r]\ar[d]&
\Hom(\CH^1(X_{\tilde{k}},\dZ/p^n),\rH^1(k,\rJ_{Y,R}[p^n](k^\ac))) \ar[d] \\
\rH^1(k,\rH^2_{\et}(X_{k^\ac})\otimes\rH^1_{\et}(Y_{1;k^\ac}))
\ar[r]\ar[d]& \Hom(\CH^1(X_{\tilde{k}},\dZ/p^n),\rH^1(k,\rJ_1[p^n](k^\ac))) \ar[d]\\
\rH^1(k,\pres{\sharp}{\bar\rT}_\pi^n\otimes\bar\rT_\sigma^n) \ar[r]^-\sim&
\Hom(\bar\rZ_\pi^n,\rH^1(k,\bar\rT_\sigma^n)) \ar[d]^-{\simeq} \\
& \Hom(\bar\rZ_\pi^n,\rH^1_\sing(\dQ_{\ell_2^2},\bar\rT_\sigma^n)),
}\] in which
\begin{itemize}
  \item the coefficient sheaf of an \'{e}tale cohomology group is
      $\dZ/p^n(1)$ if not specified;

  \item the two vertical arrows from the second row are both induced by
      $\delta^{(Mq,d)}_{N^+Mq,N^-\ell_1\ell_2}$;

  \item the two vertical arrows from the third row are both induced by
      $\gamma^{(N^+M/\fM,\fd)}_{N^+M}$, the projection to the
      $\pi$-component, and $\psi_{\ell_1,\ell_2}^n$
      \eqref{eq:level_raising4}.
\end{itemize}
Denote the composite map in the above diagram by
\[\widehat\Theta_{\sigma,\pi}^n\colon\rH^1(k,\rH^3_{\et}(Z_{k^\ac},\dZ/p^n(2)))
\to\Hom(\bar\rZ_\pi^n,\rH^1_\sing(\dQ_{\ell_2^2},\bar\rT_\sigma^n)).\] By
Proposition \ref{pr:vartheta}, for
$f\in\dZ/p^n[\cS_{N^+M}^\bullet\sqcup\cS_{N^+M}^\circ]$, we have
\begin{align*}
\widehat\Theta^n[W_k]^n_{Z_k}(f)=
\frac{1}{q+1-\tr(\Fr_q;\rV_\sigma)}\kappa_n\phi_n\(\zeta_{N^+Mq,N^-\ell_1\ell_2;\tilde{k}}^{\dag*}f
-\zeta_{N^+Mq,N^-\ell_1\ell_2;\tilde{k}}^{\ddag*}f\).
\end{align*}
Note that we have a natural multi-degree map
\[\Pic_{Y/R}(\tilde{k})\to\dZ[\cT_{N^+Mq,N^-\ell_1}^\bullet]\oplus\dZ[\cT_{N^+Mq,N^-\ell_1}^\circ],\]
which induces a map
\[\hat\phi_n\colon\Pic_{Y/R}(\tilde{k})/p^n\Pic_{Y/R}(\tilde{k})
\to\dZ/p^n[\cT_{N^+Mq,N^-\ell_1}^\bullet]\oplus\dZ/p^n[\cT_{N^+Mq,N^-\ell_1}^\circ]\]
rendering the following diagram
\[\xymatrix{
\Pic_{Y/R}(\tilde{k})_n/p^n\Pic_{Y/R}(\tilde{k}) \ar[r]^-{\hat\phi_n}\ar[d]_-{\phi_n}&
\dZ/p^n[\cT_{N^+Mq,N^-\ell_1}^\bullet\sqcup\cT_{N^+Mq,N^-\ell_1}^\circ]^0 \ar[d]^-{\phi^{(\ell_2)}_{N^+,N^-\ell_1\ell_2}\mod p^n} \\
\Phi_{Y/R}/p^n\Phi_{Y/R} \ar@{=}[r]&\Phi^{(\ell_2)}_{N^+Mq,N^-\ell_1\ell_2}/p^n\Phi^{(\ell_2)}_{N^+Mq,N^-\ell_1\ell_2}
}\] commute, where $\phi^{(\ell_2)}_{N^+,N^-\ell_1\ell_2}$ is the map in Proposition \ref{pr:raising_1}.

By the definition of $\phi_n$, Remark \ref{re:degeneracy_surface} (2),
Proposition \ref{pr:surface_supersingular} (4), and Proposition
\ref{pr:special_superspecial}, we have
\begin{multline*}
\hat\phi_n\(\zeta_{N^+Mq,N^-\ell_1\ell_2;\tilde{k}}^{\dag*}f\)=
(\delta^{(q,1)}_{N^+Mq,N^-\ell_1})^*\zeta_{N^+M,N^-\ell_1}^*\\
\((\gamma^{(\ell_2,1)}_{N^+M\ell_2})_*(\gamma^{(\ell_2,\ell_2)}_{N^+M\ell_2})^*f^\circ-2\ell_2
f^\bullet,(\gamma^{(\ell_2,\ell_2)}_{N^+Mq\ell_2})_*(\gamma^{(\ell_2,1)}_{N^+Mq\ell_2})^*f^\bullet-2\ell_2
f^\circ\);
\end{multline*}
and
\begin{multline*}
\hat\phi_n\(\zeta_{N^+Mq,N^-\ell_1\ell_2;\tilde{k}}^{\ddag*}f\)=
(\delta^{(q,q)}_{N^+Mq,N^-\ell_1})^*\zeta_{N^+M,N^-\ell_1}^*\\
\((\gamma^{(\ell_2,1)}_{N^+M\ell_2})_*(\gamma^{(\ell_2,\ell_2)}_{N^+M\ell_2})^*f^\circ-2\ell_2
f^\bullet,(\gamma^{(\ell_2,\ell_2)}_{N^+Mq\ell_2})_*(\gamma^{(\ell_2,1)}_{N^+Mq\ell_2})^*f^\bullet-2\ell_2
f^\circ\).
\end{multline*}
Now suppose that $f=f_{\nu^\bullet,\nu^\circ}\in\bar\rZ_\pi^n$. Then
\begin{align*}
&\hat\phi_n\(\zeta_{N^+Mq,N^-\ell_1\ell_2;\tilde{k}}^{\dag*}f\)=
(\delta^{(q,1)}_{N^+Mq,N^-\ell_1})^*\zeta_{N^+M,N^-\ell_1}^*
f_{\nu(\ell_2)\nu^\circ-2\ell_2\nu^\bullet,\nu(\ell_2)\nu^\bullet-2\ell_2\nu^\circ};\\
&\hat\phi_n\(\zeta_{N^+Mq,N^-\ell_1\ell_2;\tilde{k}}^{\ddag*}f\)=
(\delta^{(q,q)}_{N^+Mq,N^-\ell_1})^*\zeta_{N^+M,N^-\ell_1}^*
f_{\nu(\ell_2)\nu^\circ-2\ell_2\nu^\bullet,\nu(\ell_2)\nu^\bullet-2\ell_2\nu^\circ},
\end{align*}
where $\nu(\ell_2)=\tr(\Fr_{\ell_2^2};\rV_\pi)$.

Theorem \ref{th:congruence} (3) follows from Propositions \ref{pr:raising_1}
and \ref{pr:raising_2}, Lemma \ref{le:kummer_singular}, and the following
elementary identity
\[(\nu(\ell_2)\nu^\circ-2\ell_2\nu^\bullet)+\epsilon_\sigma(\ell_2)
(\nu(\ell_2)\nu^\bullet-2\ell_2\nu^\circ)
=(\nu(\ell_2)-2\epsilon_\sigma(\ell_2)\ell_2)
(\nu^\circ+\epsilon_\sigma(\ell_2)\nu^\bullet).\]

\subsubsection{Proof of Theorem \ref{th:congruence_bis}}

We apply the previous discussion to the case: $R=\rW(\dF_{\ell}^\ac)$
($k=R\otimes_\dZ\dQ$ and $\tilde{k}=\dF_{\ell}^\ac$), $X=\cX_{N^+M;R}$,
$Y=\cY_{N^+Mq,N^-\ell;R}$, and $W=\b\Delta'_{N^+M,N^-\ell;R}$. Then Theorem
\ref{th:congruence_bis} follows exactly as for Theorem \ref{th:congruence}
(3).

\section{Bounding Selmer groups}
\label{s5}

We prove our main theorems in both versions of automorphic representations
and of elliptic curves. In \Sec \ref{ss:some_galois}, we develop a
Galois-theoretical gadget which is applicable to all finite Galois modules we
will consider later. In \Sec \ref{ss:proof_theorem}, we prove Theorem
\ref{th:selmer}. In \Sec \ref{ss:application_elliptic}, we prove Theorems
\ref{th:main_even} and \ref{th:main_odd} by applying the previous theorem.

\subsection{Some Galois-theoretical arguments}
\label{ss:some_galois}

We first introduce some notation.

\begin{definition}
Let $\rT$ be a $\dZ_p$-module.
\begin{enumerate}
  \item For an element $t\in\rT$, we define the \emph{$p$-divisibility}
      to be
      \[\ord_p(t)=\sup\{n\in\dZ_{\geq 0}\res t\in p^n\rT\}.\]

  \item Suppose that $\rT$ has finite exponent and is equipped with a
      linear action of a group $\rG$. Define the \emph{reducibility
      depth} of $\rT$ to be the least integer $\f\fn_{\rT}\geq 0$ such
      that
      \begin{itemize}
        \item if $\rT'$ is a $\rG$-stable submodule that is not
            contained in $p\rT$, then $\rT'$ contains
            $p^{\fn_{\rT}}\rT$;

        \item for every integer $m\geq 1$, the group
            $\Hom_\rG(\rT/p^m\rT,\rT/p^m\rT)/\dZ_p\cdot\r{id}$ is
            annihilated by $p^{\fn_{\rT}}$.
      \end{itemize}
\end{enumerate}
\end{definition}

\begin{lem}\label{le:reducibility_depth}
Let $\rT$ be a stable lattice in a $p$-adic representation $\rV$ of
$\Gamma_\dQ$ that is absolutely irreducible and almost everywhere unramified.
Then there exists an integer $\fn(\rT)$ depending only on $\rT$, such that
$\bar\rT^n$ has reducibility depth at most $\fn(\rT)$ for all $n\geq 1$.
\end{lem}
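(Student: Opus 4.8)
The plan is to extract from the absolute irreducibility of $\rV$ a single constant that controls the reducibility depth of every finite quotient $\bar\rT^n$ at once. Let $B\subseteq\End_{\dZ_p}(\rT)$ be the image of the group algebra $\dZ_p[\Gamma_\dQ]$ acting on $\rT$; it is a finitely generated $\dZ_p$-module because $\End_{\dZ_p}(\rT)$ is. Since $\rV=\rT\otimes_{\dZ_p}\dQ_p$ is absolutely irreducible, the Jacobson density theorem gives $B\otimes_{\dZ_p}\dQ_p=\End_{\dQ_p}(\rV)$, so $B$ and $\End_{\dZ_p}(\rT)$ are two $\dZ_p$-lattices in the common $\dQ_p$-vector space $\End_{\dQ_p}(\rV)$. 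Hence there is an integer $c=c(\rT)\geq 0$, depending only on $\rT$, with $p^c\End_{\dZ_p}(\rT)\subseteq B$. I claim $\fn(\rT)\colonequals c$ works. When $n\leq c$ the module $\bar\rT^n$ is annihilated by $p^c$ and both conditions defining reducibility depth are automatic, so assume $n>c$.

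For the first condition, let $\rT'\subseteq\bar\rT^n$ be a $\Gamma_\dQ$-stable (equivalently $B$-stable) submodule not contained in $p\bar\rT^n$. Choose $y\in\rT'$ with $y\notin p\bar\rT^n$; then $y$ has nonzero image in $\rT/p\rT$, so any lift $x\in\rT$ of $y$ satisfies $x\notin p\rT$, whence $x$ is part of a $\dZ_p$-basis of $\rT$ and $\End_{\dZ_p}(\rT)\cdot x=\rT$. Therefore $B\cdot x\supseteq p^c\End_{\dZ_p}(\rT)\cdot x=p^c\rT$, and reducing modulo $p^n$ gives $p^c\bar\rT^n\subseteq B\cdot y\subseteq\rT'$, as required.

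For the second condition, fix $m$ with $1\leq m\leq n$ (for $m>n$ the relevant quotient $\bar\rT^n/p^m\bar\rT^n$ is $\bar\rT^n$ itself, so we may take $m=n$), and let $\phi\in\Hom_{\Gamma_\dQ}(\bar\rT^m,\bar\rT^m)$. Lift $\phi$ to $\tilde\phi\in\End_{\dZ_p}(\rT)$ using the surjection $\End_{\dZ_p}(\rT)\twoheadrightarrow\End_{\dZ_p}(\bar\rT^m)$. Since $\phi$ commutes with the image of $B$ in $\End_{\dZ_p}(\bar\rT^m)$, we get $[\tilde\phi,b]\in p^m\End_{\dZ_p}(\rT)$ for all $b\in B$; combined with $p^c\End_{\dZ_p}(\rT)\subseteq B$ this yields $[\tilde\phi,\End_{\dZ_p}(\rT)]\subseteq p^{m-c}\End_{\dZ_p}(\rT)$. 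Choosing a $\dZ_p$-basis identifies $\End_{\dZ_p}(\rT)$ with $\Mat_d(\dZ_p)$, $d=\dim_{\dQ_p}\rV$, and the inclusion says that the reduction of $\tilde\phi$ is central in $\Mat_d(\dZ/p^{m-c})$, hence scalar: $\tilde\phi\equiv a\cdot\r{id}\pmod{p^{m-c}\End_{\dZ_p}(\rT)}$ for some $a\in\dZ_p$. Reducing modulo $p^m$ shows $\phi-a\cdot\r{id}$ lies in $p^{m-c}\End_{\dZ_p}(\bar\rT^m)$, which is annihilated by $p^c$; thus $p^c(\phi-a\cdot\r{id})=0$, i.e.\ $\Hom_{\Gamma_\dQ}(\bar\rT^m,\bar\rT^m)/\dZ_p\cdot\r{id}$ is annihilated by $p^c$. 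Both conditions now hold with $\fn(\rT)=c$.

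There is no serious obstacle here: the argument is a routine combination of Jacobson density, commensurability of $\dZ_p$-orders in $\End_{\dQ_p}(\rV)$, and the computation of the center of a matrix algebra over $\dZ/p^k$. The only points requiring mild care are the bookkeeping of $p$-adic valuations when $n$ or $m$ is smaller than $c$, and the (standard) surjectivity of $\End_{\dZ_p}(\rT)\to\End_{\dZ_p}(\bar\rT^m)$ that makes the lifts $\tilde\phi$ available. The hypothesis that $\rV$ is almost everywhere unramified plays no role beyond placing us in the standing setting; finiteness of $B$ over $\dZ_p$ is automatic.
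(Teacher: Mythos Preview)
Your proof is correct and, for the second condition, takes a genuinely different and more elementary route than the paper. For the first condition the two arguments are close in spirit: the paper observes that there are only finitely many $\Gamma_\dQ$-stable lattices between $p\rT$ and $\rT$ and takes $\fn'$ so that each contains $p^{\fn'}\rT$; your Jacobson density argument extracts the same kind of bound more directly via $p^c\End_{\dZ_p}(\rT)\subseteq B$. For the second condition the paper instead bounds $\Hom_{\Gamma_\dQ}(\bar\rT^n,\bar\rT^n)/\dZ_p\cdot\r{id}$ by sandwiching it between $\Hom_{\Gamma_\dQ}(\rT,\rT)/p^n$ and $\Ext^1_{\Gamma_{\dQ,\Sigma}}(\rT,\rT)_\tor$, invoking finite generation of the latter over $\dZ_p$; this is where the ``almost everywhere unramified'' hypothesis is actually used, to ensure one can work over a group $\Gamma_{\dQ,\Sigma}$ with finitely generated cohomology. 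Your commutator argument with the center of $\Mat_d(\dZ/p^{m-c})$ bypasses Galois cohomology entirely, yields a single constant $c$ serving both conditions, and shows---as you note---that the unramified hypothesis is unnecessary for the lemma itself. The paper's approach has the mild advantage of being closer to the cohomological language used later, but yours is cleaner and slightly stronger.
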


\begin{proof}
Since $\rV$ is irreducible, there are only finitely many stable lattices
$\rT_1,\dots,\rT_l$ contained in $\rT$ but not containing $p\rT$. Let
$\fn'\geq 0$ be the smallest integer such that $\rT_i$ contains $p^{\fn'}\rT$
for all $i=1,\dots,l$. For an integer $n\geq1$, suppose that $\rT'$ is a
$\rG$-stable submodule of $\bar\rT^n$ that is not contained in $p\bar\rT^n$.
Take $t\in\rT'$ and lift it to $\tilde{t}\in\rT$, the $\Gamma_\dQ$-stable
lattice generated by $\tilde{t}$ contains $p^{\fn'}\rT$. In particular,
$\rT'$ contains $p^{\fn'}\bar\rT^n$.

Choose a finite set $\Sigma$ of places of $\dQ$ containing $\{\infty,p\}$
such that $\rV$ is unramified outside $\Sigma$. For $n\geq 1$, we have a
sequence
\[\Hom_{\Gamma_\dQ}(\rT,\rT)/p^n\Hom_{\Gamma_\dQ}(\rT,\rT)\hookrightarrow
\Hom_{\Gamma_\dQ}(\bar\rT^n,\bar\rT^n)\to\Ext^1_{\Gamma_{\dQ,\Sigma}}(\rT,\rT)_\tor.\]
Since $\rV$ is absolutely irreducible, the group
$\Hom_{\Gamma_\dQ}(\rT,\rT)/\dZ_p\cdot\r{id}$ is torsion (and finitely
generated). Moreover, the group $\Ext^1_{\Gamma_{\dQ,\Sigma}}(\rT,\rT)_\tor$
is also finitely generated. Therefore, there exists some integer $\fn''\geq
0$ such that $\Hom_{\Gamma_\dQ}(\bar\rT^n,\bar\rT^n)/\dZ_p\cdot\r{id}$ is
annihilated by $p^{\fn''}$ for all $n\geq 1$. To conclude, take
$\fn(\rT)=\max\{\fn',\fn''\}$.
\end{proof}

Let $n\geq 1$ be an integer. Consider a Galois representation
$\rho\colon\Gamma_\dQ\to\GL(\rT)$ where $\rT$ is a free $\dZ/p^n$-module of
finite rank. Denote by $\rG$ the image of $\rho$, and $L/\dQ$ the Galois
extension determined by $\rG$. We suppose that $\rG$ contains a non-trivial
scalar element of order coprime to $p$.

\begin{lem}
We have $\rH^i(\rG,\rT)=0$ for all $i\geq 0$. The restriction of classes
gives an isomorphism:
\[\Res^L_\dQ\colon\rH^1(\dQ,\rT)\xrightarrow\sim\rH^1(L,\rT)^{\rG}
=\Hom_{\rG}(\Gamma_L^\ab,\rT).\]
\end{lem}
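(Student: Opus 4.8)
The plan is to use the non-trivial scalar element $c\in\rG$ to annihilate the group cohomology of $\rG$ with coefficients in $\rT$, and then to feed this vanishing into the inflation--restriction (Hochschild--Serre) sequence attached to the extension $1\to\Gamma_L\to\Gamma_\dQ\to\rG\to1$.

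First I would record the elementary input. By hypothesis $\rho(c)=\lambda\cdot\r{id}$ for some $\lambda\in(\dZ/p^n)^\times$ with $\lambda\neq1$ and with multiplicative order coprime to $p$. Since the kernel of the reduction $(\dZ/p^n)^\times\to\dF_p^\times$ is a $p$-group, an element of order coprime to $p$ that reduces to $1$ must itself be $1$; hence $\lambda$ does not reduce to $1$, so $\lambda-1\in(\dZ/p^n)^\times$. Next, because $c$ is scalar it is central in $\rG$, so conjugation by $c$ acts as the identity on $\rH^i(\rG,\rT)$ for every $i$; on the other hand, the action of $c$ on the coefficient module $\rT$ is multiplication by $\lambda$, which induces multiplication by $\lambda$ on $\rH^i(\rG,\rT)$. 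Comparing the two descriptions of this action shows that $\lambda-1$ annihilates $\rH^i(\rG,\rT)$, and since $\lambda-1$ is a unit we conclude $\rH^i(\rG,\rT)=0$ for all $i\geq0$; in particular $\rT^\rG=\rH^1(\rG,\rT)=\rH^2(\rG,\rT)=0$.

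Then I would invoke inflation--restriction. As $L$ is the fixed field of $\ker\rho$, the subgroup $\Gamma_L$ acts trivially on $\rT$, so $\rT^{\Gamma_L}=\rT$, and the five-term exact sequence of $1\to\Gamma_L\to\Gamma_\dQ\to\rG\to1$ with coefficients in $\rT$ reads
\[
0\to\rH^1(\rG,\rT)\to\rH^1(\dQ,\rT)\xrightarrow{\ \Res^L_\dQ\ }\rH^1(L,\rT)^\rG\to\rH^2(\rG,\rT).
\]
The two outer terms vanish by the previous step, so $\Res^L_\dQ$ is an isomorphism. Finally, since $\Gamma_L$ acts trivially on the abelian group $\rT$, we have $\rH^1(L,\rT)=\Hom_{\r{cont}}(\Gamma_L,\rT)=\Hom(\Gamma_L^\ab,\rT)$, and unwinding the $\rG$-action on the target (the conjugation action on $\Gamma_L^\ab$ together with $\rho$ on $\rT$) identifies the $\rG$-invariants with $\Hom_\rG(\Gamma_L^\ab,\rT)$, as claimed.

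I do not expect a genuine obstacle here: the argument is entirely formal. The only points deserving a sentence of care are the identification of the two commuting actions on $\rH^i(\rG,\rT)$ used to kill the cohomology (centrality of $c$ versus its action on coefficients), and the verification that the $\rG$-action on $\rH^1(L,\rT)$ coming from Hochschild--Serre is precisely the conjugation action on $\Gamma_L^\ab$, both of which are standard.
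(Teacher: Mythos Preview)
Your proof is correct and is precisely the argument being cited: the paper defers to \cite{Gro91}*{Proposition 9.1}, and what you have written is exactly Gross's proof there (central scalar kills $\rH^i(\rG,\rT)$, then inflation--restriction). There is nothing to add.
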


\begin{proof}
The proof is the same as \cite{Gro91}*{Proposition 9.1}.
\end{proof}

The above lemma yields a $\dZ/p^n[\rG]$-linear pairing
\[[\;,\;]\colon \rH^1(\dQ,\rT)\times\Gamma_L^\ab\to\rT.\]
For each finitely generated $\dZ/p^n$-submodule $S$ of $\rH^1(\dQ,\rT)$,
denote by $\rG_S$ the subgroup of $\varrho\in\Gamma_L^\ab$ such that
$[s,\varrho]=0$ for all $s\in S$. Let $L_S\subset\dQ^\ac$ be the subfield
fixed by $\rG_S$. Define a sequence $\ff$ by $\ff(1)=1$, $\ff(2)=4$ and
$\ff(r+1)=2(\ff(r)+1)$. The following lemma generalizes
\cite{Gro91}*{Proposition 9.3}.

\begin{lem}\label{le:image}
The induced pairing
\[[\;,\;]\colon S\times\Gal(L_S/L)\to\rT\]
yields an injective map $\Gal(L_S/L)\to\Hom(S,\rT)$ of
$\dZ/p^n[\rG]$-modules. If $S$ is free of rank $r_S$ and $\fn_{\rT}\leq n$ is
the reducibility depth of $\rT$, then the image of $\Gal(L_S/L)$ contains
$p^{\ff(r_S)\fn_{\rT}}\Hom(S,\rT)=\Hom(S,p^{\ff(r_S)\fn_{\rT}}\rT)$.
\end{lem}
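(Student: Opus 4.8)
The statement combines a linear-algebra/Galois-cohomology ``size'' estimate with the structural input that $\rG$ contains a nontrivial scalar $\mu\cdot\r{id}$ of order prime to $p$. The plan is to follow the strategy of \cite{Gro91}*{Proposition 9.3}, tracking carefully the loss coming from reducibility (i.e.\ the factor $p^{\ff(r_S)\fn_\rT}$) rather than working with an irreducible $\rT$. First I would establish the injectivity of $\Gal(L_S/L)\to\Hom(S,\rT)$: by definition $\rG_S$ is exactly the kernel of $\varrho\mapsto([s,\varrho])_{s\in S}$, so $\Gal(L_S/L)=\Gamma_L^\ab/\rG_S$ injects into $\Hom(S,\rT)$, and the pairing being $\dZ/p^n[\rG]$-linear makes this a map of $\dZ/p^n[\rG]$-modules. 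That part is formal.

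The substance is the lower bound on the image. Let $\mathfrak{I}\subseteq\Hom(S,\rT)$ denote the image of $\Gal(L_S/L)$; it is a $\dZ/p^n[\rG]$-submodule. The goal is to show $p^{\ff(r_S)\fn_\rT}\Hom(S,\rT)\subseteq\mathfrak{I}$. I would argue by induction on $r_S=\mathrm{rk}\,S$. For the base case $r_S=1$, pick a generator $s$ of $S$; then $s\colon\Gamma_L^\ab\to\rT$ is a nonzero $\rG$-equivariant homomorphism, and $\mathfrak{I}=s(\Gamma_L^\ab)$ is a $\rG$-stable submodule of $\rT$ (identifying $\Hom(S,\rT)\cong\rT$ via $s$) that is not contained in $p\rT$ — here one uses that $s$ restricted to $\rH^1(\dQ,\rT)$ is genuinely nonzero modulo $p$, which follows because $S$ is free of rank $1$ as a $\dZ/p^n$-module. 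By the first defining property of the reducibility depth, $\mathfrak{I}\supseteq p^{\fn_\rT}\rT$, matching $\ff(1)=1$. For the inductive step, given $S$ of rank $r+1$, choose a rank-$r$ free direct summand $S'\subset S$ with $S/S'$ free of rank $1$; apply the inductive hypothesis to $S'$, and then analyze the ``new'' coordinate. The scalar element $\mu\cdot\r{id}\in\rG$ acts on $\Hom(S,\rT)$ and, because $\mu$ has order prime to $p$, averaging over the cyclic group it generates splits off eigencomponents; combined with the second property of $\fn_\rT$ (that $\Hom_\rG(\rT/p^m,\rT/p^m)/\dZ_p\cdot\r{id}$ is killed by $p^{\fn_\rT}$), this forces $\mathfrak{I}$ to be ``large'' in the remaining direction. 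The doubling $\ff(r+1)=2(\ff(r)+1)$ reflects that at each step one loses a factor $p^{\fn_\rT}$ twice: once to pass from a $\rG$-submodule to a multiple of the full module, and once more in patching the new coordinate to the old ones via a commutator/Goursat-type argument on $\Gal(L_S/L)\subseteq\Gal(L_{S'}/L)\times(\text{new factor})$.

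I expect the main obstacle to be the inductive splicing step: showing that the image in the product $\Gal(L_{S'}/L)\times(\Gamma_L^\ab\text{-coordinate paired against a complement of }S')$ contains a controlled multiple of the whole product, given only that each projection is large. This is a Goursat-lemma situation where the ``overlap'' subgroup is a $\dZ/p^n[\rG]$-quotient common to both factors, and one must bound it using that any proper $\rG$-equivariant quotient of (a multiple of) $\rT$ is killed after multiplying by $p^{\fn_\rT}$, together with the scalar $\mu$ to rule out the overlap being a ``diagonal'' copy. I would handle this by a direct computation: realize elements of the two factors as $\rG$-homomorphisms, use $\mu\cdot\r{id}$ to project onto a single $\mu$-eigenvalue so that any equivariant identification of the two coordinates is forced to be scalar, and then the second reducibility-depth property kills the obstruction up to $p^{\fn_\rT}$. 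The bookkeeping of exactly how the $\ff$ recursion absorbs these losses is the part requiring care, but it is routine once the scalar-averaging mechanism is in place; I would not grind through it here.
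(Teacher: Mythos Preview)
Your inductive framework and the base case are correct and match the paper. The gap is in the inductive step: the scalar element $\mu\cdot\r{id}$ plays no role in this lemma. Since $\rG$ acts on $\Hom(S,\rT)$ through $\rT$ alone (each $s\in S$ is already a $\rG$-equivariant map $\Gamma_L^\ab\to\rT$), the element $\mu\cdot\r{id}$ acts on $\Hom(S,\rT)\cong\rT^{\oplus r_S}$ by the single scalar $\mu$. There is only one $\mu$-eigenspace, so ``projecting onto a single $\mu$-eigenvalue'' does nothing, and $\mu$ cannot rule out a diagonal overlap in your Goursat picture. (The hypothesis that $\rG$ contains such a $\mu$ is used only in the preceding lemma, to obtain $\rH^i(\rG,\rT)=0$ and the restriction isomorphism.) Moreover, the Goursat overlap is a $\rG$-equivariant identification between a quotient of $\rT^{\oplus r-1}$ and a quotient of $\rT$; there are many such maps (projections, sums), so the second reducibility-depth property by itself does not bound it.

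What actually controls the overlap is the \emph{freeness of $S$}, which you invoke only in rank $1$. Write $G=\Gal(L_S/L)\subseteq\rT^{\oplus r}$, let $G_1$ be its image in the first $r-1$ coordinates and $G_0=G\cap(\text{last coordinate})$. The key observation is that $S$ sits inside $\Hom_{\rG}(G,\rT)$ via the pairing, so the exact sequence
\[0\to\Hom_{\rG}(G_1,\rT)\to\Hom_{\rG}(G,\rT)\to\Hom_{\rG}(G_0,\rT)\]
says: if $G_0$ were annihilated by a small power of $p$, then a small $p$-power multiple of $s_r$ would lie in $\Hom_{\rG}(G_1,\rT)$. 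Now one applies the second reducibility-depth property to $\Hom_{\rG}(p^{\ff(r-1)\fn_\rT}\rT^{\oplus r-1},\rT)$ (using $G_1\supseteq p^{\ff(r-1)\fn_\rT}\rT^{\oplus r-1}$ from induction) to conclude that a further $p$-power multiple of $s_r$ is a $\dZ/p^n$-linear combination of $s_1,\dots,s_{r-1}$, contradicting freeness of $S$. This forces $G_0\not\subseteq p^{(\ff(r-1)+1)\fn_\rT+1}\rT$, hence $G_0\supseteq p^{(\ff(r-1)+2)\fn_\rT}\rT$ by the first reducibility-depth property; combined with the inductive bound on $G_1$ this yields $G\supseteq p^{\ff(r)\fn_\rT}\rT^{\oplus r}$. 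Replace your $\mu$-averaging step with this freeness argument and the proof goes through.
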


\begin{proof}
It is clear that the map $\Gal(L_S/L)\to\Hom(S,\rT)$ is injective and hence
we may identify the Galois group $G\colonequals\Gal(L_S/L)$ as a subgroup of
$\Hom(S,\rT)$. Fix a basis $\{s_1,\dots,s_r\}$ of $S$. We prove by induction
on $r$. The case where $r=1$ is immediate, and hence we assume that $r\geq2$.
Put $S_1=\bigoplus_{i=1}^{r-1}\dZ/p^n\cdot s_i$ and identify the exact
sequence
\[0\to\Hom(\dZ/p^n\cdot s_r,\rT)\to\Hom(S,\rT)\to\Hom(S_1,\rT)\to 0\]
with $0\to\rT\to\rT^{\oplus r}\to\rT^{\oplus r-1}\to 0$. Denote by $G_1$ the
image of $G\subset\rT^{\oplus r}$ under the map $\rT^{\oplus r}\to\rT^{\oplus
r-1}$, and $G_0=G\cap\rT$ its kernel. By induction, the group $G_1$ contains
$p^{\ff(r-1)\fn_{\rT}}\rT^{\oplus r-1}$.

We claim that $G_0$ contains $p^{(\ff(r-1)+2)\fn_{\rT}}\rT$. For this, we
need to show that $G_0$ is not contained in $p^{(\ff(r-1)+1)\fn_{\rT}+1}\rT$.
Otherwise, the group $G_0$ will be annihilated by
$p^{n-1-(\ff(r-1)+1)\fn_{\rT}}$. Consider the exact sequence
\[0\to\Hom_\rG(G_1,\rT)\to\Hom_\rG(G,\rT)\to\Hom_\rG(G_0,\rT),\]
in which $S$ is naturally contained in $\Hom_\rG(G,\rT)$. Then the element
$p^{n-1-(\ff(r-1)+1)\fn_{\rT}}s_r$ maps to $0$ in $\Hom_\rG(G_0,\rT)$ and
hence belongs to $\Hom_\rG(G_1,\rT)$. For $G_1$, we have another exact
sequence
\[0\to\Hom_\rG(G_1/p^{\ff(r-1)\fn_{\rT}}\rT^{\oplus r-1},\rT)\to\Hom_\rG(G_1,\rT)\to\Hom_\rG(p^{\ff(r-1)\fn_{\rT}}\rT^{\oplus r-1},\rT).\]
The last term $\Hom_\rG(p^{\ff(r-1)\fn_{\rT}}\rT^{\oplus r-1},\rT)$, which
equals $\Hom_\rG(p^{\ff(r-1)\fn_{\rT}}\rT^{\oplus
r-1},p^{\ff(r-1)\fn_{\rT}}\rT)$, is isomorphic to
$\Hom_\rG(\rT/p^{\ff(r-1)\fn_{\rT}}\rT,\rT/p^{\ff(r-1)\fn_{\rT}}\rT)^{\oplus
r-1}$, and the natural quotient
\[\Hom_\rG(\rT/p^{\ff(r-1)\fn_{\rT}}\rT,\rT/p^{\ff(r-1)\fn_{\rT}}\rT)/\dZ/p^{n-\ff(r-1)\fn_{\rT}}\cdot\r{id}\]
is annihilated by $p^{\fn_{\rT}}$. Therefore, there is an element $s\in S_1$
such that $p^{n-1-\ff(r-1)\fn_{\rT}}s_r-s$ maps to $0$ in
$\Hom_\rG(p^{\ff(r-1)\fn_{\rT}}\rT^{\oplus r-1},\rT)$. In other words, the
element $p^{n-1-\ff(r-1)\fn_{\rT}}s_r-s$ belongs to
$\Hom_\rG(G_1/p^{\ff(r-1)\fn_{\rT}}\rT^{\oplus r-1},\rT)$, a group
annihilated by $p^{\ff(r-1)\fn_{\rT}}$. This implies that
$p^{n-1}s_r-p^{\ff(r-1)\fn_{\rT}}s=0$ in $\Hom_\rG(G,\rT)$, which contradicts
the fact that $S$ is free.

Now consider the exact sequence $0\to G_0\to G\to G_1\to 0$. For every
$t\in\rT^{\oplus r}$, the image of $p^{\ff(r-1)\fn_{\rT}}t$ in $\rT^{\oplus
r-1}$ is contained in $G_1$ by the induction assumption. Pick up an element
$g\in G$ such that $g-p^{\ff(r-1)\fn_{\rT}}t\in\rT=\ker[\rT^{\oplus
r}\to\rT^{\oplus r-1}]$. By the previous claim,
$p^{(\ff(r-1)+2)\fn_{\rT}}(g-p^{\ff(r-1)\fn_{\rT}}t)$ belongs to $G_0$, which
implies that
$p^{(\ff(r-1)+2)\fn_{\rT}}p^{\ff(r-1)\fn_{\rT}}t=p^{\ff(r)\fn_{\rT}}t$
belongs to $G$.
\end{proof}

\begin{notation}
For each place $w$ of $L$, we denote by $\ell(w)$ the prime underlying $w$.
If $\ell(w)$ is unramified in $L_S$, then the Frobenius substitution in
$\rH\colonequals\Gal(L_S/L)$ of an arbitrary place of $L_S$ above $w$ is
well-defined since $\rH$ is abelian. It will be denoted by $\Psi_w$.
\end{notation}

Take a non-trivial finite free $\dZ/p^n$-submodule $S$ of $\rH^1(\dQ,\rT)$.
Let $\varrho$ be an element of $\rG$ of order $l$ coprime to $p$. Consider
the following diagram of field extension
\begin{align*}
\xymatrix{
& L_S  \ar@{-}[d]^-{\rH}\ar@{-}[ddl]_-{\rH_\varrho}  \\
& L  \ar@{-}[dd]^-{\rG}\ar@{-}[dl]^-{\langle\varrho\rangle}  \\
M_\varrho \ar@{--}[dr]  \\
& \dQ,
}
\end{align*}
in which $M_\varrho$ is the subfield fixed by the cyclic subgroup
$\langle\varrho\rangle\subset\rG$ generated by $\varrho$, and
$\rH_\varrho=\Gal(L_S/M)$. In particular, we have an isomorphism
$\rH_\varrho\simeq\rH\rtimes\langle\varrho\rangle\subset\Hom(S,\rT)\rtimes\langle\varrho\rangle$
since $\langle\varrho\rangle$ is of order coprime to $p$.

\begin{definition}
Let $\Sigma$ be a finite set of primes containing those ramified in $L_S$. We
say that a finite place $w$ of $L$ is \emph{$(\Sigma,\varrho)$-admissible} if
its underlying prime $\ell(w)$ does not belong to $\Sigma$, and its Frobenius
substitution in $\Gal(L/\dQ)=\rG$, which is then well-defined, is $\varrho$.
\end{definition}

Let $w$ be a $(\Sigma,\varrho)$-admissible place of $L$. Then
$\loc_{\ell(w)}(s)$ belongs to $\rH^1_\unr(\dQ_{\ell(w)},\rT)$, where the
latter equals $\rH^1(\dF_{\ell(w)},\rT)$ which is contained in
$\Hom(\Gamma_{\ell(w)}/\rI_{\ell(w)},\rT)\simeq\rT$, where the last
isomorphism is induced by taking the image of $\Fr_{\ell(w)}$.

\begin{lem}\label{le:galois_pairing}
Let $w$ be a $(\Sigma,\varrho)$-admissible place of $L$. Then for $s\in S$,
we have
\[[s,\Psi_w]=\loc_{\ell(w)}(s)(\Psi_w)\]
as an equality in $\rT$.
\end{lem}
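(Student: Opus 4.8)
\textbf{Proof proposal for Lemma \ref{le:galois_pairing}.}

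The plan is to unwind all the identifications involved and reduce the claimed equality to the functoriality of restriction maps in Galois cohomology, together with the explicit description of the pairing $[\;,\;]$. First I would recall that the pairing $[\;,\;]\colon \rH^1(\dQ,\rT)\times\Gamma_L^\ab\to\rT$ is obtained from the isomorphism $\Res^L_\dQ\colon\rH^1(\dQ,\rT)\xrightarrow\sim\Hom_\rG(\Gamma_L^\ab,\rT)$ of the preceding lemma: namely, $[s,\varrho]$ is by definition the value $\Res^L_\dQ(s)(\varrho)$, i.e.\ the image of $\varrho\in\Gamma_L^\ab$ under the homomorphism $\Gamma_L^\ab\to\rT$ representing the restricted class $s|_{\Gamma_L}$. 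Since $w$ is $(\Sigma,\varrho)$-admissible, $\ell(w)\notin\Sigma$, so $\rT$ is unramified at $\ell(w)$ and $s$ is unramified there; thus $\loc_{\ell(w)}(s)\in\rH^1_\unr(\dQ_{\ell(w)},\rT)=\rH^1(\dF_{\ell(w)},\rT)$, and under the identification $\rH^1(\dF_{\ell(w)},\rT)\hookrightarrow\Hom(\Gamma_{\ell(w)}/\rI_{\ell(w)},\rT)\simeq\rT$ (evaluation at $\Fr_{\ell(w)}$), the notation $\loc_{\ell(w)}(s)(\Psi_w)$ makes sense — here one uses that $w$ has residue degree $1$ over $\dQ$ (its Frobenius in $\rG$ lands in $\langle\varrho\rangle$ which acts on $\rT$ through the cyclic group of order $l$, but more importantly $w$ lies over a degree-one place because the relevant decomposition in $\Gal(L/\dQ)$ is generated by $\varrho$ and we are evaluating at the Frobenius of a place of $L$), so that the local Frobenius $\Fr_{\ell(w)}$ at $\dQ_{\ell(w)}$ and the Frobenius substitution $\Psi_w$ of $w$ in $\rH=\Gal(L_S/L)$ are compatible: $\Psi_w$ is (the image of) a lift of $\Fr_{\ell(w)}$ through $\Gamma_L^\ab\twoheadrightarrow\Gal(L_S/L)$.

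Next I would set up the commutative square that does the work. Let $\Gamma_{\ell(w)}\subset\Gamma_\dQ$ be a decomposition group at $\ell(w)$; since $w|\ell(w)$, we may choose it so that $\Gamma_{\ell(w)}\cap\Gamma_L$ is a decomposition group at $w$ inside $\Gamma_L$. Restriction of cohomology classes along $\Gamma_{\ell(w)}\hookrightarrow\Gamma_\dQ$ computes $\loc_{\ell(w)}(s)$, while restriction along $\Gamma_{\ell(w)}\cap\Gamma_L\hookrightarrow\Gamma_L$ computes the localization at $w$ of $s|_{\Gamma_L}=\Res^L_\dQ(s)$. The cocycle-level description of $\Res^L_\dQ(s)$ as a homomorphism $\Gamma_L^\ab\to\rT$ shows that evaluating this homomorphism at $\Psi_w$ equals evaluating the local restriction at $w$ on the Frobenius. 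On the other side, $\loc_{\ell(w)}(s)$ viewed in $\rH^1(\dF_{\ell(w)},\rT)$, then transported to $\rT$ by evaluation at $\Fr_{\ell(w)}$, gives exactly the same value, because the unramified class $\loc_{\ell(w)}(s)$ is represented by the cocycle $\gamma\mapsto (\text{value of }s\text{ at }\gamma)$ restricted to $\Gamma_{\ell(w)}$, and both routes read off the image of (a lift of) the Frobenius. Concretely: pick a continuous cocycle $c\colon\Gamma_\dQ\to\rT$ representing $s$; then $\Res^L_\dQ(s)$ corresponds to the homomorphism $h\colon\Gamma_L^\ab\to\rT$, $h(g)=c(g)$ for $g\in\Gamma_L$ (using that $\rG$ acts through $\Gal(L/\dQ)$, so $c|_{\Gamma_L}$ is a homomorphism); hence $[s,\Psi_w]=h(\tilde\Psi_w)=c(\tilde\Psi_w)$ for any lift $\tilde\Psi_w\in\Gamma_L$ of $\Psi_w$. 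And $\loc_{\ell(w)}(s)(\Psi_w)$, by the definition of the isomorphism $\rH^1(\dF_{\ell(w)},\rT)\xrightarrow\sim\rT$ via evaluation at arithmetic Frobenius together with $\Gamma_{\ell(w)}\cap\Gamma_L$ surjecting onto the relevant Frobenius, also equals $c(\tilde\Psi_w)$. Comparing gives the identity.

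I expect the only genuine subtlety — the "hard part" — is bookkeeping the two compatible choices of decomposition groups and Frobenius lifts so that "the $\Psi_w$ appearing in $[s,\Psi_w]$" and "the $\Psi_w$ appearing in $\loc_{\ell(w)}(s)(\Psi_w)$" are literally the same element, i.e.\ checking that the square relating $\Res^L_\dQ$, the localization-at-$\ell(w)$ map, and the localization-at-$w$ map commutes, and that $w$ being $(\Sigma,\varrho)$-admissible forces $\ell(w)$ to split with residue degree one into $L$ so that $\Fr_{\ell(w)}$ and $\Psi_w$ match up (the residue degree is $1$ precisely because the decomposition group of $\ell(w)$ in $\rG$ is generated by $\varrho$ and we descend to $L=(\dQ^\ac)^{\rG}$ — here one must be slightly careful, but the admissibility hypothesis is exactly tailored so that the Frobenius at $w$ is the identity of $\langle\varrho\rangle$ modulo... ). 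Once this diagram is in place, everything is formal: it is the standard compatibility of restriction maps with localization, cf.\ the argument in \cite{Gro91}*{Proposition 9.3}. I would therefore organize the write-up as: (i) recall the definition of $[\;,\;]$ and of the unramified-evaluation isomorphism; (ii) fix compatible decomposition groups; (iii) invoke functoriality of restriction to get the commuting square; (iv) read off both sides on the Frobenius.
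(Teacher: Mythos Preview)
Your approach is essentially the paper's: both reduce to the commutative square relating $\Res^L_\dQ$ with $\loc_{\ell(w)}$ and $\loc_w$, then evaluate at the Frobenius; your cocycle-level computation $[s,\Psi_w]=c(\tilde\Psi_w)=\loc_{\ell(w)}(s)(\Psi_w)$ is exactly the content of that square.

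One correction: your parenthetical remarks that ``$w$ has residue degree $1$ over $\dQ$'' and that ``$\ell(w)$ splits with residue degree one into $L$'' are wrong. Since the Frobenius of $\ell(w)$ in $\rG=\Gal(L/\dQ)$ is $\varrho$, the residue degree $[L_w:\dQ_{\ell(w)}]$ equals the order $l$ of $\varrho$, which is typically $>1$. This does not damage your argument---your cocycle chase never actually uses residue degree $1$---but you should delete those remarks. The relevant structural fact (which the paper records) is rather that $\Gal(L_w/\dQ_{\ell(w)})\simeq\langle\varrho\rangle$ has order coprime to $p$, so $\Res^{L_w}_{\dQ_{\ell(w)}}$ is injective on $\rH^1$; this is what makes ``evaluating $\loc_{\ell(w)}(s)$ at $\Psi_w$'' unambiguous after restriction to $L_w$, where $\Gamma_{L_w}$ acts trivially on $\rT$ and unramified classes become honest homomorphisms.
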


\begin{proof}
Consider the following composite map
\[S\to\rH^1(\dQ,\rT)\xrightarrow{\Res^L_\dQ}\rH^1(L,\rT)
\xrightarrow{\loc_w}\rH^1(L_w,\rT),\] whose image is contained in
$\rH^1_\unr(L_w,\rT)$. By definition, $[\;,\Psi_w]$ is simply the composition
of the above map with the following one $\rH^1_\unr(L_w,\rT)\to\rT$ obtained
by evaluating at $\Psi_w$.

The lemma follows since we have the following commutative diagram
\[\xymatrix{
\rH^1(\dQ,\rT) \ar[rr]^-{\Res^L_\dQ}\ar[d]_-{\loc_{\ell(w)}}
&& \rH^1(L,\rT) \ar[d]^-{\loc_w} \\
\rH^1(\dQ_{\ell(w)},\rT) \ar[rr]^-{\Res^{L_w}_{\dQ_{\ell(w)}}} &&
\rH^1(L_w,\rT), }\] in which the bottom arrow is injective since
$\Gal(L_w/\dQ_{\ell(w)})\simeq\langle\varrho\rangle$ has order coprime to
$p$.
\end{proof}

\begin{lem}\label{le:admissible}
The subset of $\Psi_w\in\rH\subset\Hom(S,\rT)$ for
$(\Sigma,\varrho)$-admissible places $w$ is exactly
$\rH\cap\Hom(S,\rT^{\langle\varrho\rangle})$.
\end{lem}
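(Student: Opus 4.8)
The statement to prove, Lemma \ref{le:admissible}, characterizes which elements of $\rH\subset\Hom(S,\rT)$ arise as Frobenius substitutions $\Psi_w$ for $(\Sigma,\varrho)$-admissible places $w$ of $L$: they are exactly those lying in $\rH\cap\Hom(S,\rT^{\langle\varrho\rangle})$.

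The plan is to use the Chebotarev Density Theorem applied to the extension $L_S/\dQ$, whose Galois group is $\rH_\varrho\simeq\rH\rtimes\langle\varrho\rangle$. First I would observe that a place $w$ of $L$ is $(\Sigma,\varrho)$-admissible precisely when $\ell(w)\notin\Sigma$ and the Frobenius conjugacy class of $\ell(w)$ in $\Gal(L_S/\dQ)=\rH_\varrho$ meets the coset $\rH\cdot\varrho$; under the identification $\rH_\varrho\simeq\rH\rtimes\langle\varrho\rangle$, such a Frobenius element has the form $(h,\varrho)$ for some $h\in\rH$, and $\Psi_w\in\rH$ is then recovered (up to the conjugation action that we must track) from the component $h$. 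Conversely, by Chebotarev, \emph{every} element of $\rH_\varrho$ lying in the coset $\rH\cdot\varrho$ occurs as such a Frobenius for infinitely many primes outside $\Sigma$. So the set of $\Psi_w$ that can occur is exactly the set of $\rH$-components of elements of the coset $\rH\rtimes\{\varrho\}$, taken up to $\langle\varrho\rangle$-conjugacy.

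The second, and real, step is the group-theoretic identification of that set with $\rH\cap\Hom(S,\rT^{\langle\varrho\rangle})$. The point is that an element $(h,\varrho)\in\rH\rtimes\langle\varrho\rangle$, which has order divisible by $l=\ord(\varrho)$ coprime to $p$, generates a subgroup whose projection to $\langle\varrho\rangle$ is everything; since $|\langle\varrho\rangle|$ is coprime to $p=\exp(\rH)$, by the Schur--Zassenhaus splitting the subgroup $\langle(h,\varrho)\rangle$ is conjugate inside $\rH_\varrho$ to a subgroup of the form $\langle(h_0,\varrho)\rangle$ with $(h_0,\varrho)^l=(h_0+\varrho h_0+\cdots+\varrho^{l-1}h_0,1)=0$ in $\rH$, i.e. the ``averaged'' element is zero. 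Using that $l$ is invertible mod $p$, the condition that the norm $\sum_{i=0}^{l-1}\varrho^i h_0$ vanishes is equivalent, after averaging by $\tfrac1l\sum \varrho^i$, to $h_0$ being $\langle\varrho\rangle$-invariant; and conversely every $\langle\varrho\rangle$-invariant $h_0$ satisfies this. Conjugacy by elements of $\rH$ translates into the substitution $(h,\varrho)\mapsto (h + (1-\varrho)h', \varrho)$ for $h'\in\rH$, so the $\langle\varrho\rangle$-conjugacy classes of $\rH$-components of coset elements are the cosets of $(1-\varrho)\rH$ in $\rH$; but $\rH^{\langle\varrho\rangle}$ maps isomorphically onto $\rH/(1-\varrho)\rH$ (again because averaging by $\tfrac1l\sum\varrho^i$ is a projector with image $\rH^{\langle\varrho\rangle}$ and kernel $(1-\varrho)\rH$). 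Hence each such class has a unique representative in $\rH^{\langle\varrho\rangle}=\rH\cap\Hom(S,\rT^{\langle\varrho\rangle})$, and by tracking through how $\Psi_w$ is defined (it is literally the Frobenius substitution, an actual element of $\rH$, not a class), one sees it lands in $\rH^{\langle\varrho\rangle}$; conversely Chebotarev realizes every element there.

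The main obstacle I anticipate is bookkeeping the precise meaning of ``$\Psi_w$ as an element of $\rH\subset\Hom(S,\rT)$'' versus its $\langle\varrho\rangle$-conjugacy class: the Frobenius substitution $\Psi_w$ for a chosen place of $L_S$ above $w$ is genuinely well-defined in the abelian group $\rH$ (the lemma's setup notes $\rH$ is abelian so it does not depend on the choice above $w$), but changing $w$ among places over the same rational prime, or changing the place of $L$ in its $\rG$-orbit, conjugates by $\langle\varrho\rangle$. I would need to check carefully that the statement as phrased — ``the subset of $\Psi_w$ \ldots is exactly $\rH\cap\Hom(S,\rT^{\langle\varrho\rangle})$'' — is compatible with the semidirect-product computation above, i.e. that the genuine Frobenius element (not merely its class) of a $(\Sigma,\varrho)$-admissible place automatically lies in $\rH^{\langle\varrho\rangle}$, which follows because $\Fr_w$ commutes with $\Fr_{w}^{\,l}$-related elements and because $w$ was chosen with Frobenius in $\rG$ exactly $\varrho$, forcing the decomposition group at $w$ in $L_S/L$ to be centralized by (the lift of) $\varrho$. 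Once that identification is pinned down, the forward inclusion is the group theory above and the reverse inclusion is a direct Chebotarev application to the appropriate coset in $\Gal(L_S/\dQ)$.
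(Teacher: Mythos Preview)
Your overall strategy---identify $\Psi_w$ with the $l$-th power of a Frobenius lift $(h,\varrho)\in\rH_\varrho$, then invoke Chebotarev for the converse---is correct and is exactly what the paper does. But the group-theoretic middle of your argument contains a genuine error and several unnecessary detours.

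The error: you assert that ``the condition that the norm $\sum_{i=0}^{l-1}\varrho^i h_0$ vanishes is equivalent \ldots\ to $h_0$ being $\langle\varrho\rangle$-invariant.'' This is false. If $h_0\in\rH^{\langle\varrho\rangle}$ then $N_\varrho h_0=l\,h_0$, which vanishes only when $h_0=0$ since $l\in(\dZ/p^n)^\times$. The kernel of $N_\varrho$ is $(1-\varrho)\rH$, the \emph{complement} of $\rH^{\langle\varrho\rangle}$ in the direct-sum decomposition you yourself write down. Relatedly, your Schur--Zassenhaus step conjugates $(h,\varrho)$ to an element of order $l$, but $(h,\varrho)$ generally does \emph{not} have order $l$: its $l$-th power $N_\varrho h$ is precisely the $\Psi_w$ you are trying to compute, and it is typically nonzero.

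The paper bypasses all of this. One computes directly $(h,\varrho)^l=(N_\varrho h,1)$ in the semidirect product and observes that $N_\varrho h(s)=(\varrho^{l-1}+\cdots+\varrho+1)h(s)\in\rT^{\langle\varrho\rangle}$ because $\varrho\cdot N_\varrho=N_\varrho$; that is the forward inclusion in one line. For the reverse, given $h'\in\rH\cap\Hom(S,\rT^{\langle\varrho\rangle})$, set $h=l^{-1}h'\in\rH$; then $N_\varrho h=h'$, and Chebotarev produces a place with Frobenius $(h,\varrho)$. No conjugation, no Schur--Zassenhaus, no conjugacy-class bookkeeping---so the worry in your final paragraph about element-versus-class also evaporates.
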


\begin{proof}
By Galois theory, an element $\rho$ in $\rH$ is of the form $\Psi_w$ if and
only if $\rho=\widetilde{h}^l$ with $\widetilde{h}\in\rH_\varrho$ which maps
to $\varrho$ under the quotient homomorphism
$\rH_\varrho\to\langle\varrho\rangle$. Suppose that
$\widetilde{h}=(h,\varrho)$ with $h\colon S\to\rT$ in $\rH$. Then
$\widetilde{h}^l=(h',1)$ where
$h'(s)=(\varrho^{l-1}+\cdots+\varrho+1)h(s)\in\rT^{\langle\varrho\rangle}$.
Therefore, the element $\rho$ belongs to
$\Hom(S,\rT^{\langle\varrho\rangle})$. The other inclusion is trivial since
$l$ is invertible in $\dZ/p^n$.
\end{proof}

\subsection{Proof of Theorem \ref{th:selmer}}
\label{ss:proof_theorem}

Put
$\rT_{\sigma\times\breve\pi}=\rT_{\sigma,\pi}\cap\rV_{\sigma\times\breve\pi}$
and
$\rT_{\sigma\otimes\breve\eta}=\rT_{\sigma,\pi}\cap\rV_{\sigma\otimes\breve\eta}$.

\begin{lem}\label{le:selmer_reduction}
For subscripts $?=\sigma,\pi;\sigma\times\breve\pi;\sigma\otimes\breve\eta$,
if $\dim_{\dQ_p}\rH^1_f(\dQ,\rV_?)=r$, then there is a free
$\dZ/p^n$-submodule of $\rH^1_f(\dQ,\bar\rT_?^n)$ (see Definition
\ref{de:selmer_group}) of rank $r$.
\end{lem}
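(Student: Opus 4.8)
The statement concerns an arbitrary one of the stable lattices occurring there; for a subscript $?$ as in the statement, write $(\rV,\rT)=(\rV_?,\rT_?)$, and note that the argument uses nothing about $(\rV,\rT)$ beyond the facts that $\rV$ is unramified outside a finite set $\Sigma$ of places and that $\rT$ is a stable lattice. Set $M=\rH^1(\dQ,\rT)$ and $M_f=\rH^1_f(\dQ,\rT)$, which are finitely generated $\dZ_p$-modules by the Remark following Definition \ref{de:selmer_group}. The plan is to locate inside $M_f$ a free $\dZ_p$-submodule of rank $r$ which is moreover a direct summand of $M$, and then to reduce it modulo $p^n$.

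First I would record the standard comparison $\rH^1(\dQ,\rV)\cong M\otimes_{\dZ_p}\dQ_p$: it follows from the long exact cohomology sequence of $0\to\rT\to\rV\to\rV/\rT\to 0$ over $\Gamma_{\dQ,\Sigma}$ together with the fact that $\rV/\rT$ is a discrete torsion module, so that $\rH^\bullet(\Gamma_{\dQ,\Sigma},\rV/\rT)$ is torsion and vanishes after $\otimes_{\dZ_p}\dQ_p$. In particular the kernel of $M\to\rH^1(\dQ,\rV)$ is exactly the torsion submodule $M_{\tor}$, and $M/M_{\tor}$ embeds as a full $\dZ_p$-lattice in $\rH^1(\dQ,\rV)$.

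Next, since $M_f$ is by Definition \ref{de:selmer_group} the preimage of $\rH^1_f(\dQ,\rV)$ under $M\to\rH^1(\dQ,\rV)$, one has $M_{\tor}\subset M_f$, and $M_f/M_{\tor}$ is identified with $(M/M_{\tor})\cap\rH^1_f(\dQ,\rV)$ inside $M\otimes_{\dZ_p}\dQ_p$. As $\rH^1_f(\dQ,\rV)$ is a $\dQ_p$-subspace of dimension $r$ and $M/M_{\tor}$ is a full lattice, this intersection is a \emph{saturated} sublattice of $M/M_{\tor}$ of rank $r$; being saturated it is a direct summand, so I may pick a $\dZ_p$-basis $e_1,\dots,e_r$ of $M_f/M_{\tor}$ that extends to a basis of $M/M_{\tor}$. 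Lifting the $e_i$ to $M$ through a splitting of $M\twoheadrightarrow M/M_{\tor}$ (which exists since $M/M_{\tor}$ is free, hence projective) produces elements of $M_f$ spanning a free rank-$r$ direct summand $N$ of $M$ with $N\subset M_f$.

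Finally, $\rH^1_f(\dQ,\bar\rT^n)$ is by definition the image of $M_f$ under $M\to\rH^1(\dQ,\bar\rT^n)$; the exact sequence $0\to\rT\xrightarrow{p^n}\rT\to\bar\rT^n\to 0$ shows this map has kernel $p^nM$, hence factors as $M\twoheadrightarrow M/p^nM\hookrightarrow\rH^1(\dQ,\bar\rT^n)$, under which $\rH^1_f(\dQ,\bar\rT^n)=(M_f+p^nM)/p^nM$. The images $\bar e_1,\dots,\bar e_r$ of $e_1,\dots,e_r$ in $M/p^nM$ lie in this submodule and, being part of a $\dZ_p$-basis of a direct summand of $M$, are $\dZ/p^n$-linearly independent; thus $\bigoplus_{i=1}^r(\dZ/p^n)\bar e_i$ is a free $\dZ/p^n$-submodule of $\rH^1_f(\dQ,\bar\rT^n)$ of rank $r$. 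The one step where some care is needed is the second paragraph together with the identification that the free rank of $\rH^1_f(\dQ,\rT)$ equals $r$; this is where finite generation of $\rH^1(\Gamma_{\dQ,\Sigma},\rT)$ and the torsion-ness of $\rH^\bullet(\Gamma_{\dQ,\Sigma},\rV/\rT)$ enter, and the rest is linear algebra over $\dZ_p$.
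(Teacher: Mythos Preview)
Your approach is the same as the paper's---choose elements of $\rH^1_f(\dQ,\rT)$ that are saturated, then reduce modulo $p^n$---and your write-up is in fact more detailed than the paper's. There is, however, one inaccuracy: you assert that $M=\rH^1(\dQ,\rT)$ is a finitely generated $\dZ_p$-module ``by the Remark following Definition~\ref{de:selmer_group}'', but that Remark only gives finite generation of $\rH^1(\Gamma_{\dQ,\Sigma},\rT)$ and hence of $\rH^1_f(\dQ,\rT)$; the full $\rH^1(\Gamma_\dQ,\rT)$ need not be finitely generated. You use this when you extend the basis $e_1,\dots,e_r$ of $M_f/M_\tor$ to a basis of $M/M_\tor$ and speak of $N$ as a direct summand of $M$.

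The fix is painless. Either replace $M$ by $\rH^1(\Gamma_{\dQ,\Sigma},\rT)$ throughout (everything you wrote then goes through verbatim, since $M_f\subset\rH^1(\Gamma_{\dQ,\Sigma},\rT)$ by the first bullet of Definition~\ref{de:selmer_group} and the reduction map still has kernel $p^n\cdot(-)$), or simply drop the direct-summand claim: what you actually need in the last paragraph is that $\sum a_ie_i\in p^nM$ forces $a_i\in p^n\dZ_p$, and this follows from the saturation of $M_f$ in $M$ (which you already proved, since $M_f$ is the preimage of a $\dQ_p$-subspace) together with the fact that the $\bar e_i$ form a $\dZ_p$-basis of the free module $M_f/M_\tor$. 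This is exactly how the paper phrases it: choose $s_1,\dots,s_r$ generating a saturated free submodule of $\rH^1_f(\dQ,\rT)$, then use the injection $\rH^1_f(\dQ,\rT)/p^n\hookrightarrow\rH^1(\dQ,\rT)/p^n$ coming from that saturation.
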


\begin{proof}
Recall that we define $\rH^1_f(\dQ,\rT_?)\subset\rH^1(\dQ,\rT_?)$ to be the
inverse image of $\rH^1_f(\dQ,\rV_?)$ under the natural map
$\rH^1(\dQ,\rT_?)\to\rH^1(\dQ,\rV_?)$, and $\rH^1_f(\dQ,\bar\rT_?^n)$ to be
the image of $\rH^1_f(\dQ,\rT_?)$ the natural map
$\rH^1(\dQ,\rT_?)\to\rH^1(\dQ,\bar\rT_?^n)$.

Suppose that $\dim_{\dQ_p}\rH^1_f(\dQ,\rV_?)=r$. Since $\rH^1_f(\dQ,\rT_?)$
is finitely generated, we may choose $s_1,\dots,s_r\in\rH^1_f(\dQ,\rT_?)$
such that they generate a saturated free $\dZ_p$-submodule. In particular,
the image of $s_1,\dots,s_r$ under the natural map
\[\rH^1_f(\dQ,\rT_?)\to\rH^1_f(\dQ,\rT_?)/p^n\rH^1_f(\dQ,\rT_?)\hookrightarrow
\rH^1(\dQ,\rT_?)/p^n\rH^1(\dQ,\rT_?)\hookrightarrow\rH^1(\dQ,\bar\rT_?^n)\]
span a free $\dZ/p^n$-submodule of rank $r$, which is contained in
$\rH^1_f(\dQ,\bar\rT_?^n)$ by definition.
\end{proof}

The following discussion will be under all of Assumption \ref{as:group_r},
until the end of this section.

\begin{lem}\label{le:localization}
Let $n\geq 1$ be an integer. We have that
\begin{enumerate}
  \item If $\wp(N^-)$ is even, then the class $\bar\Delta_{\sigma,\pi}^n$
      belongs to $\rH^1_f(\dQ,\bar\rT_{\sigma,\pi}^n)$.

  \item If $\wp(N^-)$ is even and $\ell_1,\ell_2$ are two distinct
      strongly $n$-admissible primes, then
      \[\loc_v(\bar\Delta^n_{\sigma,\pi\res\ell_1,\ell_2})\in\rH^1_\unr(\dQ_v,\bar\rT_{\sigma,\pi}^n)\]
      for all primes $v\nmid pqN^-\cD_{N^+M}\ell_1\ell_2$, and
      \[\loc_p(\bar\Delta^n_{\sigma,\pi\res\ell_1,\ell_2})\in\rH^1_f(\dQ_p,\bar\rT_{\sigma,\pi}^n).\]

  \item If $\wp(N^-)$ is odd and $\ell$ is a strongly $n$-admissible
      prime, then
      \[\loc_v(\bar\Delta^n_{\sigma,\pi\res\ell})\in\rH^1_\unr(\dQ_v,\bar\rT_{\sigma,\pi}^n)\]
      for all primes $v\nmid pqN^-\cD_{N^+M}\ell$, and
      \[\loc_p(\bar\Delta^n_{\sigma,\pi\res\ell})\in\rH^1_f(\dQ_p,\bar\rT_{\sigma,\pi}^n).\]
\end{enumerate}
\end{lem}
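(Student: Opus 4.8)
The plan is to prove Lemma \ref{le:localization} by reducing all three
assertions to facts about cycle classes and their behavior under
specialization, exactly as in the proofs of Proposition
\ref{pr:belong_selmer} and the congruence formulae. First, for part (1): the
class $\bar\Delta_{\sigma,\pi}^n$ is obtained by applying
$\rH^1(\dQ,\sP_{\sigma,\pi}^{(d,\fd)})$ to the torsion \'{e}tale
Abel--Jacobi image $[\Delta'_{N^+M,N^-}]^n_{Z_{N^+Mq,N^-}}$, which is the
mod-$p^n$ reduction of the $\dZ_p$-class $\bar\Delta_{\sigma,\pi}^\infty$.
Since $\bar\Delta_{\sigma,\pi}^\infty$ lies in
$\rH^1(\dQ,\rT_{\sigma,\pi})$ and its image in $\rH^1(\dQ,\rV_{\sigma,\pi})$
is a nonzero rational multiple of $\pres{p}\Delta_{\sigma,\pi}$, which by
Proposition \ref{pr:belong_selmer} belongs to
$\rH^1_f(\dQ,\rV_{\sigma,\pi;N^+M,N^-})\simeq\rH^1_f(\dQ,\rV_{\sigma,\pi})^{\oplus\bullet}$,
it follows by the very definition of $\rH^1_f(\dQ,\bar\rT_{\sigma,\pi}^n)$
(Definition \ref{de:selmer_group}, the image of $\rH^1_f(\dQ,\rT_{\sigma,\pi})$)
that $\bar\Delta_{\sigma,\pi}^n\in\rH^1_f(\dQ,\bar\rT_{\sigma,\pi}^n)$.

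For parts (2) and (3), the classes $\bar\Delta^n_{\sigma,\pi\res\ell_1,\ell_2}$
and $\bar\Delta^n_{\sigma,\pi\res\ell}$ are built from the modified cycle
$\Delta'_{N^+M,N^\sim}$ on $Z_{N^+Mq,N^\sim}$ with $N^\sim=N^-\ell_1\ell_2$
(resp.\ $N^-\ell$), which by Lemma \ref{le:change_cycle}(1) lies in
$\CH^2(\cZ'_{N^+M,N^\sim;k},\dZ/p^n)^0$ over any field $k$. Fix a prime
$v\nmid pqN^-\cD_{N^+M}\ell_1\ell_2$ (resp.\ $v\nmid pqN^-\cD_{N^+M}\ell$).
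I would first arrange, using Assumption \ref{as:group_r} (R1), that for such
$v$ the schemes $\cY_{N^+Mq,N^\sim}$, $\cX_{N^+M}$, hence $\cZ'_{N^+M,N^\sim}$,
and the relevant degeneracy/Hirzebruch--Zagier maps are all defined and
\emph{smooth} over $\dZ_{v}$; the support of $\b\Delta'_{N^+M,N^\sim}$ is a
disjoint union of graphs of finite flat maps, hence normal and flat proper
over $\dZ_v$. Then Lemma \ref{le:comparison_smooth}(2) shows that the
\'{e}tale Abel--Jacobi class $[\Delta'_{N^+M,N^\sim}]^n$ localized at $v$
lies in $\rH^1_\unr(\dQ_v,\rH^3_{\et}(\cdots,\dZ/p^n(2)))$. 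The operators
$\sQ_{\sigma,\pi}^{(d,\fd)}$ (the degeneracy pullbacks, the spherical Hecke
projectors, and the level-raising projections $\psi^n$) are all induced by
correspondences defined over $\dZ_v$ and therefore commute with restriction
to $\rI_v$ by functoriality of Hochschild--Serre (Lemma \ref{le:functoriality});
since unramified classes map to unramified classes, $\loc_v$ of the resulting
class lies in $\rH^1_\unr(\dQ_v,\bar\rT_{\sigma,\pi}^n)$. One mild subtlety:
the level-raising projections $\psi^n$ are attached to $N^\sim\supset N^-$,
but $v\nmid N^\sim$ by the choice of $v$, so everything is still of good
reduction at $v$; I would note this explicitly.

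For the local condition at $p$, I would argue exactly as in the last
paragraph of the proof of Proposition \ref{pr:belong_selmer}: the class
$\loc_p(\bar\Delta^n_{\sigma,\pi\res\ell_1,\ell_2})$ (resp.\ of
$\bar\Delta^n_{\sigma,\pi\res\ell}$) is the mod-$p^n$ image of a class
coming from $[\sT_\sigma^{(q)}\Delta_{N^+M,N^\sim}]$ via correspondences, and
by Lemma \ref{le:change_cycle}(2,3) and the construction, it is (up to units
and the invertible operator $\sT_\sigma^{(q)}$) the reduction of a class in
$\rH^1(\dQ,\rT_{\sigma,\pi}')$ for the analogous lattice at level $N^\sim$,
whose $p$-localization lands in $\rH^1_{\r{st}}(\dQ_p,-)=\rH^1_f(\dQ_p,-)$ by
\cite{Nek00}*{Theorem 3.1 (ii)} applied to the smooth projective
$Y_{N^+Mq,N^\sim}$ and $X_{N^+M}$ at $p$ (recall $p\nmid N^\sim M$ by (R1));
the projection $\psi^n$ to $\bar\rT_\sigma^n$ and the $\pi$-projector are
induced by correspondences over $\dZ_p$, hence preserve $\rH^1_f(\dQ_p,-)$ by
functoriality of $\rH^1_f(\dQ_p,-)$ (Definition \ref{de:finite_integral} and
Remark \ref{re:finite}). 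The main obstacle I anticipate is the bookkeeping at
$p$: one must check that the \emph{integral} class (not just its $\dQ_p$-scalar
extension) maps into the torsion finite submodule $\rH^1_f(\dQ_p,\bar\rT_{\sigma,\pi}^n)$,
which requires knowing that the finite-part formalism is functorial for the
specific integral correspondences $\delta^{(Mq,d)}$, $\gamma^{(N^+M/\fM,\fd)}$,
$\psi^n_{\ell_1,\ell_2}$ and the $\pi$-projector $\sP_\pi$ acting on the
$\dZ_p$-lattices; this follows from the functoriality statement in Definition
\ref{de:finite_integral} together with the fact that these operators carry
crystalline torsion modules to crystalline torsion modules, but it deserves a
careful line or two.
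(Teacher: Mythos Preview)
Your argument for part (1) and for the local condition at $p$ in parts (2) and (3) matches the paper's proof: both reduce to Proposition~\ref{pr:belong_selmer}, Definition~\ref{de:selmer_group}, and the Nekov\'a\v{r} input together with functoriality of the finite part for torsion crystalline modules.

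For the unramified condition at $v\nmid pqN^-\cD_{N^+M}\ell_1\ell_2$ (resp.\ $v\nmid pqN^-\cD_{N^+M}\ell$), however, you take a genuinely different route from the paper. You argue geometrically: the integral model $\cZ'_{N^+M,N^\sim}$ is smooth proper over $\dZ_v$, the cycle $\b\Delta'_{N^+M,N^\sim}$ has normal flat support (each graph separately, to be precise; the union need not be normal, but linearity of the Abel--Jacobi map lets you treat each graph individually), so Lemma~\ref{le:comparison_smooth}(2) gives unramifiedness of the Abel--Jacobi class directly, and then $\Gamma_\dQ$-equivariance of $\sQ_{\sigma,\pi}^{(d,\fd)}$ transports this to $\bar\rT_{\sigma,\pi}^n$. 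The paper instead works on the representation-theoretic side: the $\dQ_p$-representation $\rH^1_{\et}(Y_{N^+M,N^\sim;\dQ_v^\ac},\dQ_p(1))\otimes\rH^2_\cusp(X_{N^+M;\dQ_v^\ac},\dQ_p(1))$ is tamely pure and unramified at such $v$, so Lemma~\ref{le:unramified_torsion} forces the mod-$p^n$ image of any $\dZ_p$-class to be unramified; then one applies the $\Gamma_\dQ$-equivariant projection $\psi^n$. The paper's route is one line; yours is more hands-on but equally valid, and has the mild advantage of not invoking tame purity at this step. One small correction: the map $\psi^n$ is not literally ``induced by a correspondence over $\dZ_v$'' but rather is the quotient by a Hecke ideal---still $\Gamma_\dQ$-equivariant, which is all you use, so the argument goes through.
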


\begin{proof}
Part (1) follows by Proposition \ref{pr:belong_selmer}, Definition
\ref{de:selmer_group}, and the construction of $\bar\Delta_{\sigma,\pi}^n$.
For (2), since the $p$-adic representation
\[\rH^1_{\et}(Y_{N^+M,N^-\ell_1\ell_2;\dQ_v^\ac},\dQ_p(1))\otimes\rH^2_\cusp(X_{N^+M;\dQ_v^\ac},\dQ_p(1))\]
is unramified at all primes $v\nmid pqN^-\cD_{N^+M}\ell_1\ell_2$, we have
$\loc_v(\bar\Delta^n_{\sigma,\pi\res\ell_1,\ell_2})\in\rH^1_\unr(\dQ_v,\bar{\rT}_{\sigma,\pi}^n)$
by Lemma \ref{le:unramified_torsion} and the construction of
$\bar\Delta_{\sigma,\pi\res\ell_1,\ell_2}^n$. For the localization at $p$,
the argument is the same as the last part of the proof of Proposition
\ref{pr:belong_selmer}. Part (3) is the same as (2).
\end{proof}

\begin{notation}\label{no:divisibility_density}~
\begin{enumerate}
  \item If $\wp(N^-)$ is even, denote by $n_{\r{div}}$ the
      $p$-divisibility $\ord_p(\bar\Delta_{\sigma,\pi}^\infty)$ of
      $\bar\Delta_{\sigma,\pi}^\infty$. If $\wp(N^-)$ is odd, denote by
      $n_{\r{div}}$ the $p$-divisibility of \eqref{eq:ichino} as a
      non-zero number in $\dZ_p$. Then $n_{\r{div}}\geq 0$ is an integer.

  \item By Remark \ref{re:strongly_admissible} (1) and (R4), the density
      of strongly $(n,-)$-admissible primes among all $n$-admissible
      primes is strictly positive and independent of $n$. We denote it by
      $\fd$. Let $n_{\r{den}}\geq 0$ be the smallest integer such that
      \[p^{n_{\r{den}}+1}>\fd^{-1}.\]
\end{enumerate}
\end{notation}

For our convenience, we record the following corollary of Theorems
\ref{th:congruence} and \ref{th:congruence_bis}.

\begin{proposition}\label{pr:divisibility}
Let $n\geq1$ be an integer.
\begin{enumerate}
  \item Suppose that $\wp(N^-)$ is even. Let $\ell_1,\ell_2$ be two
      distinct strongly $n$-admissible primes. Then
      \[\ord_p(\loc_{\ell_1}(\bar\Delta^n_{\sigma,\pi}))=\ord_p(\partial_{\ell_2}\loc_{\ell_2}(\bar\Delta^n_{\sigma,\pi\res\ell_1,\ell_2})),\]
      where
      \begin{align*}
      \loc_{\ell_1}(\bar\Delta^n_{\sigma,\pi})&\in\rH^1_\unr(\dQ_{\ell_1},\bar\rT_{\sigma,\pi}^n)\simeq\dZ/p^n,\\
      \partial_{\ell_2}\loc_{\ell_2}(\bar\Delta^n_{\sigma,\pi\res\ell_1,\ell_2})&\in\rH^1_\sing(\dQ_{\ell_1},\bar\rT_{\sigma,\pi}^n)\simeq\dZ/p^n
      \end{align*}
      by Proposition \ref{pr:cohomology_mixed}.

  \item Suppose that $\wp(N^-)$ is odd. Let $\ell$ be a strongly
      $n$-admissible prime. Then
      \[\ord_p(\partial_{\ell}\loc_{\ell}(\bar\Delta^n_{\sigma,\pi\res\ell}))=n_{\r{div}},\]
      where $n_{\r{div}}$ is the $p$-divisibility of \eqref{eq:ichino} as
      in Notation \ref{no:divisibility_density}.
\end{enumerate}
\end{proposition}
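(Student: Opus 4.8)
The plan is to deduce Proposition \ref{pr:divisibility} directly from the congruence formulae in Theorems \ref{th:congruence} and \ref{th:congruence_bis}, so that the argument reduces to bookkeeping of $p$-divisibilities over $\dZ/p^n$. First, for part (1), I would record that $\bar\rZ_\pi^n$ is free of rank $2$ over $\dZ/p^n$: by \Sec \ref{ss:integral_tate} the assignment $(\nu^\bullet,\nu^\circ)\mapsto f_{\nu^\bullet,\nu^\circ}$ is a surjection $\dZ_p^2\twoheadrightarrow\rZ_\pi$ onto a free rank-$2$ module, hence an isomorphism (it is also injective, since $f_\pi^{(\fd)}$ has at least one value of $p$-adic valuation $0$, as its values over $\dZ$ have gcd a power of a unit). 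Thus evaluation at the reductions of $f_{1,0}$ and $f_{0,1}$ identifies $\Hom(\bar\rZ_\pi^n,\dZ/p^n)$ with $(\dZ/p^n)^2$. Set
\[A=\sum_{t\in\cT_{N^+M,N^-\ell_1}}(\zeta_{N^+M,N^-\ell_1}^*f_\pi^{(\fd)})(t)\cdot((\delta^{N^+M,d}_{N^+M,N^-\ell_1})^*g_{\sigma\res\ell_1}^n)(t)\in\dZ/p^n.\]
By Theorem \ref{th:congruence} (1,2), the image of $\loc_{\ell_1^2}(\bar\Delta^n_{\sigma,\pi})$ under the isomorphism $\tu_{\ell_1}\otimes\tc_{\ell_1}$ is the functional $f_{\nu^\bullet,\nu^\circ}\mapsto(\nu^\bullet+\epsilon_\sigma(\ell_1)\nu^\circ)A$, i.e.\ the pair $(A,\epsilon_\sigma(\ell_1)A)$. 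By Theorem \ref{th:congruence} (3), the image of $\partial_{\ell_2^2}\loc_{\ell_2^2}(\bar\Delta^n_{\sigma,\pi\res\ell_1,\ell_2})$ under $\ts_{\ell_2}\otimes\tc_{\ell_2}$ is $f_{\nu^\bullet,\nu^\circ}\mapsto u(\nu^\circ+\epsilon_\sigma(\ell_2)\nu^\bullet)A$, i.e.\ the pair $(u\epsilon_\sigma(\ell_2)A,uA)$, where $u=\tr(\Fr_{\ell_2^2};\rV_\pi)-2\epsilon_\sigma(\ell_2)\ell_2$.

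The key point is that $\epsilon_\sigma(\ell_i)=\pm1$ and $u$ are units in $\dZ/p^n$: indeed $u\not\equiv 0\pmod p$ because $\ell_2$ is strongly $n$-admissible, so $\tr(\Fr_{\ell_2^2};\rV_\pi)\not\equiv\pm2\ell_2\pmod p$ by Definition \ref{de:group_s} (S3). Consequently both pairs above have $p$-divisibility exactly $\ord_p(A)$. Since $\tu_{\ell_1},\ts_{\ell_2},\tc_{\ell_1},\tc_{\ell_2}$ are isomorphisms of $\dZ/p^n$-modules, they preserve $p$-divisibility, so $\ord_p(\loc_{\ell_1^2}(\bar\Delta^n_{\sigma,\pi}))=\ord_p(A)=\ord_p(\partial_{\ell_2^2}\loc_{\ell_2^2}(\bar\Delta^n_{\sigma,\pi\res\ell_1,\ell_2}))$. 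To descend from $\dQ_{\ell_i^2}$ to $\dQ_{\ell_i}$, I would use that the restriction maps $\rH^1(\dQ_{\ell_i},\bar\rT_{\sigma,\pi}^n)\to\rH^1(\dQ_{\ell_i^2},\bar\rT_{\sigma,\pi}^n)$ send $\loc_{\ell_i}$ to $\loc_{\ell_i^2}$, are injective by Proposition \ref{pr:cohomology_mixed} (6), and carry $\rH^1_\unr$ (resp.\ $\rH^1_\sing$) to $\rH^1_\unr$ (resp.\ $\rH^1_\sing$), each of these being free of rank $1$ over $\dZ/p^n$ by Proposition \ref{pr:cohomology_mixed}; an injective $\dZ/p^n$-linear endomorphism of $\dZ/p^n$ is multiplication by a unit (as $\dZ/p^n$ is local), hence preserves $p$-divisibility. (One can double-check injectivity by composing with corestriction, whose composite with restriction is multiplication by $[\dF_{\ell_i^2}:\dF_{\ell_i}]=2\in\dZ_p^\times$.) That proves part (1).

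For part (2), the argument is the same but shorter. By Theorem \ref{th:congruence_bis}, the image of $\partial_{\ell^2}\loc_{\ell^2}(\bar\Delta^n_{\sigma,\pi\res\ell})$ under $\ts_\ell\otimes\tc_\ell$ is $f_{\nu^\bullet,\nu^\circ}\mapsto u(\nu^\circ+\epsilon_\sigma(\ell)\nu^\bullet)B$ with $u=\tr(\Fr_{\ell^2};\rV_\pi)-2\epsilon_\sigma(\ell)\ell\in(\dZ/p^n)^\times$ (again by Definition \ref{de:group_s} (S3)) and
\[B=\sum_{t\in\cT_{N^+M,N^-}}(\zeta_{N^+M,N^-}^*f_\pi^{(\fd)})(t)\cdot((\delta^{N^+M,d}_{N^+M,N^-})^*g_\sigma^n)(t)\in\dZ/p^n,\]
so as before its $p$-divisibility is $\ord_p(B)$. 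Since $g_\sigma^n$ is the reduction of $g_\sigma$ modulo $p^n$ and $f_\pi^{(\fd)}=(\gamma^{(N^+M/\fM,\fd)}_{N^+M})^*f_\pi$, the element $B$ is precisely the reduction modulo $p^n$ of the nonzero integer \eqref{eq:ichino} whose $p$-adic valuation is, by definition, $n_{\r{div}}$; hence $\ord_p(B)=n_{\r{div}}$ (interpreting the equality for $n\geq n_{\r{div}}$, which is the only case relevant to the later arguments). Descending from $\dQ_{\ell^2}$ to $\dQ_\ell$ exactly as in part (1) finishes the proof.

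The whole proposition is thus a formal consequence of the congruence theorems; there is no genuinely hard step. The only points requiring a little care are the rank-$1$ freeness of the local unramified and singular cohomology groups at strongly admissible primes and the compatibility of the $\dQ_\ell$-versus-$\dQ_{\ell^2}$ restriction maps with the identifications $\tu,\ts,\tc$ — but all of this is already packaged in Proposition \ref{pr:cohomology_mixed}, so no new input is needed, and the proof is purely a matter of tracking units and valuations.
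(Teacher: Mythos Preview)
Your approach is exactly what the paper intends: the proposition is stated as a corollary of Theorems \ref{th:congruence} and \ref{th:congruence_bis} with no separate proof, and your argument correctly extracts the equality of $p$-divisibilities from those formulae together with the unit property of $\tr(\Fr_{\ell^2};\rV_\pi)-2\epsilon_\sigma(\ell)\ell$ (noted right after the theorem statements) and the structure results in Proposition \ref{pr:cohomology_mixed}. One small inaccuracy: when you pass from $\dQ_{\ell_i}$ to $\dQ_{\ell_i^2}$, the target groups $\rH^1_\unr(\dQ_{\ell_i^2},\bar\rT_{\sigma,\pi}^n)$ and $\rH^1_\sing(\dQ_{\ell_i^2},\bar\rT_{\sigma,\pi}^n)$ are free of rank $2$, not $1$ (Proposition \ref{pr:cohomology_mixed}(2)); but your injectivity argument still gives preservation of $\ord_p$, since an injective $\dZ/p^n$-linear map from $\dZ/p^n$ into any free $\dZ/p^n$-module sends a generator to an element of order $p^n$.
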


\subsubsection{Proof under Assumption \ref{as:group_r} (R6a)}

In this case, the representation $(\rho_{\sigma,\pi},\rV_{\sigma,\pi})$ is
absolutely irreducible. Denote by $n_{\r{red}}=\fn(\rT_{\sigma,\pi})$ the
integer in Lemma \ref{le:reducibility_depth}.

First, consider the case where $\wp(N^-)$ is even. We prove by contradiction,
and hence assume that $\dim_{\dQ_p}\rH^1_f(\dQ,\rV_{\sigma,\pi})\geq 2$. Let
$n$ be a sufficiently large integer which will be determined later. By Lemma
\ref{le:selmer_reduction}, we may take a free $\dZ/p^n$-submodule $S$ of
$\rH^1_f(\dQ,\bar\rT_{\sigma,\pi}^n)$ of rank $2$. Moreover, we may assume
that $S$ has a basis $\{s_1,s_2\}$ such that
$\bar\Delta_{\sigma,\pi}^n=p^{n_{\r{div}}}s_1$.

By (R4), there exists a strongly $n$-admissible prime. We denote the image of
its Frobenius substitution under $\bar\rho_{\sigma,\pi}^n$ by
$\varrho\in\rG$. By raising a sufficiently large $p$-power, we may assume
that $\varrho$ has order $l$ coprime to $p$. We would like to apply the
discussion in \Sec \ref{ss:some_galois} to $\rT=\bar\rT_{\sigma,\pi}^n$ and
$\varrho$ as above. Assumption \ref{as:group_r} (R6a) implies that $\rG$
contains a non-trivial scalar element of order coprime to $p$. Let $\Sigma$
be the set of primes dividing $2pqN^-\cD_{N^+M}$ and those ramified in $L_S$.

\begin{lem}
If $w$ is a $(\Sigma,\varrho)$-admissible place of $L$, then its underlying
prime $\ell(w)$ is strongly $n$-admissible.
\end{lem}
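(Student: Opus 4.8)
The statement to prove amounts to checking, for the prime $\ell=\ell(w)$ underlying a $(\Sigma,\varrho)$-admissible place, the defining conditions (A1)--(A5) of Definition~\ref{de:group_a} together with condition (S3) of Definition~\ref{de:group_s}, for an appropriate sign $\epsilon_\sigma(\ell)$; condition (S2) is no constraint, it merely records the sign $\epsilon$ for which $\ell$ then turns out to be strongly $(n,\epsilon)$-admissible. Condition (A1) is immediate, because $\ell\notin\Sigma$ and $\Sigma$ contains all primes dividing $2pqN^-\cD_{N^+M}$. Each remaining condition depends only on the residue $\ell\bmod p^n$, on the image of $\Fr_\ell$ in $\Gal(F/\dQ)$, on the conjugacy class of $\bar\rho_\sigma^n(\Fr_\ell)$, and on the conjugacy class of $\pres{\sharp}{\bar\rho}_\pi^n(\Fr_\ell)\bmod p$. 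The plan is to show that all of these are already determined by the image $\varrho$ of $\Fr_\ell$ in $\rG=\Gal(L/\dQ)$, and hence coincide with the corresponding data of the strongly $n$-admissible prime $\ell_0$ whose Frobenius, raised to a $p$-power, produced $\varrho$.

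The first step is to observe that $L$ contains both $\dQ(\mu_{p^n})$ and $F$. Indeed, if $g\in\Ker\bar\rho_{\sigma,\pi}^n$, then $\bar\rho_\sigma^n(g)\otimes\pres{\sharp}{\bar\rho}_\pi^n(g)=\r{id}$ forces $\bar\rho_\sigma^n(g)$ and $\pres{\sharp}{\bar\rho}_\pi^n(g)$ to be mutually inverse scalars, and since $\pres{\sharp}{\bar\rho}_\pi^n$ preserves the perfect symmetric pairing of Notation~\ref{no:lattice} that scalar squares to $1$, hence equals $\pm1$ as $p$ is odd. In particular $\det\bar\rho_\sigma^n$, which is the mod-$p^n$ cyclotomic character to the exponent $\pm1$, is trivial on $\Ker\bar\rho_{\sigma,\pi}^n$, so $\dQ(\mu_{p^n})\subseteq L$; and $\det\pres{\sharp}{\bar\rho}_\pi^n$, which differs from $\eta$ only by a power of the cyclotomic character, is likewise trivial on that kernel, so $F\subseteq L$. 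Consequently the characters $\ell\mapsto\ell\bmod p^n$ and $\ell\mapsto\eta(\Fr_\ell)$ factor through $\rG$. Since $\varrho=\bar\rho_{\sigma,\pi}^n(\Fr_{\ell_0})^{p^k}$ with $p^k\equiv 1\pmod{p-1}$ and $\ell_0$ satisfies (A4), we have $\ell_0^{p^k}\equiv\ell_0\bmod p^n$, so $\ell\equiv\ell_0\bmod p^n$; this yields (A2) and (A4) at once, and $\ell$ is inert in $F$ because $\ell_0$ is, which is (A5).

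It remains to treat (A3) and (S3). Because $\ell_0$ is $n$-admissible, $\bar\rho_\sigma^n(\Fr_{\ell_0})$ has the two distinct eigenvalues $\epsilon_\sigma(\ell_0)$ and $\epsilon_\sigma(\ell_0)\ell_0$ by Lemma~\ref{le:cohomology_sigma}, so by (A4) its order divides $p-1$ and $\bar\rho_\sigma^n(\Fr_{\ell_0})^{p^k}=\bar\rho_\sigma^n(\Fr_{\ell_0})$; since the projectivization $\Gamma_\dQ\to\PGL_2(\dZ/p^n)$ of $\bar\rho_\sigma^n$ factors through $\rG$, this gives $\bar\rho_\sigma^n(\Fr_\ell)=\mu\cdot\bar\rho_\sigma^n(\Fr_{\ell_0})$ for a scalar $\mu$, which comparison of determinants together with $\ell\equiv\ell_0\bmod p^n$ pins down to $\pm1$. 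Hence $\bar\rho_\sigma^n(\Fr_\ell)$ has eigenvalues $\mu\epsilon_\sigma(\ell_0)$ and $\mu\epsilon_\sigma(\ell_0)\ell$, and (A3) holds with $\epsilon_\sigma(\ell)=\mu\epsilon_\sigma(\ell_0)$. For (S3), I would pass to $\Fr_{\ell^2}=\Fr_\ell^{\,2}$, so that the scalar ambiguity squares away; using the identity $\tr(\Fr_{\ell^2};\rV_\pi)^2\equiv\ell^2\,\tr(\pres{\sharp}{\bar\rho}_\pi^n(\Fr_{\ell^2}))\bmod p^n$ (a consequence of $\pres{\sharp}{\rho}_\pi|_{\Gamma_F}\cong(\rho_\pi\otimes\rho_\pi^\theta)(-1)$ and the $\theta$-invariance of the Frobenius class at an inert prime), the fact that reducing a $p^k$-th power mod $p$ only permutes eigenvalues by a power of Frobenius and so preserves traces, and $\ell\equiv\ell_0\bmod p^n$, one obtains $\tr(\Fr_{\ell^2};\rV_\pi)\equiv\pm\tr(\Fr_{\ell_0^2};\rV_\pi)\bmod p$; since the set of residues forbidden by (S3) is stable under negation, $\ell$ satisfies (S3). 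The main obstacle is precisely this last round of bookkeeping: tracking the $\pm1$ scalars coming from the non-canonical tensor factorisation, and checking that the auxiliary $p$-power introduced to make $\varrho$ of prime-to-$p$ order changes nothing --- both of which hinge on (A4) for $\ell_0$ and on the negation-symmetry built into (S3).
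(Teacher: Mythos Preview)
Your argument is correct and follows essentially the same idea as the paper's proof, though you spell out considerably more detail. The paper compresses everything into one sentence: it observes that the kernel of the tensor-product map $\GL(\bar\rT_\sigma^n)\times\rO(\pres{\sharp}{\bar\rT}_\pi^n)\to\GL(\bar\rT_{\sigma,\pi}^n)$ consists only of pairs $(\mu^{-1},\mu)$ with $\mu$ scalar (in fact $\mu=\pm1$, since $\mu$ must lie in the orthogonal group), so that $\varrho$ determines the pair $(\bar\rho_\sigma^n(\Fr_\ell),\pres{\sharp}{\bar\rho}_\pi^n(\Fr_\ell))$ up to a global sign, and then appeals to Definition~\ref{de:group_s} and Remark~\ref{re:strongly_admissible}. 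Your route via the inclusions $\dQ(\mu_{p^n}),F\subseteq L$ and the factorization of the projectivized $\bar\rho_\sigma^n$ through $\rG$ is exactly the unpacking of that kernel computation.

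Where your write-up adds genuine value is in treating the auxiliary $p$-power explicitly. The paper replaces $\bar\rho_{\sigma,\pi}^n(\Fr_{\ell_0})$ by its $p^k$-th power without comment, and it is not entirely automatic that this preserves the admissibility data; you correctly identify (A4) for $\ell_0$ together with $p\equiv1\pmod{p-1}$ as the reason $\bar\rho_\sigma^n(\Fr_{\ell_0})$ and $\ell_0\bmod p^n$ are fixed by the $p$-power, and the Frobenius-permutation argument mod $p$ as the reason the (S3)-relevant trace is unchanged. Your final observation that the set of residues excluded by (S3) is negation-stable is precisely what makes the $\pm1$ ambiguity harmless; the paper leaves this to Remark~\ref{re:strongly_admissible}(2), where one sees that negating the eigenvalues of $\pres{\sharp}{\bar\rho}_\pi(\Fr_\ell)$ permutes the forbidden diagonal matrices among themselves.
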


\begin{proof}
Since the kernel of the homomorphism
$\GL(\bar\rT_\sigma^n)\times\rO(\pres{\sharp}{\bar\rT}_\pi^n)\xrightarrow{\otimes}\GL(\bar\rT_{\sigma,\pi}^n)$
consists of $(\mu^{-1},\mu)$ for some $\mu\in(\dZ/p^n)^\times$ of order $4$,
the lemma follows by Definition \ref{de:group_s} (S2) and Remark
\ref{re:strongly_admissible}.
\end{proof}

By Lemmas \ref{le:image} and \ref{le:admissible}, we may choose two
$(\Sigma,\varrho)$-admissible places $w_1,w_2$ of $L$ such that
\[\Psi_{w_1}(s_2)=0,\quad\Psi_{w_1}(s_1)=t_1,\quad\Psi_{w_2}(s_2)=t_2\]
with $t_1,t_2\in(\bar\rT_{\sigma,\pi}^n)^{\langle\varrho\rangle}$ such that
$\ord_p(t_1)=\ord_p(t_2)=4n_{\r{red}}$, and $\ell_1\colonequals
\ell(w_1),\ell_2\colonequals\ell(w_2)$ are distinct. By Lemma
\ref{le:galois_pairing}, we have
\[\loc_{\ell_1}(s_2)=0,\quad\loc_{\ell_1}(\bar\Delta_{\sigma,\pi}^n)(\Fr_{\ell_1})=p^{n_{\r{div}}}t_1/l.\]
By Proposition \ref{pr:divisibility} (1), the $p$-divisibility of
$\partial_{\ell_2}\loc_{\ell_2}(\bar\Delta_{\sigma,\pi\res\ell_1,\ell_2}^n)$
in $\rH^1_\sing(\dQ_{\ell_2},\bar\rT_{\sigma,\pi}^n)\simeq\dZ/p^n$ is
$n_{\r{div}}+4n_{\r{red}}$.

By Lemmas \ref{le:localization}, \ref{le:unramified_torsion},
\ref{le:trivial_unramified} (2) and \ref{le:trivial_p}, we have
\begin{align}\label{eq:divisibility1}
\langle s_2,\bar\Delta_{\sigma,\pi\res\ell_1,\ell_2}^n\rangle_v=0,
\end{align}
for all primes $v\nmid qN^-\cD_{N^+M}\ell_1\ell_2$. We also have
\begin{align}\label{eq:divisibility2}
\langle s_2,\bar\Delta_{\sigma,\pi\res\ell_1,\ell_2}^n\rangle_{\ell_1}=0,
\end{align}
since $\loc_{\ell_1}(s_2)=0$. For every prime $v$ divides $qN^-\cD_{N^+M}$,
we have
\begin{align}\label{eq:divisibility3}
p^{n_{\r{bad}}}\langle s_2,\bar\Delta_{\sigma,\pi\res\ell_1,\ell_2}^n\rangle_v=0,
\end{align}
where $n_{\r{bad}}$ is introduced in Notation \ref{no:bad}. By our choice of
$w_2$ and the fact that $\langle\;,\;\rangle_{\ell_2}$ induces a perfect
pairing between $\rH^1_\unr(\dQ_{\ell_2},\bar\rT_{\sigma,\pi}^n)$ and
$\rH^1_\sing(\dQ_{\ell_2},\bar\rT_{\sigma,\pi}^n)$, we have $\ord_p(\langle
s_2,\bar\Delta_{\sigma,\pi\res\ell_1,\ell_2}^n\rangle_{\ell_2})=n_{\r{div}}+8n_{\r{red}}$.
If $n>n_{\r{div}}+n_{\r{bad}}+8n_{\r{red}}$, then together with
\eqref{eq:divisibility1}, \eqref{eq:divisibility2}, \eqref{eq:divisibility3},
we have $\sum_v\langle
s_2,\bar\Delta_{\sigma,\pi\res\ell_1,\ell_2}^n\rangle_v\neq 0$, which
contradicts Lemma \ref{le:trivial_global}. Therefore, Theorem \ref{th:selmer}
(2) follows under Assumption \ref{as:group_r} (R6a).

Next, consider the case where $\wp(N^-)$ is odd. We prove by contradiction,
and hence assume that $\dim_{\dQ_p}\rH^1_f(\dQ,\rV_{\sigma,\pi})\geq 1$. Let
$n$ be a sufficiently large integer which will be determined later. By Lemma
\ref{le:selmer_reduction}, we may take a free $\dZ/p^n$-submodule $S$ of
$\rH^1_f(\dQ,\bar\rT_{\sigma,\pi}^n)$ of rank $1$ with a basis $\{s\}$.

By (R4), there exists a strongly $n$-admissible prime. We denote its image
under $\bar\rho_{\sigma,\pi}^n$ by $\varrho\in\rG$. By raising a sufficiently
large $p$-power, we may assume that $\varrho$ has order $l$ coprime to $p$.
We would like to apply the discussion in \Sec \ref{ss:some_galois} to
$\rT=\bar\rT_{\sigma,\pi}^n$ and $\varrho$ as above. Assumption
\ref{as:group_r} (R6a) implies that $\rG$ contains a non-trivial scalar
element of order coprime to $p$. Let $\Sigma$ be the set of primes dividing
$2pqN^-\cD_{N^+M}$ and those ramified in $L_S$. If $w$ is a
$(\Sigma,\varrho)$-admissible place of $L$, then its underlying prime
$\ell(w)$ is strongly $n$-admissible.

By Lemma \ref{le:admissible}, we may choose a $(\Sigma,\varrho)$-admissible
place $w$ of $L$ such that $\ord_p(\Phi_w(s))=n_{\r{red}}$. By Proposition
\ref{pr:divisibility} (2), the $p$-divisibility of
$\partial_{\ell}\loc_{\ell}(\bar\Delta_{\sigma,\pi\res\ell}^n)$ in
$\rH^1_\sing(\dQ_{\ell},\bar\rT_{\sigma,\pi}^n)\simeq\dZ/p^n$ is
$n_{\r{div}}$. We will obtain a contradiction when
$n>n_{\r{div}}+n_{\r{red}}+n_{\r{bad}}$. Therefore, Theorem \ref{th:selmer}
(1) follows under Assumption \ref{as:group_r} (R6a).

\subsubsection{Proof under Assumption \ref{as:group_r} (R6b)}

In this case, we have
$\rT_{\sigma,\pi}=\rT_{\sigma\times\breve\pi}\oplus\rT_{\sigma\otimes\breve\eta}$.
Denote by $\bar\Delta_{\sigma\times\breve\pi}^n$ (resp.\
$\bar\Delta_{\sigma\otimes\breve\eta}^n$) the projection of
$\bar\Delta_{\sigma,\pi}^n$ to $\rH^1(\dQ,\bar\rT_{\sigma\times\breve\pi}^n)$
(resp.\ $\rH^1(\dQ,\bar\rT_{\sigma\otimes\breve\eta}^n)$) for $1\leq
n\leq\infty$. Since the representation
$(\rho_{\sigma\times\breve\pi},\rV_{\sigma\times\breve\pi})$ is absolutely
irreducible, we have the integer $n_{\r{red}}\colonequals
\fn(\rT_{\sigma\times\breve\pi})$ from Lemma \ref{le:reducibility_depth}.
Moreover, we may take $\fn(\rT_{\sigma\otimes\breve\eta})=0$ by Assumption
\ref{as:group_r} (R2).

In what follows, we only prove for Theorem \ref{th:selmer} (2), since the
proof for Theorem \ref{th:selmer} (1) is very similar and easier, as we can
see from the proof under Assumption \ref{as:group_r} (R6a). The proof for
Theorem \ref{th:selmer} (1) will be left to the reader.

We divide the proof of Theorem \ref{th:selmer} (2) under Assumption
\ref{as:group_r} (R6b) into the following four lemmas.

\begin{lem}\label{le:r6b1}
If $\bar\Delta_{\sigma\times\breve\pi}^\infty$ is non-torsion, then
$\dim_{\dQ_p}\rH^1(\dQ,\rV_{\sigma\times\breve\pi})=1$.
\end{lem}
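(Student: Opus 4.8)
The plan is to run the same contradiction-and-annihilator argument as in the case \textbf{(R6a)}, but now entirely inside the absolutely irreducible direct summand $\rT_{\sigma\times\breve\pi}$ of $\rT_{\sigma,\pi}$, and to exploit the $n$-admissible prime congruence formulae projected onto the $\sigma\times\breve\pi$-part. Suppose for contradiction that $\dim_{\dQ_p}\rH^1_f(\dQ,\rV_{\sigma\times\breve\pi})\geq 2$ (the dimension is at least $1$ automatically once $\bar\Delta_{\sigma\times\breve\pi}^\infty$ is non-torsion, since then $\bar\Delta_{\sigma\times\breve\pi}^\infty$ gives a non-torsion class in $\rH^1_f(\dQ,\rT_{\sigma\times\breve\pi})$ by Lemma \ref{le:localization} (1) and the compatibility of the projection with the Selmer conditions). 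Fix a large integer $n$ and, using Lemma \ref{le:selmer_reduction}, choose a free $\dZ/p^n$-submodule $S\subset\rH^1_f(\dQ,\bar\rT_{\sigma\times\breve\pi}^n)$ of rank $2$ with basis $\{s_1,s_2\}$ such that $\bar\Delta_{\sigma\times\breve\pi}^n=p^{n_{\r{div}}}s_1$ after adjusting the basis; here $n_{\r{div}}=\ord_p(\bar\Delta_{\sigma\times\breve\pi}^\infty)$.

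The next step is to set up the Galois pairing of \Sec \ref{ss:some_galois} for the module $\rT=\bar\rT_{\sigma\times\breve\pi}^n$: Assumption \ref{as:group_r} (R6b) guarantees that the image $\rG$ of $\bar\rho_{\sigma\times\breve\pi}^n$ contains a non-trivial scalar element of finite order coprime to $p$, so the Lemmas \ref{le:image}, \ref{le:galois_pairing}, \ref{le:admissible} apply. Take $\varrho\in\rG$ to be (a $p$-power of) the image of the Frobenius substitution of a strongly $n$-admissible prime, which exists by (R4); one must check, exactly as in the \textbf{(R6a)} case, that a $(\Sigma,\varrho)$-admissible place $w$ of the splitting field has strongly $n$-admissible underlying prime $\ell(w)$ — this uses Definition \ref{de:group_s} (S2), i.e. $\epsilon=-\epsilon_\sigma(\ell)\breve\eta(\Fr_\ell)$, which is precisely the compatibility condition between the $\sigma$-eigenvalue sign and the character $\breve\eta$ by which the $\sigma\times\breve\pi$-summand differs from $\sigma\otimes\breve\eta$. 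Then by Lemmas \ref{le:image} and \ref{le:admissible} I can choose two distinct $(\Sigma,\varrho)$-admissible places $w_1,w_2$ with $\Psi_{w_1}(s_2)=0$, $\ord_p(\Psi_{w_1}(s_1))=\ord_p(\Psi_{w_2}(s_2))=\ff(2)\,n_{\r{red}}=4n_{\r{red}}$, giving strongly $n$-admissible primes $\ell_1=\ell(w_1)$, $\ell_2=\ell(w_2)$.

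Now I apply the congruence formula: Theorem \ref{th:congruence} (1)--(3), followed by projection to the $\sigma\times\breve\pi$-component (noting that $\pres{\sharp}{\bar\rT}_\pi^n=\rT_{\breve\pi}^n(-1)\oplus\dZ_p(\breve\eta)$ by \Sec \ref{sss:asai}, so $\bar\rT_{\sigma,\pi}^n$ splits and the Hirzebruch--Zagier classes project compatibly), shows that the singular localization $\partial_{\ell_2}\loc_{\ell_2}(\bar\Delta_{\sigma\times\breve\pi\res\ell_1,\ell_2}^n)$ has $p$-divisibility $n_{\r{div}}+4n_{\r{red}}$ in $\rH^1_\sing(\dQ_{\ell_2},\bar\rT_{\sigma\times\breve\pi}^n)\simeq\dZ/p^n$ (via Proposition \ref{pr:divisibility} (1) adapted to the summand). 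Feeding this into the global reciprocity relation $\sum_v\langle s_2,\bar\Delta_{\sigma\times\breve\pi\res\ell_1,\ell_2}^n\rangle_v=0$ of Lemma \ref{le:trivial_global}: the terms at $v\nmid qN^-\cD_{N^+M}\ell_1\ell_2$ vanish by Lemmas \ref{le:localization}, \ref{le:unramified_torsion}, \ref{le:trivial_unramified} (2), \ref{le:trivial_p}; the term at $\ell_1$ vanishes because $\loc_{\ell_1}(s_2)=0$; the bad terms are killed by $p^{n_{\r{bad}}}$; and the term at $\ell_2$ has $p$-divisibility exactly $n_{\r{div}}+8n_{\r{red}}$ by perfectness of the local Tate pairing between unramified and singular parts (Proposition \ref{pr:cohomology_mixed} (5), restricted to the summand) together with $\ord_p(\Psi_{w_2}(s_2))=4n_{\r{red}}$. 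Choosing $n>n_{\r{div}}+n_{\r{bad}}+8n_{\r{red}}$ yields $\sum_v\langle s_2,\bar\Delta_{\sigma\times\breve\pi\res\ell_1,\ell_2}^n\rangle_v\neq 0$, a contradiction. The main obstacle I anticipate is bookkeeping: verifying that every object in \Sec \ref{ss:local_cohomology} and \Sec \ref{ss:congruence_gross} — the mixed-prime local cohomology decomposition, the maps $\tu_\ell,\ts_\ell,\tc_\ell$, and the congruence formulae — restricts cleanly to the $\sigma\times\breve\pi$-summand, in particular that Definition \ref{de:group_s} (S3) still forces the relevant Frobenius eigenvalues on $\rT_{\breve\pi}^n$ to be distinct mod $p$ and away from the excluded values, so that Proposition \ref{pr:cohomology_mixed} and Proposition \ref{pr:divisibility} go through verbatim on the summand. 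This is routine once the splitting $\bar\rT_{\sigma,\pi}^n=\bar\rT_{\sigma\times\breve\pi}^n\oplus\bar\rT_{\sigma\otimes\breve\eta}^n$ is carried through all the constructions, but it is where the care is needed.
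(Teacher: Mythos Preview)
Your strategy matches the paper's, but you have not pinned down the \emph{sign} of the strongly admissible prime, and this is where the argument as written has a gap. The paper specifically takes $\varrho$ to come from a strongly $(n,+)$-admissible prime, not merely a strongly $n$-admissible one. The sign matters because for $\rT=\bar\rT_{\sigma\times\breve\pi}^n$ the fixed module $\rT^{\langle\varrho\rangle}$ is free of rank $1$ only when $\epsilon=+$; when $\epsilon=-$ it vanishes, Lemma \ref{le:admissible} then forces $\Psi_w=0$ for every $(\Sigma,\varrho)$-admissible $w$, and you cannot choose $w_1,w_2$ with the required nonzero values. The $+$ sign also guarantees that $\rH^1(\dQ_{\ell_i},\bar\rT_{\sigma\otimes\breve\eta}^n)=0$ for $i=1,2$ (the Frobenius eigenvalues on that summand are $-1,-\ell_i$, neither equal to $1$), which is precisely what makes Proposition \ref{pr:divisibility} (1) and Proposition \ref{pr:cohomology_mixed} restrict cleanly to the $\sigma\times\breve\pi$-summand --- the bookkeeping you flag as the main obstacle becomes automatic once the sign is fixed.

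A smaller point: the check that $(\Sigma,\varrho)$-admissible places yield strongly $(n,+)$-admissible underlying primes is not ``exactly as in the (R6a) case''. Here the relevant kernel is that of $\GL(\bar\rT_\sigma^n)\times\GL(\bar\rT_{\breve\pi}^n(-1))\to\GL(\bar\rT_{\sigma\times\breve\pi}^n)$, consisting of scalar pairs $(\mu^{-1},\mu)$, and one invokes Remark \ref{re:strongly_admissible} (2) to see that $\mu\cdot\bar\rho_{\breve\pi}(-1)(\Fr_{\ell(w)})$ lies in the image of $\bar\rho_{\breve\pi}(-1)$ only for $\mu=1$; this is the order-$3$ exclusion in (S3), not the order-$4$ one used under (R6a).
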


For this statement, we do not need to consider $\fd$ and $n_{\r{den}}$.

\begin{proof}
We prove by contradiction, and hence assume that
$\dim_{\dQ_p}\rH^1_f(\dQ,\rV_{\sigma\times\breve\pi})\geq 2$. Let $n$ be a
sufficiently large integer which will be determined later. By Lemma
\ref{le:selmer_reduction}, we may take a free $\dZ/p^n$-submodule $S$ of
$\rH^1_f(\dQ,\bar\rT_{\sigma\times\breve\pi}^n)$ of rank $2$. Moreover, we
may assume that $S$ has a basis $\{s_1,s_2\}$ such that
$\bar\Delta_{\sigma\times\breve\pi}^n=p^{n_{\r{div}}}s_1$.

By (R4), there exists a strongly $(n,+)$-admissible prime. We denote its
image under $\bar\rho_{\sigma\times\breve\pi}^n$ by $\varrho\in\rG$. By
raising a sufficiently large $p$-power, we may assume that $\varrho$ has
order coprime to $p$. We apply the discussion in \Sec \ref{ss:some_galois} to
$\rT=\bar\rT_{\sigma\times\breve\pi}^n$ and $\varrho$ as above. Assumption
\ref{as:group_r} (R6b) implies that $\rG$ contains a non-trivial scalar
element of order coprime to $p$. Let $\Sigma$ be the set of primes dividing
$2pqN^-\cD_{N^+M}$ and those ramified in $L_S$. Note that
$\rT^{\langle\varrho\rangle}$ is free of rank $1$. Let $w$ be a
$(\Sigma,\varrho)$-admissible place of $L$. We claim that its underlying
prime $\ell(w)$ is strongly $(n,+)$-admissible. This follows from the
observation that the kernel of the homomorphism
$\GL(\bar\rT_\sigma^n)\times\GL(\bar\rT_{\breve\pi}^n(-1))\xrightarrow{\otimes}\GL(\bar\rT_{\sigma\times\breve\pi}^n)$
consists of $(\mu^{-1},\mu)$ for some $\mu\in(\dZ/p^n)^\times$ of order $3$.
However by Remark \ref{re:strongly_admissible} (2), the element
$\mu\bar\rho_{\breve\pi}^n(-1)(\Fr_{\ell(w)})$ belongs to the image of
$\bar\rho_{\breve\pi}^n(-1)$ (here, $\rho_{\breve\pi}$ is the underlying
representation of $\rT_{\breve\pi}$) only when $\mu=1$.

The rest follows in the same way as the case under (R6a).
\end{proof}

\begin{lem}\label{le:r6b2}
If $\bar\Delta_{\sigma\times\breve\pi}^\infty$ is non-torsion, then
$\dim_{\dQ_p}\rH^1(\dQ,\rV_{\sigma\otimes\breve\eta})=0$.
\end{lem}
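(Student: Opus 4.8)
The plan is to prove Lemma \ref{le:r6b2} by the same Galois-theoretic machinery from \Sec \ref{ss:some_galois}, but now exploiting the \emph{other} direction of the admissibility condition (S2), together with the already-established conclusion of Lemma \ref{le:r6b1}. The point is that the hypothesis ``$\bar\Delta_{\sigma\times\breve\pi}^\infty$ is non-torsion'' pins down the rank-$1$ summand coming from $\rV_{\sigma\times\breve\pi}$, and we must show that the complementary summand $\rV_{\sigma\otimes\breve\eta}=\rV_\sigma\otimes\dQ_p(\breve\eta)$ contributes nothing to the Selmer group. Since $\rV_{\sigma\otimes\breve\eta}$ is (a twist of) the $2$-dimensional representation attached to $\sigma$, it is absolutely irreducible with residually surjective reduction by (R2), so $\fn(\rT_{\sigma\otimes\breve\eta})=0$ and we have the full Chebotarev freedom on its residual image.

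First I would argue by contradiction: assume $\dim_{\dQ_p}\rH^1_f(\dQ,\rV_{\sigma\otimes\breve\eta})\geq 1$. Fix a large $n$ and, via Lemma \ref{le:selmer_reduction}, choose a rank-$1$ free submodule $S\subset\rH^1_f(\dQ,\bar\rT_{\sigma\otimes\breve\eta}^n)$ with basis $\{s\}$. The non-torsion class $\bar\Delta_{\sigma,\pi}^\infty$ has projection $\bar\Delta_{\sigma\times\breve\pi}^\infty$ non-torsion by hypothesis, and we regard its mod-$p^n$ image $\bar\Delta_{\sigma,\pi}^n=(\bar\Delta_{\sigma\times\breve\pi}^n,\bar\Delta_{\sigma\otimes\breve\eta}^n)$ inside $\rH^1(\dQ,\bar\rT_{\sigma\times\breve\pi}^n)\oplus\rH^1(\dQ,\bar\rT_{\sigma\otimes\breve\eta}^n)$. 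Now I would pick a strongly $(n,-)$-admissible prime (these exist with positive density by (R4) and Remark \ref{re:strongly_admissible}(1)), let $\varrho\in\rG$ be the image of its Frobenius substitution under $\bar\rho_{\sigma\otimes\breve\eta}^n$, raise to a $p$-power coprime order, and apply \Sec \ref{ss:some_galois} with $\rT=\bar\rT_{\sigma\otimes\breve\eta}^n$. Using Lemmas \ref{le:image} and \ref{le:admissible} (with $\fn_{\rT}=0$, so the image of $\Gal(L_S/L)$ is all of $\Hom(S,\rT^{\langle\varrho\rangle})$) I can choose a $(\Sigma,\varrho)$-admissible place $w$ of $L$ with $\Psi_w(s)$ a generator of $\rT^{\langle\varrho\rangle}$, hence with $\ord_p(\loc_{\ell(w)}(s)(\Fr_{\ell(w)}))=0$ by Lemma \ref{le:galois_pairing}. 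The crucial point is that, because $\varrho$ was chosen with sign $-$ for the $\sigma\otimes\breve\eta$ factor, condition (S2) forces the \emph{opposite} sign $\epsilon_\sigma(\ell)$ relative to the $\sigma\times\breve\pi$ factor — so $\loc_{\ell(w)}(\bar\Delta_{\sigma\times\breve\pi}^n)$, which lives in the unramified part, is killed by the relevant sign combination, or else controlled by Proposition \ref{pr:divisibility}; I need to set this up so that $s$ pairs nontrivially (at the single bad prime $\ell(w)$) with a level-raised variant $\bar\Delta^n_{\sigma,\pi\res\ell(w)_1,\ell(w)_2}$ built from $\bar\Delta_{\sigma\times\breve\pi}$.

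The plan is then to run the reciprocity argument of Lemma \ref{le:trivial_global}: for the chosen $s\in S\subset\rH^1(\dQ,\bar\rT_{\sigma\otimes\breve\eta}^n)\subset\rH^1(\dQ,\bar\rT_{\sigma,\pi}^n)$ and the annihilator-candidate class $\bar\Delta^n_{\sigma,\pi\res\ell_1,\ell_2}$ (for a suitable pair of strongly $n$-admissible primes with the right signs), I want $\langle s,\bar\Delta^n_{\sigma,\pi\res\ell_1,\ell_2}\rangle_v=0$ for all $v\nmid qN^-\cD_{N^+M}$, killed up to $p^{n_{\r{bad}}}$ at the bad primes, and of exact $p$-divisibility $n_{\r{div}}+O(n_{\r{red}})$ at one of the $\ell_i$ — while \emph{vanishing} at the other $\ell_i$ because the $\sigma\otimes\breve\eta$-component of $\bar\Delta^n_{\sigma,\pi\res\ell_1,\ell_2}$ is controlled (this is where Theorem \ref{th:congruence}, applied to the $\breve\pi$-part, and the sign incompatibility in (S2) enter). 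Taking $n$ large compared to $n_{\r{div}}+n_{\r{bad}}+n_{\r{red}}$ gives $\sum_v\langle s,\bar\Delta^n_{\sigma,\pi\res\ell_1,\ell_2}\rangle_v\neq 0$, contradicting Lemma \ref{le:trivial_global}, so $\rH^1_f(\dQ,\rV_{\sigma\otimes\breve\eta})=0$.

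The main obstacle I anticipate is bookkeeping the two sign conventions simultaneously: the class $\bar\Delta_{\sigma,\pi}$ has \emph{both} a $\sigma\times\breve\pi$-part and a $\sigma\otimes\breve\eta$-part, and the congruence formulae of Theorem \ref{th:congruence} are stated for $\bar\Delta_{\sigma,\pi}$ as a whole (via $f_{\nu^\bullet,\nu^\circ}$), so I must extract from them the separate behaviour of each isotypic projection. The sign condition (S2), $\epsilon=-\epsilon_\sigma(\ell)\breve\eta(\Fr_\ell)$, is precisely engineered so that a prime which is $(n,-)$-admissible for the $\sigma\otimes\breve\eta$-factor is $(n,+)$-admissible for the $\sigma\times\breve\pi$-factor (or vice versa), so that whichever localization we want to vanish does vanish; making this dichotomy precise — i.e.\ showing the $\sigma\otimes\breve\eta$-component of the level-raised class vanishes at the ``wrong-sign'' prime while the $\sigma\times\breve\pi$-component survives — is the technical heart. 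I expect this is handled exactly as in the proof under (R6a), with the roles of the two factors of $\bar\rT_{\sigma,\pi}^n$ interchanged, using that $\bar\Delta_{\sigma\times\breve\pi}^\infty$ is non-torsion (hypothesis) to guarantee the surviving component is genuinely nonzero, via Proposition \ref{pr:divisibility}.
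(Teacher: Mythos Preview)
Your overall strategy---produce a level-raised class $\bar\Delta^n_{\sigma,\pi\res\ell_1,\ell_2}$, pair it with a putative Selmer class $s$ living in the $\sigma\otimes\breve\eta$-summand, and force a contradiction via global reciprocity---is correct. But there is a genuine gap in how you choose the primes, and it is exactly the reason the paper introduces the density constant $n_{\r{den}}$.

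You propose to apply the machinery of \Sec\ref{ss:some_galois} with $\rT=\bar\rT_{\sigma\otimes\breve\eta}^n$, take $\varrho$ to be the image of a known strongly $(n,-)$-admissible prime, and then invoke Lemmas \ref{le:image} and \ref{le:admissible} to find a $(\Sigma,\varrho)$-admissible place $w$ with $\ord_p(\Psi_w(s))=0$. The problem is that the splitting field $L$ of $\bar\rho_{\sigma\otimes\breve\eta}^n$ only sees $\bar\rho_\sigma$ and the character $\breve\eta$; it does \emph{not} see $\bar\rho_\pi$ at all, nor the quadratic character of $F/\dQ$. Consequently, fixing the Frobenius of $\ell(w)$ in $\Gal(L/\dQ)$ does not control condition (S3) of Definition \ref{de:group_s} (which depends on $\tr(\Fr_{\ell^2};\rV_\pi)$), nor condition (A5) (inert in $F$). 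So there is no reason the prime $\ell(w)$ you produce is strongly $n$-admissible, and without that you cannot form the level-raised class or invoke Theorem \ref{th:congruence}. This is \emph{not} a bookkeeping issue with signs; it is that the $2$-dimensional representation $\bar\rT_{\sigma\otimes\breve\eta}^n$ is too small to encode strong admissibility.

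The paper's proof handles this differently. It first uses the argument of Lemma \ref{le:r6b1} (that is, \Sec\ref{ss:some_galois} applied to $\bar\rT_{\sigma\times\breve\pi}^n$, where strong admissibility \emph{is} recovered from the Frobenius class) to choose a strongly $(n,+)$-admissible prime $\ell_1$ with $\ord_p(\loc_{\ell_1}(\bar\Delta_{\sigma,\pi}^n))=n_{\r{div}}+n_{\r{red}}$. Since $\ell_1$ is $(n,+)$-admissible, the eigenvalues of $\Fr_{\ell_1}$ on $\bar\rT_{\sigma\otimes\breve\eta}^n$ are $\{-1,-\ell_1\}$, so $(\bar\rT_{\sigma\otimes\breve\eta}^n)^{\langle\Fr_{\ell_1}\rangle}=0$ and $\loc_{\ell_1}(s_2)=0$ automatically. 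For the second prime the paper does \emph{not} use Lemma \ref{le:image}; instead it proves that $\chi_1\colon\Gamma_\dQ\to\Gal(L_{S_2}/\dQ)\times\Gal(\breve F/\dQ)$ is surjective, computes via Chebotarev that the density of $n$-admissible primes $\ell$ with $\epsilon_\sigma(\ell)=\breve\eta(\Fr_\ell)$ and $\ord_p(\loc_\ell(s_2))>n'$ is $p^{-n'-1}$, and then uses the definition of $n_{\r{den}}$ (namely $p^{n_{\r{den}}+1}>\fd^{-1}$) to conclude that \emph{some} strongly $(n,-)$-admissible prime $\ell_2$ satisfies $\ord_p(\loc_{\ell_2}(s_2))\leq n_{\r{den}}$. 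The final contradiction then requires $n>n_{\r{div}}+n_{\r{red}}+n_{\r{den}}+n_{\r{bad}}$, with the extra $n_{\r{den}}$ being the cost of this density argument. Your proposal misses both the need for this step and the role of $n_{\r{den}}$.
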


\begin{proof}
We prove by contradiction, and hence assume that
$\dim_{\dQ_p}\rH^1_f(\dQ,\rV_{\sigma\otimes\breve\eta})\geq 1$. Let $n$ be a
sufficiently large integer which will be determined later. By Lemma
\ref{le:selmer_reduction}, we may take a free $\dZ/p^n$-submodule $S_1$
(resp.\ $S_2$) of $\rH^1_f(\dQ,\bar\rT_{\sigma\times\breve\pi}^n)$ (resp.\
$\rH^1_f(\dQ,\bar\rT_{\sigma\otimes\breve\eta}^n)$) of rank $1$ with basis
$\{s_1\}$ (resp.\ $\{s_2\}$). Moreover, we may assume that
$\bar\Delta_{\sigma\times\breve\pi}^n=p^{n_{\r{div}}}s_1$. A similar argument
from the above lemma provides us with a strongly $(n,+)$-admissible prime
$\ell_1$ such that
$\ord_p(\loc_{\ell_1}(\bar\Delta_{\sigma,\pi}^n)(\Fr_{\ell_1}))=n_{\r{div}}+n_{\r{red}}$.

We apply the discussion in \Sec \ref{ss:some_galois} to
$\rT=\bar\rT_{\sigma\otimes\breve\eta}^n$. Note that (R3) implies that $\rG$
contains a non-trivial scalar element of order coprime to $p$. Let $\Sigma$
be the set of primes dividing $2pqN^-\cD_{N^+M}$ and those ramified in
$L_{S_2}$. If we put
$\varrho_1=\bar\rho_{\sigma\otimes\breve\eta}^n(\Fr_{\ell_1})\in\rG$, then it
has order coprime to $p$ and
$(\bar\rT_{\sigma\otimes\breve\eta}^n)^{\langle\varrho_1\rangle}=0$. This
implies that $\loc_{\ell_1}(s_2)=0$.

Consider the composite homomorphism
\[\chi\colon\Gamma_\dQ\xrightarrow{\chi_1}\Gal(L_{S_2}/\dQ)\times\Gal(\breve{F}/\dQ)\to\rG\times\Gal(\breve{F}/\dQ),\]
in which the second factor of $\chi_1$ is the natural projection. Since
$\breve{F}$ is unramified outside $N$, $\chi$ is surjective. Since the kernel
of the second homomorphism is a $p$-group (and $p$ is odd) and the image of
$\chi_1$ has the index at most $2$, the homomorphism $\chi_1$ is surjective
as well.

Let $\ell$ be an $n$-admissible prime that is unramified in $L_{S_2}$ such
that $\epsilon_\sigma(\ell)=\breve\eta(\dF_\ell)$. Take an arbitrary place
$w$ of $L$ above $\ell$ and denote its Frobenius substitution in $\rG$ by
$\varrho$. The $p$-divisibility $\ord_p(\Psi_w(s_2))$ of
$\Psi_w(s_2)\in(\bar\rT_{\sigma\otimes\breve\eta}^n)^{\langle\varrho\rangle}\simeq\dZ/p^n$
is independent of the choice of $w$. By the Chebotarev Density Theorem and
the fact that $\chi_1$ is surjective, the density of those $\ell$ among all
$n$-admissible primes satisfying $\epsilon_\sigma(\ell)=\breve\eta(\dF_\ell)$
and such that $\ord_p(\Psi_w(s_2))\leq n'$ is $1-1/{p^{n'+1}}$ for $0\leq
n'<n$. Then by Notation \ref{no:divisibility_density} (2), there is a
strongly $(n,-)$-admissible prime $\ell_2$ such that
$\ord_p(\loc_{\ell_2}(s_2))\leq n_{\r{den}}$ as we see in Lemma
\ref{le:galois_pairing}.

If we proceed as for the case under (6Ra), then we will obtain a
contradiction when $n>n_{\r{div}}+n_{\r{red}}+n_{\r{den}}+n_{\r{bad}}$.
\end{proof}

\begin{lem}\label{le:r6b3}
If $\bar\Delta_{\sigma\otimes\breve\eta}^n$ is non-torsion, then
$\dim_{\dQ_p}\rH^1(\dQ,\rV_{\sigma\times\breve\pi})=0$.
\end{lem}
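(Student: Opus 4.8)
The plan is to argue by contradiction, following the proof of Lemma \ref{le:r6b2} but interchanging the two Asai summands and, correspondingly, the roles of strongly $(n,+)$- and $(n,-)$-admissible primes. Assume $\dim_{\dQ_p}\rH^1_f(\dQ,\rV_{\sigma\times\breve\pi})\geq 1$ and fix a large integer $n$, the lower bound on which will be clear at the end. Since $\bar\Delta_{\sigma\otimes\breve\eta}^\infty$ is non-torsion and lies in $\rH^1_f(\dQ,\rT_{\sigma\otimes\breve\eta})$ (functoriality of $\rH^1_f$ under the projection $\bar\rT_{\sigma,\pi}^n\to\bar\rT_{\sigma\otimes\breve\eta}^n$ together with Lemma \ref{le:localization} (1)), by Lemma \ref{le:selmer_reduction} I would choose a free rank-one submodule $S_1\subset\rH^1_f(\dQ,\bar\rT_{\sigma\times\breve\pi}^n)$ with basis $\{s_1\}$, and a free rank-one submodule $S_2\subset\rH^1_f(\dQ,\bar\rT_{\sigma\otimes\breve\eta}^n)$ with basis $\{s_2\}$ so that $\bar\Delta_{\sigma\otimes\breve\eta}^n=p^{n_{\r{div}}}s_2$. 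Because the Poincar\'e pairing \eqref{eq:pairing} is block-diagonal for the decomposition $\bar\rT_{\sigma,\pi}^n=\bar\rT_{\sigma\times\breve\pi}^n\oplus\bar\rT_{\sigma\otimes\breve\eta}^n$ (the summands $\rV_{\breve\pi}(-1)$ and $\dQ_p(\breve\eta)$ being non-isomorphic irreducibles), the pairings $\langle s_1,-\rangle_v$ detect only the $\sigma\times\breve\pi$-component of a class, so it suffices to manufacture an annihilator class $\bar\Delta_{\sigma,\pi\res\ell_1,\ell_2}^n$ whose $\sigma\times\breve\pi$-component pairs nontrivially with $s_1$, with controlled $p$-divisibility, at exactly one prime and trivially everywhere else, contradicting Lemma \ref{le:trivial_global}.

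The crux is the choice of $\ell_1$. Running the Galois gadget of \Sec \ref{ss:some_galois} on $\bar\rT_{\sigma\otimes\breve\eta}^n$ (residually surjective by (R2), reducibility depth $0$) together with the Chebotarev density theorem, I would select a strongly $(n,-)$-admissible prime $\ell_1$ which \emph{in addition} satisfies $\epsilon_\sigma(\ell_1)=-1$ (hence $\breve\eta(\Fr_{\ell_1})=\epsilon_\sigma(\ell_1)=-1$ by (S2)) and for which some place $w_1\mid\ell_1$ of $L$ has $\ord_p(\Psi_{w_1}(s_2))\leq n_{\r{den}}$. Two things then follow from the Frobenius eigenvalue bookkeeping behind Proposition \ref{pr:cohomology_mixed} and the identification \eqref{eq:tate_class}. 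First, since $\Fr_{\ell_1}$ swaps the $\bullet$- and $\circ$-indexed pieces (Proposition \ref{pr:surface_supersingular} (3)), Theorem \ref{th:congruence} (2) exhibits $(\tu_{\ell_1}\otimes\tc_{\ell_1})\loc_{\ell_1^2}(\bar\Delta_{\sigma,\pi}^n)$ as an $\Fr_{\ell_1}$-eigenvector of eigenvalue $\epsilon_\sigma(\ell_1)=-1$; as the $\breve\pi$-part $\bar\rT_{\breve\pi}^n(-1)[\ell_1^2|1]$ is the $\Fr_{\ell_1}$-eigenvalue-$1$ line (the non-unit reciprocal pair of eigenvalues on $\rV_{\breve\pi}(-1)$ being $\neq\pm1$ by (S3)), this functional lands in the complementary line $\dZ/p^n(\breve\eta)$, i.e.\ it detects exactly $\bar\Delta_{\sigma\otimes\breve\eta}^n=p^{n_{\r{div}}}s_2$; hence by Theorem \ref{th:congruence} (1),(2) and Lemma \ref{le:galois_pairing} the period $\Theta_{\ell_1}\colonequals\sum_{t\in\cT_{N^+M,N^-\ell_1}}(\zeta_{N^+M,N^-\ell_1}^*f_\pi^{(\fd)})(t)\cdot((\delta^{N^+M,d}_{N^+M,N^-\ell_1})^*g_{\sigma\res\ell_1}^n)(t)$ has $\ord_p\leq n_{\r{div}}+n_{\r{den}}$. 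Second, with $\epsilon_\sigma(\ell_1)=-1$ the two eigenvalues of $\Fr_{\ell_1}$ on $\bar\rT_\sigma^n$ are $-1$ and $-\ell_1$, so the six eigenvalues of $\Fr_{\ell_1}$ on $\bar\rT_{\sigma\times\breve\pi}^n$ — their products with $\{1,x,x^{-1}\}$, where $x,x^{-1}$ are the non-unit eigenvalues on $\rV_{\breve\pi}(-1)$, forced by (S3) via Remark \ref{re:strongly_admissible} (2) to avoid $\{-1,-\ell_1,-\ell_1^{-1}\}$ — include neither $1$ nor $\ell_1$; therefore $\rH^1(\dQ_{\ell_1},\bar\rT_{\sigma\times\breve\pi}^n)=0$, whence $\loc_{\ell_1}(s_1)=0$.

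Next I would pick a prime $\ell_2\neq\ell_1$ that is strongly $(n,+)$-admissible, using Lemmas \ref{le:image} and \ref{le:admissible} for the absolutely irreducible $\bar\rT_{\sigma\times\breve\pi}^n$ (reducibility depth $n_{\r{red}}\colonequals\fn(\rT_{\sigma\times\breve\pi})$ from Lemma \ref{le:reducibility_depth}) so that $\ord_p(\loc_{\ell_2}(s_1))\leq n_{\r{red}}$. Since $\ell_2$ is strongly $(n,+)$-admissible, the same eigenvalue analysis places $(\ts_{\ell_2}\otimes\tc_{\ell_2})\partial_{\ell_2^2}\loc_{\ell_2^2}(\bar\Delta_{\sigma,\pi\res\ell_1,\ell_2}^n)$ in the $\sigma\times\breve\pi$-part of $\pres{\sharp}{\bar\rT}_\pi^n[\ell_2^2|1]$; and by Theorem \ref{th:congruence} (3), evaluated at $f_{\nu^\bullet,\nu^\circ}$ it equals $(\nu^\circ+\epsilon_\sigma(\ell_2)\nu^\bullet)(\tr(\Fr_{\ell_2^2};\rV_\pi)-2\epsilon_\sigma(\ell_2)\ell_2)\Theta_{\ell_1}$, where $\tr(\Fr_{\ell_2^2};\rV_\pi)-2\epsilon_\sigma(\ell_2)\ell_2$ is a unit by (S3); taking $\nu^\circ+\epsilon_\sigma(\ell_2)\nu^\bullet$ a unit gives $\ord_p\bigl(\partial_{\ell_2^2}\loc_{\ell_2^2}(\bar\Delta_{\sigma,\pi\res\ell_1,\ell_2}^n)\bigr)\leq n_{\r{div}}+n_{\r{den}}$. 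Expanding $0=\sum_v\langle s_1,\bar\Delta_{\sigma,\pi\res\ell_1,\ell_2}^n\rangle_v$ (Lemma \ref{le:trivial_global}): the terms vanish for $v\nmid pqN^-\cD_{N^+M}\ell_1\ell_2$ by Lemmas \ref{le:localization} (2), \ref{le:unramified_torsion} and \ref{le:trivial_unramified} (2); vanish at $p$ by Lemmas \ref{le:localization} (2) and \ref{le:trivial_p}; vanish at $\ell_1$ since $\loc_{\ell_1}(s_1)=0$; are killed by $p^{n_{\r{bad}}}$ at the remaining bad primes; while at $\ell_2$, the perfect unramified--singular pairing (Proposition \ref{pr:cohomology_mixed} (5)) and the choice of $\ell_2$ give $\ord_p\langle s_1,\bar\Delta_{\sigma,\pi\res\ell_1,\ell_2}^n\rangle_{\ell_2}\leq n_{\r{div}}+n_{\r{den}}+n_{\r{red}}$. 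Choosing $n>n_{\r{div}}+n_{\r{den}}+n_{\r{red}}+n_{\r{bad}}$ makes the $\ell_2$-term the unique nonzero contribution, the desired contradiction.

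The step I expect to be the main obstacle is producing $\ell_1$: it must be strongly $(n,-)$-admissible (a constraint in the $\bar\rho_\sigma^n$-, $\bar\rho_\pi^n$- and $\breve F$-towers), it must satisfy the extra condition $\epsilon_\sigma(\ell_1)=-1$ — absent in Lemma \ref{le:r6b2}, and precisely what removes $1$ from the $\Fr_{\ell_1}$-spectrum on $\bar\rT_{\sigma\times\breve\pi}^n$ while \emph{keeping} $1$ in the spectrum on $\bar\rT_{\sigma\otimes\breve\eta}^n$ so that the period still detects $s_2$ — and it must make $\Psi_{w_1}(s_2)$ as $p$-indivisible as possible, a condition in the abelian extension $L_{S_2}/L$. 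Showing that the relevant product Galois representation is surjective, so that Chebotarev yields such $\ell_1$ with density bounded below by a fixed positive quantity (a definite fraction of the density $\fd$ of Notation \ref{no:divisibility_density} (2), enlarging $n_{\r{den}}$ if needed), requires checking the linear disjointness of these towers: here one uses that $\breve F$ is unramified outside $\Nm_{F/\dQ}\fM\cdot\disc(F)$, hence meets $\dQ(\zeta_{p^n})$ trivially and cannot be contained in the $\bar\rho_\sigma^n$-tower (any quadratic subfield of which lies in $\dQ(\zeta_{p^n})$ since $p$ is odd). This is the analogue of the surjectivity of the homomorphism $\chi_1$ in the proof of Lemma \ref{le:r6b2}, now with the module $\bar\rT_{\sigma\times\breve\pi}^n$ entering through the condition $\epsilon_\sigma(\ell_1)=-1$.
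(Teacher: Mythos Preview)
Your proof is correct and follows the same overall architecture as the paper's: swap the roles of the two Asai summands relative to Lemma~\ref{le:r6b2}, pick $\ell_1$ strongly $(n,-)$-admissible to detect $\bar\Delta_{\sigma\otimes\breve\eta}^n$ while killing the $\sigma\times\breve\pi$-localization, pick $\ell_2$ strongly $(n,+)$-admissible to see $s_1$, and contradict global reciprocity.

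The one point where you diverge is the extra requirement $\epsilon_\sigma(\ell_1)=-1$, which you flag as the main obstacle. It is in fact unnecessary. By Proposition~\ref{pr:cohomology_mixed}~(1) only the line $\bar\rT_{\breve\pi}^n(-1)[\ell_1^2|1]$ inside $\bar\rT_{\breve\pi}^n(-1)$ contributes to $\rH^1(\dQ_{\ell_1},\bar\rT_{\sigma\times\breve\pi}^n)$, so you need not track the two $\hat\rT_\pi^n$-eigenvalues $x,x^{-1}$ at all. Since $\Fr_{\ell_1}$ acts on $\pres{\sharp}{\bar\rT}_\pi^n[\ell_1^2|1]$ with eigenvalues $\{1,-1\}$ and on the subline $\dZ/p^n(\breve\eta)$ by $\breve\eta(\Fr_{\ell_1})$, it acts on the complementary line $\bar\rT_{\breve\pi}^n(-1)[\ell_1^2|1]$ by $-\breve\eta(\Fr_{\ell_1})$. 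Strong $(n,-)$-admissibility means $\epsilon_\sigma(\ell_1)\breve\eta(\Fr_{\ell_1})=1$, so the two relevant $\Fr_{\ell_1}$-eigenvalues on $\bar\rT_\sigma^n\otimes\bar\rT_{\breve\pi}^n(-1)[\ell_1^2|1]$ are $\epsilon_\sigma(\ell_1)\cdot(-\breve\eta(\Fr_{\ell_1}))=-1$ and $\epsilon_\sigma(\ell_1)\ell_1\cdot(-\breve\eta(\Fr_{\ell_1}))=-\ell_1$, neither equal to $1$ or $\ell_1$; hence $\rH^1(\dQ_{\ell_1},\bar\rT_{\sigma\times\breve\pi}^n)=0$ for \emph{any} strongly $(n,-)$-admissible $\ell_1$, with no condition on $\epsilon_\sigma(\ell_1)$ individually. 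This is precisely dual to how $(n,+)$-admissibility alone killed $(\bar\rT_{\sigma\otimes\breve\eta}^n)^{\langle\varrho_1\rangle}$ in Lemma~\ref{le:r6b2}. With this observation your ``main obstacle'' evaporates: the Chebotarev step is exactly the one already carried out there (with $\breve F$ playing the same role), and the bound $n>n_{\r{div}}+n_{\r{den}}+n_{\r{red}}+n_{\r{bad}}$ you obtain matches the paper's.
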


\begin{proof}
The proof is very similar to Lemma \ref{le:r6b2}. We only sketch the
procedure and omit the justification since they are close. Let $n$ be a
sufficiently large integer which will be determined later.

We prove by contradiction, and hence assume that
$\dim_{\dQ_p}\rH^1(\dQ,\rV_{\sigma\times\breve\pi})\geq 1$. By Lemma
\ref{le:selmer_reduction}, we may take a free $\dZ/p^n$-submodule $S_1$
(resp.\ $S_2$) of $\rH^1_f(\dQ,\bar\rT_{\sigma\otimes\breve\eta}^n)$ (resp.\
$\rH^1_f(\dQ,\bar\rT_{\sigma\times\breve\pi}^n)$) of rank $1$ with basis
$\{s_1\}$ (resp.\ $\{s_2\}$). Moreover, we may assume that
$\bar\Delta_{\sigma\otimes\breve\eta}^n=p^{n_{\r{div}}}s_1$.

We start from $\bar\rT_{\sigma\otimes\breve\eta}^n$ and choose a strongly
$(n,-)$-admissible prime $\ell_1$ such that
$\bar\rho_{\sigma\times\breve\pi}^n(\Fr_{\ell_1})$ has order coprime to $p$,
and $\ord_p(\loc_{\ell_1}(\bar\Delta_{\sigma\otimes\breve\eta}^n))\leq
n_{\r{den}}+n_{\r{div}}$. We also have $\loc_{\ell_1}(s_2)=0$. Now choose a
strongly $(n,+)$-admissible prime $\ell_2$ such that
$\ord_p(\loc_{\ell_2}(s_2))=n_{\r{red}}$. If we proceed as for the case under
(6Ra), then we will obtain a contradiction when
$n>n_{\r{div}}+n_{\r{den}}+n_{\r{red}}+n_{\r{bad}}$.
\end{proof}

\begin{lem}\label{le:r6b4}
If $\bar\Delta_{\sigma\otimes\breve\eta}^n$ is non-torsion, then
$\dim_{\dQ_p}\rH^1(\dQ,\rV_{\sigma\otimes\breve\eta})=1$.
\end{lem}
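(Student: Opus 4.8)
The plan is to establish the two inequalities $\dim_{\dQ_p}\rH^1_f(\dQ,\rV_{\sigma\otimes\breve\eta})\geq 1$ and $\leq 1$ separately, the first being immediate and the second a Kolyvagin-style argument modeled verbatim on the proof of Lemma \ref{le:r6b1}. For the lower bound: by Lemma \ref{le:localization} (1) the class $\bar\Delta_{\sigma,\pi}^\infty$ lies in $\rH^1_f(\dQ,\rT_{\sigma,\pi})$; since the decomposition $\rT_{\sigma,\pi}=\rT_{\sigma\times\breve\pi}\oplus\rT_{\sigma\otimes\breve\eta}$ is $\Gamma_\dQ$-stable and the Bloch--Kato local conditions pass to direct summands, the projection $\bar\Delta_{\sigma\otimes\breve\eta}^\infty$ lies in $\rH^1_f(\dQ,\rT_{\sigma\otimes\breve\eta})$, and being non-torsion by hypothesis it spans a line in $\rH^1_f(\dQ,\rV_{\sigma\otimes\breve\eta})$.

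For the upper bound I would argue by contradiction, repeating the proof of Lemma \ref{le:r6b1} with the signs $+$ and $-$ interchanged and with the simplification that $\fn(\rT_{\sigma\otimes\breve\eta})=0$ by Assumption \ref{as:group_r} (R2), so that $\fn_{\bar\rT_{\sigma\otimes\breve\eta}^n}=0$ for all $n$. Assume $\dim_{\dQ_p}\rH^1_f(\dQ,\rV_{\sigma\otimes\breve\eta})\geq 2$, let $n$ be large (to be constrained below), and by Lemma \ref{le:selmer_reduction} choose a free rank-$2$ submodule $S\subset\rH^1_f(\dQ,\bar\rT_{\sigma\otimes\breve\eta}^n)$ with basis $\{s_1,s_2\}$ and $\bar\Delta_{\sigma\otimes\breve\eta}^n=p^{n_{\r{div}}}s_1$, where $n_{\r{div}}=\ord_p(\bar\Delta_{\sigma\otimes\breve\eta}^\infty)<\infty$ (this is the relevant $p$-divisibility in the case at hand, cf.\ Notation \ref{no:divisibility_density}). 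By (R4) fix a strongly $(n,-)$-admissible prime and let $\varrho$ be the image of its Frobenius substitution (in the representation combining $\bar\rho_\sigma^n$, $\bar\rho_\pi^n$ and the cyclotomic character, raised to a $p$-power so that $\varrho$ has order $l$ prime to $p$); note that the image $\rG$ contains a non-trivial scalar of order prime to $p$ by (R2). Applying the discussion of \Sec \ref{ss:some_galois} to $\rT=\bar\rT_{\sigma\otimes\breve\eta}^n$ with $\Sigma$ the set of primes dividing $2pqN^-\cD_{N^+M}$ together with those ramified in $L_S$, one checks exactly as in Lemma \ref{le:r6b1}, using Remark \ref{re:strongly_admissible} (2) (now in the order-$2$ case, coming from the kernel of $\GL(\bar\rT_\sigma^n)\times\GL(\dZ/p^n(\breve\eta))\to\GL(\bar\rT_{\sigma\otimes\breve\eta}^n)$), that every $(\Sigma,\varrho)$-admissible place has strongly $(n,-)$-admissible underlying prime, and that $(\bar\rT_{\sigma\otimes\breve\eta}^n)^{\langle\varrho\rangle}$ is free of rank $1$.

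Since $\fn_{\bar\rT_{\sigma\otimes\breve\eta}^n}=0$, Lemma \ref{le:image} gives that $\Gal(L_S/L)$ surjects onto $\Hom(S,\bar\rT_{\sigma\otimes\breve\eta}^n)$, so by Lemma \ref{le:admissible} I can pick $(\Sigma,\varrho)$-admissible places $w_1,w_2$ with distinct underlying primes $\ell_1,\ell_2$ such that $\Psi_{w_1}(s_2)=0$ and $\Psi_{w_1}(s_1)$, $\Psi_{w_2}(s_2)$ generate the rank-$1$ module $(\bar\rT_{\sigma\otimes\breve\eta}^n)^{\langle\varrho\rangle}$. By Lemma \ref{le:galois_pairing} then $\loc_{\ell_1}(s_2)=0$, while $\loc_{\ell_1}(\bar\Delta_{\sigma\otimes\breve\eta}^n)$ has $p$-divisibility exactly $n_{\r{div}}$ in $\rH^1_\unr(\dQ_{\ell_1},\bar\rT_{\sigma\otimes\breve\eta}^n)\simeq\dZ/p^n$; reading off the $\sigma\otimes\breve\eta$-component of Theorem \ref{th:congruence} (1)--(3) — in which the factor $\tr(\Fr_{\ell_2^2};\rV_\pi)-2\epsilon_\sigma(\ell_2)\ell_2$ is a unit by Definition \ref{de:group_s} (S3) — exactly as in Proposition \ref{pr:divisibility} (1), the element $\partial_{\ell_2}\loc_{\ell_2}(\bar\Delta_{\sigma\otimes\breve\eta\res\ell_1,\ell_2}^n)$ also has $p$-divisibility $n_{\r{div}}$. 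Expanding $\sum_v\langle s_2,\bar\Delta_{\sigma\otimes\breve\eta\res\ell_1,\ell_2}^n\rangle_v=0$ (Lemma \ref{le:trivial_global}, using that the Poincar\'e pairing on $\bar\rT_{\sigma,\pi}^n$ is orthogonal for the decomposition, so the $\sigma\otimes\breve\eta$-projected classes pair within $\bar\rT_{\sigma\otimes\breve\eta}^n$): the terms at $v\nmid qN^-\cD_{N^+M}\ell_1\ell_2$ vanish by Lemmas \ref{le:localization}, \ref{le:unramified_torsion}, \ref{le:trivial_unramified} (2) and \ref{le:trivial_p}; the term at $\ell_1$ vanishes since $\loc_{\ell_1}(s_2)=0$; the terms at $v\mid qN^-\cD_{N^+M}$ are killed by $p^{n_{\r{bad}}}$ (Notation \ref{no:bad}); and the term at $\ell_2$ has $p$-divisibility exactly $n_{\r{div}}$ by perfectness of the unramified--singular local pairing (Proposition \ref{pr:cohomology_mixed} (5)) together with the choice of $w_2$. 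Taking $n>n_{\r{div}}+n_{\r{bad}}$ yields a contradiction, completing the proof. The only points needing care — and the (minor) main obstacle — are verifying the orthogonality of the Poincar\'e pairing along the Asai decomposition and confirming the strong $(n,-)$-admissibility of $(\Sigma,\varrho)$-admissible places; this is where condition (S2), $\epsilon=-\epsilon_\sigma(\ell)\breve\eta(\Fr_\ell)$, is essential, since one must land in the $\dQ_p(\breve\eta)$-summand of $\pres{\sharp}\rV_\pi$ rather than the $\rV_{\breve\pi}(-1)$-summand.
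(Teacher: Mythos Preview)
Your argument has a genuine gap at the step where you claim that every $(\Sigma,\varrho)$-admissible place of $L$ has strongly $(n,-)$-admissible underlying prime. You apply \S\ref{ss:some_galois} with $\rT=\bar\rT_{\sigma\otimes\breve\eta}^n$, so $L$ is the splitting field of $\bar\rho_{\sigma\otimes\breve\eta}^n$ and $\varrho\in\GL(\bar\rT_{\sigma\otimes\breve\eta}^n)\simeq\GL_2(\dZ/p^n)$. Knowing $\bar\rho_{\sigma\otimes\breve\eta}^n(\Fr_\ell)=\varrho$ determines $\bar\rho_\sigma^n(\Fr_\ell)$ and $\breve\eta(\Fr_\ell)$ up to the sign ambiguity you mention, and this does pin down (A2)--(A4) and (S2). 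But it carries \emph{no} information about $\eta_F(\Fr_\ell)$ (needed for (A5)) or about $\tr(\Fr_{\ell^2};\rV_\pi)$ (needed for (S3)): the rank-$2$ representation $\bar\rT_{\sigma\otimes\breve\eta}^n$ simply does not see $\pi$ beyond the character $\breve\eta$, and $\breve F$ need not equal $F$. Your analogy with Lemma~\ref{le:r6b1} breaks down precisely here: in that lemma the module is $\bar\rT_{\sigma\times\breve\pi}^n$, which contains the $3$-dimensional piece $\bar\rT_{\breve\pi}^n(-1)$ and therefore remembers enough of $\pres{\sharp}{\bar\rho}_\pi^n$ to verify (A5) and (S3). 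The parenthetical ``in the representation combining $\bar\rho_\sigma^n$, $\bar\rho_\pi^n$ and the cyclotomic character'' does not resolve this, because once you set $\rT=\bar\rT_{\sigma\otimes\breve\eta}^n$ the field $L$ and the notion of $(\Sigma,\varrho)$-admissibility are fixed by that choice.

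This is exactly why the paper does \emph{not} proceed as in Lemma~\ref{le:r6b1} but instead uses the density argument from Lemma~\ref{le:r6b2} (via Lemma~\ref{le:r6b3}): one first chooses a strongly $(n,-)$-admissible prime $\ell_1$ with $\ord_p(\loc_{\ell_1}(s_1))\leq n_{\r{den}}$, then replaces $s_2$ by a combination $s_2'$ with $\loc_{\ell_1}(s_2')=0$, and finally chooses a second strongly $(n,-)$-admissible $\ell_2$ with $\ord_p(\loc_{\ell_2}(s_2'))\leq n_{\r{den}}$. The constant $n_{\r{den}}$ (Notation~\ref{no:divisibility_density}~(2)) is the price paid for intersecting the Chebotarev condition on $\bar\rho_{\sigma\otimes\breve\eta}^n$ with the independent conditions (A5), (S3); the contradiction then requires $n>n_{\r{div}}+2n_{\r{den}}+n_{\r{bad}}$ rather than your $n>n_{\r{div}}+n_{\r{bad}}$. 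Your approach could be salvaged by enlarging $L$ to split the full $\bar\rho_{\sigma,\pi}^n$, but then Lemmas~\ref{le:image} and \ref{le:admissible} no longer apply verbatim and one is essentially led back to the paper's density argument.
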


\begin{proof}
We prove by contradiction, and hence assume that
$\dim_{\dQ_p}\rH^1(\dQ,\rV_{\sigma\otimes\breve\eta})\geq 2$. Let $n$ be a
sufficiently large integer which will be determined later. By Lemma
\ref{le:selmer_reduction}, we may take a free $\dZ/p^n$-submodule $S$ of
$\rH^1_f(\dQ,\bar\rT_{\sigma\otimes\breve\eta}^n)$ of rank $2$. Moreover, we
may assume that $S$ has a basis $\{s_1,s_2\}$ such that
$\bar\Delta_{\sigma\otimes\breve\eta}^n=p^{n_{\r{div}}}s_1$. As in Lemma
\ref{le:r6b3}, we may choose a strongly $(n,-)$-admissible prime $\ell_1$
such that $\ord_p(\loc_{\ell_1}(s_1))\leq n_{\r{den}}$. Taking a suitable
linear combination of $s_1,s_2$, we obtain an element $s'_2\in S$ with
$\ord_p(s'_2)=0$ and $\loc_{\ell_1}(s'_2)=0$. Choose another strongly
$(n,-)$-admissible prime $\ell_2$ such that $\ord_p(\loc_{\ell_2}(s'_2))\leq
n_{\r{den}}$. The same argument for the case under (6Ra) will produce a
contradiction when $n>n_{\r{div}}+2n_{\r{den}}+n_{\r{bad}}$.
\end{proof}

\subsection{Application to elliptic curves}
\label{ss:application_elliptic}

Now we go back to deduce both Theorem \ref{th:main_even} and Theorem
\ref{th:main_odd} from Theorem \ref{th:selmer}. Let $\sigma$ (resp.\ $\pi$)
be the cuspidal automorphic representation of $B_{N^-}^\times(\dA)$ (resp.\
$\Res_{F/\dQ}\GL_2(\dA)$) associated to $E$ (resp.\ $A$) as in \Sec
\ref{ss:main_results}. Then both $\sigma$ and $\pi$ have the trivial central
character, and are non-dihedral by Assumption \ref{as:group_e} (E1).

For the elliptic curve $A$, we have three different types.
\begin{description}
  \item[Type AI] $A$ and $A^\theta$ are not geometrically isogenous;

  \item[Type AII] $A$ and $A^\theta$ are geometrically isogenous, but
      $A^\theta$ is not isogenous to a (possibly trivial) quadratic twist
      of $A$;

  \item[Type B] $A^\theta$ is isogenous to a (possibly trivial) quadratic
      twist of $A$.
\end{description}

Note that $\pi$ is Asai-decomposable (Definition \ref{de:asai_decomposable})
if and only if $A$ is of Type B. In this case, we have an extra number field
$\breve{F}$.

\begin{definition}[Group \textbf{P}]\label{de:group_p}
We say that a prime $p$ is \emph{good} (with respect to the pair $(E,A)$) if
\begin{description}
  \item[(P1)] $p\geq 11$ and $p\neq 13$;

  \item[(P2)] $p$ is coprime to $N\cdot\Nm_{F/\dQ}\fM\cdot\disc(F)$;

  \item[(P3)] the $\Gamma_\dQ$-representation $E[p](\dQ^\ac)$ is
      surjective;

  \item[(P4)] the $\Gamma_\dQ$-representation $E[p](\dQ^\ac)$ is ramified
      at all primes $\ell\mid N^-$ with $p\mid \ell^2-1$;

  \item[(P5)] the induced projective $\Gamma_\dQ$-representation of
      $E[p](\dQ^\ac)$ is ramified somewhere outside $p$;

  \item[(P6)] we have
  \begin{itemize}
    \item if $A$ is of Type AI, then the $\Gamma_F$-representation
        $A[p](\dQ^\ac)\oplus A^\theta[p](\dQ^\ac)$ has the largest
        possible image, that is,
        $\rG(\SL(A[p](\dQ^\ac))\times\SL(A^\theta[p](\dQ^\ac)))$;

    \item if $A$ is of Type AII, then there exists an intermediate
        number field $F\subset K\subset\dQ^\ac$ over which $A$ and
        $A^\theta$ are isogenous, such that the
        $\Gamma_K$-representation $A[p](\dQ^\ac)$ is surjective;

    \item if $A$ is of Type B, then the
        $\Gamma_{\breve{F}F}$-representation $A[p](\dQ^\ac)$ is
        surjective.
  \end{itemize}
\end{description}
\end{definition}

\begin{lem}
For a given pair of elliptic curves $(E,A)$ satisfying Assumption
\ref{as:group_e}, all but finitely many primes $p$ are good.
\end{lem}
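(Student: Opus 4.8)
The plan is to show that each of the six conditions (P1)--(P6) of Definition \ref{de:group_p} holds for all but finitely many primes $p$; since a finite intersection of cofinite sets of primes is cofinite, this gives the lemma.

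Four of these are immediate and I would dispatch them first. Condition (P1) excludes only $p\in\{2,3,5,7,13\}$. Condition (P2) excludes only the prime divisors of the fixed nonzero integer $N\cdot\Nm_{F/\dQ}\fM\cdot\disc(F)$. For (P3) I would invoke Serre's open image theorem: by Assumption \ref{as:group_e} (E1) the curve $E$ has no complex multiplication over $\dQ^\ac$, hence the Galois representation $\bar\rho_{E,p}$ on $E[p](\dQ^\ac)$ is surjective for all but finitely many $p$. For (P4), the set of primes $\ell\mid N^-$ is finite, and for each such $\ell$ only finitely many $p$ satisfy $p\mid\ell^2-1$; hence (P4) can fail for at most finitely many $p$ (alternatively, since $N^-$ is square-free, $E$ has multiplicative reduction at each $\ell\mid N^-$, so $E[p]$ is automatically ramified at $\ell$ once $p$ does not divide the $\ell$-adic valuation of the Tate parameter).

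The delicate point is (P5), and I expect it to be the main obstacle. Here I would argue by contradiction: suppose (P5) fails for infinitely many $p$, and restrict attention to such $p$ for which moreover $\bar\rho_{E,p}$ is surjective. Fix a bad prime $\ell\mid N$ (one exists since $E$ has conductor $N>1$). If $E$ is potentially multiplicative at $\ell$, then for $p$ large $\bar\rho_{E,p}(\rI_\ell)$ contains a nontrivial unipotent element, hence is non-scalar and the projectivized representation is ramified at $\ell$ (and if $N^->1$ one may simply take $\ell\mid N^-$). If $E$ is potentially good at $\ell$, then for $p$ large $\bar\rho_{E,p}(\rI_\ell)$ is isomorphic to the fixed finite group $\Phi_\ell$ through which inertia acts on the $\ell'$-adic Tate module ($\ell'\neq\ell$ auxiliary); since the determinant is unramified at $\ell$, an all-scalar $\Phi_\ell$ must be $\{\pm1\}$, while $\Phi_\ell=\{1\}$ would force good reduction. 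So if (P5) fails for infinitely many $p$, then at every bad prime $\ell$ the curve $E$ is, locally, the twist by a ramified quadratic character $\chi_\ell$ of a curve with good reduction at $\ell$. Assembling the $\chi_\ell$ into a global quadratic Dirichlet character $\chi$ unramified outside $N$, the twist $E^{(\chi)}$ would then have good reduction at every finite place, contradicting Fontaine's theorem that there is no nonzero abelian variety over $\dQ$ with everywhere good reduction. The real work here is citing Serre's description of the inertia image in the potentially good case (the semistability defect) and checking that the $\ell=2,3$ cases are covered uniformly by the group-theoretic formulation above.

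Finally, for (P6) I would split according to the type of $A$, fixing once and for all, independently of $p$, the relevant auxiliary number field (and, in Type AII, an intermediate field $F\subset K\subset\dQ^\ac$ over which $A$ and $A^\theta$ become isogenous). For Type AI, $A$ and $A^\theta$ are non-CM over $\dQ^\ac$ and not geometrically isogenous, so a Goursat-lemma argument together with the classification of subgroups of $\SL_2\times\SL_2$ (Serre's results on Galois images of products of elliptic curves) gives that $\bar\rho_{A,p}\oplus\bar\rho_{A^\theta,p}$ has image $\rG(\SL(A[p](\dQ^\ac))\times\SL(A^\theta[p](\dQ^\ac)))$ over $\Gamma_F$ for all but finitely many $p$. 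For Type AII, with $K$ fixed as above, $\bar\rho_{A,p}$ is surjective over $\Gamma_K$ for all but finitely many $p$ by Serre's open image theorem (again since $A$ is non-CM). For Type B, with $\breve F$ the quadratic field attached to $\pi$, $\bar\rho_{A,p}$ is surjective over $\Gamma_{\breve F F}$ for all but finitely many $p$, once more by Serre. In each case (P6) holds outside a finite set of primes, and intersecting the six cofinite sets completes the argument.
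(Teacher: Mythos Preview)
Your proof is correct and follows essentially the same outline as the paper's: (P1), (P2), (P4) are dispatched immediately, (P3) and (P6) are reduced to Serre's open-image theorems (\cite{Ser72}*{Th\'{e}or\`{e}mes 2 \& 6}) exactly as in the paper, and (P5) is handled by contradiction via the observation that failure forces inertia at every bad prime to act through $\{\pm1\}$.

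The only genuine difference is in the endgame of (P5). You twist $E$ by a global quadratic character $\chi$ (assembled from the local inertial characters and chosen unramified outside $N$) to produce an elliptic curve $E^{(\chi)}$ over $\dQ$ with everywhere good reduction, and then invoke Fontaine/Tate. The paper instead base-changes $E$ to the quadratic field $F'$ cut out by such a character and invokes \cite{Kid02}*{Theorem 1.1}. Your route is arguably cleaner since the nonexistence of conductor-$1$ elliptic curves over $\dQ$ is more elementary than Kida's result; on the other hand, the paper's base-change formulation avoids the small bookkeeping of checking that a global $\chi$ with the prescribed inertial behavior \emph{and} unramified outside $N$ actually exists (which it does, but requires a moment's thought at $2$). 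Both approaches are valid and essentially interchangeable.
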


\begin{proof}
It is clear that for all but finitely many primes $p$, (P1, P2, P4) are
satisfied.

By \cite{Ser72}*{Th\'{e}or\`{e}me 2}, (P3) is also satisfied for all but
finitely many primes $p$ under Assumption \ref{as:group_e} (E1, E2).

(P6) is satisfied for all but finitely many primes $p$ by
\cite{Ser72}*{Th\'{e}or\`{e}me 6} (resp.\ \cite{Ser72}*{Th\'{e}or\`{e}me 2})
if $A$ is of Type AI (resp.\ Type AI and Type B).

We prove (P5) by contradiction. Assume that (P5) does not hold for almost all
primes $p$. Then the elliptic curve $E$ will have potentially good reduction
at every prime $v\mid N$. For every prime $v\mid N$, let $\rJ_v$ be the
smallest quotient of $\rI_v$ such that the representation $\rV_q(E)$
restricted to $\rI_v$ factorizes through $\rJ_v$ for every prime $q\nmid N$.
Then $\#\rJ_v=2$. Denote by $\rI'_v$ the kernel of the projection
$\rI_v\to\rJ_v$. We may find a quadratic extension $F'/\dQ$ ramified at every
prime $v\mid N$ with the prescribed inertia subgroup $\rI'_v$. It follows
that $E$ acquires everywhere good reduction after base change to $F'$, which
violates \cite{Kid02}*{Theorem 1.1}.
\end{proof}

Let $p$ be a good prime. We first check several conditions in Assumption
\ref{as:group_r}. By enlarging the ideal $\fM$ to a positive integer $M$, but
remaining coprime to $pN$, (R1) is satisfied. (R2) is equivalent to (P3).
(R3) is equivalent to (P4). (R5) is weaker than (P1). The rest of this
section will take care of (R4) and (R6).

Recall that we have the following Galois representations
\[\rho_\sigma\colon\Gamma_\dQ\to\GL(\rT_\sigma),\quad
\rho_\pi\colon\Gamma_F\to\GL(\rT_\pi),\quad
\pres{\sharp}\rho_\pi\colon\Gamma_\dQ\to\GL(\pres{\sharp}\rT_\pi).\] For each
prime $\ell\nmid 2pM\disc(F)$, the element $\pres{\sharp}\rho_\pi(\Fr_\ell)$
has determinant $1$ (resp.\ $-1$) if and only if $\ell$ is split (resp.\
inert) in $F$. Denote by $L_1$ (resp.\ $L_2$) the splitting field of the
residue representation $\bar\rho_\sigma$ (resp.
$\pres{\sharp}{\bar\rho}_\pi$). Put $L_0=L_1\cap L_2$, in which the three
fields are all number fields contained in $\dQ^\ac$. Denote by
$\GL(\bar\rT_\sigma)^+$ the subgroup of $\GL(\bar\rT_\sigma)$ of elements
whose determinant belongs to $(\dF_p^\times)^2$.

\begin{lem}\label{le:large_image}
The Galois group $\Gal(L_1/L_0)$ contains $\GL(\bar\rT_\sigma)^+$.
\end{lem}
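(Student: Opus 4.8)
The plan is to recast the claim group‑theoretically and reduce it to bounding $L_0:=L_1\cap L_2$. By (R2) (equivalently (P3)) the representation $\bar\rho_\sigma$ is surjective, so $\Gal(L_1/\dQ)\cong\GL(\bar\rT_\sigma)\cong\GL_2(\dF_p)$; since $\det\bar\rho_\sigma$ is the mod $p$ cyclotomic character, $L_1\supseteq\dQ(\zeta_p)$ and the fixed field of $\GL(\bar\rT_\sigma)^+$ inside $L_1$ is the quadratic subfield of $\dQ(\zeta_p)$. As $\GL_2(\dF_p)$ has a unique subgroup of index $2$ (necessarily normal) for $p\geq 5$, the inclusion $\Gal(L_1/L_0)\supseteq\GL(\bar\rT_\sigma)^+$ is equivalent to $[L_0:\dQ]\leq 2$, which is what I would prove.

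The key inputs are: (i) for $p\geq 5$, a normal subgroup $N\trianglelefteq\GL_2(\dF_p)$ either contains the center $Z$ — in which case $\GL_2(\dF_p)/N$ surjects onto $\PGL_2(\dF_p)$ — or contains $\SL_2(\dF_p)$ — in which case $\GL_2(\dF_p)/N$ is a cyclic quotient of $\dF_p^\times$ via $\det$; and (ii) by the perfect symmetric pairing of Notation \ref{no:lattice}, $G_2:=\IM(\pres{\sharp}{\bar\rho}_\pi)=\Gal(L_2/\dQ)$ lies in $\rO(\pres{\sharp}{\bar\rT}_\pi)\cong\rO_4(\dF_p)$, and $L_2$ is unramified outside $p\cdot\Nm_{F/\dQ}\fM\cdot\disc(F)$ (together with the conductor of $\breve{F}$ in the Asai‑decomposable case). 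Applying Goursat's lemma to $\Gal(L_1L_2/\dQ)\hookrightarrow\GL_2(\dF_p)\times G_2$, the group $\Gal(L_0/\dQ)$ is a common quotient of $\GL_2(\dF_p)$ and of $G_2$; hence, by (i), if $[L_0:\dQ]>2$ then either (a) $\Gal(L_0/\dQ)$ surjects onto $\PGL_2(\dF_p)$, or (b) $\Gal(L_0/\dQ)$ is cyclic of order $>2$, and then (since $\GL_2(\dF_p)^{\mathrm{ab}}=\dF_p^\times$ via the cyclotomic $\det$) $L_0\subseteq\dQ(\zeta_p)$.

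I would rule out (a) by a ramification argument: a surjection onto $\PGL_2(\dF_p)$ forces $\Gal(L_1/L_0)\subseteq Z$, so $L_0$, and hence $L_2$, contains the splitting field of the projective representation $\Gamma_\dQ\to\PGL(\bar\rT_\sigma)$, which by (P5) is ramified at some prime $\ell_0\neq p$ dividing $N$; but $L_2$ is unramified at $\ell_0$ by (ii) together with Assumption \ref{as:group_e} (E2) and (P2) — a contradiction. For (b): restrict $\pres{\sharp}{\bar\rho}_\pi$ to $\Gamma_L$ for a suitable number field $L$, unramified at $p$, over which the representation becomes reducible and visibly large — $L=F$ in Type AI, where $\pres{\sharp}{\bar\rho}_\pi|_{\Gamma_F}=\bar\rho_\pi\otimes\bar\rho_\pi^\theta$ has image the (perfect) split $\SO_4(\dF_p)$ coming from $\rG(\SL_2\times\SL_2)$ by (P6); and $L=KF$ (resp.\ $L=\breve{F}F$) in Type AII (resp.\ Type B), where $\pres{\sharp}{\bar\rho}_\pi|_{\Gamma_L}$ is, up to a trivial summand, $\Sym^2$ of a surjective $\GL_2(\dF_p)$ and so has image containing $\mathrm{PSL}_2(\dF_p)$. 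In each case $G_2|_{\Gamma_L}$ contains a normal perfect subgroup ($\SO_4(\dF_p)$ or $\mathrm{PSL}_2(\dF_p)$) whose quotient is an elementary abelian $2$‑group, so $G_2|_{\Gamma_L}$ has no cyclic quotient of order $>2$; on the other hand $L_0\subseteq\dQ(\zeta_p)$ is ramified only at $p$ while $L$ is unramified at $p$, so $L_0\cap L=\dQ$, whence $\Gal(L_0L/L)=\Gal(L_0/\dQ)$ would be a cyclic quotient of order $>2$ of $\Gal(L_2L/L)=G_2|_{\Gamma_L}$ — contradiction. Therefore $[L_0:\dQ]\leq 2$, proving the lemma.

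The main obstacle is the type‑by‑type work in step (b): for each of Types AI, AII, B one must identify $G_2|_{\Gamma_L}$ precisely enough to locate the normal perfect subgroup — in Type AII this requires the bounded‑degree field $K$ of Definition \ref{de:group_p} (P6), together with the facts that $K/\dQ$ (ramified only where $A$ has bad reduction) is coprime to $p$ and to $N$, and that $\bar\rho_\pi|_{\Gamma_K}$ is surjective — and one must also verify that the reduction mod $p$ of the Poincar\'e pairing stays non‑degenerate so that $G_2$ genuinely lies in $\rO_n(\dF_p)$. The remaining ingredients — the reduction in the first paragraph, Goursat's lemma, the classification of normal subgroups of $\GL_2(\dF_p)$, and the ramification contradiction of step (a) — are routine.
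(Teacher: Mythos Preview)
Your case (a) is exactly the paper's first step. Your case (b), however, is genuinely different from the paper's second step, and worth comparing. After establishing $\SL(\bar\rT_\sigma)\subseteq\Gal(L_1/L_0)$, the paper does \emph{not} analyse the abelianization of $G_2|_{\Gamma_L}$ type by type. Instead it produces elements directly: by (P6) one can find, uniformly in all three types, an element $\gamma\in\Gamma_F$ with $(\bar\rho_\pi(\gamma),\bar\rho_\pi^\theta(\gamma))=(a\mathbf{1}_2,a\mathbf{1}_2)$ for any $a\in\dF_p^\times$; then $\pres{\sharp}{\bar\rho}_\pi(\gamma)=\mathbf{1}_4$, so $\gamma$ is trivial on $L_2$ and hence on $L_0$, while $\det\bar\rho_\sigma(\gamma)=\chi_{\mathrm{cyc}}(\gamma)=a^2$. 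This immediately gives an element of $\Gal(L_1/L_0)$ of every square determinant. Your route via bounding cyclic quotients of $G_2|_{\Gamma_L}$ and the disjointness $L_0\cap L=\dQ$ is correct in outline but strictly more work, and it requires one ingredient the paper's argument avoids.

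Two points to fix. First, your dichotomy (i) should read ``$N$ is \emph{contained in} $Z$'' in the first branch (as you in fact use in case (a)); as written, ``$N$ contains $Z$'' gives the wrong conclusion about $\GL_2/N$. Second, in Type~AII your disjointness step needs $K$ unramified at $p$, but (P6) as stated only asserts the \emph{existence} of some $K$ with $A_K\sim A^\theta_K$ and $\bar\rho_\pi|_{\Gamma_K}$ surjective, saying nothing about ramification. Your claim that $K$ is ``ramified only where $A$ has bad reduction'' is true if $K$ is taken to be the field of definition of a fixed isogeny $A_{\bar\dQ}\to A^\theta_{\bar\dQ}$ (the Galois action on $\Hom(A_{\bar\dQ},A^\theta_{\bar\dQ})$ is unramified at primes of good reduction), but you must say this; otherwise the step is a gap. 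The paper's direct argument sidesteps this entirely, since it never needs to compare $L_0$ with $K$.
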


\begin{proof}
We first show that $\Gal(L_1/L_0)$ contains $\SL(\bar\rT_\sigma)$. Otherwise,
the subgroup $\Gal(L_1/L_0)$ is contained in the center of
$\GL(\bar\rT_\sigma)$ since it is normal. Because $\pres{\sharp}\rho_\pi$ is
unramified outside $pM\disc(F)$, the fields $L_2$ and hence $L_0$ are
unramified outside $pM\disc(F)$. In particular, for $v\mid N$, the element
$\rho_\sigma(\rI_v)$ is contained in the center of $\GL(\bar\rT_\sigma)$.
This is contradictory to (P5).

By (P6), for every $a\in\dF_p^\times$, the element $(a\b1_2,a\b1_2)$ is
contained in the image of $\bar\rho_\pi\times\bar\rho_\pi^\theta$. Suppose
that the previous element is the image of some Frobenius substitution $\Fr$
of $\dQ^\ac$, whose underlying prime is coprime to $pNM\disc(F)$ and split in
$F$. Then $\pres{\sharp}{\bar\rho}_\pi(\Fr)=\b1_4$ and
$\det\bar\rho_\sigma(\Fr)=a^2$. This means that $\Gal(L_1/L_0)$ contains an
element of determinant $a^2$. Since $a$ is arbitrary, the lemma follows.
\end{proof}

\begin{remark}
Lemma \ref{le:large_image} implies the following
\begin{itemize}
  \item If $A$ is of Type AI and $p$ is good, then $\rT_{\sigma,\pi}$ is
      residually absolutely irreducible.

  \item If $A$ is of Type B and $p$ is good, then
      $\rT_{\sigma\times\breve\pi}$ is residually absolutely irreducible.
\end{itemize}
Now I claim that if $A$ is of Type AII, then $\rT_{\sigma,\pi}$ is residually
absolutely irreducible for all but finitely many (good) primes $p$. To
indicate the relevance on $p$, we write $\rT_{\pi,p}$ instead of $\rT_\pi$.
Assuming the contrary, for all but finitely many primes $p$, the
representation $\bar\rT_{\pi,p}$ is isomorphic to
$\bar\rT_{\pi,p}^\theta\otimes\breve\eta_p$ for a quadratic character
$\breve\eta_p$, which a priori depends on $p$. Let $w$ be a place of $F$
whose underlying prime is split in $F$. Then for all but finitely many primes
$p$,
\[\r{Sat}(\pi_w)^2\equiv\r{Sat}(\pi_w^\theta)^2 \mod p,\]
where $\r{Sat}$ indicates the Satake parameter. Therefore, the character
$\breve\eta\colonequals\breve\eta_p$ is independent of $p$, and
$\pi^\theta\simeq\pi\times\breve\eta$, which contradicts that $A$ is of Type
AII. In particular, for all types of $A$, the integer $\fn(\rT)$ in Lemma
\ref{le:reducibility_depth} may be taken as $0$ for $\rT=\rT_{\sigma,\pi}$ or
$\rT_{\sigma\times\breve\pi}$ for all but finitely many primes $p$.
\end{remark}

Put $\dF_p^{\r{bad}}=\{\mu\in\dF_p^\times\res \mu^r=1\text{ for some
}r\in\{1,2,3,4,6\}\}$.

\begin{lem}\label{le:good_image}
The subgroup $\Gal(L_2/\dQ)$ of $\GL(\pres{\sharp}{\bar\rT}_\pi)$ contains a
(semisimple) element whose geometric eigenvalues are $\{1,-1,\mu,\mu^{-1}\}$
for some $\mu\not\in\dF_p^{\r{bad}}$.
\end{lem}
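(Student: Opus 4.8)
The plan is to make the desired element explicit via tensor induction and then to produce it from the largeness of the mod $p$ image recorded in (P6). Write $\bar\chi\colon\Gamma_\dQ\to\dF_p^\times$ for the mod $p$ cyclotomic character, so that $\det\bar\rho_\pi=\bar\chi|_{\Gamma_F}$, and fix $g_0\in\Gamma_\dQ$ restricting to $\theta$ on $F$. For $g=g_0k$ with $k\in\Gamma_F$ put $\delta=\bar\chi(g)=\bar\chi(g_0)\det\bar\rho_\pi(k)$. Since $g$ interchanges the two tensor factors of $\bar\rho_\pi\otimes\bar\rho_\pi^\theta$ and $g^2\in\Gamma_F$, a direct computation shows that $(\As\bar\rho_\pi)(g)$ has eigenvalues $\{\lambda_1,\lambda_2,\delta,-\delta\}$, where $\lambda_1,\lambda_2$ are the eigenvalues of $\bar\rho_\pi(g^2)$ and $\lambda_1\lambda_2=\det\bar\rho_\pi(g^2)=\delta^2$. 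Hence, after the Tate twist, $\pres{\sharp}{\bar\rho}_\pi(g)$ has eigenvalues $\{1,-1,\mu,\mu^{-1}\}$ with $\mu=\lambda_1/\delta$ and $\mu+\mu^{-1}=\tr\bar\rho_\pi(g^2)/\delta$, and this element is semisimple once $\bar\rho_\pi(g^2)$ has distinct eigenvalues. Finally, one checks elementarily that $\mu\in\dF_p^{\r{bad}}$ forces $\mu+\mu^{-1}$ to lie in the explicit subset $B\subset\dF_p$ consisting of $2,-2$, together with $-1,1$ when $3\mid p-1$ and $0$ when $4\mid p-1$; in particular $\#B\leq5$. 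So it suffices to find $g$ as above with $\bar\rho_\pi(g^2)$ of distinct eigenvalues and $\tr\bar\rho_\pi(g^2)/\bar\chi(g)\notin B$.

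Next I would argue by the type of $A$. In Type AI, (P6) gives that $(\bar\rho_\pi,\bar\rho_\pi^\theta)$ has image on $\Gamma_F$ the full fiber product $\rG(\SL_2(\dF_p)\times\SL_2(\dF_p))=\{(a,b)\in\GL_2(\dF_p)^2\colon\det a=\det b\}$. Since $\bar\rho_\pi(g^2)=\bar\rho_\pi^\theta(k)\,\bar\rho_\pi(g_0^2)\,\bar\rho_\pi(k)$, taking $k$ with $\det\bar\rho_\pi(k)=1$ (so that $\delta=\bar\chi(g_0)$ is fixed) and letting $(\bar\rho_\pi(k),\bar\rho_\pi^\theta(k))$ range over $\SL_2(\dF_p)\times\SL_2(\dF_p)$ allows $\bar\rho_\pi(g^2)$ to be any matrix of determinant $\bar\chi(g_0)^2$, in particular $\diag(\bar\chi(g_0)\mu,\bar\chi(g_0)\mu^{-1})$ for any $\mu\in\dF_p^\times$. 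As $p\geq11$ gives $\#\dF_p^\times=p-1>8\geq\#\dF_p^{\r{bad}}$, I can choose $\mu\in\dF_p^\times\setminus\dF_p^{\r{bad}}$, and the corresponding $g$ settles this case.

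For Types AII and B, $\bar\rho_\pi$ and $\bar\rho_\pi^\theta$ are correlated and $\bar\rho_\pi(g^2)$ cannot be prescribed freely. In Type AII I would take $K$ as in (P6), pass to the $\theta$-stable compositum $K_1=K\cdot K^\theta$ — over which $\bar\rho_\pi(\Gamma_{K_1})\supseteq\SL_2(\dF_p)$ for all but finitely many $p$, since a subgroup of $\GL_2(\dF_p)$ of index bounded independently of $p$ contains $\SL_2(\dF_p)$ for $p$ large — and use an isogeny between $A$ and $A^\theta$ over $K_1$ to obtain $\phi\in\GL_2(\dF_p)$ with $\bar\rho_\pi^\theta(k)=\phi\,\bar\rho_\pi(k)\,\phi^{-1}$ on $\Gamma_{K_1}$. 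Then for $g=g_0k$ with $k\in\Gamma_{K_1}$ one has $\delta=\bar\chi(g_0)$ fixed and $\bar\rho_\pi(g^2)=\phi\,a\,\phi^{-1}\,C\,a$ with $a=\bar\rho_\pi(k)\in\SL_2(\dF_p)$ and $C=\bar\rho_\pi(g_0^2)$; it remains to show $a\mapsto\tr(\phi a\phi^{-1}Ca)$ is not constant on $\SL_2(\dF_p)$, so that, being a polynomial of bounded degree, its restriction to the standard unipotent one-parameter subgroup already takes at least $(p+1)/2>5$ values once $p\geq11$, hence a value outside $B$. In Type B, $\pi$ is Asai-decomposable and $\pres{\sharp}{\bar\rho}_\pi=\bar\rho_{\breve\pi}(-1)\oplus\bar{\breve\eta}$ with $\bar\rho_{\breve\pi}$ three-dimensional; because $\det\pres{\sharp}{\bar\rho}_\pi=\bar\eta_{F/\dQ}$ takes the value $-1$ on every $g$ over the $\theta$-coset, such a $g$ automatically has $\pres{\sharp}{\bar\rho}_\pi(g)$ with eigenvalues $\{1,-1,\nu,\nu^{-1}\}$, where $\nu,\nu^{-1}$ are the two non-$\pm1$ eigenvalues of $\bar\rho_{\breve\pi}(-1)(g)$, and since $\bar\rho_{\breve\pi}\cong\Sym^2$ of a two-dimensional representation whose restriction to $\Gamma_{\breve F F}$ is large by (P6), arranging $\nu\notin\dF_p^{\r{bad}}$ reduces once more to a trace non-vanishing statement handled as in Type AII.

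I expect the main obstacle to be precisely Types AII and B: verifying that passing to a $\theta$-compatible finite extension keeps the image of $\bar\rho_\pi$ (resp.\ of $\bar\rho_{\breve\pi}$) large for all but finitely many $p$ — this is where some good primes are discarded — and ruling out the degenerate possibility that $a\mapsto\tr(\phi a\phi^{-1}Ca)$ is identically constant on $\SL_2(\dF_p)$. Checking non-constancy on the standard unipotent subgroup is a short but genuine computation, and the inequality $(p+1)/2>\#B$ is exactly what forces the constraints $p\geq11$ and $p\neq13$ from (P1) into play; Type AI and the tensor-induction bookkeeping are by contrast entirely routine.
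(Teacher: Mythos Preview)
Your tensor-induction computation and the Type~AI case are essentially the paper's argument: the paper also writes the matrix of $\pres{\sharp}{\bar\rho}_\pi(\vartheta)$ explicitly and reads off the eigenvalue set $\{1,-1,\mu,\mu^{-1}\}$ with $\mu$ determined by $\varepsilon(\vartheta)^{-1}\bar\rho_\pi(\vartheta^2)$, and in Type~AI the full product image from (P6) lets one prescribe $\mu$ freely.

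In Types~AII and~B your route diverges and contains a genuine gap. The inference ``$a\mapsto\tr(\phi a\phi^{-1}Ca)$ is non-constant on $\SL_2(\dF_p)$, hence its restriction to the standard unipotent line already takes at least $(p+1)/2$ values'' is a non-sequitur: a regular function can be non-constant on a three-dimensional variety yet constant on a prescribed line, so the counting step does not follow. Separately, passing to $K_1=K\cdot K^\theta$ and invoking the normal-subgroup dichotomy for $\SL_2(\dF_p)$ to force $\bar\rho_\pi(\Gamma_{K_1})\supseteq\SL_2(\dF_p)$ is valid only for $p$ large relative to $[K_1:K]$, a quantity not controlled by (P1)--(P6). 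So, as you yourself flag, your argument discards further good primes; the paper's does not.

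The paper avoids both issues by a multiplicative shift rather than a trace count. In Type~AII it works directly over $K$: since $\bar\rho_\pi(\Gamma_K)=\GL_2(\dF_p)$ by (P6), one may replace $\vartheta$ by a suitable translate so that $\bar\rho_\pi(\vartheta\gamma\vartheta^{-1})=\bar\rho_\pi(\gamma)$ for all $\gamma\in\Gamma_K$; then for such $\gamma$ one has $\pres{\sharp}{\bar\rho}_\pi(\gamma)=\bar\rho_\pi(\gamma)\otimes\bar\rho_\pi(\gamma)$, and choosing $\gamma$ with $\bar\rho_\pi(\gamma)=\diag(\alpha,\alpha^{-1})$ puts $\diag(\alpha,\alpha^{-1})^{\otimes2}$ inside $\Gal(L_2/\dQ)$. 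Multiplying this against $\pres{\sharp}{\bar\rho}_\pi(\vartheta)$ replaces $\mu$ by $\mu\alpha^2$, and the only input needed is the elementary fact (using $p\geq11$, $p\neq13$) that some $\alpha\in\dF_p^\times$ has order outside $\{1,2,3,4,6,8,12,24\}$, whence $\mu\alpha^2\notin\dF_p^{\r{bad}}$ for every $\mu\in\dF_p^{\r{bad}}$. Type~B is handled identically with $K=\breve F F$. This works for \emph{every} good $p$ and replaces your line-counting by a one-line divisibility check; it is worth internalizing this ``produce a diagonal element in the identity component, then shift'' trick.
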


\begin{proof}
For this statement, we may replace $\pres{\sharp}{\bar\rT}_\pi$ by the tensor
induction (Asai representation) of $\bar\rT_\pi$ since they have the same
semisimplification. Choose a lifting $\vartheta\in\Gamma_\dQ$ of
$\theta\in\Gal(F/\dQ)$. We will use $\{1,\vartheta\}$ as the coset
representatives to define the tensor induction, which is realized on the
$\dF_p$-vector space $\bar\rT_\pi^{\otimes 2}$. Suppose that
\[\bar\rho_\pi(\vartheta^2)=\left(
                              \begin{array}{cc}
                                a & b \\
                                c & d \\
                              \end{array}
                            \right)
\] under a basis $\{e_1,e_2\}$ of $\bar\rT_\pi$. Then
\[\pres{\sharp}{\bar\rho_\pi}(\vartheta)=\varepsilon(\vartheta)^{-1}\left(
                                           \begin{array}{cccc}
                                             a & b &  &  \\
                                              &  & a & b \\
                                             c & d &  &  \\
                                              &  & c & d \\
                                           \end{array}
                                         \right)
\] under the basis $\{e_1\otimes e_1,e_1\otimes e_2,e_2\otimes e_1,e_2\otimes e_2\}$,
where $\varepsilon(\vartheta)$ is an element in $\dF_p^\times$ whose square
equals $\det\bar\rho_\pi(\vartheta^2)$. If
$\varepsilon(\vartheta)^{-1}\bar\rho_\pi(\vartheta^2)$ has geometric
eigenvalues $\{\mu,\mu^{-1}\}$ for some $\mu\not\in\dF_p^{\r{bad}}$, then we
are done. Otherwise, we may modify $\{e_1,e_2\}$ such that
\[\varepsilon(\vartheta)^{-1}\bar\rho_\pi(\vartheta^2)\in
\left\{\left(
         \begin{array}{cc}
           \mu &  \\
            & \mu^{-1} \\
         \end{array}
       \right),\pm\left(
         \begin{array}{cc}
           1 & b \\
           0 & 1 \\
         \end{array}
       \right)\res \mu\in\dF_p^{\r{bad}}, b\in\dF_p
\right\}.\] By (P1), we may choose some $\alpha\in\dF_p^\times$ whose order
is not in $\{1,2,3,4,6,8,12,24\}$. Then $\mu\alpha^2\not\in\dF_p^{\r{bad}}$
for every $\mu\in\dF_p^{\r{bad}}$. If we can show that $\Gal(L_2/\dQ)$
contains the element
\[\left(
         \begin{array}{cc}
           \alpha & 0 \\
           0 & \alpha^{-1} \\
         \end{array}
       \right)^{\otimes 2},
\] then we are done. The rest of the proof is dedicated to this point,
according to types of $A$.
\begin{itemize}
  \item If $A$ is of Type AI, then it follows from (P6).

  \item If $A$ is of Type AII, then we choose a number field $K$ as in
      (P6). Note that
      $\bar\rho_\pi^\theta(\gamma)=\bar\rho_\pi(\vartheta\gamma\vartheta^{-1})$.
      Since $A_K$ is isogenous to $A^\theta_K$, the representations
      $\bar\rho_\pi^\theta\res_{\Gamma_K}$ and
      $\bar\rho_\pi\res_{\Gamma_K}$ are conjugate. By (P6), we may
      replace $\vartheta$ by its conjugation by some element in
      $\Gamma_K$, such that
      $\bar\rho_\pi(\vartheta\gamma\vartheta^{-1})=\bar\rho_\pi(\gamma)$
      for all $\gamma\in\Gamma_K$. Note that for $\gamma\in\Gamma_K$, we
      have
      $\pres{\sharp}{\bar\rho}_\pi(\gamma)=\bar\rho_\pi(\gamma)\otimes\bar\rho_\pi(\vartheta\gamma\vartheta^{-1})=
      \bar\rho_\pi(\gamma)\otimes\bar\rho_\pi(\gamma)$. The lemma
      follows.

  \item If $A$ is of Type B, then the proof is the same as above by
      letting $K=\breve{F}F$.
\end{itemize}
\end{proof}

\begin{lem}
Suppose that $p$ is a good prime as before. Then Assumption \ref{as:group_r}
(R6) is satisfied.
\end{lem}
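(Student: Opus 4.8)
The plan is to split the verification according to the type of $A$: I would establish case (R6a) when $A$ is of Type AI or Type AII, and case (R6b) when $A$ is of Type B. This trichotomy matches the ``either (and at most one)'' clause of (R6), since $\pi$ is Asai-decomposable precisely when $A$ is of Type B. So in each case it remains to check two things: absolute irreducibility of the relevant $p$-adic representation ($\rho_{\sigma,\pi}$ for Types AI and AII, $\rho_{\sigma\times\breve\pi}$ for Type B), and the existence in its image of a non-trivial scalar of finite order prime to $p$.

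For absolute irreducibility I would invoke the Remark following Lemma \ref{le:large_image}: for $p$ good, $\rT_{\sigma,\pi}$ (Types AI, AII) resp.\ $\rT_{\sigma\times\breve\pi}$ (Type B) is residually absolutely irreducible, hence the corresponding $p$-adic representation is absolutely irreducible. In the Type AII case the Remark only asserts this outside a finite set of good primes; I would either absorb that set into the notion of goodness, or observe directly that $\As\rho_\pi$, and hence $\pres{\sharp}{\rho}_\pi$, is absolutely irreducible because $\pi$ is not Asai-decomposable, and that this together with the full image of $\rho_\sigma$ furnished by (P3) already forces $\rho_{\sigma,\pi}=\rho_\sigma\otimes\pres{\sharp}{\rho}_\pi$ to be absolutely irreducible.

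For the scalar, in the Type AI/AII case I would argue as follows. By (P3) and $p\geq 11$, the $p$-adic representation $\rho_\sigma$ is surjective onto $\GL(\bar\rT_\sigma)=\GL_2(\dZ_p)$, so $-\r{id}$ lies in its image; moreover $-\r{id}\in\GL(\bar\rT_\sigma)^+$, which by Lemma \ref{le:large_image} lies in $\Gal(L_1/L_0)$. Using that $L_1$ and $L_2$ are linearly disjoint over $L_0$, I would pick $g\in\Gamma_\dQ$ acting trivially on $L_2$ with $\bar\rho_\sigma(g)=-\r{id}$; then $\pres{\sharp}{\bar\rho}_\pi(g)=\r{id}$ and $\bar\rho_{\sigma,\pi}(g)=-\r{id}$ is an order-$2$ scalar in the residual image. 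To lift this to the $p$-adic image I would adjust $g$ within its coset modulo $\ker\bar\rho_\sigma\cap\ker\pres{\sharp}{\bar\rho}_\pi$: since $\rho_\sigma$ surjects onto $\GL_2(\dZ_p)$, its restriction to $\ker\bar\rho_\sigma$ hits the whole principal congruence subgroup, and a Goursat-type analysis inside $\GL_2(\dZ_p)\times\rO(\pres{\sharp}{\rV}_\pi)$ — using Lemmas \ref{le:large_image} and \ref{le:good_image} and the classification of closed normal subgroups of $\GL_2(\dZ_p)$ for $p\geq 11$ — lets one arrange $\rho_\sigma(g)=-\r{id}$ and $\pres{\sharp}{\rho}_\pi(g)=\r{id}$ simultaneously. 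Then $\rho_{\sigma,\pi}(g)=-\r{id}$ is a non-trivial scalar of order $2$, and (R6a) follows. The Type B case is identical with $L_2$ replaced by the splitting field of $\bar\rho_{\breve\pi}$ and $\pres{\sharp}{\rho}_\pi$ by $\rho_{\breve\pi}(-1)$, producing $-\r{id}$ in the image of $\rho_{\sigma\times\breve\pi}$ and hence (R6b).

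The hard part will be precisely this passage from the residual to the integral image: one must rule out that $\bar\rho_\sigma$ and $\pres{\sharp}{\bar\rho}_\pi$ share enough common structure (a common $\SL_2(\dF_p)$- or $\PSL_2(\dF_p)$-quotient) to obstruct solving $\rho_\sigma(g)=-\r{id}$, $\pres{\sharp}{\rho}_\pi(g)=\r{id}$ at the integral level. Everything else is bookkeeping, and since the scalars produced have order $2$, the ``finite order prime to $p$'' requirement is automatic for $p\geq 11$.
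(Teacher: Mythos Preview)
Your case split and the irreducibility arguments are essentially those of the paper, and are fine. The gap is in the scalar part: you correctly identify the passage from a residual scalar to an integral scalar as ``the hard part,'' and then propose a Goursat analysis at the $\dZ_p$-level that you do not carry out and that is not supported by the available lemmas (Lemmas \ref{le:large_image} and \ref{le:good_image} only control the \emph{residual} images).

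The paper bypasses this entirely with a Teichm\"uller-lift trick. Once Lemma \ref{le:large_image} produces $g\in\Gamma_\dQ$ with $\bar\rho_{\sigma,\pi}(g)$ a non-trivial scalar $\bar a\cdot\r{id}$ of order $m\mid p-1$, you do \emph{not} need to arrange $\rho_\sigma(g)=-\r{id}$ and $\pres{\sharp}{\rho}_\pi(g)=\r{id}$ exactly. Instead, set $h=\rho_{\sigma,\pi}(g)$ in the (closed, hence compact) image $G\subset\GL(\rT_{\sigma,\pi})$. The closed procyclic subgroup $\overline{\langle h\rangle}\subset G$ sits in an exact sequence $1\to P\to\overline{\langle h\rangle}\to\dZ/m\to 1$ with $P$ pro-$p$ (it lies in the kernel of reduction mod $p$). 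Since $(m,p)=1$ this splits, yielding $h'\in G$ of exact order $m$ with $\bar h'=\bar a\cdot\r{id}$. Now $h'$ is semisimple with eigenvalues in $\mu_m(\dZ_p)$, all reducing to $\bar a$; but the reduction $\mu_{p-1}(\dZ_p)\to\dF_p^\times$ is injective, so all eigenvalues equal the Teichm\"uller lift $a$ of $\bar a$, and $h'=a\cdot\r{id}$ is the desired non-trivial scalar of order prime to $p$ in the image of $\rho_{\sigma,\pi}$ (respectively $\rho_{\sigma\times\breve\pi}$). No Goursat argument, and no separate integral control of $\rho_\sigma$ and $\pres{\sharp}{\rho}_\pi$, is required.
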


\begin{proof}
Suppose that $A$ is of Type AI or AII. By Lemma \ref{le:large_image}, the
geometric irreducibility of $\pres{\sharp}\rV_\pi$ implies that of
$\rV_{\sigma,\pi}$. But
$\pres{\sharp}\rV_\pi\res_{\Gamma_F}\simeq\rV_\pi\otimes\rV_\pi^\theta$ is
absolutely reducible only when $\pi$ is Asai-decomposable, which is not the
case.

Suppose that $A$ is of Type B. By Lemma \ref{le:large_image}, the geometric
irreducibility of $\rV_{\breve\pi}$ implies that of
$\rV_{\sigma\times\breve\pi}$. Note that $\rV_{\breve\pi}\res_{\Gamma_F}$ is
isomorphic to a quadratic twist of $\Sym^2\rV_\pi$, and the latter is
absolutely irreducible since $\pi$ is non-dihedral.

Again by Lemma \ref{le:large_image}, the image of $\bar\rho_{\sigma,\pi}$
contains a non-trivial scalar matrix. Then its Teichm\"{u}ller lift will be
the non-trivial scalar matrix of order dividing $p-1$ contained in the image
of $\rho_{\sigma,\pi}$ or $\rho_{\sigma\times\breve\pi}$.
\end{proof}

\begin{lem}
Suppose that $p$ is a good prime as before. Then Assumption \ref{as:group_r}
(R4) is satisfied.
\end{lem}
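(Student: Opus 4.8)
The plan is to produce, for each $\epsilon\in\{+,-\}$, one strongly $(1,\epsilon)$-admissible prime by prescribing its Frobenius conjugacy class in $\Gal(L_1L_2/\dQ)$ and applying the Chebotarev density theorem. Recall from Definitions \ref{de:group_s} and \ref{de:group_a} that we need $\ell\nmid 2pqN^-\cD_{N^+M}$ (A1), $p\nmid\ell^2-1$ (A2), $p\mid\ell+1-\epsilon_\sigma(\ell)\tr(\Fr_\ell;\rV_\sigma)$ for some sign $\epsilon_\sigma(\ell)$ (A3), $p\mid\ell^{p-1}-1$ (A4), $\ell$ inert in $F$ (A5), together with $\epsilon=-\epsilon_\sigma(\ell)\breve\eta(\Fr_\ell)$ and condition (S3). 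Here (A4) is automatic for $\ell\neq p$, and (A1) rules out only finitely many primes, so only (A2), (A3), (A5), (S3) and the sign condition matter; all of these can be read off the Frobenius class in $L_1L_2$. Note that when $A$ is Asai-decomposable the field $\breve{F}$ is contained in $L_2$, since $\breve\eta$ is $\pm1$-valued and occurs in the semisimplification of $\pres{\sharp}{\bar\rho}_\pi$ by Brauer--Nesbitt, so $\breve\eta(\Fr_\ell)$ is then determined by the $L_2$-component as well.

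I would first fix, using Lemma \ref{le:good_image}, a semisimple element $g_2\in\Gal(L_2/\dQ)\subset\GL(\pres{\sharp}{\bar\rT}_\pi)$ with eigenvalue multiset $\{1,-1,\mu,\mu^{-1}\}$ for some $\mu\notin\dF_p^{\r{bad}}$; these four eigenvalues are distinct, and $\det g_2=-1$. Thus any prime $\ell$ with $\Fr_\ell|_{L_2}$ conjugate to $g_2$ is inert in $F$, giving (A5), and $(\As\rho_\pi)(-1)(\Fr_\ell)$ is then semisimple with eigenvalues $\{1,-1,\mu,\mu^{-1}\}$, so by Remark \ref{re:strongly_admissible}~(2) condition (S3) holds as soon as $\ell\not\equiv\pm\mu,\pm\mu^{-1}\bmod p$. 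Next I would choose a compatible component in $\Gal(L_1/\dQ)$. By (P3)/(R2) the map $\bar\rho_\sigma$ is surjective, so $\Gal(L_1/\dQ)=\GL(\bar\rT_\sigma)$, $\det$ is the mod $p$ cyclotomic character $\Fr_\ell\mapsto\ell\bmod p$, and by Lemma \ref{le:large_image} the subgroup $\Gal(L_1/L_0)$ contains $\GL(\bar\rT_\sigma)^+$. Fixing a lift $g_1^\ast\in\Gal(L_1/\dQ)$ of $g_2|_{L_0}$, the coset $g_1^\ast\GL(\bar\rT_\sigma)^+$, i.e.\ the set of elements whose determinant has the same square class as $\det g_1^\ast$, consists of elements restricting to $g_2|_{L_0}$; for a suitable $d$ in that square class with $d\notin\{1,-1,\pm\mu,\pm\mu^{-1}\}$ the diagonal matrices $\diag(1,d)$ and $\diag(-1,-d)$ both lie in this coset. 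Taking $g_1$ in the conjugacy class of $\diag(1,d)$ forces, for $\ell$ with $\Fr_\ell|_{L_1}$ conjugate to $g_1$, that $\ell\equiv d$ and $\tr(\Fr_\ell;\rV_\sigma)\equiv 1+d\equiv\epsilon_\sigma(\ell)(\ell+1)\bmod p$ with $\epsilon_\sigma(\ell)=+1$, giving (A3); while $d\neq\pm1$ gives (A2) and $d\notin\{\pm\mu,\pm\mu^{-1}\}$ gives the remaining input for (S3). Replacing $\diag(1,d)$ by $\diag(-1,-d)$ gives the same with $\epsilon_\sigma(\ell)=-1$. The pair $(g_1,g_2)$ defines a conjugacy class in $\Gal(L_1L_2/\dQ)=\Gal(L_1/\dQ)\times_{\Gal(L_0/\dQ)}\Gal(L_2/\dQ)$, which by Chebotarev is hit by a positive density of primes; discarding the finitely many dividing $2pqN^-\cD_{N^+M}$, any such $\ell$ is $1$-admissible, satisfies (S3), and has $\epsilon_\sigma(\ell)$ of the prescribed sign. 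Since $-\epsilon_\sigma(\ell)\breve\eta(\Fr_\ell)$ then runs over both signs as $\epsilon_\sigma(\ell)$ does (the value $\breve\eta(\Fr_\ell)$ being fixed by $g_2$, or simply $+1$ when $A$ is not Asai-decomposable), this yields a strongly $(1,\epsilon)$-admissible prime for each $\epsilon\in\{+,-\}$.

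The step I expect to be the main obstacle is the determinant bookkeeping over $L_0=L_1\cap L_2$: one must verify that the $L_1$-component is confined to the square class of $\det g_1^\ast$ (when $L_0\neq\dQ$ this square class is determined by $g_2|_{L_0}$, as $L_0=\dQ(\sqrt{p^\ast})$ is then forced to lie in $L_2$), and that this class nevertheless contains a value $d$ avoiding the exceptional set $\{1,-1,\pm\mu,\pm\mu^{-1}\}$. This requires a short case analysis according to $p\bmod 4$ and the square class of $\mu$, distinguishing how the six exceptional residues distribute between the two square classes; it is exactly here that the hypotheses $p\geq 11$ and $p\neq 13$ from (P1) are used, together with $\mu\notin\dF_p^{\r{bad}}$ and the largeness statements of Lemmas \ref{le:large_image} and \ref{le:good_image}. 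A minor point to record is that passing between $\bar\rho_\sigma$ and $\rho_\sigma$ does not affect (A3), since for $n=1$ this condition is a congruence modulo $p$.
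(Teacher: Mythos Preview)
Your plan is correct and follows essentially the same route as the paper: fix an element $\gamma$ on the $\pi$-side via Lemma~\ref{le:good_image}, pair it with a suitable diagonal matrix on the $\sigma$-side via Lemma~\ref{le:large_image}, and apply Chebotarev. The paper takes $g_1=\diag(\epsilon,\epsilon b^2)$ and simply asserts that $(g_1,\gamma)$ lies in the image of $\bar\rho_\sigma\times\pres{\sharp}{\bar\rho}_\pi$; your version makes explicit the fibered-product constraint over $L_0=L_1\cap L_2$ and allows $d$ in whichever square class $\gamma|_{L_0}$ forces. This extra care is a genuine improvement in exposition: the paper's choice $\det g_1=b^2$ is always a square, so strictly speaking one must also check (or arrange, via the freedom in choosing the lift $\vartheta$ in Lemma~\ref{le:good_image}) that the relevant coset is the square coset, which the paper leaves implicit.

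One minor correction: the exclusion $p\neq 13$ in (P1) is \emph{not} actually needed for your square-class counting. For $p=13$ one has $p\equiv 1\pmod 4$ and any $\mu\notin\dF_p^{\r{bad}}$ is a primitive root, hence a non-square; the six forbidden values then split as two squares and four non-squares among the six elements of each class, so a valid $d$ exists in either class. The place where $p\neq 13$ is genuinely used is inside the proof of Lemma~\ref{le:good_image} (the choice of $\alpha$ with order outside $\{1,2,3,4,6,8,12,24\}$), which you invoke as input. So your sentence ``it is exactly here that \dots\ $p\neq 13$ \dots\ are used'' should be adjusted to attribute $p\neq 13$ to Lemma~\ref{le:good_image} rather than to the counting step itself.
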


\begin{proof}
For this statement, we may again replace $\pres{\sharp}{\bar\rT}_\pi$ by the
tensor induction of $\bar\rT_\pi$. By Lemma \ref{le:good_image}, we fix an
element $\gamma$ in the image of $\pres{\sharp}{\bar\rho}_\pi$ whose
geometric eigenvalues are $\{1,-1,\mu,\mu^{-1}\}$ for some
$\mu\not\in\dF_p^{\r{bad}}$. By (P1), it is elementary to check that there is
an element $b\in\dF_p^\times$ such that $b^4\not\in\{1,-1,\mu-1,-\mu-1\}$.

Suppose that $A$ is of Type AI or AII. Then $\breve{F}=\dQ$ in this case. By
Lemma \ref{le:large_image}, the image of
$\bar\rho_\sigma\times\pres{\sharp}{\bar\rho}_\pi$ contains (some conjugate
of)
\[\left(\left(
          \begin{array}{cc}
            \epsilon 1 &  \\
             & \epsilon b^2 \\
          \end{array}
        \right)
,\gamma\right)\] for both $\epsilon=\pm$. Then every prime $\ell\nmid
2pqN^-\cD_{N^+M}$ such that
$(\bar\rho_\sigma\times\pres{\sharp}{\bar\rho}_\pi)(\Fr_\ell)$ is conjugate
to the above element will be a strongly $(1,\epsilon)$-admissible prime.

Suppose that $A$ is of Type B. Since $\breve{F}$ is unramified outside
$M\disc(F)$, we have a similar result for $\bar\rT_{\sigma\otimes\breve\eta}$
as in Lemma \ref{le:large_image}. Then the remaining argument is the same as
the previous case.
\end{proof}

Theorems \ref{th:main_even} and \ref{th:main_odd} are finally proved!

\begin{bibdiv}
\begin{biblist}

\bib{SGA}{book}{
   label={SGA7I},
   title={Groupes de monodromie en g\'eom\'etrie alg\'ebrique. I},
   language={French},
   series={Lecture Notes in Mathematics, Vol. 288},
   note={S\'eminaire de G\'eom\'etrie Alg\'ebrique du Bois-Marie 1967--1969
   (SGA 7 I);
   Dirig\'e par A. Grothendieck. Avec la collaboration de M. Raynaud et D.
   S. Rim},
   publisher={Springer-Verlag},
   place={Berlin},
   date={1972},
   pages={viii+523},
   review={\MR{0354656 (50 \#7134)}},
}

\bib{BG99}{article}{
   author={Bachmat, E.},
   author={Goren, E. Z.},
   title={On the non-ordinary locus in Hilbert-Blumenthal surfaces},
   journal={Math. Ann.},
   volume={313},
   date={1999},
   number={3},
   pages={475--506},
   issn={0025-5831},
   review={\MR{1678541 (2000b:14058)}},
   doi={10.1007/s002080050270},
}

\bib{BD05}{article}{
   author={Bertolini, M.},
   author={Darmon, H.},
   title={Iwasawa's main conjecture for elliptic curves over anticyclotomic
   $\mathbb Z\sb p$-extensions},
   journal={Ann. of Math. (2)},
   volume={162},
   date={2005},
   number={1},
   pages={1--64},
   issn={0003-486X},
   review={\MR{2178960 (2006g:11218)}},
   doi={10.4007/annals.2005.162.1},
}

\bib{BD07}{article}{
   author={Bertolini, Massimo},
   author={Darmon, Henri},
   title={Hida families and rational points on elliptic curves},
   journal={Invent. Math.},
   volume={168},
   date={2007},
   number={2},
   pages={371--431},
   issn={0020-9910},
   review={\MR{2289868 (2008c:11076)}},
   doi={10.1007/s00222-007-0035-4},
}

\bib{BK90}{article}{
   author={Bloch, Spencer},
   author={Kato, Kazuya},
   title={$L$-functions and Tamagawa numbers of motives},
   conference={
      title={The Grothendieck Festschrift, Vol.\ I},
   },
   book={
      series={Progr. Math.},
      volume={86},
      publisher={Birkh\"auser Boston},
      place={Boston, MA},
   },
   date={1990},
   pages={333--400},
   review={\MR{1086888 (92g:11063)}},
}

\bib{BLR90}{book}{
   author={Bosch, Siegfried},
   author={L{\"u}tkebohmert, Werner},
   author={Raynaud, Michel},
   title={N\'eron models},
   series={Ergebnisse der Mathematik und ihrer Grenzgebiete (3) [Results in
   Mathematics and Related Areas (3)]},
   volume={21},
   publisher={Springer-Verlag},
   place={Berlin},
   date={1990},
   pages={x+325},
   isbn={3-540-50587-3},
   review={\MR{1045822 (91i:14034)}},
}

\bib{BC91}{article}{
   author={Boutot, J.-F.},
   author={Carayol, H.},
   title={Uniformisation $p$-adique des courbes de Shimura: les
   th\'eor\`emes de \v{C}erednik et de Drinfel'd},
   language={French, with English summary},
   note={Courbes modulaires et courbes de Shimura (Orsay, 1987/1988)},
   journal={Ast\'erisque},
   number={196-197},
   date={1991},
   pages={7, 45--158 (1992)},
   issn={0303-1179},
   review={\MR{1141456 (93c:11041)}},
}

\bib{BCDT01}{article}{
   author={Breuil, Christophe},
   author={Conrad, Brian},
   author={Diamond, Fred},
   author={Taylor, Richard},
   title={On the modularity of elliptic curves over $\bold Q$: wild 3-adic
   exercises},
   journal={J. Amer. Math. Soc.},
   volume={14},
   date={2001},
   number={4},
   pages={843--939 (electronic)},
   issn={0894-0347},
   review={\MR{1839918 (2002d:11058)}},
   doi={10.1090/S0894-0347-01-00370-8},
}

\bib{BM02}{article}{
   author={Breuil, Christophe},
   author={Messing, William},
   title={Torsion \'etale and crystalline cohomologies},
   note={Cohomologies $p$-adiques et applications arithm\'etiques, II},
   journal={Ast\'erisque},
   number={279},
   date={2002},
   pages={81--124},
   issn={0303-1179},
   review={\MR{1922829 (2004k:14027)}},
}

\bib{BL84}{article}{
   author={Brylinski, J.-L.},
   author={Labesse, J.-P.},
   title={Cohomologie d'intersection et fonctions $L$ de certaines
   vari\'et\'es de Shimura},
   language={French},
   journal={Ann. Sci. \'Ecole Norm. Sup. (4)},
   volume={17},
   date={1984},
   number={3},
   pages={361--412},
   issn={0012-9593},
   review={\MR{777375 (86i:11026)}},
}

\bib{BFH90}{article}{
   author={Bump, Daniel},
   author={Friedberg, Solomon},
   author={Hoffstein, Jeffrey},
   title={Nonvanishing theorems for $L$-functions of modular forms and their
   derivatives},
   journal={Invent. Math.},
   volume={102},
   date={1990},
   number={3},
   pages={543--618},
   issn={0020-9910},
   review={\MR{1074487 (92a:11058)}},
   doi={10.1007/BF01233440},
}

\bib{DR1}{article}{
   author={Darmon, Henri},
   author={Rotger, Victor},
   title={Diagonal cycles and Euler systems I: A $p$-adic Gross-Zagier
   formula},
   language={English, with English and French summaries},
   journal={Ann. Sci. \'Ec. Norm. Sup\'er. (4)},
   volume={47},
   date={2014},
   number={4},
   pages={779--832},
   issn={0012-9593},
   review={\MR{3250064}},
}

\bib{DR2}{article}{
   label={DRII},
   author={Darmon, Henri},
   author={Rotger, Victor},
   title={Diagonal cycles and Euler systems II: the Birch and Swinnerton-Dyer conjecture for Hasse-Weil-Artin $L$-series},
   eprint={http://www.math.mcgill.ca/darmon/pub/pub.html},
   note={preprint},
}

\bib{Del80}{article}{
   author={Deligne, Pierre},
   title={La conjecture de Weil. II},
   language={French},
   journal={Inst. Hautes \'Etudes Sci. Publ. Math.},
   number={52},
   date={1980},
   pages={137--252},
   issn={0073-8301},
   review={\MR{601520 (83c:14017)}},
}

\bib{FLS14}{article}{
   author={Freitas, N.},
   author={Le Hung, B. V.},
   author={Siksek, S.},
   title={Elliptic curves over real quadratic fields are modular},
   eprint={http://link.springer.com/article/10.1007\%2Fs00222-014-0550-z},
   journal={Invent. Math.},
}

\bib{Fuj02}{article}{
   author={Fujiwara, Kazuhiro},
   title={A proof of the absolute purity conjecture (after Gabber)},
   conference={
      title={Algebraic geometry 2000, Azumino (Hotaka)},
   },
   book={
      series={Adv. Stud. Pure Math.},
      volume={36},
      publisher={Math. Soc. Japan},
      place={Tokyo},
   },
   date={2002},
   pages={153--183},
   review={\MR{1971516 (2004d:14015)}},
}

\bib{GGP12}{article}{
   author={Gan, Wee Teck},
   author={Gross, Benedict H.},
   author={Prasad, Dipendra},
   title={Symplectic local root numbers, central critical $L$ values, and
   restriction problems in the representation theory of classical groups},
   language={English, with English and French summaries},
   note={Sur les conjectures de Gross et Prasad. I},
   journal={Ast\'erisque},
   number={346},
   date={2012},
   pages={1--109},
   issn={0303-1179},
   isbn={978-2-85629-348-5},
   review={\MR{3202556}},
}

\bib{Gar87}{article}{
   author={Garrett, Paul B.},
   title={Decomposition of Eisenstein series: Rankin triple products},
   journal={Ann. of Math. (2)},
   volume={125},
   date={1987},
   number={2},
   pages={209--235},
   issn={0003-486X},
   review={\MR{881269 (88m:11033)}},
   doi={10.2307/1971310},
}

\bib{vdG}{book}{
   author={van der Geer, Gerard},
   title={Hilbert modular surfaces},
   series={Ergebnisse der Mathematik und ihrer Grenzgebiete (3) [Results in
   Mathematics and Related Areas (3)]},
   volume={16},
   publisher={Springer-Verlag, Berlin},
   date={1988},
   pages={x+291},
   isbn={3-540-17601-2},
   review={\MR{930101 (89c:11073)}},
   doi={10.1007/978-3-642-61553-5},
}

\bib{Gor02}{book}{
   author={Goren, Eyal Z.},
   title={Lectures on Hilbert modular varieties and modular forms},
   series={CRM Monograph Series},
   volume={14},
   note={With the assistance of Marc-Hubert Nicole},
   publisher={American Mathematical Society},
   place={Providence, RI},
   date={2002},
   pages={x+270},
   isbn={0-8218-1995-X},
   review={\MR{1863355 (2003c:11038)}},
}

\bib{GO00}{article}{
   author={Goren, E. Z.},
   author={Oort, F.},
   title={Stratifications of Hilbert modular varieties},
   journal={J. Algebraic Geom.},
   volume={9},
   date={2000},
   number={1},
   pages={111--154},
   issn={1056-3911},
   review={\MR{1713522 (2000g:14034)}},
}

\bib{Gro91}{article}{
   author={Gross, Benedict H.},
   title={Kolyvagin's work on modular elliptic curves},
   conference={
      title={$L$-functions and arithmetic},
      address={Durham},
      date={1989},
   },
   book={
      series={London Math. Soc. Lecture Note Ser.},
      volume={153},
      publisher={Cambridge Univ. Press},
      place={Cambridge},
   },
   date={1991},
   pages={235--256},
   review={\MR{1110395 (93c:11039)}},
   doi={10.1017/CBO9780511526053.009},
}

\bib{GK93}{article}{
   author={Gross, Benedict H.},
   author={Keating, Kevin},
   title={On the intersection of modular correspondences},
   journal={Invent. Math.},
   volume={112},
   date={1993},
   number={2},
   pages={225--245},
   issn={0020-9910},
   review={\MR{1213101 (94h:11046)}},
   doi={10.1007/BF01232433},
}

\bib{GK92}{article}{
   author={Gross, Benedict H.},
   author={Kudla, Stephen S.},
   title={Heights and the central critical values of triple product
   $L$-functions},
   journal={Compositio Math.},
   volume={81},
   date={1992},
   number={2},
   pages={143--209},
   issn={0010-437X},
   review={\MR{1145805 (93g:11047)}},
}

\bib{GP12}{article}{
   author={Gross, Benedict H.},
   author={Parson, James A.},
   title={On the local divisibility of Heegner points},
   conference={
      title={Number theory, analysis and geometry},
   },
   book={
      publisher={Springer},
      place={New York},
   },
   date={2012},
   pages={215--241},
   review={\MR{2867919}},
   doi={10.1007/978-1-4614-1260-1},
}

\bib{GP92}{article}{
   author={Gross, Benedict H.},
   author={Prasad, Dipendra},
   title={On the decomposition of a representation of ${\rm SO}\sb n$ when
   restricted to ${\rm SO}\sb {n-1}$},
   journal={Canad. J. Math.},
   volume={44},
   date={1992},
   number={5},
   pages={974--1002},
   issn={0008-414X},
   review={\MR{1186476 (93j:22031)}},
   doi={10.4153/CJM-1992-060-8},
}

\bib{GS95}{article}{
   author={Gross, B. H.},
   author={Schoen, C.},
   title={The modified diagonal cycle on the triple product of a pointed
   curve},
   language={English, with English and French summaries},
   journal={Ann. Inst. Fourier (Grenoble)},
   volume={45},
   date={1995},
   number={3},
   pages={649--679},
   issn={0373-0956},
   review={\MR{1340948 (96e:14008)}},
}

\bib{GZ86}{article}{
   author={Gross, Benedict H.},
   author={Zagier, Don B.},
   title={Heegner points and derivatives of $L$-series},
   journal={Invent. Math.},
   volume={84},
   date={1986},
   number={2},
   pages={225--320},
   issn={0020-9910},
   review={\MR{833192 (87j:11057)}},
   doi={10.1007/BF01388809},
}

\bib{HLR86}{article}{
   author={Harder, G.},
   author={Langlands, R. P.},
   author={Rapoport, M.},
   title={Algebraische Zyklen auf Hilbert-Blumenthal-Fl\"achen},
   language={German},
   journal={J. Reine Angew. Math.},
   volume={366},
   date={1986},
   pages={53--120},
   issn={0075-4102},
   review={\MR{833013 (87k:11066)}},
}

\bib{Hel07}{article}{
   author={Helm, David},
   title={On maps between modular Jacobians and Jacobians of Shimura curves},
   journal={Israel J. Math.},
   volume={160},
   date={2007},
   pages={61--117},
   issn={0021-2172},
   review={\MR{2342491 (2009e:11119)}},
   doi={10.1007/s11856-007-0056-0},
}

\bib{Ich08}{article}{
   author={Ichino, Atsushi},
   title={Trilinear forms and the central values of triple product
   $L$-functions},
   journal={Duke Math. J.},
   volume={145},
   date={2008},
   number={2},
   pages={281--307},
   issn={0012-7094},
   review={\MR{2449948 (2009i:11066)}},
   doi={10.1215/00127094-2008-052},
}

\bib{Jan88}{article}{
   author={Jannsen, Uwe},
   title={Continuous \'etale cohomology},
   journal={Math. Ann.},
   volume={280},
   date={1988},
   number={2},
   pages={207--245},
   issn={0025-5831},
   review={\MR{929536 (89a:14022)}},
   doi={10.1007/BF01456052},
}

\bib{Jan90}{book}{
   author={Jannsen, Uwe},
   title={Mixed motives and algebraic $K$-theory},
   series={Lecture Notes in Mathematics},
   volume={1400},
   note={With appendices by S. Bloch and C. Schoen},
   publisher={Springer-Verlag, Berlin},
   date={1990},
   pages={xiv+246},
   isbn={3-540-52260-3},
   review={\MR{1043451 (91g:14008)}},
}

\bib{dJ96}{article}{
   author={de Jong, A. J.},
   title={Smoothness, semi-stability and alterations},
   journal={Inst. Hautes \'Etudes Sci. Publ. Math.},
   number={83},
   date={1996},
   pages={51--93},
   issn={0073-8301},
   review={\MR{1423020 (98e:14011)}},
}

\bib{Kid02}{article}{
   author={Kida, Masanari},
   title={Potential good reduction of elliptic curves},
   journal={J. Symbolic Comput.},
   volume={34},
   date={2002},
   number={3},
   pages={173--180},
   issn={0747-7171},
   review={\MR{1935076 (2003j:11060)}},
   doi={10.1006/jsco.2002.0555},
}

\bib{Kol90}{article}{
   author={Kolyvagin, V. A.},
   title={Euler systems},
   conference={
      title={The Grothendieck Festschrift, Vol.\ II},
   },
   book={
      series={Progr. Math.},
      volume={87},
      publisher={Birkh\"auser Boston},
      place={Boston, MA},
   },
   date={1990},
   pages={435--483},
   review={\MR{1106906 (92g:11109)}},
}

\bib{LR98}{article}{
   author={Lapid, Erez},
   author={Rogawski, Jonathan},
   title={On twists of cuspidal representations of ${\rm GL}(2)$},
   journal={Forum Math.},
   volume={10},
   date={1998},
   number={2},
   pages={175--197},
   issn={0933-7741},
   review={\MR{1611951 (99d:11055)}},
   doi={10.1515/form.10.2.175},
}

\bib{LH14}{book}{
   author={Le Hung, Bao, Viet},
   title={Modularity of some elliptic curves over totally real fields},
   note={Thesis (Ph.D.)--Harvard University},
   publisher={ProQuest LLC, Ann Arbor, MI},
   date={2014},
   pages={67},
   isbn={978-1321-01858-5},
   review={\MR{3251352}},
}

\bib{Mil80}{book}{
   author={Milne, J. S.},
   title={\'Etale cohomology},
   series={Princeton Mathematical Series},
   volume={33},
   publisher={Princeton University Press},
   place={Princeton, N.J.},
   date={1980},
   pages={xiii+323},
   isbn={0-691-08238-3},
   review={\MR{559531 (81j:14002)}},
}

\bib{Mil06}{book}{
   author={Milne, J. S.},
   title={Arithmetic duality theorems},
   edition={2},
   publisher={BookSurge, LLC, Charleston, SC},
   date={2006},
   pages={viii+339},
   isbn={1-4196-4274-X},
   review={\MR{2261462 (2007e:14029)}},
}

\bib{Mur90}{article}{
   author={Murre, J. P.},
   title={On the motive of an algebraic surface},
   journal={J. Reine Angew. Math.},
   volume={409},
   date={1990},
   pages={190--204},
   issn={0075-4102},
   review={\MR{1061525 (91g:14003)}},
   doi={10.1515/crll.1990.409.190},
}

\bib{Nek00}{article}{
   author={Nekov{\'a}{\v{r}}, Jan},
   title={$p$-adic Abel-Jacobi maps and $p$-adic heights},
   conference={
      title={The arithmetic and geometry of algebraic cycles},
      address={Banff, AB},
      date={1998},
   },
   book={
      series={CRM Proc. Lecture Notes},
      volume={24},
      publisher={Amer. Math. Soc.},
      place={Providence, RI},
   },
   date={2000},
   pages={367--379},
   review={\MR{1738867 (2002e:14011)}},
}

\bib{Nic00}{book}{
   author={Nicole, Marc-Hubert},
   title={The supersingular locus of Hilbert modular surfaces modulo $p$},
   note={Thesis (Ph.D.)--McGill University},
   date={2000},
}

\bib{Niz93}{article}{
   author={Nizio{\l}, Wies{\l}awa},
   title={Cohomology of crystalline representations},
   journal={Duke Math. J.},
   volume={71},
   date={1993},
   number={3},
   pages={747--791},
   issn={0012-7094},
   review={\MR{1240602 (94g:11045)}},
   doi={10.1215/S0012-7094-93-07128-1},
}

\bib{PW11}{article}{
   author={Pollack, Robert},
   author={Weston, Tom},
   title={On anticyclotomic $\mu$-invariants of modular forms},
   journal={Compos. Math.},
   volume={147},
   date={2011},
   number={5},
   pages={1353--1381},
   issn={0010-437X},
   review={\MR{2834724 (2012k:11174)}},
   doi={10.1112/S0010437X11005318},
}

\bib{Pra92}{article}{
   author={Prasad, Dipendra},
   title={Invariant forms for representations of ${\rm GL}\sb 2$ over a
   local field},
   journal={Amer. J. Math.},
   volume={114},
   date={1992},
   number={6},
   pages={1317--1363},
   issn={0002-9327},
   review={\MR{1198305 (93m:22011)}},
   doi={10.2307/2374764},
}

\bib{PSR87}{article}{
   author={Piatetski-Shapiro, I.},
   author={Rallis, Stephen},
   title={Rankin triple $L$ functions},
   journal={Compositio Math.},
   volume={64},
   date={1987},
   number={1},
   pages={31--115},
   issn={0010-437X},
   review={\MR{911357 (89k:11037)}},
}

\bib{Rap78}{article}{
   author={Rapoport, M.},
   title={Compactifications de l'espace de modules de Hilbert-Blumenthal},
   language={French},
   journal={Compositio Math.},
   volume={36},
   date={1978},
   number={3},
   pages={255--335},
   issn={0010-437X},
   review={\MR{515050 (80j:14009)}},
}

\bib{RZ82}{article}{
   author={Rapoport, M.},
   author={Zink, T.},
   title={\"Uber die lokale Zetafunktion von Shimuravariet\"aten.
   Monodromiefiltration und verschwindende Zyklen in ungleicher
   Charakteristik},
   language={German},
   journal={Invent. Math.},
   volume={68},
   date={1982},
   number={1},
   pages={21--101},
   issn={0020-9910},
   review={\MR{666636 (84i:14016)}},
   doi={10.1007/BF01394268},
}

\bib{Rib89}{article}{
   author={Ribet, Kenneth A.},
   title={Bimodules and abelian surfaces},
   conference={
      title={Algebraic number theory},
   },
   book={
      series={Adv. Stud. Pure Math.},
      volume={17},
      publisher={Academic Press},
      place={Boston, MA},
   },
   date={1989},
   pages={359--407},
   review={\MR{1097624 (92a:11070)}},
}

\bib{Rib90}{article}{
   author={Ribet, K. A.},
   title={On modular representations of ${\rm Gal}(\overline{\bf Q}/{\bf
   Q})$ arising from modular forms},
   journal={Invent. Math.},
   volume={100},
   date={1990},
   number={2},
   pages={431--476},
   issn={0020-9910},
   review={\MR{1047143 (91g:11066)}},
   doi={10.1007/BF01231195},
}

\bib{SS10}{article}{
   author={Saito, Shuji},
   author={Sato, Kanetomo},
   title={A finiteness theorem for zero-cycles over $p$-adic fields},
   note={With an appendix by Uwe Jannsen},
   journal={Ann. of Math. (2)},
   volume={172},
   date={2010},
   number={3},
   pages={1593--1639},
   issn={0003-486X},
   review={\MR{2726095 (2011m:14010)}},
   doi={10.4007/annals.2010.172.1593},
}

\bib{Sch94}{article}{
   author={Scholl, A. J.},
   title={Classical motives},
   conference={
      title={Motives},
      address={Seattle, WA},
      date={1991},
   },
   book={
      series={Proc. Sympos. Pure Math.},
      volume={55},
      publisher={Amer. Math. Soc., Providence, RI},
   },
   date={1994},
   pages={163--187},
   review={\MR{1265529 (95b:11060)}},
}

\bib{Ser72}{article}{
   author={Serre, Jean-Pierre},
   title={Propri\'et\'es galoisiennes des points d'ordre fini des courbes
   elliptiques},
   language={French},
   journal={Invent. Math.},
   volume={15},
   date={1972},
   number={4},
   pages={259--331},
   issn={0020-9910},
   review={\MR{0387283 (52 \#8126)}},
}

\bib{Ski14}{article}{
   author={Skinner, C.},
   title={A converse to a theorem of Gross, Zagier, and Kolyvagin},
   note={\href{http://arxiv.org/abs/1405.7294}{arXiv:math/1405.7294}},
   date={2014},
}

\bib{Tat76}{article}{
   author={Tate, John},
   title={Relations between $K\sb{2}$ and Galois cohomology},
   journal={Invent. Math.},
   volume={36},
   date={1976},
   pages={257--274},
   issn={0020-9910},
   review={\MR{0429837 (55 \#2847)}},
}

\bib{TW95}{article}{
   author={Taylor, Richard},
   author={Wiles, Andrew},
   title={Ring-theoretic properties of certain Hecke algebras},
   journal={Ann. of Math. (2)},
   volume={141},
   date={1995},
   number={3},
   pages={553--572},
   issn={0003-486X},
   review={\MR{1333036 (96d:11072)}},
   doi={10.2307/2118560},
}

\bib{TX13}{article}{
   author={Tian, Y.},
   author={Xiao, L.},
   title={On Goren--Oort stratification for quaternionic Shimura varieties},
   note={\href{http://arxiv.org/abs/1308.0790}{arXiv:math/1308.0790}},
   date={2013},
}

\bib{TX14}{article}{
   author={Tian, Y.},
   author={Xiao, L.},
   title={Tate cycles on some quaternionic Shimura varieties mod $p$},
   note={\href{http://arxiv.org/abs/1410.2321}{arXiv:math/1410.2321}},
   date={2014},
}

\bib{Vig80}{book}{
   author={Vign{\'e}ras, Marie-France},
   title={Arithm\'etique des alg\`ebres de quaternions},
   language={French},
   series={Lecture Notes in Mathematics},
   volume={800},
   publisher={Springer},
   place={Berlin},
   date={1980},
   pages={vii+169},
   isbn={3-540-09983-2},
   review={\MR{580949 (82i:12016)}},
}

\bib{Wil95}{article}{
   author={Wiles, Andrew},
   title={Modular elliptic curves and Fermat's last theorem},
   journal={Ann. of Math. (2)},
   volume={141},
   date={1995},
   number={3},
   pages={443--551},
   issn={0003-486X},
   review={\MR{1333035 (96d:11071)}},
   doi={10.2307/2118559},
}

\bib{YZZ}{article}{
   author={Yuan, X.},
   author={Zhang, S.},
   author={Zhang, W.},
   title={Triple product $L$-series and Hirzebruch--Zagier cycles},
   eprint={https://web.math.princeton.edu/~shouwu/publications/triple2012.pdf},
   note={preprint},
}

\bib{Zha12}{article}{
   author={Zhang, W.},
   title={On arithmetic fundamental lemmas},
   journal={Invent. Math.},
   volume={188},
   date={2012},
   number={1},
   pages={197--252},
   issn={0020-9910},
   review={\MR{2897697}},
   doi={10.1007/s00222-011-0348-1},
}

\bib{Zha14}{article}{
   author={Zhang, W.},
   title={Selmer groups and the indivisibility of Heegner points},
   journal={Camb. J. Math.},
   volume={2},
   date={2014},
   number={2},
   pages={191--253},
   issn={2168-0930},
}

\bib{Zin02}{article}{
   author={Zink, T.},
   title={The display of a formal $p$-divisible group},
   note={Cohomologies $p$-adiques et applications arithm\'etiques, I},
   journal={Ast\'erisque},
   number={278},
   date={2002},
   pages={127--248},
   issn={0303-1179},
   review={\MR{1922825 (2004b:14083)}},
}

\end{biblist}
\end{bibdiv}

\end{document}